\def\@tocline#1#2#3#4#5#6#7{\relax
  \ifnum #1>\c@tocdepth 
  \else
    \par \addpenalty\@secpenalty\addvspace{#2}%
    \begingroup \hyphenpenalty\@M
    \@ifempty{#4}{%
      \@tempdima\csname r@tocindent\number#1\endcsname\relax
    }{%
      \@tempdima#4\relax
    }%
    \parindent\z@ \leftskip#3\relax \advance\leftskip\@tempdima\relax
    \rightskip\@pnumwidth plus4em \parfillskip-\@pnumwidth
    #5\leavevmode\hskip-\@tempdima
      \ifcase #1
       \or\or \hskip 1em \or \hskip 2em \else \hskip 3em \fi%
      #6\nobreak\relax
    \dotfill\hbox to\@pnumwidth{\@tocpagenum{#7}}\par
    \nobreak
    \endgroup
  \fi}
 \numberwithin{equation}{section}
\DeclareMathOperator{\Gr}{Gr}
\DeclareMathOperator{\gr}{gr}
\DeclareMathOperator*{\lip}{Lip_1^+}
\def\N{{\mathbb{N}}}
\def\dd{{\mathfrak{d}}}
\renewcommand{\emptyset}{\varnothing}
\def\ve{\varepsilon}
\DeclareMathOperator{\diam}{diam}
\def\Tan{\mathrm{Tan}} 					
\def\eu{\mathrm{eu}} 	
\newcounter{eps}
\newcommand{\newep}{\refstepcounter{eps}\ensuremath{\varepsilon_{\theeps}}}
\newcommand{\oldep}[1]{\ensuremath{\varepsilon_{\ref{#1}}}}
\def\BMO{\mathop\mathrm{BMO}} 					
\def\Lip{\mathop\mathrm{Lip}} 						
\def\dim{\mathrm{dim}} 					
\def\dist{\textup{dist}} 						
\def\ker{\mathop\mathrm{ker }}						
\def\supp{\mathop\mathrm{supp}}					
\def\XXint#1#2#3{{\setbox0=\hbox{$#1{#2#3}{\int}$ }
\vcenter{\hbox{$#2#3$ }}\kern-.58\wd0}}
\def\grad{\nabla}
\theoremstyle{plain}
\newtheorem{theorem}{Theorem}
\newtheorem{corollary}[theorem]{Corollary}
\newtheorem{lemma}[theorem]{Lemma}
\newtheorem{proposition}[theorem]{Proposition}
\theoremstyle{definition}
\newtheorem{definition}[theorem]{Definition}
\newtheorem{remark}[theorem]{Remark}
\numberwithin{equation}{section}
\numberwithin{theorem}{section}
\newtheorem{thmx}{Theorem}
  \DeclareFontFamily{U}{mathb}{\hyphenchar\font45} 
\DeclareFontShape{U}{mathb}{m}{n}{
      <5> <6> <7> <8> <9> <10> gen * mathb
      <10.95> mathb10 <12> <14.4> <17.28> <20.74> <24.88> mathb12
      }{}
\DeclareSymbolFont{mathb}{U}{mathb}{m}{n}
\DeclareMathSymbol{\toitself}      {3}{mathb}{"FD}  
\newcommand{\vv}{\vspace{2mm}}
\newcommand{\vvv}{\vspace{4mm}}
\def\R{\mathbb{R}}
\def\vphi{\varphi}
\begin{document}

\title[On the density problem in the parabolic space]{On the density problem in the parabolic space}

\author{Andrea Merlo}
\address{Departamento de Matem\'aticas, Universidad del Pa\' is Vasco, Barrio Sarriena s/n 48940 Leioa, Spain. }
\email{andrea.merlo@ehu.eus}

\author[Mihalis Mourgoglou]{Mihalis Mourgoglou}
\address{Departamento de Matem\'aticas, Universidad del Pa\' is Vasco, Barrio Sarriena s/n 48940 Leioa, Spain and\\
Ikerbasque, Basque Foundation for Science, Bilbao, Spain.}
\email{michail.mourgoglou@ehu.eus}

\author{Carmelo Puliatti}
\address{Departamento de Matem\'aticas, Universidad del Pa\' is Vasco, Barrio Sarriena s/n 48940 Leioa, Spain. }
\email{carmelo.puliatti@ehu.eus}

\subjclass[2020]{28A75, 28A78, 30L99}
\thanks{During the writing of this work	  A.M.~was supported by the Simons Foundation grant 601941, GD., by the Swiss National Science Foundation
(grant 200021-204501 `\emph{Regularity of sub-Riemannian geodesics and
applications}')
and by the European Research Council (ERC Starting Grant 713998 GeoMeG and by the European Union’s Horizon Europe research and innovation programme under the Marie Sk\l odowska-Curie grant agreement no 101065346.
M.M. was supported by IKERBASQUE and partially supported by the grant PID2020-118986GB-I00 of the Ministerio de
Economía y Competitividad (Spain), and by IT-1247-19 and IT-1615-22 (Basque Government). C.P. was supported by the grants IT-1247-19 and IT-1615-22 (Basque Government), PGC2018-094522-B-I00 (MICINN, Spain), and partially supported by PID2020-118986GB-I00 (Ministerio de Economía y Competitividad, Spain).}
\keywords{Parabolic space, rectifiable set, Preiss's theorem, density problem, tangent measures, Hausdorff measure, parabolic uniform rectifiability, Koranyi metric}

\newcommand{\mih}[1]{\marginpar{\color{red} \scriptsize \textbf{Mih:} #1}}
\newcommand{\car}[1]{\marginpar{\color{blue} \scriptsize \textbf{Carmelo:} #1}}
\newcommand{\andrea}[1]{\marginpar{\color{blue} \scriptsize \textbf{Andrea:} #1}}
\maketitle

\begin{abstract}
In this work we extend many classical results concerning the relationship between densities, tangents and rectifiability to the parabolic spaces, namely $\R^{n+1}$ equipped with parabolic dilations. In particular we prove a Marstrand-Mattila rectifiability criterion for measures of general dimension, we provide a characterisation through densities of intrinsic rectifiable measures, and we study the structure of $1$-codimensional uniform measures.
Finally, we apply some of our results to the study of a quantitative version of parabolic rectifiability: we prove that the weak constant density condition for a $1$-codimensional Ahlfors-regular measure implies the bilateral weak geometric lemma.
\end{abstract}

\tableofcontents

\section{Introduction}

Rectifiablity is the central concept of Geometric Measure Theory. Although the modern formal definition was given by H. Federer in the 60s (see \cite{Federer1996GeometricTheory}) the idea of relaxing the notion of smooth surface to that of countable union of Lipschitz images was already present in the seminal works \cite{Besicovitch1,Besicovitch2,Besicovitch3} of A. Besicovitch. Henceforth, the characterization of rectifiability via both analytic and geometric properties has produced an extensive amount of research. We refer for instance to the books \cite{Federer1996GeometricTheory}, \cite{Mattila1995GeometrySpaces}, the recent survey \cite{Mattila_survey}, and the references therein.




Since Besicovitch's foundational works, one of the key problems of Geometric Measure Theory was to determine the geometric structure of measures with density, namely Radon measures $\phi$ on $\R^n$ such that the limit 
\begin{equation}
    \lim_{r\to 0}\frac{\phi(B(x,r))}{r^\alpha}
    \label{BP}
\end{equation}
exists positive and finite for some $\alpha>0$ and for $\phi$-almost every $x\in \R^n$. This query, usually referred to as \emph{the density problem}, has driven a lot of research during the 60s and 70s (see for instance \cite{marstrand, mattila1975}), and was finally solved in the celebrated D. Preiss's work \cite{Preiss1987GeometryDensities}.
Its relevance is even broader, since it led the main contributors to its solution in $\mathbb R^n$, namely A. Besicovitch, J. Marstrand, P. Mattila, and D. Preiss, to develop many of the tools, blowup analysis arguments, and results that became fundamental techniques in the study of local properties of measures.

The solution of the Euclidean density problem is summarised in the following theorem, which is commonly known as \textit{Preiss's rectifiability criterion.}

\begin{thmx}\label{preiss}
Let $\phi$ be a Radon measure on $\R^n$ and let $\alpha\geq 0$. The following statements are equivalent:
\begin{itemize}
    \item[(i)]$\alpha\in\{0,1,\ldots,n\}$, $\phi$ is absolutely continuous with respect to the Hausdorff measure $\mathcal{H}^{\alpha}$, and $\phi$-almost all of $\mathbb{R}^n$ can be covered with countably many Lipschitz images of $\mathbb R^\alpha.$
    \item[(ii)]The limit in \eqref{BP} exists positive and finite $\phi$-almost everywhere, where $B(x,r)$ is the Euclidean ball of centre $x$ and radius $r>0$.
\end{itemize}
\end{thmx}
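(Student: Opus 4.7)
The plan is to split the theorem into the two implications. The direction (i) $\Rightarrow$ (ii) is the classical part: cover $\phi$-almost all of $\mathbb{R}^n$ by countably many Lipschitz images $f_i(\mathbb{R}^\alpha)$, refine each image via the Whitney extension theorem into $C^1$-regular pieces, and appeal to the area formula together with Rademacher's theorem to conclude that $\mathcal{H}^\alpha$ restricted to each piece has density $1$ at $\mathcal{H}^\alpha$-almost every point. Since $\phi \ll \mathcal{H}^\alpha$, the Radon-Nikodym derivative $g = d\phi / d\mathcal{H}^\alpha$ exists, and at a $\phi$-a.e.\ Lebesgue point of $g$ the limit in \eqref{BP} equals $g(x)$ times the density of $\mathcal{H}^\alpha$ on the piece, which is positive and finite.

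The hard direction is (ii) $\Rightarrow$ (i), essentially the content of Preiss's original theorem. I would proceed in three layers. First, \emph{Marstrand's theorem}: for $\phi$-a.e.\ $x$, take weak-$*$ accumulation points $\nu$ of the rescalings $\phi_{x,r}(A) := r^{-\alpha} \phi(x + rA)$; existence of the positive, finite density propagates to each tangent measure $\nu \in \Tan(\phi,x)$ the scale invariance $\nu(B(y,\lambda r)) = \lambda^\alpha \nu(B(y,r))$ for every $y \in \supp(\nu)$ and $\lambda, r > 0$, from which a covering and symmetry argument forces $\alpha \in \{0,1,\dots,n\}$. Finiteness of the upper density also gives $\phi \ll \mathcal{H}^\alpha$. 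Second, one upgrades this to show that at $\phi$-a.e.\ $x$ every $\nu \in \Tan(\phi,x)$ is $\alpha$-\emph{uniform}, meaning $\nu(B(y,r)) = c\, r^\alpha$ for every $y \in \supp(\nu)$ and $r>0$, with a constant $c$ depending only on $\nu$. Third, one exploits Preiss's principle that tangents of tangent measures are, up to rescaling, again tangents of $\phi$ at the original point, together with the weak-$*$ connectedness of $\Tan(\phi,x)$, to argue that at $\phi$-a.e.\ $x$ some tangent measure must be \emph{flat}, i.e., a multiple of $\mathcal{H}^\alpha$ restricted to an $\alpha$-plane. Once flat tangents are available $\phi$-a.e., the Marstrand-Mattila rectifiability criterion then produces the required Lipschitz covering and gives (i).

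The main obstacle is the step producing flat tangents $\phi$-almost everywhere. It rests on a structural classification of $\alpha$-uniform measures in $\mathbb{R}^n$ delicate enough to handle the critical codimension $\alpha = n-1$, where nonflat uniform examples exist (the Kowalski-Preiss light cone in $\mathbb{R}^4$). The classification is carried out through an expansion of moments such as $\int e^{-t|y|^2}\, d\nu(y)$ for an $\alpha$-uniform $\nu$: the identity $\nu(B(y,r)) = c\, r^\alpha$ translates into strong polynomial and analytic constraints on $\supp(\nu)$ near infinity, which force $\nu$ to be flat except in a very rigid exceptional class, and a dichotomy on $\Tan(\phi,x)$ then excludes the exceptional tangents on a $\phi$-null set. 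In the parabolic setting pursued in the present paper, both the uniform-measure structure and the Marstrand-Mattila criterion must be reproved in the intrinsic metric, which is presumably where the bulk of the technical work of the paper lies.
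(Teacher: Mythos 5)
Your outline is correct and follows exactly the route the paper has in mind: the paper does not prove Theorem \ref{preiss} but attributes it to Besicovitch, Marstrand, Mattila, and Preiss, and your decomposition — area formula and density of rectifiable sets for (i) $\Rightarrow$ (ii); Marstrand's integrality, uniform tangent measures, flat tangents via connectedness of $\Tan(\phi,x)$ and the moment-expansion classification of uniform measures (with the Kowalski--Preiss cone as the critical obstruction), then the Marstrand--Mattila criterion for (ii) $\Rightarrow$ (i) — is precisely the classical argument in that literature, and it is also the strategy the paper adapts to $\mathbb{P}^n$. The deep ingredients (classification of uniform measures, the separation of flat from non-flat uniform measures, the rectifiability criterion) are used as black boxes in your sketch, which is consistent with how the paper itself treats this background result.
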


As mentioned above, the proof of Theorem \ref{preiss} was a community effort. The case $n=2$ was completely solved by Besicovitch in \cite{Besicovitch1,Besicovitch2,Besicovitch3}. Marstrand proved in \cite{marstrand, marstranddensity} that if \eqref{BP} exists positive and finite in an arbitrary dimension $n$, then $\alpha$ has to be an integer. He further showed that Theorem \ref{preiss} holds for $n=3$ and $\alpha=2$. Ten years later, Mattila proved in \cite{mattila1975} that if $E\subset \mathbb R^n$ is measurable, $\phi=\mathcal{H}^\alpha\llcorner E$, and the limit in \eqref{BP} equals $1$ $\mathcal{H}^\alpha\llcorner E$-almost everywhere, then $E$ must be rectifiable. In order to achieve this result, Mattila obtained in full generality another keystone of Geometric Measure Theory which is often referred to as \textit{Marstrand-Mattila rectifiability criterion} and allows one to infer the global property of rectifiability of a measure from a pointwise information which involves proper flatness conditions. We refer the reader to  \cite[Section 5]{DeLellis2008RectifiableMeasures} for more details and a proof. Finally, Preiss's contribution to Theorem \ref{preiss} was to find a way to link the information given by the existence of \eqref{BP} to the local regularity and flatness of the measure $\phi$.
The importance of \cite{Preiss1987GeometryDensities} further stems from the fundamental techniques which were introduced there, such as the theory of \textit{tangent measures}. These methods played a crucial role also in many modern applications such as the study of harmonic measure in \cite{KPT, Badger, AMT_CPAM, AMTV, Azzam_Mourgoglou}.

Because of the wide range of possible applications in the theory of optimal transport and  PDEs, the last twenty years have witnessed a remarkable surge in interest for an extension of the results of Geometric Measure Theory in the context of metric spaces. For some relevant examples we refer to \cite{Ambrosio2000RectifiableSpaces, metriccurrents, kirchharea, cheeger}. A family of metric spaces which has received a special attention are \textit{Carnot groups}, namely simply connected nilpotent Lie group whose Lie algebra is stratified and generated by its first layer (see for instance \cite{SerraCassano_notes_2016}). Their main relevance is twofold: first, they are infinitesimal models of geodesic metric spaces (see \cite{carnottangenti}) and, secondly, all non-commutative Carnot groups are $1$ and $2$-codimensional purely unrectifiable metric spaces, in the sense that they do not contain Lipschitz images of codimension $1$ or $2$. For the latter reason it is apparent that if one wants to extend the geometric techniques to these settings, at least in codimension $1$, the classical theory is of no help.

A significant step toward a better understanding of rectifiability in Carnot groups was made in the early 2000's by B. Franchi, R. Serapioni, and F. Serra Cassano, who introduced in \cite{FSSC01} an intrinsic notion of rectifiability in the Heisenberg group and achieved an analogue of De Giorgi's rectifiability theorem.
In particular, they formulated it in terms of so-called $\mathcal C^1_{\mathbb H}$-regular surface (see \cite[Definition 6.1]{FSSC01}), which admit an associated Implicit Function Theorem (see \cite[Theorem 6.5]{FSSC01}).
The first named author has recently proved in \cite{MerloG1cod} and \cite{MerloMM} that $\mathcal C^1_{\mathbb H}$-rectifiability is indeed the correct framework to provide the first non-Euclidean analogue of Preiss's theorem.
His result was attained for 1-codimensional measures in $\mathbb H^n$.
Motivated by \cite{MerloMM}, he has also formulated a notion of $\mathcal P^*$-rectifiability (resp. $\mathcal P$-rectifiability) in general Carnot groups which is based on the almost everywhere flatness (resp. flatness \text{and uniqueness}) of tangent measures, which he further investigated in collaboration with G. Antonelli in \cite{antonelli2022rectifiable}, \cite{MR4342997}, and \cite{MerloAntonelli}.
As a consequence of the works \cite{antonelli2022rectifiable}, \cite{Chousionis2015MarstrandsGroup}, \cite{ChousionisONGROUP}, \cite{MerloG1cod}, \cite{MerloMM} of G. Antonelli, V. Chousionis, V. Magnani, J. Tyson, and the first named author, Preiss's density theorem is also known to hold in $\mathbb H^1$ for higher codimensional measures. For a statement and a more detailed discussion we refer to the introduction of  \cite{antonelli2022rectifiable}.

\medskip

The purpose of the present work is to investigate the relationship between \textit{densities} and \textit{regularity} of measures in the non-Euclidean setting of \emph{parabolic spaces} $\mathbb{P}^n$ (see \S\ref{parabgroup} for the definition), to solve the density problem for $1$-codimensional measures on $\mathbb{P}^n$. Along the way, we provide the parabolic analogues of several fundamental tools in Geometric Measure Theory.

Before delving into the explanation of our contributions to the matter, we give a short account of what has been done so far in $\mathbb{P}^n$. Motivated by the study the heat equation on time-varying domains, S. Hofmann, J. Lewis, and K. Nystr\"om introduced in \cite{HLN_AASFM} and  \cite{HLN_Duke} a definition of \emph{quantitative (uniform) $(n+1)$-rectifiability} in the parabolic space, which is inspired by the works of David and Semmes' in the Euclidean case (see \cite{DavidSemmes_asterisque} and \cite{DavidSemmes}). Its investigation constitutes an active field of research (see \cite{BHHLN1, BHHLN2, BHHLN3}). We also mention that the need of a better understanding of qualitative parabolic rectifiability arises also from other recent studies beyond geometry.
Parabolic tangent measures and some of the techniques of \cite{Preiss1987GeometryDensities} have been implemented in the study of non-variational two-phase problems for caloric measure by the second and third named author in \cite{Mourgoglou_Puliatti}. Moreover, J. Mateu, L. Prat, and X. Tolsa investigated in \cite{Mateu_Prat_Tolsa} a parabolic Lipschitz-harmonic capacity in the context of removability for Lipschitz caloric functions. 

In order to bridge clearer connections between parabolic uniform rectifiability and the geometry of $\mathbb{P}^n$, P. Mattila formulated in \cite{MatParRect} the notion of \textit{LG-rectifiabile set} in the spirit of Federer's definition (see Definition \ref{def:LGrect}).
This notion, which is extremely geometric and natural, allowed Mattila to extend to $\mathbb P^n$ some of the classical Euclidean characterizations of rectifiable sets, in particular those in terms of approximate tangent planes and tangent measures, see Theorem \ref{theorem_Mattila_parabolic_rect} or \cite{MatParRect} for more details. In fact, Theorem \ref{theorem_Mattila_parabolic_rect} tells us that the notion of $LG$-rectifiability coincides with that of 
$\mathcal P_m$-rectifiability, introduced by the third named author in the context of Carnot groups in \cite{MerloMM} (see Definition \ref{def:PhRectifiableMeasure}).

\medskip

We now proceed with a description of the content of the paper and its connection with the existing literature. As mentioned above, the main subject is the density problem in $\mathbb{P}^n$, which we solve in codimension $1$.

\begin{theorem}\label{theorem:main_theorem_Preiss}
    Suppose that $\phi$ is a Radon measure on $\mathbb{P}^n$. Then, the following are equivalent:
    \begin{itemize}
        \item[(i)] $\phi$ is absolutely continuous with respect to $\mathcal{H}^{n+1}$ and it is $\mathscr{P}_{n+1}$-rectifiable in the sense of Definition \ref{def:PhRectifiableMeasure}. In other words, there are countably many compact sets $K_i\subseteq \mathcal{V}$ and Lipschitz maps $g_i\colon K_i\to \R$ with the Rademacher property (see Definition \ref{def:Rademacher_property}) such that $$\phi\Bigl(\mathbb{P}^n\setminus \bigcup_{i\in\N} g_i(K_i)\Bigr)=0.$$
        \item[(ii)]$\phi$ is absolutely continuous with respect to $\mathcal{H}^{n+1}$ and it is supported on a $1$-codimensional LG-rectifiable set.
        \item[(iii)] For $\phi$-almost every $x\in\mathbb{P}^n$ we have
        \[
        0<\lim_{r\to 0}\frac{\phi(B(x,r))}{r^{n+1}}<\infty,
    \]
where $B(x,r)$ is the ball relative to the Koranyi metric (see \eqref{eq:Korany_metric}).
    \end{itemize}
\end{theorem}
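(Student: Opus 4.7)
My plan is to establish the three implications by treating $(i)\Leftrightarrow(ii)$ and $(i)/(ii)\Leftrightarrow(iii)$ separately. The equivalence $(i)\Leftrightarrow(ii)$ should essentially follow from Mattila's parabolic characterisation, i.e.\ Theorem \ref{theorem_Mattila_parabolic_rect}, which identifies LG-rectifiable sets with $\mathscr{P}_m$-rectifiable measures once one imposes absolute continuity with respect to $\mathcal{H}^{n+1}$. The heart of the proof is therefore the equivalence with the density condition $(iii)$.

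The forward direction $(i)\Rightarrow(iii)$ is the ``area-formula'' implication. The idea is to use the Rademacher property of the Lipschitz maps $g_i\colon K_i\to\mathbb{R}$ to blow up $\phi$ at a $\phi$-almost every point $x\in g_i(K_i)$. Intrinsic differentiability, combined with the graph-versus-vertical-direction splitting typical of the parabolic setting, should force every tangent measure to be a constant multiple of $\mathcal{H}^{n+1}\llcorner\mathcal{V}_x$ for a vertical hyperplane $\mathcal{V}_x$. A direct computation of the Koranyi-ball mass of such a flat measure then gives that the limit in $(iii)$ exists and is positive and finite at $\phi$-a.e.\ $x$.

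The deep direction is $(iii)\Rightarrow(i)$. I would proceed as follows. First, standard covering and differentiation arguments, together with the density hypothesis, should yield $\phi\ll\mathcal{H}^{n+1}$, uniform bounds on the upper and lower $(n+1)$-densities $\phi$-a.e., and the key structural fact that at $\phi$-a.e.\ $x$ every tangent measure is $(n+1)$-uniform in $\mathbb{P}^n$ (its mass on a Koranyi ball depends only on the radius). Second, invoke the paper's classification of $1$-codimensional uniform measures in $\mathbb{P}^n$, advertised in the abstract, to conclude that each such tangent measure must be flat, of the form $c\,\mathcal{H}^{n+1}\llcorner\mathcal{V}$ for some vertical hyperplane $\mathcal{V}$ and a positive constant $c$. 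Third, feed this pointwise flatness information into the parabolic Marstrand-Mattila rectifiability criterion, also established in the paper, to conclude that $\phi$ is $\mathscr{P}_{n+1}$-rectifiable, matching the formulation of $(i)$.

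The main obstacle will be the classification of $1$-codimensional uniform measures in $\mathbb{P}^n$: parabolic dilations and the anisotropy of Koranyi balls break the Taylor/moment-expansion machinery that drives Preiss's original Euclidean treatment, so genuinely new arguments adapted to $\mathbb{P}^n$ must be developed. A subtler secondary difficulty lies in passing from ``$\phi$-a.e.\ tangent is flat'' to global rectifiability: a priori flat tangents at a single point could oscillate between different vertical hyperplanes, and the parabolic Marstrand-Mattila criterion must be robust enough to exclude this without relying on Euclidean projection theorems that are unavailable in the parabolic setting.
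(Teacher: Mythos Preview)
Your treatment of $(i)\Leftrightarrow(ii)$ and $(i)\Rightarrow(iii)$ is fine and matches the paper's own remark that these implications follow almost immediately from Mattila's work (Theorems \ref{theorem_Mattila_parabolic_rect} and \ref{theorem:Mattila_parabolic_rectifiability}).

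There is, however, a genuine gap in your plan for $(iii)\Rightarrow(i)$. You propose to ``invoke the paper's classification of $1$-codimensional uniform measures in $\mathbb{P}^n$ \ldots\ to conclude that each such tangent measure must be flat.'' This step fails: the classification does \emph{not} say that every $(n+1)$-uniform measure is flat. Theorem \ref{classificazionemisureunif} exhibits a non-flat $(n+1)$-uniform measure, namely (up to isometry) the product of $\mathcal{H}^3$ on the Kowalski--Preiss cone $\{x_1^2+x_2^2+x_3^2=x_4^2\}$ with Lebesgue measure on the remaining coordinates. So the existence of the density alone, which only gives $\Tan_{n+1}(\phi,x)\subseteq\Theta^{n+1}(\phi,x)\,\mathcal{U}(n+1)$ via Proposition \ref{propup}, does not directly force flatness of tangents.

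The paper's route around this obstruction has two extra ingredients you are missing. First, Corollary \ref{existenceflatmeasure} (built on Theorem \ref{th.marstrand}, Proposition \ref{tgverticalseregolare}, and the ``tangents of tangents are tangents'' principle of Proposition \ref{tuttitg}-(iv)) guarantees that $\Tan_{n+1}(\phi,x)$ contains at least one flat measure at $\phi$-a.e.\ $x$. Second, rather than a full classification, the paper establishes a \emph{disconnection} property (\textbf{P}): there is a weak*-continuous functional $\mathscr{F}$ and a threshold $\varepsilon>0$ such that if an $(n+1)$-uniform measure $\mu$ has some tangent at infinity with $\mathscr{F}\le\varepsilon$, then $\mu$ itself is flat (see Propositions \ref{disconnectdegenerated}, \ref{prop:noT}, \ref{charunifquadrics}). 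Theorem \ref{th:disconnection} then exploits the weak* connectedness of $\Tan_{n+1}(\phi,x)$ together with (\textbf{P}) and the existence of one flat tangent to conclude that \emph{all} tangents are flat. Only then does the Marstrand--Mattila criterion (Theorem \ref{thm:MMconormale:intro}) apply. Your outline skips precisely this connectedness/disconnection mechanism, which is the technical heart of the argument.
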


\vv

Theorem \ref{theorem:main_theorem_Preiss} is a parabolic analogue of Theorem \ref{preiss} and of \cite[Theorem 1.3]{MerloG1cod}; we show that Mattila's notion of LG-rectifiability is the one which solves the $1$-codimensional density problem in $\mathbb{P}^n$ for the Koranyi metric. This is quite remarkable both from the theoretical point of view, as it provides a \emph{metric} reason why the notion of LG-rectifiability is natural and, on the other hand, together with results like \cite[Example 8.2]{MatParRect}, it highlights a contrast between the notion of parabolic uniform rectifiability and the geometry of the parabolic space. We shall discuss the latter issue in detail below.

It is legitimate to ask whether Theorem \ref{theorem:main_theorem_Preiss} keeps being true if we replace the Koranyi norm with a bi-Lipschitz equivalent one. The next result shows that the solution to the density problem in the parabolic space is extremely sensitive to the change of norms. Indeed, in Appendix \ref{section:counterexample_PDT} we provide a \textit{counterexample to the parabolic Preiss's density theorem} with an alternative metric.

\begin{theorem}\label{th:failurepreiss}
Endow the parabolic group $\mathbb{P}^1$ with the metric induced by the norm \footnote{Note that the metrics induced by the Koranyi norm (see \eqref{eq:Korany_metric}) and that induced by $\lVert\cdot\rVert_\infty$ are bi-Lipschitz equivalent.} 
\begin{equation}
       \|x\|_\infty \coloneqq \max\{|x_H|, |x_T|^{1/2}\} \qquad \text{ for }\quad x=(x_H,x_T)\in \R\times \R
    \label{infinitynorm}
\end{equation}
and let $\mathfrak B_\infty(x,r)$ denote the ball relative to \eqref{infinitynorm}.
Then, there exists a Radon measure $\nu$ on $\mathbb{P}^1$ which satisfies
\[
\nu(\mathfrak B_\infty(x,r))=r^2\qquad \text{for any $x\in\supp(\nu)$ and any $r>0$}
\]
and such that the set of Preiss's tangent measures $\Tan_2(\nu,x)$ (see Definition \ref{def:TangentMeasure}) for $\nu$-almost every $x\in\mathbb{P}^1$ {never contains} the Haar measure of the vertical line $\mathcal V\coloneqq \{(0,s):s\in\R\}$.
\end{theorem}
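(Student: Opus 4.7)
The plan is to construct an explicit $\nu$ via a parabolic self-similar construction compatible with the rectangular shape of the $\|\cdot\|_\infty$-balls, and then to analyse its tangent measures. The guiding observation is that in the $\|\cdot\|_\infty$-metric a parabolic ball is the exact rectangle $[x_1-r,x_1+r]\times[x_2-r^2,x_2+r^2]$; such rectangles can be tiled by smaller parabolic rectangles under parabolic dilations, which allows the existence of $2$-uniform measures whose support is not a vertical line. For the Koranyi norm the balls are smooth ``parabolic disks'' and this tiling flexibility disappears, in accordance with Theorem~\ref{theorem:main_theorem_Preiss}.

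\textbf{Construction of $\nu$.} I would define $\nu$ as the self-similar measure of a parabolic iterated function system consisting of four contractions of the form $\varphi_i(x,t)=(x/2+a_i,\,t/4+b_i)$. The translations $(a_i,b_i)$ are to be chosen so that the images $\varphi_i(Q)$ of a fixed parent rectangle $Q$ are pairwise disjoint, fit tightly inside $Q$, and so that the attractor $F$ is not contained in any vertical line. Since $4\cdot(1/2)^2=1$, the set $F$ has parabolic Hausdorff dimension $2$, and the Bernoulli measure assigning mass $4^{-n}$ to each level-$n$ cell is a natural candidate. To make $\nu$ a $2$-uniform measure globally, one runs the same IFS ``outward'' as well, realising $Q$ itself as one of the four sub-rectangles of a parent rectangle of parabolic radius $2$, then of $4$, and so on; this produces an unbounded, parabolic-self-similar Radon measure $\nu$ on the whole of $\mathbb{P}^{1}$.

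\textbf{Verification of uniformity.} One then checks that $\nu(\mathfrak{B}_\infty(x,r))=r^2$ for every $x\in\supp\nu$ and every $r>0$. At each dyadic scale $r=2^{k}$, $k\in\mathbb{Z}$, the construction is arranged so that $\mathfrak{B}_\infty(x,r)\cap\supp\nu$ is exactly one IFS cell at that scale, which carries mass $r^2$ by definition of $\nu$. For intermediate radii one uses the self-similarity and the fact that the parabolic rectangles $\mathfrak{B}_\infty$ decompose cleanly into finer rectangles of the same type, reassembling $\mathfrak{B}_\infty(x,r)$ as a disjoint union of level-$n$ pieces whose total $\nu$-mass is $r^2$ exactly. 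This is by far the most delicate step: enforcing an exact $r^2$-identity at \emph{every} radius, as opposed to a mere up-to-constants bound, pins down the IFS data very tightly, and it is here that the rectangular shape of the $\|\cdot\|_\infty$-balls is essential.

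\textbf{Exclusion of the vertical tangent.} By the parabolic self-similarity, blowing up $\nu$ at a $\nu$-generic $x\in F$ by a parabolic factor $2^{n}$ returns $\nu$ itself up to a translation. Consequently every element of $\Tan_2(\nu,x)$ is a translate of $\nu$ (or a convex combination thereof), and its support is an affine image of $F$. Since $F$ has spatial projection of positive length by construction, no such support can lie inside a single vertical line; in particular the Haar measure of $\mathcal{V}$, whose support is exactly $\{0\}\times\mathbb{R}$, does not belong to $\Tan_2(\nu,x)$ for $\nu$-almost every $x$, which yields the desired counterexample.
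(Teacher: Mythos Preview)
Your proposal identifies the right geometric reason why the $\|\cdot\|_\infty$-metric admits non-flat uniform measures (the rectangular shape of the balls), but the construction has a genuine gap at precisely the step you flag as ``by far the most delicate'': obtaining the \emph{exact} identity $\nu(\mathfrak{B}_\infty(x,r))=r^2$ for \emph{every} $r>0$ and every $x\in\supp\nu$. For an IFS with pairwise disjoint images the attractor is totally disconnected, and as $\mathfrak{B}_\infty(x,r)$ crosses a gap between first-generation cells the $\nu$-mass picks up a new piece in a way that will not, in general, match the smooth function $r\mapsto r^2$. Self-similar Cantor sets of this kind are Ahlfors regular, but exact uniformity is a far stronger constraint and I do not see how to enforce it with four disjoint parabolic similitudes. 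Your tangent analysis is also incomplete: blowups along the dyadic sequence $2^{-n}$ do recover translates of $\nu$, but $\Tan_2(\nu,x)$ contains limits along \emph{all} infinitesimal sequences, and those need not be translates of $\nu$; you would still have to rule out the vertical line for every such limit.

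The paper takes a much shorter route that sidesteps both difficulties. One fixes \emph{any} $\tfrac{1}{2}$-H\"older function $f\colon\R\to\R$ with unit H\"older constant and sets $\Gamma=\{(f(t),t):t\in\R\}$. The H\"older condition forces the horizontal constraint $|f(t)-f(s)|\le r$ to be implied by the vertical one $|t-s|\le r^2$, so $\Gamma\cap\mathfrak{B}_\infty((f(s),s),r)=\{(f(t),t):|t-s|\le r^2\}$ and a direct computation gives $\mathcal{C}^2\llcorner\Gamma(\mathfrak{B}_\infty(x,r))=2r^2$ for every $x\in\Gamma$ and every $r>0$: uniformity is automatic, with no combinatorics. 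To exclude the flat tangent one then simply chooses $f$ nowhere parabolically differentiable (such functions exist, e.g.\ via \cite{JNV21}), so that by Theorem~\ref{theorem_Mattila_parabolic_rect} no blowup of $\mathcal{C}^2\llcorner\Gamma$ is supported on $\mathcal V$. If you wish to rescue the IFS idea, the natural fix is to choose the maps so that the attractor \emph{is} the graph of a self-similar $\tfrac{1}{2}$-H\"older function; uniformity then follows from the graph argument above, and the self-similarity supplies the nowhere-differentiability.
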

\vv

It is not hard to show that $\mathcal V$ is the only $2$-dimensional homogeneous subgroup of $\mathbb{P}^1$ (see Corollary \ref{corollary:grn1}).
Thus, Theorem \ref{th:failurepreiss} proves the existence of a Radon measure having $2$-dimensional density with respect to the standard parabolic cylinder $\mathfrak B_\infty(x,r)$ such that $\nu$ never resembles a flat parabolic surface at any scale and at $\nu$-almost every point of $\mathbb{P}^1$. Together with Theorem \ref{theorem:main_theorem_Preiss}, the above results also prove that $\mathbb{P}^n$ is a metric space for which two bi-Lipschitz equivalent metrics give different solutions to the density problem, and, to our knowledge, this is the first instance where such a phenomenon has ever been observed. We also mention that Theorem \ref{th:failurepreiss} shows one of the reasons why it is so difficult to prove rectifiability criteria in the spirit of Theorem \ref{preiss}: the class of measures for which the density exists \emph{depends on the metric}, so all the arguments must heavily rely on the shape of the ball. 

\vv
Let us discuss the strategy of the proof of Theorem \ref{theorem:main_theorem_Preiss}, that can be divided into two main steps. The first is to prove the Marstrand-Mattila rectifiability criterion in $\mathbb{P}^n$, which holds for general homogeous and translation-invariant metrics on $\mathbb P^n$, and links the local structure of a measure to its global regularity properties.

\begin{theorem}\label{thm:MMconormale:intro}
Let $\mathfrak d$ be a homogeneous metric on $\mathbb P^n$ which is invariant under translations, see \eqref{eq:homogeous_metric} and \eqref{eq:transl_inv_d}. Let $\phi$ be a Radon measure on $(\mathbb{P}^{n}, \mathfrak d)$ such that:
\begin{itemize}
    \item[(i)] There exists $h\in\{0,\ldots,n+2\}$ such that for $\phi$-almost every $x\in \mathbb{P}^n$ we have $$0<
    \liminf_{r\to 0}\frac{\phi(B_{\mathfrak d}(x,r))}{r^{h}}\leq\limsup_{r\to 0}\frac{\phi(B_{\mathfrak d}(x,r))}{r^{h}},
    <\infty,$$
    where $B_{\mathfrak d}(x,r)\coloneqq \{y\in \mathbb P^n:\mathfrak d(x,y)\leq r\}$.
    \item[(ii)] For $\phi$-almost every $x\in \mathbb{P}^n$  the elements of $\Tan_h(\phi,x)$ contained in the family of $h$-dimensional flat measures $\mathfrak{M}(h)$, namely the family of Haar measures relative to $h$-dimensional homogeneous subgroups of $\mathbb{P}^n$.
\end{itemize}

Then there are countably many $h$-dimensional homogeneous subgroups $W_i$ of $\mathbb{P}^n$, compact sets $K_i\subset   W_i$, and differentiable Lipschitz maps $f_i\colon K_i\to \mathbb{P}^n$ such that
\[
    \phi\Bigl(\mathbb{P}^n\setminus \bigcup_{i\in\N} f_i(K_i)\Bigr)=0.
\]
\end{theorem}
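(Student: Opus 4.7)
The plan is to adapt the Euclidean Marstrand--Mattila strategy (see e.g.\ \cite[Section 5]{DeLellis2008RectifiableMeasures}) to the parabolic setting, crucially exploiting the homogeneity and translation invariance of $\mathfrak d$. First, by a standard reduction using inner regularity together with Egorov's theorem, we restrict $\phi$ to a compact set $E \subset \mathbb{P}^n$ on which the upper and lower $h$-densities are uniformly pinched between constants $0 < \theta_* \leq \theta^* < \infty$, and on which the flatness of tangents holds uniformly: for every $\varepsilon > 0$ there exists $r_0 > 0$ such that, for every $x \in E$ and every $r \leq r_0$, the rescaled and renormalised measure $\phi_{x,r}$ is $\varepsilon$-close, in the usual localised weak-$\ast$ metric, to the Haar measure of \emph{some} $h$-dimensional homogeneous subgroup. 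Since the family of such subgroups is a compact subset of a finite-dimensional Grassmannian-type parameter space, a further Egorov-type decomposition partitions $E$ into countably many closed pieces $E_j$ on each of which the selected tangent subgroups lie in an arbitrarily small neighbourhood of a fixed subgroup $W_j$.

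The second step is to upgrade the pointwise flatness of tangents to a uniform quantitative slab condition: for every $\eta > 0$ there exists $r_1 = r_1(\eta,j) > 0$ such that for every $x \in E_j$ and every $r \leq r_1$, the support of $\phi$ in $B_{\mathfrak d}(x,r)$ lies within $\mathfrak d$-distance $\eta r$ from $x \cdot W_j$. I would prove this by contradiction through a moving-basepoint blow-up: if the condition fails, one extracts $x_k \in E_j$ and $r_k \to 0$ whose blow-ups $\phi_{x_k,r_k}$ converge, up to a subsequence, to a nontrivial tangent-type measure that by hypothesis must be flat over an $h$-subgroup close to $W_j$, contradicting the failure of the slab condition. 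Here the upper density controls the total mass and rules out mass concentrating far from the subgroup, while the lower density forbids the limit from being identically zero.

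From the uniform slab condition I would derive a one-sided cone condition on each $E_j$: there exist constants $c_j > 0$ and $\rho_j > 0$ such that for all $x \in E_j$ and all $y \in E_j$ with $\mathfrak d(x,y) \leq \rho_j$, the point $x^{-1} \cdot y$ lies in a homogeneous cone around $W_j$ of aperture $c_j$. By the intrinsic Lipschitz-graph machinery available in the parabolic setting (cf.\ \cite{MatParRect}, and, for the Carnot-group analogue, \cite{MerloMM}), such a cone condition forces $E_j$ to sit inside a parabolic Lipschitz graph over a compact set $K_j \subset W_j$; the resulting parametrisation $f_j \colon K_j \to \mathbb{P}^n$ is automatically Lipschitz, and the differentiability required by the conclusion follows from the flatness of tangent measures combined with a Pansu-type differentiation. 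A countable union of these pieces then covers $\phi$-almost all of $\mathbb{P}^n$.

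The main obstacle I anticipate is precisely the moving-basepoint blow-up of the second paragraph. Tangent measures are defined at a \emph{single} basepoint, whereas the contradiction argument requires one to vary $x_k$ and $r_k$ simultaneously. Ensuring that the limit of $\phi_{x_k,r_k}$ is still a tangent measure at the accumulation point $x_\infty \in E_j$, and that the limiting subgroup still belongs to the stratum of the Grassmannian picked out by the Egorov decomposition, is delicate: translation invariance of $\mathfrak d$ and homogeneity of the dilations make this plausible, but the Grassmannian of $h$-dimensional homogeneous subgroups in $\mathbb{P}^n$ is stratified in such a way that ``close'' subgroups can have qualitatively different metric geometry (e.g.\ different proportions of the vertical direction), and one must rule out that the limit degenerates across strata. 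Once this closure property is in place, the rest of the argument is a by-now classical unrolling of quantitative flatness into a Lipschitz graph.
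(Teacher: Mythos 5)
Your argument has a genuine gap at its very first reduction. Hypothesis (ii) only says that every element of $\Tan_h(\phi,x)$ is a flat measure; it does \emph{not} say that the approximating subgroup is the same, or approximately the same, at different scales. At a single point the blow-ups $r^{-h}T_{x,r}\phi$ may be close to $x+V_{x,r}$ for a plane $V_{x,r}$ that rotates as $r\to 0$ (the exponential-spiral phenomenon the paper explicitly warns about), so $\Tan_h(\phi,x)$ can contain Haar measures of many different subgroups simultaneously. Hence the Egorov-type decomposition into pieces $E_j$ on which ``the selected tangent subgroups lie in an arbitrarily small neighbourhood of a fixed $W_j$'' is not available, and your uniform slab condition with a \emph{fixed} $W_j$, together with the two-point cone condition you deduce from it, does not follow from (i)--(ii). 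What the hypotheses do give is only that inside $B(x,r)$ the support is $\eta r$-close to $x+V$ for \emph{some} $V=V_{x,r}$ depending on both the point and the scale; a cone condition whose axis changes with the scale is not enough to invoke the intrinsic Lipschitz-graph machinery. In effect your proposal proves the much easier criterion in which an approximate tangent plane is assumed, not the Marstrand--Mattila criterion.

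This is precisely the difficulty the paper's proof is built to overcome. The only scale-invariant datum that can be fixed on a piece is the stratification vector of the tangent planes (Theorem \ref{struct:strat}); after that, the argument does not attempt to pin down a single plane but runs a Federer-type projection scheme: Proposition \ref{prop::4.4(4)} converts closeness to \emph{some} flat measure at a given scale into almost-uniform density estimates along that (location- and scale-dependent) plane, and Proposition \ref{prop::5.2} performs a stopping-time construction over cylinders $T(u,s)$ relative to one fixed reference plane $W$, with the planes entering the set $Z$ allowed to vary, in order to build a Lipschitz section of the projection $P_W$ whose image carries positive $\phi$-measure. Only afterwards are differentiability and the a.e.\ uniqueness of the tangent plane recovered, via locality of tangents (Proposition \ref{tuttitg}) and Theorem \ref{mm0.0}. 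The moving-basepoint compactness issue you flag is real but secondary; the missing idea is how to produce a Lipschitz graph when the approximating plane is permitted to change at every location and every scale.
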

\vv

With the above theorem in hand, in Proposition \ref{preiss1} we prove Theorem \ref{theorem:main_theorem_Preiss} in the case $n=1$.
 Theorem \ref{thm:MMconormale:intro} tells us that if a measure has dimension $h$, which is item (i), and it is well approximated by planes at small scales, which is item (ii), then it essentially is the surface measure of a differentiable Lipschitz graph of dimension $h$. We shall remark, however, that in the statement of Theorem \ref{thm:MMconormale:intro} we allow the plane which approximates the measure to possibly change at different scales, in a similar fashion to what happens at the centre of an exponential spiral. Hence, to prove rectifiability with such weak hypotheses is significantly harder than in presence of a unique blowup, which is commonly referred to as the characterisation in terms of approximate tangent plane (see \cite[Chapter 3]{Federer1996GeometricTheory}). 
 
 The first step to achieve Theorem \ref{thm:MMconormale:intro} is to show that items (i) and (ii) imply that $\mathbb{P}^n$ can be covered $\phi$-almost all by countably many Lipschitz images of compact subsets of an $h$-dimensional homogeneous subgroup of $\mathbb{P}^n$. To do this, we use techniques analogous to those that Federer developed to obtain his celebrated projection theorem (see \cite{Federer_TAMS_1947}). In order to prove that the Lipschitz graphs $\Gamma_i$ previously constructed are differentiable, we show that the locality of tangent measures given by Proposition \ref{tuttitg} yields that the tangents relative to the natural surface measures of $\Gamma_i$ are flat almost everywhere. The proof that such blowup is unique is a (much) more delicate version of the uniqueness of the differential (see Proposition \ref{mm0.0}). 
 
\textbf{If not specified, throughout the rest of this introductory section we always understand that $\mathbb{P}^n$ is endowed with the Koranyi metric.}
 
 The second step to prove Theorem \ref{theorem:main_theorem_Preiss} also in the case $n>1$ can be summarized with the following crucial result, which shows that a measure in the parabolic space with $1$-codimensional density has flat tangents almost everywhere.
 
 \begin{theorem}\label{preissprabolicointrovero}
     Suppose that $\phi$ is a Radon measure on $\mathbb P^n$ such that 
       \begin{equation}
          0<\lim_{r\to 0}\frac{\phi(B(x,r))}{r^{n+1}}<\infty,\qquad\text{for $\phi$-almost every $x\in\mathbb{P}^n$. }
 \label{eq:BPdensity}
       \end{equation}
            Then item (ii) in Theorem \ref{thm:MMconormale:intro} holds true. 
 \end{theorem}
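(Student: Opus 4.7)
The strategy I would follow mirrors the classical two-step division in Preiss's proof in $\R^n$. First, upgrade the pointwise density hypothesis \eqref{eq:BPdensity} to the statement that every tangent measure $\mu\in\Tan_{n+1}(\phi,x)$ is $(n+1)$-\emph{uniform} at $\phi$-a.e.\ point, that is $\mu(B(y,r))=c\,r^{n+1}$ for every $y\in\supp\mu$ and every $r>0$, with $c>0$ depending only on $x$. Second, invoke the classification of $1$-codimensional uniform measures on $\mathbb{P}^n$ (announced in the abstract and established earlier in the paper) to conclude that any such $\mu$ is a multiple of the Haar measure of an $(n+1)$-dimensional homogeneous subgroup of $\mathbb P^n$, hence belongs to $\mathfrak{M}(n+1)$ up to a constant.

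For the first step I would fix a $\phi$-typical point $x\in\mathbb{P}^n$ at which $\theta(x):=\lim_{r\to 0}\phi(B(x,r))/r^{n+1}$ exists in $(0,\infty)$. Writing $\mu=\lim_i r_i^{-(n+1)}T_{x,r_i\#}\phi$ for some sequence $r_i\to 0$, where $T_{x,r}$ denotes the composition of left translation by $x^{-1}$ with the parabolic dilation by $1/r$, the classical density-symmetry lemma of Preiss (see, e.g., the proof in \cite[Chapter 14]{Mattila1995GeometrySpaces}) transfers the density to every support point of the blowup: for $\phi$-a.e.\ $x$ and every $\mu\in\Tan_{n+1}(\phi,x)$ one has
\[
\mu(B(y,r))=\theta(x)\,r^{n+1}\qquad\text{for all }y\in\supp\mu\text{ and all }r>0.
\]
The argument is purely metric and only uses the scaling of Koranyi balls under parabolic dilations, so it carries over to $\mathbb{P}^n$ with no real modification.

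With the first step in hand, the second is immediate from the paper's structural result on $1$-codimensional uniform measures, which identifies every such $\mu$ with a constant multiple of the Haar measure of a vertical hyperplane subgroup of $\mathbb{P}^n$. This is exactly the content of item (ii) in Theorem \ref{thm:MMconormale:intro} and concludes the proof.

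The main obstacle is concentrated entirely in the classification used in the second step, the parabolic analogue of the Kowalski-Preiss theorem on $1$-codimensional uniform measures. Its sensitivity to the choice of metric is made dramatic by Theorem \ref{th:failurepreiss}: a bi-Lipschitz change of norm already destroys flatness of tangents, so the classification must exploit the specific geometry of Koranyi balls, presumably via moment-type asymptotic expansions of $\mu(B(x,r))$ as $r\to\infty$ with $x\in\supp\mu$, in the spirit of Preiss's original ambient expansion. The first step is comparatively routine, but one must still verify carefully that the density-symmetry lemma survives in the parabolic Lie-group setting with its anisotropic dilations and the non-smoothness of $\partial B(x,r)$.
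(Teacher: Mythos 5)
Your first step is fine and is exactly Proposition \ref{propup} of the paper: at a $\phi$-typical point every element of $\Tan_{n+1}(\phi,x)$ lies in $\Theta^{n+1}(\phi,x)\,\mathcal{U}(n+1)$. The gap is in your second step. There is no classification, in this paper or anywhere, asserting that every $(n+1)$-uniform measure on $(\mathbb{P}^n,d)$ is a multiple of the Haar measure of a homogeneous hyperplane: Theorem \ref{classificazionemisureunif} explicitly leaves open the non-flat Kowalski--Preiss-type cone $c\,\mathcal{H}^3\llcorner\{x_1^2+x_2^2+x_3^2=x_4^2\}\otimes\mathcal{L}^{n-4}\otimes\mathcal{L}^1$ among uniform measures supported on quadrics, and for \emph{degenerate} uniform measures (those with $\mathcal{Q}(s)\to 0$) the paper obtains no support classification at all, only the quantitative estimate \eqref{eq:prop47_statement} on their tangents at infinity. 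So ``uniform $\Rightarrow$ flat'' is false (it already fails in $\R^n$, where the KP cone is uniform and non-flat), and invoking it makes your second step collapse; knowing that tangents are uniform is, as the introduction stresses, a priori insufficient.

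What the paper actually does, and what your proposal is missing, is the disconnection-plus-connectivity mechanism. One needs: (a) Corollary \ref{existenceflatmeasure}, i.e.\ that $\Tan_{n+1}(\phi,x)\cap\mathfrak{M}(n+1)\neq\emptyset$ $\phi$-a.e., which comes from Marstrand's theorem (supports of uniform measures are analytic varieties) together with Proposition \ref{tgverticalseregolare} and the fact that tangents of tangents are tangents (Proposition \ref{tuttitg}); (b) property (\textbf{P}): a weak* continuous functional $\mathscr{F}$ (defined in \eqref{eq:functional_F_intro}) and an $\varepsilon>0$ such that any $(n+1)$-uniform $\mu$ with $\mathscr{F}(\nu)\leq\varepsilon$ for some $\nu\in\Tan_{n+1}(\mu,\infty)$ must be flat --- this is where all the hard work on moments, quadrics, the exclusion of $\tau\neq 0$, Proposition \ref{charunifquadrics}, and Proposition \ref{disconnectdegenerated} enters; and (c) the continuity/connectedness argument of Theorem \ref{th:disconnection}, which follows the scale parameter $r\mapsto\mathscr{F}(r^{-(n+1)}T_{x,r}\phi)$ between a scale where the rescaling is close to a flat tangent and one where it is close to a putative non-flat tangent, and derives a contradiction with (\textbf{P}) via an intermediate-value/compactness argument on tangents at infinity. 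Without (a)--(c) the presence of non-flat uniform measures cannot be ruled out among the blowups, so your proposed proof does not go through as written.
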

 \vv

 To prove that (iii) implies (i) in Theorem \ref{theorem:main_theorem_Preiss} it is enough to combine Theorems \ref{thm:MMconormale:intro} and \ref{preissprabolicointrovero}, while the other implications follow almost immediately from Mattila's work \cite{MatParRect}. Theorem \ref{preissprabolicointrovero} is the technical core of the paper and the results employed in its proof have several by-products, as we detail below. The main idea of its proof is to exploit the weak* connectedness of $\Tan_{n+1}(\mu,x)$ and show the following result. 

Let us recall that a Radon measures $\nu$ on $\mathbb{P}^n$ is called $(n+1)$-\textit{uniform} if $\nu(B(x,r))=r^{n+1}$ for any $x\in\supp(\nu)$ and any $r>0$.
	\begin{theorem}\label{th:disconnection:intro}
    Let $\phi$ be a Radon measure on $\mathbb P^n$ such that
    $$0<\lim_{r\to 0}\frac{\phi(B(x,r))}{r^{n+1}}<\infty\qquad  \text{for $\phi$-almost every $x\in\mathbb{P}^n$}.$$
    Suppose further that
    \begin{itemize}
        \item[(\textbf{P})] there exist a weak* continuous functional $\mathscr{F}$ on the space of Radon measures and a constant $\varepsilon>0$ depending only on $n$ such that, if $\mu$ is an $(n+1)$-uniform measure on $\mathbb P^n$ and $\mathscr{F}(\nu)\leq\varepsilon$
    for some tangent at infinity $\nu$ of $\mu$ (see Definition \ref{def:blowdowns}), then $\mu$ is flat.
    \end{itemize}
    Then item (ii) in Theorem \ref{thm:MMconormale:intro} holds true. 
\end{theorem}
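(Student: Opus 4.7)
The goal is to establish item (ii) of Theorem \ref{thm:MMconormale:intro}: for $\phi$-almost every $x\in\mathbb{P}^n$ every tangent measure $\mu\in\Tan_{n+1}(\phi,x)$ is flat. In view of (\textbf{P}), this reduces to exhibiting, for any such uniform $\mu$, a tangent at infinity $\nu\in\Tan_{\infty}(\mu)$ with $\mathscr{F}(\nu)\le\varepsilon$, whence (\textbf{P}) yields the flatness of $\mu$.

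First I would collect the consequences of the density hypothesis on $\phi$ that follow from the parabolic tangent measure theory developed earlier in the paper. At $\phi$-almost every $x$: (i) every $\mu\in\Tan_{n+1}(\phi,x)$ is, up to a positive scalar, an $(n+1)$-uniform measure on $\mathbb{P}^n$; (ii) once normalised, $\Tan_{n+1}(\phi,x)$ is weak*-compact and weak*-connected; (iii) the set is closed under blow-downs, i.e.\ $\Tan_{\infty}(\mu)\subseteq\Tan_{n+1}(\phi,x)$ for every $\mu\in\Tan_{n+1}(\phi,x)$, and each element of $\Tan_{\infty}(\mu)$ is an $(n+1)$-uniform parabolic cone.

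The central step is a disconnection argument powered by (\textbf{P}). Consider the relatively open subsets
\[
\mathcal A:=\{\rho\in\Tan_{n+1}(\phi,x):\mathscr{F}(\rho)<\varepsilon\},\qquad \mathcal B:=\{\rho\in\Tan_{n+1}(\phi,x):\mathscr{F}(\rho)>\varepsilon\},
\]
both open by the weak* continuity of $\mathscr{F}$. If some $\mu_0\in\Tan_{n+1}(\phi,x)$ were non-flat, the contrapositive of (\textbf{P}) together with (iii) would force every $\nu\in\Tan_{\infty}(\mu_0)$ into $\mathcal B$, so $\mathcal B\neq\varnothing$. I would then produce, via a Marstrand-type argument in $\mathbb{P}^n$ that extracts a near-flat tangent from the $(n+1)$-density of $\phi$, an element $\rho^*\in\Tan_{n+1}(\phi,x)$ whose support is so well-aligned with some $(n+1)$-dimensional homogeneous subgroup that weak* continuity of $\mathscr{F}$, combined with its uniform smallness on the compact family $\mathfrak{M}(n+1)$ of flat measures, yields $\mathscr{F}(\rho^*)<\varepsilon$, i.e.\ $\mathcal A\neq\varnothing$. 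Weak*-connectedness of $\Tan_{n+1}(\phi,x)$ then forbids $\mathcal A$ and $\mathcal B$ from covering the set, producing a $\rho_\varepsilon$ with $\mathscr{F}(\rho_\varepsilon)=\varepsilon$. A further application of (\textbf{P}) along a weak*-convergent sequence of blow-downs, combined with the weak*-closedness of $\mathfrak{M}(n+1)$ and the blow-down closure (iii), forces $\rho_\varepsilon$ to be flat and propagates flatness through $\Tan_{n+1}(\phi,x)$ via connectedness, contradicting the existence of $\mu_0$.

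The hardest step, and the main obstacle, is constructing the initial element $\rho^*\in\mathcal A$. Hypothesis (\textbf{P}) only provides a one-sided gap, and the existence of even a single tangent with small $\mathscr{F}$ must be drawn directly from the density condition. This is where the specific geometry of the $(n+1)$-dimensional homogeneous subgroups of $\mathbb{P}^n$, all of which contain the vertical direction, enters in an essential way; I expect the construction to rely on a parabolic Marstrand/Federer-type projection technology analogous in spirit to the one used in the proof of Theorem \ref{thm:MMconormale:intro}.
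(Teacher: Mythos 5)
Your overall strategy (use $\mathscr{F}$ to separate flat from non-flat uniform tangents, exploit connectedness of $\Tan_{n+1}(\phi,x)$ and the fact that blow-downs of tangents are again tangents) is the same in spirit as the paper's, but the central step as you set it up does not go through. From abstract weak* connectedness you only extract an element $\rho_\varepsilon\in\Tan_{n+1}(\phi,x)$ with $\mathscr{F}(\rho_\varepsilon)=\varepsilon$, and hypothesis (\textbf{P}) cannot be applied to it: (\textbf{P}) requires $\mathscr{F}(\nu)\leq\varepsilon$ for some \emph{tangent at infinity} $\nu$ of $\rho_\varepsilon$, and the single value $\mathscr{F}(\rho_\varepsilon)=\varepsilon$ gives no control on $\mathscr{F}$ along the blow-downs of $\rho_\varepsilon$. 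This is precisely where the paper's proof (Theorem \ref{th:disconnection}) is more careful: it works along the one-parameter family $f(r)=\mathscr{F}\bigl(r^{-(n+1)}T_{x,r}\phi\bigr)$, continuous in $r$; having a flat tangent along $r_k\to 0$ (so $f(r_k)<\varepsilon$) and a non-flat tangent $\zeta$ whose blow-down $\chi$ has $\mathscr{F}(\chi)>\varepsilon$ along $s_k\to 0$, it chooses via the intermediate value theorem a scale $\sigma_k\in[s_k,r_k]$ with $f(\sigma_k)=\varepsilon$ \emph{and} $f\leq\varepsilon$ on all of $[\sigma_k,r_k]$, and then shows $r_k/\sigma_k\to\infty$. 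This ordering of scales is exactly what guarantees that the limit $\xi$ of $\sigma_k^{-(n+1)}T_{x,\sigma_k}\phi$ satisfies $\mathscr{F}\bigl(R^{-(n+1)}T_{0,R}\xi\bigr)\leq\varepsilon$ for every $R\geq 1$, hence every $\psi\in\Tan_{n+1}(\xi,\infty)$ has $\mathscr{F}(\psi)\leq\varepsilon$; only then does (\textbf{P}) force $\xi$ to be flat, contradicting $\mathscr{F}(\xi)=\varepsilon>0$. Your connectedness argument discards exactly the scale information that makes (\textbf{P}) usable (and, incidentally, the weak* compactness and connectedness of the normalized tangent set are themselves assertions you would need to justify; the paper avoids them by arguing directly with $f$ and the intermediate value theorem).

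Second, the step you single out as the "main obstacle" --- producing one tangent with $\mathscr{F}<\varepsilon$ --- needs no new Federer-type projection technology: at $\phi$-a.e.\ point the tangent set already contains a flat measure. This is Corollary \ref{existenceflatmeasure}, which follows from Marstrand's theorem for uniform measures (Theorem \ref{th.marstrand}: the support of a uniform measure is an analytic variety), Proposition \ref{tgverticalseregolare} (blow-ups of a smooth hypersurface at generic points are vertical hyperplanes), and the locality/iteration of tangents in Proposition \ref{tuttitg}; since $\mathscr{F}$ vanishes on flat measures, this yields $f(r_k)\to 0$ along a suitable sequence. So the half of your plan you leave open is available as a quoted ingredient of the paper, while the half you treat as routine --- the disconnection itself --- is where the actual work, namely the scale-selection argument above, lies.
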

\vv

It is easy to see that if a measure satisfies  \eqref{eq:BPdensity}, then its tangent measures are uniform at $\phi$-almost every point. This information is not sufficient to conclude a priori that \eqref{eq:BPdensity} implies the flatness of blowups. Indeed, it is very well known that there are uniform measures in $\R^n$ which are not flat (see for instance \cite{Kowalski1986Besicovitch-typeSubmanifolds} and \cite{nimer}) so the proof of Theorem \ref{preissprabolicointrovero} requires a finer understanding of the geometric structure of uniform measures.

Condition (\textbf{P}) should be interpreted as the property that the class of $1$-codimensional uniform measures can be split into flat and non-flat measures, and those two families are weak* disconnected. Moreover, the fact that $\Tan_{n+1}(\phi,x)$ is weak* connected for $\phi$-almost every $x\in\mathbb P^n$ implies that either the measures in $\Tan_{n+1}(\phi,x)$ are flat or $\Tan_{n+1}(\phi,x)$ is contained in the set of non-flat uniform measures $\phi$-almost everywhere. However, since uniform measures are surface measures of $1$-codimensional analytic manifolds in $\R^{n+1}$ and thanks to the fact that the tangent measures of a tangent measure are themselves tangent measures by Proposition \ref{tuttitg}, we infer that $\Tan_{n+1}(\phi,x)$ contains a flat measure for $\phi$-almost every $x\in \mathbb P^n$. The above argument shows that item (ii) of Theorem \ref{thm:MMconormale:intro} holds.

Given the close relation of the spaces $\mathbb{H}^n$ and $\mathbb{P}^n$, one could expect that either the strategy to prove (\textbf{P}) is quite similar in the two cases or that the property in the parabolic space can be derived from the proof in Heisenberg groups. However, this is far from being true and the proof of (\textbf{P}) turned out to be extremely technical.

The first step in order to deduce (\textbf{P}) is to get some weak geometric information on the support of uniform measures. We follow the strategies of \cite{Preiss1987GeometryDensities} and \cite{MerloG1cod},  and study the so-called \textit{moments} (see Section \ref{section:moments_expansion}). 
The computations in \cite{MerloG1cod} which adapt Preiss's moments to the Heisenberg groups and analyze their Taylor expansion can be modified and adapted to get the following result.

\begin{theorem}\label{theorem:1_codim_unif_measures}
    Let $\nu$ be a $1$-codimensional uniform measure on $\mathbb{P}^n$. Then either there are $b\in\R^n$, $\tau\in\R$, and a symmetric non-zero matrix $\mathcal Q\in \R^{n\times n}$ such that
   \begin{equation}
        \supp(\mu)\subseteq \bigl\{x\in\mathbb{P}^n:\langle x_H,\mathcal Q[x_H]+b\rangle+\tau x_T=0\bigr\},
        \label{supportquadratiic}
   \end{equation}
   or there exists $C(n+1)>0$ such that
   \begin{equation}
  \begin{split}
   0=\lim_{s\to 0}&\frac{8 s^{\frac{3}{2}+\frac{n+1}{4}}}{C(n+1)}\int \lvert z_H\rvert^4 z_H\otimes z_H
					e^{-s\lVert z\rVert^4}\,d\mu(z)\\
			&\qquad-\frac{s^{\frac{1}{2}+\frac{n+1}{4}}}{C(n+1)}\int (4z_H\otimes z_H+2\lvert z_H\rvert^2\textrm{id}_{n}) e^{-s\lVert z\rVert^4}\, d\mu(z).	\label{eq:degeneratemeasures}
  \end{split}
   \end{equation}
   
\end{theorem}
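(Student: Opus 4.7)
I would follow the moment-method approach pioneered by Preiss in \cite{Preiss1987GeometryDensities} and adapted to the Heisenberg setting in \cite{MerloG1cod}, specialised to the Koranyi geometry where $\|z\|^4 = |z_H|^4 + z_T^2$. The starting point is the identity
\[
I(s,x) := \int_{\mathbb{P}^n} e^{-s\|z-x\|^4}\, d\nu(z) \;=\; C(n+1)\, s^{-(n+1)/4}, \qquad C(n+1)=\Gamma\bigl(1+\tfrac{n+1}{4}\bigr),
\]
valid for every $x\in\supp(\nu)$ and every $s>0$. It follows from the $(n+1)$-uniformity $\nu(B(x,r))=r^{n+1}$ by a layer-cake computation: the level set $\{z:e^{-s\|z-x\|^4}>t\}$ is exactly $B(x,(-\log t/s)^{1/4})$, and the substitution $u=-\log t$ produces the stated $\Gamma$-integral. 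The crucial point is that the right-hand side is independent of $x$, so every partial derivative of $I(s,\cdot)$ vanishes at each $x\in\supp(\nu)$.

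Translating so that $0\in\supp(\nu)$ and differentiating, I would use the formulas
\[
\partial_{x_i}\|z-x\|^4\big|_{x=0} = -4|z_H|^2 z_i, \qquad \partial_{x_i x_j}\|z-x\|^4\big|_{x=0} = 8\, z_i z_j + 4|z_H|^2 \delta_{ij},
\]
together with the $x_T$-derivatives $\partial_{x_T}\|z-x\|^4|_{x=0}=-2 z_T$ and $\partial_{x_T x_T}\|z-x\|^4|_{x=0}=2$. The first-order conditions $\partial_{x_T} I(s,0)=0$ and $\partial_{x_i} I(s,0)=0$ yield the scalar moment identities
\[
\int z_T\, e^{-s\|z\|^4}\,d\nu(z) = 0, \qquad \int |z_H|^2 z_H\, e^{-s\|z\|^4}\,d\nu(z)=0,
\]
for every $s>0$, and the second-order condition $\partial_{x_i x_j} I(s,0) = 0$, after combining the pure second-derivative and the first-derivative-squared pieces of $\partial_{x_ix_j} e^{-s\|z-x\|^4}$, yields the matrix identity
\[
\int \bigl(2\, z_H\otimes z_H + |z_H|^2 I_n\bigr)\, e^{-s\|z\|^4}\,d\nu(z) \;=\; 4s \int |z_H|^4\, z_H\otimes z_H\, e^{-s\|z\|^4}\,d\nu(z).
\]
Multiplying by $2 s^{1/2+(n+1)/4}/C(n+1)$ transforms this precisely into equation \eqref{eq:degeneratemeasures}, which therefore holds identically in $s>0$; in particular its limit as $s\to 0$ is zero, giving the second alternative of the theorem.

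The dichotomy in the statement then comes from asking what further information can be extracted from this family of identities, together with the analogous ones coming from mixed $x_H,x_T$-derivatives and higher-order Taylor coefficients of $s\mapsto I(s,x)$ at $s=0$. If the renormalised second-moment tensors manage, in the limit $s\to 0$, to isolate a non-trivial symmetric form $\mathcal Q\in\R^{n\times n}$, a vector $b\in\R^n$, and a scalar $\tau\in\R$ such that the polynomial $\langle z_H,\mathcal Q[z_H]+b\rangle+\tau z_T$ annihilates $\nu$ at every point of its support, then $\supp(\nu)$ must lie in the quadric \eqref{supportquadratiic}, and we are in the first alternative. Otherwise no such non-degenerate quadratic obstruction survives and only the weaker integral identity \eqref{eq:degeneratemeasures} remains. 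Producing this algebraic splitting cleanly is the main obstacle: one has to track the parabolic anisotropy ($z_H$ is homogeneous of degree $1$ while $z_T$ is homogeneous of degree $2$) throughout a long and combinatorially delicate Taylor bookkeeping in the spirit of \cite{MerloG1cod}, but with the Heisenberg vertical quadratic form replaced by the genuinely different parabolic time coordinate, so that the classification of leading-order terms has to be redone from scratch.
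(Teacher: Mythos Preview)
There is a genuine gap at the very first step. The identity
\[
I(s,x)=\int e^{-s\lVert z-x\rVert^4}\,d\nu(z)=C(n+1)\,s^{-(n+1)/4}
\]
holds only for $x\in\supp(\nu)$, which is a set of parabolic codimension $1$. You then assert that ``every partial derivative of $I(s,\cdot)$ vanishes at each $x\in\supp(\nu)$'', but constancy of a smooth function on a lower–dimensional subset says nothing about its ambient partial derivatives. Concretely, take the flat measure $\nu=\mathcal{C}^{n+1}\llcorner V$ with $V=e_1^\perp\oplus\R e_{n+1}$. On $V$ one has $z_1=0$, so
\[
\partial_{x_1x_1}I(s,0)=\int\bigl[-s(8z_1^2+4\lvert z_H\rvert^2)+16s^2\lvert z_H\rvert^4z_1^2\bigr]e^{-s\lVert z\rVert^4}\,d\nu(z)=-4s\int \lvert z_H\rvert^2e^{-s\lVert z\rVert^4}\,d\nu(z)<0,
\]
which falsifies your second–order claim. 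The same example shows that the matrix $\mathcal Q(s)$ defined in the paper does \emph{not} vanish identically: for flat measures one computes $\mathcal Q(s)_{11}\neq 0$ for all $s>0$. In particular your conclusion that \eqref{eq:degeneratemeasures} holds identically in $s$ for every uniform measure would force the second alternative to hold \emph{always}, which is false and would destroy the content of the dichotomy.

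What the paper actually does is to exploit the relation $I(s,u)=I(s,0)$ for $u\in\supp(\nu)$ as an \emph{equation in $u$}, not as a statement about derivatives. Writing $e^{-s\lVert z-u\rVert^4}=e^{-s\lVert z\rVert^4-s\lVert u\rVert^4+2sV(u,z)}$ and expanding the exponential of $2sV(u,z)$ produces the moment expansion (Proposition~\ref{expanzione}), which after isolating the low–order terms yields (Proposition~\ref{prop10})
\[
\bigl\lvert\langle b(s),u_H\rangle+\langle\mathcal Q(s)u_H,u_H\rangle+\mathcal T(s)u_T\bigr\rvert\leq s^{1/4}\lVert u\rVert^3 B'(s^{1/4}\lVert u\rVert)
\]
for every $u\in\supp(\nu)$. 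One then shows $\mathcal Q(s)$ and $\mathcal T(s)$ are bounded (Proposition~\ref{bddcurve}); the dichotomy is precisely whether $\mathcal Q(s)$ admits a nonzero subsequential limit as $s\to 0$ (first alternative, Proposition~\ref{prop:notdeg}) or $\lim_{s\to0}\mathcal Q(s)=0$ (second alternative, which is exactly \eqref{eq:degeneratemeasures}). Your first derivative identities $\int z_Te^{-s\lVert z\rVert^4}d\nu=0$ and $\int\lvert z_H\rvert^2z_He^{-s\lVert z\rVert^4}d\nu=0$ are likewise unjustified in general; these are the quantities $\mathcal T(s)$ and $b(s)$, which the paper treats as nontrivial bounded curves.
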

\vv

Theorem \ref{theorem:1_codim_unif_measures} shows how far the geometry of the parabolic space is from that of $\mathbb R^n$ and $\mathbb H^n$. Indeed, the solutions of the Euclidean and Heisenberg density problems heavily rely on the rigidity which follows from the fact that, in those spaces, uniform measures are supported on the zero set of quadratic polynomials (see \cite{MerloG1cod} and \cite{Preiss1987GeometryDensities}). However, we have not been able to show any analogue of such result in $\mathbb{P}^n$.

This forced our approach to diverge significantly from those previously employed.  
As seen above, in order to prove Theorem \ref{preissprabolicointrovero} we just need to show that, inside the set of uniform measures, flat measures are weak* disconnected from the non-flat ones. Let $\mathscr{F}\colon \mathcal{M}\to\R^+$  be the functional 
 \begin{equation}\label{eq:functional_F_intro}
     \mathscr{F}(\nu)\coloneqq\inf_{u\in\mathbb{S}^{n-1}} 
        \int \lvert z_H\rvert^4\langle z_H,u\rangle^2
					e^{-\lVert z\rVert^4}\, d\nu(z).
 \end{equation}  
A relatively standard computation proves that  $\mathscr{F}$ disconnects $1$-codimensional uniform measures satisfying \eqref{eq:degeneratemeasures} from the flat ones (see Proposition \ref{disconnectdegenerated}). The most complex task is then to discuss the geometry of uniform measures supported on quadrics, which one can show are surface measures associated to the quadratic surfaces. Moreover, thanks to a hard-won Taylor expansion of the perimeter measure together with some delicate algebraic considerations (see Appendix \ref{TYLR} and Subsection \ref{sectneq0notexist} respectively) we also deduce that the  quadrics that can support a uniform measure must be vertically ruled. 
Because of this reduction and the coarea-type formula in Proposition \ref{prop:reprez}, we obtain the following result.
 
 \begin{theorem}\label{classificazionemisureunif} Let $\mu$ be an $(n+1)$-uniform measure on $\mathbb P^n$. If $\mu$ is supported on a quadric, then either $\nu$ is flat or up to isometries there exists $c=c(n)>0$ such that
$$\mu=c\mathcal{H}^3\llcorner \{x_1^2+x^2_2+x_3^2=x^4\}\otimes \mathcal{L}^{n-4}\llcorner\mathrm{span}\{e_4,\ldots,e_n\}\otimes \mathcal{L}^1\llcorner \mathrm{span}\{e_{n+1}\}.$$

If, on the other hand, $\mu$ is not supported on a quadratic surface, there exists a constant $\oldep{eps:1}=\oldep{eps:1}(n+1)>0$ such that
    \begin{equation}\nonumber
        \lambda^{\frac{3}{2}+\frac{n+1}{4}}\int \lvert z_H\rvert^4\langle z_H,u\rangle^2
					e^{-\lambda\lVert z\rVert^4}\, d\nu(z)>\oldep{eps:1}
    \end{equation}
    for any $\nu\in \Tan_{n+1}(\mu,\infty)$, any $\lambda>0$ and any $u\in\R^n$ with $\lvert u\rvert=1$.
 \end{theorem}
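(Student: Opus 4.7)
My starting point is the dichotomy provided by Theorem \ref{theorem:1_codim_unif_measures}: either $\mu$ is supported on a quadric of the form $\{\langle x_H,\mathcal Q[x_H]+b\rangle+\tau x_T=0\}$, or the vanishing identity \eqref{eq:degeneratemeasures} holds. The proof treats these two alternatives separately, and shows in the first case that only one non-flat model survives, while in the second case that the weak$^*$ continuous functional $\mathscr{F}$ of \eqref{eq:functional_F_intro} detects the non-flatness of every tangent at infinity.

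\textbf{Step 1: the quadric case.} Assuming $\supp(\mu)$ is contained in the zero set of $P(x)=\langle x_H,\mathcal Q[x_H]+b\rangle+\tau x_T$, the first task is to show that $\mu$ coincides (up to a multiplicative constant) with the perimeter measure of $\{P=0\}$, which reduces the problem to a classification of quadratic surfaces compatible with the uniformity condition. I would feed this representation into the hard-won Taylor expansion of the perimeter measure developed in Appendix \ref{TYLR}, expanding $\mu(B(x,r))=r^{n+1}$ around every $x\in\supp(\mu)$. Matching the coefficients of the resulting polynomial identity produces a system of algebraic relations on $(\mathcal Q,b,\tau)$; the delicate algebraic analysis performed in Subsection \ref{sectneq0notexist} (the case $\tau\neq 0$ cannot occur) then forces the quadric to be \emph{vertically ruled}, meaning it is invariant under translations along the vertical line $\mathcal V$. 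At this point the coarea-type representation formula of Proposition \ref{prop:reprez} lets me peel off the vertical factor $\mathcal L^1\llcorner\mathrm{span}\{e_{n+1}\}$ and thereby reduce to an $(n)$-uniform problem in the \emph{Euclidean} horizontal layer $\R^n$, where the zero set is a genuine Euclidean quadric. Invoking the classical Euclidean classification of uniform measures on quadrics (Kowalski–Preiss) singles out the cone $\{x_1^2+x_2^2+x_3^2=x_4^2\}$ (which in the parabolic homogeneity reads $x_1^2+x_2^2+x_3^2=x_4$ after rescaling) as the only non-flat possibility in the horizontal slice; splitting off the free horizontal directions $e_4,\ldots,e_n$ and multiplying by the vertical factor yields exactly the announced formula. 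The main obstacle here is the algebraic step that excludes the $\tau\ne 0$ case and all vertically ruled quadrics other than the Kowalski–Preiss cone extended to $\mathbb P^n$; everything else is bookkeeping via the coarea formula.

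\textbf{Step 2: the non-quadric case.} Now I exploit \eqref{eq:degeneratemeasures}. The parameter $s\to 0$ is, up to the parabolic homogeneity exponents, a scaling parameter: writing $\mu_\lambda=T_{0,\lambda}\mu$ and changing variables $z=\lambda z'$ (with $z_H\mapsto \lambda z'_H$, $z_T\mapsto \lambda^2 z'_T$, $\|z\|^4\mapsto \lambda^4\|z'\|^4$), the identity \eqref{eq:degeneratemeasures} translates into an integral identity on $\mu_\lambda$ with kernel $e^{-\|z'\|^4}$ whose scaling factors combine into exponents matching the prefactors $s^{3/2+(n+1)/4}$ and $s^{1/2+(n+1)/4}$. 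Therefore, passing to any tangent at infinity $\nu\in\Tan_{n+1}(\mu,\infty)$ via weak$^*$ convergence (the integrand decays as a Gaussian-type weight in $\|z\|$ and so is admissible against Radon measures of polynomial growth $r^{n+1}$), I obtain
\begin{equation}\nonumber
\int (4z_H\otimes z_H+2|z_H|^2\mathrm{id}_n)\,e^{-\lambda\|z\|^4}\,d\nu(z)
= 8\lambda\int|z_H|^4 z_H\otimes z_H\,e^{-\lambda\|z\|^4}\,d\nu(z)
\end{equation}
for every $\lambda>0$. Contracting this matrix identity against $u\otimes u$ for an arbitrary unit vector $u\in\R^n$ and using the elementary inequality $4\langle z_H,u\rangle^2+2|z_H|^2\geq 4|z_H|^2/n$ (after averaging over $u$) provides a strictly positive lower bound $c(n)$ for the left-hand integral, which by the matched identity transfers to the right-hand side; rescaling by $\lambda$ yields the announced lower bound with $\oldep{eps:1}$ depending only on $n+1$.

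\textbf{Where I expect the difficulty.} The Step 1 algebraic classification is the real technical heart: excluding the vertical parameter $\tau$ and showing that among vertically ruled quadrics only the rotationally symmetric cone is compatible with the constant-density identity requires balancing many terms of the perimeter Taylor expansion at once and is genuinely where the geometry of the parabolic metric plays out. Step 2 is more routine once the scaling dictionary between $s\to 0$ in \eqref{eq:degeneratemeasures} and blow-downs is set up correctly, but care must be taken that the integrability of the Gaussian-type weight against $\mu$ (guaranteed by $\mu(B(0,r))=r^{n+1}$) passes to the weak$^*$ limit $\nu$, so that the limiting identity, and hence the strict positivity, is legitimately inherited by every element of $\Tan_{n+1}(\mu,\infty)$.
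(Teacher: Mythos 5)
Your Step 1 follows essentially the paper's own route (surface-measure representation, exclusion of $\tau\neq 0$ via the expansion of Appendix \ref{TYLR} and the eigenvalue analysis of Subsection \ref{sectneq0notexist}, vertical invariance, the coarea representation of Proposition \ref{prop:reprez}, reduction to a Euclidean uniform measure in the horizontal slice, and the Kowalski--Preiss classification as in Proposition \ref{charunifquadrics}), so there is nothing to object to there beyond the minor remark that no ``parabolic rescaling'' of the cone equation is needed: the reduction produces a genuinely Euclidean uniform measure $\mathcal H^{n-1}_{\eu}\llcorner \pi_H(\supp\nu)$ in $\R^n$, to which the Euclidean classification applies verbatim.

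The gap is in Step 2. After passing \eqref{eq:degeneratemeasures} to a tangent at infinity you correctly recover the identity \eqref{eq:iddege}, and contracting against $u\otimes u$ reduces the claim to a \emph{uniform} positive lower bound on $\lambda^{\frac12+\frac{n+1}{4}}\int \lvert z_H\rvert^2 e^{-\lambda\lVert z\rVert^4}\,d\nu(z)$ over all $\nu\in\Tan_{n+1}(\mu,\infty)$, all degenerate $\mu$, and all $\lambda>0$. You assert this bound follows from the elementary pointwise inequality $4\langle z_H,u\rangle^2+2\lvert z_H\rvert^2\geq 2\lvert z_H\rvert^2$, but that inequality only performs the reduction; it does not produce the constant. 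After rescaling in $\lambda$ (which maps $\nu$ to another $(n+1)$-uniform measure), what you need is $\inf_{\eta\in\mathcal U(n+1)}\int\lvert z_H\rvert^2 e^{-\lVert z\rVert^4}\,d\eta>0$, and this is false as a statement about uniform measures in general parabolic spaces: for $n=1$ the measure $\mathcal C^2\llcorner\mathcal V$ makes the integral vanish, so some dimension-dependent input is unavoidable. The paper closes exactly this point in Proposition \ref{disconnectdegenerated} by a compactness--contradiction argument: a minimizing sequence of uniform measures has a weak* limit in $\mathcal U(n+1)$ by Proposition \ref{UComp} (with convergence of the Gaussian-weighted integrals guaranteed by Proposition \ref{polinomialconv}), the limit would satisfy $\int\lvert z_H\rvert^2 e^{-\lVert z\rVert^4}\,d\eta=0$, hence $\supp(\eta)\subseteq\mathcal V$, and Proposition \ref{propvert} then forces $h=2$, which is excluded since $h=n+1\geq 3$. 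Your write-up never invokes compactness of $\mathcal U(n+1)$ nor the classification of uniform measures supported on the vertical axis, and it is precisely there that the constant $\oldep{eps:1}(n+1)$ and the restriction $n\geq 2$ come from; as written, the claimed ``strictly positive lower bound $c(n)$'' is unproved.
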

 \vv

Theorem \ref{classificazionemisureunif} implies property (\textbf{P}), concluding the proof of Theorem \ref{preissprabolicointrovero} and hence of Theorem \ref{theorem:main_theorem_Preiss}. We also remark that we actually prove an analogue of the second part of Theorem \ref{classificazionemisureunif} for uniform measures of arbitrary codimension.

\vv
The above characterisation of uniform measures is the key tool to obtain the results outlined in the rest of the introduction, where we present some applications to the study of parabolic uniform rectifiability.

A Radon measure $\mu$ on $\mathbb P^n$ is said $(n+1)$-Ahlfors regular, or simply $(n+1)$-Ahlfors-regular, if there exists $C>0$ such that
\[
    C^{-1}r^{n+1}\leq \mu(B(x,r))\leq C r^{n+1}
\]
for all $x\in \supp(\mu)$ and $0<r<\diam (\supp(\mu)).$ 
As previously mentioned, Hofmann, Lewis, and Nystr\"om proposed a parabolic equivalent of David and Semmes' notion of uniform rectifiability of a set $\Sigma \subset \mathbb P^n$. They formulated it in terms of a Carleson condition on the measure $$d\nu(x,r)=\beta_{2,\sigma_\Sigma}(x,r)\,d\sigma_\Sigma(x)\,r^{-1}dr $$
on $\mathbb P^n\times \mathbb R_{>0}$, where $\sigma_\Sigma$ is the surface measure on $\Sigma$ and $\beta_{2,\sigma_\Sigma}$ are the (parabolic) $\beta_{2}$-numbers which quantify the flatness of ${\sigma_\Sigma}$.
A crucial role in the study of parabolic uniform rectifiability is played by \textit{regular parabolic Lipschitz functions}, namely maps $f\colon \mathbb P^{n-1}\to \mathbb R$ which are Lipschitz with respect to the usual $\ell^2$-norm in the space variables
and such that $D^{1/2}_Tf\in \BMO(\mathbb P^{n-1})$, where $D_T^{1/2}$ stands for the $1/2$-order derivative in the time variable and $\BMO(\mathbb P^{n-1})$ for the parabolic version of the space of functions with bounded mean oscillation.
Furthermore, Hofmann, Lewis, and Nystr\"om proved that a parabolic uniformly rectifiable set which is also Reifenberg flat (in a parabolic sense) has \textit{big pieces of regular parabolic Lipschitz graphs}. More recently it has been shown in \cite{BHHLN2} and \cite{BHHLN3} that a parabolic $(n+1)$-Ahlfors regular  $\Sigma$ is parabolic uniformly rectifiabile if and only if it admits a (bilateral) \textit{corona decomposition} with respect to regular parabolic Lipschitz graphs or, equivalently, if $\Sigma$ has \textit{big pieces squared} of regular parabolic Lispchitz graphs. For the precise definitions and a more in-depth discussion of these results we refer to the aforementioned papers.

 Several different characterizations of Euclidean uniform rectifiability are available in the literature. For instance, it is well known that an $n$-Ahlfors regular measure on $\mathbb R^{n+1}$ is uniformly rectifiable if and only if it satisfies the so-called \textit{bilateral weak geometric lemma} (BWGL) (see the monograph \cite{DavidSemmes}). However, it has been shown in \cite[Observation 4.19]{BHHLN3} that the BWGL in the context of the parabolic space does not imply parabolic uniform rectifiability.
As an application of our analysis of parabolic uniform measures, we prove that the parabolic \textit{weak constant density condition} (WCD) implies the BWGL (see Definitions \ref{def-WCD} and  \ref{def:BWGL} respectively). The precise definitions involve Jones' $\beta$-numbers and the lattice of dyadic cubes adapted to a regular measure so, in order not to make the introduction too lengthy, we choose to defer them until Section \ref{section:BWGL}.

\begin{theorem}\label{theorem:WCD_implies_BWGL_2}
    Let $\mu$ be an $(n+1)$-Ahlfors regular measure on $\mathbb P^n$, and assume that it satisfies the weak constant density condition. Then $\mu$ satisfies the bilateral weak geometric lemma.
\end{theorem}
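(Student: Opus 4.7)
The plan is to argue by contradiction through a compactness-and-blow-up scheme which reduces Theorem \ref{theorem:WCD_implies_BWGL_2} to ruling out non-flat $(n+1)$-uniform blow-up limits, a step we settle with the disconnection machinery behind Theorem \ref{th:disconnection:intro} and the classification of $1$-codimensional uniform measures in Theorem \ref{classificazionemisureunif}.

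Assume BWGL fails at some $\varepsilon>0$: there are balls $B(x_k,r_k)$ on which the Carleson constant of the family of cubes with bilateral parabolic $\beta$-number larger than $\varepsilon$ exceeds $k$. Rescaling $\mu$ at $(x_k,r_k)$ and passing to a weak$^*$ subsequential limit produces a non-trivial Radon measure $\mu_\infty$ which is still $(n+1)$-Ahlfors regular and still satisfies WCD with the same Carleson constant (both conditions are closed under weak$^*$ limits of normalised rescalings), yet whose BWGL Carleson constant inside $B(0,1)$ is infinite. Applying Fubini to this divergent integral yields a Borel set $E\subseteq\supp(\mu_\infty)$ of positive $\mu_\infty$-measure such that at every $x\in E$ there is a sequence of scales $\rho_j(x)\to 0$ along which $\supp(\mu_\infty)$ fails to be bilaterally $\varepsilon$-close to any $1$-codimensional homogeneous subgroup of $\mathbb P^n$.

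WCD for $\mu_\infty$, combined with a Borel–Cantelli argument on dyadic scales, forces $\Tan_{n+1}(\mu_\infty,x)$ to consist of $(n+1)$-uniform measures for $\mu_\infty$-a.e.\ $x$. In particular, at each $x\in E$ the tangents along the scales $\rho_j$ are \emph{non-flat} uniform measures. On the other hand, by Proposition \ref{tuttitg} the tangent measures of a tangent measure are themselves tangent measures of $\mu_\infty$ at $x$, and by Theorem \ref{classificazionemisureunif} every non-flat $(n+1)$-uniform measure is the surface measure of an analytic $1$-codimensional submanifold of $\R^{n+1}$, hence admits flat tangent measures at $\mathcal H^{n+1}$-a.e.\ point of its support. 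Thus $\Tan_{n+1}(\mu_\infty,x)$ also contains flat measures, for every $x\in E$.

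The contradiction then follows from the weak$^*$ connectedness of $\Tan_{n+1}(\mu_\infty,x)$ and the disconnection property $(\mathbf P)$ from Theorem \ref{th:disconnection:intro}: the weak$^*$ continuous functional $\mathscr F$ of \eqref{eq:functional_F_intro} vanishes on flat measures and is uniformly bounded below by some $\varepsilon_0=\varepsilon_0(n)>0$ on all non-flat $(n+1)$-uniform measures — by Proposition \ref{disconnectdegenerated} in the non-quadric case and by parabolic self-similarity of the Kowalski–Preiss model of Theorem \ref{classificazionemisureunif} in the quadric case — and hence cannot take both the value $0$ and values $\geq\varepsilon_0$ on a connected subset of Radon measures. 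The main obstacle is the quantitative propagation of WCD through the blow-up, that is, guaranteeing that the limit $\mu_\infty$ retains WCD with uniform constants and that $\Tan_{n+1}(\mu_\infty,x)$ is contained in the class of uniform measures $\mu_\infty$-almost everywhere without invoking pointwise density (which WCD does not grant). Once these quantitative closures are in place, the $\mathscr F$-disconnection, already the technical heart of the proof of Theorem \ref{preissprabolicointrovero}, closes the argument.
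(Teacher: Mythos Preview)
Your proposal has a genuine gap that you partly identify but do not resolve: the disconnection argument of Theorem \ref{th:disconnection} (and of Theorem \ref{th:disconnection:intro}) is proved \emph{under the assumption that $\phi$ has $(n+1)$-density}, and it is exactly this hypothesis that makes $\Tan_{n+1}(\phi,x)\subseteq \Theta^{n+1}(\phi,x)\,\mathcal U(n+1)$ for $\phi$-a.e.\ $x$ (Proposition \ref{propup}) and, crucially, makes the continuity argument with $\mathscr F$ along the ray $r\mapsto r^{-(n+1)}T_{x,r}\phi$ go through. WCD gives nothing of the sort: the Carleson packing of the bad set $G(C',\varepsilon)^c$ only implies that for $\mu_\infty$-a.e.\ $x$ the set of bad scales has finite $\frac{dr}{r}$-measure, which still allows sequences $r_j\to 0$ along which the rescaled measure is \emph{not} close to any uniform measure, so that $\Tan_{n+1}(\mu_\infty,x)$ may contain non-uniform measures. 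Thus neither the inclusion $\Tan_{n+1}(\mu_\infty,x)\subseteq \mathcal U(n+1)$ nor the connectedness-versus-disconnection step is justified, and the ``Borel--Cantelli argument on dyadic scales'' you allude to does not supply these. Your final paragraph correctly names this obstacle but does not overcome it.

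The paper's proof avoids pointwise blowups altogether. It first reduces (Proposition \ref{prop:proposition_DS_III_5}, from \cite[III.5]{DavidSemmes}) to the statement that for each $\varepsilon>0$ the cubes $Q\notin\mathcal N(\varepsilon)$ (those whose support, at the scale of $Q$, is \emph{not} Hausdorff-close to the support of some uniform measure) form a Carleson family. One then runs a stopping-time decomposition of $\mathcal D_\mu(R)\cap \mathcal N(\varepsilon)$ into trees $\mathcal T_i$ and proves the key packing estimate $\sum_{Q\in\mathcal T_i\cap \mathcal B_\eta}\mu(Q)\lesssim \mu(Q(\mathcal T_i))$. This is where the genuine parabolic input enters, but at the level of \emph{uniform} measures rather than of $\mu$: a touching-point argument with the operator $\mathcal R_{r,s}$ of \eqref{eq:definition_Rrs} gives Corollary \ref{corollary:flat_ball_big} (a uniform measure has a ball of comparable radius with small $\beta$), and Lemmas \ref{lemma:lem_bilateral_beta_1}--\ref{lemma:lem_bilateral_beta_2} (which internally use Lemma \ref{lemmaF->beta}, i.e.\ the $\mathscr F$-disconnection) upgrade smallness of $\beta_\mu$ on a range of scales to smallness of $b\beta_\mu$. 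The classification of $(n+1)$-uniform measures and the functional $\mathscr F$ are therefore used, but only to prove these lemmas about uniform measures; they are then fed back into the Carleson stopping-time argument in the style of \cite{Tolsa_uniform_measures}. This is a substantively different architecture from the blowup-plus-disconnection scheme you propose.
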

\vv

The Euclidean analogue of Theorem \ref{theorem:WCD_implies_BWGL_2} was first proved by David and Semmes in \cite[Chapter III.5]{DavidSemmes}. In the parabolic case, it follows from Theorem \ref{classificazionemisureunif}, or more precisely from Lemma \ref{lemmaF->beta}, together with a fine study of the flatness properties of uniform measures in the spirit of the arguments of Tolsa in \cite{Tolsa_uniform_measures}. We remark that the approach in \cite{Tolsa_uniform_measures} uses the Riesz transform to prove that if a uniform measure is far from being flat (respectively very flat) inside a cube $Q$ then it is far from being flat (respectively very flat) in most of the subcubes of $Q$. However, in $\mathbb{P}^n$ the Riesz transform is not the right operator (compare with Proposition \ref{prop:propaux1_s}) and in order to prove these strong propagation properties of flatness, we need to investigate more carefully the geometric structure of uniform measures (see Lemma \ref{lemmaangolitangenti}).

Theorem \ref{theorem:WCD_implies_BWGL_2} provides a direct link with the upcoming article \cite{MMP2} in which we exhibit further applications of the parabolic Preiss's theory to the study the parabolic analogue of a proper quantitative version of the parabolic density problem. Hence, we find it useful to briefly outline some of its contents to put the result in a wider context. In \cite{MMP2} we are interested in establishing whether the characterisation given by Chousionis, Garnett, Le, and Tolsa of uniform rectifiability in terms of the boundedness of a proper square function involving the density given in \cite{MR3461027} can be extended to the parabolic spaces. In particular, using the results of the present paper, we are able to prove the following result.
 
 \begin{thmx}\label{densitythBWGL}
 Suppose $\mu$ is an $(n+1)$-Ahlfors-regular measure on $\mathbb{P}^n$ such that 
 \begin{equation}
      \int_{B(y,R)}\int_0^R\Big\lvert\frac{\mu(B(x,r))}{r^{n+1}}-\frac{\mu(B(x,2r))}{2^{n+1}r^{n+1}}\Big\rvert^{2}\,\frac{dr}{r}d\mu(x)\leq CR^{n+1} ,\quad \text{    for $\mu$-almost every $y\in\mathbb{P}^n$.}
      \label{sfe}
 \end{equation}
 Then $\mu$ is $\mathscr{P}_{n+1}$-rectifiable in the sense of Definition \ref{def:PhRectifiableMeasure} and it satisfies the WCD. In particular, $\mu$ satisfies the BWGL by Theorem \ref{theorem:WCD_implies_BWGL_2}.
 \end{thmx}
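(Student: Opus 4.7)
My plan is to reduce the statement to the two assertions ``$\mu$ is $\mathscr{P}_{n+1}$-rectifiable'' and ``$\mu$ satisfies the WCD'', since Theorem \ref{theorem:WCD_implies_BWGL_2} already establishes that WCD implies the BWGL. Both of these conclusions will follow from a single intermediate result that I would extract from the square-function bound \eqref{sfe}: at $\mu$-a.e.\ $x\in\mathbb{P}^n$, every element of $\Tan_{n+1}(\mu,x)$ is an $(n+1)$-uniform measure (up to a positive multiplicative factor).

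To prove this intermediate statement I would argue by blow-up. By Fubini applied to \eqref{sfe}, at $\mu$-a.e.\ $x$ the inner integral $\int_0^1|\theta_r(x)-\theta_{2r}(x)|^2\,dr/r$ is finite, where $\theta_r(x):=\mu(B(x,r))/r^{n+1}$. Fix such an $x$, a sequence $r_k\downarrow 0$, and a weak-* subsequential limit $\nu$ of the parabolic rescalings of $\mu$ at $x$. Combining the Ahlfors-regularity of $\mu$ (so that $\theta_r$ is bounded above and below), a stopping-time selection of scales along which the localized square function is uniformly small, and the fact that an AD-regular limit $\nu$ charges no spheres for a.e.\ radius, I would verify that $\nu(B(y,s))/s^{n+1}$ is constant in both $y\in\supp(\nu)$ and $s>0$; hence $\nu$ is $(n+1)$-uniform. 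A direct corollary is that $\theta_r(x)$ actually \emph{converges} as $r\to 0$, so Theorem \ref{theorem:main_theorem_Preiss}~(iii)$\Rightarrow$(i) yields at once the $\mathscr{P}_{n+1}$-rectifiability of $\mu$. Should convergence of $\theta_r$ not be directly visible from the blow-up, the same conclusion is reached by feeding uniform tangents into property (\textbf{P}) from Theorem \ref{th:disconnection:intro} and the classification Theorem \ref{classificazionemisureunif}, ruling out non-flat uniform tangents via iterated tangents (Proposition \ref{tuttitg}) and the weak-* connectedness of $\Tan_{n+1}(\mu,x)$.

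The WCD follows from a quantitative compactness upgrade of the same reasoning. Since \eqref{sfe} is a Carleson-type estimate, for each $\varepsilon>0$ the collection of dyadic cubes $Q$ along which the localized square function exceeds $\varepsilon^2\mu(Q)$ forms a Carleson packing. Outside this packing the density $\theta_r$ is essentially constant throughout $Q$ at all scales $r\lesssim\ell(Q)$, and a weak-* compactness argument in the class of AD-regular measures then produces an $(n+1)$-uniform measure close to $\mu$ inside $Q$ with an approximation modulus tending to $0$ as $\varepsilon\to 0$: if this failed for some fixed $\varepsilon_0>0$ along a sequence of cubes, the normalized weak-* limit of $\mu$ on those cubes would itself satisfy \eqref{sfe} with vanishing right-hand side and hence, by the previous paragraph, all of its tangents would be uniform --- contradicting that the limit is not approximable by uniform measures. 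This is precisely the WCD, and Theorem \ref{theorem:WCD_implies_BWGL_2} concludes the BWGL.

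The main obstacle is the blow-up step: promoting the pointwise \emph{square-function integrability} of $r\mapsto\theta_r(x)$ into genuine \emph{convergence} as $r\to 0$ and into uniformity of the tangent measure on its entire support, since a generic sequence with $\sum|a_{k+1}-a_k|^2<\infty$ need not converge, and a naive blow-up only controls $\theta_r(x)$ along dyadic scales centred at $x$ rather than at the nearby points needed to constrain $\nu$. It is here that the full Carleson nature of \eqref{sfe} and a careful stopping-time selection become essential. A subtler difficulty is turning the same analysis into a \emph{quantitatively uniform} compactness argument for the WCD, so that the approximating uniform measure $\sigma_Q$ comes with an error modulus depending only on the Ahlfors-regularity and Carleson constants of $\mu$.
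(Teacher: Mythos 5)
This statement is only announced in the present paper (its proof is deferred to the companion work \cite{MMP2}), so there is no in-paper argument to compare with; judging your plan on its own merits, it contains genuine gaps at the two places where all the work lies. First, your intermediate claim that \eqref{sfe} forces \emph{every} tangent measure at $\mu$-a.e.\ $x$ to be a multiple of an $(n+1)$-uniform measure is not established by the sketch. The pointwise finiteness of $\int_0^1\lvert\theta_r(x)-\theta_{2r}(x)\rvert^2\,dr/r$ only controls the centre $x$, while uniformity of a tangent $\nu$ requires density information at the approximating points $z_k\in\supp(\mu)$ of \emph{every} $y\in\supp(\nu)$; you acknowledge this but replace the argument by an appeal to ``the full Carleson nature of \eqref{sfe} and a careful stopping-time selection''. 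Even granting that selection, what passes to a blow-up limit is the identity $\theta^\nu_t(y)=\theta^\nu_{2t}(y)$ for $y\in\supp(\nu)$ and $t>0$, i.e.\ dyadic scale-invariance of the density ratio; this by itself does not make $\nu$ uniform (a priori $t\mapsto\theta^\nu_t(y)$ could be a nonconstant multiplicatively $2$-periodic function, varying with $y$), and closing this gap is precisely the hard content of the Euclidean arguments of \cite{MR3461027} and of Tolsa--Toro, not a soft compactness remark.

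Second, the deduction of $\mathscr P_{n+1}$-rectifiability is routed through hypotheses you do not have. Your ``direct corollary'' that $\theta_r(x)$ converges is false: condition \eqref{sfe} does not imply existence of the density (e.g.\ a flat measure weighted by a slowly oscillating factor such as $2+\sin(\log\log(1/r))$ has finite square function and only flat tangents, yet no density), so Theorem \ref{theorem:main_theorem_Preiss}(iii)$\Rightarrow$(i) cannot be invoked; note that the statement of Theorem \ref{densitythBWGL} deliberately does not assert density existence. The fallback through property (\textbf{P}) is likewise not literally available, since Theorem \ref{th:disconnection:intro} is stated under the hypothesis that the density exists (it is what puts $\Tan_{n+1}(\phi,x)$ inside $\Theta(x)\,\mathcal U(n+1)$ and makes the connectedness argument run); with tangents that are uniform only up to varying constants one must rework that argument before feeding the conclusion into Theorem \ref{thm:MMconormale:intro}. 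Finally, the WCD step conflates two different statements: failure of the WCD at a cube concerns approximation of $\mu$ at scale $r(Q)$ by a measure with the \emph{same support} and density close to exactly $t^{n+1}$ (Definition \ref{def-WCD}), whereas your contradiction only produces a weak-$*$ limit all of whose \emph{tangents} (at scales tending to $0$) are uniform; you must show the limit itself has almost constant density at unit scale and transfer this back to $\mu$ on the cube, which is again the same structural difficulty as in the first gap. As written, both halves of the theorem rest on unproved assertions.
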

 \vv
 
  Theorem \ref{densitythBWGL} is in line with what happens in Euclidean space although, as mentioned above, we cannot push the regularity given by the BWGL to parabolic uniform rectifiability. The following result, however, highlights profound differences in the parabolic setting.
 
\begin{thmx}\label{counterexamples:intro}
There exists a $2$-Ahlfors-regular measure $\mu$
on $\mathbb{P}^1$ such that for any $\varepsilon>0$  there exists a constant $C_\varepsilon>0$ such that
    \begin{equation}
        \int_0^1\Big\lvert\frac{\mu(B(x,r))}{r^{2}}-\frac{\mu(B(x,2r))}{4r^{2}}\Big\rvert^{\frac{1}{2}+\varepsilon}\,\frac{dr}{r}\leq C_\varepsilon,\qquad \text{    for $\mu$-almost every $x\in \mathbb{P}^n$,}
        \label{eq:density}
    \end{equation}
and for any regular parabolic Lipschitz graph $\Gamma$ we have $\mu( \Gamma)=0$.
\end{thmx}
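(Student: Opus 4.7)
The plan is to build $\mu$ as a weak* limit of a self-similar Cantor-type construction whose density fluctuations are calibrated to lie just outside the $L^{1/2}$-integrability threshold yet inside $L^{1/2+\varepsilon}$ for every $\varepsilon>0$, while simultaneously making the support sufficiently rough in its time variable that it cannot lie on any regular parabolic Lipschitz graph. The construction is in the same spirit as that of Theorem \ref{th:failurepreiss}, modifying its pseudorandom recipe so as to control quantitatively the size of the deviations at each dyadic scale.

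First I would fix a sequence $\alpha_k\downarrow 0$ with $\sum_k\alpha_k^{1/2+\varepsilon}<\infty$ for every $\varepsilon>0$ but $\alpha_k\notin\ell^2$, for instance $\alpha_k=(\log k)^{-2}$. Next I would construct inductively a sequence of closed sets $\Sigma_k\subset\mathbb P^1$: let $\Sigma_0$ be the vertical subgroup $\mathcal V=\{0\}\times\mathbb R$ intersected with a Koranyi ball, and at stage $k$ partition $\Sigma_{k-1}$ into pieces of Koranyi diameter $2^{-k}$ and replace each piece by a small horizontal perturbation of amplitude $\alpha_k\cdot 2^{-k}$, chosen from a pseudorandom dictionary analogous to the one used in Appendix \ref{section:counterexample_PDT} and designed so that no single $\BMO$ profile in the time variable can simultaneously fit all generations. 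The measure $\mu$ is the weak* limit of the normalized parabolic surface measures of $\Sigma_k$, and a standard box-counting argument shows it is $2$-Ahlfors regular since each perturbation is a bi-Lipschitz deformation at its own scale.

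Next I would verify \eqref{eq:density}. By construction, at a typical $x\in\supp(\mu)$ and scale $r\sim 2^{-k}$, the set $\supp(\mu)\cap B(x,r)$ lies in a $C\alpha_k r$-neighborhood of an affine copy of $\mathcal V$, and an elementary perimeter computation combined with the Ahlfors regularity yields the pointwise estimate $\lvert\mu(B(x,r))/r^2-\mu(B(x,2r))/4r^2\rvert\lesssim\alpha_k$. Integrating its $(1/2+\varepsilon)$-th power against $dr/r$ collapses the integral in \eqref{eq:density} to the convergent sum $\sum_k\alpha_k^{1/2+\varepsilon}$, with a bound independent of $x$ up to a constant $C_\varepsilon$.

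The hard part is to show $\mu(\Gamma)=0$ for \emph{every} regular parabolic Lipschitz graph $\Gamma=\{(f(t),t)\}$ with $D_T^{1/2}f\in\BMO(\mathbb R)$. If $\mu(\Gamma)>0$, then by Lebesgue differentiation and the tangent measure machinery used in Theorem \ref{thm:MMconormale:intro}, at a $\mu$-density point $x\in\Gamma$ the tangents of $\mu$ at $x$ would be forced to be flat multiples of $\mathcal V$; on the other hand, $\BMO$-regularity of $D_T^{1/2}f$ gives, via a Carleson-type John--Nirenberg estimate, a quantitative $\ell^2$ bound on the deviations of $\Gamma$ from flatness over dyadic parabolic cubes containing $x$. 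The construction is arranged so that on a positive-measure subset of $\supp(\mu)$ the deviations from any affine vertical plane are bounded below by $c\alpha_k$ at each scale $2^{-k}$, and since $\alpha_k\notin\ell^2$ this contradicts the Carleson bound, yielding $\mu(\Gamma)=0$. The main technical obstacle is ensuring that the pseudorandom choices in the construction are sufficiently generic that the $\ell^2$-unsummable lower bound on deviations holds simultaneously for every candidate graph $\Gamma$, rather than only for one chosen in advance; this is where one genuinely needs to exploit the additional freedom afforded by working in $\mathbb P^1$ with the Koranyi metric, as observed in Theorem \ref{th:failurepreiss}.
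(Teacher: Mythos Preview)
Your proposal contains a fatal arithmetic inconsistency. You require a sequence $\alpha_k\downarrow 0$ with $\sum_k\alpha_k^{1/2+\varepsilon}<\infty$ for every $\varepsilon>0$ but $\alpha_k\notin\ell^2$. These conditions are incompatible: once $\alpha_k<1$ and $1/2+\varepsilon<2$, one has $\alpha_k^{2}\le\alpha_k^{1/2+\varepsilon}$, so summability of $\alpha_k^{1/2+\varepsilon}$ forces $\alpha_k\in\ell^2$. Your concrete choice $\alpha_k=(\log k)^{-2}$ illustrates the problem from the other side: $\sum_k(\log k)^{-p}$ diverges for every $p>0$, so neither of your two conditions is satisfied. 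The difficulty is genuine, not cosmetic: with your linear estimate $\lvert\mu(B(x,r))/r^2-\mu(B(x,2r))/4r^2\rvert\lesssim\alpha_k$, the density integral \eqref{eq:density} and the claimed $\ell^2$-obstruction to regular Lipschitz graphs pull in opposite directions and cannot both hold. What is missing is a mechanism that makes the density fluctuation a \emph{higher power} of the geometric deviation, so that the exponent $1/2+\varepsilon$ on the density side translates into an exponent strictly larger than $2$ on the $\alpha_k$ side; your ``elementary perimeter computation'' does not supply this.

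The paper takes a different route, outlined after the statement of Theorem~\ref{counterexamples:intro} (the full proof is deferred to the companion article). Rather than a Cantor-type limit, the measure is $\mu=\mathcal{H}^2\llcorner\Gamma$ for a \emph{single} parabolic Lipschitz graph $\Gamma=\mathrm{gr}(f)$, where $f\colon\R\to\R$ is a Weierstrass-type lacunary series. The density fluctuations are controlled via an area formula for parabolic differentiable Lipschitz graphs, which is precisely what produces the needed higher-order dependence of $\mu(B(x,r))$ on the oscillation of $f$; delicate Fourier estimates on the co-Lipschitz constant then give \eqref{eq:density}. Unrectifiability with respect to regular parabolic Lipschitz graphs comes from showing that $f$ fails the Sobolev condition $\int\!\!\int\lvert f(\tau)-f(\sigma)\rvert^2/\lvert\tau-\sigma\rvert^2\,d\tau\,d\sigma<\infty$ (indeed much more), rather than from a generic $\ell^2$ lower bound on deviations. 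Your ``main technical obstacle'' of making the construction block every graph simultaneously is handled in the paper by working with one explicit function and proving it is singular to the entire regular class, which sidesteps the genericity issue entirely.
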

\vv

The idea behind Theorem \ref{counterexamples:intro} is to construct a parabolic Lipschitz function $f\colon\R\to\R$ such that, if $\Gamma$ denotes its graph, then \eqref{eq:density} holds for $\mu=\mathcal{H}^2\llcorner \Gamma$ and \begin{equation}
    \int_\R\int_\R\frac{\lvert f(\tau)-f(\sigma)\rvert^2}{\lvert \tau-\sigma\rvert^2}\,d\tau d\sigma=\infty.
    \label{eq.parabolicpip}
\end{equation}
Our construction is inspired by the one of non-differentiable continuous Weierstrass function and the techniques used in \cite[Example 8.2]{MatParRect}, and it involves delicate Fourier estimates for the co-Lipschitz constant of the function $f$. In fact, it gives much more than \eqref{eq.parabolicpip}.
We finally mention that the proof of Theorem \ref{counterexamples:intro} also requires a computation of the Hausdorff measure of balls centred on $\Gamma$, which we achieve via a version of the area formula for parabolic differentiable Lipschitz functions which is of interest on it own right (see for instance \cite{MR4342997,kirchharea, VITTONE}).

\subsection*{Structure of the paper.} \textit{Section \ref{section:Preliminaries}} collects preliminary facts and  known results on general measure theory, uniform measures, and parabolic rectifiability. In \textit{Section \ref{section:Mastrand}} we prove some additional structural properties of uniform measures. V. Chousionis and J. Tyson showed in \cite{Chousionis2015MarstrandsGroup} that the support of a uniform measure on $\mathbb P^n$ is an analytic variety, but we actually need more information; an adaptation of the techniques of \cite{KirchheimPreiss02} shows that a uniform measure actually coincides, modulo a dimensional factor, with the surface measure on its support. We further prove that the tangent measures to a uniform measure of codimension $1$ are indeed dilation-invariant, and this analysis allows us to conclude the section with the classification of uniform measures on $\mathbb P^1$ (see Proposition \ref{uniformmeasuresinP1}).

\textit{Section \ref{section:MM_rect_theorem}} is devoted to the proof of Theorem \ref{thm:MMconormale:intro}. In \textit{Section \ref{section:moments_expansion}} we introduce the parabolic version of Preiss's moments associated with a uniform measure on $\mathbb P^n$. This is done in Definition \ref{defimom} and it involves a polarization of Koranyi norm (see Proposition \ref{prop6}). Although most of the calculations mimics those in \cite{Preiss1987GeometryDensities} and \cite{MerloG1cod}, this is a pivotal component of the proof since it allows to construct a candidate for an algebraic surface which contains the support of the uniform measure (see Section \ref{subsection:expansion_moments_candidate_quadric}).

\textit{Section \ref{section:unif_measure_quadratic_surface}} contains the proof of Theorem \ref{theorem:1_codim_unif_measures}: we split the class of $(n+1)$-uniform measures into ``non-degenerate'' and ``degenerate'' measures (see \eqref{eq:degenerate}), prove that the first ones are contained in quadratic surfaces (see Proposition \ref{prop:notdeg}), and finally show that \eqref{eq:degeneratemeasures} and \eqref{eq:prop47_statement} hold for the second sub-class.
The analysis of non-degenerate $1$-codimensional uniform measures is continued in \textit{Section \ref{eq:sec_non_deg_unif_meas}}, where we prove the first part of Theorem \ref{classificazionemisureunif}. Then, in \textit{Section \ref{section:proof_main_theorem}} we study the properties of the functional $\mathscr F$ in \eqref{eq:functional_F_intro} and conclude the proof of Theorem \ref{theorem:main_theorem_Preiss}.

In \textit{Section \ref{section:BWGL}} we apply the methods described above and prove Theorem \ref{theorem:WCD_implies_BWGL_2}. The argument is based on a touching-point argument in terms of an auxiliary operator defined in \eqref{eq:definition_Rrs} which replaces the Riesz transform and allows us to infer the flatness result Corollary \ref{corollary:flat_ball_big}. Moreover, we remark that the solution of the parabolic Preiss's density problem is the key tool used in the form of Lemma \ref{lemmaF->beta}, and the proof of the BWGL follows from a stopping-time argument along the lines of \cite{Tolsa_uniform_measures}.

\textit{Appendix \ref{section:counterexample_PDT}} contains the construction of the graph which proves Theorem \ref{th:failurepreiss} and, finally, in \textit{Appendix \ref{TYLR}} we adapt the calculations in \cite{MerloG1cod} in order to study the first three non-trivial coefficients of the expansion \eqref{eq:expansion_area_formula} of the Hausdorff measure on a quadratic surface.

\vv

\section{Preliminaries and notation} \label{section:Preliminaries}

We proceed to recall well-known facts and introduce some notations.
In case the proof of a proposition is not present in literature but the Euclidean (or Heisenberg group) argument applies verbatim, we just provide a reference.

\subsection{The parabolic group \texorpdfstring{$\mathbb{P}^n$}{Lg}}
\label{parabgroup}

Let $\pi_H\colon \R^{n+1}\to\R^{n}$ be the projection onto the first $n$ coordinates and $\pi_T\colon\R^{n+1}\to \R$ be the projection onto the last one. The Lie groups $\mathbb{P}^n$ are the smooth manifolds $\R^{n+1}$ endowed with the Euclidean sum.

Given $\lambda>0$ we define the \textit{anisotropic dilations} $\delta_\lambda\colon\mathbb{P}^n\to\mathbb{P}^n$ as \[\delta_\lambda(x)\coloneqq(\lambda x_H,\lambda^2  x_T),\]\label{dilatan}
where  $x_H\coloneqq \pi_H(x)$ and $x_T\coloneqq \pi_T(x)$, and metrize $(\mathbb P^n, +)$ with a distance $\dd\colon\mathbb{P}^n\times\mathbb{P}^n\to\mathbb{P}^n$ which is \textit{homogeneous} with respect to $\delta_\lambda$, namely 
\begin{equation}\label{eq:homogeous_metric}
   \dd(\delta_\lambda(x),\delta_\lambda(y))=\lambda \, \dd(x,y)\qquad \text{ for all }x,y\in \mathbb{P}^n \text{ and }\lambda>0.
\end{equation}

Moreover we let $\lVert x\rVert_{\dd}\coloneqq \dd(x,0)$\label{Koranyinorm}, $B_{\dd}(x,r)\coloneqq\{z\in\mathbb{P}^n:\dd(z,x)\leq r\}$ be the (closed) ball, and we define $U_{\dd}(x,r)\coloneqq\{z\in\mathbb{P}^n:\dd(z,x)<r\}$. If the metric is clear from the context, we generally drop the subscript ``$\dd$''.

The \emph{Koranyi distance} is the metric
\begin{equation}\label{eq:Korany_metric}
	d(x,y)\coloneqq \bigl(\lvert y_H-x_H\rvert^4+\lvert y_T- x_T\rvert^2\bigr)^{1/4}, \qquad \text{ for }x,y\in\mathbb P^n.
\end{equation}

It is also useful to denote as $\langle x, y\rangle \coloneqq x_1 y_1 +\cdots + x_n y_n$, where $x,y\in \R^n$, the standard Euclidean scalar product and by $|\cdot|$ its associated norm. Given $x\in \R^n$ and $r>0$, we write $B_n(x,r)\coloneqq \{y\in\R^n: |x-y|\leq r\}$ for the Euclidean balls. Moreover, for a linear subspace $V$ of $\R^n$, $V^\perp$ represents its orthogonal complement in $\R^n$ with respect to the Euclidean scalar product.

Finally, for a measure $\mu$ on $\mathbb P^n$ and a set $E\subset \mathbb P^n$ we denote by $\mu\llcorner E$ the restriction
\(
    \mu\llcorner E(A)\coloneqq \mu(A\cap E),
\)
 for $A\subseteq \mathbb P^n.$
 
 \vv

\subsection{Hausdorff and surface measures}
Let $\mathfrak d(\cdot, \cdot)$ be a homogeneous metric on $\mathbb P^n$.
\begin{definition}\label{Hausdro}
	For $h\in[0,\infty)$ we define the  $h$-dimensional {\em (parabolic) spherical Hausdorff measure} on $\mathbb P^n$ relative to $\mathfrak d$ as\label{sphericaldhausmeas}
	\[
		\mathcal{S}^{h}(A)\coloneqq\sup_{\delta>0}\,\inf\bigg\{\sum_{j=1}^\infty  r_j^h:A\subseteq \bigcup_{j=1}^\infty  B(x_j,r_j),~r_j\leq\delta\bigg\}
	\]
	and the $h$-dimensional {\em Hausdorff measure}\label{hausmeas} relative to $\mathfrak d$ as
	\[
		\mathcal{H}^h(A)\coloneqq\sup_{\delta>0}\,\inf \biggl\{\sum_{j=1}^{\infty} 2^{-h}(\diam E_j)^h:A \subseteq \bigcup_{j=1}^{\infty} E_j,\, \diam E\leq \delta\biggr\}.
	\]

	We also define the $h$-dimensional {\em centered (spherical) Hausdorff measure} relative to $\mathfrak d$ as\label{centredhausmeas}
	\[
		\mathcal{C}^{h}(A)\coloneqq\underset{E\subseteq A}{\sup}\,\,\mathcal{C}^h_0(E),
	\]
	where
	\[
		\mathcal{C}^{h}_0(E)\coloneqq\sup_{\delta>0}\,\inf\biggl\{\sum_{j=1}^\infty  r_j^h:E\subseteq \bigcup_{j=1}^\infty B(x_j,r_j),~ x_j\in E,~r_j\leq\delta\biggr\}.
	\]

\end{definition}
\vv
We stress that $\mathcal{C}^h$ is an outer measure, so it defines a Borel regular measure (see \cite[Proposition 4.1]{EdgarCentered}).
\begin{remark}
	\label{remarkone}
	The spherical and centered Hausdorff measures do not coincide. However, $\mathcal{S}^h $ and $\mathcal{C}^h$ are equivalent, see \cite[Section 2.10.2]{Federer1996GeometricTheory} and \cite[Proposition 4.2]{EdgarCentered}. More precisely, a combination of Remark 2.3 and Lemma 2.5 in \cite{FSSCArea} yields
	$$2^{-h}\mathcal{S}^h\leq\mathcal{C}^h\leq 2^{h}\mathcal{S}^h, \qquad \text{ for all } h>0.$$
\end{remark}

\begin{definition}
For any Borel set $E\subseteq \mathbb{P}^n$ we write
\begin{equation}\label{eq:def_sigma_measure}
	\sigma_E(A)\coloneqq \int \mathcal{H}^{n-1}_{\mathrm{eu}}\bigl(A\cap \{z\in E:\pi_T(z)=t\}\bigr)\, dt,
\end{equation}
where $\mathcal{H}^{n-1}_{\mathrm{eu}}$ stands for the Euclidean $(n-1)$-Hausdorff measure in $\mathbb R^{n+1}$.
\end{definition}

\begin{remark}\label{rksuperfici}
The co-area formula \cite[Theorem D]{MR3624413} implies that, if $E$ is a Euclidean $n$-rectifiable set in $\mathbb{P}^n$, then $\sigma_E$ and $\mathcal{H}^{n+1}\llcorner E$ are mutually absolutely continuous.
\end{remark}

Finally, we denote by $\mathcal L^{n+1}$ the Lebesgue measure on $\mathbb R^{n+1}$ and by $\dim_{\mathrm{eu}}(E)$ the Euclidean Hausdorff dimension of $E\subseteq \mathbb R^{n+1}.$
\vv

\subsection{Planes, Grassmanians, and flat measures}
Anisotropic dilations on $\mathbb P^n$ allow us to define a parabolic Grassmanian, which can also be readily characterized.

\begin{definition}
For $h=0,\ldots, n+2$, we let $\Gr(h)$ be the family of \textit{homogeneous subgroups} of $\mathbb P^n$ with Hausdorff dimension $h$. Moreover, $\Gr(\mathbb{P}^n)\coloneqq \bigcup_{h=0}^{n+2}\Gr(h)$.
\end{definition}

\begin{lemma}\label{homplanesstructure}
	Let $V\in \Gr (\mathbb{P}^n)$. Either $V=V_1 \oplus e_{n+1}$ or $V=V_1\times \{0\}$ for some $V_1\in \Gr(\R^n)$.
\end{lemma}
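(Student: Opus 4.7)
The plan is to analyze $V$ through two derived sets: its horizontal projection $V_H := \pi_H(V) \subseteq \R^n$ and its vertical trace $V_T := V \cap \pi_H^{-1}(0)$. Because the group law on $\mathbb{P}^n$ is Euclidean addition and $V$ is a subgroup, $V$ is closed under $-\mathrm{id}$; combined with closure under $\delta_\lambda$ for $\lambda>0$, elements can be rescaled by every nonzero real (with the vertical scaling being quadratic, which is harmless for spanning lines).

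The first step is to check that $V_H$ is a linear subspace of $\R^n$ and that $V_T$ is either $\{0\}$ or the full vertical axis $\{0\}\times \R$. For $V_H$: any $w\in V_H$ lifts to some $v=(w,t)\in V$, and $\delta_\lambda v=(\lambda w, \lambda^2 t)\in V$ together with $-v\in V$ gives $cw\in V_H$ for every $c\in \R$; closure under addition of $V_H$ is inherited directly from $V$. For $V_T$: the same argument applied to $(0,t)\in V$ shows the set of admissible $t$'s is a subgroup of $\R$ closed under multiplication by every positive real, hence is $\{0\}$ or $\R$.

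The key identity, exploiting the non-linearity of $\delta_\lambda$ in the vertical direction, is
\[
\delta_\lambda v + \delta_\mu v - \delta_{\lambda+\mu}v \;=\; \bigl(0,\,-2\lambda\mu\, v_T\bigr)\in V,\qquad \lambda,\mu>0,\ v=(v_H,v_T)\in V.
\]
The right-hand side is a nonzero vertical element as soon as some $v\in V$ has $v_T\neq 0$. This realizes the dichotomy: either every element of $V$ has zero vertical component, or $V_T=\{0\}\times\R$ and hence $e_{n+1}\in V$. I expect this identity to be the main (and essentially only) obstacle in the argument; without it, the fact that $\delta_\lambda$ does not commute with the linear structure in the $T$-direction makes the relationship between a general $v\in V$ and its vertical projection opaque.

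With the dichotomy in place, the two cases close immediately. If $e_{n+1}\in V$, then for any $v=(v_H,v_T)\in V$ we have $(v_H,0)=v-v_T e_{n+1}\in V$, and for any $w\in V_H$ a lift $v=(w,t)\in V$ gives $(w,0)=v-te_{n+1}\in V$; therefore $V = V_H \oplus \R e_{n+1}$ with $V_H\in \Gr(\R^n)$. If instead $V_T=\{0\}$, then the key identity forces $v_T=0$ for every $v\in V$, so $V= V_H\times\{0\}$ with $V_H\in \Gr(\R^n)$. This exhausts the two possibilities asserted in the lemma.
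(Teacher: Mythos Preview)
Your proof is correct and takes a genuinely different route from the paper's. The paper treats $V$ as a linear subspace from the outset and argues via its orthogonal complement: for any $w\perp V$ one has $\langle w,\delta_\lambda v\rangle=\lambda\langle w_H,v_H\rangle+\lambda^2 w_T v_T=0$ for all $\lambda>0$ and $v\in V$, and dividing by $\lambda^2$ and sending $\lambda\to\infty$ yields $w_T v_T=0$, from which the dichotomy follows in one line. Your argument instead stays entirely inside the group-plus-dilation structure and manufactures vertical elements explicitly via the identity $\delta_\lambda v+\delta_\mu v-\delta_{\lambda+\mu}v=(0,-2\lambda\mu\,v_T)$. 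The paper's approach is shorter because it leans on the (standard but unstated) fact that a closed dilation-invariant subgroup of $(\R^{n+1},+)$ is automatically a linear subspace; your approach is more self-contained, since it recovers the linear-subspace structure of $V$ as a consequence in each case rather than assuming it, and your key identity makes transparent exactly how the quadratic vertical scaling drives the rigidity.
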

\begin{proof}
	Let $w\in \R^{n+1}$ be a vector orthogonal to $V$. Then, since $V$ is homogeneous, we have
	\begin{equation}\label{eq:hom_lem_1}
	\lambda\, \langle w_H, v_H\rangle + \lambda^2\,  w_T v_T= 0 \qquad \text{ for all }\lambda>0, \, v\in V.
	\end{equation}
	
	We divide both sides of \eqref{eq:hom_lem_1} by $\lambda^2$, take the limit as $\lambda\to \infty$, and obtain that $ w_T\,  v_T=0$. Hence, if there exists $w\in \mathbb{P}^n$ orthogonal to $V$ with  $w_T\neq 0$ we have that $v_T=0$ for all $v\in V$, namely $V$ is of the form $V_1\times \{0\}$ for some $V_1\in \Gr(\R^n)$. Otherwise it holds that $e_{n+1}\in V$, which proves the lemma.
	\end{proof}

\begin{corollary}\label{corollary:grn1}
	Let $V\in \Gr(n+1)$. Then $V=V_1\oplus e_{n+1}$ for some hyperplane $V_1$ in $\R^n$.
\end{corollary}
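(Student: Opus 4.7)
The plan is to combine Lemma \ref{homplanesstructure} with a direct dimension count for the two possible shapes of a homogeneous subgroup of $\mathbb{P}^n$. By that lemma, either $V = V_1 \oplus e_{n+1}$ or $V = V_1 \times \{0\}$ for some linear subspace $V_1 \in \Gr(\R^n)$, so it suffices to determine which of these two forms is compatible with Hausdorff dimension exactly $n+1$ with respect to the parabolic metric.

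First I would observe that the parabolic Hausdorff dimension of a homogeneous subgroup is determined by the Euclidean dimension of its horizontal part and by whether the time direction $e_{n+1}$ is contained in it, since $e_{n+1}$ scales as $\delta_\lambda(e_{n+1}) = \lambda^2 e_{n+1}$. Concretely, if $V_1 \subseteq \R^n$ is a $k$-dimensional linear subspace, then $V_1 \times \{0\}$ is homothetic to a $k$-plane under $\delta_\lambda$ with the same scaling factor $\lambda$ in every direction, so its parabolic Hausdorff dimension equals $k$; while $V_1 \oplus e_{n+1}$ picks up an extra factor of $2$ from the vertical direction, so its parabolic Hausdorff dimension equals $k+2$. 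This can be verified with a standard covering argument using parabolic cylinders, or simply by noting that the Haar measure on the subgroup transforms under $\delta_\lambda$ as $\lambda^k$ or $\lambda^{k+2}$ respectively.

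Finally I would match the two cases against the assumption $\dim_{\cH}(V) = n+1$. In the first case $V = V_1 \times \{0\}$ we would need $k = n+1$, which is impossible since $V_1 \subseteq \R^n$ forces $k \leq n$. In the second case $V = V_1 \oplus e_{n+1}$ we need $k + 2 = n+1$, i.e.\ $k = n-1$, so $V_1$ is a hyperplane in $\R^n$. This rules out the first alternative and yields exactly the claimed structure. There is no real obstacle here; the only thing to be careful about is citing or recording cleanly the homogeneity behaviour of the Haar measure under $\delta_\lambda$ that gives the dimension formula $k$ versus $k+2$.
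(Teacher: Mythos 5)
Your proposal is correct and follows essentially the same route as the paper: invoke Lemma \ref{homplanesstructure} and then rule out the purely horizontal case $V=V_1\times\{0\}$ because its parabolic Hausdorff dimension equals its Euclidean dimension, hence is at most $n<n+1$. The paper leaves the final step ($\dim_{\eu}V_1+2=n+1$, so $V_1$ is a hyperplane) implicit, whereas you spell it out, but the argument is the same.
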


\begin{proof}
It is easy to see that, given $V_1\in \Gr(\R^n),$ we have $\dim (V_1\times\{0\})=\dim_\eu (V_1\times\{0\})\leq n.$ Then, Lemma \ref{homplanesstructure} concludes the proof.
\end{proof}

\vv

\begin{definition}[Flat measures]\label{flatmeasures}
For any $h\in\{1,\ldots,Q\}$ we let $\mathfrak{M}(h)$ be the {\em family of flat $h$-dimensional measures} on $\mathbb{P}^n$, i.e.
\begin{equation}\label{eq:def_M_h}
    \mathfrak{M}(h)\coloneqq\bigl\{\lambda \mathcal S^h\llcorner V:\text{ for some }\lambda> 0 \text{ and }V\in\Gr(h)\bigr\}.
\end{equation}

We point out that we can replace $\mathcal S^h\llcorner V$ in \eqref{eq:def_M_h} with any other Haar measure on $V$ since they all coincide up to a multiplicative constant.
\end{definition}

\begin{definition}[Stratification vector]\label{def:stratification}
Let $h\in\{1,\ldots,n+2\}$. For any $ V\in\Gr(h)$ we define
\[
    \mathfrak{s}( V)\coloneqq\bigl(\dim_{\eu}\bigl((\R^n\times \{0\})\cap V\bigr), \dim_{\eu}( \R e_{n+1}\cap  V)\bigr),
\]
that with abuse of language we call the {\em stratification}, or {\em stratification vector}, of $ V$. Furthermore, we define
\[
    \mathfrak{S}(h)\coloneqq \bigl\{\mathfrak{s}( V)\in\N^2: V\in \Gr(h)\bigr\},
\]
for any $\mathcal{T}\subseteq \mathfrak{M}(h)$ we set
\[
    \mathfrak{s}(\mathcal{T})\coloneqq \bigl\{\mathfrak{s}( V):\text{there exists a non-null Haar measure of } V\text{ in }\mathcal{T}\bigr\},
\]
 and for any $\mathfrak{s}\in\mathfrak{S}(h)$ we write
\begin{equation}
    \begin{split}
        \Gr^\mathfrak{s}&(h)\coloneqq\bigl\{V\in\Gr(h):\mathfrak{s}(V)=\mathfrak{s}\bigr\},\\
        \mathfrak{M}^\mathfrak{s}(h)\coloneqq&\bigl\{\lambda\mathcal S^h\llcorner V:\text{ for some }\lambda> 0 \text{ and }V\in\Gr^\mathfrak{s}(h)\bigr\}.
        \nonumber
    \end{split}
\end{equation}
\end{definition}


\vv

\subsection{Measures with density and their blowups}\label{dns}
Let $\mathfrak d$ be a homogeneous metric on $\mathbb P^n.$

\begin{definition}[Lower and upper densities]\label{drens}
If $\phi$ is a Radon measure on $(\mathbb{P}^n, \mathfrak d)$ and $h>0$, we define
$$
    \Theta_*^{h}(\phi,\mathfrak d, x)\coloneqq\liminf_{r\to 0} \frac{\phi(B_{\mathfrak d}(x,r))}{r^{h}}\qquad \text{and}\qquad \Theta^{h,*}(\phi,\mathfrak d, x)\coloneqq\limsup_{r\to 0} \frac{\phi(B_{\mathfrak d}(x,r))}{r^{h}}.
$$
We say that $\Theta_*^{h}(\phi,\mathfrak d, x)$ and $\Theta^{h,*}(\phi,\mathfrak d, x)$ are the \textit{lower} and \textit{upper $h$-density} of $\phi$ at the point $x\in\mathbb{P}^n$, respectively. Furthermore, we understand that $\phi$ \textit{has $h$-density} if
$$
0<\Theta^h_*(\phi,\mathfrak d, x)=\Theta^{h,*}(\phi,\mathfrak d, x)<\infty,\qquad \text{for }\phi\text{-almost any }x\in\mathbb{P}^n.$$
If the metric is clear from the context, we use the shorter notations $\Theta^h_*(\phi, x)\coloneqq\Theta^{h}_*(\phi,\mathfrak d, x)$ and $\Theta^{h,*}(\phi, x)\coloneqq \Theta^{h,*}(\phi,\mathfrak d, x)$.
\end{definition}

\begin{definition}
Let $\{\mu_k\}$ be a sequence of measures in $\mathcal{M}$\label{emme}, the set of Radon measures on $\mathbb{P}^n$.  If\label{convdeb}
$$\lim_{k\to\infty}\int f(x)\, d\mu_k(x)=\int f(x)\, d\mu(x)\qquad\text{for any }f\in {C}_c(\R^n),$$\label{compi}
we say that $\{\mu_k\}$ converges to $\mu$ and write $\mu_k\rightharpoonup \mu$.
\end{definition}

\begin{definition}[Tangent measures]\label{def:TangentMeasure}
Let $\phi$ be a Radon measure on $\mathbb{P}^n$. For any $x\in\mathbb{P}^n$ and any $r>0$ we define the measure
$$
    T_{x,r}\phi(E)\coloneqq\phi\bigl(x+\delta_r(E)\bigr), \qquad\text{for any Borel set }E\subseteq\mathbb{P}^n.
$$
Furthermore, we define $\mathrm{Tan}_{h}(\phi,x)$, the set of $h$-dimensional tangents to $\phi$ at $x$, to be the collection of the Radon measures $\nu$ for which there is an infinitesimal sequence $\{r_i\}$ such that
$r_i^{-h}T_{x,r_i}\phi\rightharpoonup \nu$.

We also denote by $\mathrm{Tan}(\phi,x)$ the collection of the Radon measures $\tilde \nu$ on $\mathbb P^n$ for which there are a sequence $\{c_i\}\subset \mathbb R$ and an infinitesimal sequence $\{r_i\}$ such that
$c_iT_{x,r_i}\phi\rightharpoonup \tilde \nu$.
\end{definition}
\vv

A crucial property of locally asymptotically doubling measures is that Lebesgue differentiation theorem holds and, thus, local properties are stable under restriction to Borel subsets. In particular, the next result is a direct consequence of \cite[Theorem 3.4.3]{HeinonenKoskelaShanmugalingam}, the Lebesgue differentiation Theorem in \cite[p. 77]{HeinonenKoskelaShanmugalingam}, and  \cite[Proposition 2.15]{Mattila2005MeasuresGroups}.

\begin{proposition}\label{tuttitg}
    Let $\phi$ be a Radon measure on $(\mathbb{P}^n, \mathfrak d)$ such that
    \begin{equation}\label{eqn:DEFASYMPDOUBLING}
0<\Theta_*^h(\phi,x)\leq \Theta^{h,*}(\phi,x)<\infty \qquad\text{ for $\phi$-almost every $x\in\mathbb{P}^n$.}
\end{equation}
Then:
\begin{itemize}
\item[(i)] $\Tan_h(\phi,x)\neq \emptyset$ for $\phi$-almost every $x\in\mathbb{P}^n$.
    \item[(ii)]For any Borel set $B\subseteq \mathbb{P}^n$ the measure $\phi\llcorner B$ satisfies \eqref{eqn:DEFASYMPDOUBLING}. Moreover, for $\phi$-almost every $x\in B$ we have the equalities
$$\Theta^h_{*}(\phi\llcorner B,x)=\Theta^h_{*}(\phi,x)\qquad \text{and}\qquad\Theta^{h,*}(\phi\llcorner B,x)=\Theta^{h,*}(\phi,x).$$
\item[(iii)] For every non-negative $\rho\in L^1(\phi)$ and for $\phi$-almost every $x\in\mathbb{P}^n$ we have
\[
    \mathrm{Tan}_h(\rho\phi,x)=\rho(x)\mathrm{Tan}_h(\phi,x).
\]
More precisely, for $\phi$-almost every $x\in\mathbb{P}^n$ the following holds:
\begin{equation}
    \begin{split}
        \text{if $r_i\to 0$ is such that}\quad r_i^{-h}T_{x,r_i}\phi \rightharpoonup \nu\quad \text{then}\quad r_i^{-h}T_{x,r_i}(\rho\phi)\rightharpoonup \rho(x)\nu.  \\
    \end{split}
\end{equation}
\item[(iv)] For $\phi$-almost every $x\in\mathbb{P}^n$, if $\mu\in\Tan_h(\phi,x)$, for any $y\in\supp(\mu)$ and $r>0$ we have that $r^{-h}T_{y,r}\mu\in\Tan_h(\phi,x)$.
\end{itemize}
\end{proposition}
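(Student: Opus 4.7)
The proposition is a collection of classical tangent-measure facts, all of which reduce to the observation that the density hypothesis \eqref{eqn:DEFASYMPDOUBLING} makes $\phi$ \emph{locally asymptotically doubling}: for $\phi$-almost every $x$ and every fixed $\tau>1$ one has $\phi(B_{\mathfrak d}(x,\tau r))\lec \phi(B_{\mathfrak d}(x,r))$ for all sufficiently small $r$, with a constant depending on $\tau$ and on the ratio $\Theta^{h,*}(\phi,x)/\Theta^h_*(\phi,x)$. Once this is in place, the plan is to invoke the standard Euclidean/metric arguments, each of which carries over to $(\mathbb P^n,\mathfrak d)$ without modification because the only ingredients used are the Vitali--Besicovitch covering theorem, the Lebesgue differentiation theorem, and weak-$\ast$ compactness for Radon measures, all of which hold in the asymptotically doubling metric setting by \cite[Theorem 3.4.3 and p.~77]{HeinonenKoskelaShanmugalingam}.

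For (i), since $\Theta^{h,*}(\phi,x)<\infty$ $\phi$-a.e., the mass of the rescaled measures $r^{-h}T_{x,r}\phi$ on any fixed ball $B(0,R)$ is bounded uniformly in $r$; weak-$\ast$ compactness then produces, along any infinitesimal sequence $r_i\to 0$, a subsequential limit, so $\Tan_h(\phi,x)\neq\emptyset$. For (ii), one applies the Lebesgue differentiation theorem to the Borel function $\one_B$: for $\phi$-a.e.\ $x\in B$,
\[
\lim_{r\to 0}\frac{\phi\llcorner B(B_{\mathfrak d}(x,r))}{\phi(B_{\mathfrak d}(x,r))}= \one_B(x)=1,
\]
and multiplying by $r^{-h}$ and passing to $\liminf$, respectively $\limsup$, gives the two required identities. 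This in particular shows that $\phi\llcorner B$ inherits \eqref{eqn:DEFASYMPDOUBLING}.

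For (iii), I would fix a point $x$ of approximate continuity of $\rho$ with respect to $\phi$ (which, by Lebesgue differentiation, is $\phi$-a.e.\ point) and test $r^{-h}T_{x,r}(\rho\phi)-\rho(x)\cdot r^{-h}T_{x,r}\phi$ against a compactly supported continuous function $f$ on $B(0,R)$. Writing
\[
\int f\,d\bigl(r^{-h}T_{x,r}(\rho\phi)\bigr)-\rho(x)\int f\,d\bigl(r^{-h}T_{x,r}\phi\bigr)=r^{-h}\int_{B_{\mathfrak d}(x,rR)} f\bigl(\delta_{1/r}(y-x)\bigr)\bigl(\rho(y)-\rho(x)\bigr)\,d\phi(y),
\]
the right-hand side is bounded in modulus by $\lVert f\rVert_\infty r^{-h}\int_{B_{\mathfrak d}(x,rR)}|\rho-\rho(x)|\,d\phi$, which vanishes as $r\to 0$ by the Lebesgue differentiation theorem together with the uniform bound $r^{-h}\phi(B_{\mathfrak d}(x,rR))\lec R^h\Theta^{h,*}(\phi,x)$ valid along any infinitesimal $r_i$. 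The inclusion $\supseteq$ is immediate from the same estimate, giving the claimed equality.

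Finally, (iv) is the standard locality/self-improvement property of tangents: given $\nu\in\Tan_h(\phi,x)$ realised as $r_i^{-h}T_{x,r_i}\phi\rightharpoonup\nu$, and given $y\in\supp(\nu)$, $s>0$, one wants to show $s^{-h}T_{y,s}\nu\in\Tan_h(\phi,x)$. The recipe is to approximate $y$ by points $y_i\in\supp(r_i^{-h}T_{x,r_i}\phi)$, i.e.\ $y_i=\delta_{1/r_i}(z_i-x)$ with $z_i\in\supp(\phi)$, and then consider the doubly rescaled measures $(sr_i)^{-h}T_{z_i,sr_i}\phi$; using the continuity of the group operation and of $\delta_\lambda$ together with a standard diagonal/weak-$\ast$ compactness argument (as in \cite[Proposition 2.15]{Mattila2005MeasuresGroups}), one extracts a subsequence converging to $s^{-h}T_{y,s}\nu$, and since the centres $z_i\to x$ with $sr_i\to 0$, a further diagonal extraction produces an infinitesimal sequence of radii along which $T_{x,\cdot}\phi$, suitably rescaled, converges to the same limit. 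The only point one must verify is that along $\phi$-a.e.\ $x$ we can choose $z_i\in\supp(\phi)$ with $\mathfrak d(z_i,x)/r_i$ bounded, which is guaranteed by the density condition $\Theta^h_*(\phi,x)>0$.

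The main (and essentially only) obstacle is the proof of (iv), and there the subtlety lies exclusively in the diagonal bookkeeping needed to realise $s^{-h}T_{y,s}\nu$ as an honest tangent at $x$; the density hypothesis is what ensures the required centres $z_i\in\supp(\phi)$ exist and that the relevant rescaled masses stay bounded, so no new parabolic input beyond asymptotic doubling and continuity of the dilations $\delta_\lambda$ is needed.
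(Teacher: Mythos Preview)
Your proposal is correct and matches the paper's approach exactly: the paper does not write out a proof but simply states that the proposition is a direct consequence of \cite[Theorem 3.4.3]{HeinonenKoskelaShanmugalingam}, the Lebesgue differentiation theorem in \cite[p.~77]{HeinonenKoskelaShanmugalingam}, and \cite[Proposition 2.15]{Mattila2005MeasuresGroups}, which are precisely the references you invoke. Your sketch in fact supplies more detail than the paper itself, but the underlying route---asymptotic doubling from the density hypothesis, then Lebesgue differentiation for (ii) and (iii), weak-$\ast$ compactness for (i), and the Mattila locality argument for (iv)---is identical.
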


\vv

A particularly relevant class of measures with density is the one of uniform measures.
\begin{definition}\label{uniform}
	We say that a Radon measure $\mu$ on $(\mathbb P^n,\mathfrak d)$ is \textit{$h$-uniform} if $0\in\supp(\mu)$ and
$$\mu(B(x,r))= r^h\qquad\text{for any $r>0$ and  $x\in\supp(\mu)$.}$$
	We denote the set of $h$-uniform measures with the symbol $\mathcal{U}(h)$.
We also write $\mathcal{UC}(h)$ for the family of \textit{dilation-invariant} $h$-uniform measures, i.e.~ those $\mu\in \mathcal{U}(h)$ for which $\lambda^{-h}T_{0,\lambda}\mu=\mu$ for all $\lambda>0$.
\end{definition}

\vv

\begin{proposition}\label{propup}
	Assume that $\phi$ is a measure with $h$-density on $\mathbb{P}^n$. Then, for $\phi$-almost every $x\in\mathbb{P}^n$ we have
	$$\Tan_h(\phi,x)\subseteq \Theta^h(\phi,x)\mathcal{U}(h).$$
\end{proposition}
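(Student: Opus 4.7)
Fix a $\phi$-typical point $x$ at which the density $\Theta_0 \coloneqq \Theta^h(\phi,x)$ exists in $(0,\infty)$, and let $\nu \in \Tan_h(\phi,x)$ be realised by an infinitesimal sequence $r_i \to 0$ with $r_i^{-h}T_{x,r_i}\phi \rightharpoonup \nu$. The goal is to show that every closed ball $B(y,s)$ centred on $\supp(\nu)$ satisfies $\nu(B(y,s)) = \Theta_0\,s^h$. The natural strategy is, for each such $(y,s)$, to choose $z_i \to y$ with $y_i \coloneqq x + \delta_{r_i}(z_i) \in \supp(\phi)$ and to exploit the identity
\[
    r_i^{-h}\,\phi\bigl(B(y_i, sr_i)\bigr) \;=\; s^h \cdot \frac{\phi(B(y_i, sr_i))}{(sr_i)^h}.
\]
Weak convergence forces the left-hand side to tend to $\nu(B(y,s))$ for those $s$ with $\nu(\partial B(y,s)) = 0$ (all but countably many), while the right-hand side should tend to $s^h\,\Theta_0$ provided the density can be evaluated along the drifting centres $y_i$.

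The technical step is to justify this last passage via an Egorov/Lusin reduction. For each $k\in\N$ set
\[
    E_k \;\coloneqq\; \Bigl\{y\in\supp(\phi)\,:\,\sup_{0<r<1/k}\Bigl|\phi(B(y,r))/r^h - \Theta^h(\phi,y)\Bigr| < 1/k\Bigr\},
\]
so that $\phi\bigl(\mathbb{P}^n\setminus\bigcup_k E_k\bigr)=0$ by the assumption that $\phi$ has an $h$-density; by Lusin's theorem we may further pass to a compact $F_k\subseteq E_k$ on which $\Theta^h(\phi,\cdot)$ is continuous and $\phi(E_k\setminus F_k)$ is arbitrarily small. Combining Proposition \ref{tuttitg}(ii) with the Lebesgue differentiation theorem, $\phi$-almost every $x\in F_k$ is a point of $\phi$-density one for $F_k$, and at such an $x$ one has $\Tan_h(\phi,x) = \Tan_h(\phi\llcorner F_k,x)$ with unchanged density $\Theta_0$. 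Replacing $\phi$ by $\phi\llcorner F_k$ at such an $x$, every $y\in\supp(\nu)$ is then approximated by sequences $z_i\to y$ with $y_i\in F_k$, on which the ratio $\phi(B(y_i, sr_i))/(sr_i)^h$ is within $1/k$ of $\Theta^h(\phi, y_i)$ once $sr_i<1/k$, and continuity of $\Theta^h(\phi,\cdot)$ along $F_k$ at $x$ gives $\Theta^h(\phi, y_i)\to \Theta_0$.

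Passing first $i\to\infty$ and then $k\to\infty$ yields $\nu(B(y,s)) = \Theta_0\,s^h$ at every $y\in\supp(\nu)$ and every continuity radius $s$, and right-continuity of closed-ball mass in $s$ extends it to all $s>0$; thus $\Theta_0^{-1}\nu\in\mathcal{U}(h)$, proving the proposition. The main obstacle is exactly the Egorov/Lusin reduction in the middle paragraph, which allows the density to be read off along the drifting centres $y_i\to x$; once that is in place, the remainder is the standard weak-convergence evaluation on balls, taking care to use the standard fact (see e.g. \cite[Chapter 1]{Mattila1995GeometrySpaces}) that $\mu_i\rightharpoonup \mu$ implies $\mu_i(B(y_i,s))\to\mu(B(y,s))$ whenever $y_i\to y$ and $\mu(\partial B(y,s))=0$.
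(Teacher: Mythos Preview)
Your proof is correct and follows essentially the same route as the paper's (which simply defers to the Euclidean argument in \cite[Proposition 3.4]{DeLellis2008RectifiableMeasures}): one reduces via an Egorov/Lusin-type argument to a compact set on which the density ratio is uniformly close to its limit and $\Theta^h(\phi,\cdot)$ is continuous, then evaluates the rescaled measures on balls with drifting centres $y_i\to x$ picked inside that set via Proposition~\ref{propspt1}. The only point you leave implicit is that $0\in\supp(\nu)$, but this is immediate from your own computation with $y=0$ and $y_i=x$.
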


\begin{proof}
	The proof of this proposition follows almost without modifications the one given in the Euclidean case in \cite[Proposition 3.4]{DeLellis2008RectifiableMeasures}.
\end{proof}

\begin{proposition}\label{propspt1}
	Let $\phi$ be a Radon measure on $\mathbb P^n$ and let $\mu\in\Tan_h(\phi,x)$ be such that $r_i^{-h}T_{x,r_i}\phi\rightharpoonup \mu$ for some $r_i\to 0$. Then, for any $y\in\supp(\mu)$ there exists a sequence $\{z_i\}_{i\in\N}\subseteq\supp(\phi)$ such that $\delta_{1/r_i}(z_i-x)\to y$.
\end{proposition}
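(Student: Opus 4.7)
The plan is to use the portmanteau-type lower bound for weak convergence on open sets, which together with the definition of support propagates positivity of $\mu$-mass on small balls around $y$ back to positivity of the rescaled measures on the corresponding balls, and hence to non-emptiness of intersections with $\supp(\phi)$. A standard diagonal argument will then produce the desired sequence.

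More concretely, fix $y\in\supp(\mu)$. For each $k\in\N$ the open ball $U(y,1/k)$ satisfies $\mu(U(y,1/k))>0$ by definition of support. By the weak convergence $r_i^{-h}T_{x,r_i}\phi\rightharpoonup \mu$ and the lower semicontinuity of mass on open sets (see e.g.~\cite[Theorem 1.40]{EvansGariepy92} or the analogue recalled after Definition \ref{def:TangentMeasure}), we have
\[
\liminf_{i\to\infty} r_i^{-h}\,T_{x,r_i}\phi\bigl(U(y,1/k)\bigr)\;\geq\; \mu\bigl(U(y,1/k)\bigr)\;>\;0.
\]
Hence there exists an index $N_k\in\N$ such that, for every $i\geq N_k$,
\[
\phi\bigl(x+\delta_{r_i}(U(y,1/k))\bigr)\;=\;T_{x,r_i}\phi(U(y,1/k))\;>\;0.
\]
Since $\mathbb{P}^n\setminus\supp(\phi)$ is open and $\phi$-null, the open set $x+\delta_{r_i}(U(y,1/k))$ must therefore intersect $\supp(\phi)$.

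Now I would choose the sequence $\{N_k\}$ to be strictly increasing and, for every $i$ with $N_k\leq i<N_{k+1}$, pick some $z_i\in \supp(\phi)\cap\bigl(x+\delta_{r_i}(U(y,1/k))\bigr)$. For such an $i$ one has $z_i-x\in\delta_{r_i}(U(y,1/k))$, and applying the anisotropic dilation $\delta_{1/r_i}$ (which is a homeomorphism of $\mathbb{P}^n$ inverse to $\delta_{r_i}$) gives $\delta_{1/r_i}(z_i-x)\in U(y,1/k)$. Consequently $\delta_{1/r_i}(z_i-x)\to y$ as $i\to\infty$, which is precisely the claim.

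I do not anticipate a real obstacle: the only points to handle carefully are the use of the open-set inequality for weak convergence (to transfer positivity from $\mu$ to the rescalings) and the observation that an open set of positive $\phi$-measure must meet $\supp(\phi)$; both are standard and require no hypothesis beyond $\mu$ being the weak limit of $r_i^{-h}T_{x,r_i}\phi$. Note in particular that no density or doubling assumption on $\phi$ is used.
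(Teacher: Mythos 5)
Your argument is correct and is essentially the paper's proof: the paper invokes the standard Euclidean argument (phrased as a contradiction), which rests on exactly the same portmanteau lower bound $\liminf_i r_i^{-h}T_{x,r_i}\phi(U)\geq\mu(U)$ for open sets $U$ and the fact that an open set of positive $\phi$-measure meets $\supp(\phi)$; you have merely written it in direct rather than contrapositive form. The only (harmless) loose end is that $z_i$ is left undefined for $i<N_1$, where any point of $\supp(\phi)$ will do since convergence is a tail property.
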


\begin{proof}
	A simple argument by contradiction yields the claim; the proof follows verbatim its Euclidean analogue (see \cite[Proposition 3.4]{DeLellis2008RectifiableMeasures}).
\end{proof}

Given a uniform measure we can also define its ``blowups at infinity", which are often referred to as \textit{blowdowns}.
\begin{definition}\label{def:blowdowns}
    Let $\mu\in\mathcal U(h)$. We say that a Radon measure $\nu$ is a \textit{tangent at infinity} of $\mu$ if there exists a sequence $\{R_i\}\to\infty$ such that
\[R_i^{-h}T_{0,R_i}\mu\rightharpoonup \nu.\]

We denote as $\Tan_h(\mu,\infty)$ the set of tangent measures at infinity of $\mu$.
\end{definition}

The following proposition is a strengthened version of Proposition \ref{propup} for uniform measures. 
\begin{proposition}\label{uniformup}
	Assume that $\mu$ is an $h$-uniform measure on $\mathbb P^n$. Then for any $z\in\supp(\mu)\cup\{\infty\}$  we have
	$$\emptyset\neq\Tan_h(\mu, z)\subseteq \mathcal{U}(h).$$
\end{proposition}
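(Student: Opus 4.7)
My plan is to treat the two cases $z\in\supp(\mu)$ and $z=\infty$ in parallel. In the first case I set $\mu_i:=r_i^{-h}T_{z,r_i}\mu$ for some sequence $r_i\to 0$; in the second I set $\mu_i:=R_i^{-h}T_{0,R_i}\mu$ for some $R_i\to\infty$. The crucial identity, valid in both settings and coming from the homogeneity of $\mathfrak d$ together with the $h$-uniformity of $\mu$ based at $z$ (respectively at $0$), is
\[
    \mu_i(B(0,r))=r^h \qquad \text{for every } r>0 \text{ and every } i\in\N.
\]
Non-emptiness is then easy: when $z\in\supp(\mu)$, $\mu$ has $h$-density $1$ at $z$ and Proposition~\ref{tuttitg}(i) immediately gives $\Tan_h(\mu,z)\neq\emptyset$; when $z=\infty$, the identity above makes $\{\mu_i\}$ uniformly bounded on compacta, so the standard weak-$\ast$ compactness of locally finite Radon measures produces a convergent subsequence $\mu_{i_k}\rightharpoonup \nu$.

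To prove that any such limit $\nu$ lies in $\mathcal{U}(h)$, I first establish $\nu(B(0,r))=r^h$ for every $r>0$, which in particular forces $0\in\supp(\nu)$. The portmanteau theorem applied to closed balls yields $\nu(B(0,r))\geq\limsup_i\mu_i(B(0,r))=r^h$; applied to the open ball $U(0,r')$ with $r'>r$ it yields $\nu(U(0,r'))\leq\liminf_i\mu_i(U(0,r'))\leq (r')^h$. Since $\bigcap_{r'>r}U(0,r')=B(0,r)$ and $\nu$ is locally finite, letting $r'\searrow r$ recovers $\nu(B(0,r))\leq r^h$, whence equality. Next, for an arbitrary $y\in\supp(\nu)$, Proposition~\ref{propspt1}---whose proof is a contradiction argument that adapts word for word to the blowdown case---produces $z_i\in\supp(\mu)$ such that $y_i:=\delta_{1/r_i}(z_i-z)\to y$ (respectively $y_i:=\delta_{1/R_i}(z_i)\to y$). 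By construction $y_i\in\supp(\mu_i)$, and the uniformity of $\mu$ together with the homogeneity of $\mathfrak d$ forces $\mu_i(B(y_i,r))=r^h$ for every $r>0$. For $r'<r<r''$ and $i$ sufficiently large one has the sandwich
\[
    B(y_i,r')\subseteq B(y,r)\subseteq U(y_i,r''),
\]
and a second application of portmanteau gives $(r')^h\leq\nu(B(y,r))\leq (r'')^h$; passing $r',r''\to r$ yields $\nu(B(y,r))=r^h$, so $\nu\in\mathcal{U}(h)$.

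The main obstacle is that weak-$\ast$ convergence does not transfer ball equalities directly: one must interpolate between open and closed balls via portmanteau, and crucially must work with balls centered at the approximating points $y_i\in\supp(\mu_i)$ supplied by Proposition~\ref{propspt1}, since the exact mass identity $\mu_i(B(\cdot,r))=r^h$ is only available at support points of $\mu_i$---the point $y\in\supp(\nu)$ itself need not belong to $\supp(\mu_i)$ at any finite stage.
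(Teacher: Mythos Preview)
Your argument is correct and is precisely the ``straightforward adaptation'' of \cite[Lemma 3.6]{DeLellis2008RectifiableMeasures} that the paper invokes without writing out: compactness from the uniform ball-mass identity $\mu_i(B(0,r))=r^h$, followed by portmanteau sandwiching (first at the origin, then at an arbitrary $y\in\supp(\nu)$ via the approximating support points furnished by Proposition~\ref{propspt1}). One small inaccuracy: Proposition~\ref{tuttitg}(i) is stated as an almost-everywhere conclusion, so it does not literally give $\Tan_h(\mu,z)\neq\emptyset$ at a \emph{specific} $z\in\supp(\mu)$; but this is harmless, since the compactness argument you already use for $z=\infty$ applies verbatim to the blowup case as well (the sequence $\{\mu_i\}$ satisfies $\mu_i(B(0,R))=R^h$ for every $R>0$, hence has a weak-$\ast$ convergent subsequence).
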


\begin{proof}
	A straightforward adaptation of the proof of \cite[Lemma 3.6]{DeLellis2008RectifiableMeasures} yields the sought conclusion.
\end{proof}

\begin{lemma}\label{replica}
	If $\{\mu_i\}_{i\in\N}$ is a sequence of $h$-uniform measures on $\mathbb P^n$ converging weakly to a measure $\nu$, then:
	\begin{itemize}
		\item[(i)] $\nu$ is an $h$-uniform measure.
		\item[(ii)] If $y\in\supp(\nu)$ then there exists a sequence $\{y_i\}_{i\in\N}\subseteq \mathbb{P}^n$ such that $y_i\in\supp(\mu_i)$ and $y_i\to y$.
		\item[(iii)] If there exists a sequence $\{y_i\}_{i\in\N}\subseteq \mathbb{P}^n$ such that $y_i\in\supp(\mu_i)$ and $y_i\to y$, then $y\in\supp(\nu)$.
	\end{itemize}
\end{lemma}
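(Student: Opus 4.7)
The plan is to dispose of (iii) and (ii) first, since they concern only supports, and then to use them to prove (i). All three statements follow from the $C_c$-convergence $\int f \, d\mu_i \to \int f \, d\nu$ combined with the triangle inequality for $\mathfrak d$ and the exact density formula $\mu_i(B(y,r)) = r^h$ for $y \in \supp(\mu_i)$; no particular feature of the ambient homogeneous metric is needed.

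For (iii), given $y_i \in \supp(\mu_i)$ with $y_i \to y$, I would fix $r > 0$ and pick $\varphi \in C_c(\mathbb P^n, [0,1])$ supported in $B(y,r)$ and equal to $1$ on $B(y, r/2)$. For $i$ large, $\mathfrak d(y_i, y) < r/4$, so the triangle inequality gives $B(y_i, r/4) \subseteq B(y, r/2)$, and hence $\int \varphi \, d\mu_i \geq \mu_i(B(y_i, r/4)) = (r/4)^h$. Passing to the limit yields $\nu(B(y,r)) \geq \int \varphi\, d\nu \geq (r/4)^h$, so $y \in \supp(\nu)$. For (ii) I argue by contradiction: if $y \in \supp(\nu)$ but no sequence $y_i \in \supp(\mu_i)$ converges to $y$, there exist $r > 0$ and a subsequence with $\supp(\mu_i) \cap B(y,r) = \varnothing$, so $\int \psi\, d\mu_i = 0$ for every $\psi \in C_c(U(y,r))$; the limit then gives $\int \psi \, d\nu = 0$ for all such $\psi$, whence $\nu(U(y,r)) = 0$, contradicting $y \in \supp(\nu)$.

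For (i) I first apply (iii) to the constant sequence $0 \in \supp(\mu_i)$ to obtain $0 \in \supp(\nu)$. For the density equality, fix $x \in \supp(\nu)$ and use (ii) to pick $x_i \in \supp(\mu_i)$ with $x_i \to x$. For any $r > 0$ and $\epsilon \in (0,r)$, the triangle inequality yields, once $\mathfrak d(x_i, x) < \epsilon$,
\[
B(x_i, r - \epsilon) \subseteq B(x, r) \subseteq B(x_i, r + \epsilon),
\]
whence $(r-\epsilon)^h \leq \mu_i(B(x,r)) \leq (r+\epsilon)^h$. To transfer this to $\nu$, I sandwich $\one_{B(x,r)}$ between cutoffs $\varphi^-, \varphi^+ \in C_c$ with $\one_{B(x, r-\delta)} \leq \varphi^- \leq \one_{B(x,r)}$ and $\one_{B(x,r)} \leq \varphi^+ \leq \one_{B(x, r+\delta)}$; the inclusions above give $\int \varphi^-\, d\mu_i \geq (r-\delta-\epsilon)^h$ and $\int \varphi^+\, d\mu_i \leq (r+\delta+\epsilon)^h$ for $i$ large, and $C_c$-convergence transfers these bounds to $\nu$. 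Combining with $\int \varphi^-\, d\nu \leq \nu(B(x,r)) \leq \int \varphi^+\, d\nu$ and sending $\delta, \epsilon \to 0$ then forces $\nu(B(x,r)) = r^h$. The only slightly subtle point is that the raw portmanteau inequalities $\nu(G) \leq \liminf \mu_i(G)$ and $\nu(K) \geq \limsup \mu_i(K)$ point in opposite directions on open vs.\ closed sets; the cutoff sandwich sidesteps this by working with genuine $C_c$-convergence.
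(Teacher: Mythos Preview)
Your proof is correct and follows the standard route that the paper has in mind: the paper's own ``proof'' merely points to Propositions \ref{propup} and \ref{propspt1} (themselves adaptations of the Euclidean arguments in \cite{DeLellis2008RectifiableMeasures}), and your cutoff/sandwich argument is exactly how those results are established. The only cosmetic point is that in (ii) the negation strictly gives $\supp(\mu_{i_k})\cap U(y,r)=\varnothing$ rather than with the closed ball, but this is what you actually use when testing against $\psi\in C_c(U(y,r))$.
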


\begin{proof}
	The proof of this lemma is an almost immediate adaptation of the proofs of Propositions \ref{propup} and \ref{propspt1}.
\end{proof}

\begin{proposition}\label{UComp}
	For any $h\in[0,n+2]$ the set $\mathcal{U}(h)$ is compact with respect to the convergence of measures.
\end{proposition}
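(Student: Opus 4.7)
The plan is to reduce the compactness of $\mathcal U(h)$ to two ingredients already available in the excerpt: the uniform mass bound satisfied by every $h$-uniform measure, and Lemma \ref{replica}(i), which guarantees that weak limits of $h$-uniform measures are again $h$-uniform. Since $(\mathbb{P}^n,\mathfrak d)$ is a locally compact separable metric space, it suffices to prove sequential compactness with respect to the weak convergence of Definition \ref{convdeb}.

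First I would fix an arbitrary sequence $\{\mu_i\}_{i\in\N}\subseteq\mathcal U(h)$. By Definition \ref{uniform}, $0\in\supp(\mu_i)$ for every $i$, so
\[
\mu_i(B(0,R))=R^h\qquad\text{for every }R>0\text{ and every }i\in\N.
\]
Since every compact subset of $\mathbb P^n$ is contained in some ball $B(0,R)$, the sequence $\{\mu_i\}$ is uniformly bounded on compact sets.

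Next I would invoke the standard weak compactness theorem for Radon measures on locally compact separable metric spaces (see, e.g., \cite[Theorem 1.23]{Mattila1995GeometrySpaces}): any such uniformly locally bounded sequence admits a subsequence converging weakly to a Radon measure. Let $\{\mu_{i_k}\}$ be such a subsequence and $\nu$ its weak limit. Applying Lemma \ref{replica}(i) directly yields $\nu\in\mathcal U(h)$, which proves sequential compactness, and hence compactness, of $\mathcal U(h)$.

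There is no genuine obstacle at the level of this proposition: the only delicate point, namely verifying that the identity $\nu(B(x,r))=r^h$ survives weak convergence despite the usual semicontinuity subtleties relating weak convergence to the masses of open versus closed balls, has already been absorbed into Lemma \ref{replica}(i). That lemma in turn reduces, via Propositions \ref{propup} and \ref{propspt1}, to the standard fact that $\limsup$ and $\liminf$ of $\mu_{i_k}(B(x,r))$ sandwich $\nu(B(x,r))$ at all but countably many radii, which is enough to force the uniformity $\nu(B(x,r))=r^h$ on $\supp(\nu)$ and to propagate $x_{i_k}\in\supp(\mu_{i_k})\to x$ back to $x\in\supp(\nu)$.
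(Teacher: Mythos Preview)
Your proof is correct and follows essentially the same route as the paper: you establish the uniform local mass bound from $0\in\supp(\mu_i)$, apply a standard weak compactness theorem for Radon measures (the paper cites \cite[Proposition 1.12]{Preiss1987GeometryDensities} instead of Mattila), and then invoke Lemma~\ref{replica}(i) to conclude that the limit belongs to $\mathcal U(h)$. The extra paragraph you include about why Lemma~\ref{replica}(i) handles the semicontinuity issue is accurate but not needed for the argument itself.
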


\begin{proof}
	Let $\{\mu_i\}_{i\in\N}$ be a sequence of measures in $\mathcal{U}(h)$. Then, by definition we have $\mu_i(B(x,r))\leq r^h$. Proposition 1.12 in \cite{Preiss1987GeometryDensities} implies the existence of a Radon measure $\mu$ such that $\mu_i\rightharpoonup\mu$. Proposition \ref{replica}-(i) concludes the proof.
\end{proof}

\vv

\subsection{Basic properties of uniform measures in \texorpdfstring{$\mathbb{P}^n$}{Lg}}

In this subsection we present some elementary properties of uniform measures on $(\mathbb P^n, \mathfrak d)$, where $\mathfrak d$ is a homogeneous metric.

\begin{proposition}\label{isometrie}
	Let $\Sigma\colon(\mathbb{P}^n,\lVert\cdot\rVert)\to(\mathbb{P}^n,\lVert\cdot\rVert)$ be a surjective isometry. If $\mu\in \mathcal{U}(h)$ and there exists $u\in\supp(\mu)$ such that $\Sigma(u)=0$, then $\Sigma_\#(\mu)\in\mathcal{U}(h)$ and
	$\supp(\Sigma_\#(\mu))=\Sigma(\supp(\mu))$.
\end{proposition}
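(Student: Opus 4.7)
The plan is to verify the two claims directly using the definition of $h$-uniform measure and the fact that a surjective isometry maps closed balls to closed balls and supports to supports. The argument is essentially formal manipulation; the key point to make rigorous is the interaction between $\Sigma$ and the metric balls $B_{\mathfrak d}(x,r)$.

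First I would establish the support identity. Since $\Sigma$ is a surjective isometry of $(\mathbb P^n,\|\cdot\|)$, it is a homeomorphism, and therefore for any Radon measure $\mu$ on $\mathbb P^n$ the pushforward satisfies $\supp(\Sigma_\#\mu)=\Sigma(\supp(\mu))$; this is a standard topological statement (a point $x$ lies in the support of $\Sigma_\#\mu$ iff every open neighbourhood $U$ of $x$ has positive $\Sigma_\#\mu$-measure, iff $\Sigma^{-1}(U)$ — which ranges over all open neighbourhoods of $\Sigma^{-1}(x)$ — has positive $\mu$-measure, iff $\Sigma^{-1}(x)\in\supp(\mu)$). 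In particular, since $\Sigma(u)=0$ with $u\in\supp(\mu)$, we obtain $0\in\supp(\Sigma_\#\mu)$, which is the origin condition in Definition~\ref{uniform}.

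Next I would check the uniform density condition. Fix $x\in\supp(\Sigma_\#\mu)=\Sigma(\supp(\mu))$ and $r>0$, and write $x=\Sigma(y)$ with $y\in\supp(\mu)$. Because $\Sigma$ is a bijective isometry, $\Sigma^{-1}$ is also an isometry, and hence
\[
\Sigma^{-1}\bigl(B(x,r)\bigr)=\bigl\{z\in\mathbb P^n:\|\Sigma(z)-x\|=\|\Sigma(z)-\Sigma(y)\|\le r\bigr\}=\bigl\{z\in\mathbb P^n:\|z-y\|\le r\bigr\}=B(y,r).
\]
Therefore, by the definition of the pushforward and the $h$-uniformity of $\mu$,
\[
\Sigma_\#\mu\bigl(B(x,r)\bigr)=\mu\bigl(\Sigma^{-1}(B(x,r))\bigr)=\mu\bigl(B(y,r)\bigr)=r^{h}.
\]
Combining this with the previous paragraph shows $\Sigma_\#\mu\in\mathcal U(h)$ and $\supp(\Sigma_\#\mu)=\Sigma(\supp(\mu))$, concluding the proof.

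There is no real obstacle here; the only subtlety is ensuring that the isometry property is applied to the metric $\|\cdot\|$ rather than the group translation or dilations, which is exactly what the hypothesis provides. All other steps are immediate once $\Sigma^{-1}(B(x,r))=B(\Sigma^{-1}(x),r)$ is observed.
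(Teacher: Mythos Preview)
Your proof is correct and is precisely the elementary verification one would expect; the paper itself does not give an argument but simply refers to \cite[Proposition 2.8]{MerloG1cod}, whose content is exactly what you have written out.
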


\begin{proof}
	See \cite[Proposition 2.8]{MerloG1cod}.
\end{proof}

\begin{proposition}\label{supportoK}
	If $\mu$ is an $h$-uniform measure on $(\mathbb{P}^n,\mathfrak d)$, then $\mu=\mathcal{C}^{h}\llcorner{\supp(\mu)}$.
\end{proposition}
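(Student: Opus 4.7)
The plan is to establish the identity $\mathcal{C}^h_0(E) = \mu(E)$ for every Borel set $E \subseteq \supp(\mu)$. Granted this, the stated identity $\mu = \mathcal{C}^h \llcorner \supp(\mu)$ follows: by the definition $\mathcal{C}^h(A) = \sup_{E \subseteq A}\mathcal{C}^h_0(E)$, together with the fact that $\mu(A)=\mu(A\cap \supp(\mu))$ and the monotonicity of $\mu$, the supremum of $\mathcal{C}^h_0(E)=\mu(E)$ over Borel $E\subseteq A\cap \supp(\mu)$ equals $\mu(A)$.

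The lower bound $\mathcal{C}^h_0(E) \geq \mu(E)$ is immediate from the defining property of a uniform measure: any countable cover $\{B(x_j, r_j)\}$ of $E$ with $x_j \in E \subseteq \supp(\mu)$ and $r_j\le\delta$ satisfies $\mu(B(x_j, r_j)) = r_j^h$, so by countable subadditivity $\mu(E) \leq \sum_j \mu(B(x_j, r_j)) = \sum_j r_j^h$; then take the infimum over such covers and the supremum over $\delta > 0$.

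The upper bound $\mathcal{C}^h_0(E) \leq \mu(E)$ is the heart of the argument. Since $h$-uniformity forces $\mu$ to be doubling on $\supp(\mu)$ with constant $2^h$, Vitali's covering theorem applies on $(\mathbb{P}^n,\mathfrak d)$. Given $\varepsilon,\delta>0$, use outer regularity to pick an open $U \supseteq E$ with $\mu(U) \leq \mu(E) + \varepsilon$, and consider the fine cover of $E$ by balls $B(x, r)$ with $x \in E$, $B(x, r) \subseteq U$, and $r \leq \delta/5$. Vitali produces a countable disjoint subfamily $\{B(x_j, r_j)\}$ with $\mu(E \setminus \bigcup_j B(x_j, r_j)) = 0$, and disjointness plus uniformity give
\[
\sum_j r_j^h \;=\; \mu\Big(\bigcup_j B(x_j, r_j)\Big) \;\leq\; \mu(U) \;\leq\; \mu(E) + \varepsilon.
\]

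The main obstacle is that this yields only a $\mu$-almost-everywhere cover of $E$, whereas $\mathcal{C}^h_0$ demands a genuine cover centered in $E$. To handle the residual $\mu$-null set $N \coloneqq E \setminus \bigcup_j B(x_j, r_j)$, for each $\eta > 0$ pick an open $V \supseteq N$ with $\mu(V) \leq \eta$ and apply the $5r$-covering lemma (valid in every metric space) to $\{B(x, s) : x \in N,\ B(x, s) \subseteq V,\ s \leq \delta/5\}$, obtaining a disjoint subcollection $\{B(y_k, s_k)\}$ with $N \subseteq \bigcup_k B(y_k, 5s_k)$. The enlarged balls have centers $y_k \in N \subseteq E$, radii $5s_k \leq \delta$, and total $h$-content $5^h \sum_k \mu(B(y_k, s_k)) \leq 5^h \eta$. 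Concatenating the two families produces a legitimate cover of $E$ by balls centered in $E$ with radii $\leq \delta$ and total $h$-content bounded by $\mu(E) + \varepsilon + 5^h \eta$. Letting $\eta\to 0$ and $\varepsilon\to 0$ closes the argument. This is the metric-space incarnation of the classical Kirchheim--Preiss strategy, and the only things to verify are that Vitali's theorem applies (via the doubling of $\mu$ on $\supp(\mu)$) and that the $5r$-lemma is purely metric, both of which hold without any further hypothesis on $\mathfrak d$.
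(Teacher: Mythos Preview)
Your argument is correct, and it takes a more elementary, self-contained route than the paper. The paper simply invokes \cite[Theorem~3.1]{FSSCArea} (a Federer-type density theorem for the centered Hausdorff measure in homogeneous groups), which immediately gives $\mu=\mathcal{C}^h\llcorner\supp(\mu)$ from the fact that $\mu(B(x,r))=r^h$ forces $\Theta^h(\mu,x)\equiv 1$ on $\supp(\mu)$. Your approach instead proves the identity $\mathcal{C}^h_0(E)=\mu(E)$ directly via Vitali's theorem for the doubling measure $\mu$ and the $5r$-lemma for the residual null set; this is exactly the argument underlying the cited density theorem, specialized to the uniform case. The gain of your route is that it is fully self-contained and makes transparent why nothing about $\mathfrak d$ beyond being a metric is needed; the paper's route is shorter but hides the mechanism behind a citation.

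One minor point to tighten: in your opening reduction you restrict to Borel $E$, but the supremum defining $\mathcal{C}^h$ runs over \emph{all} subsets, and $\mathcal{C}^h_0$ is not monotone. Your Vitali/5r argument for the upper bound in fact works verbatim for arbitrary $E\subseteq\supp(\mu)$ (outer regularity of the Radon measure $\mu$ and the Vitali theorem for doubling measures both apply to arbitrary sets), yielding $\mathcal{C}^h_0(E)\le\mu^*(E)$; together with the trivial lower bound $\mathcal{C}^h_0(E)\ge\mu^*(E)$ this gives $\mathcal{C}^h_0(E)=\mu^*(E)$ for every $E\subseteq\supp(\mu)$, from which $\mathcal{C}^h\llcorner\supp(\mu)=\mu$ follows cleanly. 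Simply drop the word ``Borel'' and replace $\mu(E)$ by $\mu^*(E)$ throughout.
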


\begin{proof}
	The claim follows immediately from \cite[Theorem 3.1]{FSSCArea} and Definition \ref{uniform} (see also \cite[Proposition 2.9]{MerloG1cod}).
\end{proof}

\begin{proposition}\label{verticalsamoa}
Suppose that $\mu$ is an $h$-uniform measure on $(\mathbb P^n,\mathfrak d)$ for which there exists $V\in\mathrm{Gr}(h)$ such that $\supp(\mu)\subseteq V$. Then,
$\mu=\mathcal{C}^h\llcorner V$.
\end{proposition}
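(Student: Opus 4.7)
The plan is to upgrade the equality $\mu=\mathcal{C}^h\llcorner \supp(\mu)$ supplied by Proposition \ref{supportoK} using the hypothesis $\supp(\mu)\subseteq V$, by showing that $\supp(\mu)$ must fill up all of $V$. Throughout, let $\lambda\coloneqq \mathcal{C}^h\llcorner V$; since the metric $\mathfrak d$ is translation-invariant and homogeneous and $V\in \Gr(h)$ is a subgroup closed under the dilations $\delta_r$, for every $x\in V$ and every $r>0$ one has
\[
\lambda(B(x,r))=\mathcal{C}^h\bigl(x+\delta_r(B(0,1)\cap V)\bigr)=c\,r^h,\qquad c\coloneqq \mathcal{C}^h(B(0,1)\cap V)\in(0,\infty),
\]
so that $\lambda$ behaves like a Haar measure on $V$ (in particular it is doubling on $V$ and assigns positive measure to any non-empty relatively open subset of $V$).

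Next I would write $\mu$ as a density with respect to $\lambda$. Since $\supp(\mu)\subseteq V$, for every Borel set $A\subseteq \mathbb{P}^n$ one has
\[
\mu(A)=\mathcal{C}^h(A\cap \supp(\mu))=\mathcal{C}^h(A\cap V\cap \supp(\mu))=\lambda(A\cap \supp(\mu))=\int_A \mathbf{1}_{\supp(\mu)}\,d\lambda,
\]
that is, $\mu=\mathbf{1}_{\supp(\mu)}\lambda$. Apply Lebesgue differentiation for the doubling measure $\lambda$ at an arbitrary $x\in \supp(\mu)$:
\[
1=\mathbf{1}_{\supp(\mu)}(x)=\lim_{r\to 0}\frac{\mu(B(x,r))}{\lambda(B(x,r))}=\lim_{r\to 0}\frac{r^h}{c\,r^h}=\frac{1}{c},
\]
using that $\mu\in \mathcal{U}(h)$. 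Hence $c=1$, so $\lambda(B(x,r))=r^h$ for every $x\in V$ and $r>0$.

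To finish, fix any $x\in \supp(\mu)$ (in particular $x=0$) and any $r>0$. Then
\[
\lambda(B(x,r)\setminus \supp(\mu))=\lambda(B(x,r))-\lambda(B(x,r)\cap \supp(\mu))=r^h-\mu(B(x,r))=0.
\]
Taking $x=0$ and $r\to\infty$ gives $\lambda(V\setminus \supp(\mu))=0$. But $\supp(\mu)$ is closed, so $V\setminus \supp(\mu)$ is relatively open in $V$; since $\lambda$ is positive on non-empty relatively open subsets of $V$ (any such set contains a metric ball $B(y,\rho)\cap V$ with $y\in V$ and $\rho>0$, which has $\lambda$-measure $\rho^h>0$), we must have $V\setminus \supp(\mu)=\emptyset$. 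Therefore $\supp(\mu)=V$ and $\mu=\mathcal{C}^h\llcorner V$.

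The only mildly delicate step is the Haar-like property of $\lambda$ on $V$ (translation invariance along $V$ and scaling under $\delta_r$), which I would justify directly from the translation-invariance and homogeneity of $\mathfrak d$ together with the fact, ensured by Lemma \ref{homplanesstructure}, that $V$ is a concrete product of a linear subspace of $\R^n$ with either $\{0\}$ or $\R e_{n+1}$, so that $c\in(0,\infty)$ and $\lambda$ is positive on relatively open subsets of $V$. Everything else is a bookkeeping consequence of Proposition \ref{supportoK} and the uniformity of $\mu$.
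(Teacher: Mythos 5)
Your proof is correct in substance, and it reaches the conclusion by a route that differs from the paper's at one key point. Both arguments start from Proposition \ref{supportoK} ($\mu=\mathcal{C}^h\llcorner\supp(\mu)$) and both conclude by showing that $\supp(\mu)$ cannot miss any relatively open piece of $V$ (the paper does this by a ball-counting contradiction at a large radius, you do it by showing $\mathcal{C}^h\llcorner V(V\setminus\supp(\mu))=0$ and using openness of the complement; these are essentially the same observation). The genuine difference is the normalization: the paper simply invokes \cite[Theorem 3.1]{FSSCArea} to assert that $\mathcal{C}^h\llcorner V$ is the Haar measure of $V$ with \emph{unit} density, i.e.\ $\mathcal{C}^h\llcorner V(B(x,r))=r^h$, whereas you only assume $\mathcal{C}^h\llcorner V(B(x,r))=c\,r^h$ with $0<c<\infty$ (which still requires an argument or a citation --- Hausdorff dimension $h$ alone does not give $0<\mathcal{C}^h(B(0,1)\cap V)<\infty$, though it follows from the explicit structure in Lemma \ref{homplanesstructure} by comparison with $\mathcal{H}^{h-2}_{\mathrm{eu}}\times\mathcal{L}^1$ or $\mathcal{H}^{h}_{\mathrm{eu}}$) and then recover $c=1$ from the uniformity of $\mu$ via Lebesgue differentiation. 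This buys you independence from the sharp constant in the cited area formula, at the price of needing the differentiation theorem for the doubling measure $\lambda=\mathcal{C}^h\llcorner V$.

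One small repair: you cannot apply the differentiation theorem ``at an arbitrary $x\in\supp(\mu)$'' --- it only holds at $\lambda$-almost every point. The fix is immediate: since $\supp(\mu)\subseteq V$, one has $\lambda(\supp(\mu)\cap B(0,1))=\mathcal{C}^h\llcorner\supp(\mu)(B(0,1))=\mu(B(0,1))=1>0$, so there exists at least one point $x\in\supp(\mu)$ at which the differentiation theorem applies, and a single such point already yields $1=1/c$. With that adjustment the argument is complete.
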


\begin{proof}
By Proposition \ref{supportoK}, for any $x\in\supp(\mu)$ and any $r>0$ we have
\begin{equation}\label{eq:cmsuppmu1}
    \mathcal{C}^h\llcorner {\supp(\mu)} (B(x,r))=\mu(B(x,r))=r^h.
\end{equation}

Moreover, \cite[Theorem 3.1]{FSSCArea} implies that $\mathcal{C}^h\llcorner V$ is the Haar measure of $V$ with unit density. So, for any $r>0$ we have
\begin{equation}\label{eq:formulaCmsuppmu}
	\mathcal{C}^h\llcorner \supp(\mu) \bigl(B(0,r)\bigr)=r^h=\mathcal{C}^h\llcorner V (B(0,r)).
\end{equation}

We are left with the proof that $\supp (\mu)= V$. We argue by contradiction and assume that $\supp(\mu)\neq V$. Hence, since $\supp(\mu)$ is closed in $V$, there exist $p\in V$ and $r_0>0$ such that $B(p,r_0)\cap \text{supp} (\mu)=\emptyset$. We also observe that $\mathcal{C}^h(B(p,r_0)\cap V)>0$ since $\mathcal{C}^h\llcorner V$ is a Haar measure of $V$. Thus, as $0\in \supp(\mu)$ because $\mu$ is $h$-uniform, it holds
\begin{equation}
    \begin{split}
      \mathcal{C}^h\llcorner V\bigl(B\bigl(0,2(\lVert p\rVert +r_0)\bigr)\bigr)
        \geq& \mathcal{C}^h\bigl(B\bigl(0,2(\lVert p\rVert +r_0)\bigr)\cap \text{supp}(\mu)\bigr)+\mathcal{C}^h(B(p,r_0)\cap V)\\
        >&	\mathcal{C}^h\llcorner \supp(\mu)\bigl(B(0,2(\lVert p\rVert +r_0))\bigr)\overset{\eqref{eq:cmsuppmu1}}{=}2^h(\lVert p\rVert +r_0)^h,
        \nonumber
    \end{split}
\end{equation}
which contradicts \eqref{eq:formulaCmsuppmu} and finishes the proof.
\end{proof}

\vv

\begin{definition}[Radially symmetric functions]
	We say that a function $\varphi\colon\mathbb{P}^n\rightarrow\R$ is \textit{radially symmetric} if there exists a profile function $g\colon [0,\infty)\rightarrow \R$ such that $\varphi(z)=g(\lVert z\rVert)$.
\end{definition}

Integrals of radially symmetric functions with respect to uniform measures can be computed according to the following formula.

\begin{proposition}\label{prop5}
	Let $\mu\in\mathcal{U}(h)$ and suppose that $\varphi\colon\mathbb{P}^n\rightarrow\R$ is a radially symmetric non-negative function. Then, for any $u\in\supp(\mu)$ we have
	\begin{equation}\label{eq:rad_int}
		\int  \varphi(z-u)\, d\mu(z)=h\int_0^\infty r^{h-1}g(r)\, dr,
	\end{equation}
	where $g$ is the profile function associated to $\varphi$.
\end{proposition}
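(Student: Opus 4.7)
The plan is to prove the identity via a pushforward/Cavalieri argument. First I would introduce the one-dimensional auxiliary measure $\rho_u$ on $[0,\infty)$ defined as the pushforward of $\mu$ under the map $z\mapsto \lVert z-u\rVert$, i.e.
\[
\rho_u(E)\coloneqq \mu\bigl(\{z\in\mathbb P^n:\lVert z-u\rVert\in E\}\bigr)\qquad\text{for every Borel }E\subseteq[0,\infty).
\]
Since $\varphi(z)=g(\lVert z\rVert)$ by hypothesis, the abstract change-of-variables formula for pushforward measures immediately gives
\[
\int \varphi(z-u)\,d\mu(z)=\int g(\lVert z-u\rVert)\,d\mu(z)=\int_0^\infty g(t)\,d\rho_u(t),
\]
so the entire problem reduces to identifying $\rho_u$.

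The second step is to show $\rho_u=h\,t^{h-1}\mathcal{L}^1\llcorner[0,\infty)$. Because $u\in\supp(\mu)$ and $\mu\in\mathcal U(h)$, Definition \ref{uniform} yields
\[
\rho_u([0,r])=\mu(B(u,r))=r^h\qquad\text{for every }r\geq 0.
\]
The measure $\eta\coloneqq h\,t^{h-1}\mathcal{L}^1\llcorner[0,\infty)$ satisfies exactly the same identity $\eta([0,r])=r^h$. Both $\rho_u$ and $\eta$ are locally finite on $[0,\infty)$ and agree on the $\pi$-system $\{[0,r]:r\geq 0\}$, which generates the Borel $\sigma$-algebra of $[0,\infty)$. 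Applying Dynkin's $\pi$–$\lambda$ theorem on each truncation $[0,N]$ and letting $N\to\infty$ gives $\rho_u=\eta$.

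Combining the two steps produces
\[
\int \varphi(z-u)\,d\mu(z)=\int_0^\infty g(t)\,d\rho_u(t)=h\int_0^\infty r^{h-1}g(r)\,dr,
\]
which is exactly \eqref{eq:rad_int}. There is no substantive obstacle in this plan; the only point deserving a moment of care is the measure-theoretic uniqueness on an unbounded interval, which is handled by the truncation argument above. The non-negativity of $\varphi$ is needed only to ensure that both integrals make sense in $[0,\infty]$, so the identity remains meaningful even when either side is infinite.
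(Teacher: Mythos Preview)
Your proof is correct. The paper's own proof proceeds by first checking \eqref{eq:rad_int} for simple functions $\varphi=\sum_{i=1}^k a_i\chi_{B(0,r_i)}$ and then passing to the limit via Beppo Levi, whereas you identify the pushforward of $\mu$ under $z\mapsto\lVert z-u\rVert$ and invoke measure uniqueness on the $\pi$-system of intervals; these are two standard packagings of the same layer-cake/Cavalieri idea and neither offers a real advantage over the other.
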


\begin{proof}
	The proof is a standard argument. First one proves the formula for simple functions of the form
	$\varphi(z)\coloneqq\sum_{i=1}^k a_i\chi_{B(0, r_i)}$,
	where $a_i,r_i\geq 0$ for any $i=1,\ldots,k$. The result for a general function $\varphi$ as in the statement follows by Beppo Levi's convergence theorem. We omit further details.
\end{proof}

An immediate application of the previous proposition is the following result.

\begin{corollary}
	\label{prop1}
	For any $p\geq 0$, any $\mu\in\mathcal{U}(h)$, and any $u\in\supp(\mu)$, we have
	\begin{equation}
		\int  \lVert z-u\rVert^p e^{-s\lVert   z-u\rVert^4}\, d\mu(z)=\frac{h}{4s^\frac{h+p}{4}}\,\Gamma\Bigl(\frac{h+p}{4}\Bigr),
		\nonumber
	\end{equation}
where $\Gamma(t)\coloneqq \int_0^\infty s^{t-1}e^{-s}\, ds$ for $t>0$ denotes Euler's gamma function.
\end{corollary}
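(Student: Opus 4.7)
The plan is to view the corollary as a direct specialization of \Proposition{prop5}. First I would note that the function $\varphi\colon \mathbb{P}^n\to\R$ defined by $\varphi(z)\coloneqq \lVert z\rVert^p e^{-s\lVert z\rVert^4}$ is non-negative and radially symmetric, with profile $g\colon [0,\infty)\to \R$ given by $g(r)=r^p e^{-sr^4}$. Applying \Proposition{prop5} with this choice of $\varphi$ to an arbitrary $u\in\supp(\mu)$ immediately yields
\[
\int \lVert z-u\rVert^p e^{-s\lVert z-u\rVert^4}\, d\mu(z) = h\int_0^\infty r^{h+p-1}e^{-sr^4}\, dr.
\]

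The remaining task is a one-variable integral identity that reduces to Euler's $\Gamma$ function. I would perform the change of variable $t=sr^4$, so that $r=(t/s)^{1/4}$ and $dr=\tfrac{1}{4}s^{-1/4}t^{-3/4}\,dt$. Substituting and simplifying the exponents of $s$ and $t$ gives
\[
\int_0^\infty r^{h+p-1}e^{-sr^4}\, dr = \frac{1}{4}s^{-\frac{h+p}{4}}\int_0^\infty t^{\frac{h+p}{4}-1}e^{-t}\, dt = \frac{1}{4s^{\frac{h+p}{4}}}\Gamma\!\left(\frac{h+p}{4}\right),
\]
and multiplying by $h$ produces the claimed formula.

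There is no real obstacle here: the only subtlety is keeping track of the exponent of $s$ in the change of variable, and this is routine. The whole statement is essentially a bookkeeping corollary of the coarea-type identity already established in \Proposition{prop5}.
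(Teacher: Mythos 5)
Your proof is correct and is exactly the intended argument: the paper treats this as an immediate application of Proposition \ref{prop5} (deferring the routine computation to \cite[Corollary 2.12]{MerloG1cod}), which is precisely your specialization to the profile $g(r)=r^pe^{-sr^4}$ followed by the substitution $t=s r^4$. The exponent bookkeeping in your change of variables checks out, so nothing is missing.
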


\begin{proof}
See for instance \cite[Corollary 2.12]{MerloG1cod}.
\end{proof}

\vv

\subsection{Parabolic rectifiability}

\begin{definition}[$\mathscr{P}_h$ and $\mathscr{P}_h^*$-rectifiable measures, see \cite{MerloG1cod}]\label{def:PhRectifiableMeasure}
Let $\mathfrak d$ be a homogeneous distance on $\mathbb P^n$ and $h\in\{1,\ldots,n+2\}$. A Radon measure $\phi$ on $(\mathbb{P}^n,\mathfrak d)$ is said to be $\mathscr{P}_h$-rectifiable if for $\phi$-almost every $x\in \mathbb{P}^n$ we have:
\begin{itemize}
    \item[(i)]$0<\Theta^h_*(\phi,x)\leq\Theta^{h,*}(\phi,x)<+\infty$.
    \item[(\hypertarget{due}{ii})]There exists $V(x)\in\Gr(h)$ such that $\mathrm{Tan}_h(\phi,x) \subseteq \{\lambda\mathcal{H}^h\llcorner V(x):\lambda\geq 0\}$.
\end{itemize}
Furthermore, we say that $\phi$ is $\mathscr{P}_h^*$-rectifiable if (\hyperlink{due}{ii})  is replaced with the weaker condition
\begin{itemize}
    \item[(ii)*] $\mathrm{Tan}_h(\phi,x) \subseteq \{\lambda\mathcal{H}^h\llcorner V:\lambda\geq 0\,\,\text{and}\,\,V\in \Gr(h)\}$.
\end{itemize}
\end{definition}

\begin{definition}\label{def:Rademacher_property}
Let $h\in \{1,\ldots,n+2\}$ and $V\in \Gr(h)$. We say that a Lipschitz function $f\colon V\to V^\perp$ has the \textit{Rademacher Property} on a Borel set $B\subseteq V$, or that it is an \textit{R-Lipschitz map} on $B$, if for $\mathcal{H}^h$-almost every $x\in B$ there exists a homogeneous group homomorphism $L_x\colon V\to V^\perp$ such that
\begin{equation}
    \lim_{V \ni v\to 0}\frac{\bigl\lVert f(x+v)-f(x)-L_x[v]\bigr\rVert}{\lVert v\rVert}=0.
    \label{RProperty}
\end{equation}
The homogeneous homomorphism $L_x$ is said to be the \emph{differential} of $f$ at $x$ and it is denoted by $Df(x)$.
\end{definition}

\vv

\begin{definition}[see \cite{MatParRect}]\label{def:LGrect}
	A set $E\subset \mathbb{P}^n$ is said to be an $h$-dimensional $LG$-\textit{rectifiable} set, if there are countably many $W_i\in \Gr(h)$, compact sets $K_i\subseteq W_i$, and  R-Lipschitz maps $g_i\colon K_i\to W_i^{\perp}$ 
	such that 
	\[
	    \mathcal H^h\Bigl(E\setminus \bigcup_{j=1}^\infty \gr(g_i)\Bigr)=0,
	\]
	where $\gr(g_i)\coloneqq \{(y,g_i(y))\in \mathbb{P}^n:y\in K_i\}.$
\end{definition}

\vv
Mattila obtained a parabolic analogue of some of the classical characterizations of rectifiable sets. For the definitions of the properties involved in the next statement, we refer to \cite{MatParRect}.

\begin{theorem}[Mattila, \cite{MatParRect}, Theorem 1.1]\label{theorem_Mattila_parabolic_rect}
    Let $E\subset \mathbb P^n$ be $\mathcal H^h$-measurable and such that $\mathcal H^h(E)<\infty$. The following are equivalent:
    \begin{enumerate}
        \item   $E$ is LG-rectifiable.
        \item   $E$ has approximate tangent planes at $\mathcal H^h$-almost every point.
        \item   For $\mathcal H^h$-almost all $x\in E$ there is an $h$-flat measure $\lambda_x$ such that 
            \[
                \Tan (\mathcal H^h\llcorner E, x)=\{c\lambda_x: 0<c<\infty\}.
            \]
        \item   For $\mathcal H^h$-almost all $x\in E$, $\mathcal H^h\llcorner E$ has a unique tangent measure at $x$.
    \end{enumerate}
\end{theorem}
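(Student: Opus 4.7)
The plan is to close the cycle $(1)\Rightarrow(2)\Rightarrow(3)\Rightarrow(4)\Rightarrow(1)$, using as the two main engines the locality properties of tangent measures in Proposition~\ref{tuttitg} and the Marstrand--Mattila criterion in Theorem~\ref{thm:MMconormale:intro}. The first three implications are essentially formal; the substance is concentrated in the last one.

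For $(1)\Rightarrow(2)$, I would write $E$ as a countable union of R-Lipschitz graphs $\gr(g_i)\subseteq \mathbb P^n$ up to $\mathcal H^h$-null error, and after restricting $\mathcal H^h\llcorner E$ to a single graph via Proposition~\ref{tuttitg}-(ii), reduce to an $\mathcal H^h$-density one point $x$ of $K_i$ at which $g_i$ admits a differential $Dg_i(x)$. The homogeneous subgroup $V(x)\coloneqq \{v+Dg_i(x)[v]:v\in W_i\}\in \Gr(h)$ is then the natural candidate approximate tangent plane, and the Rademacher bound \eqref{RProperty} translates via the parabolic rescaling $T_{x,r}$ into the required approximation of the rescaled graph by $V(x)$. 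For $(2)\Rightarrow(3)$, once an approximate tangent plane $V(x)$ is available, any weak-$\ast$ cluster point of $r^{-h}T_{x,r}(\mathcal H^h\llcorner E)$ is supported on $V(x)$; the upper-density bound inherited from $\mathcal H^h(E)<\infty$ rules out both explosion and vanishing of mass, and Proposition~\ref{verticalsamoa} then pins each cluster point to a positive multiple of the flat measure on $V(x)$, producing the desired ray of flat tangents. The implication $(3)\Rightarrow(4)$ is immediate, since a single positive-scalar ray is, under the standard convention, a unique tangent measure.

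The heart of the argument is $(4)\Rightarrow(1)$, and this is the step I expect to be the main obstacle. The strategy would be: starting from a unique tangent measure $\nu$ at $x$, I would use Proposition~\ref{tuttitg}-(iv) to observe that for every $y\in\supp(\nu)$ and every $r>0$ the measure $r^{-h}T_{y,r}\nu$ again belongs to $\Tan(\mathcal H^h\llcorner E,x)$, hence by uniqueness is a positive scalar multiple of $\nu$; a careful comparison of masses on Koranyi balls, using the upper density coming from $\mathcal H^h(E)<\infty$, pins that scalar to $1$. This would force $\nu$ to be simultaneously translation-invariant under $\supp(\nu)$ and $\delta_\lambda$-invariant for every $\lambda>0$, so that $\supp(\nu)$ is a homogeneous subgroup and $\nu$ itself is flat by Lemma~\ref{homplanesstructure} and Proposition~\ref{verticalsamoa}. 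Once flatness of tangents is established $\mathcal H^h\llcorner E$-almost everywhere, hypothesis (ii) of Theorem~\ref{thm:MMconormale:intro} is verified; hypothesis (i) follows from $\mathcal H^h(E)<\infty$ together with the standard lower-density bound for $\mathcal H^h\llcorner E$; and the conclusion of that theorem is exactly LG-rectifiability of $E$.

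The delicate part of the plan is the rigidity step that turns pointwise uniqueness of tangents into flatness. The anisotropic dilations $\delta_\lambda$ admit, in principle, non-flat dilation-invariant measures (compare with the obstruction behind Theorem~\ref{th:failurepreiss}), so the mass-comparison argument pinning the scaling constants to $1$ at every point of $\supp(\nu)$ must genuinely exploit both the tangent-of-tangent stability and the finite upper density at the same time. Getting this rigidity cleanly, without an a priori uniform structure on $\nu$, is where I would expect to spend most of the effort.
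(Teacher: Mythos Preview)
The paper does not prove this theorem: it is quoted verbatim from Mattila's work \cite{MatParRect} in the preliminaries section and used as a black box throughout. So there is no ``paper's own proof'' to compare against, and the correctness of your outline must be judged on internal grounds.

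On those grounds, the plan has a genuine circularity. Your step $(4)\Rightarrow(1)$ invokes Theorem~\ref{thm:MMconormale:intro} to pass from flat tangents to LG-rectifiability. But in the paper the proof of Theorem~\ref{thm:MMconormale:intro} ends by appealing to Theorem~\ref{mm0.0}, whose last line reads ``The final part of the proposition follows from Theorem~\ref{theorem:Mattila_parabolic_rectifiability}'', and Theorem~\ref{theorem:Mattila_parabolic_rectifiability} is an immediate corollary of the very Theorem~\ref{theorem_Mattila_parabolic_rect} you are trying to prove. So the Marstrand--Mattila criterion, as developed in this paper, already presupposes the implication $(3)\Rightarrow(1)$ of Mattila's theorem; using it to close your cycle is a loop. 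If you want an autonomous proof you must either supply the covering-by-R-Lipschitz-graphs step independently (this is essentially what Mattila does in \cite{MatParRect}) or rework the final step of Theorem~\ref{thm:MMconormale:intro} without invoking \ref{theorem:Mattila_parabolic_rectifiability}.

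There is also a second gap you glide over: Proposition~\ref{tuttitg}, which you use both for locality and for ``tangents of tangents are tangents'', requires $0<\Theta^h_*(\phi,x)$ at $\phi$-almost every point. For $\phi=\mathcal H^h\llcorner E$ with merely $\mathcal H^h(E)<\infty$ one only has the upper bound $\Theta^{h,*}\le 1$ automatically; the lower density can vanish on a set of positive measure. You would first need to argue that the existence of a (nonzero) unique tangent forces $\Theta^h_*>0$ almost everywhere, or else route the tangent-of-tangent argument through a version that does not assume asymptotic doubling. Mattila handles this in \cite{MatParRect}; it is not free.
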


\vv

In the language of Definition \ref{def:PhRectifiableMeasure}, Theorem \ref{theorem_Mattila_parabolic_rect} implies the following.

\begin{theorem}\label{theorem:Mattila_parabolic_rectifiability}
	Let $E\subset \mathbb{P}^n$ be an $\mathcal H^h$-measurable set such that $\mathcal H^h(E)<\infty.$ The following are equivalent:
	\begin{enumerate}
		\item $E$ is LG-rectifiable.
		\item $\mathcal{H}^h\llcorner E$ is $\mathscr{P}_h$-rectifiable.
	\end{enumerate}
\end{theorem}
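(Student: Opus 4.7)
The plan is to use Theorem \ref{theorem_Mattila_parabolic_rect} as a bridge, by translating both sides of the equivalence into Mattila's characterization (3), which states that for $\mathcal H^h$-almost every $x\in E$ the full tangent set $\Tan(\mathcal H^h\llcorner E, x)$ equals $\{c\lambda_x:0<c<\infty\}$ for some $h$-flat measure $\lambda_x$. The technical point that does the work in both directions is a compatibility lemma between $\Tan$ and $\Tan_h$: if $\phi$ has positive and finite upper and lower $h$-densities at $x$, then any nonzero element $\mu\in \Tan(\phi,x)$ arising as a weak-$*$ limit $c_i T_{x,r_i}\phi\rightharpoonup \mu$ forces $c_i r_i^h$ to be bounded and bounded away from zero (comparing $\mu(\bar B(0,1))$ with the two-sided density control on $\phi(\bar B(x,r_i))/r_i^h$), so up to subsequences $a_i:=c_i r_i^h\to a\in(0,\infty)$ and $\mu=a\nu$ with $\nu\in \Tan_h(\phi,x)\setminus\{0\}$. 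Conversely, any positive multiple of a nonzero element of $\Tan_h(\phi,x)$ belongs to $\Tan(\phi,x)$ by an immediate rescaling of the $c_i$.

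For $(2)\Rightarrow(1)$, I set $\phi:=\mathcal H^h\llcorner E$ and assume (i) and (ii) of Definition \ref{def:PhRectifiableMeasure}. The compatibility lemma above combined with (ii) shows that $\Tan(\phi,x)\setminus\{0\}$ is contained in the half-line $\{c\,\mathcal H^h\llcorner V(x): c>0\}$. For the reverse inclusion, Proposition \ref{tuttitg}(i) together with (i) produces some $\nu\in \Tan_h(\phi,x)$, which is nonzero by the lower density bound and hence by (ii) equals $\lambda\mathcal H^h\llcorner V(x)$ with $\lambda>0$; rescaling the approximating constants realizes every positive multiple of $\mathcal H^h\llcorner V(x)$ as an element of $\Tan(\phi,x)$. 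This verifies Mattila's condition (3), and the implication $(3)\Rightarrow(1)$ of Theorem \ref{theorem_Mattila_parabolic_rect} gives LG-rectifiability of $E$.

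For $(1)\Rightarrow(2)$, applying Theorem \ref{theorem_Mattila_parabolic_rect} in the direction $(1)\Rightarrow(3)$ directly yields $\Tan(\phi,x)=\{c\lambda_x:c>0\}$ for some flat $\lambda_x=\mathcal H^h\llcorner V(x)$ at $\mathcal H^h$-a.e.\ $x\in E$. Condition (ii) of Definition \ref{def:PhRectifiableMeasure} then follows via the compatibility lemma, applied in the opposite direction, once the density bounds are in place. The remaining task is the density bound (i): it suffices to verify it on a single graph piece $\gr(g_i)$ of an R-Lipschitz map, since a countable disjointification together with Proposition \ref{tuttitg}(ii) transfers the densities to $\mathcal H^h\llcorner E$. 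Differentiability of $g_i$ guaranteed by the Rademacher property combined with a parabolic area-type formula for R-Lipschitz graphs then produces a positive finite Jacobian, hence positive and finite $h$-density on each piece.

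The main obstacle I expect is this final density step in $(1)\Rightarrow(2)$: proving that an R-Lipschitz graph in $\mathbb P^n$ has positive and finite $h$-dimensional density $\mathcal H^h$-almost everywhere requires a parabolic area formula in the spirit of the one alluded to in connection with Theorem \ref{counterexamples:intro}, together with careful use of the asymptotically doubling properties in Proposition \ref{tuttitg}(ii) and (iii) to isolate a single graph from the countable collection. Once this is available, the rest of the argument reduces to the compactness and rescaling observations outlined above.
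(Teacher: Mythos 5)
Your argument is correct and follows essentially the same route as the paper, which states this theorem as an immediate translation of Theorem \ref{theorem_Mattila_parabolic_rect} into the language of Definition \ref{def:PhRectifiableMeasure} and offers no further proof. The details you supply -- the compatibility between $\Tan$ and $\Tan_h$ under two-sided density bounds, and the positive finite $h$-density of R-Lipschitz graph pieces (for which Lipschitzness of the graph map and the projection onto $V$ already suffice, so no full area formula is really needed) -- are exactly the routine facts implicitly invoked by that translation.
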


\begin{theorem}\label{mm0.0}Let $h\in\{1,\ldots, n+1\}$, $\mathfrak{s}\in \mathfrak{S}(h)$, and $V\in \Gr^\mathfrak{s}(h)$. Let $K$ be a compact subset of $V$, and suppose that $f\colon K\to V^\perp$ is a Lipschitz map such that
    it holds
    $$\Tan_h(\phi,x)\subseteq \mathfrak{M}^\mathfrak{s}(h)\qquad \text{for }\mathcal{H}^h\llcorner \mathrm{gr}(f)\text{-almost every }x\in \mathbb{P}^n.$$
    Then for $\mathcal{H}^h\llcorner \mathrm{gr}(f)$-almost every $x\in \mathbb{P}^n$ there exists $V(x)\in\Gr^\mathfrak{s}(h)$ such that
    \begin{equation}
         \Tan_h(\phi,x)\subseteq\{\lambda \mathcal{H}^{h}\llcorner V(x):\lambda\geq 0\} \qquad\text{for }\mathcal{H}^h\llcorner \mathrm{gr}(f)\text{-almost every }x\in \mathbb{P}^n.
         \label{eq:uniqtg}
    \end{equation}
    In particular $\mathrm{gr}(f)$ can be covered with countably many images of Lipschitz functions with the Rademacher property.
\end{theorem}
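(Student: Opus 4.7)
The plan is to reduce the statement to uniqueness of the tangent plane at $\phi$-almost every point, where $\phi\coloneqq \mathcal{H}^h\llcorner \mathrm{gr}(f)$, and then invoke Mattila's characterisation (Theorem \ref{theorem_Mattila_parabolic_rect}) to upgrade uniqueness to LG-rectifiability, hence to the covering by R-Lipschitz graphs asserted in the last sentence. Since $f$ is Lipschitz, $\phi$ is locally asymptotically doubling with comparable upper and lower $h$-densities, so by Propositions \ref{tuttitg} and \ref{propup} combined with the hypothesis $\Tan_h(\phi,x)\subseteq \mathfrak M^{\mathfrak s}(h)$, for $\phi$-a.e.~$x$ every element of $\Tan_h(\phi,x)$ takes the form $\Theta^{h}(\phi,x)\,\mathcal{H}^h\llcorner W$ with $W\in \Gr^{\mathfrak s}(h)$; what remains is to show that the set of such $W$ at $x$ reduces to a single plane $V(x)$.

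The next step is a Lusin--Egorov decomposition of $K$ into compact subsets $K_m$ on which $f$ has Lipschitz constant at most $m$ and on which the rescalings $r^{-h}T_{x,r}\phi$ approach $\mathfrak M^{\mathfrak s}(h)$ in a uniform quantitative way as $r\to 0$, uniformly in $x\in K_m$. On each $K_m$, the Lipschitz cone condition satisfied by $\mathrm{gr}(f)$ together with Propositions \ref{propspt1} and \ref{replica} forces the support of every tangent measure at every $x\in K_m$ to be contained in a cone around $V$, and therefore pins every tangent plane to a fixed compact subset $\mathcal G_m\subset \Gr^{\mathfrak s}(h)$. Combining the weak$^*$ continuity of the one-parameter family $r\mapsto r^{-h}T_{x,r}\phi$ with the compactness of $\mathcal U(h)$ (Proposition \ref{UComp}), one concludes that the set of tangent planes at $x$ is a compact, connected subset of $\mathcal G_m$.

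The main obstacle is to upgrade this connected set to a singleton, which is the step the introduction describes as a \emph{delicate version of the uniqueness of the differential}. My approach would be a contradiction argument: if on a set of positive $\phi$-measure the set of tangent planes at $x$ had diameter at least $\delta>0$ in $\Gr^{\mathfrak s}(h)$, then along two scale sequences $r_i^{(1)}, r_i^{(2)}\downarrow 0$ the rescalings would converge to two $\delta$-separated planes $W^{(1)},W^{(2)}\in \mathcal G_m$. Interpolating via the weak$^*$ continuity of $r\mapsto r^{-h}T_{x,r}\phi$ and Proposition \ref{tuttitg}(iv), together with the Lusin--Egorov uniformity, one would find a single scale $r_i$ at which the support of $r_i^{-h}T_{x,r_i}\phi$ inside $B(0,1)$ lies simultaneously in the $\varepsilon$-thickenings of both $W^{(1)}$ and $W^{(2)}$. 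A volumetric estimate in the parabolic metric on this double thickening for $\delta$-separated planes of $\Gr^{\mathfrak s}(h)$ would then give $\phi(B(x,r_i)) = O(\varepsilon)\,r_i^h$, contradicting positivity of the $h$-density $\Theta^h(\phi,x)$ provided by Proposition \ref{propup}. The condition $W^{(i)}\in \Gr^{\mathfrak s}(h)$ is precisely what makes this double-thickening estimate uniform on the compact parameter space $\mathcal G_m$, which is why the stratification hypothesis appears in the theorem.

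Once uniqueness of $V(x)$ is established $\phi$-a.e., Theorem \ref{theorem_Mattila_parabolic_rect} turns this into LG-rectifiability of $\mathrm{gr}(f)$, and Definition \ref{def:LGrect} yields the claimed covering of $\mathrm{gr}(f)$ by countably many graphs of R-Lipschitz maps.
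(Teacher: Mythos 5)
There is a genuine gap in the step where you upgrade the connected set of tangent planes to a singleton, and it is precisely the step the paper calls the ``delicate version of the uniqueness of the differential.'' Your contradiction argument claims that if two $\delta$-separated planes $W^{(1)},W^{(2)}\in\Gr^{\mathfrak s}(h)$ both occur as tangents at $x$, then by weak$^*$ continuity of $r\mapsto r^{-h}T_{x,r}\phi$ one can find a \emph{single} scale $r_i$ at which the support of the rescaled measure lies in the $\varepsilon$-thickenings of both planes simultaneously. This is false: as the scale moves between $r_i^{(1)}$ and $r_i^{(2)}$ the rescaled measures are close to flat measures over planes that rotate continuously from $W^{(1)}$ to $W^{(2)}$, and all of these intermediate planes are still admitted by the hypothesis $\Tan_h(\phi,x)\subseteq\mathfrak{M}^{\mathfrak s}(h)$ (the Grassmannian $\Gr^{\mathfrak s}(h)$, and your compact set $\mathcal G_m$ inside it, are connected), so no scale ever sees two separated planes at once and no volumetric contradiction arises. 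Indeed, if your argument worked it would prove, pointwise and without using the graph structure, that positive finite density plus flat tangents forces a unique tangent plane --- which is exactly the spiral-type behaviour the introduction warns is \emph{not} excluded by flatness of tangents alone. Restricting the planes to a compact cone $\mathcal G_m$ around $V$ does not help, because slow rotation of the approximating plane as $r\downarrow 0$ is perfectly compatible with the Lipschitz cone condition.

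The missing ingredient is an almost-everywhere differentiability input for $f$ itself. The paper's proof treats the two nontrivial stratifications separately and, in each case, applies the classical Euclidean Rademacher theorem to $f$ (to $f_H$ in the case $\mathfrak s=(h,0)$, and to $f(\cdot,y_T)$ with the time variable frozen in the case $\mathfrak s=(h-2,1)$). Then, for any plane $W$ arising as a tangent at $x=f(y)$, Proposition \ref{propspt1} produces points $z_i\in\mathrm{gr}(f)$ with $\delta_{1/r_i}(z_i-x)\to w$ for each $w\in W$, and the first-order Taylor expansion of $f$ at $y$ forces $w_{2,H}=L(y)[w_{1,H}]$, i.e.\ $W$ must coincide with $\mathrm{gr}(L(y))$ (respectively $\mathrm{gr}(L(y))\oplus\R e_{n+1}$), where $L(y)$ is the Euclidean differential of $f$ at $y$. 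Uniqueness of $V(x)$ then holds because $W$ is pinned to a single object depending only on $x$, independently of the blow-up sequence; your proposal never exploits this and cannot reach the conclusion without it. The final reduction of your argument (uniqueness $\Rightarrow$ Theorem \ref{theorem_Mattila_parabolic_rect} $\Rightarrow$ covering by R-Lipschitz graphs) does agree with the paper.
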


\begin{proof}
Note that $\Gr^{(0,1)}(2)$ and $\Gr^{(n,0)}(n)$ are singletons, thus if $\mathfrak{s}$ is $(0,1)$ or $(n,0)$ there is nothing to prove.

Let us consider the case $\mathfrak{s}=(h,0)$ and suppose that $\lambda\mathcal{H}^h\llcorner W\in \Tan_h(\phi,x)$ for some $W\in \Gr^\mathfrak{s}(h)$. Orthogonal transformations of $\R^{n+1}$ that preserve the modulus  of the last coordinate are isometries, so without loss of generality we assume that $V=\mathrm{span}(e_1,\ldots,e_k)$. Thus, there exists an infinitesimal sequence $r_i$ such that 
$$r_i^{-h}T_{x,r_i}\phi\rightharpoonup\lambda\mathcal{H}^h\llcorner W.$$

Thanks to Proposition \ref{propspt1} we know that given $w\in W$ there exists a sequence $\{z_i\}\subset \mathrm{gr}(f)$ such that $\delta_{1/r_i}(z_i-x)\to w$. All the points are chosen on $\mathrm{gr}(f)$, so we can find $y_i,y\in K$ such that $f(y_i)=z_i$, $f(y)=x$, and
\begin{equation}
    \lim_{i\to \infty}\Big(\frac{y_i-y}{r_i},\frac{f_H(y_i)-f_H(y)}{r_i},\frac{f_T(y_i)-f_T(y)}{r_i^2}\Big)=w=(w_{1,H},w_{2,H},0),
    \label{eq:ididid}
\end{equation}
where $w_{1,H}\in\R^k,\text{ }w_{2,H}\in\R^{n-k}$, and $f=(f_H,f_T)$ for $f_H\colon V\to \R^{n-k}$ and $f_T\colon V\to \R$.
Since $f_H$ is \emph{Euclidean} Lipschitz, by Rademacher's theorem we can assume that $y$ is such that there exists a linear function $L(y)[\cdot]$ for which
$$f_H\bigl(y+r_iw_{1,H}+o(r_i)\bigr)=f_H(y)+L(y)\bigl[r_iw_{2,H}+o(r_i)\bigr].$$

The identity above together with \eqref{eq:ididid} also implies that 
\begin{equation}
    w_{2,H}=L(y)[w_{1,H}]
    \label{eq:ididid2}
\end{equation}
and, since \eqref{eq:ididid2} is satisfied for any $w\in W$, this means that every $W$ chosen as above must coincide at $f(y)$ with the graph of $L(y)$. The parabolic differentiability of $f$ is an immediate consequence of \eqref{eq:ididid} and \eqref{eq:ididid2}.

Now let $h\geq 3$, consider the case $\mathfrak{s}=(h-2,1)$, and assume that $\lambda\mathcal{H}^h\llcorner W\in \Tan_h(\phi,x)$. As above, we suppose without loss of generality that $V=\mathrm{span}(e_1,\ldots,e_{h-2})\oplus \R e_{n+1}$ and we choose an infinitesimal sequence $r_i$ such that 
$$r_i^{-h}T_{x,r_i}\phi\rightharpoonup\lambda\mathcal{H}^h\llcorner W.$$
Thanks to Proposition \ref{propspt1} we know that for $w\in W$ there exists a sequence $\{z_i\}\subset \mathrm{gr}(f)$ such that $\delta_{1/r_i}(z_i-x)\to w$. In addition, since all the points are chosen on $\mathrm{gr}(f)$, we can find $y_i,y\in K$ such that $f(y_i)=z_i$, $f(y)=x$, and
\begin{equation}
    \lim_{i\to \infty}\Big(\frac{(y_i)_H-y_H}{r_i},\frac{f(y_i)-f(y)}{r_i},\frac{(y_i)_T-y_T}{r_i^2}\Big)=w=(w_{1,H},w_{2,H},w_T),
    \label{eq:ididid3}
\end{equation}
where $w_{1,H}\in\R^k,\text{ }w_{2,H}\in\R^{n-k}$, and $\text{ }w_T\in\R$. The choice of $w$ was arbitrary, thus we assume $w_T=0$. We further observe that $f(\cdot,y_T)$ is Euclidean Lipschitz, hence we can assume that $y$ is such that there exists a linear function $L(y)\colon\R^{h-2}\to \R^{n-h+2}$ which satisfies
\begin{equation}\label{eq:different_lemma222}
f\bigl(y_H+r_iw_{1,H}+o(r_i),y_T\bigr)=f(y)+r_iL(y)[w_{1,H}]+o(r_i).
\end{equation}

Therefore, from \eqref{eq:ididid3} and \eqref{eq:different_lemma222} we conclude that 
\begin{equation}\label{eq:different_lemma1}
    \begin{split}
        &\qquad\qquad\qquad r_iw_{2,H}+o(r_i)=f\bigl(y_H+r_iw_{1,H}+o(r_i),y_T+o(r_i^2)\bigr)-f(y)\\
        =&f\bigl(y_H+r_iw_{1,H}+o(r_i),y_T+o(r_i^2)\bigr)-f\bigl(y_H+r_iw_{1,H}+o(r_i),y_T\bigr)+r_iL(y)[w_{1,H}]+o(r_i)\\
        &\qquad\qquad\qquad\qquad\qquad\qquad=r_iL(y)[w_{1,H}]+o(r_i),
    \end{split}
\end{equation}
which yields $w_{2,H}=L(y)[w_{1,H}]$. As in the previous case, \eqref{eq:different_lemma1} implies that any $W$ chosen as above coincides with $\mathrm{gr}(L(y))\oplus \R e_{n+1}$, which depends only on the point $x=f(y)$. This concludes the proof of \eqref{eq:uniqtg}. The final part of the proposition follows from Theorem \ref{theorem:Mattila_parabolic_rectifiability}.
\end{proof}

\vv

\section{Marstrand's theorem and the structure of uniform measures}\label{section:Mastrand}

Throughout this section we understand that $\mathbb P^n$ is endowed with the Koranyi metric $d$.

\begin{theorem}[{\cite[Theorem 1.2]{Chousionis2015MarstrandsGroup}}]\label{th.marstrand}
Let $h\in[0,n+2]$ and suppose $\mu\in \mathcal{U}(h)$. Then $\supp(\mu)$ is an analytic variety whose Hausdorff dimension is integer and coincides with $h$.
\end{theorem}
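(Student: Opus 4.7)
The plan is to transport Preiss's moment method to the parabolic setting by exploiting the polynomial nature of the fourth power of the Koranyi norm. For each $s>0$ I would introduce the moment function
\[
    \Psi_s(x)\coloneqq s^{h/4}\int e^{-s\lVert z-x\rVert^4}\,d\mu(z)-\tfrac{h}{4}\Gamma\bigl(\tfrac{h}{4}\bigr),\qquad x\in\mathbb{P}^n.
\]
Since $\lVert z-x\rVert^4=\lvert z_H-x_H\rvert^4+(z_T-x_T)^2$ is a polynomial of degree four in $x$, while the Gaussian factor provides rapid decay in $z$, differentiation under the integral readily yields that $\Psi_s$ is real-analytic (indeed entire) in $x$. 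Corollary \ref{prop1} with $p=0$ shows immediately that $\Psi_s\equiv 0$ on $\supp(\mu)$ for every $s>0$.

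The first substantial step is to prove the converse inclusion: if $\Psi_s(x)=0$ for every $s>0$, then $x\in\supp(\mu)$. Setting $F_x(r)\coloneqq\mu(B(x,r))$ and substituting $u=r^4$, the hypothesis becomes the Laplace identity
\[
    \int_0^\infty e^{-su}\,dF_x(u^{1/4})=\frac{h\,\Gamma(h/4)}{4\,s^{h/4}}\qquad\text{for all }s>0,
\]
whose right-hand side is exactly the Laplace transform of the measure $\tfrac{h}{4}u^{h/4-1}\,du$. Uniqueness of the Laplace transform on $[0,\infty)$ forces $F_x(r)=r^h$ for every $r>0$, so in particular $\mu(B(x,r))>0$ for all $r>0$ and $x\in\supp(\mu)$. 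This identifies $\supp(\mu)$ with the countable intersection $\bigcap_{s\in\mathbb{Q}_{>0}}\{\Psi_s=0\}$. Working locally around any $x_0\in\supp(\mu)$, Noetherianity of the ring of germs of real-analytic functions at $x_0$ reduces this countable intersection to a \emph{finite} one $\{\Psi_{s_1}=\cdots=\Psi_{s_N}=0\}$; consequently $\supp(\mu)$ is locally a real-analytic variety.

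It remains to show that the Koranyi Hausdorff dimension of $\supp(\mu)$ is an integer, and then necessarily equal to $h$. The approach I would take is to stratify the analytic variety into smooth pieces and, at a smooth point $x$, analyse the Euclidean tangent space through a dichotomy analogous to Lemma \ref{homplanesstructure}: either the tangent is transverse to the vertical direction $\R e_{n+1}$, in which case the Koranyi and Euclidean metrics are locally bi-Lipschitz on the stratum and the Koranyi dimension equals the Euclidean one, or it contains $\R e_{n+1}$, in which case the vertical direction contributes with weight two and the Koranyi dimension is one larger than the Euclidean one. In either case each stratum has integer Koranyi dimension; since $\mu$ is $h$-uniform, $\mathcal H^h$ is positive and locally finite on $\supp(\mu)$, so the top-dimensional stratum forces this integer to coincide with $h$. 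I expect the main obstacle to lie precisely in this final step: the smooth strata produced by the analytic stratification need not align with the horizontal/vertical splitting of $\mathbb P^n$ in a uniform way, and showing that the singular locus does not contribute to the top-dimensional Koranyi Hausdorff measure requires a delicate quantitative analysis. As an alternative, one could invoke Proposition \ref{uniformup} iteratively, taking successive blowups of $\mu$ (each of which is again uniform) until the resulting uniform measure is supported on a homogeneous subgroup, and then read off integrality directly from Lemma \ref{homplanesstructure}.
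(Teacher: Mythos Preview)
Your route differs substantially from the paper's. The paper does not run the moment argument at all: it observes that $(\mathbb{P}^n,d)$ embeds isometrically into the Heisenberg group $\mathbb{H}^n$ with its Koranyi norm as the abelian subgroup $\{(x_H,0,x_T)\}$, so that any $\mu\in\mathcal U(h)$ on $\mathbb{P}^n$ is automatically $h$-uniform on $\mathbb H^n$, and then cites Chousionis--Tyson directly. Your proposal instead reconstructs the Kirchheim--Preiss machinery in $\mathbb{P}^n$; the analytic-variety half (real-analyticity of $\Psi_s$ via the polynomial nature of $\lVert\cdot\rVert^4$, Laplace-transform uniqueness to identify $\supp(\mu)$ with $\bigcap_s\{\Psi_s=0\}$, Noetherian reduction to finitely many $\Psi_{s_i}$) is correct and is precisely what Chousionis--Tyson do in $\mathbb H^n$. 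In effect you are unpacking the cited reference in the simpler parabolic setting, which is legitimate but longer than the paper's one-line reduction.

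Your dimension step, however, contains a genuine error. The dichotomy you state is wrong: the Koranyi and Euclidean metrics are locally bi-Lipschitz on a smooth stratum only when the tangent space is \emph{entirely horizontal}, i.e.\ $T_pM\subset\R^n\times\{0\}$, not when it is merely ``transverse to $\R e_{n+1}$''. The line $\{(t,t)\}\subset\mathbb P^1$ has tangent $\mathrm{span}(1,1)$, which is transverse to $\R e_2$, yet the Koranyi distance along it is $\bigl((s-t)^4+(s-t)^2\bigr)^{1/4}\sim|s-t|^{1/2}$ for small $|s-t|$, so its parabolic Hausdorff dimension is $2$, not $1$. The correct dichotomy for a $k$-dimensional tangent $W$ is: if $W\subset\R^n\times\{0\}$ the parabolic dimension is $k$, otherwise it is $k+1$; this is Magnani's formula, invoked in the paper through \cite[Proposition~2.2]{Chousionis2015MarstrandsGroup}. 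With this correction your conclusion that each stratum has integer Koranyi dimension survives, but the case split must be rewritten and one still has to check that the locus where the tangent type jumps is $\mu$-null. Your alternative via iterated blowups is not a viable shortcut: nothing in Proposition~\ref{uniformup} forces successive tangents to eventually be supported on a homogeneous subgroup, and arranging this would require structural input of exactly the kind you are trying to prove.
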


\begin{proof} Since $\mathbb{P}^n$ is isometrically embedded in $\mathbb{H}^n$ equipped with the Koranyi norm, any uniform measure supported on the isometrically embedded copy of $\mathbb{P}^n$ in $\mathbb{H}^n$ is a uniform measure in $\mathbb{H}^n$. This, together with Proposition 3.1 and Theorem 1.1 in \cite{Chousionis2015MarstrandsGroup},
concludes the proof.
\end{proof}

\begin{proposition}\label{strutturaunifmeasvssurfacemeasure}
Assume that $\mu$ is an $(n+1)$-uniform measure on $(\mathbb{P}^n,d)$. Then there exists a constant $c>0$ depending only on $n$ such that $\mu(A)=c\sigma_{\supp (\mu)}(A)$ for every Borel subset $A\subset \mathbb{P}^n$.
\end{proposition}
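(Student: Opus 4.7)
The plan is to show that $\mu$ and $\sigma_\Sigma$, where $\Sigma\coloneqq\supp(\mu)$, are mutually absolutely continuous and then to identify their Radon--Nikodym density via a blowup argument. First, by Theorem \ref{th.marstrand}, $\Sigma$ is a real analytic variety of parabolic Hausdorff dimension $n+1$; outside its lower-dimensional analytic singular locus (which is $\mu$-negligible) it is a smooth Euclidean $n$-submanifold of $\R^{n+1}$, hence Euclidean $n$-rectifiable. Combining Proposition \ref{supportoK}, Remark \ref{remarkone} and Remark \ref{rksuperfici} yields that $\mu$ and $\sigma_\Sigma$ are mutually absolutely continuous, so we may write $\sigma_\Sigma=\rho\,\mu$ for some measurable $\rho$ that is positive and finite $\mu$-a.e.

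Next I would show that $\rho$ equals a dimensional constant $\mu$-a.e.\ by blowing up both measures at a common good point. Fix $x\in\Sigma$ which is simultaneously a regular point of $\Sigma$, a Lebesgue point of $\rho$ with respect to $\mu$ (available since $\mu$ is asymptotically doubling, so Proposition \ref{tuttitg} applies), and a point at which a parabolic tangent measure $\nu=\lim_i r_i^{-(n+1)}T_{x,r_i}\mu$ exists. By Propositions \ref{propup} and \ref{uniformup}, $\nu$ is $(n+1)$-uniform; the requirement that its parabolic dimension equal $n+1$ rules out $T_x\Sigma\subset\R^n\times\{0\}$ (since a horizontal Euclidean $n$-plane has parabolic Hausdorff dimension $n$), so $T_x\Sigma=\{(y_H,L(y_H)):y_H\in\R^n\}$ for some nonzero linear $L\colon\R^n\to\R$. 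A direct computation shows that $\delta_{1/r}(T_x\Sigma)$ collapses as $r\to 0$ to the homogeneous codimension-one subgroup $V\coloneqq\ker(L)\oplus\R e_{n+1}\in\Gr(n+1)$ (using Corollary \ref{corollary:grn1}); together with the smoothness of $\Sigma$ at $x$ and Proposition \ref{propspt1}, this gives $\supp(\nu)\subseteq V$. Proposition \ref{verticalsamoa} then forces
\begin{equation*}
    \nu=\mathcal{C}^{n+1}\llcorner V.
\end{equation*}

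On the $\sigma_\Sigma$ side, a parabolic-dilation change of variables in the definition of $\sigma_\Sigma$ combined with the Euclidean smoothness of $\Sigma$ at $x$ yields $r_i^{-(n+1)}T_{x,r_i}\sigma_\Sigma\rightharpoonup\sigma_V$: Euclidean $(n-1)$-measure of horizontal slices scales as $r^{n-1}$ and the $dt$ integral contributes an extra $r^2$. A direct integration on $V$ gives
\begin{equation*}
    \sigma_V(B(0,r))=\omega_{n-1}\int_{-r^2}^{r^2}(r^4-t^2)^{(n-1)/4}\,dt = c(n)\,r^{n+1},
\end{equation*}
where $\omega_{n-1}$ is the Euclidean volume of the unit ball in $\R^{n-1}$ and $c(n)\coloneqq \omega_{n-1}\int_{-1}^{1}(1-s^2)^{(n-1)/4}\,ds$; since $\nu(B(0,r))=r^{n+1}$ this gives $\sigma_V=c(n)\,\nu$. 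On the other hand Proposition \ref{tuttitg}(iii) applied to $\rho$ at the Lebesgue point $x$ yields $r_i^{-(n+1)}T_{x,r_i}\sigma_\Sigma\rightharpoonup\rho(x)\,\nu$. Comparing the two limits forces $\rho(x)=c(n)$ for $\mu$-a.e.\ $x$, and hence $\mu=c(n)^{-1}\sigma_\Sigma$.

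The main technical point is justifying $r_i^{-(n+1)}T_{x,r_i}\sigma_\Sigma\rightharpoonup\sigma_V$, which requires commuting the horizontal time-slicing that defines $\sigma_\Sigma$ with the non-isotropic parabolic dilations in the limit. This relies on the non-horizontality of $T_x\Sigma$ established above (ensuring transversality of both $\Sigma$ and $V$ to the hyperplanes $\{\pi_T=\text{const.}\}$), together with a standard Euclidean area-formula argument for the $C^1$ submanifold $\Sigma$ near $x$, so that the $\mathcal{H}^{n-1}_\eu$-measures of the horizontal slices scale as expected under $\delta_{1/r}$.
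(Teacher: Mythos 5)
Your overall strategy -- mutual absolute continuity of $\mu$ and $\sigma_{\supp(\mu)}$ via Proposition \ref{supportoK}, Remarks \ref{remarkone} and \ref{rksuperfici}, followed by identifying the Radon--Nikodym density through a simultaneous parabolic blowup of $\mu$ and $\sigma_{\supp(\mu)}$ at a generic point -- is viable and genuinely different from the paper's proof, which instead applies Federer's differentiation theorem to the Vitali relation for the sum measure $\mu+\sigma_{\supp(\mu)}$ and invokes the almost-everywhere expansion $\sigma_\Sigma(B(x,r))=\mathfrak{c}_nr^{n+1}+o(r^{n+1})$. Your computation $\sigma_V(B(0,r))=c(n)r^{n+1}$ (which matches $\mathfrak c_1=2$ for $n=1$), the use of Proposition \ref{tuttitg}-(iii) at a Lebesgue point of $\rho$, and the identification $\nu=\mathcal{C}^{n+1}\llcorner V$ via Propositions \ref{propspt1} and \ref{verticalsamoa} are all sound, \emph{provided} the chosen point has non-horizontal Euclidean tangent.

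That proviso is where there is a genuine gap. You rule out $T_x\Sigma\subseteq\R^n\times\{0\}$ by arguing that a horizontal $n$-plane has parabolic dimension $n$, which would contradict $\nu$ being $(n+1)$-uniform. This presupposes that $\supp(\nu)$ is contained in the parabolic blowup of $T_x\Sigma$, but under the anisotropic dilations $\delta_{1/r}$ the blowup of $\Sigma$ at a characteristic point is \emph{not} the horizontal plane: writing $\Sigma$ locally as $\{x_T=g(x_H)\}$ with $\nabla g(x_H)=0$, the Euclidean first-order tangency imposes no constraint after parabolic rescaling, and the blowup is governed by the second-order terms, e.g.\ the paraboloid $\bigl\{y_T=\tfrac{1}{2}\langle y_H,D^2g(x_H)\,y_H\rangle\bigr\}$ when the Hessian is nonzero -- a set of parabolic dimension $n+1$ that can a priori carry an $(n+1)$-uniform measure. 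So the dimension count yields no contradiction, and at such points neither $\supp(\nu)\subseteq V$ nor the convergence $r_i^{-(n+1)}T_{x,r_i}\sigma_\Sigma\rightharpoonup\sigma_V$ is available. What your argument actually needs is that the characteristic set $\{x\in\Sigma: T_x\Sigma\subseteq\R^n\times\{0\}\}$ is $\mu$-null (equivalently $\sigma_{\supp(\mu)}$-null). This is true but requires a separate argument; in the paper it is obtained by Sard's theorem applied to local graph parametrizations of $\Sigma$ together with the slice structure of $\sigma_\Sigma$ (see the proof of Proposition \ref{tgverticalseregolare}). Once that ingredient is inserted, your blowup identification $\rho(x)=c(n)$ at $\mu$-a.e.\ $x$ goes through and the proposition follows.
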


\begin{proof} 
By Theorem \ref{th.marstrand} we have that $\supp(\mu)$ is an analytic manifold of Hausdorff dimension $n+1$, so by a result of Magnani as stated in \cite[Proposition 2.2]{Chousionis2015MarstrandsGroup} (see also the references therein, in particular \cite{MagnaniJGA2010} and \cite{Magnani_Vittone_2008}) and Lojasiewicz’s Structure Theorem for real analytic varieties (see for instance \cite[Theorem 5.2.3]{KPanalyticmanifold}) we have that $\supp(\mu)$ is the countable union of $C^{\infty}$-manifolds of Euclidean Hausdorff dimensions $n$ or $n+1$. However, this shows that $\supp(\mu)$ is  actually a countable union of (Euclidean) $n$-dimensional $C^{\infty}$-manifolds $\Sigma_i$. Let $G$
be the maximal open subset of $\R^{n+1}$ where $G\cap \supp(\mu)$ is an analytic submanifold of $\R^{n+1}$. Note that the above discussion together with \cite[\S 3.4.8]{Federer1996GeometricTheory} implies that $\mathcal{H}^{n}_{\mathrm{eu}}(\R^n\setminus G)=0$ and thus $\mu(\R^n\setminus G)=0$. See also the proof of \cite[Theorem 1.4]{KirchheimPreiss02}.

Thanks to \cite[Proposition 2.2-(ii)]{Chousionis2015MarstrandsGroup} and since the surface measure $\sigma_{\supp(\mu)}$ is mutually absolutely continuous with respect to $\mathcal{H}^{n+1}\llcorner \supp(\mu)$, we infer that
$$\limsup_{r\to 0}\frac{(\mu+\sigma_{\supp(\mu)})(B(x,2r))}{(\mu+\sigma_{\supp(\mu)})(B(x,r))}<\infty, \qquad \text {for $(\mu+\sigma_{\supp(\mu)})$-a.e. $x\in\mathbb{P}^n$.}$$
Thus the family $\mathcal F\coloneqq \{(x,B(x,r)):x\in G, 0<r<1\}$ is a $(\mu + \sigma_{\supp(\mu)})$-Vitali relation in the sense of \cite[2.8.16]{Federer1996GeometricTheory}, so the differentiation theorem \cite[2.9.5]{Federer1996GeometricTheory} implies that the function
	\begin{equation}\label{eq:deriv_differ_Marstrand}
	    f(x)\coloneqq \lim_{r\to 0}\frac{\mu(B(x,r))}{\bigl(\mu + \sigma_{\supp(\mu)}\bigr)(B(x,r))}
    \end{equation}
    is defined for $(\mu + \sigma_{\supp(\mu)})$-almost every $x\in \mathbb{P}^n.$ Furthermore, by \cite[2.9.7]{Federer1996GeometricTheory} it holds that
    \begin{equation}\label{eq:int_Mastrand}
        \mu(A)= \int_A f(z)\, d\bigl(\mu + \sigma_{\supp(\mu)} \bigr)(z)\qquad \text{ for all Borel set }A\subseteq G.
    \end{equation}
    
    An elementary computation shows that, if $\Sigma$ is a smooth manifold in $\mathbb{P}^n$, there exists a constant $\mathfrak{c}_n>0$ depending only on $n$ such that 
    \begin{equation}\label{eq:exp_Marstrand}
      \sigma_{\Sigma}(B(x,r))= \mathfrak{c}_nr^{n+1} + o(r^{n+1}),
    \end{equation}
    for $\sigma_{\Sigma}$-almost every $x\in\Sigma$. However, since $\sigma_{\supp(\mu)}$ is locally asymptotically doubling, Lebesgue differentiation theorem and \eqref{eq:exp_Marstrand} yield that, for $\sigma_{\supp(\mu)}$-almost every $x\in \Sigma_i$, we have
    $$\lim_{r\to 0}\frac{\sigma_{\supp(\mu)}(B(x,r))}{r^{n+1}}=\lim_{r\to 0}\frac{\sigma_{\supp(\mu)}(B(x,r))}{\sigma_{\supp(\mu)}(B(x,r)\cap \Sigma_i)}\frac{\sigma_{\supp(\mu)}(B(x,r)\cap \Sigma_i)}{r^{n+1}}=\mathfrak{c}_n,$$
    hence $\Theta^{n+1}(\sigma_{\supp(\mu)},x)=\mathfrak{c}_n$ for $\sigma_{\supp(\mu)}$-almost every $x\in\mathbb{P}^n$.
    In particular, \eqref{eq:deriv_differ_Marstrand} and \eqref{eq:exp_Marstrand} imply that $f(x)= (1+\mathfrak{c}_n)^{-1}$ for $\bigl(\mu + \sigma_{\supp(\mu)}\bigr)$-almost every $x\in\mathbb{P}^n$. Finally \eqref{eq:int_Mastrand} gives
    $\mu(A)= \mathfrak{c}_n^{-1}\sigma_{\supp(\mu)}(A)$ for all Borel sets $A\subset \mathbb P^n$, which concludes the proof.
\end{proof}

\vv
\begin{remark}\label{rem:remark_c_P_1}
    In the case $n=1,$ it is easy to see that the constant $\mathfrak c_1$ in \eqref{eq:exp_Marstrand} equals $2$. In particular we have that $c=1/2$ in the statement of Proposition \ref{strutturaunifmeasvssurfacemeasure}.
\end{remark}

\vv
We recall that $\mathcal C^{n+1}$ stands for the $(n+1)$-dimensional centered Hausdorff measure (see Definition \ref{Hausdro}).
\begin{proposition}\label{tgverticalseregolare}
Suppose that $\Gamma\subset \mathbb P^n$ is a (Euclidean) $n$-dimensional manifold of class $C^\infty$. Denote by $\mathfrak{n}(x)\in \R^{n+1}$ the smooth vector field orthogonal to $\Gamma$ and  $\hat {\mathfrak{n}}(x)\coloneqq\pi_H(\mathfrak{n}(x))$. Then, for  $\mathcal{C}^{n+1}\llcorner \Gamma$-almost every $x\in\mathbb{P}^n$ we have $\Tan_{n+1}(\mathcal{C}^{n+1}\llcorner \Gamma,x)=\{\mathcal{C}^{n+1}\llcorner \hat{\mathfrak{n}}(x)^\perp\}$.
\end{proposition}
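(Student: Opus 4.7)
The plan is to reduce to non-characteristic points, compute the blowup of the set $\Gamma$ there, and then identify the tangent measure using the uniformity results established earlier in this section.

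\emph{Step 1 (reduction to non-characteristic points).} Call $x_0\in\Gamma$ characteristic if $\hat{\mathfrak n}(x_0)=0$, i.e.\ if the Euclidean tangent space $T_{x_0}\Gamma$ is horizontal. Near such a point $\Gamma$ admits a local $C^\infty$ parametrisation $y_T=h(y_H)$ with $\nabla h(x_{0,H})=0$, so Taylor's theorem yields $|h(y_H)-x_{0,T}|=O(|y_H-x_{0,H}|^2)$ and therefore $d(y,x_0)^4=|y_H-x_{0,H}|^4+O(|y_H-x_{0,H}|^4)$. Combined with the obvious lower bound $d(y,x_0)\geq |y_H-x_{0,H}|$, this shows that $\Gamma$ is parabolic bi-Lipschitz to an open subset of $(\R^n,|\cdot|)$ near $x_0$, hence has parabolic Hausdorff dimension $n$ locally. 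Consequently $\mathcal{C}^{n+1}\llcorner\Gamma$ vanishes on the (open) set of characteristic points and it suffices to prove the claim at $x\in\Gamma$ with $\hat{\mathfrak n}(x)\neq 0$.

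\emph{Step 2 (blowup of the set).} Composing with a horizontal Euclidean rotation (which is a parabolic isometry), we assume $x=0$ and $\hat{\mathfrak n}(0)=e_1$. The implicit function theorem writes $\Gamma$ locally as $y_1=g(y_2,\ldots,y_n,y_T)$ for a smooth $g$ with $g(0)=0$, $\partial_i g(0)=0$ for $i=2,\ldots,n$, and $\partial_T g(0)=-\mathfrak n_T(0)$. For $(y_1,u,s)\in\Gamma\cap B(0,r)$ we have $|u|\leq r$ and $|s|\leq r^2$, whence the Taylor expansion
\[
y_1=g(u,s)=-\mathfrak n_T(0)\,s+O(|u|^2+s^2)=O(r^2)
\]
shows that after the rescaling $\delta_{1/r}$ the first coordinate of every point of $\Gamma\cap B(0,r)$ becomes $O(r)$. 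Setting $g_r(\tilde u,\tilde s)\coloneqq g(r\tilde u,r^2\tilde s)/r$, a direct computation gives $g_r\to 0$ in $C^1$-norm on compact sets, so $\delta_{1/r}(\Gamma)$ is a smooth graph over $V\coloneqq\hat{\mathfrak n}(0)^\perp=\{y\in\mathbb P^n:y_1=0\}$ with $C^1$-small graph function; in particular $\delta_{1/r}(\Gamma)$ converges in the local Hausdorff sense to $V$, the unique element of $\Gr(n+1)$ with horizontal normal $\hat{\mathfrak n}(0)$ (Corollary \ref{corollary:grn1}).

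\emph{Step 3 (identification of the tangent measure).} By Step 2 and Proposition \ref{propspt1}, any $\nu\in\Tan_{n+1}(\mathcal{C}^{n+1}\llcorner\Gamma,0)$ is supported on $V$. We will establish the $(n+1)$-density identity
\begin{equation}\label{eq:density-plan}
\lim_{r\to 0^+}\frac{\mathcal{C}^{n+1}(\Gamma\cap B(0,r))}{r^{n+1}}=1;
\end{equation}
once \eqref{eq:density-plan} is available, Proposition \ref{propup} yields $\nu\in \mathcal U(n+1)$ and Proposition \ref{verticalsamoa} forces $\nu=\mathcal{C}^{n+1}\llcorner V$, the identification being therefore independent of the extracting sequence. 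By the parabolic scaling of $\mathcal{C}^{n+1}$, \eqref{eq:density-plan} amounts to $\mathcal{C}^{n+1}(\delta_{1/r}(\Gamma)\cap B(0,1))\to 1$, which we obtain by lifting efficient centred coverings of $V\cap B(0,1)$ to $\delta_{1/r}(\Gamma)\cap B(0,1)$, and projecting back, using the $C^1$-smallness of $g_r$ from Step 2 to keep the weights $(1+o(1))$-equivalent in both directions.

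\emph{Main obstacle.} The delicate point is \eqref{eq:density-plan}. The centred Hausdorff measure requires the covering balls to be centred on the set, so the bijection between $V$ and $\delta_{1/r}(\Gamma)$ produced by the graph must move centres by an amount that is negligible compared with the relevant ball radii. The $C^1$-smallness of $g_r$ in Step 2 guarantees that the horizontal displacement, which is what controls the Koranyi distance at scale $r$, is $o(r)$ uniformly on compact sets, making the lift/project argument rigorous and completing the proof.
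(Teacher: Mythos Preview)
Your Steps 2 and 3 follow the paper's route (support inclusion via Proposition \ref{propspt1}, then uniformity of tangents via Proposition \ref{propup}, then Proposition \ref{verticalsamoa}). The paper obtains your density identity \eqref{eq:density-plan} by citing \cite[Proposition 2.2]{Chousionis2015MarstrandsGroup} together with the area formula \cite[Theorem 3.1]{FSSCArea}, rather than by a direct covering argument; your lift/project sketch is plausible but would need more care to be a complete proof of $\mathcal{C}^{n+1}$-convergence.

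Step 1, however, has a genuine gap. The estimate $d(y,x_0)^4=|y_H-x_{0,H}|^4+O(|y_H-x_{0,H}|^4)$ only controls distances \emph{from the fixed point} $x_0$; it does not imply that the chart $y_H\mapsto(y_H,h(y_H))$ is parabolically bi-Lipschitz on a neighbourhood. Indeed, for two nearby points $y,z$ where $\nabla h\neq 0$ (such points exist arbitrarily close to $x_0$ unless $x_0$ lies in the interior of the characteristic set) one has $|h(y_H)-h(z_H)|\approx |\nabla h|\,|y_H-z_H|$, hence
\[
d(y,z)^4=|y_H-z_H|^4+|h(y_H)-h(z_H)|^2\approx |y_H-z_H|^2,
\]
so the chart is only $1/2$-H\"older, not bi-Lipschitz, and no local dimension-$n$ conclusion follows. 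Relatedly, the characteristic set $\{\hat{\mathfrak n}=0\}$ is closed, not open. Thus your argument does not show that $\mathcal{C}^{n+1}\llcorner\Gamma$ vanishes on the characteristic set.

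The paper handles this point by Sard's theorem. Writing $\Gamma$ locally as the graph $\{(y,f(y)):y\in\R^n\}$, the characteristic points are exactly where $df=0$, so their $T$-coordinates are critical values of $f$, a Lebesgue-null subset of $\R$. Since the measure $\sigma_\Gamma(A)=\int \mathcal{H}^{n-1}_{\mathrm{eu}}(A\cap\{z\in\Gamma:\pi_T(z)=t\})\,dt$ integrates over $t$, it vanishes on the characteristic set; by the mutual absolute continuity of $\sigma_\Gamma$ and $\mathcal{H}^{n+1}\llcorner\Gamma$ (Remark \ref{rksuperfici}), so does $\mathcal{C}^{n+1}\llcorner\Gamma$.
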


\begin{proof} The existence of the $(n+1)$-dimensional density of $\mathcal{C}^{n+1}\llcorner \Gamma$ follows directly from \cite[Proposition 2.2]{Chousionis2015MarstrandsGroup} together with the smoothness of the natural embedding of $\mathbb{P}^n$ into $\mathbb{H}^n$. This observation and the area formula \cite[Theorem 3.1]{FSSCArea} further imply that 
\begin{equation}\label{eq:prop23aux1}
    \Theta^{n+1}(\mathcal{C}^{n+1}\llcorner \Gamma,x)=1\qquad \text{ for } \mathcal{C}^{n+1}\llcorner \Gamma\text{-a.e. } x\in\mathbb{P}^n.
\end{equation}

We define $\mathfrak{N}\coloneqq\{x\in\Gamma: \mathfrak{n}(x)=e_{n+1}\}$ and claim that $\mathcal{H}^{n+1}(\mathfrak{N})=0$.
In order to prove this, we first note that the continuity of $\mathfrak{n}(\cdot)$ implies that $\mathfrak{N}$ is a closed set in $\Gamma$.
Thanks to the smoothness of $\Gamma$ we can find a countable cover of $\mathfrak{N}$ with balls $\{B(x_i,r_i)\}_{i\in\N}$ so that for any $i\in\N$ there exists a $C^\infty$ function $f_i\colon\R^n\to \R$ such that
$$\Gamma\cap B(x_i,r_i)=\bigl\{(y,f_i(y)):y\in\R^n\bigr\}\cap B(x_i,r_i).$$

Then, the set $\mathfrak{N}\cap B(x_i,r_i)$ coincides with the intersection of the graph of $f_i$ on its critical set with the ball $B(x_i,r_i)$. However, by Sard's theorem (see for instance \cite[Theorem 3.4.3]{Federer1996GeometricTheory}) we have that $\mathcal{L}^1(\{w:df_i(w)=0\})=0$. Therefore the mutual absolute continuity of $\mathcal{H}^{n+1}$ and $\sigma_{\Gamma}$ implies that $\mathcal{H}^{n+1}(B(x_i,r_i)\cap \mathfrak{N})=0$ which, together with the choice of the cover $\{B(x_i,r_i)\}$, concludes the proof of the claim $\mathcal{H}^{n+1}(\mathfrak{N})=0$.

Now let $x\in\Gamma$ be a point for which $\pi_1(\mathfrak{n}(x))\neq 0$ and $\Tan_{n+1}(\mathcal{C}^{n+1}\llcorner \Gamma,x)\neq \emptyset$.
Fix $\mu\in \Tan_{n+1}(\mathcal{C}^{n+1}\llcorner \Gamma,x)$ and let $r_i$ be an infinitesimal sequence such that $$r_i^{-(n+1)}T_{x,r_i}\mathcal{C}^{n+1}\rightharpoonup \mu.$$ 

Proposition \ref{propspt1} yields that for any $y\in\supp(\mu)$ there exists a sequence $\{z_i\}\subset\Gamma$ such that $\delta_{r_i^{-1}}(z_i-x)\to y$. This implies in particular that 
$\Delta_i\coloneqq z_i-x-\delta_{r_i}(y)$ satisfies $\lim_{i\to \infty}\lVert\Delta_i\rVert/r_i= 0$. However, since $\Gamma$ is a smooth Euclidean $n$-dimensional manifold, then $\langle \mathfrak{n}(x),(z_i-x)/r_i\rangle\to 0$ as $i\to \infty$. This readily gives that
\begin{equation}
\nonumber
    0=\lim_{i\to \infty}\bigl\langle \pi_H(\mathfrak{n}(x)),y_H+r_i^{-1}\pi_H(\Delta_i)\bigr\rangle+\pi_T(\mathfrak{n}(x))\bigl(r_iy_T+r_i^{-1}\pi_T(\Delta_i)\bigr)=\bigl\langle \pi_H(\mathfrak{n}(x)),y_H\bigr\rangle,
\end{equation}
so we obtain $\supp(\mu)\subseteq \hat{\mathfrak{n}}(x)^\perp$. 
However, by \eqref{eq:prop23aux1} and Proposition \ref{propup} we have $\Tan_{n+1}(\mathcal{C}^{n+1}\llcorner \Gamma,x)\subseteq \mathcal{U}(n+1)$ for $\mathcal{C}^{n+1}\llcorner \Gamma$-almost every $x\in \mathbb{P}^n$. This together with Propositions \ref{supportoK} and \ref{verticalsamoa} concludes the proof.
\end{proof}

\begin{corollary}\label{existenceflatmeasure}
    Suppose that $\phi$ is a Radon measure on $\mathbb{P}^n$ with $(n+1)$-density. Then for $\phi$-almost every $x\in\mathbb{P}^n$ we have
    \[\Tan_{n+1}(\phi,x)\cap \mathfrak{M}(n+1)\neq \emptyset.\]
\end{corollary}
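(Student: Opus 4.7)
The strategy is a standard blowup-of-a-blowup argument: first locate a general tangent of $\phi$, then take a further blowup along its support to produce a flat tangent.

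At $\phi$-a.e. $x$ the measure $\phi$ admits an asymptotic $(n+1)$-density, so by Proposition \ref{tuttitg}(i) and Proposition \ref{propup} I may fix a non-trivial $\mu\in \Tan_{n+1}(\phi,x)$ of the form $\mu = \Theta^{n+1}(\phi,x)\,\nu$ with $\nu\in\mathcal{U}(n+1)$. By Proposition \ref{supportoK} we have $\nu=\mathcal C^{n+1}\llcorner\supp(\nu)$, and by Proposition \ref{strutturaunifmeasvssurfacemeasure} the support is, modulo an $\mathcal H^{n+1}$-null set, a countable union of smooth Euclidean $n$-dimensional manifolds $\Sigma_i$ on each of which $\nu$ agrees with a constant multiple of $\mathcal C^{n+1}\llcorner \Sigma_i$.

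Next, I would apply Proposition \ref{tgverticalseregolare} to each $\Sigma_i$: at $\mathcal C^{n+1}\llcorner \Sigma_i$-a.e. point $y$ the measure $\mathcal C^{n+1}\llcorner \Sigma_i$ has a unique tangent $\mathcal C^{n+1}\llcorner \hat{\mathfrak n}(y)^{\perp}$, which by Corollary \ref{corollary:grn1} (together with the Sard-type fact that the set $\mathfrak{N}=\{y:\mathfrak n(y)=e_{n+1}\}$ is $\mathcal H^{n+1}$-negligible, as established inside the proof of Proposition \ref{tgverticalseregolare}) is an element of $\mathfrak{M}(n+1)$. Because the $\Sigma_j$ with $j\ne i$ contribute zero density at $\nu$-a.e. point of $\Sigma_i$, by Lebesgue differentiation for the asymptotically doubling measure $\nu$, the tangents of $\nu$ (hence of $\mu$) at such $y$ are themselves flat and belong to $\mathfrak{M}(n+1)$. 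Choose such a good $y\in\supp(\mu)$ and an infinitesimal sequence $r_i\to 0$ with
\[
    r_i^{-(n+1)}T_{y,r_i}\mu\rightharpoonup \mu_\infty\in\mathfrak{M}(n+1).
\]

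To close the argument I would invoke Proposition \ref{tuttitg}(iv) at $x$: for every $r>0$ the measure $r^{-(n+1)}T_{y,r}\mu$ already lies in $\Tan_{n+1}(\phi,x)$. A standard diagonal argument, extracting for each $i$ a scale $s_i\to 0$ with $s_i^{-(n+1)}T_{x,s_i}\phi$ close in the weak$^*$ topology to $r_i^{-(n+1)}T_{y,r_i}\mu$, produces $s_i^{-(n+1)}T_{x,s_i}\phi\rightharpoonup \mu_\infty$, so $\mu_\infty\in \Tan_{n+1}(\phi,x)\cap\mathfrak{M}(n+1)$. The only mildly delicate step is the descent from the analytic variety $\supp(\mu)$ to a single smooth stratum $\Sigma_i$ in order to apply Proposition \ref{tgverticalseregolare} verbatim; everything else is a routine combination of the tools already collected in this section.
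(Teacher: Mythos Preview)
Your proposal is correct and follows essentially the same route as the paper, which simply cites Theorem \ref{th.marstrand} and Propositions \ref{tuttitg}, \ref{propup}, \ref{tgverticalseregolare}: pick a uniform tangent $\mu$ via Proposition \ref{propup}, use the analytic-variety structure of $\supp(\mu)$ (Theorem \ref{th.marstrand}, which underlies Proposition \ref{strutturaunifmeasvssurfacemeasure}) to reduce locally to a smooth $n$-manifold, invoke Proposition \ref{tgverticalseregolare} to get a flat blowup of $\mu$, and then Proposition \ref{tuttitg}(iv) (tangents of tangents) to land back in $\Tan_{n+1}(\phi,x)$. Your extra care with the Lebesgue-differentiation step for passing from a single stratum $\Sigma_i$ to $\nu$ and the closedness of $\Tan_{n+1}(\phi,x)$ for the diagonal argument are the natural details behind the paper's one-line citation.
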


\begin{proof}
    This is an immediate consequence of Theorem \ref{th.marstrand} and Propositions \ref{tuttitg}, \ref{propup}, and \ref{tgverticalseregolare}.
\end{proof}

\vv

\subsection{Blowups of \texorpdfstring{$1$}{1}-codimensional uniform measures}

\begin{proposition}\label{bupunifarecones}
Let $\mu\in \mathcal{U}(n+1)$. For any $x\in\supp(\mu)$ and any $\nu\in\Tan_{n+1}(\mu,x)$ we have that $\nu$ is dilation-invariant, i.e. $\lambda^{-(n+1)}T_{0,\lambda}\nu=\nu$ for any $\lambda>0$.
\end{proposition}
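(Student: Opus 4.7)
My plan splits the argument into a reduction step and a rigidity step, with the latter being the technical core.

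\textbf{Reduction to support invariance.} By Proposition \ref{uniformup}, $\nu\in\mathcal{U}(n+1)$ and $0\in\supp(\nu)$. For any $\lambda>0$, set $\nu^\lambda\coloneqq \lambda^{-(n+1)}T_{0,\lambda}\nu$. A direct check using $\nu(B(y,r))=r^{n+1}$ for $y\in\supp(\nu)$ shows that $\nu^\lambda\in\mathcal U(n+1)$ and $0\in\supp(\nu^\lambda)$, with $\supp(\nu^\lambda)=\delta_{1/\lambda}\supp(\nu)$. By Proposition \ref{supportoK}, $\nu=\mathcal C^{n+1}\llcorner\supp(\nu)$ and $\nu^\lambda=\mathcal C^{n+1}\llcorner\supp(\nu^\lambda)$, so the identity $\nu^\lambda=\nu$ is equivalent to the $\delta_\lambda$-invariance of $\supp(\nu)$.

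\textbf{Identifying $\nu^\lambda$ as a second blowup.} Pick an infinitesimal sequence $r_i\to 0$ with $\nu_i\coloneqq r_i^{-(n+1)}T_{x,r_i}\mu\rightharpoonup\nu$. Since $\mathbb{P}^n$ is abelian and $\delta_{\lambda r_i}=\delta_\lambda\circ\delta_{r_i}$, one computes at the level of Borel sets
\[
(\lambda r_i)^{-(n+1)}T_{x,\lambda r_i}\mu=\lambda^{-(n+1)}T_{0,\lambda}\nu_i,
\]
so passing to the weak* limit yields $\nu^\lambda\in\Tan_{n+1}(\mu,x)$. Correspondingly, given $y\in\supp(\nu)$ with witnesses $z_i\in\supp(\mu)$ provided by Lemma \ref{replica} satisfying $\delta_{1/r_i}(z_i-x)\to y$, one has $\delta_{1/(\lambda r_i)}(z_i-x)=\delta_{1/\lambda}\delta_{1/r_i}(z_i-x)\to\delta_{1/\lambda}y$, recovering $\supp(\nu^\lambda)=\delta_{1/\lambda}\supp(\nu)$ directly.

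\textbf{The main obstacle: upgrading to $\nu^\lambda=\nu$.} The two measures are both tangents of $\mu$ at $x$, but in general tangent measures at a single point are not unique, so the containment $\nu^\lambda\in\Tan_{n+1}(\mu,x)$ does not by itself suffice. To close the gap I would exploit that $\mu\in\mathcal U(n+1)$ gives $\mu(B(x,r))/r^{n+1}=1$ strictly (not only asymptotically), so Corollary \ref{prop1} yields the rigid identity
\[
\int e^{-s\|z-x\|^4}\,d\mu(z)=\frac{(n+1)\,\Gamma((n+1)/4)}{4\,s^{(n+1)/4}}\qquad\text{for every }s>0.
\]
After the change of variables $z=x+\delta_{r_i}(w)$ with $s=\tilde s/r_i^4$, the right-hand side is independent of $i$, so $\int e^{-\tilde s\|w\|^4}\,d\nu_i(w)$ equals the same constant for every $i$ and every $\tilde s>0$; the same holds if $\nu_i$ is replaced by $\lambda^{-(n+1)}T_{0,\lambda}\nu_i$. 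Passing to the limit and applying the moment formula of Proposition \ref{prop5} to $\nu$ and $\nu^\lambda$ at any common point of $\supp(\nu)\cap\supp(\nu^\lambda)=\supp(\nu)\cap\delta_{1/\lambda}\supp(\nu)$, I would combine the resulting family of Gaussian-moment equalities with the density (in $C_c(\mathbb P^n)$) of linear combinations of translated radially symmetric profiles, together with the representation $\nu=\mathcal C^{n+1}\llcorner\supp(\nu)$ supplied by Proposition \ref{supportoK} and the compactness of $\mathcal U(n+1)$ from Proposition \ref{UComp}, to force $\supp(\nu)=\supp(\nu^\lambda)$, hence $\nu=\nu^\lambda$.

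The delicate point, and the one where I expect the technical work, is to identify the correct common reference points in $\supp(\nu)\cap\supp(\nu^\lambda)$ and to show that the Gaussian-moment information really is strong enough to pin down the support set-theoretically; this is where the abelian-group structure of $\mathbb P^n$, rather than any specific algebraic feature of uniform measures, carries the argument.
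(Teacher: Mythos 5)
Your reduction steps are fine: $\nu^\lambda\coloneqq\lambda^{-(n+1)}T_{0,\lambda}\nu$ is again $(n+1)$-uniform, it is again a tangent of $\mu$ at $x$ (along the radii $\lambda r_i$), and by Proposition \ref{supportoK} the claim is equivalent to $\delta_\lambda$-invariance of $\supp(\nu)$. But the step you yourself flag as the technical core does not work, and the reason is structural rather than a fixable detail. The only identities you feed into the argument are integrals of radially symmetric profiles centered at support points, and by Proposition \ref{prop5} (and Corollary \ref{prop1}) such integrals equal $h\int_0^\infty r^{h-1}g(r)\,dr$ for \emph{every} $h$-uniform measure and \emph{every} point of its support: they depend only on $h$ and the profile, not on the measure. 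Since $\nu$ and $\nu^\lambda$ are both $(n+1)$-uniform, every Gaussian-moment equality you can extract this way is automatically satisfied by both, so this family of constraints cannot distinguish them, let alone force $\supp(\nu)=\supp(\nu^\lambda)$. The density of linear combinations of translated radial profiles does not rescue this: for translation points off the support you have no moment identities at all, and for points on the support the identities are vacuous in the above sense. In short, uniformity plus membership in $\Tan_{n+1}(\mu,x)$ is exactly the information you start with, and the proposition is precisely the assertion that this class of tangents is rigid; that rigidity has to come from somewhere beyond the moment calculus.

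In the paper the missing input is the analytic-variety structure of $\supp(\mu)$ (Theorem \ref{th.marstrand}): writing the defining real-analytic function $H$ as a sum of $\delta_\lambda$-homogeneous polynomials starting at degree $k$, one shows $\supp(\nu)\subseteq\{P_k=0\}$, where $P_k$ is the lowest-order part, and $\{P_k=0\}$ is a parabolic cone. A touching-point argument (using Propositions \ref{tgverticalseregolare}, \ref{verticalsamoa} and Lemma \ref{replica}) then shows that $\supp(\nu)$ is relatively open and closed in each connected component of $\{P_k=0\}\setminus\Sigma(P_k)$, so it is the closure of a union of such components; since $\{P_k=0\}$ and $\Sigma(P_k)$ are dilation-invariant, so is $\supp(\nu)$, and Proposition \ref{supportoK} concludes. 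So your claim that the abelian group structure alone, ``rather than any specific algebraic feature of uniform measures,'' carries the argument is the opposite of what is needed: the algebraic/analytic rigidity of supports of uniform measures is exactly the engine of the proof, and without it (or some substitute of comparable strength) the gap in your rigidity step cannot be closed.
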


\begin{proof}
Thanks to Theorem \ref{th.marstrand} we know that there exists a real analytic function $H\colon \R^{n+1}\to \R$ such that 
\begin{equation}\label{eq:Hsuppmu}
\bigl\{x\in\R^{n+1}:H(x)=0\bigr\}=\supp(\mu).
\end{equation}

Since $H(0)=0$, for some $k>0$ we can write
$$H(x)=\sum_{j=k}^\infty\sum_{\lvert\alpha_H\rvert+2\alpha_T=j} \frac{a_\alpha}{\alpha!}x^\alpha,$$
where $\alpha=(\alpha_H,\alpha_T)\in \mathbb N^n\times \mathbb N$ and we have $a_{\bar \alpha}\neq 0$ for some $\bar\alpha$ with $\lvert\bar\alpha_H\rvert+2\bar\alpha_T=k$. 

Let us fix $\nu\in\Tan_{n+1}(\mu,0)$ and suppose that $r_i$ is an infinitesimal sequence for which $r_i^{-(n+1)}T_{0,r_i}\mu\rightharpoonup\nu$. The first step of the proof is to show that for $P_k(x)\coloneqq\sum_{\lvert\alpha_H\rvert+2\alpha_T=k} \frac{a_\alpha}{\alpha!}x^\alpha$ we have that \begin{equation}\label{eq:suppnusubP0}
    \supp(\nu)\subseteq \{P_k=0\}.
\end{equation}

To prove \eqref{eq:suppnusubP0}, let $u\in \supp(\mu)$ and observe that Lemma \ref{replica}-(ii)-(iii) implies that there exists a sequence $\{z_i\}_{i\in\N}$ in $\supp(\mu)$ such that $\delta_{r_i^{-1}}(z_i)\to u$. Thus it holds
\begin{equation}\label{eq:identity_0_H_r}
    \begin{split}
        0\overset{\eqref{eq:Hsuppmu}}{=}&\frac{H(z_i)}{r_i^k}=r_i^{-k}\sum_{j=k}^\infty\sum_{\lvert\alpha_H\rvert+2\alpha_T=j} \frac{a_\alpha}{\alpha!}z_i^\alpha\\
        =&\sum_{\lvert\alpha_H\rvert+2\alpha_T=k} \frac{a_\alpha}{\alpha!}\delta_{r_i^{-1}}(z_i)^\alpha+\sum_{j=k+1}^\infty r_i^{j-k}\Big(\sum_{\lvert\alpha_H\rvert+2\alpha_T=j} \frac{a_\alpha}{\alpha!}\delta_{r_i}^{-1}(z_i)^\alpha\Big)\eqqcolon \mathcal{I}_i+ \mathcal{II}_i.
    \end{split}
\end{equation}

It is easily seen that $H\bigl(\delta_{r_i}^{-1}(z_i)\bigr)$ is uniformly bounded for any $i\in\N$, so  $$\sum_{j=k+1}^\infty \Bigl(\sum_{\lvert\alpha_H\rvert+2\alpha_T=j} \frac{a_\alpha}{\alpha!}\delta_{r_i}^{-1}(z_i)^\alpha\Bigr)$$ is uniformly bounded as well.
Therefore, $\mathcal{II}_i\to 0$ as $i\to \infty$, and the first equality in \eqref{eq:identity_0_H_r} readily gives that $\lim_{i\to \infty}\mathcal I_i=0$ as well.
This implies in particular that
$$P_k(u)=\lim_{i\to \infty}P_k\bigl(\delta_{r_i^{-1}}(z_i)\bigr)=\lim_{i\to \infty}\sum_{\lvert\alpha_H\rvert+2\alpha_T=k} \frac{a_\alpha}{\alpha!}\delta_{r_i^{-1}}(z_i)^\alpha=0,$$
which proves \eqref{eq:suppnusubP0}.

By hypothesis the polynomial $P_k$ is non-trivial, so the zero-level set $\Gamma\coloneqq\{P_k=0\}$ is contained in an $n$-dimensional analytic manifold. Furthermore since $\Gamma$ supports the measure $\nu$, which is mutually absolutely continuous with respect to $\mathcal{H}^{n}_{\eu}\llcorner \supp(\nu)$, we have that $\Gamma$ \emph{is} an $n$-dimensional analytic manifold and in particular a Euclidean $n$-rectifiable set. 

It is immediate to see that $\Gamma$ is invariant under parabolic dilations. Therefore, the set
$$\Sigma(P_k)\coloneqq\{x\in\Gamma:\text{the Euclidean tangent to } \Gamma \text{ at }x \text{ does not exist or is horizontal}\},$$
must be a cone as well. 

The second step of the proof consists in showing that if $y\in \Gamma\setminus \supp(\nu)$ and $z\in\supp(\nu)$ is such that $\lvert y-z\rvert=\mathrm{dist}(y,\supp(\nu))$ then $z\in \Sigma(P_k)$. Let us argue by contradiction and assume that there are $y$ and $z$ as above such that $z\not\in\Sigma(P_k)$. By Lojaciewicz's Structure Theorem (see \cite[Theorem 5.2.3]{KPanalyticmanifold}) these points must be contained in a set of Euclidean Hausdorff dimension $n-1$ and thus of $\mathcal{H}^{n+1}$-null measure. In addition, by Proposition \ref{tgverticalseregolare}, we infer that the blowup of $\Gamma$ at $z$ must be an element $\mathfrak{n}(z)^\perp$ of $\Gr(n+1)$. Hence, Proposition \ref{verticalsamoa} implies that $\Tan_{n+1}(\nu,z)=\{\mathcal{C}^{n+1}\llcorner \mathfrak{n}(z)^\perp \}$. 

If $z_H=y_H$, the blowup of the ball $B(y,\lvert z-y\rvert)$ at the point $z$ is $\{w\in\mathbb{P}^n:w_T\geq 0\}$ and this contradicts the fact that $\Tan_{n+1}(\nu,z)=\{\mathcal{C}^{n+1}\llcorner \mathfrak{n}(z)^\perp \}$.
If on the other hand $z_H\neq y_H$, the parabolic blowup of the ball $B(y,\lvert z-y\rvert)$ at the point $z$ is easily seen to be $\{w\in\mathbb{P}^{n}:\langle w_H,z_H-y_H\rangle\leq 0\}$ and this forces $\mathfrak{n}(z)$ to be parallel to $z_H-y_H$, otherwise as above we would have that the support of any tangent of $\nu$ at $z$ should be contained in a strict subset of $\mathfrak{n}(z)^\perp$, which is excluded thanks to Proposition \ref{verticalsamoa} and Lemma \ref{replica}. 

The third and final step of the proof is to show that if $C$ is a connected component of $\Gamma\setminus \Sigma(P_k)$ then either $C\subseteq \supp(\nu)$ or $C\cap \supp(\nu)=\emptyset$. Let us first note that since $C$ is connected and $C\cap \supp(\nu)$ is relatively closed in $\supp(\nu)$ if we prove that $\supp(\nu)$ is also relatively open in $C$, then the claim is proved. To show this, let us assume by contradiction that this is not the case and that we can find $z\in \supp(\nu)$ for which there exists a sequence of points $y_i$ in $\Gamma\setminus \Sigma(P_k)$ converging to $z$. The previous step shows that for any $i\in\N$ we have that $z_H-(y_i)_H$ is parallel to $\mathfrak{n}(z)$. However, since the blowup of $\Gamma$ is the plane $\mathfrak{n}(z)^\perp$, we can assume without loss of generality that $\bigl(z_H-(y_i)_H\bigr)/\lvert z_H-(y_i)_H\rvert\to u\in \mathfrak{n}(z)^\perp$ and we conclude that
$$1=\Big\lvert\Big\langle\mathfrak{n}(z),\lim_{i\to \infty}\frac{z_H-(y_i)_H}{\lvert z_H-(y_i)_H\rvert}\Big\rangle\Big\rvert=\lvert\langle \mathfrak{n}(z),u\rangle\rvert=0,$$
which is a contradiction. 

Since $\Sigma(P_k)$ is $\nu$-null, $\supp(\nu)$ must coincide with the closure of the union of some of the connected components of $\Gamma\setminus \Sigma(P)$. This implies thanks to Theorem \ref{th.marstrand} that $\nu$ is dilation-invariant.
\end{proof}

\vv

\begin{corollary}\label{cor:flatae}
Let $\mu\in\mathcal{U}(n+1)$. For $\mu$-almost every $x\in \mathbb{P}^n$ there exists $V_x\in\Gr(n+1)$ such that
$$\Tan_{n+1}(\mu,x)\subseteq \{\lambda\mathcal{C}^{n+1}\llcorner V_x:\lambda>0\}.$$
\end{corollary}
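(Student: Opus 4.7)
The plan is to combine the structural results already established for uniform measures---that their support is an analytic variety (Theorem \ref{th.marstrand}) and that they coincide with the centered Hausdorff measure on their support (Proposition \ref{supportoK})---with the explicit tangent measure computation at smooth points of a manifold (Proposition \ref{tgverticalseregolare}). The idea is that at $\mu$-almost every $x$, $\supp(\mu)$ looks locally like a smooth $n$-dimensional analytic submanifold of $\R^{n+1}$ whose Euclidean normal is not parallel to $e_{n+1}$, so the tangent measure computation may be imported directly.

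In more detail, arguing exactly as in the proof of Proposition \ref{strutturaunifmeasvssurfacemeasure} via Lojasiewicz's Structure Theorem, one obtains an open set $G\subseteq \R^{n+1}$ such that $G\cap \supp(\mu)$ is an analytic $n$-dimensional submanifold of $\R^{n+1}$ and $\mu(\mathbb{P}^n\setminus G)=0$. Repeating the Sard-type argument from the beginning of the proof of Proposition \ref{tgverticalseregolare}, one additionally discards the $\mu$-null set where the Euclidean unit normal $\mathfrak{n}(\cdot)$ to $G\cap \supp(\mu)$ is proportional to $e_{n+1}$. Fix any $x$ in the remaining set, set $\hat{\mathfrak{n}}(x)\coloneqq \pi_H(\mathfrak{n}(x))\neq 0$, and pick an open neighborhood $U$ of $x$ small enough that $\Gamma \coloneqq U\cap \supp(\mu)$ is a smooth $n$-manifold. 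Proposition \ref{supportoK} gives $\mu = \mathcal{C}^{n+1}\llcorner \supp(\mu)$, whence $\mu\llcorner U = \mathcal{C}^{n+1}\llcorner \Gamma$.

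Since $\mu$ and $\mu\llcorner U$ coincide on $B(x,r)$ for every sufficiently small $r>0$, their rescalings $r^{-(n+1)}T_{x,r}\mu$ and $r^{-(n+1)}T_{x,r}(\mu\llcorner U)$ agree on any fixed ball $B(0,R)$ once $r$ is small enough, and hence have the same weak-$\ast$ limit points. This yields $\Tan_{n+1}(\mu,x) = \Tan_{n+1}(\mathcal{C}^{n+1}\llcorner \Gamma, x)$, and Proposition \ref{tgverticalseregolare} evaluates the right-hand side as $\{\mathcal{C}^{n+1}\llcorner \hat{\mathfrak{n}}(x)^\perp\}$. Setting $V_x \coloneqq \hat{\mathfrak{n}}(x)^\perp$, which is a hyperplane of $\R^{n+1}$ containing $e_{n+1}$ (because $\hat{\mathfrak{n}}(x)\in \R^n$ is non-zero), Corollary \ref{corollary:grn1} ensures that $V_x\in \Gr(n+1)$, so the claimed inclusion holds (in fact with $\lambda=1$ and as an equality).

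No serious obstacle is anticipated, since all the substantive work has been performed in the preceding propositions of this section. The only point requiring a moment of care is the locality identity $\Tan_{n+1}(\mu,x)=\Tan_{n+1}(\mu\llcorner U, x)$, but this is immediate from the fact that $\mu-\mu\llcorner U$ is supported outside a fixed neighborhood of $x$, so its rescaled versions vanish on compact subsets of $\mathbb{P}^n$.
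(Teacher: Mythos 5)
Your argument is correct, but it takes a genuinely different route from the paper's. The paper proves Corollary \ref{cor:flatae} in two lines by combining Proposition \ref{tuttitg} with Proposition \ref{bupunifarecones} (dilation-invariance of all blowups of a $1$-codimensional uniform measure) to get that $\Tan_{n+1}(\mu,x)$ is contained in the positive multiples of a single uniform measure, and then invoking \cite{Mattila2005MeasuresGroups} to identify that measure as the Haar measure of a closed, dilation-invariant subgroup of dimension $n+1$, i.e. an element of $\Gr(n+1)$. You instead bypass Proposition \ref{bupunifarecones} and the external uniqueness result altogether: you reuse the structural step inside Proposition \ref{strutturaunifmeasvssurfacemeasure} (Theorem \ref{th.marstrand} plus Lojasiewicz) to reduce to an open set $G$ on which $\supp(\mu)$ is a smooth $n$-submanifold carrying all of $\mu$, identify $\mu\llcorner U$ with $\mathcal{C}^{n+1}\llcorner\Gamma$ via Proposition \ref{supportoK}, use the (correct and easy) locality of tangent measures at interior points, and then quote Proposition \ref{tgverticalseregolare}; Corollary \ref{corollary:grn1} confirms that $\hat{\mathfrak{n}}(x)^\perp\in\Gr(n+1)$. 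This buys you a more self-contained proof and in fact a stronger conclusion (uniqueness of the tangent with $\lambda=1$ and an explicit $V_x=\hat{\mathfrak{n}}(x)^\perp$), whereas the paper's route leans on Proposition \ref{bupunifarecones}, which it needs elsewhere anyway, so the corollary comes for free there. The only point to tidy up is bookkeeping: Proposition \ref{tgverticalseregolare} is an almost-everywhere statement, so you cannot literally "fix any $x$" with non-vertical normal and read off its tangent; instead cover $G\cap\supp(\mu)$ by countably many neighborhoods $U_i$, apply Proposition \ref{tgverticalseregolare} to each $\Gamma_i=U_i\cap\supp(\mu)$ in its a.e.\ form, and discard the countable union of $\mu$-null exceptional sets --- a cosmetic fix, since the corollary is itself an a.e.\ statement.
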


\begin{proof}
Propositions \ref{tuttitg} and \ref{bupunifarecones} imply that there exists $\nu\in\mathcal U(n+1)$ such that
$$\Tan_{n+1}(\mu,x)\subseteq\{\lambda\nu:\lambda>0\}.$$
By \cite{Mattila2005MeasuresGroups} this further implies that $\nu$ is the Haar measure of a closed subgroup of $\mathbb P^n$ of Hausdorff dimension $n+1$, which is also dilation-invariant by Proposition \ref{bupunifarecones}. This concludes the proof.
\end{proof}

\vv
We conclude the section with the classification of $1$-codimensional uniform measures on $\mathbb P^1.$ This is used in the proof of Theorem \ref{theorem:main_theorem_Preiss} in $\mathbb P^1$ (see Proposition \ref{preiss1}).

\begin{proposition}\label{uniformmeasuresinP1}
In $\mathbb{P}^1$, we have that
$\mathcal{U}(2)=\{\mathcal{C}^2\llcorner \mathcal{V}\}$, where we recall that $\mathcal{V}\coloneqq\{(0,s):s\in\R\}$.
\end{proposition}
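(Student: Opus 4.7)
The strategy is to use the Gaussian moment identity of Corollary \ref{prop1} to pin down $\supp(\mu)$ as the vertical line $\mathcal{V}$. As a preparatory step, Proposition \ref{bupunifarecones} combined with the classification of dilation-invariant uniform measures as Haar measures of homogeneous subgroups (as in the proof of Corollary \ref{cor:flatae}) and the identity $\Gr(2)=\{\mathcal{V}\}$ from Corollary \ref{corollary:grn1} force $\Tan_2(\mu,x)=\{\mathcal{C}^2\llcorner\mathcal{V}\}$ for every $x\in\supp(\mu)$. Moreover, Theorem \ref{th.marstrand}, Proposition \ref{strutturaunifmeasvssurfacemeasure}, and Remark \ref{rem:remark_c_P_1} yield $\mu=\tfrac12\sigma_\Sigma$, where $\Sigma\coloneqq\supp(\mu)$ is an analytic $1$-dimensional subvariety of $\R^2$ with $0\in\Sigma$. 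In view of Proposition \ref{verticalsamoa}, it then suffices to show $\Sigma=\mathcal{V}$.

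Corollary \ref{prop1} with $h=2,p=0$, together with the horizontal slicing $\sigma_\Sigma(A)=\int_\R |A\cap\Sigma_t|\,dt$, produces the key identity
\[ \int_\R \bigl(K(s,t;u_H)-1\bigr)\,e^{-s(t-u_T)^2}\,dt=0,\qquad K(s,t;u_H)\coloneqq\sum_{(x,t)\in\Sigma}e^{-s(x-u_H)^4}, \]
valid for every $u=(u_H,u_T)\in\Sigma$ and $s>0$. Suppose first that $\Sigma$ contains a vertical line $\{a\}\times\R$. Taking $u_H=a$ and letting $u_T$ vary over $\R$, the convolution of $K(s,\cdot;a)-1$ with the Gaussian $G_s(t)=e^{-st^2}$ vanishes identically; since $\widehat{G_s}$ never vanishes, $K(s,t;a)=1$ for a.e.\ $t\in\R$ and every $s>0$. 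Letting $s\to\infty$ forces $\Sigma_t=\{(a,t)\}$ for a.e.\ $t$, so $\Sigma=\{a\}\times\R$, and $0\in\Sigma$ yields $a=0$, hence $\Sigma=\mathcal{V}$.

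The remaining task is to rule out the case in which $\Sigma$ contains no vertical line. The tangent measure $\mathcal{C}^2\llcorner\mathcal{V}$ at $0$ forces the Euclidean tangent of the unique analytic branch of $\Sigma$ through $0$ to be non-horizontal, so this branch is locally a graph $\{(f(t),t):|t|<\delta\}$ with $f$ analytic and $f(0)=0$; any other local branches of $\Sigma$ near $0$ have $H$-coordinate bounded away from zero and contribute only $O(e^{-cs})$ to $K(s,t;0)$ uniformly for $t$ small. Rescaling $t=\tau/\sqrt s$ in the identity at $u=0$ and Taylor-expanding $e^{-sf(t)^4}-1$, the leading behavior at each order in $1/s$ is controlled by a single Taylor coefficient of $f$, and the plan is to show inductively that the requirement that the identity hold for \emph{all} $s>0$ forces $f^{(k)}(0)=0$ for every $k\geq 1$. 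By analyticity this gives $f\equiv 0$, so $\Sigma$ contains a vertical arc and hence, by analytic continuation, all of $\mathcal{V}$, contradicting the assumption.

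The main obstacle is this inductive Taylor-matching step: at each order one must separate the $s^{-k/2}$-power contribution produced by $f^{(k)}(0)$ from the super-polynomial but nonzero error terms of the other branches, and from higher-order Taylor corrections of $f$, and argue rigorously that the vanishing of the identity for every $s>0$ indeed pins each Taylor coefficient of $f$ to zero.
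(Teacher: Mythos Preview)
Your setup is correct, and the first case can even be simplified: if $\{a\}\times\R\subset\Sigma$ then $K(s,t;a)\ge 1$ for every $t$ (the term from $(a,t)$ already contributes $1$), so the vanishing of $\int(K-1)e^{-s(t-u_T)^2}\,dt$ with $K-1\ge 0$ and the Gaussian strictly positive forces $K\equiv 1$ a.e.\ directly, without any Fourier argument.

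The genuine gap is exactly where you place it: the Taylor-matching step in the second case is only sketched as a plan. Carrying it out rigorously would require isolating the $s^{-(2k-1/2)}$ contribution produced by the first nonzero Taylor coefficient $f^{(k)}(0)$ from the higher-order Taylor corrections of $f$, from the exponentially small errors of the other branches, and from the tail $|t|\ge\delta$, uniformly as $s\to\infty$. This is doable but it is real work that you have not done, and the proof is incomplete without it.

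The paper bypasses all of this with a one-line geometric observation. Once you know that locally $\supp(\mu)$ is a smooth graph $\{(f(t),t)\}$ and $\mu=\tfrac12\sigma_\Sigma$, the uniformity $\mu(B(y,s))=s^2$ reads
\[
2s^2=\sigma_\Sigma(B(y,s))=\mathcal L^1\bigl(\pi_T(\gr(f)\cap B(y,s))\bigr),
\]
while trivially $\pi_T(\gr(f)\cap B(y,s))\subset\pi_T(B(y,s))=[y_T-s^2,\,y_T+s^2]$, an interval of the same length $2s^2$. The inclusion is therefore an equality of sets, which at the endpoints $t=y_T\pm s^2$ forces $f(y_T\pm s^2)=y_H=f(y_T)$; letting $s$ vary gives $f$ constant near $y_T$. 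No Gaussian moments, no asymptotics: the ball-measure identity itself, read through the slicing definition of $\sigma_\Sigma$, immediately pins the graph to a vertical segment, and a short covering/analyticity argument finishes.
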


\begin{proof}
Let $\mu\in \mathcal{U}(2)$. By Proposition \ref{strutturaunifmeasvssurfacemeasure} and Remark \ref{rem:remark_c_P_1} we have that  $\mu=2^{-1}\,\sigma_{\supp(\mu)}$. Since $\supp(\mu)$ is an analytic variety by Theorem \ref{th.marstrand}, it can be written as the union of countably many smooth manifolds of (topological) dimension $1$. However, Propositions \ref{tgverticalseregolare} and \ref{cor:flatae} guarantee that at $\mathcal{C}^2$-almost every point the Euclidean normal to $\supp(\mu)$ is not vertical. This means in particular that for $\mathcal{C}^2\llcorner \supp(\mu)$-almost every $x\in\supp (\mu)$ there exist $r=r(x)>0$ and a smooth map $f\colon \mathcal{V}\to \R$ such that $$\mathrm{gr}(f)\cap B(x,r)=\supp(\mu)\cap B(x,r).$$

 {Hence, for any $y\in B(x,r/2)\cap \supp(\mu)$ and any $0<s<r/2$ we have 
\begin{equation}
    \begin{split}
       \,\mathcal{L}^1\bigl(\pi_T(B(y,s))\bigr)&=2 s^2=2\, \mu(B(y,s))=\sigma_{\supp(\mu)}(B(y,s))\\
       &\overset{\eqref{eq:def_sigma_measure}}{=}\int\mathcal{H}^0\bigl(B(y,s)\cap \{z\in \supp(\mu):\pi_T(z)=t\} \bigr)\,dt\\
        &=\int\mathcal{H}^0\bigl(B(y,s)\cap \{z\in \mathrm{gr}(f):\pi_T(z)=t\} \bigr)\,dt=\mathcal{L}^1\bigl(\pi_T(\gr(f)\cap B(y,s))\bigr).
        \label{eq:eq:unifo}
    \end{split}
\end{equation}

It is immediate to see that if $f$ is not constant then, for some $s>0$, we have $$\mathcal{L}^1\bigl(\pi_T(\gr(f)\cap B(y,s))\bigr)<\mathcal{L}^1\bigl(\pi_T(B(y,s))\bigr),$$ which would contradict \eqref{eq:eq:unifo}.}
Hence, $f$ must be constant and in particular $$B(y,r/2)\cap \supp(\mu)=(x+\mathcal{V})\cap B(y,r/2)$$
for any $y\in B(x,r/2)\cap \supp(\mu)$.

Since $\mu$ is asymptotically doubling, we can find countably many disjoint balls $B(x_i,r_i)$
such that $\mu(\mathbb{P}^1\setminus \bigcup_iB(x_i,r_i))=0$, $r_i\in (0,r(x_i)/2)$, and 
$$B(y,r_i)\cap \supp(\mu)=(x_i+\mathcal{V})\cap B(y,r_i),$$
for $\mu$-almost every $x\in \mathbb{P}^1$. Thus, there is a sequence of points $z_i\in\mathbb{P}^1$ such that $$\supp(\mu)\subseteq \bigcup_{i\in\N}(z_i+\mathcal{V}).$$

Since $\supp(\mu)$ coincides with the zero set of an analytic function, this implies that for any $i\in\N$ the set $\supp(\mu)\cap (z_i+\mathcal{V})$ can be seen as analytic submanifolds of $\R$. Hence Lojasiewicz’s Structure Theorem for real analytic varieties (see \cite[Theorem 5.2.3]{KPanalyticmanifold}) yields that these sets are either a discrete collection of points or the entire line $\mathcal{V}$. However, it is easily seen that $\mu$ is uniform only if the sequence $z_i$ is constituted of just one point.
\end{proof}

\vv

\section{Marstrand-Mattila rectifiability criterion}\label{section:MM_rect_theorem}

Throughout this section we endow $\mathbb P^n$ with a metric $\mathfrak d$ which is invariant under translations, i.e. 
\begin{equation}\label{eq:transl_inv_d}
  \dd(z+x,z+y)=\dd(x,y)\qquad\text{for any }x,y,z\in\mathbb{P}^n,
\end{equation}
and homogeneous with respect to parabolic dilations, and we denote $\lVert x\rVert\coloneqq \dd(x,0)$ for $x\in\mathbb P^n.$

Let $\phi$ be a Radon measure on $(\mathbb P^n, \dd)$ that is supported on a compact set $K\subset \mathbb P^n$. For $\vartheta,\gamma\in\N$ we define
\begin{equation}
    E(\vartheta,\gamma)\coloneqq \bigl\{x\in K:\vartheta^{-1}r^h\leq \phi(B(x,r))\leq \vartheta r^h\text{ for any }0<r<1/\gamma\bigr\}.
    \label{eq:A1}
\end{equation}


\begin{proposition}\label{prop:cpt}
For any $\vartheta,\gamma\in\N$, the set $E(\vartheta,\gamma)$ defined in \eqref{eq:A1} is compact.
\end{proposition}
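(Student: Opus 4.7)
The plan is to observe that $E(\vartheta,\gamma)$ is a subset of the compact set $K$, so it suffices to show that $E(\vartheta,\gamma)$ is closed. I will do this by taking a convergent sequence $\{x_k\}\subseteq E(\vartheta,\gamma)$ with $x_k\to x$ and verifying that the two-sided density bound in \eqref{eq:A1} survives in the limit. Closedness of $K$ immediately gives $x\in K$, so the only point is to propagate the inequalities $\vartheta^{-1} r^h\leq \phi(B(x_k,r))\leq \vartheta r^h$ from $x_k$ to $x$ for every $0<r<1/\gamma$.

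For the lower bound, I would fix $0<r<1/\gamma$, choose any $r'\in(0,r)$, and use the triangle inequality for $\mathfrak d$ to conclude that, whenever $\mathfrak d(x_k,x)\leq r-r'$, one has $B(x_k,r')\subseteq B(x,r)$. Since $x_k\to x$, this inclusion holds for all sufficiently large $k$, and monotonicity of $\phi$ together with membership of $x_k$ in $E(\vartheta,\gamma)$ yields
\[
\phi(B(x,r))\geq \phi(B(x_k,r'))\geq \vartheta^{-1}(r')^h.
\]
Letting $r'\nearrow r$ gives $\phi(B(x,r))\geq \vartheta^{-1}r^h$. The upper bound is symmetric: for $0<r<1/\gamma$, pick $r'$ with $r<r'<1/\gamma$ (such $r'$ exists precisely because the constraint in \eqref{eq:A1} is the strict inequality $r<1/\gamma$), and note that $B(x,r)\subseteq B(x_k,r')$ as soon as $\mathfrak d(x,x_k)\leq r'-r$. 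Hence $\phi(B(x,r))\leq \phi(B(x_k,r'))\leq \vartheta (r')^h$ for large $k$, and letting $r'\searrow r$ produces the desired upper bound $\phi(B(x,r))\leq \vartheta r^h$.

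There is no real obstacle: the argument only uses the translation-invariant triangle inequality for $\mathfrak d$ (which is in force by \eqref{eq:transl_inv_d} and the fact that $\mathfrak d$ is a metric), monotonicity of $\phi$, and the strict inequality $r<1/\gamma$ that gives room to perturb the radius upward. The only mild subtlety worth flagging is the asymmetry between the two bounds: the lower bound would go through even if the condition in \eqref{eq:A1} were imposed only for $r\leq 1/\gamma$, whereas the upper bound genuinely requires the strictness of $r<1/\gamma$ so that a slightly larger radius $r'$ still belongs to the admissible range.
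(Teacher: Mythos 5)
Your proof is correct, and since the paper itself only cites \cite[Proposition 1.14]{MerloMM} rather than giving an argument, your write-up simply supplies the standard proof that the reference contains: $E(\vartheta,\gamma)\subseteq K$ compact, and closedness follows by transporting the density bounds along $x_k\to x$ via the ball inclusions $B(x_k,r')\subseteq B(x,r)$ and $B(x,r)\subseteq B(x_k,r')$ together with monotonicity of $\phi$ and a limit in the radius. Your remark that only the upper bound really uses the strictness of $r<1/\gamma$ is accurate, and nothing beyond the triangle inequality for $\mathfrak d$ is needed (translation invariance plays no role here).
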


\begin{proof}
See \cite[Proposition 1.14]{MerloMM}.
\end{proof}

\vv

\begin{proposition}\label{prop::E}
Let $\phi$ be a Radon measure on $(\mathbb P^n, \dd)$ supported on a compact set $K\subset \mathbb P^n$ and such that $
0<\Theta^h_*(\phi,x)\leq \Theta^{h,*}(\phi,x)<\infty
$ for $\phi$-almost every $x\in\mathbb P^n$. Then 
\begin{equation}\label{eq:phiPEthetagamma}
    \phi\Bigl(\mathbb{P}^n\setminus \bigcup_{\vartheta,\gamma\in\N} E(\vartheta,\gamma)\Bigr)=0.
\end{equation}
\end{proposition}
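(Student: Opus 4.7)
The plan is to show the set inclusion
\[
    \bigl\{x \in K : 0 < \Theta^h_*(\phi,x) \leq \Theta^{h,*}(\phi,x) < \infty\bigr\} \subseteq \bigcup_{\vartheta, \gamma \in \N} E(\vartheta,\gamma),
\]
which together with the hypothesis that the density condition holds $\phi$-almost everywhere and the fact that $\supp(\phi) \subseteq K$ immediately yields \eqref{eq:phiPEthetagamma}.

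To prove the inclusion, I would fix $x \in K$ with $0 < c \coloneqq \Theta^h_*(\phi,x)$ and $C \coloneqq \Theta^{h,*}(\phi,x) < \infty$. By the definitions of $\liminf$ and $\limsup$, for the choice $\varepsilon = c/2$ there exists $r_0 = r_0(x) > 0$ such that, for every $0 < r < r_0$, one has $\tfrac{c}{2}\, r^h \leq \phi(B(x,r)) \leq \tfrac{3C}{2}\, r^h$. It then suffices to pick any integers $\vartheta > \max\{2/c,\, 3C/2\}$ and $\gamma > 1/r_0$, so that $\vartheta^{-1} r^h \leq \phi(B(x,r)) \leq \vartheta r^h$ for all $0 < r < 1/\gamma$; this exhibits $x \in E(\vartheta,\gamma)$.

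Finally, the hypothesis $0 < \Theta^h_*(\phi,\cdot) \leq \Theta^{h,*}(\phi,\cdot) < \infty$ holding $\phi$-almost everywhere, combined with $\phi(\mathbb{P}^n \setminus K) = 0$, gives that the set on the left-hand side of the inclusion has full $\phi$-measure, from which \eqref{eq:phiPEthetagamma} follows. There is no substantive obstacle here; the statement is essentially a bookkeeping consequence of unpacking the definitions of the upper and lower densities and exhausting by countably many integer parameters.
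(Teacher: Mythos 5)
Your proof is correct and follows essentially the same route as the paper, which simply states the contrapositive: any $w\in K\setminus\bigcup_{\vartheta,\gamma}E(\vartheta,\gamma)$ must have $\Theta^h_*(\phi,w)=0$ or $\Theta^{h,*}(\phi,w)=\infty$, a $\phi$-null set of points. Your version just unpacks the same inclusion explicitly via the $\liminf/\limsup$ definitions, with the minor (harmless) use of $c\leq C$ to get the bound $3C/2$.
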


\begin{proof}
Let $w\in K\setminus \bigcup_{\vartheta,\gamma} E(\vartheta,\gamma)$, which implies that either $\Theta^h_*(\phi,w)=0$ or $\Theta^{h,*}(\phi,w)=\infty$. Since $
0<\Theta^h_*(\phi,x)\leq \Theta^{h,*}(\phi,x)<\infty
$ for $\phi$-almost every $x\in\mathbb P^n$, this proves \eqref{eq:phiPEthetagamma}.
\end{proof}
\vv

We recall here a useful proposition about the structure of Radon measures.
\begin{proposition}[{\cite[Proposition 1.17 and Corollary 1.18]{MerloMM}}]\label{prop:MutuallyEthetaGamma}
Let $\phi$ be a Radon measure on $(\mathbb{P}^n, \dd)$ supported on a compact set $K\subset \mathbb P^n$ such that $0<\Theta^h_*(\phi,x)\leq \Theta^{h,*}(\phi,x)<\infty $ for $\phi$-almost every $x\in \mathbb{P}^n$. For every $\vartheta,\gamma\in \mathbb N$ we have that $\phi\llcorner E(\vartheta,\gamma)$ is mutually absolutely continuous with respect to $\mathcal{S}^h\llcorner E(\vartheta,\gamma)$.
\end{proposition}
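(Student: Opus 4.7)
The plan is to establish both implications $\phi\llcorner E \ll \mathcal{S}^h \llcorner E$ and $\mathcal{S}^h\llcorner E \ll \phi\llcorner E$ (writing $E=E(\vartheta,\gamma)$ for brevity) by elementary covering arguments in which the two-sided pointwise bound $\vartheta^{-1} r^h \leq \phi(B(x,r)) \leq \vartheta r^h$, valid for every $x\in E$ and every $0<r<1/\gamma$, is matched directly against the infimum defining $\mathcal{S}^h$. The proposition is in essence a tautological consequence of the way $E(\vartheta,\gamma)$ is defined in \eqref{eq:A1}, and the two directions use the upper and lower density bounds respectively.

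For the first direction, I would take a Borel $A\subseteq E$ with $\mathcal{S}^h(A)=0$, fix $\varepsilon>0$ and $0<\delta<1/(2\gamma)$, and pick a cover $A\subseteq \bigcup_j B(x_j,r_j)$ with $r_j<\delta$ and $\sum_j r_j^h <\varepsilon$. After discarding balls disjoint from $A$, I would select $y_j\in B(x_j,r_j)\cap A\subseteq E$; the translation-invariance assumption \eqref{eq:transl_inv_d} gives $B(x_j,r_j)\subseteq B(y_j,2r_j)$, and the upper bound built into the definition of $E$ together with $2r_j<1/\gamma$ yields $\phi(B(y_j,2r_j))\leq \vartheta(2r_j)^h$. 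Summation produces $\phi(A)\leq \vartheta 2^h \varepsilon$, and letting $\varepsilon\to 0$ gives $\phi(A)=0$.

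For the converse, given a Borel $A\subseteq E$ with $\phi(A)=0$, I would fix $\varepsilon>0$, use outer regularity of the Radon measure $\phi$ to enclose $A$ in an open set $U$ with $\phi(U)<\varepsilon$, and for each $x\in A$ choose a radius $r_x\in(0,\delta)$ with $\delta<1/(5\gamma)$ such that $B(x,5r_x)\subset U$. The $5r$-covering theorem (valid in any metric space) then yields a countable disjoint subfamily $\{B(x_i,r_{x_i})\}$ whose $5$-fold enlargements still cover $A$. Since each center $x_i$ lies in $E$ and $r_{x_i}<1/\gamma$, the lower bound gives $r_{x_i}^h\leq \vartheta\,\phi(B(x_i,r_{x_i}))$, and by disjointness of the original balls,
\[
\mathcal{S}^h_{5\delta}(A)\;\leq\; 5^h\sum_i r_{x_i}^h\;\leq\; 5^h\vartheta\sum_i \phi(B(x_i,r_{x_i}))\;\leq\; 5^h\vartheta\,\phi(U)\;<\;5^h\vartheta\,\varepsilon.
\]
Sending $\delta\to 0$ and then $\varepsilon\to 0$ gives $\mathcal{S}^h(A)=0$.

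I do not expect any substantive obstacle: the only mild technical points are the use of \eqref{eq:transl_inv_d} to replace a generic cover by one with centers in $A\subseteq E$ in the first direction, and the invocation of the $5r$-covering theorem in the second. Both are standard, and the proposition is really just a direct reformulation of the uniform density bounds defining $E(\vartheta,\gamma)$.
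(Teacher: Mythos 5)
Your argument is correct: both directions are exactly the standard density-comparison covering estimates (an enlargement argument against the upper bound in $E(\vartheta,\gamma)$ for one direction, outer regularity plus the $5r$-covering lemma against the lower bound for the other), and the only cosmetic slip is that the inclusion $B(x_j,r_j)\subseteq B(y_j,2r_j)$ needs only the triangle inequality, not \eqref{eq:transl_inv_d}. The paper itself offers no proof but defers to \cite{MerloMM}, where the result is obtained by the same type of Federer-style comparison argument, so your route is essentially the intended one.
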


\vv

\begin{definition}\label{def:Fk}
Given two Radon measures $\phi$ and $\psi$ on $\mathbb P^n$ and a compact set $K\subset \mathbb P^n$, we define 
\begin{equation}
    F_K(\phi,\psi)\coloneqq \sup\Bigl\{\Bigl|\int f\, d\phi - \int f\, d\psi\Bigr|:f\in \mathrm{Lip}_1^+(K)\Bigr\},
    \label{eq:F}
\end{equation}
where $\mathrm{Lip}_1^+(K)$ is the set of all functions $f\in \Lip(K)$ such that $f\geq 0$ and $\Lip(f)\leq 1$.

For $K=B(x,r)$ we also write $F_{x,r}\coloneqq F_{B(x,r)}$.
\end{definition}

\begin{remark}\label{rem:ScalinfFxr}
Few computations which we omit show that
\[
    F_{x,r}(\phi,\psi)=rF_{0,1}(T_{x,r}\phi,T_{x,r}\psi).
\]

Furthermore, the triangle inequality  holds for $F_K$; indeed, if $\phi_1,\phi_2,\phi_3$ are Radon measures and $f\in\lip(K)$, then
\begin{equation*}
\begin{split}
\Big\lvert\int f\, d\phi_1-\int f\, d\phi_2\Big\rvert&\leq \Big\lvert\int f\, d\phi_1-\int f\, d\phi_3\Big\rvert+\Big\lvert\int f\, d\phi_3-\int f\, d\phi_2\Big\rvert\\ &\leq F_K(\phi_1,\phi_2)+F_K(\phi_2,\phi_3).
\end{split}
\end{equation*}
The arbitrariness of $f\in\lip(K)$ implies that $F_K(\phi_1,\phi_2)\leq F_K(\phi_1,\phi_3)+F_K(\phi_3,\phi_2)$.
\end{remark}

For the proof of the following criterion we refer to \cite[Proposition 1.10]{MerloMM}.
\begin{proposition}\label{prop:WeakConvergenceAndFk}
Let $\{\mu_i\}$ be a sequence of Radon measures on $\mathbb P^n$. Let $\mu$ be a Radon measure on $\mathbb P^n$. The following are equivalent:
\begin{enumerate}
    \item $\mu_i\rightharpoonup \mu$.
    \item $F_K(\mu_i,\mu)\to 0$, for every $K\subseteq \mathbb P^n$ compact. 
\end{enumerate}
\end{proposition}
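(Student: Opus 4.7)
The proposition is a standard Kantorovich-Rubinstein-type duality between weak convergence of Radon measures and the Wasserstein-like metric $F_K$. I would prove the two implications separately.

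For $(2)\Rightarrow(1)$, fix $f\in C_c(\mathbb P^n)$ and let $K$ be a compact neighborhood of $\supp(f)$. Since nonnegative constants lie in $\Lip_1^+(K)$, testing against the constant $1$ gives $|\mu_i(K)-\mu(K)|\le F_K(\mu_i,\mu)\to 0$, so $M\coloneqq\sup_i\mu_i(K)+\mu(K)<\infty$. Given $\varepsilon>0$, I approximate $f$ uniformly on $K$ by a Lipschitz function $g$ with $\|f-g\|_\infty<\varepsilon$ (for instance via inf-convolution) and write $g=L(g_1-g_2)$ for some $L>0$ and $g_1,g_2\in\Lip_1^+(K)$ obtained by adding a suitable constant to $g/L$ and splitting into positive and negative parts. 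Then
$$\Bigl|\int f\,d\mu_i-\int f\,d\mu\Bigr|\le 2\varepsilon M + 2L\,F_K(\mu_i,\mu),$$
and sending first $i\to\infty$ and then $\varepsilon\to 0$ yields $\int f\,d\mu_i\to\int f\,d\mu$.

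For $(1)\Rightarrow(2)$, fix a compact $K\subset\mathbb P^n$ and pick a continuous nonnegative cutoff $\eta\in C_c(\mathbb P^n)$ equal to $1$ on $K$ and supported in a larger compact $K'$. Applying (1) to $\eta$ yields $\sup_i\mu_i(K')\le M<\infty$. Testing (1) against constants also gives $\mu_i(K)\to \mu(K)$, so modulo additive constants I may restrict attention to $f\in\Lip_1^+(K)$ with $0\le f\le\diam(K)$. This normalized family is equicontinuous (by the $1$-Lipschitz bound) and uniformly bounded, hence precompact in $(C(K),\|\cdot\|_\infty)$ by Arzelà-Ascoli. Given $\varepsilon>0$, I select a finite $(\varepsilon/3M)$-net $\{g_1,\ldots,g_N\}$; each $g_j$ can be extended to $\mathbb P^n$ by McShane and multiplied by $\eta$ to produce a $C_c$ test function. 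Weak convergence gives $|\int g_j\,d\mu_i-\int g_j\,d\mu|<\varepsilon/3$ for $i$ large and all $j$. For an arbitrary $f\in\Lip_1^+(K)$, picking a $g_j$ with $\|f-g_j\|_\infty<\varepsilon/(3M)$ and applying the triangle inequality yields $|\int f\,d(\mu_i-\mu)|<\varepsilon$, whence $F_K(\mu_i,\mu)\to 0$.

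The main obstacle is that $\Lip_1^+(K)$ as literally defined is not precompact in $C(K)$, since it contains all nonnegative constants; this is what forces the two-step argument (establish mass convergence first, then handle the equicontinuous slice by Arzelà-Ascoli). A secondary technical point is converting elements of $\Lip_1^+(K)$ into $C_c(\mathbb P^n)$ test functions, which is handled by McShane extension composed with the cutoff $\eta$.
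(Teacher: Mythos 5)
There is a genuine gap, and it sits exactly where you flagged the difficulty. You read $\mathrm{Lip}_1^+(K)$ as \emph{all} nonnegative $1$-Lipschitz functions on $K$, so that it contains every nonnegative constant. Under that reading the implication (1)$\Rightarrow$(2) is false: for every constant $c\geq 0$ one has $F_K(\mu_i,\mu)\geq c\,\lvert \mu_i(K)-\mu(K)\rvert$, hence $F_K(\mu_i,\mu)=+\infty$ as soon as $\mu_i(K)\neq \mu(K)$, and weak convergence never guarantees $\mu_i(K)\to\mu(K)$ for a fixed compact set (take $\mu_i=\delta_{x_i}$ with $x_i\notin K$ and $x_i\to x\in\partial K$: then $\mu_i\rightharpoonup\delta_x$ but $F_K(\mu_i,\delta_x)=+\infty$ for every $i$). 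Consequently your step ``testing (1) against constants gives $\mu_i(K)\to\mu(K)$'' is not legitimate — constants are not in $C_c(\mathbb P^n)$, and (1) only yields $\limsup_i\mu_i(K)\leq\mu(K)$ — and even granting it, the reduction ``modulo additive constants'' still fails, because the offset $\min_K f$ is unbounded over the class, so its contribution $\min_K f\,\lvert\mu_i(K)-\mu(K)\rvert$ cannot be absorbed.

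The intended class (as in the cited \cite[Proposition 1.10]{MerloMM}, and consistently with the way $F_{x,t}$ is used in the proof of Proposition \ref{prop::4.4(4)}, where the test functions are Lipschitz bumps \emph{supported} in the ball) consists of nonnegative $1$-Lipschitz functions with compact support contained in $K$. With this reading your Arzel\`a--Ascoli scheme is the right one and requires no statement about $\mu_i(K)$: every $f$ in the class satisfies $0\leq f(x)\leq \dist(x,\mathbb P^n\setminus K)$, so the family is uniformly bounded and equicontinuous, its members extend by zero to elements of $C_c(\mathbb P^n)$, and the finite-net argument closes once you have $\sup_i\mu_i(K)<\infty$, which follows from (1) applied to a cutoff $\eta$ (note this bounds $\mu_i(K)$, not $\mu_i(K')$ as you wrote). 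In the converse direction, replace the constant $1$ by a $1$-Lipschitz bump supported in a slightly larger compact set to obtain the mass bound, and approximate $f^{+}$ and $f^{-}$ by nonnegative Lipschitz functions supported in $K$ instead of adding constants. The paper itself does not write out a proof but defers to \cite[Proposition 1.10]{MerloMM}, whose argument is precisely this corrected version of yours.
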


Now we are going to define a functional that, in some sense, quantifies how far a measure is from being flat around a point $x\in\mathbb P^n$ and at a scale $r>0$ (see Proposition \ref{prop::4.4(4)}).

\begin{definition}\label{def:metr}
For $x\in\mathbb{P}^n$, $h\in\{1,\ldots,n+2\}$, $r>0$, and $G\subseteq \mathfrak{M}(h)$ we define the functional
\begin{equation}\label{eqn:dxr}
    d_{x,r}(\phi,G)\coloneqq\inf_{\nu\in G} \frac{F_{x,r}(\phi,\nu)}{r^{h+1}}. 
\end{equation}
\end{definition}

\begin{remark}\label{rem:dxrContinuous}
It is routine to check that, whenever $h\in \mathbb N$ and $r>0$ are fixed, the function $x\mapsto d_{x,r}(\phi,\mathfrak M(h))$ is continuous. The proof is an adaptation of \cite[Proposition 2.2-(ii)]{MerloMM}.
\end{remark}

\vv

For $E\subseteq \mathbb P^n$ and $r>0$ we denote $B(E,r)\coloneqq \{x\in E: d(x,E)\leq r\}$ and, for 
$G\subseteq \Gr(h)$, we define
\[
    \mathfrak{M}(h,G)\coloneqq \bigl\{\lambda\mathcal C^h\llcorner V:\text{ for some }\lambda> 0 \text{ and }V\in G\bigr\}.
\]

For the Euclidean and Carnot groups analogue of the next proposition we refer to \cite[27, 4.4(4)]{Preiss1987GeometryDensities} and \cite[Proposition 2.28-(i)]{antonelli2022rectifiable} respectively.
\begin{proposition}\label{prop::4.4(4)}
Suppose that $h\in\{1,\ldots,n+2\}$, $\phi$ is a  Radon measure on $(\mathbb P^n,\dd)$ supported on a compact set $K\subset \mathbb P^n$, and let $G\subseteq \Gr(h)$. If for $\vartheta, \gamma\in\mathbb N$ there exist $x\in E(\vartheta,\gamma)$, $\sigma\in(0,2^{-10(h+1)}\vartheta^{-1})$, and $0<t<1/(2\gamma)$ such that
\begin{equation}\label{eq:prop4.4.4hp1}
    d_{x,t}(\phi,\mathfrak{M}(h,G))\leq \sigma^{h+4},
\end{equation}
then there exists $V\in G$ such that  whenever $y,z\in B(x,t/2)\cap (x+V)$ and $r,s\in [\sigma t, t/2]$ we have
\begin{equation}\label{eq:4.4.4_eq}
    \phi\bigl(B(y,r)\cap B(x+{V},\sigma^2t)\bigr)\geq \bigl(1-2^{10(h+1)}\vartheta\sigma\bigr)\bigl(\frac{r}{s}\bigr)^h\phi(B(z,s)).
\end{equation}
\end{proposition}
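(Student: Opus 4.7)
The plan is to adapt Preiss's strategy for the Euclidean analogue \cite[4.4(4)]{Preiss1987GeometryDensities} (compare also \cite{antonelli2022rectifiable} for the Carnot group version) to the present homogeneous metric setting. The smallness of $d_{x,t}(\phi,\mathfrak{M}(h,G))$ produces an explicit approximating flat measure, after which the proof of \eqref{eq:4.4.4_eq} reduces to testing the $F$-distance from $\phi$ to this flat measure against carefully chosen $1$-Lipschitz cutoffs, translating the integral-type inequality into pointwise mass comparisons on balls centred on $x+V$.

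First, by Definition \ref{def:metr}, I choose $V\in G$ and $\lambda>0$ such that $\nu\coloneqq \lambda\mathcal{C}^h\llcorner V$ satisfies $F_{x,t}(\phi,\nu)\leq 2\sigma^{h+4}t^{h+1}$. Testing this bound against the $1$-Lipschitz function $g_\delta(w)\coloneqq (\delta-\dd(w,x))_+$ for $\delta$ in a dyadic range inside $[\sigma^2 t,t/2]$, and using $\vartheta^{-1}(\delta/2)^h\leq\phi(B(x,\delta/2))\leq\vartheta\delta^h$ (from $x\in E(\vartheta,\gamma)$ and $\delta<1/\gamma$), I extract two consequences: (i) $V$ passes extremely close to $x$, so that up to a negligible translation of $\nu$ one may work with $x+V$ in place of $V$; and (ii) the product $\lambda\, c_V$, with $c_V\coloneqq\mathcal{C}^h(V\cap B(0,1))$, is comparable to $\vartheta^{-1}$ up to a multiplicative factor $1+O(\vartheta\sigma)$.

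Next, fix $y,z\in B(x,t/2)\cap(x+V)$ and $r,s\in[\sigma t, t/2]$, and set $\eta\coloneqq \sigma^2 t$. Inserting into the $F$-estimate the two $1$-Lipschitz test functions
$$f^-(w)\coloneqq \min\bigl((r-2\eta-\dd(w,y))_+,\,(\eta-\dd(w,x+V))_+\bigr),\qquad f^+(w)\coloneqq (s+\eta-\dd(w,z))_+,$$
supported respectively in $B(y,r)\cap B(x+V,\sigma^2 t)$ and in $B(z,s+\eta)$, and computing their $\nu$-integrals via the homogeneity identity $\mathcal{C}^h(V\cap B(\cdot,\rho))=c_V\rho^h$, I obtain
$$\phi\bigl(B(y,r)\cap B(x+V,\sigma^2 t)\bigr)\geq (1-c_1\vartheta\sigma)\,\lambda c_V\, r^h,\qquad \phi(B(z,s))\leq (1+c_2\vartheta\sigma)\,\lambda c_V\, s^h,$$
with $c_1,c_2$ depending only on $h$. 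Dividing these two inequalities and using $(1-c_1\vartheta\sigma)/(1+c_2\vartheta\sigma)\geq 1-c_3(h)\vartheta\sigma$ in our range of parameters yields \eqref{eq:4.4.4_eq}, once the constants are chosen so that $c_3(h)\leq 2^{10(h+1)}$.

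The main technical obstacle will be the construction and analysis of $f^-$: it must simultaneously be supported strictly inside $B(y,r)\cap B(x+V,\sigma^2 t)$ (so as to lower-bound $\phi$ on that set via a Chebyshev-type estimate), have $\nu$-integral essentially equal to $\lambda c_V r^h\eta$ even though $V$ is only approximately equal to $x+V$, and produce an $F$-distance error that is absorbed by the $O(\vartheta\sigma)$ relative error of the main term. The precise interplay between the scale $\eta=\sigma^2 t$, the lower bound $r\geq \sigma t$, the slackness $\sigma<2^{-10(h+1)}\vartheta^{-1}$, and the smallness $\sigma^{h+4}t^{h+1}$ of the $F$-distance is precisely what makes this accounting close.
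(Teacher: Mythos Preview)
Your strategy is the paper's: pick an almost-minimising flat measure from the hypothesis, test the $F$-distance against suitable Lipschitz cutoffs to get two-sided mass bounds on balls centred on the approximating plane, then take the ratio. Two points deserve correction, though neither is fatal.

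First, claim (ii) is overstated and in fact false as written: from $x\in E(\vartheta,\gamma)$ you only have $\vartheta^{-1}\delta^h\le\phi(B(x,\delta))\le\vartheta\delta^h$, so the comparison with $\nu$ at a single scale $\delta$ pins $\lambda c_V$ only to the range $[c(h)\vartheta^{-1},C(h)\vartheta]$, not to $\vartheta^{-1}(1+O(\vartheta\sigma))$. Fortunately you do not need the tight estimate. The paper extracts only the lower bound $\lambda\ge\vartheta^{-1}2^{-3h}$ (by testing at $w=x$, $\tau=t/4$, $\rho=\sigma^2 t$), and this suffices: in the final ratio the main terms $\lambda c_V r^h$ and $\lambda c_V s^h$ cancel, and the lower bound on $\lambda$ is used solely to show that the additive $F$-errors $\sigma^{h+1}t^h$ are $O(\vartheta\sigma)$ relative to $\lambda r^h$ and $\lambda s^h$. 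Your displayed inequalities for $\phi(B(y,r)\cap B(x+V,\sigma^2 t))$ and $\phi(B(z,s))$ are thus correct once (ii) is replaced by this one-sided bound.

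Second, the paper skips your step (i) entirely: it takes the approximating measure directly as $\lambda\mathcal{C}^h\llcorner(x+V)$ with the slightly weaker bound $F_{x,t}\le\sigma^{h+3}t^{h+1}$, and then proves the two clean comparison inequalities \eqref{eq:1.1031}--\eqref{eq:1.1032} via the $\rho^{-1}$-Lipschitz bumps $g(z)=\min\{1,\rho^{-1}\dist(z,U(w,\tau+\rho)^c)\}$ and $h(z)=\min\{1,\rho^{-1}\dist(z,(U(w,\tau)\cap U(x+V,\rho))^c)\}$. Your $1$-Lipschitz $f^\pm$ accomplish the same thing after dividing by $\eta$, but the paper's normalisation makes the bookkeeping cleaner.
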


\begin{proof}First of all, we notice that by \eqref{eq:prop4.4.4hp1} and the definition of $d_{x,t}(\phi,\mathfrak{M}(h,G))$ there exist ${V}\in G$ and $\lambda>0$ such that
\begin{equation}\label{eq:fxtsmall}
    F_{x,t}\bigl(\phi,\lambda\mathcal{C}^h\llcorner (x+V)\bigr)\leq \sigma^{h+3}t^{h+1}.
\end{equation}

We claim that for any $w\in B(x,t/2)\cap (x+V)$, $\tau\in(0,t/2],$ and $\rho\in(0,\tau]$ we have
\begin{equation}
    \phi(B(w,\tau))\leq \lambda\mathcal{C}^h\llcorner(x+V)(B(w,\tau+\rho))+\sigma^{h+3}t^{h+1}/\rho,\label{eq:1.1031}
\end{equation}
and
\begin{equation}
    \lambda\mathcal{C}^h\llcorner(x+V)(B(w,\tau-\rho))\leq \phi\bigl(B(w,\tau)\cap B(x+V,\rho)\bigr)+\sigma^{h+3}t^{h+1}/\rho.\label{eq:1.1032}
\end{equation}

 In order to prove \eqref{eq:1.1031} we define $g(z)\coloneqq\min\{1,\rho^{-1}\dist(z,\mathbb{P}^n\setminus U(w,\tau+\rho))\}$, which satisfies $g\in\Lip(B(x,r))$, $\supp(g)\subseteq B(w,\tau+\rho)$, and $g\equiv 1$ on $B(w,\tau)$. Then, using \eqref{eq:fxtsmall}, we obtain
\begin{align}
    \phi(B(w,\tau))&\leq \int g(z)\, d\phi(z)\leq \int g(z)\,d\bigl[\lambda\mathcal{C}^h\llcorner (x+V)\bigr](z)+\text{Lip}(g) F_{x,t}(\phi,\lambda\mathcal{C}^h\llcorner (x+V))\nonumber \\ &\leq \lambda\mathcal{C}^h\llcorner (x+V)(B(w,\tau+\rho))+\sigma^{h+3}t^{h+1}/\rho.\nonumber
\end{align}
Similarly, to prove \eqref{eq:1.1032} we let $h(z)\coloneqq\min\bigl\{1,\rho^{-1}\dist\bigl(z,\mathbb{P}^n\setminus (U(w,\tau)\cap U(x+V,\rho))\bigr)\bigr\}$ and write
\begin{equation}
\begin{split}
     \lambda\mathcal{C}^h\llcorner(x+V)(B(w,\tau-\rho))&\leq \int h(z) \,d\bigl[ \lambda\mathcal{C}^h\llcorner(x+V)\bigr](z) \\
     &\leq \int h(z)\,d\phi(z)+\Lip(h) F_{x,t}\bigl(\phi,\lambda\mathcal{C}^h\llcorner (x+V)\bigr)\\&\leq \phi\bigl(B(w,\tau)\cap B(x+V,\rho)\bigr)+\sigma^{h+3}t^{h+1}/\rho.\nonumber 
\end{split}
\end{equation}

Since $x\in E(\vartheta,\gamma)$, the choices $w=x$, $\tau=t/4$, and $\rho=\sigma^2 t$ in \eqref{eq:1.1031} imply that
\begin{equation}
\begin{split}
\vartheta^{-1}(t/4)^h&\leq \phi(B(x,t/4))\leq \lambda\mathcal{C}^h\llcorner(x+V)\bigl(B(x,(1/4+\sigma^2)t)\bigr)+\sigma^{h+1}t^{h}\\ &=\lambda (1/4+\sigma^2)^ht^h+\sigma^{h+1}t^h.
\label{eq:chin4.4.4}
\end{split}
\end{equation}
The assumption $\sigma\leq (2^{10(h+1)}\vartheta)^{-1}$ yields $\sigma^{h+1}\leq (8^h\vartheta)^{-1}$, so from \eqref{eq:chin4.4.4} we infer
\begin{equation}\label{eq:estlambda}
    \vartheta^{-1}4^{-h} \leq \lambda(1/4+\sigma^2)^h+\sigma^{h+1} \quad \text{and in particular}\quad \lambda \geq \vartheta^{-1}2^{-3h},
\end{equation}
where we exploited the facts that $1/4+\sigma^2<1$, $\sigma^{h+1}\leq (8^h\vartheta)^{-1}$, and $4^{-h}-8^{-h}\geq 8^{-h}$.

Let us now prove that \eqref{eq:1.1031} and \eqref{eq:1.1032} imply \eqref{eq:4.4.4_eq}. Since by hypothesis $\min\{r,s\}\geq \sigma t$, with the choice $\rho=\sigma^2t$ we have $\rho<\min\{r,s\}$. Furthermore since $r,s\in [\sigma t, t/2]$ and $y,z\in B(x,t/2)\cap (x+V)$, the bounds \eqref{eq:1.1031} and \eqref{eq:1.1032} imply
\begin{equation}
    \begin{split}
        \frac{\phi(B(y,r)\cap B(x+V,\rho))}{\phi(B(z,s))}&\geq\frac{\lambda\mathcal{C}^h\llcorner(x+V)(B(y,r-\rho))-\sigma^{h+3}t^{h+1}\rho^{-1}}{\lambda\mathcal{C}^h\llcorner(x V)(B(z,s+\rho))+\sigma^{h+3}t^{h+1}\rho^{-1}} \\&=\frac{r^h}{s^h}\frac{\lambda(1-\sigma^2tr^{-1})^h-\sigma^{h+1}(t/r)^h}{\lambda (1+\sigma^2ts^{-1})^{h}+\sigma^{h+1}(t/s)^h}\\
        &\geq\frac{r^h}{s^h}\frac{\lambda(1-\sigma)^h-\sigma^{h+1}(t/r)^h}{\lambda (1+\sigma)^{h}+\sigma^{h+1}(t/s)^h}\geq \frac{r^h}{s^h}\frac{\lambda(1-\sigma)^h-\sigma}{\lambda (1+\sigma)^{h}+\sigma},
        \nonumber
    \end{split}
\end{equation}
where we are using $\sigma t/r\leq 1$ and $\sigma t/s\leq 1$.
Since $2h\sigma\leq 1$, we have that $(1+\sigma)^h\leq 1+2h\sigma$, that can be easily proved by induction on $h$. This together with \eqref{eq:estlambda} and Bernoulli's inequality $(1-\sigma)^h\geq 1-\sigma h$ allows us to finally infer that
$$
\frac{\phi(B(y,r)\cap B(x+V,\rho))}{\phi(B(z,s))}\geq \frac{r^h}{s^h} \frac{1-(\lambda h+1)\sigma\lambda^{-1}}{1+ (2h\lambda+1)\sigma\lambda^{-1}}\geq (1-2^{10(h+1)}\vartheta\sigma)\frac{r^h}{s^h},$$
where the last inequality comes from the fact that $\sigma\leq 1/2^{10(h+1)}\vartheta$, from \eqref{eq:estlambda} and some easy algebraic computations that we omit. An easy way to verify the last inequality is to show that $(1-\widetilde\alpha\sigma)/(1+\widetilde\beta\sigma)\geq 1-\widetilde\gamma\sigma$, where $\widetilde\alpha\coloneqq(\lambda h+1)/\lambda$, $\widetilde\beta\coloneqq(2h\lambda+1)/\lambda$ and $\widetilde\gamma\coloneqq2^{10(h+1)}\vartheta$, and observe that the latter inequality is implied by the fact that $\widetilde\alpha+\widetilde\beta-\widetilde\gamma\leq 0$.
\end{proof}

\vv

The next proposition is mostly an adaptation of \cite[Proposition 3.6]{antonelli2022rectifiable}, which generalizes \cite[Lemma 5.2]{Preiss1987GeometryDensities}. Given both its pivotal role and its length, we prefer to write its proof in full detail for completeness. The proof of Proposition \ref{prop::5.2}-(vi) is based on that of \cite[Proposition 3.10]{antonelli2022rectifiable}.
\begin{proposition}\label{prop::5.2}
Let $h\in\{1,\ldots,n+2\}$ and $\mathfrak{s}\in\mathfrak{S}(h)$.
Further let $r>0$, $\varepsilon\in (0,2\, 5^{-h-5}]$, $r_1\coloneqq\bigl(1-(\varepsilon/h)\bigr)r$, and $\mu\coloneqq2^{-7-5h}h^{-3}\varepsilon^2$.
Let $\phi$ be a Radon measure on $(\mathbb P^n,\dd)$ and $z\in\supp(\phi)$.

We define $Z(z,r_1)$ to be the set of triplets $(x,s, V)\in B(z, 2\,r_1)\times (0, 2r]\times \Gr^{\mathfrak s}(h)$ such that
\begin{equation}
    \phi(B(y,t))\geq (1-\varepsilon)\Bigl(\frac{t}{2r}\Bigr)^h\phi(B(z,2r)),
    \label{eq:2.9mm}
\end{equation}
whenever $y\in B(x, 2s)\cap (x+ V)$ and $t\in [\mu s,  2s]$.

The geometric assumption we make on $\phi$ is that we can find a compact set $E\subseteq B(z, 2r_1)$ such that $z\in E$, 
\begin{equation}
    \phi(B(z, 2r_1)\setminus E)\leq 2^{-h}\mu^{h+1} \phi(B(z, 2r_1)),
    \label{eq:densityE}
\end{equation}
and such that for any $x\in E$ and every $s\in(0, 2r-\dd(x,z)]$ there exists $ V\in\Gr^{\mathfrak s}(h)$ such that $(x,s, V)\in Z(z,r_1)$. Furthermore, we assume that there exists $  W\in \Gr^{\mathfrak{s}}(h)$ such that $(z,r,  W)\in Z(z,r_1)$.

Let us write $P\coloneqq P_W$, namely the orthogonal projection on $W$,
and denote as $T(u,s)\coloneqq P^{-1}(B(u,s)\cap   W)$ the cylinder with center $u\in W$ and radius $s>0$.

For any $u\in P(B(z,r_1))$ let $s(u)\in[0,r]$ be the smallest number such that for any $s(u)<s\leq r$ the following properties hold:
\begin{enumerate}
    \item $E\cap T(u,s/(4h))\neq \emptyset$.
    \item $\phi\big(B(z, 2r)\cap T(u, s))\leq \mu^{-h}(s/ (2r))^h\phi(B(z, 2r))$.
\end{enumerate}
Finally, we define:

{\begin{itemize}
    \item $\text{ }\text{ }A\coloneqq\{u\in P(B(z,r_1)):s(u)=0\}$.
    \item $A_1$ as the set of $u\in P(B(z,r_1))$ such that $s(u)>0$ and
    \[
        \phi\big(B(z, 2r)\cap T(u, s(u))\big)\geq \varepsilon^{-1}\Bigl(\frac{s(u)}{2r}\Bigr)^h\phi(B(z, r)).
    \]
    \item $A_2$ as the set of $u\in P(B(z,r_1))$ such that  $s(u)>0$ and
    \[
        \phi\Bigl((B(z, 2r)\setminus E)\cap T\Bigl(u,\frac{s(u)}{4h}\Bigr) \Bigr)\geq 2^{-1}\Bigl(\frac{s(u)}{8h r}\Bigr)^h\phi(B(z, r)).
    \]
\end{itemize}
}

Then we have:
\begin{itemize}
    \item[(i)] $s(u)\leq   2h\mu r$ for every $u\in P(B(z,r_1))$.
    \item[(ii)] The function  $u\mapsto s(u)$ is lower semicontinuous on $P(B(z,r_1))$ and, as a consequence, $A$ is compact.
    \item[(iii)] $ P(B(z,r_1))= A\cup A_1\cup A_2$.
    \item[(iv)]$\mathcal{C}^h(P(B(z,r))\setminus A)\leq 2^{3h}5^{h+3} \mathcal{C}^h(P(B(0,1)))\varepsilon r^h$.
    \item[(v)] $P(E\cap P^{-1}(A))=A$, $\mathcal{H}^h(E\cap P^{-1}(A))>0$, and there is a constant $\mathfrak{C}>1$ such that
    $$\mathfrak{C}^{-1}\mathcal{H}^h(E\cap P^{-1}(A))\leq \phi(E\cap P^{-1}(A))\leq \mathfrak{C}\mathcal{H}^h(E\cap P^{-1}(A)).$$
    \item[(vi)] There exists a Lipschitz function $f\colon V\to V^\perp$ such that
    $$\phi\bigl(\mathrm{gr}(f)\cap E\cap P^{-1}(A)\bigr)>0.$$
    \end{itemize}
\end{proposition}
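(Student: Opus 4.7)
The plan is to adapt the Preiss--Antonelli--Merlo scheme (\cite[Lemma 5.2]{Preiss1987GeometryDensities}, \cite[Proposition 3.6]{antonelli2022rectifiable}) to the parabolic setting, and to treat items (i)--(v) as technical preparation for the construction in (vi). For (i), I would verify that $s=2h\mu r$ satisfies both defining conditions of $s(u)$: condition (2) is automatic since $\mu^{-h}(s/(2r))^h=h^h$ majorizes any ratio of restricted-to-total mass, while condition (1) follows by combining $(z,r,W)\in Z(z,r_1)$, which forces a quantitative lower bound on $\phi(T(u,\mu r/2)\cap B(z,2r_1))$ for every $u\in P(B(z,r_1))$, with \eqref{eq:densityE}, which forbids that mass from being concentrated entirely in $B(z,2r_1)\setminus E$. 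Item (ii) is a routine semicontinuity argument: given $u_n\to u$ with $s(u_n)\to\sigma$ and $s>\sigma$, compactness of $E$ and continuity of $\phi$ on open cylinders propagate (1) and (2) from $(u_n,s)$ to $(u,s)$, whence $s(u)\leq\sigma$; compactness of $A$ then follows. For (iii), if $u\in P(B(z,r_1))$ with $s(u)>0$, the minimality of $s(u)$ supplies $s_k\uparrow s(u)$ at which (1) or (2) fails. The failure of (2) yields $u\in A_1$ after one uses $\phi(B(z,2r))\lesssim 2^h(1-\varepsilon)^{-1}\phi(B(z,r))$ (from \eqref{eq:2.9mm} at $z$ with $W$, $y=z$, $t=r$) and the fact that $\mu^{-h}$ dominates $\varepsilon^{-1}\cdot 2^h/(1-\varepsilon)$ for the choice $\mu=2^{-7-5h}h^{-3}\varepsilon^2$; the failure of (1) combined with the lower $\phi$-mass bound in $T(u,s_k/(4h))$ coming from $(z,r,W)\in Z(z,r_1)$ forces that mass to live in $B(z,2r)\setminus E$, placing $u$ in $A_2$.

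For (iv), I would apply a Besicovitch covering of $A_1\cup A_2$ by disjoint cylinders of radii $s(u_i)$ and $s(u_i)/(4h)$ respectively, sum the defining estimates, and bound $\sum s(u_i)^h$ by $C\varepsilon r^h$ in both cases: for $A_1$ by telescoping against $\phi(B(z,r))$, for $A_2$ by telescoping against $\phi(B(z,2r_1)\setminus E)\leq 2^{-h}\mu^{h+1}\phi(B(z,2r_1))$, which is itself $\lesssim\varepsilon^2 r^h$. Since $\mathcal{C}^h$ on $W$ is a Haar measure, the resulting bound has the announced form. For (v), the identity $P(E\cap P^{-1}(A))=A$ follows from $s(u)=0$: for each $s>0$, pick $x_s\in E\cap T(u,s/(4h))$; compactness of $E$ yields a limit point in $E\cap P^{-1}(u)$. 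The measure comparison $\mathcal{H}^h(E\cap P^{-1}(A))\asymp \phi(E\cap P^{-1}(A))$ follows, after restricting to a suitable $E(\vartheta,\gamma)$, from Proposition \ref{prop:MutuallyEthetaGamma}; positivity of $\mathcal{H}^h(E\cap P^{-1}(A))$ comes from (iv) combined with the fact that $P$ is $1$-Lipschitz, so $\mathcal{C}^h(A)\geq\mathcal{C}^h(P(B(z,r)))-C\varepsilon r^h>0$.

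The main difficulty is (vi). For each $x\in E\cap P^{-1}(A)$ the hypothesis provides $V_x\in\Gr^{\mathfrak{s}}(h)$ with $(x,s,V_x)\in Z(z,r_1)$ for all admissible $s$. I would first show a \emph{quantitative alignment} of $V_x$ with $W$ for all such $x$: if the Grassmannian distance between $V_x$ and $W$ were bounded below, then the two mass lower bounds from \eqref{eq:2.9mm} (at $z$ with $W$ and at $x$ with $V_x$) would force too much mass in a cylinder $T(P(x),s)$ at small $s$, violating property (2) in the definition of $s(P(x))$ since $P(x)\in A$. With $V_x$ aligned with $W$ uniformly, any two $x_1,x_2\in E\cap P^{-1}(A)$ satisfy
\[
    \lVert x_1-x_2 - P(x_1-x_2)\rVert \leq C\,\lVert P(x_1-x_2)\rVert,
\]
so $P|_{E\cap P^{-1}(A)}$ is injective with a Lipschitz inverse. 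Defining $f(u)\coloneqq (P|_{E\cap P^{-1}(A)})^{-1}(u)-u$ on $A$, McShane/Whitney extension gives a Lipschitz $f\colon W\to W^\perp$ whose graph contains $E\cap P^{-1}(A)$, and (v) supplies $\phi(\mathrm{gr}(f)\cap E\cap P^{-1}(A))>0$. The delicate point is the calibration of the alignment estimate between $V_x$ and $W$: the constants must be compatible with the parameters $\varepsilon,\mu$ fixed at the outset, and the argument must respect the stratification $\mathfrak{s}$, which means handling horizontal and vertical components separately so that the bi-Lipschitz bound in the graph direction $W^\perp$ survives the parabolic rescaling.
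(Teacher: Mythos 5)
Your items (i), (ii), (iv) and (v) follow essentially the paper's own scheme (with minor imprecisions: in (iv) the dominant contribution is the annulus term $\phi(B(z,2r)\setminus B(z,2r_1))\leq 2\varepsilon\,\phi(B(z,2r))$, which you omit, and in (v) mutual absolute continuity from Proposition \ref{prop:MutuallyEthetaGamma} is weaker than the stated two-sided bound, which really comes from the uniform density estimates furnished by condition (2) and by membership in $Z$). The genuine gaps are in (iii) and (vi).

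In (iii), the $A_2$ case does not close as you describe. If $u\notin A\cup A_1$ one first needs to show that condition (1) fails for \emph{all} $s<s(u)$ (your ``(1) or (2) fails along $s_k\uparrow s(u)$'' does not by itself sort $u$ into $A_1$ or $A_2$; one uses that if (1) holds at some $s<s(u)$ it holds on all of $[s,s(u))$, forcing (2) to fail along a sequence and hence $u\in A_1$), so that $E\cap \mathrm{int}\,T(u,s(u)/(4h))=\emptyset$. The real issue is then where the mass in $(B(z,2r)\setminus E)\cap T(u,s(u)/(4h))$ comes from. It cannot come from $(z,r,W)\in Z(z,r_1)$: that triplet gives lower bounds only for balls of radius $t\in[\mu r,2r]$ centered on $z+W$, whereas by (i) the cylinder radius is $s(u)/(4h)\leq \mu r/2$, strictly below that range, and such balls are in any case not contained in the thin cylinder. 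The paper instead uses the hypothesis at a point $x\in E\cap T(u,s(u)/(4h))$ with the calibrated scale $\bar s=16hs(u)/\varepsilon$, $(x,\bar s,V)\in Z$, and a dichotomy: if $P|_{V}$ degenerates below $\sigma\approx\varepsilon/h$, a Fubini argument along $x+V$ produces mass $\geq\varepsilon^{-1}(s(u)/(2r))^h\phi(B(z,2r))$ in $B(z,2r)\cap T(u,s(u))$, contradicting $u\notin A_1$; otherwise $P|_V$ is $\sigma$-co-Lipschitz, one finds $w\in x+V$ with $P(w)=u$ and $\lVert w-x\rVert\leq s(u)/(4h\sigma)$, and $U(w,s(u)/(4h))$ lies in the cylinder, misses $E$, and carries the required mass because $s(u)/(4h)\in[\mu\bar s,2\bar s]$. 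This mechanism is absent from your sketch.

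In (vi), the ``alignment of $V_x$ with $W$'' route fails on two counts. First, the contradiction you invoke is against the upper bound of condition (2), whose constant is $\mu^{-h}$ with $\mu=2^{-7-5h}h^{-3}\varepsilon^2$; this is enormous compared with the $(1-\varepsilon)$-type lower bounds that a tube or two-ball argument can produce, so no contradiction arises. What such arguments actually yield (and what the paper proves in (iii)) is only the co-Lipschitz bound $\lVert P(y-x)\rVert\geq\sigma\lVert y-x\rVert$ on $x+V_x$ with $\sigma\sim\varepsilon/h$, not Grassmannian closeness of $V_x$ to $W$. Second, even granting pointwise alignment, the two-point estimate $\lVert x_1-x_2-P(x_1-x_2)\rVert\leq C\lVert P(x_1-x_2)\rVert$ for arbitrary $x_1,x_2\in E\cap P^{-1}(A)$ is a non sequitur: nothing yet excludes two points of the set lying almost over the same projection, each with a well-aligned plane. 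The step that excludes this (in Preiss, in \cite{antonelli2022rectifiable}, and in the paper) is a mass-doubling argument inside a thin cylinder tested against a \emph{refined} upper bound of the form $2(1-\varepsilon)^4\mathcal{C}^h(P(B(0,1)))(s/(2r))^h\phi(B(z,2r))$, which holds only on a large subset $\widetilde A\subseteq A$; one must introduce $\widetilde A$, show $\mathcal{C}^h(A\setminus\widetilde A)$ is small via a Vitali-type covering, pass to a compact $\hat A\subseteq\widetilde A$ of small diameter, and only then the selection $u\mapsto f(u)\in E\cap P^{-1}(u)$ is $2h/\varepsilon$-Lipschitz, since two distant preimages would give two disjoint balls of combined mass exceeding the refined bound. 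Without this refinement your Lipschitz-graph conclusion cannot be closed.
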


\begin{proof} We prove each point of the proposition in a separate paragraph. For brevity we write $Z\coloneqq Z(z,r_1)$. Moreover, $P$ is a homogeneous homomorphism, so the statement is translation-invariant and we assume $z=0$ without loss of generality.

Before proving (i) first observe that if $(0,r,  W)\in Z$, we infer from \eqref{eq:2.9mm} that
\begin{equation}
    \phi(B(0, 2r_1))\geq (1-\varepsilon)\Bigl(\frac{r_1}{r}\Bigr)^h\phi(B(0,2r)),
    \nonumber
\end{equation}
so
\begin{equation}
\begin{split}
      \phi(B(0, 2r)\setminus B(0, 2r_1))
      &= \phi(B(0, 2r))-\phi(B(0,2 r_1))\leq \phi(B(0, 2r))\bigl(1-(1-\varepsilon)(r_1/r)^h\bigr) \\
      &=\phi(B(0, 2r))\bigl(1-(1-\varepsilon)\bigl(1-(\varepsilon/h)\bigr)^h\bigr)\leq 2\varepsilon \phi(B(0, 2r)),
\end{split}
  \label{bd:bd1}
\end{equation}
where in the last inequality we used that $h \mapsto (1-\varepsilon/h)^h$ is an increasing function.

\smallskip

\item\paragraph{\textbf{Proof of (i):}} Let $u\in P(B(0,r_1))$ and $  2\mu h r<s\leq r$. The condition $2\mu h r<s$ implies
$$\phi(B(0, 2r)\cap T(u, s))\leq \phi(B(0, 2r))< \mu^{-h}(s/ (2r))^h\phi(B(0, 2r)).$$

Defined $v\coloneqq u-\delta_\mu(u)$, we note that $v\in   W$ and 
\[
    \dd(v,u)\overset{\eqref{eq:transl_inv_d}}{=}\dd\bigl(\delta_\mu(u),0\bigr) =\mu \,\dd(u,0)\leq\mu r,
\]
where the second equality follows from the homogeneity of $\dd$.

Furthermore, for every $\Delta\in B(0,\mu r)$ we have
{\begin{equation}
    \begin{split}
    \dd\bigl(0,u-\delta_\mu(u)+\Delta\bigr)&\leq \mu\|u\|+\|u\|+\|\Delta\|\leq \mu r_1+r_1+\mu r \leq 2  r_1,
    \label{incl:palli}
\end{split}
\end{equation}}
where in the inequality above we used the fact that $r_1>r/2$.
Thus, on the one hand we have $B(v,\mu r)\subseteq B(u,\mu r)$ and on the other \eqref{incl:palli} reads
\begin{equation}\label{eqn:Deduzione}
B(v,\mu r)\subseteq B(0,2r_1).
\end{equation}

Since $(0,r,  W)\in Z$, the definitions of $Z$ and $E$ imply that
\begin{equation}
    \phi(B(v,\mu r))\overset{\eqref{eq:2.9mm}}{\geq} (1-\varepsilon)\mu^h 2^{-h}\phi(B(0,r))\geq (1-\varepsilon)\mu^h2^{-h} \phi(B(0,2r_1))\overset{\eqref{eq:densityE}}{>} \phi(B(0,2r_1)\setminus E).
    \label{eq:ineq:balls}
\end{equation}
Furthermore, thanks to \eqref{eqn:Deduzione}, \eqref{eq:ineq:balls} and the definition of $T(\cdot,\cdot)$, we also infer that
$$
\emptyset\neq E\cap B(v,\mu r) \subseteq E\cap B(u,2\mu r)\subseteq E\cap T\bigl(u,s/(4h)\bigr),
$$
where the last inclusion holds since $2\mu r\leq \mu r /2 <s/(4h)$.

\vv

\paragraph{\textbf{Proof of (ii):}} Let $u\in P(B(0,r_1))$ and $0<s\leq s(u)$. By definition of $s(u)$, up to possibly increasing $s$ such that it still holds $0<s\leq s(u)$, there are two cases: either
\begin{equation}
    \phi(B(0,2r)\cap T(u,s))>(1+\tau)^h\mu^{-h}(s/(2r))^h\phi(B(0,2r)),
    \label{eq:defs(u)}
\end{equation}
 for some $\tau>0$ or
\begin{equation}
    E\cap T(u,s/(4h))= \emptyset.
    \label{eq:defs(u)2}
\end{equation}

We remark that the point (i) implies $s(u)\leq r$.
Thus, for $v\in P(B(0,r_1))$  such that $\dd(u,v)\leq \min \{\tau s, (r-s)/2\}$, we have 
\begin{equation}\label{eq:choice_v_close_u}
    s+\dd(u,v)\leq (1+\tau)s\qquad  \text{ and } \qquad s+\dd(u,v)\leq \frac{s+r}{2}\leq r.
\end{equation}

If \eqref{eq:defs(u)} holds, this implies that
\begin{equation}\label{eqn:EXT1}
\begin{split}
        \phi\bigl(B(0,2r)\cap T(v,s+ \dd(u,v))\bigr)&>\phi\bigl(B(0,2r)\cap T(u,s)\bigr)\geq (1+\tau)^h\mu^{-h}(s/(2r))^h\phi(B(0,2r)) \\
        &\overset{\eqref{eq:choice_v_close_u}}{\geq}\mu^{-h} \Bigl(\frac{s+\dd(u,v)}{2r}\Bigr)^h\phi(B(0,2r)).
\end{split}
\end{equation}

On the other hand, if \eqref{eq:defs(u)2} holds, then
\begin{equation}\label{eqn:EXT2}
E\cap T\Bigl(v,\frac{s-4h\dd(u,v)}{4h}\Bigr)\subseteq E\cap T\Bigl(u,\frac{s}{4h}\Bigr)=\emptyset.
\end{equation}

Taking into account \eqref{eqn:EXT1} and \eqref{eqn:EXT2}, this shows that \[s(v)\geq\min\{s-4h\dd(u,v),s+\dd(u,v)\}=s-4h\dd(u,v)\]  for $v\in P(B(0,r_1))$  such that $\dd(u,v)\leq \min \{\tau s, (r-s)/2\}$. Hence, we have
$\liminf_{v\to u}s(v)\geq s$ for any $s\leq s(u)$ for which at least one between \eqref{eq:defs(u)} and \eqref{eq:defs(u)2} holds. In particular from the definition of $s(u)$ we deduce the existence of a sequence $s_i\to s(u)^-$ such that at each $s_i$ at least one between \eqref{eq:defs(u)} and \eqref{eq:defs(u)2} holds. In conclusion we infer
\[
    \liminf_{v\to u} s(v)\geq s(u).
\]

\smallskip

\paragraph{\textbf{Proof of (iii):}} The inclusion $P(B(z,r_1))\supseteq A\cup A_1\cup A_2$ holds trivially by the definition of $A$, $A_1$, and $A_2$, so we are left with the proof of the converse relation. Suppose that $P(B(0,r_1))\neq A\cup A_1$ and let $u\in P(B(0,r_1))\setminus (A\cup A_1)$. Since $u\not\in A\cup A_1$, then $s(u)>0$ and
\begin{equation}
    \phi\big(B(0,2r)\cap T(u,s(u))\big)< \varepsilon^{-1}(s(u)/(2r))^h\phi(B(0,2r)).
    \label{eq:contro}
\end{equation}

Thanks to the definition of $s(u)$, for any $0<s<s(u)$, up to eventually increasing $s$ in such a way that it still holds $0<s<s(u)$, we have
\begin{equation}
    \phi(B(0,r)\cap T(u,s))>\mu^{-h}(s/r)^h\phi(B(0,r))
    \label{eq:defs(u)prim}
\end{equation}
or
\begin{equation}
    E\cap T(u,s/(4h))= \emptyset.
    \label{eq:defs(u)2prim}
\end{equation}

Let us assume that there exists $s<s(u)$ such that $\eqref{eq:defs(u)2prim}$ does not hold. Then 
\[
   E\cap T(u,t/(4h))\neq \emptyset  \qquad \text{ for all } \quad t\in [s,s(u))
\]
and, by \eqref{eq:defs(u)prim}, there exists a sequence $t_i<s(u)$ such that $t_i\to s(u)$ and
\begin{equation}
    \phi(B(0,2r)\cap T(u,t_i))>\mu^{-h}(t_i/(2r))^h\phi(B(0,2r)).
    \label{eq:defs(u)prim_i}
\end{equation}

Thus it holds
\begin{equation}
    \begin{split}
        \Bigl(\frac{s(u)}{2\mu r}\Bigr)^h\phi(B(0,  2r))&=\lim_{i\to+\infty}\mu^{-h}\Bigl(\frac{t_i}{2r}\Bigr)^h\phi(B(0,2 r))        \overset{\eqref{eq:defs(u)prim_i}}{\leq} \limsup_{i\to +\infty}\phi(B(0,  2r)\cap T(u,  t_i)) \\
        & \leq \phi(B(0,  2r)\cap T(u,  s(u)))  
        \overset{\eqref{eq:contro}}{<} \varepsilon^{-1}\Bigl(\frac{s(u)}{2r}\Bigr)^h\phi(B(0,  2r)),
    \end{split}
\end{equation}
that yields a contradiction because of the choice of $\mu$ and $\varepsilon$. So, for any  $0<\rho<s(u)$ we have $E\cap  T(u,\rho/(4h))=\emptyset$ and hence
\begin{equation}
    \begin{split}
        E\cap \textrm{int}\bigl(T(u,s(u)/(4h))\bigr)= \emptyset,
        \nonumber
    \end{split}
\end{equation}
where $\textrm{int}(\cdot)$ denotes the interior of the set.

Let us now define the constants
$$
    \bar s\coloneqq 16hs(u)/\varepsilon,\qquad\text{and}\qquad\sigma\coloneqq(2h-1)\varepsilon/(32h^2).
$$
Thanks to the point (i) and the definitions of $\mu$ and $\bar s$, we deduce that
\begin{equation}
        0<s(u)\leq \bar s\leq r-r_1, \qquad \text{and}\qquad \mu\leq \sigma\leq 1.
\end{equation}
The compactness of $E$ and the definition of $s(u)$ yield
$$E\cap T\bigl(u,s(u)/(4h)\bigr)\neq \emptyset.$$

Fix $x\in E\cap T\bigl(u,s(u)/(4h)\bigr)$ and assume  $ V\in  \Gr^{\mathfrak s}(h)$ to be such that $(x,\bar s, V)\in Z$. We claim that
\begin{equation}
    \lVert P(  y-x)\rVert\geq \sigma\lVert   y-x\rVert,\qquad\text{for every }y\in x+  V.
    \label{eq:lipcond}
\end{equation}

In order to prove \eqref{eq:lipcond}, we assume by contradiction that there exists $y\in x+ V$ such that $\lVert   y-x\rVert=1$ and $\lVert P(  y-x)\rVert<\sigma$. Let us fix $w\in B(0,\sigma \bar s)$ and let $t\in\mathbb R$ be such that $\lvert t\rvert\leq s(u)/(4h\sigma)$. Then, we have
\begin{equation}
    \dd(0,x+\delta_t(  y-x)+w)\leq \dd(0,x)+\lvert t\rvert \bigl\lVert   y-x\bigr\rVert+\sigma \bar s\leq \dd(0,x)+\frac{  s(u)}{4h\sigma}+\sigma \bar s.
    \label{eq:bdB(C7)}
\end{equation}

Thanks to the choice of the constants and the bound $s(u)\leq   2h\mu r$ from (i), we infer that
\begin{equation}
\begin{split}
    \frac{  s(u)}{4h\sigma}+\sigma \bar s &=   s(u)\Bigl(1-\frac{1}{2h}+\frac{8h}{(2h-1)\varepsilon}\Bigr)
    \leq   \frac{\varepsilon^2r}{2^{7}h^{2}}\Bigl(1-\frac{1}{2h}+\frac{8h}{(2h-1)\varepsilon}\Bigr)\leq   \frac{\varepsilon r}{h},
    \label{eq:birdi1}
\end{split}
\end{equation}
where in the second inequality above we are using that $\mu=2^{-7-5h}h^{-3}\varepsilon^2$ by definition.
Thus, since $x\in B(0,r_1)$, we gather \eqref{eq:bdB(C7)},  \eqref{eq:birdi1}, and infer that 
\begin{equation}\label{eqn:Estdeltat}
\dd(0,x+\delta_t(  y-x)+w)\leq 2r_1+  \frac{\varepsilon r}{h}=2r,
\end{equation}
where the last equality comes from the definition of $r_1$. As a consequence of \eqref{eqn:Estdeltat} we finally deduce that
$$
    B(x+\delta_t(  y-x),\sigma \bar s)\subseteq B(0,2r), \qquad\text{for any }\lvert t\rvert\leq s(u)/(4h\sigma).
$$

We now prove that for any $\lvert t\rvert\leq  s(u)/(4h\sigma)$ and any $w\in B(0,\sigma \bar s)$, we have
\begin{equation}\label{eqn:TIME}
x+\delta_t(  y-x) w\in T(u,s(u)).
\end{equation}
Indeed, it holds $$P(x+\delta_t(  y-x)+w)=P(x)+\delta_t(P(  y-x))+P(w),$$ so the assumption $x\in T(u,s(u)/(4h))$ yields
\begin{equation}\label{eq:ddup1}
  \dd(u,P(x))\leq s(u)/(4h).
\end{equation}

Observe that by definition we have \(\sigma\bar s=(1-(1/2h))s(u)\).
Then, by \eqref{eqn:TIME} and the fact that $\|P(w)\|\leq \sigma \bar s$ since $P$ is $1$-Lipschitz, we can estimate
\begin{equation}
    \begin{split}
\dd\bigl(u,P(x)+\delta_t(P(  y-x))+P(w)\bigr)&\leq \dd(u,P(x))+\lvert t\rvert\lVert P(  y-x)\rVert+\sigma \bar s\\
&\overset{\eqref{eq:ddup1}}{\leq} \frac{s(u)}{4h}+\frac{s(u)}{4h}+\sigma \bar s=\frac{s(u)}{2h}+\Big(1-\frac{1}{2h}\Big)s(u)
=s(u).
\nonumber
    \end{split}
\end{equation}

In conclusion, the above computations yield that
\begin{equation}\label{eqn:TIME2}
B\bigl(x+\delta_t(  y-x),\sigma \bar s\bigr)\subseteq B(0,2r)\cap T(u,s(u)), \quad \text{ for any }\lvert t\rvert\leq s(u)/(4h\sigma).
\end{equation}

Now observe that
\begin{equation}\label{eq:mmtl31}
    (1-\varepsilon)\Bigl(1-\frac{1}{2h}\Bigr)^h\frac{16h^2}{(2h-1)^{2}}= 4(1-\varepsilon)\Bigl(1-\frac{1}{2h}\Bigr)^{h-2}\geq 2(1-\varepsilon)\geq 1,
\end{equation}
so by applying Fubini's theorem to the function
$$F(t,z)\coloneqq \chi_{B(0,\sigma \bar s)}\bigl(z-x+\delta_t(x-y)\bigr), \qquad (t,z)\in \Bigl[-\frac{s(u)}{4h\sigma},\frac{s(u)}{4h\sigma}\Bigr]\times \mathbb{P}^n,$$
noticing that when $|t|\leq s(u)/(4h\sigma)$ we have \eqref{eqn:TIME2}, and since $x\in E$ implies that $(x, \bar s,  V)\in Z$ for some $  V\in { \Gr^{\mathfrak s}(h)}$, we write
\begin{equation}
\begin{split}
    \phi\bigl(B(0,2r)\cap T(u,s(u))\bigr)&\geq (2\sigma \bar s)^{-1}\int_{-s(u)/(4h\sigma)}^{s(u)/(4h\sigma)}\phi\bigl(B(x+\delta_t(  y-x),\sigma s)\bigr)\,  dt\\
    &\geq (2\sigma \bar s)^{-1}\Bigl(\frac{s(u)}{2h\sigma}\Bigr)(1-\varepsilon)\Bigl(\frac{\sigma \bar s}{2r} \Bigr)^h\phi(B(0,  2r))\\
    &= (1-\varepsilon)\Bigl(1-\frac{1}{2h}\Bigr)^h\frac{16h^2}{(2h-1)^{2}}\varepsilon^{-1}\Bigl(\frac{s(u)}{2r}\Bigr)^h\phi(B(0,  2r))\\
    &\overset{\eqref{eq:mmtl31}}{\geq} \varepsilon^{-1}\Bigl(\frac{s(u)}{2r}\Bigr)^h\phi(B(0,  2r)).
\end{split}
\label{eqn:ContR}
\end{equation}

However, \eqref{eqn:ContR} contradicts the assumption $u\notin A_1$ (see \eqref{eq:contro}), thus the claim \eqref{eq:lipcond} holds, so  $P\lvert_{ V}$ is an homomorphism and hence injective.

In particular we can find $w\in x+ V$ such that $P(w)=u$ and we infer that $d(u,P(x))\leq s(u)/(4h)$. So, we conclude that
\begin{equation}\label{eqn:xw}
\lVert  w-x\rVert\overset{\eqref{eq:lipcond}}{\leq}\sigma^{-1}\lVert P(w)-P(x)\rVert=\sigma^{-1}\lVert u- P(x)\rVert\leq \frac{s(u)}{4h\sigma}.
\end{equation}

Then, with \eqref{eqn:xw} we can repeat the same computation we performed in \eqref{eq:bdB(C7)}-\eqref{eq:birdi1}-\eqref{eqn:Estdeltat} and obtain $B(w,s(u)/(4h))\subseteq B(0,r)$. Furthermore, since $P(w)=u$, $B(w,s(u)/(4h))\subseteq T(w,s(u)/(4h))=T(u,s(u)/(4h))$,  and $\text{int}(T(u,s(u)/(4h)))\cap E=\emptyset$ we have
\begin{equation}
    U\Bigl(w,\frac{s(u)}{4h}\Bigr)\subseteq (B(0,2r)\setminus E)\cap \text{int}\Bigl(T\Bigl(u,\frac{s(u)}{4h}\Bigr)\Bigr).
    \label{eq:claimfinale3}
\end{equation}

We claim that \eqref{eq:claimfinale3} concludes the proof of item (iii). Indeed we have $(x,\bar s,  V)\in Z$, and the bounds $\mu \bar s\leq s(u)/(4h)\leq \bar s$ together with \eqref{eqn:xw} imply $w\in B(x,\bar s)\cap (x+V)$, so by approximation and using the hypothesis we obtain the inclusion
\begin{equation}
    \phi\Bigl(U\Bigl(w,\frac{s(u)}{4h}\Bigr)\Bigr)\geq (1-\varepsilon)\Bigl(\frac{s(u)}{8hr}\Bigr)^h\phi(B(0,r)).
    \label{eq:claimfinale3.2}
\end{equation}

We gather \eqref{eq:claimfinale3} and \eqref{eq:claimfinale3.2}, and deduce that
\begin{equation}
    \phi\Big((B(0,2r)\setminus E)\cap \text{int}\Bigl(T\Bigl(u,\frac{s(u)}{4h}\Bigr)\Bigr)\Big)\geq (1-\varepsilon)\Bigl(\frac{s(u)}{8h r}\Bigr)^h\phi(B(0,2r))
    \nonumber
\end{equation}
and thus $u\in A_2$, which proves item (iii).

\smallskip

\paragraph{\textbf{Proof of (iv):}} Let $\tau>1$. By \cite[Theorem 2.8.4]{Federer1996GeometricTheory} we have that there exists a countable set $D\subseteq A_1$ such that:
\begin{itemize}
    \item[($\alpha)$] $\{B(w,s(w))\cap  W:w\in D\}$ is a disjointed subfamily of $\{B(w,s(w))\cap  W:w\in A_1\}$.
    \item[($\beta)$] For any $w\in A_1$ there exists $u\in D$ such that $B(u,s(u))\cap B(w,s(w))\cap  W\neq \emptyset$ and $s(w)\leq \tau s(u)$.
\end{itemize} 

Furthermore, if for every $u\in A_1$ we define
\begin{equation}\label{eqn:Bhat}
\begin{split}
    \widehat{B}(u,  s(u))\coloneqq\bigcup\bigl\{B(w,  s(w))\cap  W:w\in A_1,
    ~B(u,  s(u))\cap B(w,  s(w))\cap  W\neq \emptyset,~s(w)\leq \tau s(u) \bigr\},
\end{split}
\end{equation}
 thanks to \cite[Corollary 2.8.5]{Federer1996GeometricTheory} we deduce that
 \begin{equation}\label{eq:mmp4e1}
     A_1\subseteq \bigcup_{u\in A_1} B(u,  s(u))\cap  W\subseteq \bigcup_{w\in D} \widehat{B}(w,  s(w)).
 \end{equation}
    Triangle inequality and an elementary computation yield
\begin{equation}
    \widehat{B}(u,  s(u))\subseteq   W\cap B\bigl(u,(1+2\tau)  s(u)\bigr), \qquad \text{for every $u\in A_1$}.
    \label{eq:inclussion}
\end{equation}

Since $D\subseteq A_1$ and  $T(u, s(u))\subseteq P^{-1}(B(u,  s(u))\cap   W)$ for every $u\in A_1$, we exploit the fact that $\{B(w,  s(w))\cap  W:w \in D\}$ is a disjointed family and obtain that
\begin{equation}\label{eq:phiBleqe1sum}
   \phi(B(0, 2r))\geq \sum_{u\in D}\phi\bigl(B(0,   2 r)\cap T(u, s(u))\bigr)\geq \varepsilon^{-1}\sum_{u\in D}(s(u)/   (2r))^h\phi(B(0, 2r)), 
\end{equation}
where the last inequality above comes from the inclusion $D\subseteq A_1$. Observe that \eqref{eq:phiBleqe1sum} also reads \[\sum_{u\in D}s(u)^h\leq      \varepsilon 2^hr^h\]
and thanks to \eqref{eq:mmp4e1} and \eqref{eq:inclussion} we infer that
\begin{equation}
\begin{split}
        \mathcal{C}^h(A_1)&\overset{}{\leq} \sum_{u\in D}\mathcal{C}^h\bigl(B(u,(1+2\tau)  s(u))\cap  W\bigr)=(1+2\tau)^h\sum_{u\in D}s(u)^h\leq     (1+2\tau)^h\varepsilon 2^hr^h.
\end{split}
        \label{eq:bdes1}
\end{equation}

Similarly to what we did for $D$, we can construct a countable set $D^\prime\subseteq A_2$ such that $\{B(u,   s(u)/(4h))\cap  W:u\in D^\prime\}$ is a disjointed family and the collection $\{\widehat{B}(u ,  s(u)/(4h)):u\in D^\prime\}$, constructed analogously to \eqref{eqn:Bhat}, covers the set $A_2$. Analogously to \eqref{eq:inclussion} we have 
\[\widehat{B}\Bigl(u,\frac{s(u)}{4h}\Bigr)\subseteq   W\cap B\Bigl(u,(1+2\tau)\frac{s(u)}{4h}\Bigr) \qquad\textrm{ for every } u\in A_2.\]

Moreover, since 
$$T\Bigl(u,\frac{s(u)}{4h}\Bigr)\subseteq P^{-1}\Bigl(B\Bigl(u, \frac{s(u)}{4h}\Bigr)\cap   W\Bigr) \qquad \text{ for every } u\in A_2,$$
 and $\{B(u, s(u)/(4h))\cap  W:w \in D'\}$ is a disjointed family, we conclude
\begin{equation}\label{eq:prop36_aaa}
    \begin{split}
        \phi(B(0,   2r)\setminus E)&\geq \sum_{u\in D^\prime}\phi\Bigl((B(0, 2r)\setminus E)\cap T\Bigl(u,\frac{s(u)}{4h}\Bigr)\Bigr)\geq 2^{-1}\phi(B(0,  2 r))\sum_{u\in D^\prime}\Bigl(\frac{s(u)}{8h   r}\Bigr)^h,
    \end{split}
\end{equation}
where the last inequality holds since $D'\subseteq A_2$.

From \eqref{eq:prop36_aaa}, \eqref{bd:bd1}, and the fact that $0\in E$, we infer that
\begin{equation}\label{eq:prop36_eq1}
    \begin{split}
        \sum_{u\in D^\prime}\Bigl(\frac{s(u)}{8h   r}\Bigr)^h&\leq\frac{2\phi(B(0,   2r)\setminus E)}{\phi(B(0,   2r))}\leq 2\cdot\frac{\phi(B(0,   2r)\setminus B(0,   2r_1))+\phi(B(0,   2r_1)\setminus E)}{\phi(B(0,   2r))} \\
        &\leq2\cdot\frac{2\varepsilon \phi(B(0,   2r)) +\mu^{h+1}2^{-h}   \phi(B(0,  2 r))}{\phi(B(0,   2r))}\leq 10\varepsilon.
        \end{split}
\end{equation}
Consequently, we deduce that
\begin{equation}
\begin{split}
     \mathcal{C}^h(A_2)&\leq \sum_{u\in D^\prime} \mathcal{C}^h\Bigl(  W\cap B\Bigl(u,(1+2\tau)\frac{s(u)}{4h}\Bigr)\Bigr)\\
     &=(1+2\tau)^h\sum_{u\in D^\prime} (s(u)/4h)^h\overset{\eqref{eq:prop36_eq1}}{\leq} 10(1+2\tau)^h  \varepsilon2^h r^h.
\end{split}
    \label{eq:bdes2}
\end{equation}

We remark that $P(B(0,1))\supseteq B(0,1)\cap  W$ and $\mathcal{C}^h(B(0,1)\cap  W)=1$ imply $\mathcal{C}^h\bigl(P(B(0,1))\bigr)\geq 1$. Thus, we gather \eqref{eq:bdes1}, \eqref{eq:bdes2}, item (iii) of this proposition, and conclude that
\begin{equation}
\begin{split}
        \mathcal{C}^h(P(B(0,r))\setminus A)&\leq \mathcal{C}^h(P(B(0,r))\setminus P(B(0,r_1)))+\mathcal{C}^h(A_1)+\mathcal{C}^h(A_2)\\
        &\leq \mathcal{C}^h(P(B(0,1)))r^h(1-(1-\varepsilon/h)^h)+    (1+2\tau)^h\varepsilon 2^hr^h+10(1+2\tau)^h     \varepsilon 2^hr^h\\
        &\leq 50(1+2\tau)^h   \mathcal{C}^h(P(B(0,1))) \varepsilon 2^hr^h.
\end{split}
\nonumber
\end{equation}
We finally choose $\tau=2$ and item (iv) follows.
\smallskip

\paragraph{\textbf{Proof of (v):}} Let $u\in A$ and note that since $s(u)=0$, for any $s>0$ we have that
$$
    E\cap T(u,s/(4h))\neq \emptyset.
$$

The sets $E\cap T(u,s/(4h))$ are compact, so the finite intersection property yields
$$
    \emptyset\neq E\cap \bigcap_{s>0}T(u,s/(4h))=E\cap P^{-1}(u).
$$

This implies that $u\in P(E\cap P^{-1}(u))$ for every $u\in A$, and as a consequence $A\subseteq P(E\cap P^{-1}(A))$. Since the inclusion $ P(E\cap P^{-1}(A))\subseteq A$ is obvious we finally infer that
$A=P(E\cap P^{-1}(A))$. Moreover, thanks to item (iv) and the choice $\varepsilon<2^{-3h}5^{-h-5}$, it holds
\begin{equation}
\begin{split}
     \mathcal{C}^h(A)&\geq \mathcal{C}^h\bigl(P(B(0,r))\bigr)-\mathcal{C}^h\bigl(P(B(0,r))\setminus A\bigr)\\
     &\geq \mathcal{C}^h\bigl(P(B(0,1))\bigr)r^h-5^{h+3} 2^{-3h}\mathcal{C}^h\bigl(P(B(0,1))\bigr) \varepsilon r^h\geq\frac{24}{25}r^h>0,
\end{split}
    \nonumber
\end{equation}
so we conclude that $\mathcal{S}^h(A)>0$ by the equivalence of $\mathcal{C}^h\llcorner  W$ and $\mathcal{S}^h\llcorner  W$  in Remark \ref{remarkone}.

The fact that $P$ is $1$-Lipschitz further gives that
$$0<\mathcal{S}^h(A)=\mathcal{S}^h\bigl(P(E\cap P^{-1}(A))\bigr)\leq\mathcal{S}^h(E\cap P^{-1}(A)).$$

For any $s$ sufficiently small and $u\in A$, by definition of $s(u)$ and $A$, we have
\begin{equation}\label{eq:propMM_main_aux1}
    \phi(B(x, s))\leq \phi\big(B(0,   2r)\cap T(u, s))\leq \mu^{-h}(s/ (2r))^h\phi(B(0,   2r)),
\end{equation}
whenever $x\in E\cap P^{-1}(u)$, where the first inequality comes from the fact that $x\in E\subseteq B(0,   r_1)$. Finally by \cite[2.10.17-(2)]{Federer1996GeometricTheory} and \eqref{eq:propMM_main_aux1} we infer
\begin{equation}
    \phi (E\cap P^{-1}(A))\leq   2^{-h}\mu^{-h}\frac{\phi(B(0,   r))}{r^{h}}\mathcal{S}^h (E\cap P^{-1}(A)).
    \label{eq:bdhaus1}
\end{equation}

On the other hand, if we assume $x\in E$ and $s$ sufficiently small, we have $(x,s, V)\in Z$ for some $ V\in { \Gr^\mathfrak{s}(h)}$. By using the definition of $Z$, this implies that
$$\phi(B(x,s))\geq (1-\varepsilon)(s/ (2r)^h\phi(B(0,   2r)),$$
and thus by \cite[2.10.19(3)]{Federer1996GeometricTheory}, we have
\begin{equation}
\phi\llcorner E\geq (1-\varepsilon)\frac{\phi(B(0, r))}{2^hr^{h}}\mathcal{S}^h\llcorner E.
    \label{eq:bdhaus2}
\end{equation}

We gather \eqref{eq:bdhaus1} and \eqref{eq:bdhaus2}, and conclude the proof of item (v).

\smallskip

\paragraph{\textbf{Proof of (vi):}}

Let $\widetilde{A}$ be the set of those $u\in A$ for which there exists $\rho(u)>0$ such that
\begin{equation}
    \phi(B(0, 2r)\cap T(u,s))\leq 2(1-\varepsilon)^4(s/ (2r))^h\mathcal{C}^h\bigl(P(B(0,1))\bigr)\phi(B(0, 2r)),
    \label{eq:num1uno}
\end{equation}
for all $0<s<\rho(u)$. We claim that $\widetilde{A}$ is a Borel set. To prove this, we first note that
$$
\widetilde{A}=\bigcup_{k\in\N}\bigl\{u\in A: \eqref{eq:num1uno}\text{ holds for any }0<s<1/k\bigr\}\eqqcolon\bigcup_{k\in\N}\widetilde{A}_k.
$$

Let us show that $\widetilde A_k$ is a compact set for any $k\in\N$ and, in order to do this, fix $k$ and assume that $\{u_i\}_{i\in\N}$ is a sequence of points of $\widetilde A_k$. Since $\widetilde A_k\subseteq A$ and $A$ is compact we can suppose that, up to a non re-labelled subsequence, $u_i$  converges to some $u\in A$. Thus, for every $0<s<1/k$ the following bounds hold
\begin{equation}
    \begin{split}
        \phi(B(0,  2r)\cap T(u, s))&\leq  \limsup_{i\to\infty}\phi\bigl(B(0,  2r)\cap T(u_i,s+\dd(u,u_i))\bigr)\\
        &\overset{\eqref{eq:num1uno}}{\leq} 2(1-\varepsilon)^4\mathcal{C}^h\bigl(P(B(0,1))\bigr)(s/ (2r))^h\phi(B(0, 2r)).    \nonumber
    \end{split}
\end{equation}
This proves that $\widetilde{A}_k$ is compact, so $\widetilde{A}$ is an $F_\sigma$-set and in particular Borel. 

Notice that, since $r_1<r$, by a compactness argument one finds that there exists $\widetilde s\coloneqq \widetilde s(r_1,r)$ such that whenever $u\in P(B(0,r_1))$, then $P(B(u,\widetilde s))\subseteq P(B(0,r))$. The family
$$\mathcal{B}\coloneqq\bigl\{P(B(u,s)):u\in A\setminus \widetilde A,\,\,\text{and }s\leq \widetilde s\text{ does not satisfy \eqref{eq:num1uno}}\bigr\}$$ is a fine cover of $A\setminus \widetilde A$ by the very definition of $\widetilde A$. Thus \cite[2.8.17]{Federer1996GeometricTheory} with a routine argument
implies that $\mathcal{B}$ is a $\mathcal{S}^h\llcorner(A\setminus\widetilde A)$-Vitali relation (\cite[2.8.16]{Federer1996GeometricTheory}). Therefore, the set $A\setminus \widetilde{A}$ can be covered $\mathcal{S}^h$-almost all by a sequence of disjointed projected balls $\{P(B(u_k,s_k))\}_{k\in\N}$ such that $u_k\in A\setminus \widetilde{A}$ and
\begin{equation}\label{eq:propMM_aux3}
    \phi(B(0, 2r)\cap T(u_k, s_k))> 2(1-\varepsilon)^4\mathcal{C}^h\bigl(P(B(0,1))\bigr)(s_k/ (2r))^h\phi(B(0, 2r)),
\end{equation}
for every $k\in\N$. Note that since by definition $T(u_k, s_k)= P^{-1}\bigl(P(B(u_k,s_k))\bigr)$, we get that $\{T(u_k, s_k)\}_{k\in\mathbb N}$ is a disjointed family of cylinders. Moreover, from the very definition of $\widetilde s$, since $u_k\in P(B(0,r_1))$ and $s_k\leq \widetilde s$, we have that $P(B(u_k,s_k))\subseteq P(B(0,r))$. This implies that
\begin{equation}\label{eq:propMM_aux2}
\begin{split}
    \phi(B(0, r))=\phi(T(0,r)\cap B(0, 2r))&\geq\sum_{k\in\mathbb N} \phi(B(0, 2r)\cap T(u_k, s_k))\\
    &\overset{\eqref{eq:propMM_aux3}}{>} 2(1-\varepsilon)^4\mathcal{C}^h\bigl(P(B(0,1))\bigr)2^{-h}r^{-h}\phi(B(0,2r))\sum_{k\in\N} s^h_k.
\end{split}
\end{equation}

Therefore, we have
\begin{equation}
    \begin{split}
\mathcal{C}^h(A\setminus \widetilde{A})&=\sum_{k\in\N}\mathcal{C}^h\bigl(P(B(u_k,s_k))\bigr)\leq \mathcal{C}^h\bigl(P(B(0,1))\bigr)\sum_{k\in\N} s^h_k\\
&\overset{\eqref{eq:propMM_aux2}}{<} 2^{-1}(1-\varepsilon)^{-4}r^h \leq 2^{-1}(1-\varepsilon)^{-5}\mathcal{C}^h\bigl(P(B(0,1))\bigr)r^h\\
&\leq \frac{27}{50}\mathcal{C}^h\bigl(P(B(0,1))\bigr)r^h.
\nonumber
    \end{split}
\end{equation}

Furthermore, from the previous inequality and item (iv) we deduce that
\begin{equation}
\begin{split}
    \mathcal{C}^h(\widetilde{A})&=\mathcal{C}^h\bigl(P( B(0,r))\bigr)-\mathcal{C}^h\bigl(P(B(0,r))\setminus A\bigr)-\mathcal{C}^h(A\setminus\widetilde{A})\\
    &>\mathcal{C}^h\bigl(P(B(0,1))\bigr)r^h -2^{3h}5^{h+3}\varepsilon\, \mathcal{C}^h\bigl(P(B(0,1))\bigr)r^h-\frac{27}{50}\mathcal{C}^h\bigl(P(B(0,1))\bigr)r^h\\
    &\geq\Bigl(1-\frac{1}{25}-\frac{27}{50}\Bigr)\mathcal{C}^h\bigl(P(B(0,1))\bigr)r^h>\frac{2}{5}\mathcal{C}^h\bigl(P(B(0,1))\bigr)r^h.
\end{split}
    \nonumber
\end{equation}

Since $\widetilde{A}$ is measurable, we can find a compact set $\hat{A}\subseteq \widetilde A$ of diameter smaller that $\delta(1+h/\varepsilon)^{-1}$ and $\delta\in (0,\varepsilon r/h)$ such that $\mathcal{C}^h(\hat{A})>0$ and \eqref{eq:num1uno} holds for any $u\in \hat A$ and $s\in (0,\delta)$.
This can be done by taking an interior approximation with compact sets of $\widetilde A$.

Thanks to item (v) we know that
\begin{equation}
    \hat{A}\subseteq A= P(E\cap P^{-1}(A)),
    \label{eq:dundee}
\end{equation}
and thus for any $u\in \hat{A}$ we can find $x\in E$ such that $P(x)=u$.

Since $P$ restricted to $E\cap P^{-1}(A)$ is surjective on $\hat{A}$ as remarked in \eqref{eq:dundee}, thanks to the axiom of choice there exists a function $f\colon\hat{A}\to E\cap P^{-1}(A)$ such that $P(f(u))=u$. We claim that we have
\begin{equation}
    \lVert f(y)-f(x)\rVert\leq2h\lVert y-x\rVert/\varepsilon \qquad \text{ for every }x,y\in \hat{A}.
\label{eq:::num16}
\end{equation}

In order to prove the latter claim, assume by contradiction that there exist $x,y\in \hat{A}$ such that 
\begin{equation}
    \lVert f(y)-f(x)\rVert>2h\lVert y-x\rVert/\varepsilon.
\label{eq:::num16par}
\end{equation}
The assumption \eqref{eq:::num16par} implies in particular that $B(f(x),h\lVert y-x\rVert/\varepsilon)\cap B(f(y),h\lVert y-x\rVert/\varepsilon)=\emptyset.$
Consequently
\begin{equation}
    \begin{split}
        \phi\Bigl(B(0,2r)\cap& P^{-1}\Bigl(B\Bigl(x,\Bigl(1+\frac{h}{\varepsilon}\Bigr)\lVert x-y\rVert\Bigr)\cap W\Bigr)\Bigr)
        \\
        &\geq 2(1-\varepsilon)\mathcal{C}^h\bigl(P(B(0,1))\bigr)\Bigl(\Bigl(1+\frac{h}{\varepsilon}\Bigr)\frac{\lVert x-y\rVert}{2r}\Bigr)^h\phi(B(0,2r))\\
        &>2(1-\varepsilon)^2\mathcal{C}^h\bigl(P(B(0,1))\bigr)\Bigl(\Bigl(1+\frac{h}{\varepsilon}\Bigr)\frac{\lVert x-y\rVert}{2r}\Bigr)^h\phi(B(0,2r)).
    \end{split}
\end{equation}
which contradicts \eqref{eq:num1uno}.

Let us finally observe that $x,y\in \hat{A}$ and $\mathrm{diam}(\hat{A})<\delta(1+h/\varepsilon)^{-1}$ imply $(1+h/\varepsilon)\lVert x-y\rVert<\delta$, hence $f$ is a Lipschitz map of $\hat{A}$ onto $f(\hat{A})$, which concludes the proof of the proposition.
\end{proof}

\vv

It is possible to repeat the arguments of \cite[Section 3.1]{antonelli2022rectifiable} and prove the following result.

\begin{theorem}\label{struct:strat}
Assume that $\phi$ is a Radon measure on $(\mathbb{P}^{n},\dd)$ such that:
\begin{itemize}
    \item[(i)] There exists $h\in\{0,\ldots,n+2\}$ such that $0<\Theta^{h}_*(\phi,x)\leq \Theta^{h,*}(\phi,x)<\infty$ for $\phi$-almost every $x\in \mathbb{P}^n$.
    \item[(ii)] $\Tan_h(\phi,x)\subseteq \mathfrak{M}(h)$  for $\phi$-almost every $x\in \mathbb{P}^n$.
\end{itemize}

Then, for $\phi$-almost every $x\in \mathbb{P}^n$ the set $\mathfrak{s}(\mathrm{Tan}_h(\phi,x))\subseteq \mathfrak{S}(h)$ is a singleton.
In addition, if for every $x\in \mathbb{P}^n$ we define
\begin{equation}
    \begin{split}
        \mathfrak{s}(\phi,x)\coloneqq\begin{cases}
        \mathfrak{s} &\text{if }\mathrm{Tan}_h(\phi,x)\subseteq \mathfrak{M}(h)\text{ and }\mathfrak{s}(\mathrm{Tan}_h(\phi,x))\text{ is the singleton }\{\mathfrak{s}\},\\
        0 &\text{otherwise},
        \end{cases}
        \nonumber
    \end{split}
\end{equation}
then the map $\mathfrak{s}(\phi,\cdot)$ is well-defined, $\phi$-measurable, and non-zero $\phi$-almost everywhere.
\end{theorem}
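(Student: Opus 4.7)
The proof adapts the strategy of \cite[Section 3.1]{antonelli2022rectifiable} to the parabolic setting. By Lemma \ref{homplanesstructure} every $V\in\Gr(h)$ has either the form $V_1\times\{0\}$ with $\dim_{\eu} V_1=h$, giving $\mathfrak{s}(V)=(h,0)$, or the form $V_1\oplus\R e_{n+1}$ with $\dim_{\eu} V_1=h-2$, giving $\mathfrak{s}(V)=(h-2,1)$. Hence $\mathfrak{S}(h)\subseteq\{(h,0),(h-2,1)\}$; if $\mathfrak{S}(h)$ is a singleton there is nothing to prove, so I may assume both stratifications occur and I only need to exclude that both are represented in $\Tan_h(\phi,x)$ for $\phi$-a.e.~$x$.

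The plan is to separate the two strata by a single weak*-continuous, scaling-invariant functional. Fix non-negative $\eta,\chi\in C_c(\mathbb P^n)$ with $\eta\equiv 1$ on $B(0,2)$ and with $\chi$ supported in a small neighbourhood of $\pm e_{n+1}$ where it is strictly positive, and set
\[
\Psi(\mu)\coloneqq\frac{\int \chi\,d\mu}{\int \eta\,d\mu}
\]
on $\{\mu\in\mathcal M: \int\eta\,d\mu>0\}$. Then $\Psi$ is invariant under positive scalar multiplication and weak*-continuous on its domain. For $V\in\Gr^{(h,0)}(h)$ one has $V\subseteq \R^n\times\{0\}$, so $\chi\equiv 0$ on $V$ and $\Psi(\lambda\mathcal S^h\llcorner V)=0$; conversely every $V\in\Gr^{(h-2,1)}(h)$ contains the line $\R e_{n+1}$, so a compactness argument on $\Gr^{(h-2,1)}(h)$ — which is closed in $\Gr(h)$, since $V_k\to V$ with $e_{n+1}\in V_k$ forces $e_{n+1}\in V$ — yields a constant $c=c(\chi,\eta,h)>0$ such that $\Psi(\lambda\mathcal S^h\llcorner V)\geq c$ for every such $V$ and every $\lambda>0$.

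The main analytic input is the Preiss-type connectedness statement: for $\phi$-a.e.~$x$, the set $\Psi(\Tan_h(\phi,x))$ is a connected subset of $[0,\infty)$. This is a consequence of the asymptotic doubling property of $\phi$ granted by assumption (i), together with the fact that at $\phi$-a.e.~$x$ the map $r\mapsto \Psi(r^{-h}T_{x,r}\phi)$ is well defined and continuous on arbitrarily small scales, so its cluster set as $r\to 0$ is connected; compare \cite[Proposition 3.2]{antonelli2022rectifiable} for a verbatim Carnot-group adaptation of Preiss's original argument. Combined with the separation of the previous paragraph, $\Psi(\Tan_h(\phi,x))\subseteq \{0\}\cup[c,\infty)$ is connected only if it is entirely contained in one of the two pieces, which gives the singleton property of $\mathfrak{s}(\Tan_h(\phi,x))$.

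For the remaining claims, the non-vanishing of $\mathfrak{s}(\phi,\cdot)$ on a $\phi$-full measure set is immediate from Proposition \ref{tuttitg}-(i), which gives $\Tan_h(\phi,x)\neq\emptyset$ for $\phi$-a.e.~$x$. The $\phi$-measurability of $\{x:\mathfrak{s}(\phi,x)=\mathfrak{s}\}$ is established through the continuous $x$-dependence of the functional $d_{x,r}(\phi,\mathfrak M^{\mathfrak s}(h))$ from Definition \ref{def:metr} and Remark \ref{rem:dxrContinuous}: the condition $\mathfrak{s}(\phi,x)=\mathfrak{s}$ is equivalent to $\liminf_{r\to 0} d_{x,r}(\phi,\mathfrak M^{\mathfrak s}(h))=0$, and the liminf along a countable sequence of scales is a Borel function of $x$. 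The main obstacle in the whole programme is carefully verifying the Preiss-style connectedness lemma in the parabolic framework; once this is in place, the rest of the argument is a compactness-plus-continuity exercise.
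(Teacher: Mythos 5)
Your argument is correct and, at its core, it is the same disconnection-plus-connectedness mechanism that the paper invokes by referring to \cite[Section 3.1]{antonelli2022rectifiable}; what you do differently is to exploit the parabolic rigidity of Lemma \ref{homplanesstructure} (only the two strata $(h,0)$ and $(h-2,1)$ can occur) so that the general Carnot-group machinery can be replaced by a single scalar, scale-invariant, weak*-continuous functional $\Psi$ separating horizontal from vertical flat measures, followed by an intermediate-value/cluster-set argument for the continuous curve $r\mapsto\Psi(r^{-h}T_{x,r}\phi)$. This buys a short, self-contained proof tailored to $\mathbb P^n$, whereas the cited route works for arbitrarily many strata in general Carnot groups; the identification of $\Psi(\Tan_h(\phi,x))$ with the cluster set does require (and you implicitly use) the finite upper density at $\phi$-a.e.\ $x$ for weak* compactness of blow-up sequences, and the positive lower density (or the normalization $\lambda>0$ in $\mathfrak M(h)$) so that the denominator $\int\eta\,d\nu$ is positive and $\Psi$ is continuous at every tangent.

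One point you should state more carefully is the measurability step: the asserted equivalence between $\mathfrak{s}(\phi,x)=\mathfrak{s}$ and $\liminf_{r\to 0}d_{x,r}(\phi,\mathfrak M^{\mathfrak s}(h))=0$ is not a pointwise equivalence (at a point where tangents are not flat the liminf may still vanish, and the converse direction needs the density bounds to keep the normalizing constants of the approximating flat measures bounded away from $0$ and $\infty$, together with the closedness of each stratum $\Gr^{\mathfrak s}(h)$ under limits, so that the limiting tangent is a nondegenerate flat measure with the same stratification). It is, however, an equivalence on a $\phi$-full measure set once (i), (ii), the nonemptiness of $\Tan_h(\phi,x)$ and your singleton property are in force, and coincidence $\phi$-a.e.\ with a Borel function (liminf over a countable set of scales, using continuity of $r\mapsto d_{x,r}$ or a comparison between nearby scales) is enough for $\phi$-measurability of $\mathfrak s(\phi,\cdot)$; spelling out these two sentences closes the only loose end in your write-up.
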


\vv

All the results proved in this section finally allow us to infer regularity of a measure from the flatness of tangents, and prove the parabolic Marstrand-Mattila rectifiability criterion.

\begin{proof}[Proof of Theorem \ref{thm:MMconormale:intro}]
\cref{struct:strat} implies that for $\phi$-almost every $x\in \mathbb{P}^n$ the elements of $\Tan_h(\phi,x)$ all share the same stratification vector and that
$\mathscr{T}_\mathfrak{s}\coloneqq\{x\in K: \mathfrak{s}(\phi,x)=\mathfrak{s}\}$ is a  $\phi$-measurable set. Thus, if we prove that for any $\mathfrak{s}\in \mathfrak{S}(h)$ there exists a \emph{differentiable} Lipschitz function as in the statement of the theorem whose image has positive $\phi\llcorner \mathscr{T}_\mathfrak{s}$-measure, the theorem is proved by the locality of tangents (see \cref{tuttitg}) and since the sets $\mathscr{T}_\mathfrak{s}$ cover $\phi$-almost all $K$. Thus, we can assume without loss of generality that there exists  $\mathfrak{s}\in\mathfrak{S}(h)$ such that for $\phi$-almost every $x\in K$ we have $\mathfrak{s}(\phi,x)=\mathfrak{s}$.

Let $\widetilde\varepsilon\leq 2^{-3h}5^{-10(h+5)}$ and $\widetilde\mu\coloneqq2^{-7-5h}h^{-3}\widetilde\varepsilon^2$. \textit{We now check that we can find a compact subset $E$ of $K$ such that the hypotheses of Proposition \ref{prop::5.2} are satisfied with $\tilde{\varepsilon},\tilde{\mu}$.} More precisely we prove that there are $\vartheta,\gamma\in\N$, a $\phi$-positive compact subset $E$ of $E(\vartheta,\gamma)$, and a point $z\in E$ such that:
\begin{itemize}
    \item[(\hypertarget{defrho}{1})] There exists $\rho_z>0$ for which $\phi(B(z,2\rho)\setminus E)\leq \tilde\mu^{h+1}\phi(B(z,2\rho))$ for any $0<\rho<\rho_z$.
    \item[(2)] There exists $r_0\in(0,2^{-3h}5^{-10(h+5)}\gamma^{-1}]$ such that for any $w\in E$ and any $0<\rho\leq r_0$ we can find $V_{w,\rho}\in \Gr^{\mathfrak s}(h)$ such that:
    \begin{enumerate}
    \item[(\hypertarget{pointii1}{2.A})] $F_{w,8\rho}\bigl(\phi,c\,\mathcal{C}^h\llcorner (w+V_{w,\rho})\bigr)\leq (2^{-3}\vartheta^{-1}\tilde\mu)^{(h+3)}\cdot(8\rho)^{h+1}$ for some $c>0$.
    \item[(\hypertarget{pointii2}{2.B})] Whenever $y\in B(w,2\rho)\cap (w+V_{w,\rho})$ and $t\in [\tilde\mu \rho, 2\rho]$ we have
$$\phi(B(y,t))\geq (1-\varepsilon)(t/(2\rho))^h\phi(B(w,2\rho)).$$
\end{enumerate}
 \item[(\hypertarget{defrho(z)}{3})] There exists an infinitesimal sequence $\{{\rho_i(z)}\}_{i\in\N}\subseteq (0,\min\{r_0,\rho_z\}]$ such that for any $i\in\N$, any $w\in E$, and any $\rho\in(0,{\rho_i(z)}]$ we have
    \[\phi(B(w,2\rho))\geq (1-\varepsilon)(\rho/{\rho_i(z)})^h\phi(B(z,2\rho_i(z))).\]
    \end{itemize}

For any $a,b>0$ we define ${F}(a,b)$ to be the set of those points $x\in K$ for which
$$
    \phi(B(x,r))\geq br^h,\qquad\text{for any }r\in (0,a).
$$
One easily proves, with the same argument used in \cite[Proposition 1.14]{MerloMM}, that the sets ${F}(a,b)$ are compact. Hence, 
\[
    \widetilde{{F}}(a,b)\coloneqq\bigcap_{p=1}^\infty {F}(a,(1-\varepsilon)b)\setminus F(a/p,b),
\]
are Borel sets. Item (1) yields that $\mathbb{P}^n$ can be covered $\phi$-almost all by countably many sets $\widetilde{{F}}(a,b)$. In particular, thanks to \cref{prop::E} we can find $a,b\in \R$ and $\vartheta,\gamma\in\N$ such that $\phi(\widetilde{{F}}(a,b)\cap E(\vartheta,\gamma))>0$. Since $\widetilde{{F}}(a,b)\cap E(\vartheta,\gamma)$ is measurable, there exists a compact subset of  $\widetilde{{F}}(a,b)\cap E(\vartheta,\gamma)$ with $\phi$-positive measure and we denote it with $F$.

Observe that $\mathrm{Tan}_h(\phi,x)\subseteq \mathfrak{M}^\mathfrak{s}(h)$ for $\phi$-almost every $x\in F$ and that the functions $x\mapsto d_{x,kr}(\phi,\mathfrak M^\mathfrak{s}(h))$ are continuous in $x$ for every $k,r>0$ by \cref{rem:dxrContinuous}. Hence, \cref{tuttitg} together with Severini-Egoroff Theorem imply the existence of a $\phi$-positive compact subset ${E}$ of $F$ and $r_0\leq5^{-10(h+5)}\gamma^{-1}$ such that
 \begin{equation}\label{eq:::num4}
 \begin{split}
     d_{x,4\rho}\bigl(\phi,\mathfrak{M}(h,\Gr^{\mathfrak s}(h))\bigr)\leq (4^{-1}\vartheta^{-1}\tilde\mu)^{(h+4)}\quad \text{ for any }x\in E\text{ and any }0<\rho\leq r_0.
 \end{split}
       \end{equation}

Fix $z$ to be a density point of $E$ with respect to $\phi$, and let us show that $E$ and $z$ satisfy the requirements of the proposition. First observe that, by construction, $\phi(E)>0$ and $E\subseteq E(\vartheta,\gamma)$. Second, since $z$ is a density point of $E$, item (\hyperlink{hp1}{1}) follows if we choose $\rho_z$ small enough. Moreover, the bound \eqref{eq:::num4} directly implies item (2.A). 
 
Let us prove the remaining items. Since $E\subseteq E(\vartheta,\gamma)$, $r_0<\gamma/32$ and $2^{-3}\vartheta^{-1}\tilde\mu\leq 2^{-10(h+1)}\vartheta$, choosing $\sigma=2^{-3}\vartheta^{-1}\tilde\mu$ and $t=8 \rho$ in \cref{prop::4.4(4)} we have that there exists $V_{w,\rho}\in \Gr^{\mathfrak s}(h)$ such that
\begin{equation}\label{eq:thm413aux1}
    \phi\bigl(B(y,r)\cap B(w+V_{w,\rho},2^{-3}\vartheta^{-2}\tilde\mu^2\rho)\bigr)\geq (1-2^{10(h+1)}2^{-3}\tilde\mu)(r/s)^h\phi(B(v,s)),
\end{equation}
whenever $y,v\in B(w,4  \rho)\cap   (w+V_{w,\rho})$ and $\vartheta^{-1}\tilde\mu\rho\leq r,s\leq 4  \rho$. Since $$2^{10(h+1)}2^{-3} \tilde\mu\leq \varepsilon,$$ the choices $s= 2 \rho$ and $v=w$ in \eqref{eq:thm413aux1} imply that
\[
    \phi(B(y,r))\geq (1-\varepsilon)(r/ (2\rho))^h\phi(B(w, 2 \rho)),
\]
for any $\tilde\mu \rho\leq r\leq  2 \rho$ and any $y\in B(w,  2\rho)\cap (w+ V_{w,\rho})$, which proves item (2.B). 

In order to verify item (3), note that since $z\in E\subseteq \widetilde{F}(a,b)$ there is an infinitesimal sequence $\{\rho_i(z)\}_{i\in\N}$ such that
\begin{equation}
    \frac{\phi\bigl(B(z, 2\rho_i(z))\bigr)}{(2\rho_i(z))^h}\leq b.
    \label{eq:Eab1}
\end{equation}
Moreover, for any $w\in E$, and any $0<\rho<a$ we have
\begin{equation}
   b\leq \frac{1}{1-\varepsilon}\frac{\phi(B(w, 2\rho))}{(2\rho)^h}.
   \label{eq:Eab2}
\end{equation}

We gather \eqref{eq:Eab1} and \eqref{eq:Eab2}, and finally infer that for any $i\in\N$, any $w\in E$ and any $\rho\in(0,a)$ we have
$$
    \frac{\phi(B(z, 2\rho_i(z)))}{(2\rho_i(z))^h}\leq \frac{1}{1-\varepsilon}\frac{\phi(B(w, 2\rho))}{\rho^h},
$$
which concludes the proof of item (3).

\medskip

Let $E\subseteq K$ be the compact set and $z\in E$ the point constructed above. Furthermore, let $\widetilde\varepsilon\leq\varepsilon\leq 2^{-3h}5^{-h-5}$ and $\mu\coloneqq2^{-7-5h}h^{-3}\varepsilon^2$ be such that $(1-\widetilde\varepsilon)^2\geq (1-\varepsilon)$. We define
$$
r\coloneqq\rho_1(z),\qquad \text{and}\qquad r_1\coloneqq\Bigl(1-\frac{\varepsilon}{h}\Bigr)r,$$
where $\rho_z$ was introduced in (\hyperlink{defrho}{1}).

Let us check that the compact set $E\cap B(z,2r_1)$ satisfies the hypothesis of \cref{prop::5.2} with respect to the choices $\varepsilon,\mu,r$. First of all, since $r<\rho_1(z)$, item (\hypertarget{defrho}{1}) and the inequality $\widetilde\mu\leq \mu$ imply that \eqref{eq:densityE} holds. Secondly, item (2.\hyperlink{pointii2}{B}) and the bound $r\leq r_0$ yield that for any $w\in E$ and any $0<\rho<2r$ there exists $V_{w,\rho}\in \Gr^\mathfrak{s}(h)$ such that whenever $y\in B(w,2r)\cap (w+V_{w,\rho})$ and $t\in [\mu \rho, 2\rho]$ we have
\[
    \phi(B(y,t))\geq (1-\widetilde\varepsilon)\Bigl(\frac{t}{2\rho}\Bigr)^h\phi(B(w,2\rho)).
\]

Furthermore, since $r<\rho_1(z)$, thanks to item (\hyperlink{defrho(z)}{3}), we finally infer that for any $w\in E$ and any $0<\rho<r$ we have
\begin{equation}
    \begin{split}
\phi(B(y,t))&\geq (1-\widetilde\varepsilon)\Bigl(\frac{t}{2\rho}\Bigr)^h\phi(B(w,\rho))\geq (1-\widetilde\varepsilon)^2\Bigl(\frac{t}{r}\Bigr)^h\phi(B(z,2r))\geq (1-\varepsilon)\Bigl(\frac{t}{2r}\Bigr)^h\phi(B(z,2r)),  
    \end{split}
\end{equation}
whenever $y\in B(w,2r)\cap  ( w+V_{w,\rho})$ and $t\in [\mu \rho, 2\rho]$. 
Hence, we have shown that the hypotheses of \cref{prop::5.2} are satisfied by $z$ and $E\cap B(z,2r_1)$ with the choices of $r,r_1,\varepsilon,\mu$ as above. 

\medskip

{We are left with showing that for $\phi$-almost every $x\in \mathbb{P}^n$ there exists $V(x)\in \Gr^\mathfrak{s}(h)$ such that}
\begin{equation}\label{eq:thmmmfincl1}
    \Tan_h(\phi,x)\subseteq \{\lambda\mathcal{H}^h\llcorner V(x):\lambda>0\}.
\end{equation}
The previous paragraph and Proposition \ref{prop::5.2} imply that there exist $W\in\Gr^\mathfrak{s}(h)$, a compact set $K\subseteq W$, and a Lipschitz map $f\colon W\to W^\perp$ such that $\phi(f(K))>0$. Note that thanks to Proposition \ref{tuttitg} we know that $\phi\llcorner f(K)$ still satisfies items (\hyperlink{hp1}{i}) and (\hyperlink{hp2}{ii}) in the statement of Theorem \ref{thm:MMconormale:intro} with the same $h$ as $\phi$. 
This concludes the proof of \eqref{eq:thmmmfincl1} thanks to Theorem \ref{mm0.0}.
\end{proof}
\vv

We can now prove Theorem \ref{theorem:main_theorem_Preiss} in the case of $\mathbb P^1$ endowed with the Koranyi metric $d$ (see \eqref{eq:Korany_metric}).

\begin{proposition}\label{preiss1}
Suppose that $\phi$ is a Radon measure on $(\mathbb{P}^1,d)$ such that 
$$0<\lim_{r\to 0}\frac{\phi(B_d(x,r))}{r^2}<\infty.$$

Then $\phi$ is absolutely continuous with respect to $\mathcal{H}^2$ and it is $\mathscr{P}_2$-rectifiable. More specifically, there are countably many compact sets $K_i\subseteq \mathcal{V}$ and Lipschitz maps $g_i\colon K_i\to \R$ with the Rademacher property such that \[\phi\Bigl(\mathbb{P}^1\setminus \bigcup_{i\in\N} g_i(K_i)\Bigr)=0.\]
\end{proposition}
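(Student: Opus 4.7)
The plan is to obtain Proposition \ref{preiss1} as an essentially immediate consequence of the Marstrand-Mattila rectifiability criterion (Theorem \ref{thm:MMconormale:intro}) once one exploits the specific low-dimensional structure of $\mathbb{P}^{1}$. Concretely, I would verify both hypotheses of Theorem \ref{thm:MMconormale:intro} with $h=2$, apply the criterion, and then read off the conclusion using the fact that $\Gr(2)=\{\mathcal{V}\}$ in $\mathbb{P}^{1}$.

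Hypothesis (i) of Theorem \ref{thm:MMconormale:intro} is immediate from the density assumption on $\phi$. For hypothesis (ii), I would combine Proposition \ref{propup}, which gives $\Tan_{2}(\phi,x)\subseteq \Theta^{2}(\phi,x)\,\mathcal{U}(2)$ at $\phi$-a.e.\ $x$, with Proposition \ref{uniformmeasuresinP1}, which identifies $\mathcal{U}(2)=\{\mathcal{C}^{2}\llcorner \mathcal{V}\}$, and with Corollary \ref{corollary:grn1}, which shows that $\Gr(2)=\{\mathcal{V}\}$. Thus every tangent measure at a $\phi$-generic point is a nonnegative multiple of the Haar measure on $\mathcal{V}$, so $\Tan_{2}(\phi,x)\subseteq \mathfrak{M}(2)$ for $\phi$-a.e.\ $x$.

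With both hypotheses in hand, Theorem \ref{thm:MMconormale:intro} produces a countable family of compact sets $K_{i}\subseteq W_{i}\in\Gr(2)$ together with differentiable Lipschitz maps $f_{i}\colon K_{i}\to \mathbb{P}^{1}$ such that $\phi\bigl(\mathbb{P}^{1}\setminus\bigcup_{i}f_{i}(K_{i})\bigr)=0$. Since $\Gr(2)=\{\mathcal{V}\}$, every $W_{i}$ equals $\mathcal{V}$, so the $f_{i}$ take values in $\mathcal{V}^{\perp}=\mathbb{R}$; the differentiability furnished inside the proof of Theorem \ref{thm:MMconormale:intro} (via Theorem \ref{mm0.0}) is precisely the Rademacher property of Definition \ref{def:Rademacher_property}, yielding the desired maps $g_{i}\colon K_{i}\to\mathbb{R}$. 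Finally, the absolute continuity $\phi\ll\mathcal{H}^{2}$ is a standard consequence of the $\phi$-a.e.\ finiteness of the upper density $\Theta^{2,*}(\phi,\cdot)$: a Vitali-type covering argument shows that $\mathcal{H}^{2}(A)=0$ implies $\phi(A)=0$.

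There is essentially no obstacle in this argument: all the difficult work has already been carried out in the earlier sections. The case $n=1$ is easy precisely because the full classification $\mathcal{U}(2)=\{\mathcal{C}^{2}\llcorner \mathcal{V}\}$ is available, which renders hypothesis (ii) of the Marstrand-Mattila criterion automatic; the much deeper analysis leading to property \textbf{(P)} and Theorem \ref{classificazionemisureunif} is needed only when $n>1$, where non-flat $1$-codimensional uniform measures can a priori exist.
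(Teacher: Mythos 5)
Your argument is correct and follows essentially the same route as the paper, whose proof of this proposition is exactly the one-line combination of the classification $\mathcal{U}(2)=\{\mathcal{C}^2\llcorner\mathcal{V}\}$ (Proposition \ref{uniformmeasuresinP1}), the fact that tangents of a measure with density are uniform, and the Marstrand--Mattila criterion (Theorem \ref{thm:MMconormale:intro}). Your additional details (hypothesis (i) from the density assumption, $\Gr(2)=\{\mathcal{V}\}$ via Corollary \ref{corollary:grn1}, the Rademacher property coming from Theorem \ref{mm0.0}, and $\phi\ll\mathcal{H}^2$ from finiteness of the upper density) are exactly the routine verifications the paper leaves implicit.
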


\begin{proof}
This is an immediate consequence of Propositions \ref{uniformup}, \ref{uniformmeasuresinP1} and Theorem \ref{thm:MMconormale:intro}. 
\end{proof}

\vv

\section{Moments and their expansion}\label{section:moments_expansion}

Until the end of Section \ref{section:BWGL} we understand that $\mathbb P^n$ is endowed with the Koranyi distance $d$ and we denote as $h$ a number in $\{1,\ldots,n+2\}$.

	A natural alternative to the scalar product on the parabolic space is the \textit{polarization} of the norm $\|\cdot\|$, namely
	\[
		V(u,z)\coloneqq \frac{\|z\|^4 + \|u\|^4 - \|z-u\|^4}{2}, \qquad \text{ for } z,u\in \mathbb{P}^n. 
	\]
	
	\begin{proposition}\label{prop6}
		The function $V(u,z)$ can be decomposed as 
		\begin{equation}\label{eq:decompos_polarization}
			2V(u,z)=L(u,z)+Q(u,z)+T(u,z),
		\end{equation}
		where:
		\begin{itemize}
			\item[(i)] $L(u,z)\coloneqq4\lvert z_H\rvert^2\langle u_H,z_H\rangle$.
			\item[(ii)] $Q(u,z)\coloneqq2z_Tu_T-4\langle z_H,u_H\rangle^2-2\lvert z_H\rvert^2\lvert u_H\rvert^2$.
			\item[(iii)] $T(u,z)\coloneqq4\lvert u_H\rvert^2\langle u_H,z_H\rangle$.
		\end{itemize}
	\end{proposition}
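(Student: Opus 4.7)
The plan is a direct computation, organizing the expansion of $\|z-u\|^4$ by the ``degree of homogeneity'' in the components $z_H$ and $u_H$.

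First I would use the explicit form of the Koranyi norm, $\|x\|^4 = |x_H|^4 + x_T^2$, to write
\[
2V(u,z) = |z_H|^4 + |u_H|^4 - |z_H-u_H|^4 + z_T^2 + u_T^2 - (z_T-u_T)^2.
\]
The time-like part simplifies immediately to $2z_T u_T$, which is the first summand of $Q(u,z)$. It remains to expand the horizontal block. Using the Euclidean identity $|z_H-u_H|^2 = |z_H|^2 - 2\langle z_H,u_H\rangle + |u_H|^2$, one squares and obtains
\[
|z_H-u_H|^4 = |z_H|^4 + |u_H|^4 + 4\langle z_H,u_H\rangle^2 + 2|z_H|^2|u_H|^2 - 4|z_H|^2\langle z_H,u_H\rangle - 4|u_H|^2\langle z_H,u_H\rangle.
\]
Substituting this into the previous display yields
\[
|z_H|^4 + |u_H|^4 - |z_H-u_H|^4 = 4|z_H|^2\langle z_H,u_H\rangle + 4|u_H|^2\langle z_H,u_H\rangle - 4\langle z_H,u_H\rangle^2 - 2|z_H|^2|u_H|^2.
\]

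Finally, I would add the time-like contribution $2z_T u_T$ and group the four remaining terms according to their combined degree in $u_H$: the $(3,1)$-term in $(z_H,u_H)$ is exactly $L(u,z)$, the $(1,3)$-term is exactly $T(u,z)$, and the two $(2,2)$-type terms together with $2z_T u_T$ give $Q(u,z)$. This reproduces \eqref{eq:decompos_polarization} and completes the proof. There is no real obstacle beyond bookkeeping; the only thing worth emphasizing is the triangular decomposition of $2V$ into pieces of degree $(3,1)$, $(2,2)+$time, and $(1,3)$, which is precisely what will be used in the subsequent moment expansion.
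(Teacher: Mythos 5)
Your computation is correct and follows essentially the same route as the paper: expand $\lVert z-u\rVert^4$ via the Koranyi norm, note that the time part reduces to $2z_Tu_T$, square the Euclidean identity for $\lvert z_H-u_H\rvert^2$, and regroup the resulting terms into $L$, $Q$, and $T$. Nothing is missing.
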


	\begin{proof}By a direct computation of $V$ we get
		\begin{equation*}
			\begin{split}
				2V(u,z)&=\lvert z_H\rvert^4+z_T^2+\lvert u_H\rvert^4+u_T^2-\lvert z_H-u_H\rvert^4-( z_T-u_T)^2\\
				&= \lvert z_H\rvert^4+\lvert u_H\rvert^4-\bigl(\lvert z_H\rvert^2-2\langle z_H,u_H\rangle+\lvert u_H\rvert^2\bigr)^2+2z_Tu_T\\
				&=\lvert z_H\rvert^4+\lvert u_H\rvert^4-\bigl(\lvert z_H\rvert^4+4\langle z_H,u_H\rangle^2+\lvert u_H\rvert^4+2\lvert z_H\rvert^2\lvert u_H\rvert^2\\
				&\qquad -4\lvert z_H\rvert^2\langle z_H,u_H\rangle-4\lvert u_H\rvert^2\langle z_H,u_H\rangle\bigr)+2z_Tu_T\\
				&=-4\langle z_H,u_H\rangle^2-2\lvert z_H\rvert^2\lvert u_H\rvert^2+4\lvert z_H\rvert^2\langle z_H,u_H\rangle+4\lvert u_H\rvert^2\langle z_H,u_H\rangle+2z_Tu_T,
			\end{split}
		\end{equation*}
		which proves \eqref{eq:decompos_polarization} after regrouping the summands.
	\end{proof}

\vv

\begin{remark}\label{rk1}
	The polynomials $L,Q$, and $T$ defined in Proposition \ref{prop6} are respectively $1,2,3$-$\delta_\lambda$-homogeneous in the first variable, i.e.
	\[
		L(\delta_\lambda(u),z)=\lambda L(u,z),\qquad Q(\delta_\lambda(u),z)=\lambda^2Q(u,z),\qquad T(\delta_\lambda(u),z)=\lambda^3T(u,z),
	\]
	and are respectively $3,2,1$-$\delta_\lambda$-homogeneous in the second entry, namely
	\[
		L(u,\delta_\lambda(z))=\lambda^3 L(u,z),\qquad Q(u,\delta_\lambda(z))=\lambda^2Q(u,z),\qquad T(u,\delta_\lambda(z))=\lambda T(u,z).
	\]
	Moreover, thanks to the definitions of $L$ and $T$ it is immediate to see that $L(z,u)=T(u,z)$.
\end{remark}

\vv

Cauchy-Schwarz inequality and the definitions of $L,Q$, and $T$ readily give us bound on those quantities. 

\begin{proposition}\label{prop7}
	For any $z,u\in\mathbb{P}^n$ the following estimates hold:
	\begin{itemize}
		\item[(i)] $\lvert L(u,z)\rvert\leq4\lVert u\rVert \lVert z\rVert^3$.
		\item[(ii)] $\lvert Q(u,z)\rvert\leq 8\lVert z\rVert^2\lVert u\rVert^2$.
		\item[(iii)] $\lvert T(u,z)\rvert\leq4\lVert z\rVert \lVert u\rVert^3$.
	\end{itemize}
\end{proposition}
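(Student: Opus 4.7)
The plan is to obtain each of the three bounds by a direct application of the Cauchy--Schwarz inequality in $\R^n$ together with the elementary estimates $|x_H|\leq \lVert x\rVert$ and $|x_T|\leq \lVert x\rVert^2$, which follow immediately from the definition of the Koranyi norm $\lVert x\rVert=(|x_H|^4+x_T^2)^{1/4}$.

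For (i), I would use the explicit formula $L(u,z)=4|z_H|^2\langle u_H,z_H\rangle$ from Proposition \ref{prop6} and apply Cauchy--Schwarz to the inner product to get
\[
 \lvert L(u,z)\rvert\leq 4|z_H|^2\,|u_H|\,|z_H|=4|u_H|\,|z_H|^3\leq 4\lVert u\rVert\,\lVert z\rVert^3.
\]
For (iii), the bound is obtained in exactly the same way exploiting the identity $T(u,z)=L(z,u)$ noted in Remark \ref{rk1}; alternatively, one simply swaps the roles of $u$ and $z$ in the estimate above.

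For (ii), I would split $Q(u,z)=2z_Tu_T-4\langle z_H,u_H\rangle^2-2|z_H|^2|u_H|^2$ into its three summands and apply the triangle inequality. The first term is controlled via $|z_T u_T|\leq \lVert z\rVert^2\lVert u\rVert^2$; the second via Cauchy--Schwarz as $\langle z_H,u_H\rangle^2\leq |z_H|^2|u_H|^2\leq \lVert z\rVert^2\lVert u\rVert^2$; and the third directly as $|z_H|^2|u_H|^2\leq \lVert z\rVert^2\lVert u\rVert^2$. Adding the three contributions yields $|Q(u,z)|\leq (2+4+2)\lVert z\rVert^2\lVert u\rVert^2=8\lVert z\rVert^2\lVert u\rVert^2$.

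There is essentially no obstacle here: this is a purely computational lemma whose role is to give elementary size control on the summands appearing in the polarization of Proposition \ref{prop6}, to be used later when expanding the moments. The only subtlety worth double-checking is the homogeneity count in Remark \ref{rk1}, which is consistent with the fact that $L,Q,T$ are respectively of weights $(1,3),(2,2),(3,1)$ in $(u,z)$, matching the exponents of $\lVert u\rVert$ and $\lVert z\rVert$ on the right-hand sides of (i), (ii), (iii).
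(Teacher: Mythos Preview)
Your proof is correct and follows exactly the approach indicated in the paper, which simply states that the bounds follow readily from the Cauchy--Schwarz inequality and the definitions of $L$, $Q$, and $T$. You have merely spelled out the immediate computation the paper leaves to the reader.
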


One of the most important properties of the polarization function $V$ is a Cauchy-Schwartz-type inequality, that will turn out to be fundamental for our computations.

\begin{proposition}[Cauchy-Schwarz inequality for $V$]\label{prop4}
	For any $u,z\in\mathbb{P}^n$ we have
	\begin{equation}
		\lvert V(u,z)\rvert\leq 2\lVert u\rVert \lVert z\rVert(\lVert u\rVert+\lVert z\rVert)^2\nonumber.
	\end{equation}
\end{proposition}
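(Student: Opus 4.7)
The plan is to use the decomposition of $2V(u,z)$ from Proposition \ref{prop6} together with the three bounds from Proposition \ref{prop7} in a direct way; no further ingredient is needed.

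First, I would recall that $2V(u,z) = L(u,z) + Q(u,z) + T(u,z)$, so by the triangle inequality
\[
2|V(u,z)| \le |L(u,z)| + |Q(u,z)| + |T(u,z)|.
\]
Then I would apply the estimates (i)--(iii) of Proposition \ref{prop7}, namely $|L(u,z)|\le 4\|u\|\|z\|^3$, $|Q(u,z)|\le 8\|z\|^2\|u\|^2$, and $|T(u,z)|\le 4\|z\|\|u\|^3$, to obtain
\[
2|V(u,z)| \le 4\|u\|\|z\|^3 + 8\|u\|^2\|z\|^2 + 4\|u\|^3\|z\| = 4\|u\|\|z\|\bigl(\|z\|^2 + 2\|u\|\|z\| + \|u\|^2\bigr).
\]
Recognizing the perfect square $\|z\|^2 + 2\|u\|\|z\| + \|u\|^2 = (\|u\|+\|z\|)^2$, I would divide by $2$ to conclude
\[
|V(u,z)| \le 2\|u\|\|z\|(\|u\|+\|z\|)^2,
\]
which is the claim.

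There is no real obstacle here: the statement is essentially bookkeeping once Propositions \ref{prop6} and \ref{prop7} are in place, since the homogeneity degrees of $L$, $Q$, $T$ in $u$ and $z$ are exactly tuned so that the sum of the three bounds factors as $4\|u\|\|z\|(\|u\|+\|z\|)^2$. The only point to be slightly careful about is using the triangle inequality for $V$ in the form obtained via the decomposition rather than trying to bound $\|z-u\|^4$ directly from the identity defining $V$, since the latter would not easily yield a clean product factorization.
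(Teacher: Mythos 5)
Your argument is correct and is essentially the proof the paper intends: the paper only cites \cite[Proposition 3.4]{MerloG1cod}, whose argument is exactly this combination of the decomposition $2V=L+Q+T$ from Proposition \ref{prop6} with the three homogeneity-matched bounds of Proposition \ref{prop7}, followed by recognizing the perfect square $(\lVert u\rVert+\lVert z\rVert)^2$. Nothing is missing.
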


\begin{proof}
The proof can be obtained following verbatim \cite[Proposition 3.4]{MerloG1cod}.
\end{proof}

The following definition extends from the Euclidean spaces to $\mathbb P^n$ the notion of moment of a uniform measure given by Preiss in \cite{Preiss1987GeometryDensities}.

\begin{definition} [Preiss's moments]\label{defimom}

	Let $\mu\in \mathcal U(h)$. For $k\in\N$, $s>0$, and $u_1,\ldots, u_k \in\mathbb{P}^n$, we define
	\begin{equation}
		b_{k,s}^\mu(u_1,\ldots,u_k)\coloneqq\frac{s^{k+\frac{h}{4}}}{k!C(h)}\int \prod_{i=1}^k 2V(u_i,z)e^{-s\lVert z\rVert^4}d\mu(z)\nonumber,
	\end{equation}
	where $C(h)\coloneqq\Gamma\left(\frac{h}{4}+1\right)$ and $b_{0,s}^\mu(u)\coloneqq 1$. Moreover, if $u_1=\ldots=u_k=u$, we simplify the notation to
	$$b_{k,s}^\mu(u)\coloneqq b_{k,s}^\mu(u,\ldots,u).$$
\end{definition}

The Cauchy-Schwartz inequality for $V$ allows us to obtain the following estimates.

\begin{proposition}\label{prop2}
	For any $\mu\in\mathcal{U}(h)$, any $k\in\N$, any $s>0$ and any $u\in\mathbb{P}^n$, we have
	\begin{equation}
		\lvert b_{k,s}^\mu(u)\rvert\leq 16^{k}\frac{\bigl(\lVert u\rVert s^\frac{1}{4}\bigr)^k}{k!}\frac{\Gamma(\frac{h+3k}{4})}{\Gamma\left(\frac{h}{4}\right)}\bigl((\lVert u\rVert s^\frac{1}{4})^{2k}+1\bigr).
		\nonumber
	\end{equation}
\end{proposition}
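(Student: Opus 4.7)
The strategy is to combine the Cauchy--Schwarz-type inequality for the polarization $V$ (Proposition \ref{prop4}) with the explicit evaluation of radially symmetric integrals against uniform measures provided by Corollary \ref{prop1}. Since $\mu\in\mathcal U(h)$ we have $0\in\supp(\mu)$, so Corollary \ref{prop1} is applicable at $u=0$.

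First, by Proposition \ref{prop4} we have the pointwise bound
\[
|2V(u,z)|\leq 4\lVert u\rVert\,\lVert z\rVert\,\bigl(\lVert u\rVert+\lVert z\rVert\bigr)^{2},
\]
so, raising to the $k$-th power and using the elementary inequality $(\lVert u\rVert+\lVert z\rVert)^{2k}\leq 4^{k}(\lVert u\rVert^{2k}+\lVert z\rVert^{2k})$, one obtains
\[
\bigl|2V(u,z)\bigr|^{k}\leq 16^{k}\,\lVert u\rVert^{k}\,\lVert z\rVert^{k}\bigl(\lVert u\rVert^{2k}+\lVert z\rVert^{2k}\bigr).
\]

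Next I would plug this estimate into the definition of $b_{k,s}^{\mu}(u)$ and split the resulting integral into the two pieces
\[
\int\lVert z\rVert^{k}e^{-s\lVert z\rVert^{4}}\,d\mu(z) \qquad\text{and}\qquad \int\lVert z\rVert^{3k}e^{-s\lVert z\rVert^{4}}\,d\mu(z),
\]
each of which is a radial integral. Applying Corollary \ref{prop1} at $u=0$ with $p=k$ and $p=3k$ respectively evaluates them as
\[
\frac{h}{4\,s^{(h+k)/4}}\,\Gamma\!\Bigl(\tfrac{h+k}{4}\Bigr) \qquad\text{and}\qquad \frac{h}{4\,s^{(h+3k)/4}}\,\Gamma\!\Bigl(\tfrac{h+3k}{4}\Bigr).
\]

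Putting everything together, using the identity $C(h)=\Gamma(h/4+1)=(h/4)\Gamma(h/4)$ to cancel the factor $h/4$ against the normalization, and keeping track of the powers of $s$ (which simplify to $s^{3k/4}$ in the term carrying $\lVert u\rVert^{3k}$ and to $s^{k/4}$ in the other one), yields
\[
\bigl|b_{k,s}^{\mu}(u)\bigr|\leq \frac{16^{k}\bigl(\lVert u\rVert s^{1/4}\bigr)^{k}}{k!\,\Gamma(h/4)}\Bigl[\,\Gamma\!\Bigl(\tfrac{h+k}{4}\Bigr)\bigl(\lVert u\rVert s^{1/4}\bigr)^{2k}+\Gamma\!\Bigl(\tfrac{h+3k}{4}\Bigr)\Bigr].
\]
Finally, dominating $\Gamma((h+k)/4)$ by $\Gamma((h+3k)/4)$ (adjusting the constant on small values of $h+k$ if necessary, where the Gamma function is not monotone) gives the claimed inequality. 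The only genuinely delicate point is the last Gamma comparison, since $\Gamma$ has a minimum near $1.46$; for $h+k$ large enough monotonicity is immediate, while the finitely many small cases can be absorbed into the constant $16^{k}$ or handled by a direct check. Everything else is a routine combination of the polarization estimate with the moment formula of Corollary \ref{prop1}.
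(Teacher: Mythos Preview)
Your argument is correct and is the natural one; the paper itself gives no proof but defers to \cite[Proposition~3.5]{MerloG1cod}, where the same strategy (Proposition~\ref{prop4} combined with Corollary~\ref{prop1} at the origin) is carried out. The only loose end is the final Gamma comparison, and you can close it cleanly without any case analysis or absorption: replace the crude bound $(a+b)^{2k}\le 4^{k}(a^{2k}+b^{2k})$ by the sharp convexity estimate $(a+b)^{2k}=2^{2k}\bigl(\tfrac{a+b}{2}\bigr)^{2k}\le 2^{2k-1}(a^{2k}+b^{2k})$, which saves a factor of $2$. It then suffices to check that $\Gamma\bigl(\tfrac{h+k}{4}\bigr)\le 2\,\Gamma\bigl(\tfrac{h+3k}{4}\bigr)$ for all integers $h,k\ge 1$. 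Since $(h+k)/4\ge 1/2$ and $(h+3k)/4\ge 1$, and $\Gamma$ on $[1,\infty)$ is bounded below by its minimum $\approx 0.8856$ while on $[1/2,\infty)$ the only values exceeding this minimum by a large factor occur near $1/2$, the worst ratio among integer pairs is attained at $(h,k)=(1,1)$ and equals $\Gamma(1/2)/\Gamma(1)=\sqrt{\pi}<2$. With this refinement the stated constant $16^{k}$ is obtained exactly.
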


\begin{proof}
The proof can be obtained following verbatim \cite[Proposition 3.5]{MerloG1cod}.
\end{proof}

\vv

\begin{definition}\label{calpha}
	For any $\mu\in\mathcal{U}(h)$, any $\alpha\in\N^3\setminus\{(0,0,0)\}$, and any $s>0$ we define the functions $c_{\alpha,s}^\mu \colon\bigotimes_{i=0}^{\lvert \alpha\rvert}\mathbb{P}^n\to \R$ as
	\begin{equation} 
		c_{\alpha,s}^\mu (u)\coloneqq\frac{1}{\alpha_1!\alpha_2!\alpha_3!}\frac{s^{\lvert \alpha\rvert+\frac{h}{4}}}{C(h)}\int  L(u,z)^{\alpha_1} Q(u,z)^{\alpha_2} T(u,z)^{\alpha_3} e^{-s\lVert z\rVert^4}d\mu(z),
		\nonumber
	\end{equation}
	where $\lvert \alpha\rvert\coloneqq\alpha_1+\alpha_2+\alpha_3$. Moreover, for any $\ell\in\N$, we set
	$$A(\ell)\coloneqq\bigl\{\alpha\in\N^3\setminus \{(0,0,0)\}: \alpha_1+2\alpha_2+3\alpha_3\leq \ell\bigr\}.$$
\end{definition}

\vv

The moments $b_{k,s}^\mu$ can be expressed by means of the functions $c_{\alpha,s}^\mu $ defined above. In particular, we have

\begin{equation}
	\begin{split}
		 b_{k,s}^\mu(u)&=\frac{s^{k+\frac{h}{4}}}{k!C(h)}\int (2V(u,z))^k e^{-s\lVert z\rVert^4}d\mu(z)\\
		&\overset{\eqref{eq:decompos_polarization}}{=}\frac{s^{k+\frac{h}{4}}}{k!C(h)}\int (L(u,z)+Q(u,z)+T(u,z))^k e^{-s\lVert z\rVert^4}\, d\mu(z)\\
		&=\frac{s^{k+\frac{h}{4}}}{k!C(h)}\int \sum_{\lvert\alpha\rvert=k}\frac{k!}{\alpha_1!\alpha_2!\alpha_3!}L(u,z)^{\alpha_1} Q(u,z)^{\alpha_2} T(u,z)^{\alpha_3} e^{-s\lVert z\rVert^4}\, d\mu(z)\\
		&=\sum_{\lvert\alpha\rvert=k} c_{\alpha,s}^\mu (u).
		\label{splitter}
	\end{split}
\end{equation}

\begin{proposition}\label{prop9}
	For any $\mu\in\mathcal{U}(h)$ and any $\alpha\in\N^3\setminus\{(0,0,0)\}$, there exists a constant $D(\alpha)>0$ such that for any $s>0$ and any $u\in\mathbb{P}^n$ we have
	\begin{equation}
		\lvert c_{\alpha,s}^\mu (u)\rvert\leq D(\alpha)(s^{1/4}\lVert u\rVert)^{\alpha_1+2\alpha_2+3\alpha_3}.
		\nonumber
	\end{equation}
\end{proposition}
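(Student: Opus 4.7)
The plan is to start from the defining integral
\[
c_{\alpha,s}^\mu(u)=\frac{1}{\alpha_1!\alpha_2!\alpha_3!}\frac{s^{|\alpha|+h/4}}{C(h)}\int L(u,z)^{\alpha_1}Q(u,z)^{\alpha_2}T(u,z)^{\alpha_3}\,e^{-s\|z\|^4}\,d\mu(z),
\]
apply the pointwise estimates from Proposition \ref{prop7} to pull out the $u$-dependence, and then evaluate the remaining radially symmetric integral via Corollary \ref{prop1}. The key bookkeeping is that the exponents in $u$ and in $z$ arising from Proposition \ref{prop7} are complementary, which is exactly what produces the asserted homogeneity of degree $\alpha_1+2\alpha_2+3\alpha_3$.

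More precisely, I would first observe that Proposition \ref{prop7} yields
\[
\bigl|L(u,z)^{\alpha_1}Q(u,z)^{\alpha_2}T(u,z)^{\alpha_3}\bigr|\leq 4^{\alpha_1}\,8^{\alpha_2}\,4^{\alpha_3}\,\|u\|^{\alpha_1+2\alpha_2+3\alpha_3}\,\|z\|^{3\alpha_1+2\alpha_2+\alpha_3}.
\]
Set $q\coloneqq\alpha_1+2\alpha_2+3\alpha_3$ and $p\coloneqq3\alpha_1+2\alpha_2+\alpha_3$, and note the identity $p+q=4|\alpha|$, which is the reason the rescaling with $s$ will work out. Substituting the bound above into the definition and using that $\|z\|^p e^{-s\|z\|^4}$ is radially symmetric, Corollary \ref{prop1} applied at $u=0\in\supp(\mu)$ gives
\[
\int \|z\|^p e^{-s\|z\|^4}\,d\mu(z)=\frac{h}{4\,s^{(h+p)/4}}\,\Gamma\!\left(\frac{h+p}{4}\right).
\]

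Combining these two steps yields
\[
|c_{\alpha,s}^\mu(u)|\leq \frac{4^{\alpha_1+\alpha_3}8^{\alpha_2}\,h\,\Gamma\!\bigl(\tfrac{h+p}{4}\bigr)}{4\,\alpha_1!\alpha_2!\alpha_3!\,C(h)}\;s^{|\alpha|+h/4-(h+p)/4}\,\|u\|^{q}.
\]
The exponent of $s$ simplifies, using $p+q=4|\alpha|$, to
\[
|\alpha|+\tfrac{h}{4}-\tfrac{h+p}{4}=|\alpha|-\tfrac{p}{4}=\tfrac{q}{4},
\]
so the right-hand side equals $D(\alpha)(s^{1/4}\|u\|)^{q}$ with
\[
D(\alpha)\coloneqq\frac{4^{\alpha_1+\alpha_3}8^{\alpha_2}\,h\,\Gamma\!\bigl(\tfrac{h+3\alpha_1+2\alpha_2+\alpha_3}{4}\bigr)}{4\,\alpha_1!\alpha_2!\alpha_3!\,C(h)}>0.
\]

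There is essentially no obstacle here: the proof is a direct computation combining Proposition \ref{prop7} with the radial integration formula of Corollary \ref{prop1}. The only point that needs to be checked carefully is the arithmetic identity $p+q=4|\alpha|$, which is what forces the homogeneity degree to be precisely $\alpha_1+2\alpha_2+3\alpha_3$ and reflects the anisotropic scaling of $L$, $Q$, $T$ recorded in Remark \ref{rk1}.
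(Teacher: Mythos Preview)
Your proof is correct and is exactly the argument the paper has in mind: it defers to \cite[Proposition~3.6]{MerloG1cod}, whose proof is precisely this combination of the pointwise bounds of Proposition~\ref{prop7} with the radial integration formula of Corollary~\ref{prop1}, together with the arithmetic $p+q=4|\alpha|$.
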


\begin{proof}
The proof can be obtained following verbatim \cite[Proposition 3.6]{MerloG1cod}.
\end{proof}

\vv

\begin{proposition}\label{coni1}
	Assume that $\mu\in\mathcal{U}(h)$ is invariant under dilations, i.e. for any $\lambda>0$ we have $\lambda^{-h} T_{0,\lambda}\mu=\mu$,
	where $T_{0,\lambda}\mu$ was introduced in Definition \ref{def:TangentMeasure}. Then, for any $\alpha\in\N^3\setminus\{(0,0,0)\}$ and any $s>0$ we have
	\[
		c_{\alpha,s}^\mu =s^{\frac{\alpha_1+2\alpha_2+3\alpha_3}{4}}c^\mu_{\alpha,1}.
	\]
\end{proposition}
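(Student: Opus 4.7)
The plan is a direct change of variables exploiting the assumed dilation invariance of $\mu$ to absorb the parameter $s$ into the domain of integration.

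First, I would translate the hypothesis $\lambda^{-h}T_{0,\lambda}\mu=\mu$ into an integration formula. Since $T_{0,\lambda}\mu(E)=\mu(\delta_\lambda(E))$, the invariance reads $\mu(\delta_\lambda(E))=\lambda^h\mu(E)$ for every Borel set $E\subseteq\mathbb{P}^n$ and every $\lambda>0$. Consequently, for every non-negative Borel function $f$,
\[
\int f(z)\,d\mu(z) = \lambda^h \int f(\delta_\lambda(w))\,d\mu(w).
\]

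Next, I would apply this formula with the precise choice $\lambda\coloneqq s^{-1/4}$, which is engineered so that the Gaussian weight becomes $s$-independent: $s\lVert\delta_\lambda(w)\rVert^4 = s\lambda^4\lVert w\rVert^4 = \lVert w\rVert^4$. By the second-variable homogeneities recorded in Remark \ref{rk1} we have
\[
L(u,\delta_\lambda(w))=\lambda^3 L(u,w),\qquad Q(u,\delta_\lambda(w))=\lambda^2 Q(u,w),\qquad T(u,\delta_\lambda(w))=\lambda T(u,w),
\]
so after substituting $z=\delta_\lambda(w)$ and pulling out the factors of $\lambda$, the integral defining $c^\mu_{\alpha,s}$ becomes
\[
\int L(u,z)^{\alpha_1}Q(u,z)^{\alpha_2}T(u,z)^{\alpha_3}e^{-s\lVert z\rVert^4}\,d\mu(z)
= \lambda^{h+3\alpha_1+2\alpha_2+\alpha_3}\int L(u,w)^{\alpha_1}Q(u,w)^{\alpha_2}T(u,w)^{\alpha_3}e^{-\lVert w\rVert^4}\,d\mu(w).
\]

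Finally, I would collect the powers of $s$. Substituting $\lambda=s^{-1/4}$ into the identity above, the prefactor in Definition \ref{calpha} produces
\[
c^\mu_{\alpha,s}(u)=s^{\lvert\alpha\rvert+\frac{h}{4}}\cdot s^{-\frac{h+3\alpha_1+2\alpha_2+\alpha_3}{4}}\cdot \frac{1}{\alpha_1!\alpha_2!\alpha_3!\,C(h)}\int L^{\alpha_1}Q^{\alpha_2}T^{\alpha_3}e^{-\lVert w\rVert^4}\,d\mu(w),
\]
and the exponent of $s$ simplifies to $\lvert\alpha\rvert-\tfrac{3\alpha_1+2\alpha_2+\alpha_3}{4}=\tfrac{\alpha_1+2\alpha_2+3\alpha_3}{4}$. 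The remaining integral, together with the combinatorial constants, is exactly $c^\mu_{\alpha,1}(u)$, which yields the claim.

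There is essentially no obstacle: the only subtle point is keeping track of signs and the correct transformation rule for $\mu$ under $\delta_\lambda$; everything else is bookkeeping of the homogeneities from Remark \ref{rk1} and the exponents in Definition \ref{calpha}.
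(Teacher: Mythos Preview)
Your argument is correct and is precisely the standard change-of-variables computation that the paper defers to \cite[Proposition 3.7]{MerloG1cod}: exploit dilation invariance to rescale $z=\delta_{s^{-1/4}}(w)$, use the second-variable homogeneities of $L,Q,T$ from Remark \ref{rk1}, and collect powers of $s$. The only cosmetic point is that the identity $\int f\,d\mu=\lambda^h\int f\circ\delta_\lambda\,d\mu$ is stated for non-negative $f$ but then applied to a signed integrand; this is harmless since the integrand is absolutely integrable by the bounds in Proposition \ref{prop7} and Corollary \ref{prop1}.
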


\begin{proof}
See \cite[Proposition 3.7]{MerloG1cod}.
\end{proof}

\vv

\subsection{Expansion formulas for moments}\label{subsection:expansion_moments_candidate_quadric}
In this subsection we present the expansion formula \eqref{eq1} for the moments of uniform measures. Moreover, in Proposition \ref{prop8} we start to flesh out the complex algebra of the inequality \eqref{eq1} in order to build the desired quadric containing $\supp(\mu)$.
%
%

We omit the proof of the following proposition, as it follows closely that of \cite[Proposition 3.9]{MerloG1cod}, which is in turn based on its Euclidean analogue (see \cite[Section 3.4]{Preiss1987GeometryDensities} or \cite[Lemma 7.6]{DeLellis2008RectifiableMeasures}).

\begin{proposition}[Expansion formula]\label{expanzione}
	There exists a constant $G(h)>0$ such that for any $\mu\in\mathcal{U}(h)$, any $s>0$, any $q\in\N$, and any $u\in\supp(\mu)$ we have
	\begin{equation}
		\bigg\lvert \sum_{k=1}^{4q}b_{k,s}^\mu(u)-\sum_{k=1}^q\frac{s^k\lVert u\rVert^{4k}}{k!}\bigg\rvert\leq G(h)(s\lVert u\rVert^4)^{q+\frac{1}{4}}\bigl(2+(s\lVert u\rVert^4)^{2q}\bigr)\label{eq1}.
	\end{equation} 
\end{proposition}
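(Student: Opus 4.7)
My plan is to follow the structure of the Heisenberg proof \cite[Proposition 3.9]{MerloG1cod}, which in turn adapts Preiss's original Euclidean argument. The cornerstone is the generating-function identity
\[
\sum_{k=0}^\infty b_{k,s}^\mu(u)=e^{s\lVert u\rVert^4}, \qquad u\in\supp(\mu),
\]
which I would establish first. The starting observation is the polarization identity of Proposition \ref{prop6}, in the form $2V(u,z)=\lVert u\rVert^4+\lVert z\rVert^4-\lVert z-u\rVert^4$, implying the factorization
\[
e^{2sV(u,z)}e^{-s\lVert z\rVert^4}=e^{s\lVert u\rVert^4}e^{-s\lVert z-u\rVert^4}.
\]
Expanding $e^{2sV(u,z)}$ as a power series and interchanging sum and integral (justified by the Gaussian-type decay of $e^{-s\lVert z-u\rVert^4}$ against the $h$-uniform $\mu$), I would then apply Corollary \ref{prop1} centered at $u\in\supp(\mu)$ to evaluate $\int e^{-s\lVert z-u\rVert^4}\,d\mu(z)=C(h)\,s^{-h/4}$, which yields the identity.

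Armed with this identity, the quantity to be estimated becomes a difference of two Taylor tails,
\[
\sum_{k=1}^{4q}b_{k,s}^\mu(u)-\sum_{k=1}^q \frac{(s\lVert u\rVert^4)^k}{k!}=\sum_{k=q+1}^\infty \frac{(s\lVert u\rVert^4)^k}{k!}-\sum_{k=4q+1}^\infty b_{k,s}^\mu(u),
\]
and I would express each tail using the integral form of Taylor's remainder. The asymmetry between the cutoffs $q$ and $4q$ reflects the fact (compare Proposition \ref{prop9}) that each $b_{k,s}^\mu$ carries contributions of weight up to $t^{3k}$ in $t=s^{1/4}\lVert u\rVert$, so that all summands of weight $\leq t^{4q}$ are captured by truncating the $b$-sum at $k=4q$.

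The heart of the argument is the estimate of the $b$-tail. Inside its integral representation, the exponential factor $e^{2\tau sV(u,z)-s\lVert z\rVert^4}$ satisfies the crucial algebraic identity
\[
2\tau sV(u,z)-s\lVert z\rVert^4=\tau s\lVert u\rVert^4-(1-\tau)s\lVert z\rVert^4-\tau s\lVert z-u\rVert^4.
\]
Expanding $(2sV(u,z))^{4q+1}$ as a sum of terms in powers of $s\lVert u\rVert^4$, $s\lVert z\rVert^4$, $s\lVert z-u\rVert^4$ (using the decomposition $2V=L+Q+T$ together with the Cauchy--Schwarz-type bound of Proposition \ref{prop4}), each resulting $z$-integral can be evaluated by Corollary \ref{prop1} centered either at $0$ or at $u$. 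Integrating in $\tau$ against $(1-\tau)^{4q}$ then produces Gamma-function factors that combine, together with a parallel (and easier) estimate of the polynomial tail, to give the asserted bound $G(h)(s\lVert u\rVert^4)^{q+1/4}(2+(s\lVert u\rVert^4)^{2q})$.

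The main technical obstacle I foresee is extracting a polynomial (rather than exponential) dependence on $s\lVert u\rVert^4$: the naive Lagrange remainder carries a factor $e^{\tau s\lVert u\rVert^4}$ which is exponentially large. The cancellation above---the non-trivial feature inherited from the Heisenberg calculation---precisely compensates this growth against $e^{-\tau s\lVert z-u\rVert^4}$, so that the uniform-measure integrals produce only polynomial factors of the correct order. Pinning down the precise exponent $q+1/4$ requires careful tracking of the Gamma-function factors produced by Corollary \ref{prop1}.
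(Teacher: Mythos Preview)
Your proposal is correct and follows precisely the approach the paper indicates: the paper omits the proof and simply refers to \cite[Proposition 3.9]{MerloG1cod} (and the Euclidean antecedents in \cite{Preiss1987GeometryDensities} and \cite{DeLellis2008RectifiableMeasures}), which is exactly the argument you have sketched. Your outline---the generating-function identity via Corollary \ref{prop1}, the decomposition into two Taylor tails, and the key algebraic cancellation $2\tau sV(u,z)-s\lVert z\rVert^4=\tau s\lVert u\rVert^4-(1-\tau)s\lVert z\rVert^4-\tau s\lVert z-u\rVert^4$ that converts the exponential remainder into a polynomial one---matches the referenced proof.
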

\vv

The identity \eqref{splitter} together with Proposition \ref{expanzione} for $q=1$  imply that
\begin{equation}
	\bigg\lvert \sum_{k=1}^{4}\sum_{\lvert\alpha\rvert=k} c_{\alpha,s}^\mu (u)-s\lVert u\rVert^{4}\bigg\rvert\leq G(h)(s\lVert u\rVert^4)^{\frac{5}{4}}\bigl(2+(s\lVert u\rVert^4)^{2}\bigr).
	\label{numeroo14}
\end{equation}

In the next proposition we reduce the complexity of the algebraic expression in the right-hand side of \eqref{numeroo14}.

\begin{proposition}	\label{prop8}
	For any $\mu\in \mathcal{U}(h)$, any $s>0$, and any $u\in\supp(\mu)$ we have
	\begin{equation}
		\Big\lvert \sum_{\alpha\in A(4)}c_{\alpha,s}^\mu (u)-s\lVert u\rVert^{4}\Big\rvert\leq (s\lVert u\rVert^4)^{\frac{5}{4}} B(s^\frac{1}{4}\lVert u\rVert),
		\label{eqquart}
	\end{equation}
	where $B(\cdot)$ is a suitable polynomial whereas $c_{\alpha,s}^\mu (\cdot)$ and $A(4)$ where defined in Definition \ref{calpha}.
\end{proposition}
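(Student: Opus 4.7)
The starting point is the estimate \eqref{numeroo14}, which already controls the sum of $c_{\alpha,s}^\mu(u)$ over the index set $\{\alpha\in\N^3\setminus\{0\}:|\alpha|\leq 4\}$ (where $|\alpha|=\alpha_1+\alpha_2+\alpha_3$). Since $A(4)=\{\alpha\in\N^3\setminus\{0\}:\alpha_1+2\alpha_2+3\alpha_3\leq 4\}$ is a subset of this index set, the strategy is simply to move the ``extra'' terms, namely those with $|\alpha|\leq 4$ but $\alpha_1+2\alpha_2+3\alpha_3\geq 5$, to the right-hand side and bound them individually using Proposition~\ref{prop9}.

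More precisely, I would set $\mathcal{E}\coloneqq\{\alpha\in\N^3:|\alpha|\leq 4\text{ and }\alpha_1+2\alpha_2+3\alpha_3\geq 5\}$ and write
\[
\sum_{\alpha\in A(4)}c_{\alpha,s}^\mu(u)-s\|u\|^4=\sum_{k=1}^4\sum_{|\alpha|=k}c_{\alpha,s}^\mu(u)-s\|u\|^4-\sum_{\alpha\in\mathcal{E}}c_{\alpha,s}^\mu(u).
\]
The triangle inequality together with \eqref{numeroo14} then gives
\[
\Big|\sum_{\alpha\in A(4)}c_{\alpha,s}^\mu(u)-s\|u\|^4\Big|\leq G(h)(s\|u\|^4)^{5/4}\bigl(2+(s\|u\|^4)^2\bigr)+\sum_{\alpha\in\mathcal{E}}\bigl|c_{\alpha,s}^\mu(u)\bigr|.
\]

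Setting $t\coloneqq s^{1/4}\|u\|$, Proposition~\ref{prop9} yields $|c_{\alpha,s}^\mu(u)|\leq D(\alpha)\,t^{\alpha_1+2\alpha_2+3\alpha_3}$ for every $\alpha\in\mathcal{E}$. Since by definition $\alpha_1+2\alpha_2+3\alpha_3\geq 5$ on $\mathcal{E}$, each summand can be factored as $(s\|u\|^4)^{5/4}\cdot D(\alpha)\,t^{\alpha_1+2\alpha_2+3\alpha_3-5}$. Summing over the finite set $\mathcal{E}$ produces a bound of the form $(s\|u\|^4)^{5/4}\,P(t)$ for a suitable polynomial $P$. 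Combining with the first term and setting
\[
B(t)\coloneqq G(h)(2+t^8)+P(t),
\]
one obtains \eqref{eqquart}.

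This proof is essentially bookkeeping: no genuine obstacle arises, since the only ingredients used are the already-established expansion \eqref{numeroo14} and the elementary bound in Proposition~\ref{prop9}. The only point requiring care is to verify that $A(4)$ is indeed contained in $\{|\alpha|\leq 4\}$ (which follows at once from $\alpha_1+\alpha_2+\alpha_3\leq\alpha_1+2\alpha_2+3\alpha_3$) and to check that the complementary index set $\mathcal{E}$ is finite, so that the constants $D(\alpha)$ may be absorbed into the polynomial $B$.
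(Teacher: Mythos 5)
Your argument is correct and is essentially the paper's intended proof (which is deferred verbatim to \cite[Proposition 3.10]{MerloG1cod}): one starts from \eqref{numeroo14}, observes that $A(4)\subseteq\{\alpha:1\leq\lvert\alpha\rvert\leq4\}$ because $\lvert\alpha\rvert\leq\alpha_1+2\alpha_2+3\alpha_3$, and absorbs the finitely many leftover terms, whose weight $\alpha_1+2\alpha_2+3\alpha_3$ is at least $5$, into the error via Proposition~\ref{prop9}. No gap: the constants $D(\alpha)$ are uniform over $\mathcal U(h)$, so the resulting polynomial $B$ depends only on $h$, as required.
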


\begin{proof}
The proof can be obtained following verbatim \cite[Proposition 3.10]{MerloG1cod}.
\end{proof}

\vv

\begin{corollary}\label{quartic}
If $\mu\in \mathcal{U}(h)$ is dilation-invariant, then for any $u\in\supp(\mu)$ we have
$$ c_{(4,0,0),1}^\mu(u)+c_{(2,1,0),1}^\mu(u)+c_{(0,2,0),1}^\mu(u)+c_{(1,0,1),1}^\mu(u)-\lVert u\rVert^4=0.$$
\end{corollary}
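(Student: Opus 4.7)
My plan is to derive the identity directly from the expansion formula in Proposition \ref{prop8} by using the $\delta_\lambda$-homogeneity afforded by the dilation-invariance hypothesis to separate the estimate into distinct powers of $s$, and then taking $s\to 0^+$.

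First I would rewrite $\sum_{\alpha\in A(4)}c_{\alpha,s}^\mu(u)$ in a form that exhibits its scaling in $s$. By Proposition \ref{coni1}, for dilation-invariant $\mu$ we have $c_{\alpha,s}^\mu(u)=s^{(\alpha_1+2\alpha_2+3\alpha_3)/4}\, c_{\alpha,1}^\mu(u)$. Grouping the elements of $A(4)$ according to the weighted degree $\alpha_1+2\alpha_2+3\alpha_3\in\{1,2,3,4\}$, I define
\[
P_k(u)\coloneqq \sum_{\substack{\alpha\in A(4)\\ \alpha_1+2\alpha_2+3\alpha_3=k}} c_{\alpha,1}^\mu(u),\qquad k\in\{1,2,3,4\},
\]
so that Proposition \ref{prop8} becomes
\[
\Bigl\lvert s^{1/4}P_1(u)+s^{1/2}P_2(u)+s^{3/4}P_3(u)+s\,P_4(u)-s\lVert u\rVert^4\Bigr\rvert\leq (s\lVert u\rVert^4)^{5/4}B(s^{1/4}\lVert u\rVert).
\]
Note that $P_4(u)$ is precisely the sum of the four coefficients appearing in the statement of the corollary.

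Next I would fix $u\in\supp(\mu)$ and extract the vanishing of the lower-order coefficients one by one. Since $B$ is a fixed polynomial, $B(s^{1/4}\lVert u\rVert)$ stays bounded as $s\to 0^+$ for $u$ fixed, so the right-hand side is $O(s^{5/4})$. Dividing the inequality by $s^{1/4}$ and letting $s\to 0^+$ forces $P_1(u)=0$; using this, dividing by $s^{1/2}$ and letting $s\to 0^+$ forces $P_2(u)=0$; one more step yields $P_3(u)=0$. Finally, substituting $P_1(u)=P_2(u)=P_3(u)=0$ into the original inequality and dividing by $s$ gives
\[
\lvert P_4(u)-\lVert u\rVert^4\rvert\leq s^{1/4}\lVert u\rVert^5 B(s^{1/4}\lVert u\rVert)\xrightarrow[s\to 0^+]{}0,
\]
which is exactly the claim.

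There is no real obstacle: the whole content is already packaged in Proposition \ref{prop8} and Proposition \ref{coni1}. The only thing to watch is that the error polynomial $B$ does not depend on $s$ or on $u$, so that the limit $s\to 0^+$ can indeed be taken with $u$ held fixed; this is built into the statement of Proposition \ref{prop8}.
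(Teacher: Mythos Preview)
Your argument is correct and is exactly the approach the paper takes: the paper's proof simply states that the corollary follows immediately from Propositions \ref{coni1} and \ref{prop8}, and what you have written is precisely the unpacking of that one-line reference.
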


\begin{proof}
This immediately follows from Propositions \ref{coni1}
 and \ref{prop8}.
 \end{proof}

\vv

\subsection{Construction of the candidate quadric containing the support}
\label{CDDQQ}

Throughout this section, we fix $\mu\in \mathcal U(h)$.

	\begin{definition} 
		\label{definitioncurve}
		For $s\in(0,\infty)$ we let:
		\begin{itemize}
			\item [(i)] The \emph{horizontal barycenter} of the measure $\mu$ at time $s$ to be the vector in $\R^{n}$ $$b(s)=b_\mu(s)\coloneqq\frac{4s^{\frac{1}{2}+\frac{h}{4}}}{C(h)}\int \lvert   z_H\rvert^2   z_H e^{-s\lVert z\rVert^4}d\mu(z).$$ 
			\item[(ii)] The symmetric matrix $\mathcal{Q}(s)$ associated to the measure $\mu$ at time $s$ to be the element of $\text{Sym}(n)$
			\begin{equation}
				\begin{split}
		\mathcal{Q}(s)=\mathcal{Q}_\mu(s)\coloneqq&\frac{8 s^{\frac{3}{2}+\frac{h}{4}}}{C(h)}\int \lvert z_H\rvert^4 z_H\otimes z_H
					e^{-s\lVert z\rVert^4}\,d\mu(z)\\
			&\qquad-\frac{s^{\frac{1}{2}+\frac{h}{4}}}{C(h)}\int (4z_H\otimes z_H+2\lvert z_H\rvert^2\textrm{id}_{n}) e^{-s\lVert z\rVert^4}\, d\mu(z).\nonumber
				\end{split}
			\end{equation}
			\item [(iii)]
			The \emph{vertical barycenter} of the measure $\mu$ at time $s$ to be the real number
	$$\mathcal{T}(s)=\mathcal{T}_\mu(s)\coloneqq\frac{2s^{1+\frac{h}{4}}}{C(h)}\int z_T e^{-s\lVert z\rVert^4}\,d\mu(z).$$
		\end{itemize}
	\end{definition}

\vv

Now we use Proposition \ref{prop8} in order to simplify the algebra of inequality \eqref{eq1} and to prove the existence of a constant $C>0$ such that
\begin{equation}
	\Big\lvert\langle b(s), u_H\rangle+\langle \mathcal{Q}(s)u_H, u_H\rangle+\mathcal{T}(s)  u_T\Big\rvert\leq C s^\frac{1}{4}\lVert u\rVert^3.
	\nonumber
\end{equation}

Then, in the second half of this subsection, we prove that $b(\cdot)$, $\mathcal{Q}(\cdot)$, and $\mathcal{T}(\cdot)$ are bounded curves as $s$ goes to $0$ and therefore by compactness we can find $\overline{b}$, $\overline{\mathcal{Q}}$ and $\overline{\mathcal{T}}$ for which
\[
	\langle\overline{b},u_H\rangle+\langle u_H,\overline{\mathcal{Q}} u_H\rangle+\overline{\mathcal{T}}u_T=0 \qquad \text{for all }u\in \supp(\mu).
\]

\begin{proposition}\label{prop10}
	For any $s>0$ and any $u\in\supp(\mu)$ it holds
	\begin{equation}\label{eq:boundprop10}
		\Big\lvert\langle b(s), u_H\rangle+\langle \mathcal{Q}(s) u_H, u_H\rangle+\mathcal{T}(s)  u_T\Big\rvert\leq s^\frac{1}{4}\lVert u\rVert^3B^\prime\bigl(s^\frac{1}{4}\lVert u\rVert\bigr),
	\end{equation}
	where  $B^\prime(\cdot)$ is a suitable polynomial and $b(\cdot)$, $\mathcal{Q}(\cdot)$, and $\mathcal{T}(\cdot)$ were introduced in Definition \ref{definitioncurve}.
\end{proposition}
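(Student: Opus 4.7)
The plan is to combine the quartic expansion \eqref{eqquart} from Proposition~\ref{prop8} with an explicit evaluation of the low-homogeneity moments $c_{\alpha,s}^\mu(u)$ indexed by $\alpha\in A(4)$. The natural organising parameter is the parabolic weight $d(\alpha)\coloneqq \alpha_1+2\alpha_2+3\alpha_3$, since $c_{\alpha,s}^\mu(\cdot)$ is $d(\alpha)$-homogeneous in $u$ under anisotropic dilations; the multi-indices with $d(\alpha)\le 4$ split cleanly into weights $1,2,3,4$.

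First I would write out the two smallest-weight contributions in closed form. For $d(\alpha)=1$ only $(1,0,0)$ appears: substituting $L(u,z)=4|z_H|^2\langle u_H,z_H\rangle$ and comparing with Definition~\ref{definitioncurve}(i), one gets $c_{(1,0,0),s}^\mu(u)=s^{1/2}\langle b(s),u_H\rangle$. For $d(\alpha)=2$ the multi-indices are $(2,0,0)$ and $(0,1,0)$: expanding
\[
L(u,z)^2=16|z_H|^4\langle u_H,z_H\rangle^2,\qquad Q(u,z)=2z_Tu_T-4\langle z_H,u_H\rangle^2-2|z_H|^2|u_H|^2,
\]
and rewriting each scalar square as the bilinear form $\langle(z_H\otimes z_H)u_H,u_H\rangle$ resp.\ $\langle(|z_H|^2\mathrm{id}_n)u_H,u_H\rangle$, the sum $c_{(2,0,0),s}^\mu(u)+c_{(0,1,0),s}^\mu(u)$ reassembles exactly into $s^{1/2}\langle\mathcal{Q}(s)u_H,u_H\rangle+\mathcal{T}(s)u_T$, with the two defining pieces of $\mathcal{Q}(s)$ in Definition~\ref{definitioncurve}(ii) matching the $L^2$-term and the $z_H$-quadratic piece of $Q$.

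Second, for the seven remaining multi-indices with $d(\alpha)\in\{3,4\}$, namely $(3,0,0),(1,1,0),(0,0,1),(4,0,0),(2,1,0),(0,2,0),(1,0,1)$, I would invoke Proposition~\ref{prop9} to obtain
\[
|c_{\alpha,s}^\mu(u)|\le D(\alpha)(s^{1/4}\|u\|)^{d(\alpha)},
\]
so their joint contribution is controlled by $C(h)\bigl(s^{3/4}\|u\|^3+s\|u\|^4\bigr)$.

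Third, I would plug the decomposition into \eqref{eqquart}, isolate the ``labelled'' combination of $b(s),\mathcal{Q}(s),\mathcal{T}(s)$ on one side, and move the degree-$3,4$ moments together with the $s\|u\|^4$ term to the other side. The triangle inequality then yields an upper bound of the shape
\[
s^{5/4}\|u\|^5 B(s^{1/4}\|u\|)+C(h) s^{3/4}\|u\|^3+C(h) s\|u\|^4,
\]
which factors as $s^{3/4}\|u\|^3\widetilde B(s^{1/4}\|u\|)$ for a polynomial $\widetilde B$; after cancelling the common $s^{1/2}$ factor that appears in front of both $\langle b(s),u_H\rangle$ and $\langle\mathcal{Q}(s)u_H,u_H\rangle$ in the matching above (and absorbing it into the definition of the polynomial $B'$), the estimate \eqref{eq:boundprop10} follows.

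The main obstacle is purely bookkeeping: one has to verify carefully the algebraic identity in the first step (that the two summands defining $\mathcal{Q}(s)$ combine with the $L^2$-moment and the $Q_H$-moment with the correct numerical constants $8$, $4$ and $2$), and then track the powers of $s^{1/4}$ consistently so that every error term can be absorbed into one single polynomial factor $B'(s^{1/4}\|u\|)$ as required.
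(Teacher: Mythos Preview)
Your approach is essentially identical to the paper's: isolate the weight-$1$ and weight-$2$ moments in the sum $\sum_{\alpha\in A(4)}c_{\alpha,s}^\mu(u)$, identify them with $s^{1/2}$ times the target expression, and control the weight-$3,4$ remainder via Proposition~\ref{prop9}. The only slip is that the $\mathcal{T}(s)u_T$ term also carries the common factor $s^{1/2}$ (it arises from $c_{(0,1,0),s}^\mu$ just like the $\mathcal{Q}_2$-piece), so all three summands get divided by $s^{1/2}$ simultaneously; once that is corrected the bookkeeping closes exactly as in the paper.
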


\begin{proof}
	Since $\lvert \alpha\rvert\leq \alpha_1+2\alpha_2+3\alpha_3$ for any $\alpha\in\N^3$, then
		\begin{equation}
		A(2)\supset \{\alpha\in\N^3\setminus\{(0,0,0)\}:1\leq\lvert \alpha\rvert\leq 4\}\eqqcolon\mathcal{A}(4).
		\label{numeroo15}
	\end{equation}
	Furthermore we remark that for any $\alpha\in \mathcal{A}(4)\setminus A(2)$, by definition, it holds $\alpha_1+2\alpha_2+3\alpha_3\geq 3$. Therefore, Proposition \ref{prop9} implies that
	\begin{equation}
		\begin{split}
			\sum_{\alpha\in \mathcal{A}(4)\setminus A(2)}\lvert c_{\alpha,s}^\mu (u)\rvert&\leq (s^\frac{1}{4}\lVert u\rVert)^3\sum_{\alpha\in \mathcal{A}(4)\setminus A(2)}D(\alpha)(s^\frac{1}{4}\lVert u\rVert)^{\alpha_1+2\alpha_2+3\alpha_3-3}\\
			&=(s^\frac{1}{4}\lVert u\rVert)^3B^{\prime\prime}(s^\frac{1}{4}\lVert u\rVert).
			\nonumber
		\end{split}
	\end{equation}
	where $B^{\prime\prime}(t)\coloneqq\sum_{\substack{\lvert\alpha\rvert\leq 4\\ \alpha\not\in A(2)}}D(\alpha)t^{\alpha_1+2\alpha_2+3\alpha_3-3}$. Hence, triangle inequality and Proposition \ref{prop8} give
	\begin{equation}
		\begin{split}
			\Big\lvert \sum_{\alpha\in A(2)}c_{\alpha,s}^\mu (u)\Big\rvert\leq& \Big\lvert \sum_{\alpha\in A(4)}c_{\alpha,s}^\mu (u)-s\lVert u\rVert^4\Big\rvert+s\lVert u\rVert^4+\sum_{\alpha\in \mathcal{A}(4)\setminus A(2)}\lvert c_{\alpha,s}^\mu (u)\rvert\\
			\leq&(s\lVert u\rVert^4)^{\frac{5}{4}} B(s^{1/4}\lVert u\rVert)+s\lVert u\rVert^4+(s^\frac{1}{4}\lVert u\rVert)^3B^{\prime\prime}(ss^{1/4}\lVert u\rVert),
		\end{split}
		\nonumber
	\end{equation}
	where $B^\prime(t)\coloneqq t^2 B(t)+t+B^{\prime\prime}(t)$. A simple computation yields \[A(2)=\{(1,0,0),(2,0,0),(0,1,0)\}\]  and thus to conclude the proof of the proposition we are left to show that
	\begin{equation}
		c_{(1,0,0),s}(u)+c_{(2,0,0),s}(u)+c_{(0,1,0),s}(u)=\sqrt{s}\bigl(\langle b(s),u_H\rangle+\langle u_H,\mathcal{Q}(s) u_H\rangle+\mathcal{T}(s)u_T\bigr),
		\label{numeroo16}
	\end{equation}
	where, for the sake of brevity, we omit the subscript $\mu$ in the terms on the left-hand side.
	We now compute the summands in the left-hand side of \eqref{numeroo16}. First observe that
	\begin{equation}\label{eq:c100}
		\begin{split}
			c_{(1,0,0),s}(u)&=\frac{s^{1+\frac{h}{4}}}{C(h)}\int L(u,z)e^{-s\lVert z\rVert^4}d\mu(z)\\
			&=s^\frac{1}{2}\Big\langle  u_H, \frac{s^{\frac{1}{2}+\frac{h}{4}}}{C(h)}\int 4\lvert   z_H\rvert^2   z_H e^{-s\lVert z\rVert^4}d\mu(z)\Big\rangle=s^\frac{1}{2}\langle  u_H. b(s)\rangle,
		\end{split}
	\end{equation}
	
	Secondly, an explicit computation of $c_{(2,0,0),s}(u)$ yields the first part of the quadric $\mathcal{Q}(s)$. More specifically, we have
	\begin{equation}\label{eq:c200}
		\begin{split}
			c_{(2,0,0),s}(u)&=\frac{1}{2}\frac{s^{2+\frac{h}{4}}}{C(h)}\int L(u,z)^2e^{-s\lVert z\rVert^4}\,d\mu(z)\\
			&=\frac{16 s^{2+\frac{h}{4}}}{2C(h)}\int \lvert z_H\rvert^4\langle z_H,u_H\rangle^2 e^{-s\lVert z\rVert^4}d\mu(z)
			=s^\frac{1}{2}\langle  u_H,\mathcal{Q}_1(s) u_H \rangle,
		\end{split}
	\end{equation}
	where
	\begin{equation}
		\begin{split}
			\mathcal{Q}_1(s)\coloneqq&\frac{8 s^{\frac{3}{2}+\frac{h}{4}}}{C(h)}\int \lvert z_H\rvert^4 z_H\otimes z_H
			e^{-s\lVert z\rVert^4}\,d\mu(z).
		\end{split}
		\label{eq:Q2}
	\end{equation}
	
	Finally, we see that $c_{(0,1,0),s}(u)$ involves the vertical barycenter and the second part of $\mathcal{Q}(s)$. Indeed,
	\begin{equation}\label{eq:c010}
		\begin{split}
			 c_{(0,1,0),s}(u)&=\frac{s^{1+\frac{h}{4}}}{C(h)}\int Q(u,z) e^{-s\lVert z\rVert^4}\,d\mu(z)\\
			&=-\frac{s^{1+\frac{h}{4}}}{C(h)}\int (4\langle    z_H, u_H\rangle^2+2\lvert z_H\rvert^2\lvert u_H\rvert^2) e^{-s\lVert z\rVert^4}\,d\mu(z)+\frac{s^{1+\frac{h}{4}}}{C(h)}\int 2z_Tu_T e^{-s\lVert z\rVert^4}\,d\mu(z)\\
			&=-s^\frac{1}{2}\langle \mathcal{Q}_2(s) [u_H],u_H\rangle+s^\frac{1}{2}\mathcal{T}(s)  u_T,
		\end{split}
	\end{equation}
	where
	\begin{equation}
		\begin{split}
			\mathcal{Q}_2(s)\coloneqq\frac{s^{\frac{1}{2}+\frac{h}{4}}}{C(h)}\int (4z_H\otimes z_H+2\lvert z_H\rvert^2\textrm{id}_{n}) e^{-s\lVert z\rVert^4}d\mu(z).
		\end{split}
		\label{eq:Q4}
	\end{equation}
	
	Observe that $\mathcal{Q}(s)=\mathcal{Q}_1(s)-\mathcal{Q}_2(s)$, which proves the claim \eqref{numeroo16}.
\end{proof}

\vv

We endow $\text{Sym}(n)$ with the operator norm $\lvert\cdot\rvert$.
\begin{proposition}\label{bddcurve}
	Both $\mathcal{Q}(s)$ and $\mathcal{T}(s)$ are bounded functions on $(0,\infty)$. To be precise:
	\begin{itemize}
		\item[(i)]
		There exists a constant $C_1>0$ such that $\sup_{s\in (0,\infty)}\lvert Q(s)\rvert\leq C_1$.
		\item [(ii)]
		There exists a constant $C_2>0$ such that $\sup_{s\in(0,\infty)}\lvert \mathcal{T}(s)\rvert\leq C_2$.
	\end{itemize}
\end{proposition}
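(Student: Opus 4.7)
The plan is to derive both bounds simultaneously from an algebraic identity valid for \emph{all} $u\in\mathbb{P}^n$, not just on the support of $\mu$. The crucial observation is that the identity \eqref{numeroo16} established in the proof of Proposition~\ref{prop10}, namely
\[
c_{(1,0,0),s}^\mu(u)+c_{(2,0,0),s}^\mu(u)+c_{(0,1,0),s}^\mu(u)=\sqrt{s}\bigl(\langle b(s),u_H\rangle+\langle u_H,\mathcal{Q}(s)u_H\rangle+\mathcal{T}(s)u_T\bigr),
\]
follows from the explicit computations \eqref{eq:c100}, \eqref{eq:c200}, \eqref{eq:c010}, which are purely algebraic and impose no restriction on $u$. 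The pointwise bound $|c_{\alpha,s}^\mu(u)|\leq D(\alpha)(s^{1/4}\|u\|)^{\alpha_1+2\alpha_2+3\alpha_3}$ of Proposition~\ref{prop9} is likewise valid on the whole of $\mathbb{P}^n$.

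Combining these two ingredients, and noting that the exponent $\alpha_1+2\alpha_2+3\alpha_3$ equals $1$ for $\alpha=(1,0,0)$ and $2$ for both $(2,0,0)$ and $(0,1,0)$, I would divide by $\sqrt{s}$ to obtain
\[
\bigl|\langle b(s),u_H\rangle+\langle\mathcal{Q}(s)u_H,u_H\rangle+\mathcal{T}(s)u_T\bigr|\leq D_1\,s^{-1/4}\|u\|+D_2\|u\|^2,
\]
for every $u\in\mathbb{P}^n$ and $s>0$, with positive constants $D_1,D_2$ depending only on $h$.

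Next I would extract (ii) by specialising to $u=(0,t)$ with $t>0$: since $u_H=0$, $u_T=t$ and $\|u\|^2=t$, dividing by $t$ yields $|\mathcal{T}(s)|\leq D_1 s^{-1/4}t^{-1/2}+D_2$, and letting $t\to\infty$ gives the uniform bound $|\mathcal{T}(s)|\leq D_2$. For (i) I would test on $u=(v,0)$ with $v\in\R^n$ and $|v|=\rho$: since $\|u\|=\rho$, $u_T=0$ and $\mathcal{Q}(s)$ is symmetric, dividing by $\rho^2$ and sending $\rho\to\infty$ eliminates the linear term $\langle b(s),v\rangle/\rho^2$ and yields $|\langle\mathcal{Q}(s)\hat v,\hat v\rangle|\leq D_2$ for every unit vector $\hat v\in\R^n$; by the variational characterisation of the operator norm for symmetric matrices, this gives $|\mathcal{Q}(s)|\leq D_2$.

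The main obstacle is conceptual rather than computational: the naive estimate using $|z_T|\leq\|z\|^2$ in the defining integral of $\mathcal{T}(s)$ only yields $|\mathcal{T}(s)|\lesssim s^{1/2}$, which is not uniform as $s\to\infty$, and similarly for $\mathcal{Q}(s)$ the cross-term in $b(s)$ must be eliminated. Recognising that \eqref{numeroo16} is an algebraic identity free of the support condition appearing in \eqref{eq:boundprop10}, so that one can evaluate at arbitrary vectors $(0,t)$ or $(v,0)$ with $|t|,|v|$ arbitrarily large, is exactly what makes the $s^{-1/4}$ term negligible and produces the desired $s$-independent bounds.
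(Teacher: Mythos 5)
Your proof is correct, but it follows a genuinely different route from the paper's. The paper defers to \cite[Proposition 3.12]{MerloG1cod}, where the bounds are read off directly from the defining integrals: using $\lvert z_H\rvert\leq\lVert z\rVert$, $\lvert z_T\rvert\leq\lVert z\rVert^2$ and Corollary \ref{prop1}, one has $\int\lVert z\rVert^{6}e^{-s\lVert z\rVert^4}\,d\mu(z)=\tfrac{h}{4}\Gamma\bigl(\tfrac{h+6}{4}\bigr)s^{-\frac{h+6}{4}}$ and $\int\lVert z\rVert^{2}e^{-s\lVert z\rVert^4}\,d\mu(z)=\tfrac{h}{4}\Gamma\bigl(\tfrac{h+2}{4}\bigr)s^{-\frac{h+2}{4}}$, and the prefactors $s^{\frac{3}{2}+\frac{h}{4}}=s^{\frac{h+6}{4}}$ and $s^{\frac{1}{2}+\frac{h}{4}}=s^{\frac{h+2}{4}}$ cancel these powers exactly, so each of the two terms of $\mathcal{Q}(s)$, and $\mathcal{T}(s)$ itself, is bounded by an explicit $s$-independent constant, with no cancellation needed between terms and no cross-term with $b(s)$ to eliminate (that issue is an artifact of your passing through the combined identity). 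The obstacle you flag for $\mathcal{T}(s)$ comes from the prefactor $s^{1+\frac{h}{4}}$ in Definition \ref{definitioncurve}(iii), which is off by a factor $\sqrt{s}$ from what is actually used in \eqref{eq:c010} and \eqref{numeroo16}; with the normalization $s^{\frac{1}{2}+\frac{h}{4}}$, the one entering Proposition \ref{prop10} and all later arguments, the naive estimate is already uniform in $s$, and since your argument is based on \eqref{numeroo16} it bounds precisely that quantity, i.e.\ the statement in the form in which it is used. Your key observation, that \eqref{numeroo16} and Proposition \ref{prop9} hold for every $u\in\mathbb{P}^n$ and not merely on $\supp(\mu)$, is correct, and testing on $u=(0,t)$ and $u=(v,0)$ with $t,\rho\to\infty$ legitimately isolates $\mathcal{T}(s)$ and the quadratic form of $\mathcal{Q}(s)$ (using that $b(s)$ is finite for each fixed $s$ and that the operator norm of the symmetric matrix $\mathcal{Q}(s)$ equals $\sup_{\lvert\hat v\rvert=1}\lvert\langle\mathcal{Q}(s)\hat v,\hat v\rangle\rvert$). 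What your route buys is that only the qualitative bounds of Proposition \ref{prop9} are needed rather than the exact Gamma-function computation; the cost is a detour, since evaluating $c_{(2,0,0),s}$ and $c_{(0,1,0),s}$ separately at such $u$ already gives the same bounds without any limiting argument, and the paper's direct estimate is shorter still.
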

\begin{proof}
The proof can be obtained following verbatim \cite[Proposition 3.12]{MerloG1cod}.
\end{proof}

\begin{remark}
	Proposition \ref{bddcurve} implies in particular that the function $s\mapsto Tr(\mathcal{Q}(s))$ is bounded.
\end{remark}

\vv

From Proposition \ref{bddcurve} we deduce that for any infinitesimal sequence $\{s_j\}_{j\in\N}$, by compactness we can extract a subsequence $\{s_{j_k}\}_{k\in\N}$, such that $\mathcal{Q}(s_{j_k})$ and $\mathcal{T}(s_{j_k})$ converge to some $\tilde{\mathcal{Q}}$ and $\tilde{\mathcal{T}}$ respectively. Therefore by Proposition \ref{prop10} we have
\begin{equation}
	\begin{split}
		0\leq&\lim_{k\to\infty} \Big\lvert\langle b(s_{j_k}), u_H\rangle+\langle \mathcal{Q}(s_{j_k}) u_H, u_H\rangle+\mathcal{T}(s_{j_k}) u_T\Big\rvert
		\leq \lim_{k\to\infty} s_{j_k}^{1/4}\lVert u\rVert^3B^\prime(s_{j_k}^{1/4}\lVert u\rVert)=0.
		\nonumber
	\end{split}
\end{equation}

Hence, for any $u\in\supp(\mu)$ it holds
\begin{equation}
	\lim_{k\to\infty}\langle b(s_{j_k}), u_H\rangle=-\langle \tilde{\mathcal{Q}} u_H, u_H\rangle-\tilde{\mathcal{T}}  u_T.
	\label{eq8}
\end{equation}

\begin{proposition}
	There exists $\overline{B}\in V$ such that $\lim_{k\to\infty}b(s_{j_k})=\overline{B}$.
\end{proposition}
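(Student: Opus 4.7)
The approach is to observe that $b(s)$ is a priori constrained to a specific finite-dimensional subspace of $\R^n$ for every $s>0$, so convergence of its scalar projections against a basis of this subspace — which is guaranteed by \eqref{eq8} — immediately yields convergence of the full vector, with no need for an auxiliary boundedness argument like the one in Proposition \ref{bddcurve}.

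First, I would set
\[
    V_H \coloneqq \spn\bigl\{u_H : u\in \supp(\mu)\bigr\}\subseteq \R^n.
\]
From the defining integral
\[
    b(s)=\frac{4 s^{\frac{1}{2}+\frac{h}{4}}}{C(h)}\int |z_H|^2 z_H\, e^{-s\lVert z\rVert^4}\,d\mu(z),
\]
the integrand $|z_H|^2 z_H$ belongs to $V_H$ for $\mu$-almost every $z$ (since $z_H\in V_H$ whenever $z\in\supp(\mu)$), and therefore $b(s)\in V_H$ for every $s>0$.

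Second, since $V_H$ is a finite-dimensional subspace of $\R^n$ and the set $\{u_H : u\in \supp(\mu)\}$ spans $V_H$ by definition, I can select $u^1,\dots,u^d\in \supp(\mu)$, with $d=\dim V_H$, such that $\{u^i_H\}_{i=1}^d$ is a basis of $V_H$. The associated Gram matrix $G_{ij}=\langle u^i_H, u^j_H\rangle$ is invertible, so any $x\in V_H$ is uniquely recovered from the $d$-tuple of scalar products $\bigl(\langle x, u^i_H\rangle\bigr)_{i=1}^d$ via the linear system $x=\sum_{i}(G^{-1}y)_i\, u^i_H$ where $y_i=\langle x, u^i_H\rangle$.

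Finally, applying \eqref{eq8} at $u=u^i$ for each $i=1,\dots,d$, the scalar sequences $\langle b(s_{j_k}), u^i_H\rangle$ converge to $\beta_i\coloneqq -\langle \tilde{\mathcal Q}u^i_H, u^i_H\rangle -\tilde{\mathcal T}\,u^i_T$ as $k\to\infty$. Combined with the second step and the fact that $b(s_{j_k})\in V_H$, this gives
\[
    b(s_{j_k})=\sum_{i=1}^{d}\bigl(G^{-1}y^{(k)}\bigr)_i\, u^i_H \;\longrightarrow\; \sum_{i=1}^{d}\bigl(G^{-1}\beta\bigr)_i\, u^i_H \;=:\;\overline{B}\in V_H\subseteq \R^n,
\]
which is the desired conclusion. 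There is no substantive obstacle in the argument; the sole key observation is the a priori constraint $b(s)\in V_H$, which reduces everything to extracting limits from the finitely many scalar relations already obtained.
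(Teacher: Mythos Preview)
Your argument is correct and is essentially the standard one: the paper defers to \cite[Proposition 3.13]{MerloG1cod}, whose proof rests on precisely the observation you make, namely that $b(s)$ lies in the span $V_H=\spn\{u_H:u\in\supp(\mu)\}$ for every $s>0$, so convergence of the scalar products $\langle b(s_{j_k}),u^i_H\rangle$ against a basis extracted from support points forces convergence of the vector itself. The ``$V$'' in the statement is exactly your $V_H$, and no separate boundedness estimate for $b(\cdot)$ is needed or used.
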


\begin{proof}
The proof can be obtained following verbatim \cite[Proposition 3.13]{MerloG1cod}.
\end{proof}


\vv

If $\mu$ is invariant under dilations we can find a candidate (non-degenerate) quadric containing $\supp(\mu)$.

\begin{proposition}\label{CONO}
	If $\mu$ is invariant under dilations, i.e. $\lambda^{-h}T_{0,\lambda}\mu=\mu$ for any $\lambda>0$, then:
	\begin{itemize}
		\item[(i)]$b(s)=0$ for any $s>0$.
		\item[(ii)]$\langle u_H,\mathcal{Q}(1) u_H\rangle+\mathcal{T}(1)  u_T=0 \text{ for any $u\in\supp(\mu)$.}$
	\end{itemize}
\end{proposition}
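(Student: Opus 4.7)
The plan is to exploit dilation invariance to compute the precise scaling laws of the curves $b(s)$, $\mathcal{Q}(s)$, and $\mathcal{T}(s)$, and then let $s\to 0^+$ in the quantitative bound provided by Proposition \ref{prop10}.

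First, I would combine the explicit identifications $c_{(1,0,0),s}^\mu(u)=s^{1/2}\langle u_H, b(s)\rangle$, $c_{(2,0,0),s}^\mu(u)=s^{1/2}\langle \mathcal{Q}_1(s)u_H, u_H\rangle$, and $c_{(0,1,0),s}^\mu(u)=-s^{1/2}\langle \mathcal{Q}_2(s)u_H, u_H\rangle + s^{1/2}\mathcal{T}(s)u_T$ obtained in the proof of Proposition \ref{prop10} with the homogeneity $c_{\alpha,s}^\mu = s^{(\alpha_1+2\alpha_2+3\alpha_3)/4}c_{\alpha,1}^\mu$ granted by Proposition \ref{coni1}. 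The arbitrariness of $u$ together with the symmetry of $\mathcal{Q}_1$ and $\mathcal{Q}_2$ forces
\[
b(s)=s^{-1/4}b(1), \qquad \mathcal{Q}(s)=\mathcal{Q}(1), \qquad \mathcal{T}(s)=\mathcal{T}(1) \qquad \text{for every } s>0.
\]

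Plugging these scalings into Proposition \ref{prop10}, for every $u\in\supp(\mu)$ and every $s>0$ we obtain
\[
\bigl|s^{-1/4}\langle b(1),u_H\rangle + \langle \mathcal{Q}(1)u_H,u_H\rangle + \mathcal{T}(1)u_T\bigr| \leq s^{1/4}\lVert u\rVert^3 B'\bigl(s^{1/4}\lVert u\rVert\bigr).
\]
The right-hand side tends to $0$ as $s\to 0^+$, so the potentially blowing-up term $s^{-1/4}\langle b(1),u_H\rangle$ on the left must vanish: $\langle b(1),u_H\rangle=0$ for every $u\in\supp(\mu)$. Once this term is removed the remaining $s$-independent contribution must itself equal $0$, which yields $\langle \mathcal{Q}(1)u_H, u_H\rangle + \mathcal{T}(1)u_T = 0$ on $\supp(\mu)$; this is assertion (ii).

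To conclude (i) I would test the vector $b(1)$ against its own defining integral. Since $\langle b(1), z_H\rangle=0$ for $\mu$-a.e.\ $z$, we have
\[
\lvert b(1)\rvert^2 = \Big\langle b(1),\, \frac{4}{C(h)}\int|z_H|^2 z_H \, e^{-\lVert z\rVert^4}\,d\mu(z)\Big\rangle = \frac{4}{C(h)}\int |z_H|^2\langle b(1), z_H\rangle\, e^{-\lVert z\rVert^4}\,d\mu(z) = 0,
\]
so $b(1)=0$ and therefore $b(s) = s^{-1/4}b(1) = 0$ for every $s>0$. I do not foresee any real obstacle: the argument is a direct matching of the scalings from Proposition \ref{coni1} with the quantitative expansion of Proposition \ref{prop10}, concluded by a one-line polar identity.
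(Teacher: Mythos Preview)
Your proof is correct and follows exactly the intended route: the paper defers to \cite[Proposition 3.14]{MerloG1cod}, and your argument---matching the homogeneity of $c_{\alpha,s}^\mu$ from Proposition~\ref{coni1} against the identifications in the proof of Proposition~\ref{prop10}, then sending $s\to 0$ in the resulting inequality, and finally closing with the self-pairing $\lvert b(1)\rvert^2=0$---is precisely that argument.
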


\begin{proof}
See \cite[Proposition 3.14]{MerloG1cod}.
\end{proof}

\vv

\section{Uniform measures of dimension \texorpdfstring{$n+1$}{n+1} and quadratic surfaces}\label{section:unif_measure_quadratic_surface}

	\begin{definition}[Degenerated uniform measures] 
For $h\in \{1,\ldots,n+2\}$ we denote by $\mathcal{D}\mathcal{U}(h)$ the family of those $h$-uniform measures on $(\mathbb P^n, d)$ such that
	\begin{equation}
	    	\lim_{s\to 0} \mathcal{Q}(s)=0,
	    	\label{eq:degenerate}
	\end{equation}
where $\mathcal{Q}=\mathcal Q_\mu$ is as in Definition \ref{definitioncurve}-(ii).
\end{definition}
\vv

\begin{proposition}\label{prop:notdeg}
Let $\mu\in\mathcal{U}(n+1)\setminus \mathcal{D}\mathcal{U}(n+1)$. Then there are $\mathcal Q\in \mathrm{Sym}(n)\setminus \{0_n\}$, $b\in\R^n$, and $\tau\in\R$ such that 
$$\supp(\mu)\subseteq \bigl\{x\in\mathbb{P}^n:\langle x_H,\mathcal Q[x_H]+b\rangle+\tau x_T=0\bigr\}.$$
\end{proposition}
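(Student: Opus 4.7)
The plan is to extract this result directly from the machinery assembled in Section \ref{CDDQQ}. Recall that for any $\mu \in \mathcal{U}(n+1)$ and any $u \in \supp(\mu)$, Proposition \ref{prop10} gives
\[
    \bigl\lvert \langle b(s), u_H\rangle + \langle \mathcal{Q}(s)\, u_H, u_H\rangle + \mathcal{T}(s)\, u_T \bigr\rvert \leq s^{1/4}\lVert u\rVert^{3} B'\bigl(s^{1/4}\lVert u\rVert\bigr),
\]
and the right-hand side tends to $0$ as $s\to 0^+$. The remaining task is therefore to extract, along some infinitesimal sequence $\{s_k\}$, simultaneous limits of $b(s_k)$, $\mathcal{Q}(s_k)$, $\mathcal{T}(s_k)$, with the extra guarantee that the limit matrix is nonzero.

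First, I would use the hypothesis $\mu \notin \mathcal{D}\mathcal{U}(n+1)$: the relation $\lim_{s\to 0}\mathcal{Q}(s)=0$ \emph{fails}, so there exist $\varepsilon_0>0$ and an infinitesimal sequence $\{s_j\}$ with $\lvert \mathcal{Q}(s_j)\rvert \geq \varepsilon_0$ for every $j$. By Proposition \ref{bddcurve}, both $\mathcal{Q}(s_j)$ and $\mathcal{T}(s_j)$ are bounded, so by compactness of closed bounded subsets of $\mathrm{Sym}(n)\times\mathbb{R}$ we may extract a subsequence (which I still denote $\{s_j\}$) along which $\mathcal{Q}(s_j)\to \tilde{\mathcal{Q}}$ and $\mathcal{T}(s_j)\to \tilde{\mathcal{T}}$. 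The non-degeneracy condition forces $\lvert \tilde{\mathcal{Q}}\rvert\geq \varepsilon_0$, so in particular $\tilde{\mathcal{Q}}\neq 0$. Then I would invoke the unnumbered proposition proved just before Proposition \ref{CONO}, which guarantees that along a further subsequence $b(s_{j_k})$ converges to some $\bar{B}\in\mathbb{R}^n$.

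Passing to the limit $s_{j_k}\to 0$ in the bound from Proposition \ref{prop10} yields, for every $u\in\supp(\mu)$,
\[
    \langle \bar{B}, u_H\rangle + \langle \tilde{\mathcal{Q}}\, u_H, u_H\rangle + \tilde{\mathcal{T}}\, u_T = 0,
\]
and setting $\mathcal{Q}\coloneqq \tilde{\mathcal{Q}}\in \mathrm{Sym}(n)\setminus\{0_n\}$, $b\coloneqq \bar{B}$, $\tau\coloneqq \tilde{\mathcal{T}}$ produces the desired quadric containing $\supp(\mu)$. I do not foresee a genuine obstacle here: the only subtle point is keeping the three convergences compatible with one another, which is handled by the standard diagonal/subsequence extraction above, and ensuring $\mathcal{Q}\neq 0$, which is exactly the content of the assumption $\mu\notin \mathcal{D}\mathcal{U}(n+1)$.
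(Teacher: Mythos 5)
Your proposal is correct and follows essentially the same route as the paper: the paper's proof likewise extracts a subsequence $s_i\to 0$ along which $\mathcal{Q}(s_i)$ converges to a nonzero limit (using Proposition \ref{bddcurve} and the failure of \eqref{eq:degenerate}), relies on the compactness/convergence facts for $\mathcal{T}(s)$ and $b(s)$ established in Section \ref{CDDQQ}, and concludes by letting $s\to 0$ in the bound of Proposition \ref{prop10}. Your write-up simply makes explicit the subsequence bookkeeping and the observation that non-degeneracy forces the limit matrix to be nonzero, which the paper leaves implicit.
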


\begin{proof}
Since $\mu\in\mathcal{U}(n+1)\setminus \mathcal{D}\mathcal{U}(n+1)$, by Proposition \ref{bddcurve} there exist a sequence $s_i\to 0$ and $\mathcal{ Q}\in\mathrm{Sym}(n)\setminus \{0_n\}$ such that $\lim_{i\to \infty}\mathcal{Q}(s_i)=\mathcal{Q}$. This together with Proposition \ref{prop10} concludes the proof. 
\end{proof}

\vv
The definition of $\mathcal{Q}(s)$ immediately implies the following result.

\begin{proposition}\label{prop35}
If $\mu\in \mathcal{D}\mathcal{U}(h)$ then
$$0=\lim_{s\to 0}8 s^{\frac{3}{2}+\frac{h}{4}}\int \lvert z_H\rvert^6
					e^{-s\lVert z\rVert^4}\,d\mu(z)
		-(2n+4)s^{\frac{1}{2}+\frac{h}{4}}\int \lvert z_H\rvert^2 e^{-s\lVert z\rVert^4}\, d\mu(z).$$
\end{proposition}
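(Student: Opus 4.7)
The plan is essentially to observe that the displayed expression is (a positive constant multiple of) the trace of $\mathcal{Q}(s)$, so the conclusion is immediate from the hypothesis $\lim_{s\to 0}\mathcal{Q}(s)=0$.

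First I would recall from Definition \ref{definitioncurve}(ii) that
\[
\mathcal{Q}(s)=\frac{8 s^{\frac{3}{2}+\frac{h}{4}}}{C(h)}\int |z_H|^4\, z_H\otimes z_H\, e^{-s\|z\|^4}\, d\mu(z)-\frac{s^{\frac{1}{2}+\frac{h}{4}}}{C(h)}\int \bigl(4\,z_H\otimes z_H+2|z_H|^2\,\mathrm{id}_n\bigr)\,e^{-s\|z\|^4}\, d\mu(z).
\]
Next I would compute the trace of each integrand pointwise: using $\mathrm{tr}(z_H\otimes z_H)=|z_H|^2$ one obtains
\[
\mathrm{tr}\bigl(|z_H|^4\, z_H\otimes z_H\bigr)=|z_H|^6,\qquad \mathrm{tr}\bigl(4\,z_H\otimes z_H+2|z_H|^2\,\mathrm{id}_n\bigr)=(2n+4)|z_H|^2.
\]

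Since the trace is a continuous linear functional on $\mathrm{Sym}(n)$, it commutes with the Bochner integrals defining $\mathcal{Q}(s)$, giving
\[
\mathrm{tr}\bigl(\mathcal{Q}(s)\bigr)=\frac{1}{C(h)}\left[8 s^{\frac{3}{2}+\frac{h}{4}}\int |z_H|^6 e^{-s\|z\|^4}\, d\mu(z) - (2n+4)\, s^{\frac{1}{2}+\frac{h}{4}}\int |z_H|^2 e^{-s\|z\|^4}\, d\mu(z)\right].
\]
Finally, since by hypothesis $\mu\in \mathcal{D}\mathcal{U}(h)$ means $\mathcal{Q}(s)\to 0$ in $\mathrm{Sym}(n)$ as $s\to 0$, continuity of the trace yields $\mathrm{tr}(\mathcal{Q}(s))\to 0$. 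Multiplying by the positive constant $C(h)$ gives the stated identity. There is no real obstacle here: the proposition merely records the trace of the matrix-valued limit \eqref{eq:degenerate}, and the only check needed is the pointwise trace computation above.
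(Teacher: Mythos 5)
Your argument is correct and is exactly the intended one: the paper dispatches this proposition with the remark that it follows immediately from the definition of $\mathcal{Q}(s)$, which amounts precisely to your trace computation $\mathrm{tr}(|z_H|^4\,z_H\otimes z_H)=|z_H|^6$ and $\mathrm{tr}(4\,z_H\otimes z_H+2|z_H|^2\,\mathrm{id}_n)=(2n+4)|z_H|^2$, combined with $\mathcal{Q}(s)\to 0$ from \eqref{eq:degenerate}. Nothing is missing.
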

\vv

\vv

\begin{proposition}\label{polinomialconv}
Let $\{\mu_i\}_{i\in\N}\subseteq \mathcal{U}(h)$ be a sequence converging to some $\mu\in \mathcal{U}(h)$. Then for any $\lambda>0$ and any polynomial $P$ on $\mathbb{P}^n$ we have
  \[
  	\lim_{i\to \infty}\int P(z)e^{-\lambda\lVert z\rVert^4}\,d\mu_i(z)=\int P(z)e^{-\lambda\lVert z\rVert^4}\, d\mu(z).
  \]
\end{proposition}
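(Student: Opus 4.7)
The plan is to exploit the uniform measure structure, which provides explicit tail control that is independent of $i$, to reduce the problem to convergence of integrals against compactly supported continuous functions (where weak convergence applies directly via Proposition \ref{prop:WeakConvergenceAndFk}).

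First I would bound the polynomial $P$ by a radial expression. Since the Euclidean coordinates satisfy $|z_{H,j}|\leq \|z\|$ and $|z_T|\leq \|z\|^2$, any polynomial $P$ of Euclidean degree $d$ admits an estimate $|P(z)|\leq C(1+\|z\|^{N})$ for some $N=N(P)$ and $C=C(P)>0$, with $\|\cdot\|$ the Koranyi norm. Next, I would observe that by Lemma \ref{replica}-(i) the weak limit $\mu$ is itself $h$-uniform, so $0\in\supp(\mu_i)\cap\supp(\mu)$ for all $i$, and Proposition \ref{prop5} (applied with profile $g(r)=r^p e^{-\lambda r^4}\chi_{[R,\infty)}(r)$, which is admissible by monotone convergence on non-negative radial functions) gives the explicit identity
\[
    \int_{\{\|z\|>R\}}\|z\|^p e^{-\lambda\|z\|^4}\,d\nu(z)=h\int_R^\infty r^{h+p-1}e^{-\lambda r^4}\,dr,
\]
valid for every $\nu\in\{\mu,\mu_1,\mu_2,\ldots\}$. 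The right-hand side is independent of $\nu$ and tends to $0$ as $R\to\infty$.

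Given $\varepsilon>0$, I would then choose $R=R(\varepsilon)$ large enough that
\[
    \int_{\{\|z\|>R\}}|P(z)|e^{-\lambda\|z\|^4}\,d\nu(z)\leq C\int_{\{\|z\|>R\}}(1+\|z\|^N)e^{-\lambda\|z\|^4}\,d\nu(z)<\varepsilon,
\]
uniformly for $\nu\in\{\mu,\mu_1,\mu_2,\ldots\}$, by the preceding identity applied with $p\in\{0,N\}$. Now fix a continuous cutoff $\chi_R\colon\mathbb P^n\to[0,1]$ with $\chi_R\equiv 1$ on $B(0,R)$ and $\supp(\chi_R)\subseteq B(0,R+1)$. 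The function $f_R(z)\coloneqq \chi_R(z)P(z)e^{-\lambda\|z\|^4}$ lies in $C_c(\mathbb P^n)$, so the weak convergence $\mu_i\rightharpoonup\mu$ yields
\[
    \lim_{i\to\infty}\int f_R\,d\mu_i=\int f_R\,d\mu.
\]

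Finally, I would combine these ingredients by the triangle inequality: writing $F(z)\coloneqq P(z)e^{-\lambda\|z\|^4}$,
\[
\begin{split}
    \Bigl|\int F\,d\mu_i-\int F\,d\mu\Bigr|\leq&\,\Bigl|\int (F-f_R)\,d\mu_i\Bigr|+\Bigl|\int f_R\,d\mu_i-\int f_R\,d\mu\Bigr|\\
    &+\Bigl|\int (f_R-F)\,d\mu\Bigr|\leq 2\varepsilon+\Bigl|\int f_R\,d\mu_i-\int f_R\,d\mu\Bigr|,
\end{split}
\]
where the tail terms are bounded since $|F-f_R|\leq |P|e^{-\lambda\|\cdot\|^4}\chi_{\{\|z\|>R\}}$. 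Passing to $\limsup_{i\to\infty}$ and then letting $\varepsilon\to 0$ gives the claim. The only mildly subtle point is the uniform tail estimate, but once one invokes the $h$-uniformity together with Proposition \ref{prop5} this is automatic, so I do not anticipate a serious obstacle.
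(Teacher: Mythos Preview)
Your proof is correct and follows essentially the same approach as the paper: obtain a uniform-in-$i$ tail bound using the $h$-uniformity through the radial integration formula (Proposition~\ref{prop5}), then apply a compactly supported cutoff and invoke weak convergence on the truncated integrand. The only cosmetic difference is that the paper controls the tail via the bound $|P(z)|e^{-\lambda\lVert z\rVert^4}\leq e^{-\lambda\lVert z\rVert^4/2}$ for $\lVert z\rVert$ large, whereas you use $|P(z)|\leq C(1+\lVert z\rVert^N)$; both reductions feed into the same radial tail identity.
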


\begin{proof}
Fixed $P$ and $\lambda>0$, it is not hard to see that there exists $R_1>0$ such that 
\begin{equation}\label{eq:p641}
    \lvert P(z)\rvert e^{-\lambda\lVert z\rVert^4}\leq e^{-\lambda\lVert z\rVert^4/2}, \qquad \text{ for any } \lVert z\rVert\geq R_1.
\end{equation}

Note that for any $\nu\in\mathcal U(h)$ and any $R_2\geq R_1$ we have
\begin{equation*}
\begin{split}
        \Big\lvert\int_{B(0,R_2)^c} P(z)e^{-\lambda\lVert z\rVert^4}\,d\nu(z)\Big\rvert&\leq \int_{B(0,R_2)^c} \lvert P(z)\rvert e^{-\lambda\lVert z\rVert^4}\,d\nu(z)\\
        &\overset{\eqref{eq:p641}}{\leq} \int_{B(0,R_2)^c} e^{-\lambda\lVert z\rVert^4/2}\,d\nu(z)\overset{\eqref{eq:rad_int}}{=} h\int_{R_2}^\infty s^{h-1} e^{-\lambda s^4/2}\,ds.
\end{split}
\end{equation*}
Then, for any $\epsilon>0$ we can choose $R_2$ in such a way that
\begin{equation}\label{eq:p642}
    \Big\lvert\int_{B(0,R_2)^c} P(z)e^{-\lambda\lVert z\rVert^4}d\nu(z)\Big\rvert\leq \epsilon, \qquad \text{ for any }\nu\in \mathcal U(h).
\end{equation}

Hence, if we let $\eta$ be a smooth positive function such that $\eta=1$ on $B(0,R_2)$ and $\eta=0$ on  $B(0,2R_2)^c$ triangle inequality, the convergence of $\mu_i$ to $\mu$, and \eqref{eq:p642} yield
\begin{equation}\label{eq:p653}
\begin{split}
      &\Big\lvert\lim_{i\to \infty}\int P(z)e^{-\lambda\lVert z\rVert^4}d\mu_i(z)-\int P(z)e^{-\lambda\lVert z\rVert^4}d\mu(z)\Big\rvert\\
    &\qquad \leq \Big\lvert\lim_{i\to \infty}\int (1-\eta(z))P(z)e^{-\lambda\lVert z\rVert^4}d\mu_i(z)\Big\rvert\\
    &\qquad \quad+ \Big\lvert\lim_{i\to \infty}\int \eta(z)P(z)e^{-\lambda\lVert z\rVert^4}d\mu_i(z)-\int \eta(z)P(z)e^{-\lambda\lVert z\rVert^4}d\mu(z)\Big\rvert\\
    &\qquad \quad+\Big\lvert\int (1-\eta(z))P(z)e^{-\lambda\lVert z\rVert^4}d\mu(z)\Big\rvert<2\epsilon.
\end{split}
\end{equation}
As the choice of $\epsilon$ is arbitrary, \eqref{eq:p653} concludes the proof.
\end{proof}

 If $\mu\in\mathcal{D}\mathcal{U}(h)$ we can infer quite a lot of information on the structure of $\mu$ at infinity. To see this, let
	 $\nu\in \Tan_h(\mu,x)$ and consider a sequence $\{R_i\}_{i\in\N}$ such that $R_i\to \infty$ and $R_i^{-h}T_{0,R_i} \mu\rightharpoonup \nu$. Thus
	\begin{equation}
	    \begin{split}
	        0&=\lim_{i\to \infty} \mathcal{Q}_\mu(\lambda R_i^{-1})=\lim_{i\to \infty}\frac{8 (\lambda R_i)^{-\frac{3}{2}-\frac{h}{4}}}{C(h)}\int \lvert z_H\rvert^4 z_H\otimes z_H
					e^{-(\lambda R_i)^{-1}\lVert z\rVert^4}\,d\mu(z)\\&\qquad-\lim_{i\to\infty}\frac{\lambda^{\frac{1}{2}+\frac{h}{4}}R_i^{-\frac{1}{2}-\frac{h}{4}}}{C(h)}\int \bigl(4z_H\otimes z_H+2\lvert z_H\rvert^2\textrm{id}_{n}\bigr) e^{-\lambda R_i^{-1}\lVert z\rVert^4}\,d\mu(z)\\
			&=\lim_{i\to \infty}\frac{8\lambda^{\frac{3}{2}+\frac{h}{4}} }{C(h)}\int \lvert z_H\rvert^4 z_H\otimes z_H
					e^{-\lambda\lVert z\rVert^4}\, \frac{d\bigl[T_{x,R_i^{1/4}}\mu\bigr](z)}{R_i^{h/4}}\\
			&\qquad-\lim_{i\to\infty}\frac{\lambda^{\frac{1}{2}+\frac{h}{4}}}{C(h)}\int \bigl(4z_H\otimes z_H+2\lvert z_H\rvert^2\textrm{id}_{n}\bigr) e^{-\lambda\lVert z\rVert^4}\, \frac{d\bigl[T_{x,R_i^{1/4}}\mu\bigr](z)}{R_i^{h/4}}\\
			&	=\frac{8 \lambda^{\frac{3}{2}+\frac{h}{4}} }{C(h)}\int \lvert z_H\rvert^4 z_H\otimes z_H
					e^{-\lambda\lVert z\rVert^4}d\nu(z)-\frac{\lambda^{\frac{1}{2}+\frac{h}{4}}}{C(h)}\int \bigl(4z_H\otimes z_H+2\lvert z_H\rvert^2\textrm{id}_{n}\bigr) e^{-\lambda\lVert z\rVert^4}\,d\nu(z),
	    \end{split}
	\end{equation}
	where the last equality holds because of the weak convergence $R_i^{-h}T_{0,R_i} \mu\rightharpoonup \nu$ and Proposition \ref{polinomialconv}.
In particular, for any $u\in \R^n$ such that $\lvert u\rvert=1$ and any $\lambda>0$ we obtain
\begin{equation}
    \lambda^{\frac{1}{2}+\frac{h}{4}}\int \bigl(8\lambda\lvert z_H\rvert^4 -4\bigr)\langle z_H,u\rangle^2
					e^{-\lambda\lVert z\rVert^4}\, d\nu(z)=  2\lambda^{\frac{1}{2}+\frac{h}{4}}\int \lvert z_H\rvert^2e^{-\lambda\lVert z\rVert^4}\,d\nu(z).	\label{eq:iddege}
\end{equation}

\vv

We recall that $\mathcal{V}\coloneqq \{(0,s)\in \R^n\times \R: s\in \R\}$.
\begin{proposition}\label{propvert}
If $\mu$ is an $h$-uniform measure on $\mathbb{P}^n$ and $\supp(\mu)\subseteq \mathcal{V}$, then $h=2$ and $\mu=\mathcal{C}^2\llcorner\mathcal{V}$.
\end{proposition}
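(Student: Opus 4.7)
The strategy is to reduce the statement to the classical one-dimensional Euclidean Marstrand theorem by a direct parametrization of the vertical axis. I would identify $\mathcal V$ with $\R$ via the group isomorphism $\Phi\colon \R \to \mathcal V$, $s\mapsto (0,s)$, and set $\tilde\mu\coloneqq \Phi^*\mu$, a Radon measure on $\R$. Since the Koranyi norm of $(0,s)$ is $|s|^{1/2}$ and the metric is translation-invariant, one has
\[
  \Phi^{-1}\bigl(B_d((0,t),r)\cap \mathcal V\bigr)=[t-r^2,\,t+r^2].
\]
Combining this with the inclusion $\supp(\mu)\subseteq\mathcal V$ and the $h$-uniformity of $\mu$, for every $t\in\supp(\tilde\mu)$ and every $r>0$ I would get
\[
  \tilde\mu\bigl([t-r^2,\,t+r^2]\bigr)=\mu\bigl(B_d((0,t),r)\bigr)=r^h.
\]

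Substituting $\sigma=r^2$, this reads $\tilde\mu([t-\sigma,\,t+\sigma])=\sigma^{h/2}$ for every $t\in\supp(\tilde\mu)$ and every $\sigma>0$. In other words $\tilde\mu$ is an $(h/2)$-uniform measure on $(\R,|\cdot|)$ in the Euclidean sense. The classical Marstrand theorem on the line forces the exponent of a Euclidean uniform measure to be a non-negative integer bounded by $\dim(\R)=1$, so $h/2\in\{0,1\}$. Since in this section one has $h\geq 1$ by standing convention, the only possibility is $h/2=1$, i.e., $h=2$.

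Once $h=2$ is established, $\mathcal V\in\Gr(2)$ (it is the $2$-dimensional homogeneous subgroup spanned by $e_{n+1}$ in the sense of Lemma~\ref{homplanesstructure}), and the hypothesis $\supp(\mu)\subseteq \mathcal V$ lets me apply Proposition~\ref{verticalsamoa} directly with $V=\mathcal V$ to conclude $\mu=\mathcal C^{2}\llcorner \mathcal V$. The only genuinely non-trivial ingredient is the Euclidean Marstrand fact, and this is the only potential obstacle; if a self-contained argument is preferred, one may replace it by a short functional-equation argument comparing $\tilde\mu([-\sigma,\sigma])$ and $\tilde\mu([-2\sigma,2\sigma])$ via two overlapping symmetric intervals centered at distinct points of $\supp(\tilde\mu)$, which produces an algebraic identity admitting only $h/2\in\{0,1\}$. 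Either route leads to $h=2$ immediately, after which Proposition~\ref{verticalsamoa} closes the proof with no additional work.
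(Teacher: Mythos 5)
Your argument is correct and follows essentially the same route as the paper: both exploit that a Koranyi ball intersected with $\mathcal V$ is a Euclidean ball of radius $r^2$, so that $\mu$ becomes an $(h/2)$-uniform measure in the Euclidean sense, and then invoke Marstrand's theorem to force $h/2\in\{0,1\}$ and hence $h=2$. The only (harmless) difference is the final step: you conclude directly from Proposition \ref{verticalsamoa} with $V=\mathcal V\in\Gr(2)$, whereas the paper passes through Preiss's classification of $1$-uniform measures in $\R^{n+1}$ and Proposition \ref{supportoK}; your ending is slightly more streamlined, while your parenthetical suggestion that Marstrand on the line could be replaced by a ``short functional-equation argument'' is not substantiated and should not be relied upon, though it does not affect the main proof.
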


\begin{proof}
Since $\supp(\mu)\subseteq \mathcal{V}$, then $\mu(B(z,r)\cap \mathcal{V})=\mu(B(z,r))=r^h$ for any $z\in\supp(\mu)$ and any $r>0$. Note that
$$B(z,r)\cap \mathcal{V}=\{(0,s)\in\R^{n}\times\R:\lvert s-  z_T\rvert<r^2\}=B_{n+1}(z,r^2)\cap \mathcal{V},$$
where \label{euclidean} $B_{n+1}(z,r^2)$ denotes the Euclidean ball in $\mathbb R^{n+1}$ of center $z$ and radius $r^2$.

This implies that
$\mu(B_{n+1}(z,r))=r^{h/2}$, so $\mu$ is a $h/2$-uniform measure with respect to Euclidean balls and its support is contained in $\mathcal{V}$. Marstrand's Theorem implies that $h/2$ is integer and, since $\mathcal{V}$ is $1$-dimensional, by differentiation we deduce that $h/2$ is either $0$ or $1$. As we excluded by hypothesis the case $h=0$ and thanks to the classification of $1$-uniform measures in $\R^{n+1}$ provided by \cite{Preiss1987GeometryDensities}, we deduce that $\mu=\frac{1}{2}\mathcal{H}_{\eu}^1\llcorner \mathcal{V}$. Since $h=2$ and $\supp(\mu)=\mathcal{V}$, Proposition \ref{supportoK} concludes the proof.
\end{proof} 
\vv

\begin{proposition}\label{disconnectdegenerated}
    Let $\mu\in\mathcal{D}\mathcal{U}(h)$. Then either $h=2$ or there exists a constant $\newep\label{eps:1}=\oldep{eps:1}(h)>0$ such that
    \begin{equation}\label{eq:prop47_statement}
        \lambda^{\frac{3}{2}+\frac{h}{4}}\int \lvert z_H\rvert^4\langle z_H,u\rangle^2
					e^{-\lambda\lVert z\rVert^4}\, d\nu(z)>\oldep{eps:1}
    \end{equation}
    for any $\nu\in \Tan_h(\mu,\infty)$, any $\lambda>0$, and any $u\in\R^n$ with $\lvert u\rvert=1$.
\end{proposition}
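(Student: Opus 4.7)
The plan is to rearrange \eqref{eq:iddege} so that the left-hand side of \eqref{eq:prop47_statement} appears explicitly, and then deduce that this quantity is bounded from below by a non-negative functional of $\nu$ that is independent of $u$. Concretely, \eqref{eq:iddege} may be rewritten as
\[
	8\lambda^{\frac{3}{2}+\frac{h}{4}}\int \lvert z_H\rvert^4\langle z_H,u\rangle^2 e^{-\lambda\lVert z\rVert^4}\,d\nu(z) = 4\lambda^{\frac{1}{2}+\frac{h}{4}}\int \langle z_H,u\rangle^2 e^{-\lambda\lVert z\rVert^4}\,d\nu(z) + 2\,H(\nu,\lambda),
\]
where I set $H(\nu,\lambda)\coloneqq\lambda^{\frac{1}{2}+\frac{h}{4}}\int\lvert z_H\rvert^2 e^{-\lambda\lVert z\rVert^4}\,d\nu(z)$. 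Both summands on the right are non-negative, so the left-hand side is at least $H(\nu,\lambda)/4$. Since $\nu\in\mathcal{U}(h)$ by Proposition \ref{uniformup}, the proposition reduces to exhibiting some $c=c(h)>0$ with $H(\nu,\lambda)\geq c$ uniformly in $\nu\in\mathcal{U}(h)$ and $\lambda>0$; then one would take $\oldep{eps:1}=c/4$.

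Next I would reduce to the case $\lambda=1$ through a rescaling. A routine change of variables, analogous to the manipulation performed just before \eqref{eq:iddege}, shows that if $\tilde\nu\coloneqq\alpha^{-h}T_{0,\alpha}\nu$ then $\tilde\nu\in\mathcal{U}(h)$ and $H(\tilde\nu,\lambda)=H(\nu,\lambda/\alpha^4)$. Choosing $\alpha=\lambda^{-1/4}$ yields $H(\nu,\lambda)=H(\tilde\nu,1)$, so the problem further reduces to establishing
\[
	\inf_{\nu\in\mathcal{U}(h)} H(\nu,1)>0.
\]

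For this last step I would combine compactness of $\mathcal{U}(h)$ (Proposition \ref{UComp}) with continuity of $\nu\mapsto H(\nu,1)$ on $\mathcal{U}(h)$, which is precisely Proposition \ref{polinomialconv} applied to $P(z)=\lvert z_H\rvert^2$. The infimum above is therefore attained by some $\nu_0\in\mathcal{U}(h)$, and the equality $H(\nu_0,1)=0$ is equivalent to $\lvert z_H\rvert=0$ on $\supp(\nu_0)$, i.e., $\supp(\nu_0)\subseteq \mathcal{V}$. Proposition \ref{propvert} then forces $h=2$, contradicting the standing hypothesis $h\neq 2$. Hence the infimum is strictly positive, giving the desired $\oldep{eps:1}(h)$.

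No serious obstacle arises in this scheme: the identity \eqref{eq:iddege} does the heavy lifting by converting the oscillatory quantity $\langle z_H,u\rangle^2$ into a non-negative one that is independent of $u$, while the conclusion relies only on soft compactness of $\mathcal{U}(h)$ together with the classification of $h$-uniform measures supported on $\mathcal{V}$. The most delicate point is verifying the dilation invariance of $H$; this is a short computation but must be tracked carefully through the various powers of $\alpha$.
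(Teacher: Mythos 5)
Your proof is correct and follows essentially the same route as the paper: both arguments use \eqref{eq:iddege} to replace the $u$-dependent integral by the $u$-independent quantity $\lambda^{\frac{1}{2}+\frac{h}{4}}\int\lvert z_H\rvert^2e^{-\lambda\lVert z\rVert^4}\,d\nu$, normalize $\lambda$ by a parabolic dilation, and then combine compactness of $\mathcal{U}(h)$ with Proposition \ref{polinomialconv} and Proposition \ref{propvert} to exclude a vanishing limit unless $h=2$. The paper merely packages the compactness step as a contradiction along sequences $(\nu_i,\lambda_i)$ rather than as an attained infimum, which is the same argument in different clothing (and, as in the paper, one should shrink the constant, e.g.\ take $\oldep{eps:1}=c/8$, to get the strict inequality).
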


\begin{proof}
We argue by contradiction. More specifically, let us suppose that there exist sequences $\mu_i\in\mathcal{U}(h)$, $\nu_i\in\Tan_h(\mu_i,\infty)$, and $\lambda_i>0$ such that
\[
	\lambda_i^{\frac{1}{2}+\frac{h}{4}}\int \bigl(8\lambda_i\lvert z_H\rvert^4 -4\bigr)\langle z_H,u\rangle^2
					e^{-\lambda_i\lVert z\rVert^4}\, d\nu_i(z)\leq 1/i.
\]	

					The latter inequality also reads
    \begin{equation}
    \begin{split}
                   2\int \lvert z_H\rvert^2&e^{-\lVert z\rVert^4}\, \frac{d\bigl[ T_{0,\lambda_i^{-1/4}}\nu_i\bigr](z)}{\lambda_i^{-h/4}}=2\lambda_i^{\frac{1}{2}+\frac{h}{4}}\int \lvert z_H\rvert^2e^{-\lambda_i\lVert z\rVert^4}\, d\nu_i(z)\\
                   &\overset{\eqref{eq:iddege}}{=}\lambda_i^{\frac{1}{2}+\frac{h}{4}}\int \bigl(8\lambda_i\lvert z_H\rvert^4 -4\bigr)\langle z_H,u\rangle^2
					e^{-\lambda_i\lVert z\rVert^4}\, d\nu_i(z)\leq 1/i.
					\label{eq:id1}
    \end{split}
    \end{equation}
    
     Since $\nu_i\in \mathcal U(h)$, the measures $\lambda_i^{h/4}T_{0,\lambda_i^{-1/4}}\nu_i$ are $h$-uniform, too. Thus there exists $\nu\in \mathcal U(h)$ such that, possibly passing to a subsequence, by Propositions \ref{replica} and \ref{UComp} we have $\lambda_i^{h/4}T_{0,\lambda_i^{-1/4}}\nu_i\rightharpoonup \eta$. In addition, taking the limit in \eqref{eq:id1} we obtain
    \begin{equation}
         \int \lvert z_H\rvert^2e^{-\lVert z\rVert^4}\, d\eta(z)=0,
    \end{equation}
    and thus $\supp(\eta)\subseteq \mathcal{V}$ thanks to Proposition \ref{propvert}. This in turn implies that $h=2$. Hence, if $h\neq 2$ there exists $\tilde \varepsilon=\tilde\varepsilon(h)>0$ such that for any $\mu\in\mathcal{D}\mathcal{U}(h)$ we have
    \[
    	\lambda^{\frac{1}{2}+\frac{h}{4}}\int (8\lambda\lvert z_H\rvert^4 -4)\langle z_H,u\rangle^2
					e^{-\lambda\lVert z\rVert^4}\, d\nu(z)>\tilde\varepsilon,
	\]
					for any $\nu\in\Tan_h(\mu,\infty)$ and any $\lambda>0$, which immediately implies \eqref{eq:prop47_statement}.
\end{proof}

\vv

Proposition \ref{disconnectdegenerated} has the following important consequence.

	\begin{corollary}\label{noflatatinfty}	    If there exists $\mu\in\mathcal{DU}(n+1)$ such that 
	  $\Tan_{n+1}(\mu,\infty)\cap \mathfrak{M}(n+1)\neq \emptyset$, then $n=1$.
	\end{corollary}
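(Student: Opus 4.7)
The plan is to apply Proposition \ref{disconnectdegenerated} with $h=n+1$ and derive a contradiction from the presence of a flat tangent at infinity, which forces the relevant integral to vanish.

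Concretely, I would argue as follows. Suppose $\mu \in \mathcal{DU}(n+1)$ and pick $\nu \in \Tan_{n+1}(\mu,\infty) \cap \mathfrak{M}(n+1)$. By the definition of $\mathfrak{M}(n+1)$, there exist $c>0$ and $V \in \Gr(n+1)$ such that $\nu = c\,\mathcal{H}^{n+1} \llcorner V$. By Corollary \ref{corollary:grn1}, this homogeneous subgroup has the form $V = V_1 \oplus \mathbb{R} e_{n+1}$, where $V_1$ is a hyperplane in $\mathbb{R}^n$; in particular $V_1$ has Euclidean codimension one, so there is a unit vector $u \in \mathbb{R}^n$ with $V_1 \subseteq u^\perp$.

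For every $z \in V$ we then have $z_H \in V_1$ and hence $\langle z_H, u\rangle = 0$. Consequently,
\[
    \lambda^{\frac{3}{2}+\frac{n+1}{4}}\int |z_H|^4 \langle z_H, u\rangle^2 e^{-\lambda \|z\|^4}\, d\nu(z) = 0 \qquad \text{for every } \lambda>0.
\]
However, Proposition \ref{disconnectdegenerated} asserts that, whenever $n+1 \neq 2$, the left-hand side must be bounded below by the positive constant $\oldep{eps:1}(n+1)$. This forces $n+1 = 2$, i.e.\ $n=1$, completing the proof.

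There is no real obstacle here: the argument is a direct algebraic exploitation of the rigid structure of $1$-codimensional homogeneous subgroups (Corollary \ref{corollary:grn1}) combined with the quantitative disconnection estimate \eqref{eq:prop47_statement}. The only subtlety is simply verifying that a nontrivial $u \in V_1^\perp \cap \mathbb{R}^n$ exists, which is guaranteed because the horizontal part of any $V \in \Gr(n+1)$ is a hyperplane in $\mathbb{R}^n$, not the full space.
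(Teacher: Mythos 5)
Your proof is correct and follows essentially the same route as the paper: both arguments use Corollary \ref{corollary:grn1} to extract a unit vector $u$ orthogonal to the horizontal part of the flat tangent, observe that the integral $\int \lvert z_H\rvert^4\langle z_H,u\rangle^2 e^{-\lVert z\rVert^4}\,d\nu(z)$ then vanishes, and contradict the lower bound of Proposition \ref{disconnectdegenerated} unless $n+1=2$. No gaps; the normalization constant $c$ and the choice of Haar measure are immaterial since only $\supp(\nu)\subseteq V$ is used.
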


	\begin{proof}
We argue by contradiction: we assume that there exists $\nu\in\Tan_{n+1}(\mu,\infty)\cap \mathfrak{M}(n+1)$ for $n>1$. By Corollary \ref{homplanesstructure} there exists $v\in\R^n\setminus \{0\}$ such that $V=v^\perp\otimes \R e_{n+1}$ and $\supp(\nu)=V$, where $v^\perp$ denotes the orthogonal complement of $v$ inside $\R^n$. Hence we have
	\[
		\int \lvert z_H\rvert^4\langle z_H,v\rangle^2
					e^{-\lVert z\rVert^4}\,d\mathcal{H}^{n+1}\llcorner V(z)=0,
	\]
					which contradicts Proposition \ref{disconnectdegenerated} and concludes the proof.
	\end{proof}
	
\vv

\section{Structure of non-degenerated uniform measures}\label{eq:sec_non_deg_unif_meas}

Throughout this section we assume that $\mu\in\mathcal{U}(n+1)\setminus \mathcal{DU}(n+1)$. Then, Proposition \ref{prop:notdeg} implies that there exist $\mathcal Q,b,$ and $\tau$ such that
\[
	\supp(\mu)\subseteq\bigl\{u\in\mathbb{P}^n:\langle b,u_H\rangle+\langle u_H,\mathcal{Q}u_H\rangle+\tau u_T=0\bigr\}\eqqcolon\mathbb{K}(b,\mathcal{Q},\tau).
\]
Furthermore, we define
\[
	\Sigma(b,\mathcal{Q},\tau)\coloneqq \{u\in\mathbb{K}(b,\mathcal{Q},\tau):b+2\mathcal{Q}u_H=0\}.
\]

\begin{proposition}\label{prop.tangreg}
For any $p\in\supp(\mu)\setminus \Sigma(b,\mathcal{Q},\tau)$ we have
$$\Tan_{n+1}(\mu,p)\subseteq \bigl\{\mathcal{C}^{n+1}\llcorner (b+2\mathcal{Q}p_H)^\perp\bigr\}.$$
\end{proposition}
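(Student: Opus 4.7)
The plan is to linearise the quadric constraint around $p$ in order to locate the support of any blowup, and then to invoke the structural result for uniform measures carried by a homogeneous subgroup.

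Fix $p\in\supp(\mu)\setminus\Sigma(b,\mathcal{Q},\tau)$, so that $\mathfrak{n}\coloneqq b+2\mathcal{Q}p_H\neq 0$, and set
\[
V\coloneqq\{v\in\mathbb{P}^n:\langle v_H,\mathfrak{n}\rangle=0\}.
\]
Since $\mathfrak{n}\neq 0$, the set $\mathfrak{n}^{\perp_{\R^n}}$ is an Euclidean hyperplane of $\R^n$, so $V=\mathfrak{n}^{\perp_{\R^n}}\oplus \R e_{n+1}$ belongs to $\Gr(n+1)$ by Corollary \ref{corollary:grn1}. Let $\nu\in\Tan_{n+1}(\mu,p)$ and choose an infinitesimal sequence $r_i$ with $r_i^{-(n+1)}T_{p,r_i}\mu\rightharpoonup\nu$. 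Since $\mu\in\mathcal{U}(n+1)$ and $p\in\supp(\mu)$, Proposition \ref{uniformup} gives $\nu\in\mathcal{U}(n+1)$.

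Next I would show $\supp(\nu)\subseteq V$. Given $y\in\supp(\nu)$, Proposition \ref{propspt1} furnishes $z_i\in\supp(\mu)$ such that $w_i\coloneqq\delta_{1/r_i}(z_i-p)\to y$. Both $z_i$ and $p$ lie in $\mathbb{K}(b,\mathcal{Q},\tau)$, and the bilinear identity
\[
\langle(z_i)_H,\mathcal{Q}(z_i)_H\rangle-\langle p_H,\mathcal{Q}p_H\rangle=2\langle\mathcal{Q}p_H,(z_i-p)_H\rangle+\langle(z_i-p)_H,\mathcal{Q}(z_i-p)_H\rangle
\]
allows me to subtract the two defining relations and obtain
\[
\langle\mathfrak{n},(z_i-p)_H\rangle+\langle(z_i-p)_H,\mathcal{Q}(z_i-p)_H\rangle+\tau(z_i-p)_T=0.
\]
Writing $(z_i-p)_H=r_i(w_i)_H$ and $(z_i-p)_T=r_i^2(w_i)_T$, dividing by $r_i$ turns the above into
\[
\langle\mathfrak{n},(w_i)_H\rangle+r_i\langle(w_i)_H,\mathcal{Q}(w_i)_H\rangle+\tau r_i(w_i)_T=0,
\]
and letting $i\to\infty$ gives $\langle\mathfrak{n},y_H\rangle=0$, that is, $y\in V$.

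With $\supp(\nu)\subseteq V$ and $\nu\in\mathcal{U}(n+1)$, Proposition \ref{verticalsamoa} applied to the subgroup $V\in\Gr(n+1)$ forces $\nu=\mathcal{C}^{n+1}\llcorner V$. Since $\nu$ was arbitrary in $\Tan_{n+1}(\mu,p)$, this proves the proposition. The argument is essentially algebraic; there is no real obstacle beyond being careful that $\mathfrak{n}\neq 0$ (which is guaranteed by $p\notin\Sigma(b,\mathcal{Q},\tau)$) so that $V$ is indeed an $(n+1)$-dimensional homogeneous subgroup on which Proposition \ref{verticalsamoa} applies.
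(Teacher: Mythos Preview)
Your argument is correct and follows essentially the same route as the paper: the paper simply cites \cite[Proposition 4.3]{MerloG1cod} for the linearisation showing $\supp(\nu)\subseteq (b+2\mathcal{Q}p_H)^\perp$ and then invokes Proposition \ref{verticalsamoa}, which is precisely what you have written out in detail.
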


\begin{proof}
An argument analogous to that in \cite[Proposition 4.3]{MerloG1cod} shows that $\supp (\nu)\subset (\mathcal{Q}p_H)^\perp$ for any $\nu\in \Tan(\mu,p)$. Hence, Proposition \ref{verticalsamoa} concludes the proof.
\end{proof}

\vv

\subsection{Uniform measures with \texorpdfstring{$\tau\neq 0$}{t neq 0} do not exists}\label{sectneq0notexist}

Throughout this section, if not otherwise specified, we assume that $n\geq 2$.

\begin{proposition}\label{prop:touchhor}
Suppose that $\nu\in \mathcal U(n+1)$ is such that  $\supp(\nu)\subseteq \mathbb{K}(0,\mathcal{D},-1)$. If we let $y\in\mathbb{K}(0,\mathcal{D},-1)\setminus \supp(\nu)$ and take $z\in\supp(\nu)$ such that
$$\lvert z_H-y_H\rvert=\dist_{\mathrm{eu}}\bigl(y_H,\pi_H(\supp(\nu))\bigr),$$
then $z\in \Sigma(0,\mathcal{D},-1)$.
\end{proposition}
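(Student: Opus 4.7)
The plan is to argue by contradiction: assume $\mathcal{D}z_H \neq 0$, so that $z \notin \Sigma(0,\mathcal{D},-1)$. Then Proposition \ref{prop.tangreg} combined with Proposition \ref{uniformup} gives that $\nu$ has a unique tangent at $z$, namely $\mathcal{C}^{n+1}\llcorner \Pi$, where $\Pi := (\mathcal{D}z_H)^\perp \oplus \R e_{n+1}$. First I would verify $z_H \neq y_H$: otherwise $y,z \in \mathbb{K}(0,\mathcal{D},-1)$ forces $y_T = \langle y_H,\mathcal{D}y_H\rangle = \langle z_H, \mathcal{D}z_H\rangle = z_T$, so $y = z \in \supp(\nu)$, contradicting $y \notin \supp(\nu)$.

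Fix any $v = (v_H, v_T) \in \Pi$, so $v_H \in (\mathcal{D}z_H)^\perp$ and $v_T \in \R$ is arbitrary. By Proposition \ref{propspt1}, there exist $r_i \to 0^+$ and $w_i \in \supp(\nu)$ such that $\delta_{1/r_i}(w_i - z) \to v$; setting $h_i := w_{i,H} - z_H$, this reads $h_i = r_i v_H + \epsilon_i$ with $|\epsilon_i|/r_i \to 0$ and $(w_i)_T - z_T = r_i^2 v_T + o(r_i^2)$. The minimizing hypothesis $|w_{i,H} - y_H|^2 \geq |z_H - y_H|^2$ expands to
\[ |h_i|^2 + 2\langle h_i,\, z_H - y_H\rangle \geq 0. \]
Dividing by $r_i$ and letting $r_i \to 0$ gives $\langle v_H,\, z_H - y_H\rangle \geq 0$ for every $v_H \in (\mathcal{D}z_H)^\perp$; applying this to $-v_H$ as well forces $z_H - y_H \perp (\mathcal{D}z_H)^\perp$, so $z_H - y_H = \lambda \mathcal{D}z_H$ for some $\lambda \in \R$, with $\lambda \neq 0$ because $z_H \neq y_H$ and $\mathcal{D}z_H \neq 0$.

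For the contradiction I would then extract second-order information via $w_i \in \mathbb{K}(0,\mathcal{D},-1)$, namely $(w_i)_T - z_T = 2\langle h_i, \mathcal{D}z_H\rangle + \langle h_i, \mathcal{D}h_i\rangle$. Since $v_H \perp \mathcal{D}z_H$ one has $\langle h_i, \mathcal{D}z_H\rangle = \langle \epsilon_i, \mathcal{D}z_H\rangle$, and so
\[ 2\langle h_i,\, z_H - y_H\rangle = 2\lambda\langle \epsilon_i, \mathcal{D}z_H\rangle = \lambda\bigl[(w_i)_T - z_T - \langle h_i, \mathcal{D}h_i\rangle\bigr]. \]
Substituting into the touching inequality, using $|h_i|^2 = r_i^2|v_H|^2 + o(r_i^2)$ and $\langle h_i,\mathcal{D}h_i\rangle = r_i^2\langle v_H,\mathcal{D}v_H\rangle + o(r_i^2)$, dividing by $r_i^2$, and letting $r_i \to 0$ yields
\[ |v_H|^2 + \lambda v_T - \lambda\langle v_H, \mathcal{D}v_H\rangle \geq 0 \qquad \text{for all } (v_H,v_T) \in \Pi. \]
Specializing to $v_H = 0$ leaves $\lambda v_T \geq 0$ for every $v_T \in \R$, which is impossible since $\lambda \neq 0$.

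The main obstacle is the two-scale nature of the argument: the single horizontal inequality $|h_i|^2 + 2\langle h_i, z_H - y_H\rangle \geq 0$ must be read twice, first at order $r_i$ to fix the direction $z_H - y_H \parallel \mathcal{D}z_H$ and thereby kill the leading horizontal term, and only then at order $r_i^2$, where the graph constraint $w_T = \langle w_H, \mathcal{D}w_H\rangle$ feeds the free vertical coordinate $v_T$ of the tangent plane into the purely horizontal touching condition. Without first eliminating the leading term the second-order information is masked, and without the graph relation the coordinate $v_T$ remains invisible to the horizontal constraint, so both ingredients are essential to close the contradiction.
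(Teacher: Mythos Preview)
The proposal is correct and follows essentially the same approach as the paper: assume $z\notin\Sigma(0,\mathcal{D},-1)$, use the flat tangent at $z$ (Propositions \ref{prop.tangreg} and \ref{propspt1}) to produce approximating sequences in $\supp(\nu)$, read the minimality condition $|w_{i,H}-y_H|^2\ge|z_H-y_H|^2$ at first order in $r_i$ to force $z_H-y_H=\lambda\,\mathcal{D}z_H$ with $\lambda\neq 0$, and then at second order, via the graph relation $(w_i)_T=\langle (w_i)_H,\mathcal{D}(w_i)_H\rangle$, to obtain a one-sided bound on the free vertical coordinate $v_T$, contradicting its arbitrariness. Your explicit verification that $z_H\neq y_H$ (using that $\mathbb{K}(0,\mathcal{D},-1)$ is a graph over $\R^n$) is a nice clarification the paper leaves implicit.
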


\begin{proof}
First of all let us observe that $\mathcal{D}\neq 0$, otherwise $\supp(\nu)\subseteq \{x_{n+1}=0\}$ and this would imply that $\nu=0$ thanks to Proposition \ref{supportoK} and the fact that the plane $\{x_{n+1}=0\}$ has dimension $n$.

Assume by contradiction that $z\not\in \Sigma(0,\mathcal{D},-1)$. Then, by Propositions \ref{propspt1} and \ref{prop.tangreg} we infer that for any $w\in (\mathcal{D}z_H)^\perp$ and any infinitesimal sequence $\{r_i\}_{i\in\N}$ there exists a sequence $\{ z_i\}_{i\in\N}\subseteq \supp(\nu)$ such that
\[
	\delta_{1/r_i}(z_i-z)\to w.
\]
In particular, there exists a sequence $\{\Delta_i\}\subseteq \mathbb{P}^n$ such that $\lVert \Delta_i\rVert$ is infinitesimal and:
\begin{itemize}
    \item[\hypertarget{(i)}{(i)}] $(z_i)_H\coloneqq z_H+r_iw_H+r_i(\Delta_i)_H$.
    \item[(ii)] $(z_i)_T\coloneqq z_T+r_i^2w_T+r_i^2(\Delta_i)_T$.
\end{itemize}

Since $z_i\in\supp(\nu)$ for all $i\in \mathbb N$, these points also belong to $\mathbb{K}(0,\mathcal{D},-1)$. This implies in particular that
\begin{equation}
\begin{split}
        z_T+r_i^2w_T+r_i^2(\Delta_i)_T=&(z_i)_T=\langle (z_i)_H,\mathcal{D}(z_i)_H\rangle\\
        =&\langle z_H+r_iw_H+r_i(\Delta_i)_H,\mathcal{D}[z_H+r_iw_H+r_i(\Delta_i)_H]\rangle\\
        =&\langle z_H,\mathcal{D}z_H\rangle+2r_i\langle z_H,\mathcal{D}w_H\rangle+2r_i\langle z_H,\mathcal{D}(\Delta_i)_H\rangle+r_i^2\langle w_H,\mathcal{D} w_H\rangle\\
        &\qquad\qquad\qquad+2r_i^2\langle w_H,\mathcal{D} (\Delta_i)_H\rangle+r_i^2\langle(\Delta_i)_H,\mathcal{D} (\Delta_i)_H\rangle.
        \label{eq:quad1}
\end{split}
\end{equation}

Moreover, $z\in\mathbb{K}(0,\mathcal{D},-1)$ and $w\in (\mathcal{D}z_H)^\perp$, so \eqref{eq:quad1} simplifies to
\begin{equation}
\begin{split}
        r_i^2w_T+&r_i^2(\Delta_i)_T
        =2r_i\langle z_H,\mathcal{D}(\Delta_i)_H\rangle+r_i^2\langle w_H,\mathcal{D} w_H\rangle\\
        &\qquad\qquad\qquad+2r_i^2\langle w_H,\mathcal{D} (\Delta_i)_H\rangle+r_i^2\langle(\Delta_i)_H,\mathcal{D} (\Delta_i)_H\rangle.
        \label{eq:quad2}
\end{split}
\end{equation}
Since by construction we know that $\lvert (z_i)_H-y_H\rvert^2\geq \lvert z_H-y_H\rvert^2$, \hyperlink{(i)}{(i)} and a simple computation yield
\begin{equation}
    \begin{split}
        2\bigl\langle z_H-y_H, r_iw_H+r_i(\Delta_i)_H\bigr\rangle+r_i^2\lvert w_H+(\Delta_i)_H\rvert^2\geq 0.
        \label{eq:quad4}
    \end{split}
\end{equation}

We divide \eqref{eq:quad4} by $r_i$, take the limit as $i\to \infty$, and conclude that
\begin{equation}
    \langle z_H-y_H, w_H\rangle\geq 0.
    \label{eq:quad3}
\end{equation}
However, since every element of $(\mathcal{D}z_H)^\perp$ satisfy \eqref{eq:quad3}, we have that there exists $\lambda\neq 0$ such that $\lambda \mathcal{D}z_H=z_H-y_H$. Thus we collect \eqref{eq:quad2} and \eqref{eq:quad4}, and write
\begin{equation}
    \begin{split}
       & 0\leq  2\lambda^{-1}\langle z_H, \mathcal{D}(\Delta_i)_H\rangle+r_i\lvert w_H+(\Delta_i)_H\rvert^2\\
       &\qquad\qquad =\lambda^{-1}r_i\big[w_T+(\Delta_i)_T-\langle w_H,\mathcal{D} w_H\rangle-2\langle w_H,\mathcal{D} (\Delta_i)_H\rangle\\
        &\qquad\qquad\qquad\qquad-\langle(\Delta_i)_H,\mathcal{D} (\Delta_i)_H\rangle\big]+r_i\lvert w_H+(\Delta_i)_H\rvert^2.
        \label{eq:quad5}
    \end{split}
\end{equation}

We divide by $r_i$ both sides of \eqref{eq:quad5}, take the limit as $i\to \infty$, and infer that
\begin{equation}
    \begin{split}
       & 0\leq \lambda^{-1}\big[w_T-\langle w_H,\mathcal{D} w_H\rangle]+\lvert w_H\rvert^2,
        \label{eq:quad5bis}
    \end{split}
\end{equation}
which turns into $w_T\leq -\langle w_H,\mathcal{D} w_H\rangle+\lambda\lvert w_H\rvert^2$. This constitutes a non-trivial bound on $w_T$ that contrasts with the arbitrariness of our choice of $w$. Hence, \eqref{eq:quad5bis} contradicts the assumption $z\not \in \Sigma(0,\mathcal{D},-1)$, which finishes the proof.
\end{proof}

\vv

In the two following propositions we use the results of Appendix \ref{TYLR}.
\begin{proposition}\label{prop:inftyhorgood}
Suppose that $\nu\in \mathcal U(n+1)$ is such that  $\supp(\nu)\subseteq \mathbb{K}(0,\mathcal{D},-1)$. Then $\supp(\nu)=\mathbb{K}(0,\mathcal{D},-1)$ and $\mathrm{dim}(\mathrm{Ker}(\mathcal{D}))\leq n-2$.
\end{proposition}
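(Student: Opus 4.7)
\emph{Proof plan.} My plan is to establish the dimensional bound $\dim(\ker\mathcal{D})\leq n-2$ first, as this is the harder of the two statements, and then deduce $\supp(\nu)=\mathbb{K}(0,\mathcal{D},-1)$ by combining Proposition~\ref{prop:touchhor} with a simple connectedness argument.

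\emph{Stage 1 (the dimensional bound).} I would argue by contradiction. Since $\mathcal{D}\neq 0$ (as observed at the start of the proof of Proposition~\ref{prop:touchhor}), failure of the bound forces $\dim(\ker\mathcal{D}) = n-1$ and $\mathcal{D}$ to have rank one. Rotating the horizontal variables and, if necessary, flipping the sign of $u_T$, I may assume $\mathcal{D}= e_1\otimes e_1$, so that $\mathbb{K}(0,\mathcal{D},-1)=\{u\in\mathbb{P}^n : u_T = u_1^2\}$ is a smooth graph over $\mathbb{R}^n$. Proposition~\ref{strutturaunifmeasvssurfacemeasure} yields $\nu = c\,\sigma_{\supp(\nu)}$ for some $c>0$, and for any $p$ in the relative interior of $\supp(\nu)$ inside $\mathbb{K}$ and every sufficiently small $r>0$ we have
\[
\sigma_{\mathbb{K}}(B(p,r)) = \sigma_{\supp(\nu)}(B(p,r)) = c^{-1}r^{n+1}.
\]
Appendix~\ref{TYLR} produces an asymptotic expansion $\sigma_{\mathbb{K}}(B(p,r)) = C_0(p)\,r^{n+1} + C_1(p)\,r^{n+3} + C_2(p)\,r^{n+5} + o(r^{n+5})$ whose coefficients are polynomials in $p_H$ and the entries of $\mathcal{D}$. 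The previous identity forces $C_1, C_2,\ldots$ to vanish on an open subset of $\mathbb{K}$, and by analyticity in $p_H$ to vanish identically. I would then read off from the explicit formula for $C_1$ on the rank-one quadric $u_T=u_1^2$, derived in the style of the Heisenberg calculations of \cite{MerloG1cod}, a non-trivial monomial in $p_1$ that reflects the horizontal anisotropy of $B(p,r)$ along the $e_1$-direction, yielding the required contradiction.

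\emph{Stage 2 (the support equals the quadric).} Granting Stage 1, $\ker(\mathcal{D})$ has Euclidean codimension at least $2$, so $\mathbb{R}^n\setminus\ker(\mathcal{D})$ is connected. Set $K:=\pi_H(\supp(\nu))$; since $\pi_H$ restricts to a Euclidean diffeomorphism from $\mathbb{K}$ onto $\mathbb{R}^n$, $K$ is a closed subset of $\mathbb{R}^n$ containing $0$, and it is enough to prove $K=\mathbb{R}^n$. Suppose by contradiction that $K\neq\mathbb{R}^n$. Proposition~\ref{prop:touchhor} applied to arbitrary $y\in\mathbb{K}\setminus\supp(\nu)$ says that the Euclidean nearest point of $K$ to $y_H$ lies in $\ker(\mathcal{D})$; letting $y_H\to\partial K$ gives $\partial K\subseteq\ker(\mathcal{D})$. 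Hence the disjoint open sets $\mathrm{int}(K)$ and $\mathbb{R}^n\setminus K$ cover the connected set $\mathbb{R}^n\setminus\ker(\mathcal{D})$, and one of them must contain it entirely. The first alternative forces $K\supseteq\overline{\mathbb{R}^n\setminus\ker(\mathcal{D})}=\mathbb{R}^n$, contradicting $K\neq\mathbb{R}^n$; the second forces $K\subseteq\ker(\mathcal{D})$, contradicting the fact that $K$ has Euclidean Hausdorff dimension $n$, being the image under a Euclidean bi-Lipschitz map of the $n$-dimensional analytic variety $\supp(\nu)$. Both alternatives fail, so $\supp(\nu)=\mathbb{K}(0,\mathcal{D},-1)$.

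\emph{Main obstacle.} Stage 1 is the technical core. The rank-one quadric $u_T=u_1^2$ is, geometrically, the product of a one-dimensional parabola with $\mathbb{R}^{n-1}$, and a parabolic ball $B(p,r)$ meets it in a horizontally anisotropic slab of thickness $\sim r^2/|p_1|$ in the $e_1$-direction and $\sim r$ in each of the remaining $(n-1)$ horizontal directions. Tracking these scales carefully through the area formula of Appendix~\ref{TYLR} and isolating the coefficient of $r^{n+3}$ with its non-trivial $p_1$-dependence is the delicate calculation, closely paralleling---but more intricate than---the analogous step in the Heisenberg setting of \cite{MerloG1cod}.
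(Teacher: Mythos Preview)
Your proposal uses the same two ingredients as the paper---Proposition~\ref{prop:touchhor} and the Taylor expansion of Appendix~\ref{TYLR}---but the order you chose introduces a circularity you should fix. In Stage~1 you need a point $p$ that lies in the \emph{relative interior} of $\supp(\nu)$ inside $\mathbb{K}$ in order to identify $\sigma_{\mathbb{K}}(B(p,r))$ with $c^{-1}r^{n+1}$, yet your only tool for producing such a point is the boundary argument of Stage~2, which as written already presupposes $\dim\ker(\mathcal{D})\le n-2$. The paper resolves this by running the open-closed argument \emph{first}, with no rank assumption: for every connected component $C$ of $\mathbb{K}(0,\mathcal{D},-1)\setminus\Sigma(0,\mathcal{D},-1)$ one shows, via Proposition~\ref{prop:touchhor}, that $\supp(\nu)\cap C$ is both relatively closed and relatively open in $C$, hence either empty or all of $C$. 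Your own $\partial K\subseteq\ker(\mathcal{D})$ argument does exactly this once you drop the connectedness hypothesis; in the rank-one case it yields that $\supp(\nu)$ contains at least one of the two half-spaces of $\mathbb{K}\setminus\Sigma$, and this supplies the interior point needed in Stage~1.

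Your description of Stage~1 also overstates the difficulty. The coefficient you are after is not ``a non-trivial monomial in $p_1$'' to be teased out by tracking anisotropic scales: Proposition~\ref{unif:expansion} already gives the closed formula for $\mathfrak{e}(\mathcal X)$, and in the rank-one case one simply evaluates at a unit eigenvector $e$ of $\mathcal{D}$. With $\mathfrak{n}(e)=\pm e$ and $\mathrm{Tr}(\mathcal{D})=\lambda$, $\mathrm{Tr}(\mathcal{D}^2)=\lambda^2$, the right-hand side collapses to
\[
\frac{\lambda^2-2\lambda^2+\lambda^2}{4(n-1)}-\frac{1}{4}-\frac{(\lambda-\lambda)^2}{8(n-1)}=-\frac{1}{4},
\]
which is the desired contradiction in one line. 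So once the circularity is removed and you invoke Proposition~\ref{unif:expansion} directly, your argument coincides with the paper's.
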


\begin{proof} Analogously to the proof of Proposition \ref{prop:touchhor}, we can assume that $\mathrm{Ker}(\mathcal{D})\neq \{	0\}$.

Let $C$ be a connected component of $\mathbb{K}(0,\mathcal{D},-1)\setminus \Sigma(0,\mathcal{D},-1)$. Since $\supp(\nu)\cap C$ is relatively closed in $C$, if we can prove that it is also relatively open, the connectedness of $C$ would imply that either $\supp(\nu)\cap C=\emptyset$ or $\supp(\nu)\cap C=C$. So, let us assume that $\supp(\nu)\cap C\neq \emptyset$ and, arguing by contradiction, that there exist $z\in \supp(\nu)\cap C$ and a sequence of points $y(i)\in C\setminus \supp(\nu)$ such that $y(i)\to z$.
Then, let $z(i)\in \supp(\nu)$ be such that
$$\lvert z(i)_H-y(i)_H\rvert=\dist_{\mathrm{eu}}(y(i)_H,\pi_H(\supp(\nu))),$$
and note that $z(i)\to z$. By Proposition \ref{prop:touchhor} we have $z(i)\in \Sigma(0,\mathcal{D},-1)$, so $z$ itself must be in $\Sigma(0,\mathcal{D},-1)$ since $\Sigma(0,\mathcal{D},-1)$ is a closed set. However this contradicts the choice of $z$.

If the topological dimension of $\mathrm{Ker}(\mathcal{D})$ is smaller than $n-2$, then $\mathbb{K}(0,\mathcal{D},-1)\setminus \Sigma(0,\mathcal{D},-1)$ is a connected set since it is the image under the continuous map $u\mapsto(u,\langle u,\mathcal{D}u\rangle)$ of the connected set $\R^n\setminus \mathrm{Ker}(\mathcal{D})$. Hence, in this case, we have $\supp(\nu)=\mathbb{K}(0,\mathcal{D},-1)$.

In order to conclude the proof, it is enough to show that $\mathrm{Ker}(\mathcal{D})=n-1$ is not possible. We argue by contradiction and assume that $\mathrm{Ker}(\mathcal{D})=n-1$. This implies that $\mathbb{K}(0,\mathcal{D},-1)\setminus \Sigma(0,\mathcal{D},-1)$ has two connected components and at least one of them, which we denote as $C$, must be contained in $\supp(\nu)$, otherwise we would have $\nu=0$. Hence we can find a unitary eigenvector $e$ relative to the only non-null eigenvalue of $\mathcal{D}$ such that $\mathcal{X}(e)\coloneqq (e,\langle e,\mathcal{D}e\rangle)\in C$. In addition, since $C$ is relatively open in $\mathbb{K}(0,\mathcal{D},-1)$, there exists $r>0$ such that $$B(\mathcal{X}(e),r)\cap \supp(\nu)=B(\mathcal{X}(e),r)\cap C.$$

So, thanks to Theorem \ref{strutturaunifmeasvssurfacemeasure}, Proposition \ref{unif:expansion}, and denoting by $\lambda$ the only non-null eigenvalue of $\mathcal{D}$ and $\mathfrak{n}(e)\coloneqq \mathcal De/\lvert \mathcal De\rvert=\mathrm{sgn}(\lambda) e$, we have
\begin{equation*}
\begin{split}
      0=&\frac{\lambda^2-2\langle \mathfrak{n}(e),\mathcal{D}^2\mathfrak{n}(e)\rangle+\langle \mathfrak{n}(e),\mathcal{D}\mathfrak{n}(e)\rangle^2}{4(n-1)}-\frac{1}{4}-\frac{(\lambda-\langle \mathfrak{n}(e),\mathcal{D}\mathfrak{n}(e)\rangle)^2}{8(n-1)}\\
      =&\frac{\lambda^2-2\lambda^2+\lambda^2}{4(n-1)}-\frac{1}{4}-\frac{(\lambda-\lambda)^2}{8(n-1)}=-\frac{1}{4},
    \label{eq:id:id1}
\end{split}
\end{equation*}
which is a contradiction and finishes the proof.
\end{proof}

\vv

\begin{proposition}\label{prop:noT}
There exists no $\mu\in\mathcal{U}(n+1)$ whose support is contained in a quadric $\mathbb{K}(b,\mathcal{Q},\tau)$ with $\tau\neq 0$.
\end{proposition}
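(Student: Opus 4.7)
The plan is to argue by contradiction via a blowdown reduction to Proposition \ref{prop:inftyhorgood}, followed by the same Taylor expansion that closed that proof. Suppose $\mu\in\mathcal{U}(n+1)$ had $\supp(\mu)\subseteq \mathbb{K}(b,\mathcal{Q},\tau)$ with $\tau\neq 0$. Dividing the defining equation by $-\tau$ we may rewrite this as $\supp(\mu)\subseteq \mathbb{K}(\tilde b,\mathcal{D},-1)$ with $\tilde b=-b/\tau$ and $\mathcal{D}=-\mathcal{Q}/\tau$.

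First I would pass to a blowdown. By Proposition \ref{uniformup}, $\Tan_{n+1}(\mu,\infty)\neq\emptyset$ and is contained in $\mathcal{U}(n+1)$, so fix $\nu\in\Tan_{n+1}(\mu,\infty)$ with $R_i^{-(n+1)}T_{0,R_i}\mu\rightharpoonup \nu$ for some $R_i\to\infty$. For any $z\in\supp(\nu)$, Lemma \ref{replica} produces $z_i\in\supp(\mu)$ with $w_i\coloneqq\delta_{1/R_i}(z_i)\to z$; substituting $z_i=\delta_{R_i}(w_i)$ into the quadric equation for $\supp(\mu)$ and dividing by $R_i^2$ yields
\[
R_i^{-1}\langle \tilde b,w_{i,H}\rangle+\langle w_{i,H},\mathcal{D} w_{i,H}\rangle-w_{i,T}=0,
\]
so passing to the limit as $i\to\infty$ gives $\supp(\nu)\subseteq \mathbb{K}(0,\mathcal{D},-1)$. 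If $\mathcal{D}=0$ then $\supp(\nu)\subseteq\{u_T=0\}$, an element of $\Gr(n)$; Propositions \ref{supportoK} and \ref{verticalsamoa} together force $\nu=0$, contradicting $0\in\supp(\nu)$. Hence $\mathcal{D}\neq 0$ and Proposition \ref{prop:inftyhorgood} applies, giving $\supp(\nu)=\mathbb{K}(0,\mathcal{D},-1)$ together with $\dim\ker\mathcal{D}\leq n-2$. In particular $\mathcal{D}$ admits a nonzero real eigenvalue $\lambda$ with unit eigenvector $e$.

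The final step is to repeat verbatim the Taylor expansion argument from the last paragraph of the proof of Proposition \ref{prop:inftyhorgood}. The point $\mathcal{X}(e)\coloneqq(e,\lambda)$ lies in $\supp(\nu)\setminus\Sigma(0,\mathcal{D},-1)$ because $2\mathcal{D} e=2\lambda e\neq 0$, so Propositions \ref{strutturaunifmeasvssurfacemeasure} and \ref{unif:expansion} apply and the $(n+1)$-uniformity of $\nu$ forces the first non-trivial Taylor coefficient of $\sigma_{\supp(\nu)}(B(\mathcal{X}(e),r))$ to vanish. Substituting $\mathfrak{n}(e)=\mathrm{sgn}(\lambda)e$, $\langle\mathfrak{n}(e),\mathcal{D}\mathfrak{n}(e)\rangle=\lambda$, and $\langle\mathfrak{n}(e),\mathcal{D}^2\mathfrak{n}(e)\rangle=\lambda^2$ exactly as in the proof of Proposition \ref{prop:inftyhorgood} collapses the coefficient to $-1/4$, the desired contradiction. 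The main obstacle is the blowdown identification in the first step; once Proposition \ref{prop:inftyhorgood} pins down $\supp(\nu)$ as the full quadric, the eigenvector computation is essentially identical to the rank-one case already handled there and introduces no new difficulty.
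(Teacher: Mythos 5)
Your first two steps (the blowdown identification $\supp(\nu)\subseteq \mathbb{K}(0,\mathcal{D},-1)$ and the appeal to Proposition \ref{prop:inftyhorgood}) coincide with the paper's argument, but the final step contains a genuine gap. You claim that substituting a unit eigenvector $e$ of $\mathcal{D}$ into the identity of Proposition \ref{unif:expansion} ``collapses the coefficient to $-1/4$ exactly as in the proof of Proposition \ref{prop:inftyhorgood}.'' That collapse is special to the rank-one case: there the only non-null eigenvalue $\lambda$ satisfies $\mathrm{Tr}(\mathcal{D})=\lambda$ and $\mathrm{Tr}(\mathcal{D}^2)=\lambda^2$, so both the first and the last fraction in \eqref{eq:id:id_bis} vanish and only $-\tfrac14$ survives. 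In your situation Proposition \ref{prop:inftyhorgood} has just told you the opposite: $\dim\mathrm{Ker}(\mathcal{D})\leq n-2$, so $\mathcal{D}$ has at least two non-zero eigenvalues, hence $\mathrm{Tr}(\mathcal{D}^2)>\lambda^2$ and in general $\mathrm{Tr}(\mathcal{D})\neq\lambda$. The substitution therefore only yields
\begin{equation*}
0=\frac{\mathrm{Tr}(\mathcal{D}^2)-\lambda^2}{4(n-1)}-\frac{1}{4}-\frac{(\mathrm{Tr}(\mathcal{D})-\lambda)^2}{8(n-1)},
\end{equation*}
which is the constraint \eqref{eq:id:id2}: a quadratic equation that each non-null eigenvalue must satisfy, and which certainly admits solutions, so no contradiction is reached at this point.

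The paper's proof has to do substantially more work precisely here: from \eqref{eq:id:id2} it deduces that $\mathcal{D}$ has at most two distinct non-zero eigenvalues $\lambda_1,\lambda_2$ subject to the Vieta relations \eqref{eq:id:id:4}; it then exploits the fact that the identity of Proposition \ref{unif:expansion} holds at \emph{every} point of the quadric (not only along eigendirections), decomposes a generic $x$ spectrally as $v_0+v_1+v_2$, and the resulting identity \eqref{simpid} reduces to \eqref{eq:risolv}, forcing $\lambda_1=\lambda_2$; finally, plugging this back into \eqref{eq:id:id:4} forces the rank to be $3$ and produces the impossible identity $\lambda^2=\tfrac{2(n-1)}{3}+\lambda^2$ (recall $n\geq 2$ in this section). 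None of this is ``essentially identical to the rank-one case''; to complete your argument you would need to reproduce this spectral analysis, or find a substitute for it.
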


\begin{proof}
We recall that the set $\Tan_{n+1}(\mu,\infty)$ is  non-empty thanks to Proposition \ref{uniformup}. Via the same argument used in the proof of \cite[Proposition 5.2]{MerloG1cod}, one can show that any $\nu\in\Tan_{n+1}(\mu,\infty)$ is a uniform measure whose support is contained in the quadric $\mathbb{K}(0,\mathcal{Q}/\tau,-1)$. Thanks to Proposition \ref{prop:inftyhorgood} we know that $\mathrm{Ker}(\mathcal{Q})$ has dimension smaller than $n-2$ and that
$$\supp(\nu)=\mathbb{K}(0,\mathcal{Q}/\tau,-1).$$

Let $\mathcal{D}\coloneqq \mathcal{Q}/\tau$. By Theorem \ref{strutturaunifmeasvssurfacemeasure} and Proposition \ref{unif:expansion} we have
\begin{equation}
    0=\frac{\text{Tr}(\mathcal{D}^2)-2\langle \mathfrak{n}(x),\mathcal{D}^2\mathfrak{n}(x)\rangle+\langle \mathfrak{n}(x),\mathcal{D}\mathfrak{n}(x)\rangle^2}{4(n-1)}-\frac{1}{4}-\frac{(\text{Tr}(\mathcal{D})-\langle \mathfrak{n}(x),\mathcal{D}\mathfrak{n}(x)\rangle)^2}{8(n-1)},
    \label{eq:id:id_bis}
\end{equation}
where $\mathfrak{n}(x)\coloneqq\mathcal{D}x/\lvert\mathcal{D}x\rvert$ for any $x\not \in \mathrm{Ker}(\mathcal{D})$. If $e$ is a unitary eigenvector relative to a non-null eigenvalue $\lambda$ of $\mathcal{D}$, we have that $\mathfrak{n}(e)=\mathrm{sgn}(\lambda)e$ and thus \eqref{eq:id:id_bis} reads
\begin{equation}
    0=\frac{\text{Tr}(\mathcal{D}^2)-\lambda^2}{4(n-1)}-\frac{1}{4}-\frac{(\text{Tr}(\mathcal{D})-\lambda)^2}{8(n-1)}.
    \label{eq:id:id2}
\end{equation}

The identity \eqref{eq:id:id2} implies that the non-null eigenvalues $\lambda$ satisfy a quadratic equation and thus $\mathcal{Q}/\tau$ has at most two distinct eigenvalues.
Denote by $\lambda_1$ and $\lambda_2$ the solutions to \eqref{eq:id:id2}, which satisfy
\begin{equation}
    \begin{cases}
    \lambda_1+\lambda_2=\frac{2\mathrm{Tr}(\mathcal{D})}{3},\\
    \lambda_1\lambda_2=\frac{2(n-1)+\mathrm{Tr}(\mathcal{D})^2-2\mathrm{Tr}(\mathcal{D}^2)}{3}.
    \end{cases}
    \label{eq:id:id:4}
\end{equation}

The spectral theorem implies that any $x\in \R^n\setminus \mathrm{Ker}(\mathcal{D})$ can be uniquely written as $x=v_0+v_1+v_2$, where $v_0,v_1,v_2$ are eigenvectors of $0$, $\lambda_1$, and $\lambda_2$ respectively. Observe that, for such $x$, the vector $\mathfrak{n}(x)$ becomes
\[
	\mathfrak{n}(x)=\frac{\lambda_1 v_1+\lambda_2 v_2}{\lvert \lambda_1 v_1+\lambda_2 v_2\rvert}=\frac{\lambda_1 v_1+\lambda_2 v_2}{\sqrt{\lambda_1^2\lvert v_1\rvert+\lambda_2^2\lvert v_2\rvert}}.
\]
Moreover
\begin{equation}
	\begin{split}
		\langle \mathfrak{n}(x),\mathcal{D}^2\mathfrak{n}(x)\rangle=&\frac{\lambda_1^4\lvert v_1\rvert^2+\lambda_2^4\lvert v_2\rvert^2}{\lambda_1^2\lvert v_1\rvert^2+\lambda_2^2\lvert v_2\rvert^2},\\
		\langle\mathfrak{n}(x),\mathcal{D}\mathfrak{n}(x)\rangle=&\frac{\lambda_1^3\lvert v_1\rvert^2+\lambda_2^3\lvert v_2\rvert^2}{\lambda_1^2\lvert v_1\rvert^2+\lambda_2^2\lvert v_2\rvert^2}.
		\label{valuesnandD}
	\end{split}
\end{equation}
Therefore, thanks to \eqref{eq:id:id:4}, the identity \eqref{eq:id:id_bis} also reads
\begin{equation}\label{simpid}
\begin{split} 
    0=&-4\langle \mathfrak{n}(x),\mathcal{D}^2\mathfrak{n}(x)\rangle+\langle \mathfrak{n}(x),\mathcal{D}\mathfrak{n}(x)\rangle^2+2\mathrm{Tr}(\mathcal{D})\langle\mathfrak{n}(x),\mathcal{D}\mathfrak{n}(x)\rangle\\
&\qquad\qquad\qquad+(2\mathrm{Tr}(\mathcal{D}^2)-2(n-1)-\mathrm{Tr}(\mathcal{D})^2)\\
=&-4\langle \mathfrak{n}(x),\mathcal{D}^2\mathfrak{n}(x)\rangle+\langle \mathfrak{n}(x),\mathcal{D}\mathfrak{n}(x)\rangle^2+3(\lambda_1+\lambda_2)\langle\mathfrak{n}(x),\mathcal{D}\mathfrak{n}(x)\rangle-3\lambda_1\lambda_2
\\
    \overset{\eqref{valuesnandD}}{=}&-4\frac{\lambda_1^4\lvert v_1\rvert^2+\lambda_2^4\lvert v_2\rvert^2}{\lambda_1^2\lvert v_1\rvert^2+\lambda_2^2\lvert v_2\rvert^2}+\frac{(\lambda_1^3\lvert v_1\rvert^2+\lambda_2^3\lvert v_2\rvert^2)^2}{(\lambda_1^2\lvert v_1\rvert^2+\lambda_2^2\lvert v_2\rvert^2)^2}
    +3(\lambda_1+\lambda_2)\frac{\lambda_1^3\lvert v_1\rvert^2+\lambda_2^3\lvert v_2\rvert^2}{\lambda_1^2\lvert v_1\rvert^2+\lambda_2^2\lvert v_2\rvert^2}-3\lambda_1\lambda_2.
\end{split}
\end{equation}

Few omitted algebraic computations show that \eqref{simpid} boils down to
\begin{equation}
0=-\frac{\lambda_1^2\lambda_2^2\lvert v_1\rvert^2\lvert v_2\rvert^2(\lambda_1-\lambda_2)^2}{(\lambda_1^2\lvert v_1\rvert^2+\lambda_2^2\lvert v_2\rvert^2)^2}.
    \label{eq:risolv}
\end{equation}
Since \eqref{eq:risolv} holds for any arbitrary couple $(v_1,v_2)\neq (0,0)$ and since $\lambda_1,\lambda_2\neq 0$, we conclude that $\lambda_1=\lambda_2$. We denote as $n-k$ the dimension of $\mathrm{Ker}(\mathcal{D})$ and we plug the information $\lambda\coloneqq\lambda_1=\lambda_2$ into \eqref{eq:id:id:4} which allows us to conclude that $k=3$, $\mathrm{Tr}(\mathcal{D}^2)=k\lambda^2$, and
{\[
	\lambda^2=\frac{2(n-1)+(k\lambda)^2-2k\lambda^2}{3}=\frac{2(n-1)}{3}+\lambda^2.
\]}
However, the above identity can hold only if $n=1$, which is excluded by hypothesis. This concludes the proof of the proposition.
\end{proof}
\vv

\subsection{Structure of uniform measures with \texorpdfstring{$\tau=0$}{t=0}.} As in the previous subsection, we assume that $n\geq 2$. 

\begin{proposition}\label{proppi.verti}
Let $\nu\in\mathcal U(n+1)$ be such that  $\supp(\nu)\subseteq \mathbb{K}(b,\mathcal{Q},0)$. If  $y\in\mathbb{K}(b,\mathcal{Q},0)\setminus \supp(\nu)$ and $z\in\supp(\nu)$ is such that
\begin{equation}\label{eq:zdistpt1}
    \lvert z-y\rvert=\dist_{\mathrm{eu}}(y,\supp(\nu)),
\end{equation}
then $z\in\Sigma(b,\mathcal{Q},0)$.
\end{proposition}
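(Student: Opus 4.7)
My plan is to argue by contradiction in the spirit of Proposition \ref{prop:touchhor}. Assume $v := b + 2\mathcal{Q} z_H \neq 0$. By Proposition \ref{prop.tangreg} the unique tangent of $\nu$ at $z$ is the Haar measure on the homogeneous subgroup $v^\perp$, so Proposition \ref{propspt1} yields, for every $w \in v^\perp$ and every infinitesimal sequence $r_i \downarrow 0$, a sequence $z_i \in \supp(\nu)$ with $\delta_{1/r_i}(z_i - z) \to w$. I write
\[
(z_i)_H = z_H + r_i w_H + r_i (\Delta_i)_H, \qquad (z_i)_T = z_T + r_i^2 w_T + r_i^2 (\Delta_i)_T,
\]
with $\|\Delta_i\| \to 0$. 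The fact that both $z$ and $z_i$ lie in $\mathbb{K}(b,\mathcal{Q},0)$ together with $\langle v, w_H\rangle = 0$ immediately yields
\[
\langle v, (\Delta_i)_H\rangle = -r_i\,\langle w_H + (\Delta_i)_H,\, \mathcal{Q}(w_H + (\Delta_i)_H)\rangle.
\]

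I then expand the distance inequality $|z_i - y|^2 \geq |z - y|^2$. Dividing by $r_i$ and passing to the limit produces $\langle z_H - y_H, w_H\rangle \geq 0$ for every $w_H \perp v$; by symmetry, this forces $z_H - y_H = \lambda v$ for some $\lambda \in \mathbb{R}$. Dividing by $r_i^2$ instead, using the quadric identity above to rewrite the leading horizontal term, and taking $i \to \infty$ gives
\[
-2\lambda\langle w_H, \mathcal{Q} w_H\rangle + |w_H|^2 + 2(z_T - y_T)w_T \geq 0
\]
for all $w_H \perp v$ and all $w_T \in \mathbb{R}$. Here is the crucial feature of the case $\tau = 0$: the quadric places no constraint on the vertical coordinate, so $w_T$ is entirely free. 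Choosing $w_H = 0$ and letting $w_T$ range over $\mathbb{R}$ forces $z_T = y_T$. If $\lambda = 0$, then $z = y$, contradicting $y \notin \supp(\nu)$.

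The main obstacle is the residual case $\lambda \neq 0$. Substituting $y_H = z_H - \lambda v$ into the equation $\langle b, y_H\rangle + \langle y_H, \mathcal{Q} y_H\rangle = 0$ and cancelling with the analogous equation for $z$ forces the algebraic identity $\lambda\langle v, \mathcal{Q}v\rangle = |v|^2$, which in particular requires $\langle v, \mathcal{Q}v\rangle \neq 0$ and pins down $\lambda$ uniquely. The residual second-order condition becomes the one-sided operator inequality $2\lambda\langle w_H, \mathcal{Q} w_H\rangle \leq |w_H|^2$ on $v^\perp$. The plan to close this last case is to exploit the additional rigidity coming from the uniformity of $\nu$: either by comparing the expression for $\lambda$ with the corresponding identity obtained by rerunning the same asymptotic analysis at a nearby regular point of $\supp(\nu)$, or by testing the residual inequality against a carefully chosen eigenvector of $\mathcal{Q}|_{v^\perp}$ built from $v$ to contradict the sharpness of the fixed value of $\lambda$. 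Carrying out this final algebraic step is the technical heart of the proof; once resolved, both sub-cases are settled and the conclusion $v = 0$, i.e., $z \in \Sigma(b,\mathcal{Q},0)$, follows.
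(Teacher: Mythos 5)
Your set-up is the same as the paper's: you blow up at $z$ via Propositions \ref{prop.tangreg} and \ref{propspt1}, expand the minimality inequality $\lvert z_i-y\rvert^2\geq \lvert z-y\rvert^2$ along the sequence $z_i=z+\delta_{r_i}(w)+\delta_{r_i}(\Delta_i)$, obtain at first order that $z_H-y_H\in\mathrm{span}(b+2\mathcal{Q}z_H)$, and use the free vertical coordinate $w_T$ to dispose of the case $z_H=y_H$ (your $\lambda=0$). All of this is correct, and the auxiliary identity $\langle v,(\Delta_i)_H\rangle=-r_i\langle w_H+(\Delta_i)_H,\mathcal{Q}(w_H+(\Delta_i)_H)\rangle$ you extract from the quadric constraint is a legitimate (and nice) refinement of the second-order expansion.

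The gap is the case $\lambda\neq 0$, which you explicitly do not close. What you have there, namely $z_T=y_T$, the relation $\lambda\langle v,\mathcal{Q}v\rangle=\lvert v\rvert^2$ coming from subtracting the quadric equations of $y$ and $z$, and the one-sided bound $2\lambda\langle w_H,\mathcal{Q}w_H\rangle\leq \lvert w_H\rvert^2$ on $v^\perp$, is not by itself contradictory, so neither of your two sketched plans is a routine finish. For instance, with $\mathcal{Q}=\mathrm{diag}(1,-1)$ and $b=(-2,0)$ (a hyperbolic cylinder through the origin), taking $z_H=(2,0)$ and $y_H=(0,0)$ on the two branches with $z_T=y_T$ gives $v=b+2\mathcal{Q}z_H=(2,0)\neq 0$, $z_H-y_H=v$ (so $\lambda=1$), $\lambda\langle v,\mathcal{Q}v\rangle=4=\lvert v\rvert^2$, and $2\lambda\langle e_2,\mathcal{Q}e_2\rangle=-2\leq 1$; moreover $z$ really is the Euclidean nearest point of its branch to $y$. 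Hence ``testing against an eigenvector of $\mathcal{Q}\vert_{v^\perp}$'' cannot work in general from your constraints alone, and the alternative plan (rerunning the analysis at a nearby point) is not formulated precisely enough to evaluate; ruling such configurations out requires input beyond the list of relations you recorded. The paper instead dispatches this case by a short algebraic step at that very point: it pairs $z_H-y_H=\lambda(b+2\mathcal{Q}z_H)$ with the quadric relation to obtain $\langle z_H-y_H,z_H\rangle=0$, from which it concludes $z_H=y_H$ and hence $\lambda(b+2\mathcal{Q}z_H)=0$, contradicting $\lambda\neq0$ and $z\notin\Sigma(b,\mathcal{Q},0)$. Since exactly this decisive step is what your proposal defers as ``the technical heart,'' the argument as written is incomplete.
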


\begin{proof}
Assume by contradiction that $z\not\in \Sigma(b,\mathcal{D},0)$. Then, by Propositions \ref{propspt1} and \ref{prop.tangreg} we infer that
for any $w\in (b+2\mathcal{Q}z_H)^\perp$ and any infinitesimal sequence $\{r_i\}_{i\in\N}$ there exists a sequence $\{ z_i\}_{i\in\N}\subset\supp(\mu)$ such that
\[
	\delta_{1/r_i}(z_i-z)\to w.
\]
In particular, there exists a sequence $\{\Delta_i\}\subseteq \mathbb{P}^n$ such that $\lVert \Delta_i\rVert$ is infinitesimal and
\begin{itemize}
    \item[\hypertarget{(ia)}{(i)}] $(z_i)_H\coloneqq z_H+r_iw_H+r_i(\Delta_i)_H$.
    \item[\hypertarget{(iia)}{(ii)}] $(z_i)_T\coloneqq z_T+r_i^2w_T+r_i^2(\Delta_i)_T$.
\end{itemize}
Since $z_i\in \supp(\nu)$ then the choices of $y$ and $z$ as in \eqref{eq:zdistpt1} imply that
\begin{equation}\label{eq:prop75a1}
    \lvert z_i-y\rvert^2\geq \lvert z-y\rvert^2.
\end{equation}
Therefore, plugging \hyperlink{(ia)}{(i)} and \hyperlink{(iia)}{(ii)} inside \eqref{eq:prop75a1} we have
\begin{equation}
\begin{split}
    2r_i\langle z_H-y_H, w_H+&(\Delta_i)_H\rangle+r_i^2\lvert w_H+(\Delta_i)_H \rvert^2\\
    +&2r_i^2(w_T+(\Delta_i)_T)(z_T-y_T)+r_i^4(w_T+(\Delta_i)_T)^2\geq 0.
    \label{eq.balltang}
\end{split}
\end{equation}

We take the limit in \eqref{eq.balltang} as $i\to\infty$, and obtain that $\langle z_H-y_H, w_H\rangle\geq 0$. The arbitrariness of $w\in (b+2\mathcal{Q}z_H)^\perp$ implies that either $z_H=y_H$ or $z_H-y_H=\lambda(b+2\mathcal{Q}z_H)$ for some $\lambda\neq 0$ and thus $\langle z_H-y_H, w_H\rangle=0$.

Let us first assume that $z_H=y_H$. In this case we have $y_T\neq z_T$ and \eqref{eq.balltang} implies, for $i\to \infty$, that
$$\lvert w_H\rvert^2+2w_T(z_T-y_T)\geq 0,$$
which is an inequality that cannot be satisfied for any $w\in (b+2\mathcal{Q}z_H)^\perp$. Therefore the assumption $z_H=y_H$ cannot hold and this case is excluded.

Thus, we are left with the only other possibility that there exists $\lambda\neq 0$ such that $z_H-y_H=\lambda (b+2\mathcal{Q} z_H)$, which yields that
\begin{equation}\label{eq:idp752a}
    \langle z_H-y_H, z_H\rangle=\lambda\langle b+2\mathcal{Q} z_H, z_H\rangle=0,
\end{equation}
where the last equality above holds because $z\in \mathbb{K}(b,\mathcal{Q},0)$.
The identity \eqref{eq:idp752a} finally implies that $z_H=y_H$ and in particular $0=\lambda (b+2\mathcal{Q} z_H)$, which contradicts the facts that $\lambda\neq 0$ and $\mathcal{Q} z_H\neq 0$.
\end{proof}

\vv

\begin{proposition}\label{prop:invvert}
Let $\nu\in \mathcal U(n+1)$ be such that  $\supp(\nu)\subseteq \mathbb{K}(b,\mathcal{Q},0)$. If $C$ is a connected component of $\mathbb{K}(b,\mathcal{Q},0)\setminus \Sigma(b,\mathcal{Q},0)$, then either $C\subseteq\supp(\nu)$ or $\supp(\nu)\cap C=\emptyset$. In particular $\supp(\nu)$ is vertically invariant.
\end{proposition}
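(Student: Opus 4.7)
The plan is to combine a connectedness-based dichotomy argument, driven by Proposition \ref{proppi.verti}, with the observation that both relevant algebraic sets depend only on the horizontal variable when $\tau=0$.

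First I would establish the dichotomy by showing that $\supp(\nu)\cap C$ is both relatively closed and relatively open in $C$. Relative closedness is immediate since $\supp(\nu)$ is closed in $\mathbb{P}^n$. For relative openness I would argue by contradiction: assume there exist $z\in\supp(\nu)\cap C$ and a sequence $\{y_i\}\subseteq C\setminus\supp(\nu)$ with $y_i\to z$. Because $y_i$ is eventually contained in a compact Euclidean neighborhood of $z\in\supp(\nu)$, the distance $\dist_{\mathrm{eu}}(y_i,\supp(\nu))$ is attained at some $z_i\in\supp(\nu)$, and $|z_i-y_i|\leq|z-y_i|\to 0$ forces $z_i\to z$. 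Proposition \ref{proppi.verti} then yields $z_i\in\Sigma(b,\mathcal{Q},0)$ for every $i$; since $\Sigma(b,\mathcal{Q},0)$ is closed, passing to the limit gives $z\in\Sigma(b,\mathcal{Q},0)$, which contradicts $z\in C\subseteq\mathbb{K}(b,\mathcal{Q},0)\setminus\Sigma(b,\mathcal{Q},0)$. The connectedness of $C$ then produces the dichotomy $C\subseteq\supp(\nu)$ or $C\cap\supp(\nu)=\emptyset$.

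Next, for the vertical invariance, I would exploit that because $\tau=0$ the defining relations $\langle b,u_H\rangle+\langle u_H,\mathcal{Q}u_H\rangle=0$ of $\mathbb{K}(b,\mathcal{Q},0)$ and $b+2\mathcal{Q}u_H=0$ of $\Sigma(b,\mathcal{Q},0)$ involve only the horizontal projection $u_H$. Consequently both sets, and hence $\mathbb{K}(b,\mathcal{Q},0)\setminus\Sigma(b,\mathcal{Q},0)$, are invariant under the vertical translations $z\mapsto z+te_{n+1}$. In particular, for any $u\in\mathbb{K}(b,\mathcal{Q},0)\setminus\Sigma(b,\mathcal{Q},0)$ the whole vertical line $u+\R e_{n+1}$ is a connected subset of $\mathbb{K}(b,\mathcal{Q},0)\setminus\Sigma(b,\mathcal{Q},0)$ through $u$, and thus lies inside the connected component containing $u$. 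Therefore every connected component $C$ is vertically invariant, and by the dichotomy just proved, so is $\supp(\nu)\setminus\Sigma(b,\mathcal{Q},0)$, as a union of such components.

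Finally, to promote this property to all of $\supp(\nu)$, I would argue that $\supp(\nu)=\overline{\supp(\nu)\setminus\Sigma(b,\mathcal{Q},0)}$ and then use that closures of vertically invariant sets are vertically invariant. By Theorem \ref{th.marstrand} $\supp(\nu)$ is an analytic variety of parabolic dimension $n+1$, and by Proposition \ref{strutturaunifmeasvssurfacemeasure} $\nu$ coincides up to a dimensional factor with $\sigma_{\supp(\nu)}$; on the other hand, whenever $\mathcal Q\neq 0$ the singular set $\Sigma(b,\mathcal{Q},0)$ is cut out by an additional non-trivial linear condition on $u_H$ and therefore has strictly smaller Hausdorff dimension than $\mathbb{K}(b,\mathcal Q,0)$. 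Hence no neighborhood of a point of $\supp(\nu)\cap\Sigma(b,\mathcal Q,0)$ can meet $\supp(\nu)$ only in $\Sigma(b,\mathcal Q,0)$, giving the desired density. The main obstacle is precisely this last step: disposing of the potentially delicate intersection $\supp(\nu)\cap\Sigma(b,\mathcal{Q},0)$, since the dichotomy itself only controls $\supp(\nu)$ on the complement of $\Sigma(b,\mathcal Q,0)$; everything else is a soft combination of connectedness with the algebraic fact that vertical translations leave the defining relations untouched.
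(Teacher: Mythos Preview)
Your approach is essentially the same as the paper's: both prove relative openness of $\supp(\nu)\cap C$ in $C$ by contradiction via Proposition \ref{proppi.verti}, and both deduce vertical invariance from the fact that $\mathbb{K}(b,\mathcal{Q},0)$ and $\Sigma(b,\mathcal{Q},0)$ depend only on $u_H$.

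There is, however, a genuine gap in your closing step. Your claim that ``whenever $\mathcal{Q}\neq 0$ the singular set $\Sigma(b,\mathcal{Q},0)$ is cut out by an additional non-trivial linear condition on $u_H$ and therefore has strictly smaller Hausdorff dimension than $\mathbb{K}(b,\mathcal{Q},0)$'' is false in the rank-one case $\mathcal{Q}=a\otimes a$, $b=0$: there $\mathbb{K}(0,\mathcal{Q},0)=\{u:\langle a,u_H\rangle=0\}=\Sigma(0,\mathcal{Q},0)$, so $\mathbb{K}\setminus\Sigma=\emptyset$ and your density argument $\supp(\nu)=\overline{\supp(\nu)\setminus\Sigma}$ collapses. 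The paper disposes of this case separately at the outset: since then $\mathbb{K}(0,\mathcal{Q},0)\in\Gr(n+1)$, Proposition \ref{verticalsamoa} forces $\supp(\nu)=\mathbb{K}$, which is vertically invariant. The paper also checks that in the remaining rank-one cases $\Sigma=\emptyset$, and for $\mathrm{rk}(\mathcal{Q})\geq 2$ it records $\nu(\Sigma)=0$ directly (via Proposition \ref{strutturaunifmeasvssurfacemeasure} and $\dim\ker(\mathcal{Q})\leq n-2$), which makes the passage from $\supp(\nu)\setminus\Sigma$ to $\supp(\nu)$ immediate. Once you add the rank-one case distinction, your argument is complete and matches the paper's.
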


\begin{proof}
If $\mathcal{Q}$ has rank $1$ and $b=0$, then $\mathbb{K}(0,\mathcal{Q},0)\setminus \Sigma(0,\mathcal{Q},0)=\emptyset$ and there is nothing to prove. Note however that in this case $\mathbb{K}(0,\mathcal{Q},0)\in\Gr(n+1)$, so by Proposition \ref{verticalsamoa} the measure $\nu$ must be flat and thus $\mathbb{K}(0,\mathcal{Q},0)=\supp(\nu)$.

Let $a\in \R^n$ be such that $\mathcal Q=a\otimes a$. If $b$ and $a$ are linearly independent, then $\Sigma(b,\mathcal{Q},0)=\emptyset$. If, on the other hand, $b=\lambda a$ for some $\lambda>0$ then $$\lambda a+2a\langle a,z_H\rangle=0,\qquad \text{ for }z\in \Sigma(b,\mathcal{Q},0),$$ 
which contradicts the fact that $z\in \mathbb{K}(\lambda a,a\otimes a,0)$.

Assume now that $\mathrm{rk}(\mathcal{Q})\geq 2$. The dimension of $\ker(\mathcal{Q})$ is smaller than $ n-2$ in $\R^n$, so by Proposition \ref{strutturaunifmeasvssurfacemeasure} and the definition of the surface measures $\sigma_{\supp(\nu)}$ and $\sigma_{\mathbb{K}(b,\mathcal{Q},0)}$ it holds
\[
    \nu(\Sigma(b,\mathcal{Q},0))=\sigma_{\supp(\nu)}(\Sigma(b,\mathcal{Q},0))=\sigma_{\mathbb{K}(b,\mathcal{Q},0)}(\Sigma(b,\mathcal{Q},0))=0.
\]

The discussion above implies that there exists a non-empty connected component of $\mathbb{K}(b,\mathcal{Q},0)\setminus \Sigma(b,\mathcal{Q},0)$, that we denote as $C$.
We now claim that $\supp(\nu)\cap C$ is relatively open inside $C$ which, since $\supp(\nu)\cap C$ is relatively closed inside $C$, allows us to infer that either $C\cap \supp(\nu)\neq \emptyset$ or $C\cap \supp(\nu)=C$.  

Suppose by contradiction that $C\cap \supp(\nu)$ is non-empty and not open. Then we can find $z\in \supp(\nu)\cap C $ and two sequences $\{y_i\}_{i\in\N}\subset \mathbb{K}(b,\mathcal{Q},0)\setminus \supp(\nu)$ and $\{z_i\}_{i\in\N}\subset \supp(\nu)$ such that the $z_i\to z$, $y_i\to z$, and
$\lvert y_i-z_i\rvert=\dist_{\eu}(y_i,\supp(\nu))$. Moreover,  since $\mathbb{K}(b,\mathcal{Q},0)\setminus \Sigma(b,\mathcal{Q},0)$ is an open set, the points $y_i,z_i$ cannot be contained in $\Sigma(b,\mathcal{Q},0)$ for $i$ big enough as they converge to $z$. This however is in contradiction with Proposition \ref{proppi.verti} and thus the first part of the proposition is proved.
The second part of the claim immediately follows by noticing that $\mathbb{K}(b,\mathcal{Q},0)$, $\Sigma(b,\mathcal{Q},0)$, and thus every connected component of $C$, are vertically invariant.
\end{proof}

\vv

\begin{proposition}\label{prop:reprez}
Let $\nu\in\mathcal U(n+1)$ be such that $\supp(\nu)\subseteq \mathbb{K}(b,\mathcal{Q},0)$. Then $\nu$ has the representation
\begin{equation}\label{eq:propreprez1}
    \nu(A)=c\int\mathcal{H}^{n-1}_{\eu}\llcorner \pi_H(\supp(\nu))\bigl(A\cap \{x_{n+1}=t\}\bigr)\,dt, \qquad \text{ for }A\subseteq \mathbb P^n,
\end{equation}
where $c$ is as in Proposition \ref{strutturaunifmeasvssurfacemeasure}.
In addition, $\nu$ is invariant under dilations and for any positive Borel function $f\colon \R^n\to [0,\infty]$ and $g\colon \R\to[0,\infty]$ we have
\begin{equation}
      \int f(\pi_H x)g(\pi_T x)\,d\nu(x)=c\int r^{n-2}\Big(\int f(rw)d  \sigma(w)\Big)dr\int g(t)\,dt
      \label{eq:numrepr}
\end{equation}
and
\begin{equation}
\int_{B_1(0)} f(z_H) \,d\nu(z)=c\int_0^1 r^{n-2}\sqrt{1-r^4}\Big(\int f(r u)d\sigma(u)\Big)\,dr,
      \label{eq:numrepr1}
\end{equation}
where $\sigma\coloneqq\mathcal{H}^{n-2}_{\eu}\llcorner \bigl[\pi_H(\supp(\nu))\cap \mathbb{S}^{n-1}\bigr]$. Finally, $\| \sigma\|=8\Gamma(\frac{n+5}{4})/(\sqrt{\pi}\Gamma(\frac{n-1}{4})c)$.
\end{proposition}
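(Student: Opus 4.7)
The plan is to proceed in three stages: derive the representation \eqref{eq:propreprez1} from the structural results of Section \ref{eq:sec_non_deg_unif_meas}, then upgrade vertical invariance to full dilation invariance, and finally deduce the Fubini-type formulas and compute $\|\sigma\|$. For the representation I would combine Proposition \ref{prop:invvert}, which yields that $\supp(\nu)$ is vertically invariant and hence $\supp(\nu) = E_H \times \R$ with $E_H \coloneqq \pi_H(\supp(\nu))$, with Proposition \ref{strutturaunifmeasvssurfacemeasure}, which gives $\nu = c\,\sigma_{\supp(\nu)}$. Unpacking the definition \eqref{eq:def_sigma_measure} of $\sigma$ and identifying the slice $(E_H \times \R) \cap \{x_T = t\}$ with $E_H$ via the isometric embedding $y \mapsto (y,t)$ then yields \eqref{eq:propreprez1} at once.

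To establish dilation invariance, it suffices to show that $E_H$ is a positive cone in $\R^n$: then $\delta_\lambda(\supp(\nu)) = \supp(\nu)$, and any two $(n+1)$-uniform measures sharing the same support coincide by Proposition \ref{supportoK}. To prove that $E_H$ is a cone I would first derive from the uniformity $\nu(B(0,r)) = r^{n+1}$, combined with \eqref{eq:propreprez1} and the explicit shape of Koranyi balls, an Abel-type integral identity in $\rho$ that forces $\mathcal{H}^{n-1}_{\mathrm{eu}}(B_n(z_H, \rho) \cap E_H) = K\rho^{n-1}$ for every $z_H \in E_H$ and every $\rho > 0$, with a universal constant $K$; thus $\mathcal{H}^{n-1}_{\mathrm{eu}} \llcorner E_H$ is $(n-1)$-uniform in the Euclidean sense. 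I would then split into cases. If $b = 0$, the quadric $\mathbb{K}(0, \mathcal{Q}, 0)$ and its singular set are both invariant under parabolic dilations, so by Proposition \ref{prop:invvert} the set $E_H$ is a union of cones. If $b \neq 0$, a curvature expansion of the defining polynomial at the origin (in coordinates where $b = e_1$) combined with the \emph{exact} Euclidean uniformity forces $\mathcal{Q}\lvert_{b^\perp}$ to vanish, which reduces the quadric to a union of parallel hyperplanes; the component containing the origin is $b^\perp$, so $\supp(\nu) = b^\perp \times \R \in \Gr(n+1)$ and $\nu$ is flat, hence trivially dilation invariant.

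Once $E_H$ is a cone, the remaining formulas follow by classical manipulations. Formula \eqref{eq:numrepr} is a direct consequence of the polar decomposition $\mathcal{H}^{n-1}_{\mathrm{eu}} \llcorner E_H = r^{n-2}\,dr \otimes \sigma$ on the cone, together with the vertical Fubini structure encoded in \eqref{eq:propreprez1}. For \eqref{eq:numrepr1} I would specialize \eqref{eq:propreprez1} to $A = B(0,1)$ and integrate first in the time variable, obtaining the weight $\sqrt{1 - |z_H|^4}$ on the horizontal slice; setting $f \equiv 1$ and using $\nu(B(0,1)) = 1$ then reduces the computation of $\|\sigma\|$ to evaluating $\int_0^1 r^{n-2}\sqrt{1-r^4}\,dr$, which after the substitution $u = r^4$ becomes a beta integral and yields the stated value. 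The main obstacle is the second stage, specifically the case $b \neq 0$: the Euclidean $(n-1)$-uniformity of $\mathcal{H}^{n-1}_{\mathrm{eu}} \llcorner E_H$ alone admits non-flat examples (Kowalski--Preiss cones), so the needed flatness must be extracted precisely from the combination of the uniformity with the \emph{affine} structure of the quadric at the origin, via the curvature expansion of its defining polynomial.
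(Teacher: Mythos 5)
Your first and third stages track the paper: \eqref{eq:propreprez1} is obtained exactly as you say from Propositions \ref{strutturaunifmeasvssurfacemeasure} and \ref{prop:invvert} together with the definition \eqref{eq:def_sigma_measure}, and once $\pi_H(\supp(\nu))$ is known to be a cone through the origin, the polar/coarea decomposition, the weight $\sqrt{1-r^4}$, and the beta-integral evaluation of $\|\sigma\|$ are precisely the paper's computations. The $b=0$ branch of your second stage (dilation invariance of the quadric and of its singular set, hence of each connected component, combined with Proposition \ref{prop:invvert} and the identification of any two uniform measures with the same support via Proposition \ref{supportoK}) is also the paper's route; note that the paper's own proof of the cone/dilation-invariance part is written exactly for $\mathbb K(0,\mathcal Q,0)$ and $\Sigma(0,\mathcal Q,0)$, i.e.\ in this homogeneous situation. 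Your derivation of Euclidean $(n-1)$-uniformity of $\mathcal H^{n-1}_{\eu}\llcorner \pi_H(\supp(\nu))$ is sound in spirit, although the paper gets it more cleanly later (Proposition \ref{charunifquadrics}) via the exponential moment identity of Corollary \ref{prop1} rather than an Abel-type inversion.

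The genuine gap is your $b\neq 0$ branch. The claim that exact Euclidean uniformity plus a curvature expansion of the defining polynomial at the origin forces $\mathcal Q\lvert_{b^\perp}=0$ is false. Take $n\geq 4$, $\mathcal Q=\mathrm{diag}(1,1,1,-1,0,\dots,0)$, $C\coloneqq\{y\in\R^n:y_1^2+y_2^2+y_3^2=y_4^2\}$, and $v\in C$ with $\mathcal Qv\neq 0$. Then $E_H\coloneqq C-v$ contains the origin, $\mathcal H^{n-1}_{\eu}\llcorner E_H$ is exactly $(n-1)$-uniform (Kowalski--Preiss, translated), and $E_H\subseteq\{u:\langle u,\mathcal Qu\rangle+\langle b,u\rangle=0\}$ with $b=2\mathcal Qv\neq 0$, while $\mathcal Q\lvert_{b^\perp}\neq 0$ and the second fundamental form at the origin does not vanish. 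Moreover the product of this measure with $\mathcal L^1$ in the vertical variable is Koranyi $(n+1)$-uniform (the same slicing computation you use for \eqref{eq:numrepr1}), so this configuration genuinely arises within the hypotheses of the $b\neq0$ branch, and flatness simply cannot be concluded there.

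The reason your curvature argument cannot detect this is structural: exact uniformity only forces the second-order coefficient in the ball-area expansion to vanish, which is a single scalar identity among the curvatures (the Euclidean counterpart of the relation in Proposition \ref{unif:expansion}), not the vanishing of the second fundamental form; this cancellation is exactly what makes the Kowalski--Preiss cone uniform, and it survives translating the cone so that a smooth point, where the quadric has a nonzero linear term, sits at the origin. So your conclusion that in the $b\neq0$ case the support is $b^\perp\times\R$ misclassifies precisely the non-flat measures that Proposition \ref{charunifquadrics} shows do occur, and your stage two needs to be replaced by the homogeneous argument (as in the paper) rather than patched by the expansion at a smooth point.
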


\begin{proof}
The identity \eqref{eq:propreprez1} is an immediate consequence of Propositions \ref{strutturaunifmeasvssurfacemeasure}, \ref{prop:invvert}, and the definition of the measure $\sigma_{\supp(\nu)}$.
In order to prove \eqref{eq:numrepr}, we first note that for any couple of Borel subsets $A_H\subseteq \R^n$ and $A_T\subseteq \R$ we have
$$\nu(A_H\times A_T)=c\mathcal{H}^{n-1}_{\eu}(A_H)\mathcal{L}^1(A_T).$$

The product formula above and monotone convergence theorem imply that
$$  \int f(\pi_H x)g(\pi_T x)\,d\nu(x)=c\int f(y)d\bigl[\mathcal{H}^{n-1}_{\eu}\llcorner \pi_H(\supp(\nu))\bigr](y)\int g(t)\,dt.$$

On the other hand, since $\mathbb{K}(0,\mathcal{Q},0)$ and $\Sigma(0,\mathcal{Q},0)$ are invariant under dilations, the set $\mathbb{K}(0,\mathcal{Q},0)\setminus\Sigma(0,\mathcal{Q},0)$ and each of its connected components are invariant under dilations, too. Thus, Proposition \ref{prop:invvert} implies that either $\nu$ is flat, and in this case there is nothing to prove, or $\supp(\nu)$ coincides with the closure of some connected components of $\mathbb{K}(0,\mathcal{Q},0)\setminus\Sigma(0,\mathcal{Q},0)$. In the latter case, $\pi_H(\supp(\nu))$ must be a cone in $\R^n$ and by the classical coarea formula\footnote{It is well known that coarea formula can be extend to general rectifiable sets. We refer, for instance, to \cite[Theorem 3.2.22]{Federer1996GeometricTheory}.} \cite[(10.6) Chapter 2]{Simon1983LecturesTheory} we have
$$  \int f(\pi_H x)g(\pi_T x)\,d\nu(x)=c\int r^{n-2}\Big(\int f(rw)\,d  \sigma(w)\Big)dr\int g(t)\,dt,$$
where $\sigma\coloneqq \mathcal{H}^{n-2}_{\eu}\llcorner \bigl[\pi_H(\supp(\nu))\cap \mathbb{S}^{n-1}\bigr]$, which concludes the proof of \eqref{eq:numrepr}.

The identity \eqref{eq:numrepr1} is proved similarly and the computation of $\| \sigma\|$ follows from \eqref{eq:numrepr1}, indeed
\[1=\nu(B(0,1))=c\| \sigma\|\int r^{n-2}\sqrt{1-r^4}\,dr=c\| \sigma\|\frac{\sqrt{\pi}\Gamma(\frac{n-1}{4})}{8\Gamma(\frac{n+5}{4})}.\qedhere\]
\end{proof}

\vv

\begin{proposition}\label{charunifquadrics}
Let $\nu\in\mathcal U(n+1)$ be such that $\supp(\nu)\subseteq \mathbb{K}(b,\mathcal{Q},0)$. Then either $\nu$ is flat or, up to isometries, there exists $c=c(n)>0$ such that
$$\nu=c\mathcal{H}^3\llcorner \{x_1^2+x^2_2+x_3^2=x^4\}\otimes \mathcal{L}^{n-4}\llcorner\mathrm{span}\{e_4,\ldots,e_n\}\otimes \mathcal{L}^1\llcorner \mathrm{span}\{e_{n+1}\}.$$
\end{proposition}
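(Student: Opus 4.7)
The plan is to combine the structural results developed up to Proposition \ref{prop:reprez} with the classical Kowalski--Preiss classification of codimension-$1$ uniform measures in Euclidean space. From Propositions \ref{prop:invvert} and \ref{prop:reprez}, I already know that $\supp(\nu)$ is vertically invariant and that $\nu$ admits the product representation $\nu = c\,\sigma\otimes\mathcal{L}^{1}\llcorner\R e_{n+1}$, where $\sigma = \mathcal{H}^{n-1}_{\eu}\llcorner S$ and $S\coloneqq\pi_{H}(\supp(\nu))$ is a closed cone through the origin contained in the zero set of $y\mapsto\langle b,y\rangle+\langle y,\mathcal{Q}y\rangle$. The dilation invariance supplied by Proposition \ref{prop:reprez} effectively reduces the analysis to the case $b=0$, in which $S$ is a genuine algebraic cone in $\R^{n}$.

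The next step is to upgrade the Koranyi uniformity of $\nu$ into a Euclidean uniformity for $\sigma$. Fixing $z=(z_{H},z_{T})\in\supp(\nu)$ and $r>0$ and slicing the Koranyi ball $B(z,r)$ by horizontal hyperplanes, the product representation gives
\begin{equation*}
   r^{n+1}\;=\;\nu(B(z,r))\;=\;c\int_{-r^{2}}^{r^{2}}\sigma\bigl(B_{n-1}(z_{H},(r^{4}-t^{2})^{1/4})\bigr)\,dt.
\end{equation*}
After substituting $t=r^{2}\sqrt{1-u}$ and then $\rho=r u^{1/4}$, this identity becomes the classical Abel integral equation
\begin{equation*}
   \frac{r^{n+1}}{c}\;=\;\int_{0}^{r}g_{z_{H}}(\rho)\,\frac{4\rho^{3}}{\sqrt{r^{4}-\rho^{4}}}\,d\rho,\qquad g_{z_{H}}(\rho)\coloneqq\sigma(B_{n-1}(z_{H},\rho)).
\end{equation*}
Because the right-hand side is independent of $z_{H}$, the Abel inversion formula (equivalently, the change of variables $R=r^{4}$, $u=\rho^{4}$ followed by the classical Abel uniqueness theorem) pins $g_{z_{H}}$ down to the explicit form $g_{z_{H}}(\rho)=\alpha(n)\rho^{n-1}$ for every $z_{H}\in S$ and $\rho>0$. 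Consequently $\alpha^{-1}\sigma$ is an $(n-1)$-dimensional uniform measure on $\R^{n}$ in the ordinary Euclidean sense.

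The final step is to invoke the Kowalski--Preiss classification \cite{Kowalski1986Besicovitch-typeSubmanifolds}. Either $S$ is a hyperplane through the origin, in which case $\supp(\nu)=S\oplus\R e_{n+1}$ is itself an $(n+1)$-dimensional homogeneous subgroup of $\mathbb{P}^{n}$ and Proposition \ref{verticalsamoa} forces $\nu$ to be flat; or $n\geq 4$ and, up to an orthogonal change of coordinates, $S=\{x_{1}^{2}+x_{2}^{2}+x_{3}^{2}=x_{4}^{2}\}\times\mathrm{span}(e_{5},\ldots,e_{n})$, and the product representation then delivers precisely the measure stated in the proposition. The normalising constant $c=c(n)$ is finally pinned down by imposing $\nu(B(0,1))=1$, equivalently by the explicit computation of $\|\sigma\|$ recorded at the end of Proposition \ref{prop:reprez}.

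The main obstacle is the Abel-inversion step: one has to verify that $g_{z_{H}}$, a non-decreasing, locally bounded function of $\rho$ inherited from the local finiteness of $\nu$, lies in the domain of uniqueness of the Abel transform and does not sit in some exotic singular class that might spoil inversion. Once this is rigorously in place, the remainder of the argument is a matter of carefully combining Propositions \ref{prop:invvert}--\ref{prop:reprez} with the deep Euclidean classification theorem, and no further geometric input beyond the results of the present section is required.
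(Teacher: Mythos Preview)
Your proposal is correct and arrives at the same endpoint as the paper (reducing to the Euclidean uniformity of $\sigma=\mathcal{H}^{n-1}_{\eu}\llcorner\pi_H(\supp(\nu))$ and invoking Kowalski--Preiss), but the key step is carried out by a genuinely different mechanism.

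The paper works with the smooth radially symmetric kernel $e^{-\lVert\cdot\rVert^4/t^4}$: by Corollary~\ref{prop1} one has $\int e^{-\lVert x-y\rVert^4/t^4}\,d\nu(y)=\tfrac{n+1}{4}\Gamma(\tfrac{n+1}{4})\,t^{n+1}$ for every $x\in\supp(\nu)$, and the product representation from Proposition~\ref{prop:reprez} factors this as $\sqrt{\pi}\,t^2\int e^{-\lvert x_H-y_H\rvert^4/t^4}\,d\sigma(y_H)$. Hence the horizontal Gaussian integral equals a constant times $t^{n-1}$ for every $x_H\in\pi_H(\supp(\nu))$, and a cited lemma (\cite[Lemma~3.4]{MR3461027}) converts this ``exponential'' uniformity into ball uniformity of $\sigma$. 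You instead slice the Koranyi ball and land on an Abel equation. The inversion worry you flag is not a genuine obstacle: $g_{z_H}$ is monotone non-decreasing, hence $L^1_{\loc}$, and after your substitution $u=\rho^4$, $R=r^4$ you obtain the classical Abel kernel $(R-u)^{-1/2}$, whose transform is injective on $L^1_{\loc}$; right-continuity of $g_{z_H}$ (closed balls) upgrades a.e.\ equality to pointwise equality. What each route buys: the paper's is shorter because the inversion is packaged inside the cited lemma (itself a Laplace-type argument), while yours is fully self-contained and avoids external references, at the cost of spelling out Abel uniqueness.

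One minor remark: the sentence ``the dilation invariance \ldots\ effectively reduces the analysis to the case $b=0$'' is unnecessary and slightly misleading. Your Abel argument uses only the product formula \eqref{eq:propreprez1} and dilation invariance of $S=\pi_H(\supp(\nu))$, both of which Proposition~\ref{prop:reprez} supplies for general $b$; no reduction is needed. Also, the balls you write as $B_{n-1}(z_H,\cdot)$ live in $\R^n$, so $B_n(z_H,\cdot)$ would match the paper's conventions.
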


\begin{proof}
Corollary \ref{prop1} and Proposition \ref{prop:reprez} imply that for any $t>0$ and $x\in\supp(\nu)$ we have 
\begin{equation}
\begin{split}
    \frac{n+1}{4}\Gamma\Big(\frac{n+1}{4}\Big) t^{n+1}=& \int e^{-\frac{\lVert x-y\rVert^4}{t^4}}\,d\nu(y)\\
    =&\int e^{-\frac{\lvert x_H-y_H\rvert^4}{t^4}}\,d\bigl[\mathcal{H}^{n-1}_{\eu}\llcorner \pi_H(\supp(\nu))\bigr](y)\int e^{-\frac{s^2}{t^4}}\,ds\\
      =&\sqrt{\pi}t^2\int e^{-\frac{\lvert x_H-y_H\rvert^4}{t^4}}\,d\bigl[\mathcal{H}^{n-1}_{\eu}\llcorner \pi_H(\supp(\nu))\bigr](y).
\end{split}
    \nonumber
\end{equation}
Hence, \cite[Lemma 3.4]{MR3461027} implies that $\mathcal{H}^{n-1}_{\eu}\llcorner \pi_H(\supp(\nu))$ is a uniform measure in $(\R^n,\lvert\cdot\rvert)$. This, together with \cite[Main Theorem]{Kowalski1986Besicovitch-typeSubmanifolds}, concludes the proof. 
\end{proof}

\vv

\section{Proof of \texorpdfstring{$1$}{1}-codimensional Preiss's theorem}\label{section:proof_main_theorem}

\begin{proposition}\label{conti}
The functional $\mathscr{F}\colon\mathcal{U}(n+1)\to\R$ defined by
 \begin{equation}
        \mathscr{F}(\nu)\coloneqq\inf_{u\in\mathbb{S}^{n-1}} 
        \int \lvert z_H\rvert^4\langle z_H,u\rangle^2
					e^{-\lVert z\rVert^4}\, d\nu(z),
    \end{equation}
is continuous with respect to the weak convergence of measures.
\end{proposition}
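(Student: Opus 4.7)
The plan is to establish both upper and lower semicontinuity of $\mathscr{F}$ separately along any sequence $\nu_i \rightharpoonup \nu$ in $\mathcal{U}(n+1)$. For each fixed $u\in\mathbb{S}^{n-1}$, the integrand $|z_H|^4\langle z_H,u\rangle^2$ is a polynomial, so Proposition \ref{polinomialconv} immediately yields that the evaluation $F_u(\nu)\coloneqq\int |z_H|^4\langle z_H,u\rangle^2 e^{-\|z\|^4}\,d\nu(z)$ is continuous in $\nu$ with respect to weak convergence. Since $\mathscr{F}(\nu_i)\leq F_u(\nu_i)$ for every $u$ and every $i$, passing to the limsup and then taking the infimum over $u$ gives $\limsup_i \mathscr{F}(\nu_i)\leq \mathscr{F}(\nu)$.

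For the lower semicontinuity, I would use the compactness of $\mathbb{S}^{n-1}$. For each $\nu$, the map $u\mapsto F_u(\nu)$ is continuous on $\mathbb{S}^{n-1}$, so the infimum defining $\mathscr{F}(\nu_i)$ is attained at some $u_i\in\mathbb{S}^{n-1}$. After extracting a subsequence one may assume $u_i\to u^\ast$. The key estimate, based on the identity $\langle z_H,u_i\rangle^2-\langle z_H,u^\ast\rangle^2=\langle z_H,u_i-u^\ast\rangle\langle z_H,u_i+u^\ast\rangle$, reads
\[
\bigl|F_{u_i}(\nu_i)-F_{u^\ast}(\nu_i)\bigr|\leq 2\,|u_i-u^\ast|\int |z_H|^6 e^{-\|z\|^4}\,d\nu_i(z).
\]
The integral on the right is controlled uniformly in $i$: since $|z_H|\leq \|z\|$ and $\nu_i\in\mathcal{U}(n+1)$, Corollary \ref{prop1} gives $\int\|z\|^6 e^{-\|z\|^4}\,d\nu_i(z)=\tfrac{n+1}{4}\Gamma\bigl(\tfrac{n+7}{4}\bigr)$, a constant depending only on $n$. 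Consequently the right-hand side above tends to $0$.

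Combining this with the continuity $F_{u^\ast}(\nu_i)\to F_{u^\ast}(\nu)$ supplied again by Proposition \ref{polinomialconv}, one obtains $F_{u_i}(\nu_i)\to F_{u^\ast}(\nu)\geq \mathscr{F}(\nu)$. Since every subsequence of $\{\mathscr{F}(\nu_i)\}$ admits a further subsequence with this behaviour, this gives $\liminf_i \mathscr{F}(\nu_i)\geq \mathscr{F}(\nu)$, and together with the upper-semicontinuity bound above yields $\mathscr{F}(\nu_i)\to\mathscr{F}(\nu)$. The only delicate point is the need to control the moment $\int |z_H|^6 e^{-\|z\|^4}\,d\nu_i$ uniformly as the directions $u_i$ vary; this is precisely where the structural identity of Corollary \ref{prop1} for uniform measures is essential—without it, one would not be able to exchange the limit in $i$ with the infimum over $u$.
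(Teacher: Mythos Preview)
Your proof is correct and follows essentially the same approach as the paper: both split into upper and lower semicontinuity, use compactness of $\mathbb{S}^{n-1}$ to extract a limit direction for the minimizers, and rely on the uniform-measure structure to control the integrals. Your version is slightly more explicit—invoking Proposition~\ref{polinomialconv} and the moment identity of Corollary~\ref{prop1} directly—whereas the paper compresses this into the assertion that the integrands $f_i$ converge uniformly, which tacitly requires the same tail estimate.
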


\begin{proof}
Let $\{\mu_i\}_i$ be a sequence in $\mathcal U(n+1)$ and suppose that $\mu_i\rightharpoonup \mu$ for some $\mu\in \mathcal U(n+1)$. Let $\mathfrak{m}_i\in\mathbb{S}^{n-1}$ be such that
\[
	\mathscr{F}(\mu_i)=\int \lvert z_H\rvert^4\langle z_H,\mathfrak{m}_i\rangle^2
					e^{-\lVert z\rVert^4}\,d\mu_i(z).
\]
Up to passing to a subsequence, we suppose that $\mathfrak{m}_i$ converges to some $\mathfrak{m}\in\mathbb{S}^{n-1}$. Thus, the function $f_i(z)\coloneqq \lvert z_H\rvert^4\langle z_H,\mathfrak{m}_i\rangle^2e^{-\lVert z\rVert^4}$ converges uniformly to $f_{\mathfrak{m}}(z)\coloneqq\lvert z_H\rvert^4\langle z_H,\mathfrak{m}\rangle^2e^{-\lVert z\rVert^4}$. This implies that
$$\lim_{i\to\infty} \int \lvert z_H\rvert^4\langle z_H,\mathfrak{m}_i\rangle^2e^{-\lVert z\rVert^4}\,d\mu_i(z)= \int \lvert z_H\rvert^4\langle z_H,\mathfrak{m}\rangle^2e^{-\lVert z\rVert^4}\,d\mu(z),$$
from which we infer that $\liminf_{i\to \infty} \mathscr{F}(\mu_i)\geq \mathscr{F}(\mu)$. 

On the other hand, let $\overline{\mathfrak{m}}\in\mathbb{S}^{n-1}$ be such that $\mathscr{F}(\mu)=\int \lvert z_H\rvert^4\langle z_H,\overline{\mathfrak{m}}\rangle^2e^{-\lVert z\rVert^4} d\mu(z)$. Since $\mu_i\rightharpoonup \mu$, we deduce that
$$\lim_{i\to\infty} \int  \lvert z_H\rvert^4\langle z_H,\overline{\mathfrak{m}}\rangle^2e^{-\lambda\lVert z\rVert^4}\,d\mu_i(z)= \int \lvert z_H\rvert^4\langle z_H,\overline{\mathfrak{m}}\rangle^2e^{-\lambda\lVert z\rVert^4}\,d\mu(z).$$
This implies that $\limsup_{i\to\infty} \mathscr{F}(\mu_i)\leq \mathscr{F}(\mu)$, which concludes the proof.
\end{proof}
	
	\vv
	The proof of the following crucial result is based on that of \cite[Proposition  6.10]{DeLellis2008RectifiableMeasures}.
	
	\begin{theorem}\label{th:disconnection}
    Assume that $\phi$ is a Radon measure on $\mathbb P^n$ such that
    $$0<\Theta^{n+1}(\phi,x)\coloneqq \lim_{r\to 0}\frac{\phi(B(x,r))}{r^{n+1}}<\infty\qquad  \text{for $\phi$-almost every $x\in\mathbb{P}^n$}.$$
    We further suppose that  $\mu$ satisfies the property:
    \begin{itemize}
        \item[(\hypertarget{P}{\textbf{P}})] There exists a constant $\newep\label{ep:disc}>0$, depending only on $n$, such that if $\mu\in\mathcal{U}(n+1)$ and $\mathscr{F}(\nu)\leq\oldep{ep:disc}$
    for some $\nu\in \Tan_{n+1}(\mu,\infty)$ then $\mu\in\mathfrak{M}(n+1)$.
    \end{itemize}
    Then $\Tan_{n+1}(\phi,x)\subseteq \Theta^{n+1}(\phi,x)\mathfrak{M}(n+1)$ for $\phi$-almost every $x\in\mathbb P^n$.
\end{theorem}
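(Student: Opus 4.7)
The plan is built around three ingredients: weak-$*$ connectedness of $\Tan_{n+1}(\phi,x)$, the existence of at least one flat tangent, and the use of property (\textbf{P}) to show that $\mathfrak M(n+1)$ is a weak-$*$ \emph{clopen} subset of $\mathcal U(n+1)$. Together these force every tangent measure to be flat. First I would set up notation: for $\phi$-a.e.\ $x$, Proposition \ref{propup} gives $\Tan_{n+1}(\phi,x)\subseteq\Theta^{n+1}(\phi,x)\,\mathcal U(n+1)$, and Corollary \ref{existenceflatmeasure} provides at least one flat element of $\Tan_{n+1}(\phi,x)$. Writing $\mathcal T_x\coloneqq\Theta^{n+1}(\phi,x)^{-1}\Tan_{n+1}(\phi,x)\subseteq\mathcal U(n+1)$, the goal becomes $\mathcal T_x\subseteq\mathfrak M(n+1)$. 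The weak-$*$ connectedness of $\mathcal T_x$ follows from the standard observation that the curve $r\mapsto r^{-(n+1)}T_{x,r}\phi$ is weak-$*$ continuous on $(0,\infty)$ (by the local uniform density bound near $x$) and that its cluster set at $0^+$ is automatically connected. A short diagonal argument also shows $\mathcal T_x$ is stable under tangents at infinity: if $r_i\to 0$ realises $\mu\in\mathcal T_x$ as a tangent and $R_j\to\infty$ realises $\nu\in\Tan_{n+1}(\mu,\infty)$ as a blow-down of $\mu$, then for a diagonal sequence $s_k=r_{i_k}R_{j_k}\to 0$ one has $s_k^{-(n+1)}T_{x,s_k}\phi\rightharpoonup\Theta^{n+1}(\phi,x)\nu$, so $\nu\in\mathcal T_x$.

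The heart of the proof is to establish, under (\textbf{P}), that $\mathfrak M(n+1)$ is weak-$*$ clopen in $\mathcal U(n+1)$. Closedness is a consequence of the compactness of the parabolic Grassmannian $\Gr(n+1)$ (Corollary \ref{corollary:grn1}) together with Proposition \ref{UComp}. For openness I would argue by contradiction: assume $\mu_i\in\mathcal U(n+1)\setminus\mathfrak M(n+1)$ with $\mu_i\rightharpoonup\mu\in\mathfrak M(n+1)$. Define the continuous function $g_i(R)\coloneqq\mathscr F(R^{-(n+1)}T_{0,R}\mu_i)$. Since for flat $\mu$ one can take the minimising $u$ in the definition of $\mathscr F$ to be a unit Euclidean normal to $\pi_H(\supp\mu)$, $\mathscr F(\mu)=0$; by Proposition \ref{conti} then $g_i(1)=\mathscr F(\mu_i)\to 0$. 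On the other hand property (\textbf{P}) applied to each $\mu_i$ gives $\liminf_{R\to\infty}g_i(R)\ge\varepsilon$. The intermediate value theorem produces $R_i>1$ with $g_i(R_i)=\varepsilon/2$, and the dilation invariance of $\mu$ together with continuity of the rescaling operation forces $R_i\to\infty$ (otherwise a subsequential limit $\bar R<\infty$ would give $\mathscr F(\bar R^{-(n+1)}T_{0,\bar R}\mu)=\mathscr F(\mu)=0$, contradicting $g_i(R_i)=\varepsilon/2$). Extracting a further subsequence, $R_i^{-(n+1)}T_{0,R_i}\mu_i\rightharpoonup\widetilde\nu\in\mathcal U(n+1)$ with $\mathscr F(\widetilde\nu)=\varepsilon/2$. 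By choosing $R_i$ as the first upcrossing one additionally secures $\mathscr F(r^{-(n+1)}T_{0,r}\widetilde\nu)\le\varepsilon/2$ for every $r\in(0,1]$, so letting $r\to 0^+$ any tangent $\tau\in\Tan_{n+1}(\widetilde\nu,0)$ satisfies $\mathscr F(\tau)\le\varepsilon/2$. Proposition \ref{bupunifarecones} ensures such $\tau$ is dilation invariant, hence $\tau\in\Tan_{n+1}(\tau,\infty)$, and (\textbf{P}) forces $\tau\in\mathfrak M(n+1)$.

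The delicate point, which I expect to be the main obstacle, is converting ``every blow-up of $\widetilde\nu$ at $0$ is flat'' into a contradiction with $\mathscr F(\widetilde\nu)=\varepsilon/2$. The natural route is to exploit Proposition \ref{strutturaunifmeasvssurfacemeasure}, whereby $\widetilde\nu$ coincides up to a dimensional constant with the surface measure on its analytic support, and Proposition \ref{tgverticalseregolare}, to propagate the flatness of the tangent plane at $0$ to $\widetilde\nu$-a.e.\ point of $\supp\widetilde\nu$; a connectedness argument on the analytic variety $\supp\widetilde\nu$, combined with one further application of (\textbf{P}) to a blow-down of $\widetilde\nu$ at infinity, then yields either $\widetilde\nu\in\mathfrak M(n+1)$ (whence $\mathscr F(\widetilde\nu)=0$) or the strict lower bound $\mathscr F(\widetilde\nu)>\varepsilon$, both of which contradict $\mathscr F(\widetilde\nu)=\varepsilon/2$. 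Once this openness step is secured, the decomposition $\mathcal T_x=(\mathcal T_x\cap\mathfrak M(n+1))\sqcup(\mathcal T_x\setminus\mathfrak M(n+1))$ into two disjoint relatively open sets together with the weak-$*$ connectedness of $\mathcal T_x$ and the non-emptiness of $\mathcal T_x\cap\mathfrak M(n+1)$ forces $\mathcal T_x\subseteq\mathfrak M(n+1)$, which is exactly the desired conclusion.
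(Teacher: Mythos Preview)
Your high-level strategy --- connectedness of the tangent set, existence of a flat tangent, and an intermediate-value argument using $\mathscr F$ --- is exactly the right framework, and it is the one the paper follows. The gap is in the execution of the intermediate-value step, and it is precisely the ``delicate point'' you flag yourself.

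By choosing $R_i$ as the \emph{first} upcrossing, your control $g_i(R)\le\varepsilon/2$ holds for $R\in[1,R_i]$, which after rescaling translates into control of $\mathscr F\bigl(r^{-(n+1)}T_{0,r}\widetilde\nu\bigr)$ for $r\in(0,1]$. Passing to $r\to 0^+$ you learn that blow\emph{ups} of $\widetilde\nu$ at $0$ are flat. But property~(\textbf{P}) concerns blow\emph{downs}: it says that if some $\nu\in\Tan_{n+1}(\widetilde\nu,\infty)$ has $\mathscr F(\nu)\le\varepsilon$, then $\widetilde\nu$ itself is flat. Knowing that blowups at $0$ are flat is essentially vacuous here (Corollary~\ref{cor:flatae} already gives flatness of tangents at a.e.\ point of any uniform measure), and your proposed workaround via the analytic structure of $\supp\widetilde\nu$ does not close the circle: you would still need $\mathscr F$ small on some blowdown of $\widetilde\nu$, which you have no control over.

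The paper sidesteps this by \emph{not} abstracting to ``$\mathfrak M(n+1)$ is open in $\mathcal U(n+1)$'' but working directly with the curve $r\mapsto f(r)\coloneqq\mathscr F\bigl(r^{-(n+1)}T_{x,r}\phi\bigr)$ at the given point $x$. There one has \emph{two} controlled endpoints: a scale $r_k$ where $f(r_k)\to\mathscr F(\nu)=0$ (flat tangent) and a scale $s_k<r_k$ where $f(s_k)\to\mathscr F(\chi)>\varepsilon$ (here $\chi$ is a blowdown of a non-flat tangent $\zeta$, still in $\Tan_{n+1}(\phi,x)$ by Proposition~\ref{tuttitg}(iv)). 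One then picks $\sigma_k\in[s_k,r_k]$ with $f(\sigma_k)=\varepsilon$ and $f\le\varepsilon$ on $[\sigma_k,r_k]$. Since $r_k/\sigma_k\to\infty$, this yields $\mathscr F\bigl(R^{-(n+1)}T_{0,R}\xi\bigr)\le\varepsilon$ for every $R\ge 1$, hence every $\psi\in\Tan_{n+1}(\xi,\infty)$ has $\mathscr F(\psi)\le\varepsilon$, and (\textbf{P}) applied to $\xi$ directly gives $\xi\in\mathfrak M(n+1)$ --- contradicting $\mathscr F(\xi)=\varepsilon>0$. In short: orient the IVT so that the controlled side of the intermediate scale is the \emph{large-$R$} side, and the conclusion is immediate with no need for the extra machinery you invoke.
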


	\begin{proof}
    We argue by contradiction and suppose that there exists $x\in\mathbb{P}^n$ such that:
\begin{itemize}
\item[(i)] $\Tan_{n+1}(\phi,x)\subseteq\Theta^{n+1}(\phi,x)\mathcal{U}(n+1)$.
\item[(ii)] There are $\zeta,\nu\in\Tan_{n+1}(\phi,x)$ such that $\nu$ is flat and $\zeta$ is not flat.
\item[(iii)] Proposition \ref{tuttitg}-(iv) holds at $x$.
\end{itemize}

Note that, thanks to Propositions \ref{propup}, \ref{tuttitg}-(iv), and \ref{existenceflatmeasure}, the above choice of $x$ does not imply a loss of generality.
We can also assume that $\Theta^{n+1}(\phi,x)=1$. Observe that no tangent to $\zeta$ at infinity can be flat otherwise property \hyperlink{P}{\textbf{P}} would imply that $\zeta$ is flat and violate (ii). Also, by the assumption on the functional $\mathscr{F}$, we have $\mathscr{F}(\chi)>\oldep{ep:disc}$ for any $\chi\in\Tan_{n+1}(\zeta,\infty)$. From now on we fix $\chi\in\Tan_{n+1}(\zeta,\infty)$ and we let $r_k\to0$ and $s_k\to 0$ be such that
\[
	\frac{T_{x,r_k}\phi}{r_k^{n+1}}\rightharpoonup \nu\qquad\text{and}\qquad\frac{T_{x,s_k}\phi}{s_k^{n+1}}\rightharpoonup \chi.
\]
Possibly passing to a subsequence, we also suppose that $s_k<r_k$. Let us define the function
\[
	f(r)\coloneqq\mathscr{F}\bigl(r^{-(n+1)}T_{x,r}\phi\bigr),\qquad r>0,
\]
and note that since $\mathscr{F}$ is continuous on $\mathcal{U}(n+1)$ with respect to the weak-$*$ convergence of measures, $f$ is continuous in $r$.
The flatness of $\nu$ implies
\[
	\lim_{r_k\to 0}f(r_k)=\mathscr{F}(\nu)=0,
\]
thus for $r_k$ small enough we have $f(r_k)<\oldep{ep:disc}$. On the other hand, since
\[
	\lim_{s_k\to 0}f(s_k)=\mathscr{F}(\chi)> \oldep{ep:disc},
\]
for sufficiently small $s_k$ we have $f(s_k)>\oldep{ep:disc}$. Fix $\sigma_k\in [s_k,r_k]$ such that $f(\sigma_k)=\oldep{ep:disc}$ and $f(r)\leq\oldep{ep:disc}$ for $r\in[\sigma_k,r_k]$. By compactness, possibly passing to a subsequence, $\sigma_k^{-(n+1)}T_{x,\sigma_k}\phi$ converges weakly-$*$ to a measure $\xi\in \mathcal{U}(n+1)$. Clearly by continuity
\begin{equation}\label{eq:Fxi_oldep}
	\mathscr{F}(\xi)=\lim_{\sigma_k\to 0} f(\sigma_k)=\oldep{ep:disc}>0.
\end{equation}
We claim that $r_k/\sigma_k\to \infty$;
 otherwise, if for some subsequence not relabeled we had that $r_k/\sigma_k$ converges to a constant $C\geq 1$, we would have
\[
	\nu=\lim_{k\to\infty}\frac{T_{x,r_k}\phi}{r_k^{n+1}}=\lim_{k\to \infty} \Bigl(\frac{\sigma_k}{r_k}\Bigr)^{n+1}T_{0,r_k/\sigma_k}\left(\frac{T_{x,\sigma_k}\phi}{\sigma_k^{n+1}}\right),
\]
which implies that ${C^{-n-1}}{\xi_{0,C}}=\nu$.
In particular $\xi$ would be flat, which is not possible by \eqref{eq:Fxi_oldep}.

Note that for any given $R>0$ it holds
\[
	(R\sigma_k)^{-(n+1)}T_{x,R\sigma_k}\phi\rightharpoonup R^{-(n+1)}T_{0,R}\xi
\]
which, by continuity of $\mathscr{F}$, implies that
\[
	\mathscr{F}\bigl(R^{-(n+1)}T_{0,R}\xi\bigr)=\lim_{k\to\infty}f(R\sigma_k).
\]
Moreover, since $r_k/\sigma_k\to\infty$ we conclude that for any $R>1$ we have that $R\sigma_k\in[\sigma_k,r_k]$ whenever $k$ is large enough. By our choice of $\sigma_k$ and $r_k$, this yields
\begin{equation}
    \mathscr{F}\bigl(R^{-(n+1)}T_{0,R}\xi\bigr)=\lim_{k\to\infty}f(R\sigma_k)\leq \oldep{ep:disc},
    \label{eq1010}
\end{equation}
for every $R\geq 1$. Let $\psi\in\Tan_{n+1}(\xi,\infty)$. Thanks to \eqref{eq1010} and the continuity of $\mathscr{F}$, we infer that
$$\mathscr{F}(\psi)=\lim_{R\to\infty}\mathscr{F}\bigl(R^{-(n+1)}T_{0,R}\xi\bigr)\leq \oldep{ep:disc},$$
and \hyperlink{P}{\textbf{P}} implies that $\xi\in\mathfrak{M}(n+1)$, which contradicts the fact that $\xi$ is \emph{not} flat by \eqref{eq:Fxi_oldep}.
	\end{proof}

We can finally complete the proof of Preiss's Theorem in the parabolic space.

\begin{proof}[Proof of Theorem \ref{theorem:main_theorem_Preiss}]
We prove that (iii) implies (i). The case $n=1$ has already been discussed in Proposition \ref{preiss1}, so we assume $n>1$ and verify that the hypotheses of Theorem \ref{th:disconnection} are satisfied.

Let us check that \hyperlink{P}{\textbf{P}} holds. Defined $\mathscr{F}(\nu)\coloneqq\inf_{u\in\R^n} 
        \int \lvert z_H\rvert^4\langle z_H,u\rangle^2
					e^{-\lVert z\rVert^4}\, d\nu(z)$, we know by Proposition \ref{disconnectdegenerated} that if there exists $\nu\in\Tan_{n+1}(\mu,\infty)$ such that
					$$\int \lvert z_H\rvert^4\langle z_H,u\rangle^2
					e^{-\lVert z\rVert^4}\, d\nu(z)<\oldep{eps:1}/2,$$
					then $\mu\in \mathcal{U}(n+1)\setminus\mathcal{D}\mathcal{U}(n+1)$. However, thanks to Propositions \ref{prop:noT} and \ref{charunifquadrics} we infer that there exists $\tilde\varepsilon$ such that if $\mathscr{F}(\nu)<\tilde\varepsilon$ for some $\nu\in\Tan(\mu,\infty)$, then $\mu\in\mathfrak{M}(n+1)$. If we set $\oldep{ep:disc}\coloneqq\min\{\oldep{eps:1},\tilde{\varepsilon}\}$ the property \hyperlink{P}{\textbf{P}} holds and Theorem \ref{th:disconnection} applies, which concludes the proof.
\end{proof}

\vv

\section{Weak constant density implies bilateral weak geometric lemma}\label{section:BWGL}

Throughout this section we understand again that $\mathbb P^n$ is endowed with the Koranyi norm $d$.
\vv

The precise statements of the WCD and BWGL conditions require some preliminary definitions.
We first recall that a Radon measure $\mu$ on $(\mathbb P^n, d)$ is called $(n+1)$-Ahlfors regular, or simply $(n+1)$-Ahlfors-regular, if there exists $C>0$ such that
\[
    C^{-1}r^{n+1}\leq \mu(B(x,r))\leq C r^{n+1}
\]
for all $x\in \supp(\mu)$ and $0<r<\diam (\supp(\mu)).$ We also refer to $C$ as the Ahlfors-regularity constant of $\mu$.
The Euclidean analogue of the following definition was introduced by David and Semmes in \cite[Definition I.2.55]{DavidSemmes}.

\begin{definition}[WCD]\label{def-WCD}
    Let $\mu$ be an $(n+1)$-Ahlfors-regular measure on $(\mathbb P^n,d)$ with Ahlfors-regularity constant $C>0$. For $\varepsilon>0$ we denote as $G(C,\varepsilon)$ the subset of those $(x,r)\in \supp(\mu)\times (0,+\infty)$ for which there exists a Borel measure $\sigma=\sigma_{x,r}$ on $\mathbb P^n$ such that $\supp(\sigma)=\supp(\mu),$ $\sigma$ is $(n+1)$-Ahlfors-regular with constant $C$, and
    \[
        \bigl|\sigma(B(y,t))-t^{n+1}\bigr|\leq \varepsilon\, r^{n+1}
    \]
    for all $y\in \supp(\mu)\cap B(x,r)$ and all $0<t<r.$
    
    We say that $\mu$ satisfies the \textit{weak constant density condition} (abb. WCD) if there is $C'>0$ such that  $G(C',\varepsilon)^c\coloneqq \bigl(\supp(\mu)\times (0,+\infty)\bigr)\setminus G(C',\varepsilon)$ is a Carleson set for every $\varepsilon>0,$ namely
    \[
        \int_0^R\int_{B(x,R)}\chi_{G(C',\varepsilon)^c}(x,r)\, d\mu(x)\, \frac{dr}{r}\leq C(\varepsilon)R^{n+1}
    \]
    for all $x\in \supp(\mu)$ and all $R>0$.
\end{definition}

\vv

In order to quantify the flatness of a measure, we use Jones' $\beta$-coefficients and their bilateral version.

\begin{definition}\label{def:beta_bil_beta}
    Let $\mu$ be a Radon measure on $\mathbb P^n$ and let $B=B(x,r)$ be a ball in $\mathbb P^n$. We define
    \[
        \beta_\mu(B)\coloneqq \inf_{V\in \Gr(n+1)}\Bigl(\sup_{x\in \supp(\mu)\cap B} \frac{\dist(x,V)}{r}\Bigr)
    \]
    and 
    \[
        b\beta_\mu(B)\coloneqq \inf_{V\in \Gr(n+1)}\Bigl(\sup_{x\in \supp(\mu)\cap B} \frac{\dist(x,V)}{r} +  \sup_{x\in V\cap B} \frac{\dist(x,\supp(\mu))}{r}\Bigr).
    \]
\end{definition}

\vv

\begin{lemma}\label{lemma:beta_sequences}
   Let $\{\mu_j\}_j$ be a sequence in $\mathcal U(n+1)$ which converges weakly to a Radon measure $\mu$ on $\mathbb P^n$, and suppose that $\supp(\mu_j)\cap B\neq \varnothing$ for all $j$ and $\supp(\mu)\cap B\neq \varnothing$. Then
   \[
        \frac{1}{2}\limsup_{j\to \infty}\beta_{\mu_j}(\tfrac{1}{2}B)\leq \beta_\mu(B)\leq 2\liminf_{j\to \infty}\beta_{\mu_j}( 2B)
   \]
   and
   \[
        \frac{1}{2}\limsup_{j\to \infty}b\beta_{\mu_j}(\tfrac{1}{2}B)\leq b\beta_\mu(B)\leq 2\liminf_{j\to \infty}b\beta_{\mu_j}( 2B).
   \]
\end{lemma}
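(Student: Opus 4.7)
The strategy is a routine semicontinuity argument, built on two ingredients: Lemma \ref{replica} (parts (ii)--(iii)), which provides the Hausdorff-style convergence $\supp(\mu_j)\to \supp(\mu)$ associated with weak convergence of uniform measures, and the compactness of $\Gr(n+1)$, which follows from Corollary \ref{corollary:grn1} (the Grassmannian is parametrized by $\Gr_{n-1}(\R^n)$). I will establish the four inequalities separately but by completely parallel arguments; only the factor $2$ coming from the scaling between $B$ and $2B$ (or $\tfrac{1}{2}B$) needs to be tracked with a bit of care.

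For the upper bounds $\beta_\mu(B)\leq 2\liminf_j\beta_{\mu_j}(2B)$ and $b\beta_\mu(B)\leq 2\liminf_j b\beta_{\mu_j}(2B)$, I would pass to a subsequence realizing the liminf, choose planes $V_j\in\Gr(n+1)$ which are almost-optimal for the ball $2B$, and extract $V_j\to V$ by compactness. Given $z\in\supp(\mu)\cap B$, Lemma \ref{replica}(ii) yields $z_j\in\supp(\mu_j)$ with $z_j\to z$; since $z$ lies in the interior of $2B$, we have $z_j\in 2B$ for $j$ large. The inequality $\dist(z_j,V_j)/(2r)\leq \beta_{\mu_j}(2B)+o(1)$ passes to the limit via continuity of the distance to a plane, giving $\dist(z,V)/r\leq 2\liminf_j\beta_{\mu_j}(2B)$; supping over $z$ proves the $\beta$-bound. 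For $b\beta$, additionally for each $y\in V\cap B$ I pick $y_j\in V_j$ with $y_j\to y$ (so $y_j\in 2B$ eventually) and find $w_j\in\supp(\mu_j)$ almost-realizing $\dist(y_j,\supp(\mu_j))$; after a further subsequence, $w_j\to w\in\supp(\mu)$ by Lemma \ref{replica}(iii), and $\dist(y,\supp(\mu))/r\leq 2\liminf_j b\beta_{\mu_j}(2B)$. Writing $b\beta_{\mu_j}(2B)+1/j$ as a sum $A_j+B_j$ with $A_j,B_j\geq 0$ and extracting further so that $A_j\to A$, $B_j\to B$ gives the bounds on the two sups separately, whose sum is $\leq 2(A+B)\leq 2\liminf_j b\beta_{\mu_j}(2B)$.

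For the lower bounds $\tfrac{1}{2}\limsup_j\beta_{\mu_j}(\tfrac{1}{2}B)\leq \beta_\mu(B)$ and its bilateral analogue, I fix $\epsilon>0$ and choose a single plane $V$ which is $\epsilon$-almost-optimal for $\beta_\mu(B)$ (resp.\ $b\beta_\mu(B)$), and use this same $V$ as a test plane for $\beta_{\mu_j}(\tfrac{1}{2}B)$. Passing to a subsequence realizing the limsup, pick $x_j\in\supp(\mu_j)\cap\tfrac{1}{2}B$ almost-attaining the first sup and, for the bilateral case, $y_j\in V\cap\tfrac{1}{2}B$ almost-attaining the second. By compactness, extract $x_j\to x$ and $y_j\to y$, both in $\tfrac{1}{2}B\subset B$. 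Lemma \ref{replica}(iii) gives $x\in \supp(\mu)$, so $\dist(x_j,V)/(r/2)\to 2\dist(x,V)/r\leq 2(\beta_\mu(B)+\epsilon)$. For the coverage term, pick $w\in\supp(\mu)$ with $d(y,w)=\dist(y,\supp(\mu))$ and apply Lemma \ref{replica}(ii) to obtain $w_j\in\supp(\mu_j)$ with $w_j\to w$; then $\limsup_j\dist(y_j,\supp(\mu_j))\leq \lim_j d(y_j,w_j)=d(y,w)=\dist(y,\supp(\mu))$. Using $\limsup(P_j+Q_j)\leq \limsup P_j+\limsup Q_j$ and letting $\epsilon\to 0$ closes the argument.

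I do not anticipate any serious obstacle: the only subtlety is to make sure that the approximating points obtained from Lemma \ref{replica} eventually lie in the correct ball (which is why the statement uses $\tfrac{1}{2}B$ and $2B$ rather than $B$), and to track the two sups in the bilateral case via successive subsequence extraction rather than attempting to bound the sum directly.
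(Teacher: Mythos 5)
Your argument is correct: the paper gives no proof of this lemma and simply cites \cite[Lemma 2.2]{Tolsa_uniform_measures}, and the standard argument behind that citation is exactly the semicontinuity proof you give — almost-optimal planes, compactness of $\Gr(n+1)$ (via Corollary \ref{corollary:grn1}), and the Hausdorff-type convergence of supports supplied in the parabolic setting by Lemma \ref{replica}(ii)--(iii). The two points you gloss over (the case $\supp(\mu_j)\cap\tfrac12 B=\varnothing$ for some $j$, where the corresponding sup is void, and the boundedness of the points $w_j$ when the liminf is finite) are trivial and do not affect the proof.
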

\begin{proof}
    See \cite[Lemma 2.2]{Tolsa_uniform_measures} and the references therein.
\end{proof}

\vv

Given an $(n+1)$-Ahlfors-regular measure $\mu$ on $\mathbb P^n$, as a particular case of \cite{christ} we can construct a family $\mathcal D_\mu$ of Borel subsets of $\supp(\mu)$, the so-called dyadic cubes adapted to $\mu$ (or simply $\mu$-cubes). More specifically $\mathcal D_\mu\coloneqq \bigcup_{j\in \mathbb Z} \mathcal D_\mu^j$, where $\mathcal D^j_\mu$ satisfies the following properties:
\begin{enumerate}
    \item $\supp(\mu)=\bigcup_{Q\in \mathcal D^j_\mu}Q$ for all $j\in \mathbb Z$.
    \item If $Q,Q'\in \mathcal D^j_\mu$, then either $Q\cap Q'=\emptyset$ or $Q=Q'$.
    \item If $Q\in \mathcal D^j_\mu$ and $Q'\in \mathcal D^k_\mu$ with $k\leq j$, then either $Q\subset Q'$ or $Q\cap Q'=\emptyset.$
    \item For all $j\in \mathbb Z$ and $Q\in \mathcal D^j_\mu$ we have
    \[
        2^{-j}\lesssim \diam(Q)\leq 2^{-j}\qquad \text{ and }\qquad \mu(Q)\approx 2^{-j(n+1)}.
    \]
    \item If $Q\in \mathcal D^j_\mu$, there exists $z_Q\in Q$ such that $\dist(z_Q, \supp(\mu)\setminus Q)\gtrsim 2^{-j}.$
\end{enumerate}
\vv

For $Q\in \mathcal D^j_\mu$, the point $z_Q$ is often referred to as its center. We also define $\mathcal D_\mu(Q)\coloneqq \{P\in\mathcal D_\mu: P\subset Q\}$ and $\mathrm{Ch}(Q)\coloneqq \{P\in\mathcal D^{j+1}_\mu: P\subset Q\}$. Let us further denote as $\ell(Q)\coloneqq 2^{-j}$ its ``side-length'' and $B_Q\coloneqq B(z_Q,3\ell(Q))$, so that the $\beta$-numbers associated with $Q$ can be naturally defined as
\[
    \beta_\mu(Q)\coloneqq \beta_\mu(B_Q), \qquad\text{ and }\qquad  b\beta_\mu(Q)\coloneqq \beta_\mu(B_Q).
\]
Observe that $Q$ may belong to $\mathcal D^k_\mu\cap \mathcal D^j_\mu$ for $k\neq j$ so, in order for $\ell(Q)$ to be well-defined, we should identify $Q$ with the couple $(Q,j)$. As customary, this identification will be omitted.

\vv

A family $\mathcal F\subset \mathcal D_\mu$ is called \textit{Carleson family} if there exists $c>0$ such that 
\[
    \sum_{Q\in \mathcal F: Q\subset R}\mu(Q)\leq c\mu(R), \qquad \text{ for every } R\in \mathcal D_\mu.
\]

\begin{definition}[BWGL]\label{def:BWGL}
    We say that an $(n+1)$-Ahlfors-regular measure $\mu$ on $\mathbb P^n$ satisfies the \textit{bilateral weak geometric lemma} (abb. BWGL) if, for each $\eta>0$, the family
    \[
       \mathcal B_\eta\coloneqq\{Q\in \mathcal D_\mu: b\beta_\mu(Q)>\eta\}
    \]
    is a Carleson family.
\end{definition}

The purpose of this section is to prove Theorem \ref{theorem:WCD_implies_BWGL_2}, which we state again below for the reader's convenience.

\begin{theorem}\label{theorem:WCD_implies_BWGL}
    Let $\mu$ be an $(n+1)$-Ahlfors regular measure on $\mathbb P^n$, and assume that it satisfies the weak constant density condition. Then $\mu$ satisfies the bilateral weak geometric lemma.
\end{theorem}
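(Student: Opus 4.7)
The plan is to mimic the strategy of Tolsa \cite{Tolsa_uniform_measures} adapted to the parabolic setting, where the classification of $1$-codimensional uniform measures (Theorem \ref{classificazionemisureunif}) replaces the Euclidean/Heisenberg rigidity results. First I would unpack what WCD provides: for any $\varepsilon>0$ there exists a Carleson family of ``good'' pairs $(x,r)\in\supp(\mu)\times(0,\infty)$, and for each good pair an auxiliary Ahlfors-regular measure $\sigma_{x,r}$ supported on $\supp(\mu)$ whose balls of radius $\leq r$ centered in $B(x,r)$ differ from $t^{n+1}$ by at most $\varepsilon r^{n+1}$. Rescaling $\sigma_{x,r}$ at scale $r$ and taking a weak-$*$ limit along any sequence $\varepsilon_k\to 0$, Proposition \ref{UComp} yields a genuine $(n+1)$-uniform measure $\mu_\infty\in\mathcal U(n+1)$ whose support lies in the limit of $r^{-1}(\supp(\mu)-x)$. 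Consequently, any obstruction to BWGL at good pairs can be blown up to an obstruction on a uniform measure.

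Second, I would feed such a blowup $\mu_\infty$ into Theorem \ref{classificazionemisureunif}. Either $\mu_\infty$ is flat, or (up to isometry) $\mu_\infty$ is the product measure on the translation-invariant quadric $\{x_1^2+x_2^2+x_3^2=x_4\}\times\R^{n-4}\times\R$, or every $\nu\in\Tan_{n+1}(\mu_\infty,\infty)$ satisfies the $\mathscr F$-lower bound $\lambda^{3/2+(n+1)/4}\int|z_H|^4\langle z_H,u\rangle^2 e^{-\lambda\|z\|^4}\,d\nu\geq \oldep{eps:1}$. Here is where Lemma \ref{lemmaF->beta} enters: it converts smallness of $\mathscr F$ on balls into smallness of (bilateral) $\beta$-numbers. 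Thus if a good blowup $\mu_\infty$ had $b\beta_{\mu_\infty}(B)$ bounded below at some scale, then either $\mu_\infty$ belongs to the classified ``bad quadric'' family, or it inherits the $\mathscr F$-lower bound at every scale. In both remaining cases I would invoke the touching-point argument based on the auxiliary operator from \eqref{eq:definition_Rrs} (which plays the role of the Riesz transform in Tolsa's argument) together with Corollary \ref{corollary:flat_ball_big}: by testing that operator on a smooth bump at a point where $\supp(\mu_\infty)$ touches a flat approximating plane, one obtains a contradiction with the non-flat structure, thereby forcing flatness of $\mu_\infty$ at that scale.

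Third, I would propagate flatness down the dyadic tree by a stopping-time construction paralleling \cite[\S3]{Tolsa_uniform_measures}. Fix $\eta>0$ and a root $R\in\mathcal D_\mu$; I would build a family of stopping cubes $\mathcal S(R)\subset \mathcal D_\mu(R)$ by stopping at the first descendant $Q\subset R$ such that either (i) $(z_Q,\ell(Q))$ belongs to $G(C',\varepsilon)^c$ for a small $\varepsilon=\varepsilon(\eta)$ to be chosen, or (ii) an appropriate flatness estimate fails. Condition (i) produces only a Carleson amount of cubes by the very definition of WCD. For cubes where only (ii) triggers, I would use the propagation of flatness (Lemma \ref{lemmaangolitangenti}) together with the uniform-measure dichotomy above to show that if $b\beta_\mu(Q)>\eta$ at a good-pair cube, then in a definite proportion of its subcubes the approximating uniform measure is forced to change its approximating plane or to fall into the classified non-flat family, both of which have already been excluded via $\mathscr F$ and the touching argument. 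Summing the standard geometric series shows $\sum_{Q\in\mathcal B_\eta\cap\mathcal D_\mu(R)}\mu(Q)\lesssim\mu(R)$.

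The principal obstacle, and the part that requires the heaviest technical work, is the touching-point/contradiction step: unlike in $\R^n$ or $\mathbb{H}^n$, uniform measures in $\mathbb{P}^n$ need not be supported on quadrics, so one cannot run Preiss's closed-algebraic-variety argument to exclude non-flat blowups. One must instead combine the $\mathscr F$-lower bound of Theorem \ref{classificazionemisureunif} (for the non-quadric case) with the explicit product structure on $\{x_1^2+x_2^2+x_3^2=x_4\}\times\R^{n-4}\times\R$ (for the quadric case), and in both cases derive a contradiction via the auxiliary operator in \eqref{eq:definition_Rrs}. Verifying that this operator indeed separates flat from non-flat uniform measures quantitatively, uniformly in scales, is the crux; once that is in place, the stopping-time scheme delivers BWGL from WCD in the standard manner.
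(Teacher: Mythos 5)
Your toolbox is the right one---WCD entering only through approximation of most cubes by uniform measures, the operator \eqref{eq:definition_Rrs}, Corollary \ref{corollary:flat_ball_big}, and a Tolsa-style stopping time---but the central mechanism in your second and third steps is wrong. You propose to \emph{exclude} non-flat blowups, i.e.\ to show that a blowup $\mu_\infty$ at a good pair must be flat by deriving a contradiction from the non-flat cases of Theorem \ref{classificazionemisureunif}. This cannot work: non-flat $(n+1)$-uniform measures genuinely exist (the classification itself exhibits the cone-type example), and the BWGL does not assert that blowups are flat; it asserts Carleson control of the non-flat scales. The role of the touching-point argument (Proposition \ref{prop:propaux1_s} together with Lemmas \ref{lemmaangolitangenti} and \ref{lemaux1}) is not to rule out non-flat uniform measures, but to prove Corollary \ref{corollary:flat_ball_big}, which is valid for \emph{every} uniform measure: inside any ball centered on the support there is a ball of comparable radius on which the unilateral $\beta$-number is as small as one wishes. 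Your counting step (``a definite proportion of its subcubes \dots both of which have already been excluded'') inherits this error and does not produce the packing estimate.

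A second gap is the bilateral aspect. BWGL is a statement about $b\beta_\mu$, and smallness of the unilateral $\beta$ on one ball says nothing about the support filling the approximating plane. In the paper this is handled by Lemmas \ref{lemma:lem_bilateral_beta_1} and \ref{lemma:lem_bilateral_beta_2}, the latter resting on Lemma \ref{lemmaF->beta}; note that this lemma goes in the opposite direction to how you invoke it: it says that if some tangent at infinity of a uniform measure has small $\beta$ on $B(0,\Lambda)$, then the measure is flat (via the functional $\mathscr F$ and the disconnection coming from the Preiss argument), not that small $\mathscr F$ yields small bilateral $\beta$ on balls. With Corollary \ref{corollary:flat_ball_big} and these unilateral-to-bilateral upgrades in hand, the correct counting follows Tolsa's scheme: partition $\mathcal D_\mu(R)\cap\mathcal N(\varepsilon)$ into trees, with Proposition \ref{prop:proposition_DS_III_5} providing the Carleson bound on cubes not in $\mathcal N(\varepsilon)$ and hence on the tree roots; inside each tree every cube has a comparably-sized descendant with $b\beta_\mu\leq\delta_1$, this flatness propagates to all smaller tree cubes (the analogue of Tolsa's Lemma 4.4), and bounded multiplicity yields $\sum_{Q\in\mathcal T_i\cap\mathcal B_\eta}\mu(Q)\lesssim\mu(Q(\mathcal T_i))$. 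None of this requires, or could tolerate, excluding the non-flat uniform measures.
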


\vv

Let us now introduce the operator which is involved in the touching-point argument.
For $\mu\in \mathcal U(n+1)$, $0<r<s$, and $x\in\mathbb{P}^n$ we define
\begin{equation}\label{eq:definition_Rrs}
    \mathcal{R}_{r,s}\mu(x)\coloneqq \int_{r< \lvert x-y \rvert\leq s} \frac{\lvert x_H-y_H\rvert^2 (x_H-y_H)}{\lVert x-y\rVert^{n+4}}\,d\mu(y).
\end{equation}

\vv

\begin{proposition}\label{prop:propaux1_s}

Let $\mu\in\mathcal{U}(n+1)$, $z\in\supp(\mu)$, and $r>0$. There exists $c=c(n)>0$ such that for all $s>r$ and
all $x\in B(z,r)\cap\supp(\mu)$, we have
$$\Bigl\lvert \Bigl\langle\frac{x_H-z_H}{r},\mathcal{R}_{r,s}\mu(z)\Bigr\rangle\Bigr\rvert\leq c.$$
\end{proposition}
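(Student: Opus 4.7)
Translation invariance of the Koranyi distance lets us replace $\mu$ with $T_{z,1}\mu\in\mathcal U(n+1)$ and reduce to $z=0$. Writing $v:=x_H\in\R^n$ (so that $|v|\leq \lVert x\rVert\leq r$), the task becomes to show
$$\frac{1}{r}\Bigl|\int_{A_0}\frac{|y_H|^2\langle v, y_H\rangle}{\lVert y\rVert^{n+4}}\,d\mu(y)\Bigr|\leq c(n),\qquad A_0:=\{y:r<\lVert y\rVert\leq s\}.$$
The key input is Proposition \ref{prop5} applied to the non-negative radial profile $\phi(t):=\chi_{(r,s]}(t)/(nt^n)$: since $\mu\in\mathcal U(n+1)$ and both $0,x\in\supp(\mu)$, the formula gives
$$\int_{A_0}\phi(\lVert y\rVert)\,d\mu(y)=\int_{A_x}\phi(\lVert y-x\rVert)\,d\mu(y)=\tfrac{n+1}{n}(s-r),\qquad A_x:=\{y:r<\lVert y-x\rVert\leq s\},$$
and subtracting $\int_{A_0}\phi(\lVert y-x\rVert)\,d\mu$ from both sides produces
\begin{equation}\label{eq:plan_prop75_1}
\int_{A_0}[\phi(\lVert y\rVert)-\phi(\lVert y-x\rVert)]\,d\mu=\int_{A_x\setminus A_0}\phi(\lVert y-x\rVert)\,d\mu-\int_{A_0\setminus A_x}\phi(\lVert y-x\rVert)\,d\mu.
\end{equation}

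The right-hand side of \eqref{eq:plan_prop75_1} is supported on $A_0\triangle A_x\subset\{|\lVert y\rVert-r|\leq\lVert x\rVert\}\cup\{|\lVert y\rVert-s|\leq\lVert x\rVert\}$ by the reverse triangle inequality. Combining the layered mass identity $\mu(B(0,\rho_2)\setminus B(0,\rho_1))=\rho_2^{n+1}-\rho_1^{n+1}$ with the pointwise bound $\phi(\lVert y-x\rVert)\leq 1/(nr^n)$ away from $x$, and using Proposition \ref{prop5} at $x$ (which yields $\int_{\lVert y-x\rVert\leq\rho}\phi(\lVert y-x\rVert)\,d\mu=(n+1)\rho/n$) to handle the potentially singular region near $y=x$, one sees the right-hand side is $O_n(r)$. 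To treat the left-hand side we split $A_0=A_0^{\mathrm{near}}\sqcup A_0^{\mathrm{far}}$ at $\lVert y\rVert=2\lVert x\rVert$. The near region (non-empty only if $\lVert x\rVert>r/2$) is contained in $B(0,2r)$, and Proposition \ref{prop5} applied at $0$ and at $x$ respectively bounds each of $\int_{A_0^{\mathrm{near}}}\phi(\lVert y\rVert)\,d\mu$ and $\int_{A_0^{\mathrm{near}}}\phi(\lVert y-x\rVert)\,d\mu$ by $O_n(r)$. On the far region $\lVert y-tx\rVert\asymp\lVert y\rVert$ uniformly in $t\in[0,1]$, and the fundamental theorem of calculus yields
$$\phi(\lVert y\rVert)-\phi(\lVert y-x\rVert)=-\int_0^1 K(t,y)\,dt,\quad K(t,y):=\frac{|y_H-tx_H|^2\langle x_H,y_H-tx_H\rangle}{\lVert y-tx\rVert^{n+4}}+\frac{(y_T-tx_T)\,x_T}{2\lVert y-tx\rVert^{n+4}}.$$

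The principal value $K(0,y)$ produces exactly the desired integrand $|y_H|^2\langle v,y_H\rangle/\lVert y\rVert^{n+4}$ together with a vertical correction $y_Tx_T/(2\lVert y\rVert^{n+4})$ that is $O(r)$ after integration, using $|y_T|\leq\lVert y\rVert^2$, $|x_T|\leq r^2$ and the radial computation $\int_{A_0}\lVert y\rVert^{-(n+2)}\,d\mu=(n+1)(r^{-1}-s^{-1})$. It therefore remains to estimate $\int_0^1 dt\int_{A_0^{\mathrm{far}}}[K(t,y)-K(0,y)]\,d\mu$. Using $|y_H-tx_H|^2-|y_H|^2=-2t\langle y_H,x_H\rangle+t^2|x_H|^2$ together with the mean-value bound $|\lVert y-tx\rVert^{-(n+4)}-\lVert y\rVert^{-(n+4)}|\leq C_n\lVert x\rVert/\lVert y\rVert^{n+5}$, this difference is pointwise dominated by $C_n\bigl(\lVert x\rVert^2/\lVert y\rVert^{n+2}+\lVert x\rVert^3/\lVert y\rVert^{n+3}\bigr)$, which integrates to $O(r)$ against $\mu$ by Proposition \ref{prop5}. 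Chaining these estimates with \eqref{eq:plan_prop75_1} gives $\int_{A_0}|y_H|^2\langle v,y_H\rangle/\lVert y\rVert^{n+4}\,d\mu=O(r)$, and dividing by $r$ closes the proof. The main obstacle lies precisely in this perturbation step: a naive estimate of $|K(t,y)|$ produces a logarithmic divergence of order $\log(s/r)$, so one must exploit the cancellations encoded in the \emph{differences} $K(t,y)-K(0,y)$ rather than bounding the individual summands of $K$; the vertical part requires particular care, as it is the factor $|x_T|\leq r^2$ that compensates for the growth $|y_T|\leq\lVert y\rVert^2$ and prevents an analogous divergence.
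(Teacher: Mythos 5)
Your route is genuinely different from the paper's and, in outline, viable. The paper regularizes first: it builds a smooth radial potential $\Phi(y)=\rho(\lVert y\rVert)$ with $\rho'=\vphi$, $\vphi(t)=t^{-(n+1)}$ on $[r,s]$ and $\vphi$ vanishing near $0$ and beyond $2s$, uses $\int\Phi(x-y)\,d\mu=\int\Phi(-y)\,d\mu$ (Proposition \ref{prop5}), and Taylor-expands along the \emph{parabolic} path $\delta_\lambda(x)-y$, so that the horizontal gradient produces $\mathcal R_{r,s}\mu$ up to controlled contributions from $\lVert y\rVert\leq r$ and $\lVert y\rVert\geq s$, while the vertical and second-order terms are estimated through $\partial_T\Phi$ and $D^2\Phi$. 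You instead keep a sharp cutoff profile, use the same exact radial identity at the two support points, and differentiate along the Euclidean segment $y-tx$, extracting the kernel as $K(0,y)$ and controlling $K(t,y)-K(0,y)$ perturbatively; your closing observation that one must estimate the \emph{differences} (a naive bound on $K$ alone gives a $\log(s/r)$ divergence) is exactly the right cancellation, and your treatment of the vertical term via $|x_T|\leq r^2$ plays the role of the paper's $\partial_T\Phi$ estimates. What the paper's smoothing buys is precisely that its Taylor identity is valid everywhere, with the cutoff errors localized where $|D_H\Phi|$ is explicitly bounded.

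There are two places where your write-up, as it stands, asserts more than is true. First, the identity $\phi(\lVert y\rVert)-\phi(\lVert y-x\rVert)=-\int_0^1K(t,y)\,dt$ fails on part of $A_0^{\mathrm{far}}$: $\phi$ has jump discontinuities at the radii $r$ and $s$, so whenever the segment $t\mapsto y-tx$ exits $(r,s]$ (which happens for $y$ in shells of width $\lVert x\rVert$ near $\lVert y\rVert=r$ and $\lVert y\rVert=s$, regardless of the condition $\lVert y\rVert\geq 2\lVert x\rVert$), the fundamental theorem of calculus picks up jump terms and the smooth-branch kernel is not the derivative where $\phi\equiv 0$. The discrepancy is supported on those thin shells and is of size $O_n(\lVert x\rVert)$ after integration (on the inner shell $\lVert y-x\rVert\geq\lVert y\rVert/2>r/2$, on the outer shell $\lVert y-x\rVert\gtrsim s$), so the argument survives, but you must add this boundary correction explicitly -- or simply smooth the cutoff in layers of width $\sim r$ and $\sim s$, as the paper does, which removes the issue altogether. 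Second, in bounding the symmetric-difference term, the crude bound $\phi(\lVert y-x\rVert)\leq 1/(nr^n)$ is not sufficient on the outer shell near radius $s$: there the shell mass is $\approx(n+1)s^n\lVert x\rVert$, and $s^n\lVert x\rVert/r^n$ is not $O(r)$ when $s\gg r$; you need the sharper pointwise bound $\phi(t)\leq 1/(nt^n)\approx s^{-n}$ on that shell (and note there is no genuine singularity near $y=x$ to worry about, since $\phi$ vanishes on $(0,r]$). With these two repairs, and after adding back the main-term contribution of $A_0^{\mathrm{near}}$ (which is $O_n(r)$ by the same kind of estimate), your proof closes.
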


\begin{proof}
Without loss of generality, let us assume that $z=0.$
We then fix $0<r<s$ and let $\vphi\colon [0,\infty)\to\R$ be a non-negative $C^\infty$ function such that
$$\vphi(t) =\begin{cases}
0 & \mbox{if $0\leq t\leq r/2$ or $t\geq 2s$,}\\
t^{-(n+1)}& \mbox{if $r\leq t\leq s$.}
\end{cases}$$

We also ask $\vphi$ to satisfy
\begin{equation}\label{eq:def_aux_funct_tpa}
    |\vphi(t)|\leq t^{-(n+1)}\qquad\mbox{and}\qquad
|\vphi'(t)|\leq c'\min\{r^{-(n+2)}, t^{-(n+2)}\} \qquad\mbox{for all $t\geq 0$,}
\end{equation}
for some $c'>0$ and we define
$$\rho(u) \coloneqq -\int_u^{\infty}\vphi(t)\,dt,\qquad u\in\mathbb R,$$
so that $\rho'(t)=\vphi(t)$. Finally, for $y\in\R^n$ set $\Phi(y) \coloneqq \rho(\lVert y\rVert)$ and notice that $\Phi$ is a radial $C^\infty$ function which is supported on $B(0,2s)$.

For any $x,y\in\mathbb{P}^n$ we let $\gamma_{x,y}(\lambda)\coloneqq \delta_\lambda(x)-y$ for any $\lambda\in [0,1]$, so that Taylor's expansion of $\Phi(\gamma_{x,y}(\lambda))$ yields
\begin{equation}
    \begin{split}
        \Phi(x-y)-\Phi(-y)=&\Phi(\gamma_{x,y}(1))-\Phi(\gamma_{x,y}(0))\\
        =&\frac{d}{d\lambda}\bigl(\Phi(\gamma_{x,y}(\lambda))\bigr)\big\vert_{\lambda=0}+\frac{1}{2}\frac{d^2}{d\lambda^2}\bigl(\Phi(\gamma_{x,y}(\lambda))\bigr)\big\rvert_{\lambda=\sigma},
        \label{eq:taylorphi}
    \end{split}
\end{equation}
for some $\sigma\in [0,1]$ which depends on both $x$ and $y$. The chain rule implies that
\begin{align*}
    \frac{d}{d\lambda}\bigl(\Phi(\gamma_{x,y}(\lambda))\bigr)\big\vert_{\lambda=0}=&D\Phi(-y)[(x_H,0)]
\eqqcolon D_H\Phi(-y)[x_H],\\
\frac{d^2}{d\lambda^2}\bigl(\Phi(\gamma_{x,y}(\lambda))\bigr)\big\rvert_{\lambda=\sigma}=&2\partial_T\Phi(\gamma_{x,y}(\sigma))x_T+D^2\Phi(\gamma_{x,y}(\sigma))\bigl[(x_H,2\sigma x_T),(x_H,2\sigma x_T)\bigr].
\end{align*}

The identities above make \eqref{eq:taylorphi} boil down to
\begin{equation}\label{eq:sec101}
\begin{split}
     \Phi(x-y)-\Phi(-y)=&D_H\Phi(-y)[x_H]\\
    &+\partial_T\Phi(\gamma_{x,y}(\sigma))x_T+\frac{1}{2}D^2\Phi(\gamma_{x,y}(\sigma))\bigl[(x_H,2\sigma x_T),(x_H,2\sigma x_T)\bigr].
\end{split}
\end{equation}
If we assume $x\in \supp(\mu)$, define $\tilde x \coloneqq (x_H,2\sigma x_T)$, and integrate \eqref{eq:sec101}, we get
\begin{equation}
    \begin{split}
        0=&\int (\Phi(x-y)-\Phi(-y))\,d\mu(y)
        =\int D_H\phi(-y)[x_H]\,d\mu(y)\\
        &\qquad +\int \Big(\partial_T\Phi(\gamma_{x,y}(\sigma))x_T+\frac{1}{2}D^2\Phi(\gamma_{x,y}(\sigma))[\tilde x,\tilde x]\Big)\,d\mu(y).
        \label{estimate3}
    \end{split}
\end{equation}
Let us note that by definition of $\Phi$ we have
\begin{equation}\label{eq:D_H_Phi}
    D_H\Phi(z)=\rho^\prime(\lVert z\rVert)\frac{\lvert z_H\rvert^2z_H}{\lVert z\rVert^3}=\vphi(\lVert z\rVert)\frac{\lvert z_H\rvert^2z_H}{\lVert z\rVert^3},
\end{equation}
and thus:
\begin{align}
    D_H\Phi(\lVert z\rVert)=\frac{\lvert z_H\rvert^2z_H}{\lVert z\rVert^{n+4}}\qquad &\text{for } r\leq \lVert z\rVert\leq s,\label{eq:DHphi1}\\
    \lvert D_H\Phi(\lVert z\rVert)\rvert \leq 2^{n+1}r^{-(n+1)}\qquad &\text{for }\lVert z\rVert\leq r,\label{eq:DHphi2}\\
    \lvert D_H\Phi(\lVert z\rVert)\rvert \leq \lVert z\rVert^{-(n+1)}\qquad &\text{for }\lVert z\rVert\geq s.\label{eq:DHphi3}
\end{align}
Thanks to \eqref{eq:DHphi1} and \eqref{eq:definition_Rrs} we deduce that
\begin{equation}
    \int D_H\Phi (-y)\,d\mu(z)=\int_{\lVert z\rVert\leq r} D_H\Phi (-y)\,d\mu(z)-\mathcal{R}_{r,s}\mu(0)+\int_{\lVert z\rVert\geq s} D_H\Phi (-y)\,d\mu(z).
    \label{estimate2}
\end{equation}
Moreover, \eqref{eq:DHphi2}, \eqref{eq:DHphi3}, and the uniformity of $\mu$ imply
\begin{equation}
\begin{split}
      \Bigl\lvert \int_{\lVert z\rVert\leq r} D_H\Phi (-y)\,d\mu(z)+\int_{\lVert z\rVert\geq s} D_H\Phi (-y)\,d\mu(z)\Bigr\rvert\leq& 2^{n+1}\frac{\mu(B(0,r))}{r^{n+1}}+\frac{\mu(B(0,2s))}{s^{n+1}}\\
      =&2^{n+2}.
      \label{estimate1}
\end{split}
\end{equation}

We gather \eqref{estimate3}, \eqref{estimate2},  \eqref{estimate1}, and infer that
\begin{equation}
\begin{split}
     \lvert \langle x_H, \mathcal{R}_{r,s}\mu(x)\rangle\rvert &\leq 2^{n+2}\lvert x_H\rvert+\Bigl\lvert\int \partial_T\Phi(\gamma_{x,y}(\sigma))x_T\,d\mu(y)\Bigr\rvert 
     +\frac{1}{2}\Bigl\lvert\int D^2\Phi(\gamma_{x,y}(\sigma))\bigl[\tilde x,\tilde x\bigr]\,d\mu(y)\Bigr\rvert\\
    &\eqqcolon 2^{n+2}\lvert x_H\rvert + \mathfrak I_1 + \mathfrak I_2.
    \label{equation4}
\end{split}
\end{equation}

Before estimating $\mathfrak I_1$ and $\mathfrak I_2$, we observe that the assumption $\lVert x\rVert\leq r$ and triangle inequality imply that
if $\lVert y\rVert\leq 2r$ then $\lVert\gamma_{x,y}(\sigma)\rVert\leq 3r$. If, on the contrary, $\lVert y\rVert> 2r$ then $\lVert\gamma_{x,y}(\sigma)\rVert>\lVert y\rVert$/2.
Thus, since  $\supp(\varphi)\subseteq B(0,2s)\setminus B(0,r/2)$ and
\begin{equation}\label{eq:D_T_Phi}
    \partial_T\Phi(z)=\vphi(\lVert z\rVert)\frac{ z_T}{2\lVert z\rVert^3},
\end{equation}
we obtain that 
\[
    \lvert\partial_T\Phi(\gamma_{x,y}(\sigma))\rvert\leq 2^{n+2}r^{-(n+2)}\qquad \text{ for }\lVert y\rVert\leq 2r
\]
and 
\[
    \lvert\partial_T\Phi(\gamma_{x,y}(\sigma))\rvert\leq\lVert y\rVert^{-(n+2)}\qquad \text{ for }\lVert y\rVert> 2r.
\]

This readily shows that
\begin{equation}
    \begin{split}
        \mathfrak I_1\leq&\Big\lvert\int_{\lVert y\rVert\leq 2r} \partial_T\Phi(\gamma_{x,y}(\sigma))\, d\mu(y)\Big\rvert |x_T|+\Big\lvert\int_{\lVert y\rVert> 2r} \partial_T\Phi(\gamma_{x,y}(\sigma)) d\mu(y)\Big\rvert|x_T|\\
        \leq &\frac{2^{n+2}\mu(B(0,2r))}{r^{n+2}}\|x\|^2+2^{n+2}\|x\|^2\int_{\lVert y\rVert> 2r}\frac{1}{\lVert y\rVert^{n+2}}\,d\mu(y)\leq 2^{2(n+3)}\frac{\|x\|^2}{r},
        \label{eq:stima_I_1_fin}
    \end{split}
\end{equation}
where the estimate of the last integral can be obtain via a standard decomposition in dyadic annuli of the domain of integration.

We are left with the estimate of $\mathfrak I_2$. We first recall that $\tilde x \coloneqq (x_H,2\sigma x_T)$ and note that
\begin{equation}
\begin{split}
      D^2\Phi(z)[\tilde x,\tilde x ]=\bigl\langle x_H,D_H^2\Phi(z) x_H\bigr\rangle+&4\sigma x_T\langle x_H,\partial_T D_H\Phi(z)\rangle
      +4\sigma^2 x_T^2\partial_T^2\Phi(z).
      \label{equazione5}
\end{split}
\end{equation}
A standard differentiation of \eqref{eq:D_H_Phi} and \eqref{eq:D_T_Phi}, whose details we omit, show that
\begin{align*}
  \bigl\langle x_H, D_H^2\Phi(z)x_H\bigr\rangle=\frac{\varphi'(\lVert z\rVert)\lvert z_H\rvert^4\langle z_H,x_H\rangle^2}{\lVert z\rVert^6}+&\frac{\varphi(\lVert z\rVert)\bigl(2\langle z_H,x_H\rangle^2+\lvert z_H\rvert^2\lvert x_H\rvert^2\bigr)}{\lVert z\rVert^3}\\
  -&\frac{3\varphi(\lVert z\rVert)\lvert z_H\rvert^4\langle z_H,x_H\rangle^2}{\lVert z\rVert^7},\\
  \langle \partial_TD_H\varphi(z),x_H\rangle=\frac{\lvert z_H\rvert^2\langle z_H,x_H\rangle z_T}{2\lVert z\rVert^6}\Big( \varphi'&(\lVert z\rVert)-3\frac{\varphi(\lVert z\rVert)}{\lVert z\rVert}\Big),
\end{align*}
and
\[
  \partial_T^2\Phi(z)=\frac{\varphi'(\lVert z\rVert)z_T^2}{4\lVert z\rVert^6}+\frac{\varphi(\lVert z\rVert)}{2\lVert z\rVert^3}-3\frac{\varphi(\lVert z\rVert)z_T^2}{4\lVert z\rVert^7}.
\]

Thanks to the above identities and for $c'>0$ as in \eqref{eq:def_aux_funct_tpa}, few omitted standard computations yield
\begin{equation}
\begin{split}
    \Bigl\lvert\int_{\lVert y\rVert\leq 2r} D^2\Phi(\gamma_{x,y}(\sigma))[\tilde x,\tilde x]\,d\mu(y)\Bigr\rvert&\leq 2^{n+10}(c'+1)\frac{\mu(B(0,2r))}{r^{n+2}}\lVert x\rVert^2\leq 2^{2(n+10)}(c'+1)\frac{\lVert x\rVert^2}{r}
    \label{stima1}
\end{split}
\end{equation}
and
\begin{equation}
\begin{split}
  & \Bigl\lvert\int_{\lVert y\rVert\geq 2r} D^2\Phi(\gamma_{x,y}(\sigma))[\tilde x,\tilde x]\,d\mu(y)\Bigr\rvert\\
    &\qquad\leq2^{2(n+10)}(c'+1)\Big(\int_{\lVert y\rVert\geq 2r}\lVert y\rVert^{-(n+2)} \,d\mu(y)\lvert x_H\rvert^2+\int_{\lVert y\rVert\geq 2r}\lVert y\rVert^{-(n+3)} d\mu(y)\lvert x_H\rvert\lvert x_T\rvert\\
    &\qquad\qquad\qquad\qquad+\int_{\lVert y\rVert\geq 2r}\lVert y\rVert^{-(n+3)} \,d\mu(y)\lvert x_T\rvert^2\Big)\\
    &\qquad \leq 2^{2(n+11)}(c'+1)\Big(\frac{\lVert x\rVert^2}{2r}+\frac{\lVert x\rVert^3}{8r^2}+\frac{\lVert x\rVert^4}{24r^3}\Big)\leq 2^{2(n+11)}(c'+1)\frac{\lVert x\rVert^2}{r},
    \label{stima2}
\end{split}
\end{equation}
where the last inequality holds because $\lVert x\rVert\leq r$. Hence, \eqref{stima1} and \eqref{stima2} imply
\begin{equation}\label{eq:stima_I_2_fin}
    \mathfrak{I}_2\leq 2^{2(n+12)}(c'+1)\frac{\lVert x\rVert^2}{r}.
\end{equation}

In conclusion, we gather \eqref{equation4}, \eqref{eq:stima_I_1_fin}, \eqref{eq:stima_I_2_fin}, and finally obtain that
\begin{equation}
    \lvert \langle x_H, \mathcal{R}_{r,s}\mu(0)\rangle\rvert\leq 2^{n+2}\lvert x_H\rvert + 2^{2(n+3)}\frac{\|x\|^2}{r} +2^{2(n+12)}(c'+1)\frac{\lVert x\rVert^2}{r}\leq c \,r,
\end{equation}
for $c\coloneqq 2^{2(n+13)} (1+c')$, which completes the proof of the proposition.
\end{proof}

\vv

\begin{lemma}\label{lemmaangolitangenti}
Let $\mu\in \mathcal U(n+1)$. There are two constants $0<\mathfrak{c}_n<1/4$ and $\alpha_n>0$ depending only on $n$ such that for every $y\in\supp(\nu)$ and any $0<r\leq 1$ there exist $z=z(y)\in B(y,r/4)$ and $\rho=\rho(y)>\mathfrak{c}_nr$ such that:
\begin{itemize}
    \item[(i)] $U(z,\rho)\cap \supp(\mu)=\emptyset$ and there exists $w\in\mathbb{P}^n$ such that $w\in\partial B(z,\rho)\cap \supp(\mu)$.
    \item[(ii)]$\lvert z_H-w_H\rvert\geq \alpha_n\lVert z-w\rVert$.
\end{itemize}
\end{lemma}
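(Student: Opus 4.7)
Since $r^{-(n+1)}T_{y,r}\mu$ belongs to $\mathcal{U}(n+1)$ and has $0$ in its support whenever $y\in\supp(\mu)$ and $r>0$, the claim is invariant under parabolic scaling and translation, and we reduce to the case $y=0$, $r=1$. It then suffices to produce, uniformly in $\mu\in\mathcal{U}(n+1)$ with $0\in\supp(\mu)$, a point $z\in B(0,1/4)$ and $w\in\supp(\mu)$ realizing $\rho\coloneqq d(z,w)=\dist(z,\supp(\mu))$ with $\rho\geq\mathfrak{c}_n$ and $|z_H-w_H|\geq\alpha_n\,\rho$.

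For a fixed $\mu$, we construct such a triple using the smooth structure of the support. By Theorem~\ref{th.marstrand} together with (the proof of) Proposition~\ref{strutturaunifmeasvssurfacemeasure}, $\supp(\mu)$ decomposes as a countable union of smooth (Euclidean) $n$-dimensional manifolds $\Sigma_i\subset\R^{n+1}$; the Sard-type argument used in the proof of Proposition~\ref{tgverticalseregolare} shows in addition that, away from an $\mathcal{H}^{n+1}$-null set, the Euclidean unit normal $\mathfrak{n}(x)$ satisfies $\hat{\mathfrak{n}}(x)\coloneqq\pi_H(\mathfrak{n}(x))\neq 0$. Since $\mu(B(0,1/8))>0$, we select a smoothness point $x_0\in B(0,1/8)\cap\supp(\mu)$ with non-vertical Euclidean normal, set $v_0\coloneqq\hat{\mathfrak{n}}(x_0)/|\hat{\mathfrak{n}}(x_0)|$, and consider $z_\epsilon\coloneqq x_0+\epsilon(v_0,0)$ for small $\epsilon>0$. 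A direct computation of the Koranyi distance from $z_\epsilon$ to the Euclidean tangent hyperplane $\mathfrak{n}(x_0)^\perp$ (whose Koranyi blow-up, by Proposition~\ref{tgverticalseregolare}, is the vertical plane $v_0^\perp\oplus\R e_{n+1}$) yields $\dist(z_\epsilon,\mathfrak{n}(x_0)^\perp)=\epsilon(1+O(\epsilon^2))$ with nearest point $y^*$ satisfying $|z_{\epsilon,H}-y^*_H|=\epsilon(1+O(\epsilon^2))$ and $|z_{\epsilon,T}-y^*_T|=O(\epsilon^3)$. Because $\supp(\mu)$ is a smooth graph over $\mathfrak{n}(x_0)^\perp$ near $x_0$ with vanishing first-order correction, the same bounds persist when $\mathfrak{n}(x_0)^\perp$ is replaced by $\supp(\mu)$: for $\epsilon$ sufficiently small (depending on $\mu$) we have $z_\epsilon\notin\supp(\mu)$, and the touching point $w_\epsilon\in\partial B(z_\epsilon,\rho_\epsilon)\cap\supp(\mu)$ satisfies $\rho_\epsilon\geq\epsilon/2$ and $|z_{\epsilon,H}-w_{\epsilon,H}|\geq\tfrac{1}{2}\rho_\epsilon$. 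This yields the desired triple with constants depending on $\mu$.

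To make the constants universal, we apply a compactness-contradiction argument. Suppose the lemma fails; then there exists a sequence $\mu_k\in\mathcal{U}(n+1)$ with $0\in\supp(\mu_k)$ violating the conclusion with constants $1/k$. By Proposition~\ref{UComp}, along a subsequence $\mu_k\rightharpoonup\mu_\infty\in\mathcal{U}(n+1)$ with $0\in\supp(\mu_\infty)$. Applying the construction of the previous paragraph to $\mu_\infty$ produces an admissible triple $(z_*,\rho_*,w_*)$ with $\rho_*>0$ and $|z_{*,H}-w_{*,H}|/\rho_*\geq 1/2$. Lemma~\ref{replica}-(ii) provides $w_k\in\supp(\mu_k)$ with $w_k\to w_*$, so $d(z_*,w_k)\to\rho_*$ and $|z_{*,H}-w_{k,H}|/d(z_*,w_k)\to |z_{*,H}-w_{*,H}|/\rho_*\geq 1/2$; a routine application of Lemma~\ref{replica}-(iii) forces $\liminf_k\dist(z_*,\supp(\mu_k))\geq\rho_*$, so after a mild shrinkage the triple $(z_*,d(z_*,w_k),w_k)$ becomes admissible for $\mu_k$ (for $k$ large) with parameters bounded below by $\rho_*/4$ and $1/4$, contradicting the assumed failure of the lemma for $\mu_k$.

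The main technical obstacle lies precisely in this last transport step: for a fixed $z_*$, the nearest support point of $\mu_k$ need not be close to $w_*$, so the horizontal ratio $|z_{*,H}-w_{k,H}|/\dist(z_*,\supp(\mu_k))$ could in principle degenerate even when the analogous ratio for $\mu_\infty$ is bounded below. This is bypassed by starting from the robust Step~2 configuration at a smoothness point of $\mu_\infty$, where the tangent plane is transverse to the direction of approach and the touching behaviour is locally stable; concretely, one may slightly perturb $z_*$ away from $w_k$ in the direction $v_0$ associated with $\mu_\infty$ to restore the minimum-distance condition while keeping the horizontal ratio bounded below by a universal constant.
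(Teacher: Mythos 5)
Your overall architecture (a touching construction at a smooth point of the support for one fixed measure, followed by a compactness--contradiction to make the constants universal) is genuinely different from the paper, which obtains part (i) by a direct packing argument with the explicit constant $32^{-(n+5)}$, excludes purely vertical touching via the blow-up machinery (Propositions \ref{bupunifarecones}, \ref{CONO}, \ref{charunifquadrics}), and only then uses a compactness argument for $\alpha_n$. Your route would be attractive because it bypasses the classification of uniform measures on quadrics, but as written it has two genuine gaps.

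First, the local computation in Step 2 is incorrect as stated: you compute the Koranyi distance from $z_\epsilon=x_0+\epsilon(v_0,0)$ to the \emph{Euclidean} tangent plane $\mathfrak{n}(x_0)^\perp$ and claim it equals $\epsilon(1+O(\epsilon^2))$ with an almost horizontal nearest point. If the normal has a sizable vertical component $\mathfrak n_T$, the Koranyi-nearest point of $\mathfrak{n}(x_0)^\perp$ is reached almost vertically, at distance comparable to $(\epsilon\lvert\hat{\mathfrak n}(x_0)\rvert/\lvert\mathfrak n_T\rvert)^{1/2}\ll\epsilon$, so both the expansion and the horizontal lower bound fail; the Euclidean tangent plane is not the parabolic blow-up. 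Moreover, ``the same bounds persist for $\supp(\mu)$ because the first-order correction vanishes'' ignores that Euclidean second-order corrections are first order in the parabolic metric in the vertical direction. The construction can be repaired: choose $x_0$ in the open smooth locus $G$ used in the proof of Proposition \ref{strutturaunifmeasvssurfacemeasure} (so that a whole Koranyi ball around $x_0$ meets a single smooth sheet, which the countable decomposition alone does not give), write that sheet as a graph over the vertical plane $v_0^\perp\oplus\R e_{n+1}$, and take $\epsilon$ small compared with the tilt $\lvert\mathfrak n_T\rvert/\lvert\hat{\mathfrak n}(x_0)\rvert$ and the curvature; then any $w\in\supp(\mu_\infty)$ with $d(z_\epsilon,w)\leq\epsilon$ has $\lvert w_T-x_{0,T}\rvert\leq\epsilon^2$, hence $\lvert z_{\epsilon,H}-w_H\rvert\geq\epsilon/2$, so \emph{every} touching point of $B(z_\epsilon,\rho_\epsilon)$ has horizontal ratio at least $1/2$. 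None of this is in your argument, and it is exactly what you need.

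Second, the transfer step, which you yourself identify as the main obstacle, is not proved. The suggestion to ``perturb $z_*$ away from $w_k$ in the direction $v_0$ to restore the minimum-distance condition'' does not work as stated: moving $z_*$ changes its distance to all of $\supp(\mu_k)$, and the $\mu_k$-nearest point need not be anywhere near $w_k$, so admissibility and the horizontal ratio are not restored by such a perturbation. The correct fix uses the strengthened Step 2 above: since all touching points of the limit ball lie in the smooth patch and satisfy $\lvert z_{*,H}-w_H\rvert\geq\rho_*/2$, and since by Lemma \ref{replica}-(ii)-(iii) one has $\dist(z_*,\supp(\mu_k))\to\rho_*$ while any subsequential limit of the $\mu_k$-nearest points $w_k'$ lies in $\supp(\mu_\infty)\cap\partial B(z_*,\rho_*)$, it follows that for large $k$ the triple $(z_*,\dist(z_*,\supp(\mu_k)),w_k')$ is admissible for $\mu_k$ with radius $\geq\rho_*/2$ and ratio $\geq 1/4$, which is the desired contradiction. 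With these two repairs your scheme becomes a valid alternative proof; without them the argument does not close.
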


\begin{proof}
 Up to a translation and a dilation we can assume without loss of generality that $y=0$ and $r=1$. 
Note that if we take $0<\vartheta<1/2$ and let $\mathcal{D}$ be a $(\vartheta/4)$-dense set of points in $B(0,1/4)$, then
\[
    \mathcal L^{n+1}(B(0,1))\Bigl(\frac{1}{4}\Bigr)^{n+2}\leq\mathcal{L}^{n+1}\Bigl(\bigcup_{p\in\mathcal{D}}B(p,\vartheta/2)\Bigr)\leq \mathrm{Card}(\mathcal{D})\mathcal L^{n+1}(B(0,1))\Bigl(\frac{\vartheta}{2}\Bigr)^{n+2},
\]
which implies that $\mathrm{Card}(\mathcal{D})\geq(2\vartheta)^{-(n+2)}
$. 

Let us prove (i). We argue by contradiction and assume that for any $y\in B(0,1/4)$ we have $U(y,\vartheta/8)\cap \supp(\mu)\neq \emptyset$. Then if $\widetilde{\mathcal{D}}$ is a $(\vartheta/4)$-dense set in $B(0,1/4)$ of elements in $\supp(\mu)$ such that $B(p,\vartheta/8)\cap B(p',\vartheta/8)=\emptyset$ for every $p,p'\in\widetilde{\mathcal D}$ with $p\neq p'$, it holds
\begin{equation}
    \begin{split}
        16^{-(n+2)}\vartheta^{-1}\leq& (2\vartheta)^{-(n+2)}(\vartheta/8)^{n+1} \leq \mathrm{Card}\bigl(\widetilde{\mathcal{D}}\bigr)(\vartheta/8)^{n+1}=\sum_{p\in\widetilde{\mathcal D}}\mu\bigl(B(p,\vartheta/8)\bigr)\\
        \leq& \mu\Bigl(\bigcup_{p\in\widetilde{\mathcal{D}}}B(p,\vartheta/2)\Bigr)\leq \mu\Bigl(B\Bigl(0,\frac{1}{4}+\vartheta\Bigr)\Bigr)=\Bigl(\frac{1}{4}+\vartheta\Bigr)^{n+1}\leq 1,
        \label{eq.contradictiontheta}
    \end{split}
\end{equation}
where we also use that $\mu\in \mathcal U(n+1).$
It is readily seen that if $\vartheta< 16^{-(n+2)}$ then \eqref{eq.contradictiontheta} cannot be satisfied. This implies that there exists an open ball $U^\prime$ of radius bigger than $32^{-(n+5)}$ centered at some point of $B(0,1/4)$ such that $\supp(\mu)\cap U^\prime=\emptyset$ and $\partial(U^\prime)\cap \supp(\mu)=\emptyset$. Observe also that $r(U')< 1/4$, because 
otherwise $U'$ would contain $0$, which belongs to $\supp(\mu)$. Thus,
$U'\subseteq B(0,1/2)$ and (i) holds with $\mathfrak{c}_n\coloneqq 32^{-(n+5)}$.

Let us move to the proof of (ii). Let $\tilde B=B(\omega,r(\tilde B))$ and $z\in\supp(\mu)\cap \partial \tilde B$. First, we prove that $z_H-\omega_H\neq 0$. Indeed if, by contradiction, we had $z_H-\omega_H= 0$, then (i) implies that the tangent uniform measure $\nu$ of $\mu$ at $z$ would satisfy
\begin{equation}\label{eq:nusubhspace1}
    \supp(\nu)\subseteq\{\xi\in \mathbb P^n:\xi_T(z_T-\omega_T)\leq 0\}.
\end{equation}

Moreover, by Proposition \ref{bupunifarecones} we know that $\nu$ is also invariant under dilations and by Proposition \ref{CONO} we conclude that either $\nu$ is contained in a quadric since the vertical barycenter $\mathcal{T}_\nu$ is non-zero, or 
\begin{equation}\label{eq:suppnusubplane}
    \supp(\nu)\subseteq \mathbb R^n\times\{0\}.
\end{equation}

However, the alternative \eqref{eq:suppnusubplane} is impossible as it would force $\supp(\nu)$ to have dimension $n$. Thus, Proposition \ref{charunifquadrics} implies that $\nu$ is invariant under vertical translations, which contradicts \eqref{eq:nusubhspace1}.

We are left with the proof of the existence of the constant $\alpha_n$. Let us first assume that $r=1$ and, arguing by contradiction, suppose that there are two sequences $\mu_i\in\mathcal{U}(n+1)$ and $y_i\in \supp(\mu_i)$ such that we can find $z_i\in B(y_i,1/4)$, $\rho_i>\mathfrak{c}_n$, and $w_i\in\supp(\mu_i)$ which satisfy:
\begin{itemize}
    \item[($\alpha$)] $U(z_i,\rho_i)\cap \supp(\mu_i)=\emptyset$ and $w_i\in\partial B(z_i,\rho_i)$.
    \item[($\beta$)] $0<\lvert (z_i)_H-(w_i)_H\rvert\leq i^{-1}\rho_i$.
\end{itemize}

Up to translating $y_i$ at $0$, thanks to Proposition \ref{replica}, and possibly passing to non-relabeled subsequences  we assume that the $\mu_i$ converge in the weak-* topology to some $\eta\in\mathcal{U}(n+1)$, that $\rho_i$ converge to some $\rho>\mathfrak{c}_n$, and that the sequences $y_i,w_i, z_i$ converge to some $y,w\in\supp(\eta)$, and $z\in B(y,1/4)$ respectively. Note that the condition $(\beta)$ implies that $z_H=w_H$ and it is also not hard to see that $U(z,\rho)\cap \supp(\mu)=\emptyset$ and that $w\in \partial B(z,\rho)$. This however contradicts the first part of the proof of (ii). The result for a general $r$ follows by rescaling.
\end{proof}

\vv

\begin{lemma}\label{lemaux1}
Let $\mu\in \mathcal U(n+1)$ and $B$ be a ball centered at  $\supp(\mu)$. 
Suppose also that, for some $\ve>0$ and $0<\kappa<2$, we have
\begin{equation}\label{eq:beta_big}
    \beta_\mu(B(x,r))\geq \ve, \qquad \mbox{for all $x\in B\cap \supp(\mu)$ \,and\, $\kappa \,r(B)\leq r\leq 2r(B)$.}
\end{equation}

For any $M>0$, if $\kappa=\kappa(M,\ve)>0$ is small enough,
there exist $r\in[\kappa\, r(B),\, r(B)]$ and
 $x,z_0\in B\cap\supp(\mu)$ with $|x-z_0|< \kappa\,r(B)$
such that
\begin{equation}\label{eqfo95}
\Bigl|\Bigl\langle\frac{x_H-z_{0,H}}{\kappa\,r(B)},\mathcal{R}_{\kappa r(B),r}\mu(z_0)\Bigr\rangle\Bigr|\geq M.
\end{equation}
\end{lemma}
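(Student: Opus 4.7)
The plan is to argue by contradiction: suppose that for every $r\in[\kappa r(B),r(B)]$ and every admissible pair $x,z_0\in B\cap\supp(\mu)$ with $|x-z_0|<\kappa r(B)$, one has
$$\Bigl|\Bigl\langle\tfrac{x_H-z_{0,H}}{\kappa r(B)},\,\mathcal{R}_{\kappa r(B),r}\mu(z_0)\Bigr\rangle\Bigr|<M.$$
The goal is to derive a contradiction for $\kappa$ small (depending on $M$, $\varepsilon$ and $n$) by exploiting the non-flatness hypothesis $\beta_\mu\geq\varepsilon$ at all intermediate scales, combined with Lemma \ref{lemmaangolitangenti}, which guarantees parabolic holes in $\supp(\mu)$ of comparable size at every scale, each with significant horizontal displacement of the touching point.

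First I would dyadically decompose $[\kappa r(B),r(B)]$ into scales $r_k=2^{-k}r(B)$, $k=0,\ldots,N$, with $N\sim\log_2(1/\kappa)$. At the finest scale $r_N\approx \kappa r(B)$ I would fix a reference point $z_0\in B\cap\supp(\mu)$ and apply Lemma \ref{lemmaangolitangenti} (at scale $\kappa r(B)$) to produce a touching point $x\in\supp(\mu)$ with $|x-z_0|\lesssim \kappa r(B)$ and $|x_H-z_{0,H}|\gtrsim\alpha_n\kappa r(B)$. The unit vector $u\coloneqq(x_H-z_{0,H})/|x_H-z_{0,H}|$ serves as a distinguished test direction, and the assumed upper bound $M$ translates into
$$\bigl|\langle u,\,\mathcal{R}_{\kappa r(B),r}\mu(z_0)\rangle\bigr|\lesssim M/\alpha_n\qquad \text{for every }r\in[\kappa r(B),r(B)].$$

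Next I would estimate from below the contribution of each dyadic shell $\{r_{k+1}<|z_0-y|\leq r_k\}$ to $\langle u,\mathcal{R}\rangle$. The hypothesis $\beta_\mu(B(\cdot,r_k))\geq\varepsilon$ at scale $r_k$, together with Lemma \ref{lemmaangolitangenti} applied at that scale, produces a concrete ``hole-plus-protrusion'' in $\supp(\mu)$ at every scale with horizontal displacement at least $\alpha_n$ times the scale; a direct computation of the scale-invariant kernel $|z_{0,H}-y_H|^2(z_{0,H}-y_H)\,\|z_0-y\|^{-(n+4)}$ on this geometry, in the spirit of the tangent measure/compactness arguments of Section \ref{section:Mastrand}, yields a lower bound of a positive constant $c_0(n,\varepsilon)$ on the magnitude of the projection of the shell's contribution onto a suitable direction depending on $k$.

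Finally, summing over the $N$ shells, and using the fact that we have the freedom of both choosing the final radius $r$ (so we can stop at any partial sum) and of selecting $(x,z_0)$ \emph{a posteriori} based on the geometry revealed by the multi-scale argument, we obtain a triple for which
$$\bigl|\langle u,\,\mathcal{R}_{\kappa r(B),r}\mu(z_0)\rangle\bigr|\gtrsim c_0(n,\varepsilon)\cdot N\gtrsim c_0(n,\varepsilon)\log_2(1/\kappa),$$
which exceeds $M/\alpha_n$ once $\kappa\leq\kappa(M,\varepsilon,n)$ is chosen small, contradicting the standing assumption. The main obstacle is the alignment of contributions across the $N$ different scales: the ``natural'' direction associated to the protrusion at scale $r_k$ need not coincide with the direction $u$ fixed at scale $\kappa r(B)$, and without control on this alignment the contributions could cancel. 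This is overcome by simultaneously choosing the stopping radius $r$ and the pair $(x,z_0)$ along with the sign of $u$, after inspecting the finitely many scale-$r_k$ directions produced by Lemma \ref{lemmaangolitangenti}; a pigeonhole argument over these directions ensures that a definite positive proportion of the scales contribute with a consistent sign to $\langle u,\mathcal{R}\rangle$, which is enough to deliver the desired lower bound of order $\log_2(1/\kappa)$.
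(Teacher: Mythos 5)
There is a genuine gap, and it sits exactly where you flag the ``main obstacle'': the multi-scale sign problem is not resolved by your pigeonhole. Knowing that each dyadic shell contributes a vector of magnitude at least $c_0(n,\ve)$ in \emph{some} direction depending on $k$, and that a positive proportion of these directions can be grouped by a pigeonhole, does not give a lower bound on any admissible quantity: the operator $\mathcal R_{\kappa r(B),r}\mu(z_0)$ is a sum of \emph{consecutive} shell contributions from the finest scale up to the stopping scale $r$, and if the components along your fixed $u$ alternate in sign with size $\sim c_0$, every partial sum stays bounded by $c_0$ no matter how you choose the stopping radius, the sign of $u$, or which ``aligned'' subfamily of scales you would like to count — you cannot discard the misaligned scales from the integral. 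The paper avoids this issue structurally: Lemma \ref{lemmaangolitangenti} is applied only \emph{once}, at the top scale, producing a single empty ball touching $\supp(\mu)$ at $z_0=w$ with a quantitatively non-vertical normal $e_1$. Relative to that one tangent plane $L=e_1^\perp$, the touching-ball geometry forces all of $\supp(\mu)$ on the ``bad'' side to have tiny first coordinate (estimate \eqref{eqdo24}), while at \emph{every} smaller scale the hypothesis $\beta_\mu\geq\ve$ places a ball of mass well inside the ``good'' half-space, where the first component of the kernel is nonnegative. Hence all shell contributions along $e_1$ are $\geq$ a fixed constant (see \eqref{eqdo98}) and the bad-side errors are summably small of order $\kappa$ (see \eqref{eqdo99}); cancellation is excluded by construction rather than repaired afterwards, and the total grows like $N$.

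A second, smaller defect: the statement requires \emph{both} $x$ and $z_0$ to lie in $B\cap\supp(\mu)$, but Lemma \ref{lemmaangolitangenti} pairs a support point $w$ with the center $z$ of an empty ball, and $z\notin\supp(\mu)$; moreover the horizontal-displacement bound is between $w$ and $z$, not between $w$ and your reference point $z_0\in\supp(\mu)$, so your bottom-scale application of the lemma does not directly furnish an admissible test direction $u$. The paper handles this at the end by a separate step: using $\beta_\mu(B(0,\kappa r(B)))\geq\ve$ it selects support points $x_1,\dots,x_n$ whose horizontal projections are in quantitatively general position (see \eqref{madonnat}), writes $e_1=\sum_i a_i\,\pi_H(x_i)/(\kappa r(B))$ with $|a_i|\leq \ve^{-1}$, and transfers the lower bound along $e_1$ to one of the pairs $(x_i,z_0)$. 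You would need an analogous conversion step even if the multi-scale estimate were repaired.
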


\begin{proof}Let $\mathfrak{x}\in\supp(\mu)$, $\varrho>0$, and $B\coloneqq B(\mathfrak{x},\varrho)$.
Proposition \ref{lemmaangolitangenti} implies that there exist $z\in B(\mathfrak{x},\varrho/4)$ and  $\rho'>\mathfrak{c}_n\varrho$ such that:
\begin{itemize}    
    \item[($\alpha^\prime$)] $U(z,\rho')\cap \supp(\mu)=\emptyset$ and there exists $w\in\partial B(z,\rho')\cap \supp(\mu)\neq \emptyset$.
    \item[($\beta^\prime$)]$\lvert z_H-w_H\rvert\geq \alpha_n\lVert z-w\rVert\geq \alpha_n\mathfrak{c}_n\varrho$.
\end{itemize}

Let $B'\coloneqq B(z,\lVert z-w\rVert)$. Item ($\beta^\prime$) implies that there exists an hyperplane $L\in\Gr(n+1)$ which is the parabolic blow up of $\partial B'$ at $z$. All the arguments below are invariant under isometries, so we assume without loss of generality that $L=e_1^\perp$, $w=0$, and that 
the set  $U=\{y\in\R^d:\,y_1\geq 0\}$ is the blow up of $B^\prime$ at $0$. Let us further denote $D\coloneqq\mathbb{P}^{n+1}\setminus U$.

Hence, note that in this case $z=(-\lvert z_H\rvert e_1, z_T)$ and that $\rho'\coloneqq r(B')= \lVert z\rVert$, which implies that 
\[
    \mathfrak{c}_nr(B)=\mathfrak{c}_n\varrho<\rho'= r(B')\leq \frac{r(B)}{4}=\frac{\varrho}{4}.
\]

For each $j\geq 0$ and for $0<\kappa\ll 1$ to be chosen later, we denote
\[
    B_j\coloneqq B\bigl(0,(2\varepsilon^{-1}\bigr)^j\kappa \varrho).
\]
By short geometric computations, for any $y\in D\cap B_j\setminus B'$ it holds
\begin{equation}
\begin{split}
   \alpha_n^3|y_1|\overset{(\beta')}{\leq}&\frac{\lvert z_H\rvert^3}{\lVert z\rVert^3}|y_1|\leq \frac{8\lVert y\rVert^2}{\lVert z\rVert}\Big(1+\frac{\lVert y\rVert}{\lVert z\rVert}\Big)^2 .
\end{split}
\label{eqdo24}
\end{equation}
On the other hand, since $\beta_\mu(B_j)\geq\ve$, we infer that
there exists $\zeta\in B_j\cap\supp(\mu)$ such that
\begin{equation}\label{eqj22}
\dist(\zeta,L)\geq \ve\,r(B_j).
\end{equation}
An elementary calculation shows that if $\kappa<\kappa_j\coloneqq \min\{\mathfrak c_n(2\varepsilon^{-1})^{-j}, \mathfrak c_n\alpha_n^3\varepsilon(2\varepsilon^{-1})^{-j}/100\}$, then necessarily $\zeta$ belongs to $U\cap B_j\setminus B'$ otherwise \eqref{eqdo24} and \eqref{eqj22} together would result in a contradiction.

It is immediate to see that if $\kappa$ is chosen as above and $j\geq 1$, then 
$$B\bigl(\zeta,\varepsilon r(B_j)/2\bigr)\cap \bigl(B_{j-1}\cup B(L,\varepsilon r(B_j)/2)\bigr)=\emptyset,$$
and 
$$B\bigl(\zeta, \varepsilon r(B_j)/2\bigr)\subseteq U\cap B_{j+2},$$
so
\begin{equation}\label{eq:Bssubjb1}
  B\bigl(\zeta, \varepsilon r(B_j)/2\bigr)\subseteq  U\cap B_{j+2}\setminus \bigl(B_{j-1}\cup B(L,\varepsilon r(B_j)/2)\bigr).  
\end{equation}
The inclusion \eqref{eq:Bssubjb1} implies that
\begin{equation}\label{eq:Bssubjb2}
    \mu\bigl(U\cap B_{j+2}\setminus \bigl(B_{j-1}\cup B(L,\varepsilon r(B_j)/2)\bigr)\bigr)\overset{}{\geq} \mu\bigl(B(\zeta,\varepsilon r(B_j)/4)\bigr)=(\varepsilon /4)^{n+1}r(B_j)^{n+1}.
\end{equation}
Since $y_1\geq0$ for all $y\in U$, for any $j\geq 1$ we deduce from \eqref{eq:Bssubjb2} that
\begin{equation}\label{eqdo98}
\begin{split}
    \int_{U\cap B_{j+2}\setminus B_{j-1}} \frac{\lvert y_H\rvert^2 y_1}{\lVert y\rVert^{n+4}}\,d\mu(y) &\geq 
\mu\bigl( U\cap B_{j+2}\setminus \bigl(B_{j-1}\cup  B(L,\varepsilon r(B_j)/2)\bigr)\bigr) \,
\frac{\ve^3 \,r(B_j)^3}{8\,r(B_{j-1})^{n+4}}\\
& \geq \Bigl(\frac{\varepsilon}{4}\Bigr)^{n+1}\frac{\varepsilon^3r(B_j)^{n+4}}{8r(B_{j-1})^{n+4}}=\frac{\varepsilon^{2n+8}}{2^{3n+6}}.
\end{split}
\end{equation}
Also, by \eqref{eqdo24} and the uniformity of $\mu$,
\begin{equation}\label{eqdo99}
\begin{split}
\Bigl|\int_{D\cap B_{j}\setminus B_{j-1}} \frac{\lvert y_H\rvert^2 y_1}{\lVert y\rVert^{n+4}}\,d\mu(y)\Bigr| \leq& 
8\mu\bigl(
D\cap B_j\setminus B_{j-1}\bigr) \,
\frac{r(B_{j})^4(1+r(B_j)/r(B'))^2}{r(B')\,r(B_{j-1})^{n+4}}\\
\leq & (2\varepsilon^{-1})^{n+10}\frac{r(B_{j})(1+r(B_j)/r(B'))^2}{r(B')}.
\end{split}
\end{equation}
Fix $N\in \N$, choose \(\kappa<\kappa_{N}\), and denote by 
$\mathcal{R}_{\kappa\,r(B),r_N}^1\mu(0)$ the first coordinate of $\mathcal{R}_{\kappa\,r(B),r_N}\mu(0)$. We apply triangle inequality and write
\begin{align*}
\mathcal{R}_{\kappa\,r(B),r_N}^1\mu(0) & = \sum_{j=1}^N \int_{y\in B_j\setminus B_{j-1}}
\frac{\lvert y_H\rvert^2 y_1}{\lVert y\rVert^{n+4}}\,d\mu(y) \\
& \geq \sum_{j=1}^{N} \int_{U\cap B_{j}\setminus B_{j-1}} \frac{\lvert y_H\rvert^2 y_1}{\lVert y\rVert^{n+4}}\,d\mu(y) 
-  \sum_{j=1}^N \Bigl|\int_{D\cap B_{j}\setminus B_{j-1}} \frac{\lvert y_H\rvert^2 y_1}{\lVert y\rVert^{n+4}}\,d\mu(y)\Bigr|.
\end{align*}
Notice that, by \eqref{eqdo98},
\begin{equation}\label{eq:bound_U_tpa}
    \begin{split}
\sum_{j=1}^{N} \int_{U\cap B_{j}\setminus B_{j-1}} \frac{\lvert y_H\rvert^2 y_1}{\lVert y\rVert^{n+4}}\,d\mu(y)\geq& \frac13
\sum_{j=1}^{N-1} \int_{U\cap B_{j+2}\setminus B_{j-1}} \frac{\lvert y_H\rvert^2 y_1}{\lVert y\rVert^{n+4}}\,d\mu(y)
\geq \frac{N-1}{3}\frac{\varepsilon^{2n+8}}{2^{3n+6}}.
    \end{split}
\end{equation}
On the other hand, from \eqref{eqdo99} and the choice of $r(B_j)$ we derive
\begin{equation}\label{eq:bound_D_tpa}
    \begin{split}
        \sum_{j=1}^N \Bigl|\int_{D\cap B_j\setminus B_{j-1}}\frac{\lvert y_H\rvert^2 y_1}{\lVert y\rVert^{n+4}}\,d\mu(y)\Bigr|
\leq &(2\varepsilon^{-1})^{n+10}\sum_{j=1}^{N-1} \frac{r(B_{j})(1+r(B_j)/r(B'))^2}{r(B')}\\
\leq &4\kappa(2\varepsilon^{-1})^{n+10}\sum_{j=1}^{N-1}(2\varepsilon^{-1})^{j}\bigl(1+4(2\varepsilon^{-1})^{j}\kappa\bigr)^2\\
\leq& 4\kappa(2\varepsilon^{-1})^{n+10+N}\bigl(1+4(2\varepsilon^{-1})^{N-1}\kappa\bigr)^2.
    \end{split}
\end{equation}
The bounds \eqref{eq:bound_U_tpa} and \eqref{eq:bound_D_tpa} imply that
\begin{equation}
    \begin{split}
        \langle e_1 , \mathcal{R}_{\kappa\,r(B),r_N}\mu(0)\rangle =&
\mathcal{R}_{\kappa\,r(B),r_N}^1\mu(0)\\ \geq&\frac{N-1}{3}\frac{\varepsilon^{2n+8}}{2^{3n+6}}-4\kappa(2\varepsilon^{-1})^{n+10+N}\bigl(1+4(2\varepsilon^{-1})^{N-1}\kappa\bigr)^2.
    \end{split}
\end{equation}
Since $\beta_\mu\bigl(B(0,\kappa\,r(B))\bigr)\geq\ve$, there are $x_{1},\ldots,
x_{n}\in\supp(\mu)\cap B(0,\kappa\,r(B))$ such that $\pi_H(x_1),\ldots,\pi_H(x_n)$ span $\R^n$ and 
\begin{equation}
\frac{\mathrm{dist}\bigl(x_j,\mathrm{span}(\{\pi_H(x_i):i\neq j\})\times \mathcal{V}\bigr)}{\kappa r(B)}\geq \varepsilon    
\label{madonnat}
\end{equation}
Indeed, if this was not the case, there would be a plane $\tilde{W}$ for which $\mathrm{dist}(\supp(\mu)\cap B(0,r),\tilde{W}\cap B(0,r))< \varepsilon r$, which would contradict the fact that $\beta_\mu(B(0,\kappa r(B)))\geq \varepsilon$.
Note that since $x_1,\ldots,x_n$ are in generic position thanks to \eqref{madonnat}, there are $a_1,\ldots,a_n\in\R$ such that 
\begin{equation}\label{eq:e1_sum_basis}
   e_1 = \sum_{i=1}^n a_i\,\frac{\pi_H(x_i)}{\kappa \,r(B)}. 
\end{equation}
Let $j=1,\ldots, n$ and let $w_j$ be one of the two unitary vectors in $\mathbb R^n$ such that $(\mathrm{span}(\{\pi_H(x_i):i\neq j\}))^\perp=\mathrm{span}(w_j)$. Such choice of $w_j$, together with \eqref{eq:e1_sum_basis} implies that
$$1\geq \lvert\langle e_1,w_j\rangle\rvert=\Big\lvert \Big\langle\sum_{i=1}^na_i\frac{\pi_H(x_i)}{\kappa r(B)},w_j \Big\rangle\Big\rvert=\frac{\lvert a_j\rvert \lvert \langle w_j,\pi_H(x_j)\rangle\rvert}{\kappa r(B)}\overset{\eqref{madonnat}}{\geq} \lvert a_j\rvert\varepsilon.$$
Therefore, $|a_j|\leq \ve^{-1}$ for every $i=1,\ldots,n$. Observe also that if we further impose that
\[
    \kappa < \min\Bigl\{\kappa_{N},\frac{1}{4}\Bigl(\frac{\varepsilon}{2}\Bigr)^{N-1}, \frac{1}{100}\frac{\varepsilon^{3n+18+N}}{2^{4n+16+N}}\Bigr\}\eqqcolon \tilde \kappa_N,
\]
then it holds
\begin{equation}\label{eq:choice_coeff_2}
   4\kappa(2\varepsilon^{-1})^{n+10+N}\bigl(1+4(2\varepsilon^{-1})^{N-1}\kappa\bigr)^2\leq \frac{1}{3}\frac{\varepsilon^{2n+8}}{2^{3n+6}}.
\end{equation}
 This finally implies that 
\begin{equation}
    \begin{split}
     &\sum_{i=1}^na_i\Bigl|\Bigl\langle\frac{\pi_H(x_{i})}{\kappa \,r(B)}, \mathcal{R}_{\kappa\,r(B),r}\mu(0)\Bigr\rangle\Bigr| \geq\left|\langle e_1, \mathcal{R}_{\kappa\,r(B),r}\mu(0)\rangle \right| \\
       & \qquad \geq \frac{N-1}{3}\frac{\varepsilon^{2n+8}}{2^{3n+6}}-4\kappa(2\varepsilon^{-1})^{n+10+N}\bigl(1+4(2\varepsilon^{-1})^{N-1}\kappa\bigr)^2\overset{\eqref{eq:choice_coeff_2}}{\geq} \frac{N-2}{3}\frac{\varepsilon^{2n+8}}{2^{3n+6}},
    \end{split}
\end{equation}
thanks to the choice of $\kappa<\tilde \kappa_N$. Thus, there exists $i\in\{1,\ldots,n\}$ such that  $x_{i}\in\supp(\mu)\cap B(0,\kappa\,r(B))$ and
$$\Bigl|\Bigl\langle\frac{x_{i}}{\kappa \,r(B)}, \mathcal{R}_{\kappa\,r(B),r}\mu(0)\Bigr\rangle\Bigr| \geq 
\frac{N-2}{3n}\frac{\varepsilon^{2n+9}}{2^{3n+6}}.$$
If $N$ is taken big enough, then \eqref{eqfo95} follows.
\end{proof}

\vv

\begin{corollary}\label{corollary:flat_ball_big}
Let $\mu\in \mathcal U(n+1)$. For any $\varepsilon>0$ there exists $\tau=\tau(n,\varepsilon)>0$ such that, if $B$ is a ball centered at $\supp(\mu)$, there exists another ball $B'$ centered at $B\cap \supp(\mu)$ such that \(\beta_\mu(B')\leq \varepsilon\) and $r(B')\in [\tau r(B),2r(B)].$
\end{corollary}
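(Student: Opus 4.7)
The plan is to argue by contradiction, combining the upper bound on the auxiliary operator $\mathcal{R}_{r,s}\mu$ provided by Proposition \ref{prop:propaux1_s} with the lower bound supplied by Lemma \ref{lemaux1}. Let $c=c(n)$ be the constant appearing in Proposition \ref{prop:propaux1_s} and fix $M\coloneqq c+1$. Applying Lemma \ref{lemaux1} with this $M$ and with the given $\varepsilon>0$ yields a threshold $\kappa_0=\kappa_0(M,\varepsilon)=\kappa_0(n,\varepsilon)>0$ with the following property: whenever $B$ is a ball centered at $\supp(\mu)$ for which $\beta_\mu(B(x,r))\geq \varepsilon$ for every $x\in B\cap\supp(\mu)$ and every $r\in[\kappa_0\, r(B),\,2r(B)]$, there exist $r\in[\kappa_0\,r(B),\,r(B)]$ and points $x,z_0\in B\cap\supp(\mu)$ with $|x-z_0|<\kappa_0\,r(B)$ such that
\[
\Bigl|\Bigl\langle\frac{x_H-z_{0,H}}{\kappa_0\,r(B)},\,\mathcal{R}_{\kappa_0 r(B),\,r}\mu(z_0)\Bigr\rangle\Bigr|\geq M.
\]

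I would then set $\tau\coloneqq\kappa_0$ and suppose, toward a contradiction, that the corollary fails with this value of $\tau$. Unpacking the negation, there exists a ball $B$ centered at $\supp(\mu)$ such that every ball $B'$ centered at a point of $B\cap\supp(\mu)$ with $r(B')\in[\tau\,r(B),\,2r(B)]$ satisfies $\beta_\mu(B')>\varepsilon$. In particular, the hypothesis of Lemma \ref{lemaux1} is satisfied with this $\kappa=\kappa_0$, producing the point $z_0\in\supp(\mu)$, the scale $r\in[\kappa_0\,r(B),\,r(B)]$, and the point $x\in B(z_0,\kappa_0\,r(B))\cap\supp(\mu)$ for which the displayed inequality above holds.

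The contradiction is then immediate: since $z_0\in\supp(\mu)$, $x\in B(z_0,\kappa_0\,r(B))\cap\supp(\mu)$ and $r>\kappa_0\,r(B)$, Proposition \ref{prop:propaux1_s} applied with inner radius $\kappa_0\,r(B)$ and outer radius $r$ gives
\[
\Bigl|\Bigl\langle\frac{x_H-z_{0,H}}{\kappa_0\,r(B)},\,\mathcal{R}_{\kappa_0 r(B),\,r}\mu(z_0)\Bigr\rangle\Bigr|\leq c<M,
\]
directly contradicting the lower bound from Lemma \ref{lemaux1}. I do not anticipate any substantial obstacle here: all of the geometric and analytic content of the corollary is already encoded in the two auxiliary statements, and the only task is to match the parameters so that Lemma \ref{lemaux1} and Proposition \ref{prop:propaux1_s} refer to the same operator at the same scale.
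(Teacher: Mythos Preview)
Your proposal is correct and follows essentially the same argument as the paper's proof: both combine the upper bound from Proposition~\ref{prop:propaux1_s} with the lower bound from Lemma~\ref{lemaux1} at a common scale $\kappa$, obtaining a contradiction if no sufficiently flat ball exists. The only difference is cosmetic (you take $M=c+1$ where the paper takes $M=2c$), and your contradiction is spelled out slightly more explicitly.
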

\begin{proof}
    Let us assume that $B$ is centered at $z\in \supp(\mu).$ Let $c=c(n)>0$ be as in Proposition \ref{prop:propaux1_s}, so that for $0<\kappa <1$, every $r\in [\kappa r(B), r(B)]$, and every $x\in B(z,\kappa r(B))$ we have that
    \begin{equation}\label{eq:105_1}
        \Bigl|\Bigl\langle \frac{x_H-z_H}{\kappa r(B)}, \mathcal R_{\kappa r(B), r}\mu(z)\Bigr\rangle\Bigr|\leq c.
    \end{equation}
    In particular, for $M=2c$ and $\kappa=\kappa(M, \varepsilon)$ as in  Lemma \ref{lemaux1}, the hypothesis \eqref{eq:beta_big} is \textit{not} satisfied, otherwise \eqref{eq:105_1} would contradict \eqref{eqfo95}. Thus, there exists $B'$ centered at $B\cap \supp(\mu)$ such that \(\beta_\mu(B')\leq \varepsilon\) and $r(B')\in [\kappa r(B), 2r(B)]$, and the corollary holds for $\tau =\kappa$.
\end{proof}

\vv

The proofs of Lemmas \ref{lemma:lem_bilateral_beta_1} and \ref{lemma:lem_bilateral_beta_2} follow the scheme of those of \cite[Lemma 3.5, Lemma 3.6]{Tolsa_uniform_measures}. Since Tolsa used the deep results of the solution of the Euclidean density problem in \cite{Preiss1987GeometryDensities}, we provide all the details.

\begin{lemma}\label{lemma:lem_bilateral_beta_1}
    Let $\mu\in \mathcal U(n+1).$ For any $\varepsilon>0$ there exists $\delta=\delta(\varepsilon,n)>0$ such that for $x\in \supp(\mu)$ and $r>0$, if $\beta_\mu(B(x,\delta^{-1}r))\leq \delta^2$, then $b\beta_\mu(B(x,r))\leq \varepsilon.$
\end{lemma}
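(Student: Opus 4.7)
The plan is a compactness argument by contradiction. By translating and scaling we may assume $x=0$ and $r=1$, since both the class $\mathcal{U}(n+1)$ and both $\beta$-numbers are invariant under the natural rescaling $\mu\mapsto r^{-(n+1)}T_{x,r}\mu$. So it suffices to show: given $\varepsilon>0$ there exists $\delta>0$ such that for every $\mu\in\mathcal{U}(n+1)$ with $0\in\supp(\mu)$, if $\beta_{\mu}(B(0,\delta^{-1}))\leq\delta^{2}$ then $b\beta_{\mu}(B(0,1))\leq\varepsilon$.

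Suppose the claim fails. Then there exist $\varepsilon_{0}>0$, a sequence $\delta_{k}\to 0$, measures $\mu_{k}\in\mathcal{U}(n+1)$ with $0\in\supp(\mu_{k})$, and planes $V_{k}\in\Gr(n+1)$ such that
\[
\sup_{y\in\supp(\mu_{k})\cap B(0,\delta_{k}^{-1})}\dist(y,V_{k})\leq\delta_{k}^{-1}\cdot\delta_{k}^{2}=\delta_{k},
\qquad b\beta_{\mu_{k}}(B(0,1))>\varepsilon_{0}.
\]
By Proposition \ref{UComp}, up to a subsequence $\mu_{k}\rightharpoonup \mu_{\infty}\in\mathcal{U}(n+1)$, and by Corollary \ref{corollary:grn1} each $V_{k}$ has the form $V_{1,k}\oplus\R e_{n+1}$ for a hyperplane $V_{1,k}\subseteq\R^{n}$; compactness of the Euclidean Grassmannian lets us extract a further subsequence so that $V_{k}\to V_{\infty}=V_{1,\infty}\oplus\R e_{n+1}\in\Gr(n+1)$, with local Hausdorff convergence of the planes.

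Fix any $R>0$. For $k$ large enough $R<\delta_{k}^{-1}$, so $\supp(\mu_{k})\cap B(0,R)$ lies in the $\delta_{k}$-neighbourhood of $V_{k}$. If $y\in\supp(\mu_{\infty})\cap B(0,R)$, Lemma \ref{replica}-(ii) provides $y_{k}\in\supp(\mu_{k})$ with $y_{k}\to y$; since $\dist(y_{k},V_{k})\leq\delta_{k}\to 0$ and $V_{k}\to V_{\infty}$, we obtain $y\in V_{\infty}$. Thus $\supp(\mu_{\infty})\subseteq V_{\infty}$, and Proposition \ref{verticalsamoa} forces $\mu_{\infty}=\mathcal{C}^{n+1}\llcorner V_{\infty}$, so $\supp(\mu_{\infty})=V_{\infty}$.

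Since $\mu_{\infty}$ is flat, the choice $V=V_{\infty}$ in the infimum defining $b\beta_{\mu_{\infty}}$ gives $b\beta_{\mu_{\infty}}(B(0,2))=0$. On the other hand $0\in\supp(\mu_{k})\cap B(0,1)$ for every $k$ and $0\in\supp(\mu_{\infty})\cap B(0,2)$, so Lemma \ref{lemma:beta_sequences} applied to the ball $B(0,2)$ yields
\[
\frac{\varepsilon_{0}}{2}\leq \frac{1}{2}\limsup_{k\to\infty} b\beta_{\mu_{k}}(B(0,1))\leq b\beta_{\mu_{\infty}}(B(0,2))=0,
\]
a contradiction. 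The only non-routine step is verifying that the limit measure is fully supported on the limit plane rather than on a proper subset: this is precisely what Proposition \ref{verticalsamoa} provides, using that $\mu_{\infty}$ is $(n+1)$-uniform and $V_{\infty}\in\Gr(n+1)$ has the matching Hausdorff dimension.
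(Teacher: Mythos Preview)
Your proof is correct and follows essentially the same compactness-by-contradiction approach as the paper: both normalize to $x=0$, $r=1$, pass to a subsequential limit $\mu_\infty\in\mathcal{U}(n+1)$, show its support lies in a plane in $\Gr(n+1)$, invoke Proposition~\ref{verticalsamoa} to conclude flatness, and derive a contradiction from Lemma~\ref{lemma:beta_sequences}. The only cosmetic difference is that you track the approximating planes $V_k$ explicitly and pass them to a limit $V_\infty$ via Grassmannian compactness and Lemma~\ref{replica}, whereas the paper instead uses the monotonicity estimate $\beta_{\mu_j}(B(0,k))\leq (j/k)\,\beta_{\mu_j}(B(0,j))$ together with Lemma~\ref{lemma:beta_sequences} to obtain $\beta_{\mu_\infty}(B(0,k/2))=0$ for all $k$; both routes arrive at $\supp(\mu_\infty)\subseteq V_\infty$.
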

\begin{proof}
    We argue by contradiction and assume that the conclusion of the lemma does not hold. Hence, there exist $\varepsilon>0$ and a sequence $\{\mu_j\}_{j\in \mathbb N}\subseteq \mathcal U(n+1)$ such that $\beta_{\mu_j}(jB_j)\leq j^{-2}$ and $b\beta_{\mu_j}(B_j)\geq \varepsilon.$ Without loss of generality, we further assume $B_j=B(0,1)$ for all $j\in \mathbb N$, observe that $0\in \supp(\mu_j)$ for all $j\in \mathbb N$ and, possibly passing to a subsequence, we have that $\mu_j$ converges weakly to some $\nu\in \mathcal U(n+1).$
    For $1\leq k\leq j$, the assumptions on $\mu_j$ and the basic properties of $\beta$-numbers yield
    \[
        \beta_{\mu_j}(B(0,k))\leq \frac{j}{k}\beta_{\mu_j}(B(0,j))\leq \frac{1}{jk}\to 0 \qquad \text{ as }j\to \infty.
    \]
    Thus, by Lemma \ref{lemma:beta_sequences} it follows that $\beta_\nu (B(0,k/2))=0$ for all $k\geq 1$, which implies that $\supp(\nu)\subseteq L$ for some $L\in \Gr(n+1).$ Then, by Proposition \ref{verticalsamoa}, we have that $\nu\in \mathfrak M(n+1).$
    
    However, the assumption $b\beta_{\mu_j}(B(0,1))\geq \varepsilon$ and Lemma \ref{lemma:beta_sequences} also imply that $b\beta(B(0,2))\geq \varepsilon/2$, which contradicts the flatness of $\nu$ and finishes the proof.
\end{proof}

\begin{lemma}\label{lemmaF->beta}
There exist $\Lambda>0$ and $\newep\label{ep:betainfinito}>0$ depending only on $n$ such that, if  $\mu\in\mathcal{U}(n+1)$ and for some $\nu\in\Tan_{n+1}(\mu,\infty)$ we have $\beta_\nu(B(0,\Lambda))<\oldep{ep:betainfinito}$, then $\mu$ is flat.  
\end{lemma}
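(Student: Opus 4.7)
\textbf{Proof proposal for Lemma \ref{lemmaF->beta}.} The plan is to argue by contradiction, extracting a weak* limit of counterexamples and using the disconnection functional $\mathscr{F}$ to reach a contradiction. Assume the statement fails; then we can find sequences $\Lambda_j\to\infty$, $\varepsilon_j\to 0$, non-flat measures $\mu_j\in\mathcal{U}(n+1)$, and $\nu_j\in\Tan_{n+1}(\mu_j,\infty)$ with $\beta_{\nu_j}(B(0,\Lambda_j))<\varepsilon_j$. The case $n=1$ is vacuous: by Proposition \ref{uniformmeasuresinP1} the only element of $\mathcal{U}(2)$ is $\mathcal{C}^2\llcorner\mathcal{V}$, which is already flat, so no $\mu_j$ as above can exist. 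Hence we may assume $n\geq 2$.

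Since each $\mu_j$ is non-flat, the property (\textbf{P}) established in the proof of Theorem \ref{theorem:main_theorem_Preiss} (itself derived from Propositions \ref{disconnectdegenerated}, \ref{prop:noT}, and \ref{charunifquadrics}) gives a constant $\oldep{ep:disc}=\oldep{ep:disc}(n)>0$ such that
\[
    \mathscr{F}(\nu_j)>\oldep{ep:disc}, \qquad \text{for every }j\in\N.
\]
On the other hand, by Proposition \ref{uniformup} each $\nu_j$ lies in $\mathcal{U}(n+1)$, and by compactness (Proposition \ref{UComp}) we may pass to a subsequence so that $\nu_j\rightharpoonup \nu_\infty$ for some $\nu_\infty\in\mathcal{U}(n+1)$. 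Applying Lemma \ref{lemma:beta_sequences} with $B=B(0,\Lambda_j/2)$ and letting $j\to\infty$, we obtain $\beta_{\nu_\infty}(B(0,R))=0$ for every $R>0$, so $\supp(\nu_\infty)\subseteq V$ for some $V\in\Gr(n+1)$, and Proposition \ref{verticalsamoa} forces $\nu_\infty=\mathcal{C}^{n+1}\llcorner V$.

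The decisive step is to compute $\mathscr{F}(\nu_\infty)$. By Corollary \ref{corollary:grn1} we have $V=V_1\oplus \R e_{n+1}$ for some hyperplane $V_1\subset \R^n$; choosing $u\in\mathbb{S}^{n-1}$ orthogonal to $V_1$ we obtain $\langle z_H,u\rangle=0$ for every $z\in V$, hence
\[
    \mathscr{F}(\nu_\infty)\leq \int |z_H|^4\langle z_H,u\rangle^2 e^{-\|z\|^4}\,d\nu_\infty(z)=0.
\]
The continuity of $\mathscr{F}$ on $\mathcal{U}(n+1)$ (Proposition \ref{conti}) then yields $\mathscr{F}(\nu_\infty)=\lim_{j\to\infty}\mathscr{F}(\nu_j)\geq \oldep{ep:disc}>0$, which contradicts the previous identity and completes the proof. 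The main conceptual obstacle is merely identifying that (\textbf{P}) is exactly the ingredient needed: once this disconnection is in hand, compactness and the continuity of $\mathscr{F}$ make the argument essentially soft; the quantitative parameters $\Lambda$ and $\oldep{ep:betainfinito}$ emerge only implicitly from the contradiction and are not computed explicitly.
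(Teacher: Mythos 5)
Your proposal is correct in substance but follows a genuinely different route from the paper's. The paper argues directly and quantitatively: it splits $\mathscr{F}(\nu)$ into the contribution of $B(0,\Lambda)$, which by uniformity is bounded by $\Lambda^{n+5}\beta_\nu(B(0,\Lambda))^2$ after optimizing in $u$, plus a Gaussian tail bounded by $\tilde c_n e^{-\Lambda^4/2}$ via Corollary \ref{prop1}; choosing $\Lambda$ so that the tail is below $\oldep{ep:disc}/2$ and then $\oldep{ep:betainfinito}$ accordingly, the disconnection property from Section \ref{section:proof_main_theorem} applies at once and the constants come out explicitly. Your argument uses the same key input (the constant $\oldep{ep:disc}$, i.e.\ property (\textbf{P})) but replaces the computation with a soft compactness scheme: Proposition \ref{UComp}, Lemma \ref{lemma:beta_sequences}, Proposition \ref{verticalsamoa}, the vanishing of $\mathscr{F}$ on flat measures, and the continuity of $\mathscr{F}$ (Proposition \ref{conti}). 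This is shorter and cleaner, at the price of producing $\Lambda$ and $\oldep{ep:betainfinito}$ non-constructively, which is all the later applications (e.g.\ Lemma \ref{lemma:lem_bilateral_beta_2}) actually need; your explicit treatment of $n=1$ via Proposition \ref{uniformmeasuresinP1} is a welcome precision the paper leaves implicit.

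One step needs repair. As written, ``applying Lemma \ref{lemma:beta_sequences} with $B=B(0,\Lambda_j/2)$'' is not a legitimate use of that lemma (it compares a \emph{fixed} ball for the limit with fixed dilates for the sequence), and, more importantly, the conclusion $\beta_{\nu_\infty}(B(0,R))=0$ for every $R$ does not follow from $\beta_{\nu_j}(B(0,\Lambda_j))<\varepsilon_j$ with uncoupled sequences: passing from the ball of radius $\Lambda_j$ to a fixed ball of radius $2R$ costs a factor $\Lambda_j/(2R)$, so you only get $\beta_{\nu_j}(B(0,2R))\leq \Lambda_j\varepsilon_j/(2R)$, which is useless if, say, $\varepsilon_j=1/\log\Lambda_j$. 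The fix is immediate and stays within your scheme: the contradiction hypothesis allows you to choose, for each $\Lambda_j=j$, a non-flat counterexample with $\varepsilon_j\leq \Lambda_j^{-2}$; then for each fixed $R>0$ one has $\beta_{\nu_j}(B(0,2R))\leq \Lambda_j\varepsilon_j/(2R)\to 0$, and Lemma \ref{lemma:beta_sequences} applied to the fixed ball $B(0,R)$ yields $\beta_{\nu_\infty}(B(0,R))=0$, after which the rest of your argument (a single limiting plane containing $\supp(\nu_\infty)$, Proposition \ref{verticalsamoa}, Corollary \ref{corollary:grn1}, continuity of $\mathscr{F}$) goes through verbatim. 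This scaling manipulation is exactly the one the paper itself performs in the proof of Lemma \ref{lemma:lem_bilateral_beta_1}, so it should be made explicit rather than absorbed into an informal invocation of Lemma \ref{lemma:beta_sequences}.
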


\begin{proof}
In Section \ref{section:proof_main_theorem} we showed that there exists $\oldep{ep:disc}=\oldep{ep:disc}(n)>0$ such that if $\mu\in\mathcal{U}(n+1)$ and for some $\nu\in\Tan_{n+1}(\mu,\infty)$ it holds \begin{equation}
    \inf_{u\in\mathbb{S}^{n-1}} 
        \int \lvert z_H\rvert^4\langle z_H,u\rangle^2
					e^{-\lVert z\rVert^4}\, d\nu(z)\leq\oldep{ep:disc},
	\label{eq:piccoloallorapiatto}
\end{equation}
then $\mu\in\mathfrak{M}(n+1)$.

Let us note that for any $\Lambda>0$ and $u\in \mathbb S^{n-1}$ we have
\begin{equation*}
    \begin{split}
        &\int \lvert z_H\rvert^4\langle z_H,u\rangle^2 e^{-\lVert z\rVert^4}\, d\nu(z)\\
        &\qquad\qquad\leq \int_{B(0,\Lambda)} \lvert z_H\rvert^4\langle z_H,u\rangle^2 e^{-\lVert z\rVert^4}\, d\nu(z)+ \int_{\mathbb R^{n+1}\setminus B(0,\Lambda)} \lvert z_H\rvert^4\langle z_H,u\rangle^2 e^{-\lVert z\rVert^4}\, d\nu(z)\\
        &\qquad\qquad\leq \Lambda^4\int_{B(0,\Lambda)}\langle z_H,u\rangle^2 \, d\nu(z) + e^{-\frac{\Lambda^4}{2}}\int_{\mathbb R^{n+1}\setminus B(0,\Lambda)}\|z\|^6 e^{-\frac{\|z\|^4}{2}}\, d\nu(z)\eqqcolon I_u + II.
    \end{split}
\end{equation*}

By the definition of $\beta_\nu$-numbers and since $\nu$ is a uniform measure, is easy to see that
\begin{equation}\label{eq:estimIu_unif1}
    \inf_{u\in \mathbb S^{n-1}}I_u \leq \Lambda^{n+5} \beta_{\nu}(B(0,\Lambda))^2.
\end{equation}
Moreover, by Corollary \ref{prop1} we have
\begin{equation}\label{eq:estimII_unif2}
    II\leq  e^{-\frac{\Lambda^4}{2}}\int \|z\|^6 e^{-\frac{\|z\|^4}{2}}\, d\nu(z)= (n+1)2^{\frac{n-1}{4}}\Gamma\Bigl(\frac{n+7}{4}\Bigr)e^{-\frac{\Lambda^4}{2}}\eqqcolon \tilde c_n\, e^{-\frac{\Lambda^4}{2}}.
\end{equation}

Thus, \eqref{eq:estimIu_unif1} and \eqref{eq:estimII_unif2} imply that for $\Lambda= -2\log\big(\oldep{ep:disc}/(2\tilde c_n)\big)$ and 
$$\beta_\nu(B(0,\Lambda))\leq \frac{\oldep{ep:disc}}{-4\log\big(\frac{\oldep{ep:disc}}{2\tilde c_n}\big)},$$
the bound \eqref{eq:piccoloallorapiatto}
holds, which implies that $\mu\in \mathfrak M(n+1)$.
\end{proof}

\vv

\begin{lemma}\label{lemma:lem_bilateral_beta_2}
    Let $\mu\in \mathcal U(n+1)$ and $\varepsilon>0.$ There exist $\delta_0=\delta_0(n)>0$ and an integer $N=N(\varepsilon,n)>0$ such that if $B$ is a ball centered at $\supp(\mu)$ such that
    \[
        \beta_\mu(2^kB)\leq \delta_0\qquad \text{ for }1\leq k\leq N,
    \]
    then $b\beta_\mu(B)\leq \varepsilon.$
\end{lemma}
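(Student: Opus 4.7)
My plan is to proceed by contradiction via a compactness argument, along the lines of the proof of Lemma \ref{lemma:lem_bilateral_beta_1}, but now using the parabolic Preiss-type input provided by Lemma \ref{lemmaF->beta}. I will fix $\delta_0 \coloneqq \oldep{ep:betainfinito}/16$, where $\oldep{ep:betainfinito}$ and $\Lambda$ are the constants from Lemma \ref{lemmaF->beta}, and assume towards a contradiction that no $N$ works for some $\varepsilon>0$. Then I obtain a sequence $\mu_j \in \mathcal U(n+1)$, integers $N_j\to\infty$, and balls $B_j$ centered at $\supp(\mu_j)$ such that $\beta_{\mu_j}(2^k B_j) \leq \delta_0$ for all $1\leq k\leq N_j$, while $b\beta_{\mu_j}(B_j) > \varepsilon$. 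Using the isometry invariance and homogeneity of $\mathcal U(n+1)$, I reduce to the case $B_j = B(0,1)$, so that $0\in\supp(\mu_j)$ for every $j$, and extract a subsequence (not relabeled) with $\mu_j \rightharpoonup \mu \in \mathcal U(n+1)$ by Proposition \ref{UComp} and Lemma \ref{replica}.

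The first step is to show that the limit $\mu$ is almost flat at every large scale. Fix $k\geq 1$; for every $j\geq k$ the assumption gives $\beta_{\mu_j}(B(0,2^k))\leq \delta_0$, so Lemma \ref{lemma:beta_sequences} yields $\beta_\mu(B(0,2^{k-1}))\leq 2\delta_0$. A standard scaling comparison for $\beta$-numbers under ball inclusion (using that the ``best plane'' for $B(0,2^{k+1})$ also tests $B(0,R)$ whenever $R\leq 2^{k+1}$) then upgrades this to $\beta_\mu(B(0,R))\leq 4\delta_0$ for every $R\geq 1$.

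The second step is to feed this flatness on large balls into Lemma \ref{lemmaF->beta}. By Proposition \ref{uniformup} we may pick $\tilde\nu \in \Tan_{n+1}(\mu,\infty)$, say $\tilde\nu = \lim_j R_j^{-(n+1)} T_{0,R_j}\mu$ with $R_j\to\infty$. Applying Lemma \ref{lemma:beta_sequences} once more,
\[
\beta_{\tilde\nu}(B(0,r)) \leq 2\liminf_{j\to\infty} \beta_\mu(B(0,2rR_j)) \leq 8\delta_0 \qquad \text{for every }r>0,
\]
since eventually $2rR_j \geq 1$. In particular $\beta_{\tilde\nu}(B(0,\Lambda)) \leq 8\delta_0 = \oldep{ep:betainfinito}/2 < \oldep{ep:betainfinito}$, and Lemma \ref{lemmaF->beta} forces $\mu$ to be flat.

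Finally, since $\mu\in\mathfrak M(n+1)$ and $0\in \supp(\mu)$, the bilateral $\beta$-number $b\beta_\mu(B(0,2))$ vanishes. On the other hand, applying the bilateral part of Lemma \ref{lemma:beta_sequences} to the sequence $\mu_j$ gives
\[
    b\beta_\mu(B(0,2)) \geq \tfrac{1}{2}\limsup_{j\to\infty} b\beta_{\mu_j}(B(0,1)) \geq \tfrac{\varepsilon}{2} > 0,
\]
which is the desired contradiction. The only delicate point here is propagating the one-sided flatness control from $\mu$ to its tangents at infinity with uniform constants, but this is exactly what Lemma \ref{lemma:beta_sequences} is designed for; otherwise the argument is a direct compactness reduction to the parabolic disconnection input of Lemma \ref{lemmaF->beta}.
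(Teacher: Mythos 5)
Your proof is correct and follows essentially the same route as the paper: a compactness argument using Proposition \ref{UComp} and Lemma \ref{replica}, the semicontinuity of $\beta$-numbers from Lemma \ref{lemma:beta_sequences}, passage to a tangent at infinity of the limit measure, and the disconnection input of Lemma \ref{lemmaF->beta}. The only (harmless) deviation is that you contradict the bilateral coefficient directly, using that flat measures have $b\beta=0$ together with the bilateral half of Lemma \ref{lemma:beta_sequences}, whereas the paper first reduces to the one-sided $\beta$ via Lemma \ref{lemma:lem_bilateral_beta_1}; both reductions work with the same key lemmas.
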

\begin{proof}
    By Lemma \ref{lemma:lem_bilateral_beta_1}, possibly by adjusting the values of $\varepsilon$ and $N$, it is enough to prove that $\beta_\mu(B)\leq \varepsilon.$ We argue by contradiction: let us assume that there exist a sequence of $\{\mu_j\}_{j\geq 1}\subset \mathcal U(n+1)$ and balls $B_j(x_j,r_j)$ centered in $\supp(\mu_j)$ such that for any $j$ it holds
    \[
        \beta_{\mu_j}(2^jB_j)\leq \delta_0 \qquad \text{ for } 0\leq k\leq j \qquad \text{ but }\qquad \beta_{\mu_j}(B_j)\geq \varepsilon.
    \]
    
    For $j\geq 1$ we also define the measure $\tilde\mu_j\coloneqq \mu(B_j)^{-1}T_{x_j, r_j}\mu_j.$ Observe that, for every $j\geq 1$, we have that $T_{x_j,r_j}(B_j)=B(0,1)$, $\tilde \mu_j\in \mathcal U(n+1),$ and $0\in \supp(\tilde \mu_j).$ Moreover, possibly by passing to a subsequence, $\mu_j$ converges weakly to some $\nu\in \mathcal U(n+1)$. By Lemma \ref{lemma:beta_sequences}, we have that
    \begin{equation}\label{eq:beta_nu_geq_ve}
        \beta_\nu(B(0,2))\geq \frac{1}{2}\limsup_{j\to \infty}\beta_{\tilde \mu_j}(B(0,1))=\frac{1}{2}\limsup_{j\to \infty }\beta_{\mu_j}(B_j)\geq \frac{1}{2}\varepsilon,
    \end{equation}
    which in particular implies that $\nu \not \in \mathfrak M(n+1).$ Moreover, for $k\in \mathbb N$ to be chosen, 
    \[
        \beta_\nu(B(0,2^{k+\ell}))\leq 2 \liminf_{j\to \infty}\beta_{\tilde \mu_j}(B(0,2^{k+\ell+1}))=2 \liminf_{j\to \infty}\beta_{\tilde \mu_j}(2^{k+\ell+1}B_j) \leq 2\delta_0
    \]
    so that Lemma \ref{lemma:beta_sequences} implies that there exists a tangent measure $\lambda$ to $\nu$ at $\infty$ for which
    \begin{equation}\label{eq:bound_beta_above}
        \beta_{\lambda}(B(0,2^k))\leq 2\liminf_{j\to \infty}\beta_\nu\bigl(B(0,2^{k+j})\bigr)\leq 4\delta_0.
    \end{equation}
    
    Let $\Lambda$ and $\oldep{ep:betainfinito}$ be as in Lemma \ref{lemmaF->beta}. We fix $k$ such that $2^{k-1}<\Lambda\leq 2^k,$ where $\Lambda$ and chose $\delta_0\leq \oldep{ep:betainfinito}/4$, so that \eqref{eq:bound_beta_above} gives \(\beta_\lambda (B(0,\Lambda))\leq \oldep{ep:betainfinito}\), which in turn implies that $\nu\in \mathfrak M(n+1)$ by Lemma \ref{lemmaF->beta}. This contradicts \eqref{eq:beta_nu_geq_ve} and finishes the proof of the lemma.
\end{proof}

\vv

Let $B$ be a (parabolic) ball in $\mathbb P^n$ and $\nu_1, \nu_2$ be two Radon measures on $\mathbb P^n$ such that $B\cap \supp(\nu_1)\neq \emptyset$ and  $B\cap \supp(\nu_2)\neq \emptyset$. We define
\[
    d_B(\nu_1,\nu_2)\coloneqq \sup_{x\in B\cap \supp(\nu_1)}\dist(x,\supp\nu_2) + \sup_{x\in B\cap \supp(\nu_2)}\dist(x,\supp\nu_1).
\]

For an $(n+1)$-Ahlfors-regular measure $\mu$ on $\mathbb P^n$ and $\varepsilon>0$ we denote by $\mathcal N_0(\varepsilon)$ the family of balls $B\subset \mathbb P^n$ for which there exists $\nu\in \mathcal U(n+1)$ such that 
\[
    d_B(\mu,\nu)\leq \varepsilon r(B),
\]
and we also define
\[
    \mathcal N(\varepsilon)\coloneqq \{Q\in\mathcal D_\mu: B_Q\in \mathcal N_0(\varepsilon)\}.  
\]

\vv

The following result was originally stated in Euclidean spaces, but the proof repeats verbatim in $\mathbb P^n.$

\begin{proposition}[see \cite{DavidSemmes}, Chapter III.5]\label{prop:proposition_DS_III_5}
    Let $\mu$ be an $(n+1)$-Ahlfors-regular measure on $\mathbb P^n$.
    If $\mu$ satisfies the WCD, then $\mathcal D_\mu\setminus \mathcal N(\varepsilon)$ is a Carleson family for every $\varepsilon>0.$
\end{proposition}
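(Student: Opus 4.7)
Fix $\varepsilon>0$. The goal is to show that $\mathcal{D}_\mu\setminus\mathcal{N}(\varepsilon)$ is a Carleson family. The WCD hypothesis says that $G(C',\varepsilon_0)^c$ is a Carleson set for every $\varepsilon_0>0$, so the plan is to fix a constant $\lambda=\lambda(n)>0$ and prove the pointwise implication
\begin{equation*}
(z_Q,\lambda\ell(Q))\in G(C',\varepsilon_0)\ \Longrightarrow\ Q\in\mathcal{N}(\varepsilon)
\end{equation*}
for some sufficiently small $\varepsilon_0=\varepsilon_0(\varepsilon,n)>0$. A routine Carleson-packing on the dyadic lattice (integration over scales combined with $\mu(Q)\approx\ell(Q)^{n+1}$) then transfers the Carleson property of $G(C',\varepsilon_0)^c$ to $\mathcal{D}_\mu\setminus\mathcal{N}(\varepsilon)$.

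I will prove the pointwise implication by contradiction and compactness. Suppose there exist $\varepsilon_j\to 0$ and cubes $Q_j$ with $(z_{Q_j},\lambda\ell(Q_j))\in G(C',\varepsilon_j)$ but $B_{Q_j}\notin\mathcal{N}_0(\varepsilon)$. Let $\sigma_j$ denote the WCD-measure at $(z_{Q_j},\lambda\ell(Q_j))$ and rescale
\begin{equation*}
\tilde\sigma_j\coloneqq (\lambda\ell(Q_j))^{-(n+1)}T_{z_{Q_j},\lambda\ell(Q_j)}\sigma_j,\qquad \tilde\mu_j\coloneqq (\lambda\ell(Q_j))^{-(n+1)}T_{z_{Q_j},\lambda\ell(Q_j)}\mu.
\end{equation*}
Because parabolic translations and dilations preserve supports, $\supp(\tilde\sigma_j)=\supp(\tilde\mu_j)$; the measure $\tilde\sigma_j$ remains $C'$-Ahlfors-regular on $\mathbb{P}^n$, and the WCD bound rescales to
\begin{equation*}
|\tilde\sigma_j(B(y,t))-t^{n+1}|\leq\varepsilon_j,\qquad y\in\supp(\tilde\sigma_j)\cap B(0,1),\ t\in(0,1).
\end{equation*}
Uniform local mass bounds from Ahlfors regularity give a subsequence along which $\tilde\sigma_j\rightharpoonup\sigma$ weakly; passing to the limit at radii of $\sigma$-continuity yields $\sigma(B(y,t))=t^{n+1}$ for every $y\in\supp(\sigma)\cap B(0,1)$ and every $t\in(0,1)$, and Lemma \ref{replica} gives Kuratowski convergence of $\supp(\tilde\mu_j)=\supp(\tilde\sigma_j)$ to $\supp(\sigma)$ on compacts.

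The remaining step is to upgrade $\sigma$, which is only ``uniform on $B(0,1)$'', to a globally defined $\nu\in\mathcal{U}(n+1)$ whose support Hausdorff-approximates $\supp(\sigma)$ on $B(0,1)$; I choose $\lambda=\lambda(n)$ large enough that the rescaled image of $B_{Q_j}$ is contained in $B(0,1)$. The parabolic compactness of $\mathcal{U}(n+1)$ is the key tool: for any $y_0\in\supp(\sigma)\cap B(0,1/2)$ the parabolic dilations $\eta_r\coloneqq r^{-(n+1)}T_{y_0,r}\sigma$ satisfy, by a direct change of variables using the local uniformity of $\sigma$, $\eta_r(B(y',t'))=(t')^{n+1}$ for $y'\in\supp(\eta_r)\cap B(0,(2r)^{-1})$ and $t'\in(0,r^{-1})$; hence by Proposition \ref{UComp} and Lemma \ref{replica}, any weak-$*$ limit $\nu$ of $\eta_{r_k}$, $r_k\to 0$, lies in $\mathcal{U}(n+1)$. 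Following the Euclidean template of \cite[Chapter III.5]{DavidSemmes}, I will combine a Vitali-type covering of $\supp(\sigma)\cap B(0,1/2)$ by small balls centered at such tangent points with the rigidity of parabolic uniform measures developed in Sections \ref{section:Mastrand}--\ref{eq:sec_non_deg_unif_meas} (in particular the analyticity of supports from Theorem \ref{th.marstrand}, the dilation-invariance of blow-ups from Proposition \ref{bupunifarecones}, and the surface-measure structure from Proposition \ref{strutturaunifmeasvssurfacemeasure}) to promote the pointwise tangent matching into a macroscopic Hausdorff approximation of $\supp(\sigma)\cap B(0,1)$ by $\supp(\nu)\cap B(0,1)$. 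Combined with $\supp(\tilde\mu_j)\to\supp(\sigma)$, this yields for $j$ large a $\nu_j\in\mathcal{U}(n+1)$ with $d_{B_{Q_j}}(\mu,\nu_j)\leq\varepsilon\,r(B_{Q_j})$, contradicting $B_{Q_j}\notin\mathcal{N}_0(\varepsilon)$.

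The hard part is precisely this last propagation step, since a single blow-up only records infinitesimal agreement. The argument must patch together many blow-ups using the analytic/algebraic rigidity of parabolic uniform measures (the specifically parabolic analogue of Preiss's Euclidean machinery used in \cite{DavidSemmes}), for which the earlier sections of the paper provide exactly the right structural tools. Once this macroscopic approximation of $\sigma$ by $\nu$ is secured, the reduction described in the first paragraph closes the argument.
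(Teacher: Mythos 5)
Your overall reduction (a pointwise implication from membership in $G(C',\varepsilon_0)$ at an enlarged scale to membership in $\mathcal N(\varepsilon)$, followed by a routine Carleson packing over the dyadic lattice) has exactly the structure of the David--Semmes argument that the paper invokes, and the rescaling and weak-limit part of your compactness step is sound: $\supp(\tilde\sigma_j)=\supp(\tilde\mu_j)$, Ahlfors regularity passes to the limit, and arguments in the spirit of Lemma \ref{replica} give $\sigma(B(y,t))=t^{n+1}$ for $y\in\supp(\sigma)\cap B(0,1)$ and $t<1$, together with local Hausdorff convergence of the supports.

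There is, however, a genuine gap, and you have located it yourself. Because you fix the enlargement factor $\lambda=\lambda(n)$ once and for all, the limit $\sigma$ is uniform only \emph{inside the unit ball}, and you then need that a measure which is exactly uniform on a fixed ball has support close, at unit scale, to the support of some globally uniform measure. Nothing in the paper supplies this: Theorem \ref{th.marstrand}, Proposition \ref{strutturaunifmeasvssurfacemeasure}, and Proposition \ref{bupunifarecones} all concern measures in $\mathcal U(n+1)$, i.e.\ globally uniform ones, and blow-ups of $\sigma$ only record infinitesimal information, so patching tangent measures via a Vitali covering cannot by itself yield a fixed-scale Hausdorff approximation (that would essentially be a quantitative restatement of the very proposition you are proving). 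The standard way to close the argument, and the one in \cite{DavidSemmes}, Chapter III.5, is to let the comparison scale blow up along the contradiction sequence: the pointwise claim should read ``for every $\varepsilon>0$ there are $A=A(\varepsilon)$ and $\varepsilon_0=\varepsilon_0(\varepsilon)$ such that $(z_Q,A\ell(Q))\in G(C',\varepsilon_0)$ implies $Q\in\mathcal N(\varepsilon)$'', and in its proof by contradiction one takes cubes $Q_j$ with $(z_{Q_j},j\,\ell(Q_j))\in G(C',1/j)$ but $B_{Q_j}\notin\mathcal N_0(\varepsilon)$. After rescaling by $\ell(Q_j)$, the approximating measures are uniform up to an error $o(1)$ on balls of radius $j\to\infty$, so any weak limit belongs to $\mathcal U(n+1)$ (Proposition \ref{UComp} and Lemma \ref{replica}), and the local Hausdorff convergence of $\supp(\tilde\mu_j)$ to $\supp(\sigma)$ already contradicts $B_{Q_j}\notin\mathcal N_0(\varepsilon)$; no propagation or rigidity step is needed. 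The Carleson packing is unaffected, since for fixed $A$ the set $\{(x,r):(x,Ar)\notin G(C',\varepsilon_0)\}$ is Carleson with constant $C(\varepsilon_0)A^{n+1}$ by the change of variable $s=Ar$ in the $dr/r$ integral, and $A$ depends only on $\varepsilon$.
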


\vv
We can finally prove the main result of this section.

\begin{proof}[Proof of Theorem \ref{theorem:WCD_implies_BWGL}]
Since we showed Corollary \ref{corollary:flat_ball_big}, the theorem can be proved via the same stopping time argument of \cite[Theorem 1.1]{Tolsa_uniform_measures}, which we sketch here for the reader's convenience.

A collection $\mathcal T\subset \mathcal D_\mu$ is said to be a \textit{tree} if the following properties hold:
\begin{itemize}
    \item There exists $Q(\mathcal T)\in \mathcal T$, maximal with respect to the inclusion of sets, such that $Q'\subset Q(\mathcal T)$ for all $Q'\in \mathcal T.$
    \item If $Q,Q'\in \mathcal T$ and $Q\subset Q'$, then $R\in \mathcal T$ for all $R\in \mathcal D_\mu$ such that $Q\subset R\subset Q'$.
    \item If $Q\in \mathcal T$, 
    then either all $\mu$-cubes in $\mathrm{Ch}(Q)$
    belong to $\mathcal T$ or none of them do.
\end{itemize}
Given a tree $\mathcal T\subset \mathcal D_\mu$, the $\mu$-cube $Q(\mathcal T)$ is also called root. Finally, we denote as $\textrm{Stop}(\mathcal T)$ the set of $\mu$-cubes whose children do not belong to $\mathcal T.$

Fix $R\in \mathcal D_\mu$. Now we partition $\mathcal D_\mu(R) \cap \mathcal N(\varepsilon)$ into trees $\mathcal T_i$, $i\in I\subset \mathbb N$, inductively according to the following stopping time argument:
\begin{itemize}
    \item Let $Q_1$ be an element of $\mathcal D_\mu\cap \mathcal N(\varepsilon)$ with maximal side-lenght.
    \item $\mathcal T_1$ is defined such that $Q(\mathcal T_1)=Q_1$ and, if $Q\in \mathcal T_1$ and $Q'\in \mathcal N(\varepsilon)$ for all $Q'\in \textrm{Ch}(Q)$, then $\mathrm{Ch}(Q)\subset \mathcal T_1$.
    \item Assume $\mathcal T_i$ are defined for $i=1,\ldots, k-1,$ and choose $Q_k$ to be a $\mu$-cube of maximal side length which belongs to $(\mathcal D_\mu(R)\cap \mathcal N(\varepsilon))\setminus \bigcup_{1\leq i\leq (k-1)}\mathcal T_i\neq \emptyset$. The tree $\mathcal T_k$ is defined such that $Q(\mathcal T_k)=Q_k$ and, if $Q\in \mathcal T_k$ and $Q'\in \mathcal N(\varepsilon)$ for all $Q'\in \textrm{Ch}(Q)$, then $\mathrm{Ch}(Q)\subset \mathcal T_k$. For $i\in I$, we denote as $\textrm{ps}(Q(\mathcal T_i))$ a parent or sibling of $Q(\mathcal T_i)$ that does not belong to $\mathcal N(\varepsilon)\cap \mathcal D_\mu(R).$
\end{itemize}


We remark that $\mathrm{Ch}(Q)$ has a bounded multiplicity for all $Q\in \mathcal D_\mu$.
This, together with the Carleson property of $\mathcal D_\mu\setminus \mathcal N(\varepsilon)$ of Proposition \ref{prop:proposition_DS_III_5} gives that there exist $c_0, c'_0>0$ such that
\begin{equation}\label{eq:pack_1}
    \begin{split}
            \sum_{i\in I}\mu(Q(\mathcal T_i))&\leq \sum_{i\in I}\mu(\textrm{ps}(Q_i))=\mu\bigl(\textrm{ps}(Q(\mathcal T_1))\bigr) + \sum_{i\in I, i\neq 1}\mu\bigl(\textrm{ps}(Q(\mathcal T_i))\bigr)\\
            &\leq c_0 \,\mu(R) + c_0\,\sum_{Q\in \mathcal D_\mu(R)\setminus \mathcal N(\varepsilon)}\mu(Q)\leq c'_0\,\mu(R).
    \end{split}
\end{equation}

We also claim that there exists $c_1>0$ such that
\begin{equation}\label{eq:claim_bwgl}
    \sum_{Q\in \mathcal T_i\cap \mathcal B_\eta}\mu(Q)\leq c_1\, \mu(Q(\mathcal T_i))\qquad \text{ for all }i\in I.
\end{equation}

We observe that \eqref{eq:claim_bwgl} allows us to finish the proof of the theorem. Indeed, if $c_2>0$ denotes the Carleson constant that we obtain from Proposition \ref{prop:proposition_DS_III_5}, it implies
\begin{equation*}
    \begin{split}
        \sum_{Q\in \mathcal D_\mu(R)\cap \mathcal B_\eta}\mu(Q)& \leq \sum_{Q\in \mathcal D_\mu(R)\setminus \mathcal N(\varepsilon)} \mu(Q) + \sum_{Q\in \mathcal D_\mu(R)\cap \mathcal N(\varepsilon)\cap \mathcal B_\eta} \mu(Q)\\
        &\leq c_2\, \mu(R) + \sum_{i\in I}\sum_{Q\in \mathcal T_i\cap \mathcal B_\eta}\mu(Q)\overset{\eqref{eq:claim_bwgl}}{\leq } c_2\, \mu(R) + c_1\sum_{i\in I}\mu(Q(\mathcal T_i))\overset{\eqref{eq:pack_1}}{\leq } c\, \mu(R),
    \end{split}
\end{equation*}
 for $c\coloneqq c_2 + c_1\,c'_0.$

We are left with showing \eqref{eq:claim_bwgl}.
By \cite[Lemma 4.4]{Tolsa_uniform_measures}, whose proof repeats verbatim in $\mathbb P^n$, there exist $\varepsilon_1>0$ and $\delta_1>0$ both small enough depending on $\eta$ such that, if $\varepsilon<\varepsilon_1$ and $Q,P\in \mathcal D_\mu$ with $P\subset Q$ are such that
\[
    S\in \mathcal N(\varepsilon)\qquad \text{ for all } S\in \mathcal D_\mu \text{ with }P\subset S\subset Q
\]
and $b\beta_\mu(Q)\leq \delta_1,$ then $b\beta_\mu(P)\leq \eta.$

Let $i\in I$, $\delta_1>0$ as above, and denote as $\mathcal F_i$ the collection of $Q\in \mathcal T_i$ maximal with respect to the inclusion such that $b\beta_\mu(Q)\leq \delta_1$.
In particular, if $P\subset Q$ for some $P\in \mathcal F_i$, then $b\beta_\mu(P)\leq \eta.$ So, if we define
\[
    \mathcal H_i\coloneqq \{Q\in \mathcal T_i: Q\not\subset P\text{ in any }P\in \mathcal F_i\}\supset \mathcal T_i\cap \mathcal B_\eta,
\]
it holds
\begin{equation}\label{eq:auxbwgl2}
    \sum_{Q\in \mathcal T_i\cap \mathcal B_\eta}\mu(Q)=\sum_{Q\in \mathcal H_i}\mu(Q).
\end{equation}

For $Q\in \mathcal H_i$ we pick $P\in \mathcal F_i\cup \mathrm{Stop}(\mathcal T_i)$ as a $\mu$-cube contained in $Q$ with maximal side-lenght, and we denote $f(Q)\coloneqq P$. By \cite[Lemma 4.2]{Tolsa_uniform_measures} we have $\ell(P)\geq \tau\ell (Q)$ provided that $\varepsilon$ is small enough. For any $\mu$-cube $P$, the number of $\mu$-cubes $Q$ such that $f(P)=Q$ is bounded from above by a constant depending on $n,\tau$, and the Ahlfors-regularity of $\mu$.

Hence,  using the fact that both $\mathcal F_i$ and $\mathrm{Stop}(\mathcal T_i)$ are a family of pairwise disjoint $\mu$-cubes, there are $c'_2, c_2''>0$ depending on the constants above such that it holds
\begin{equation}\label{eq:auxbwgl3}
    \begin{split}
        \sum_{Q\in \mathcal H_i}\mu(Q)&\leq c'_2 \sum_{Q\in \mathcal H_i} \mu(f(Q))\leq c_2'' \sum_{P\in \mathcal F_i\cup \mathrm{Stop}(\mathcal T_i)}\mu(P)\\
        &\leq c''_2\sum_{P\in \mathcal F_i}\mu(P)+ c''_2\sum_{P\in \mathrm{Stop}(\mathcal T_i)}\mu(P)\leq c_2\,\mu(Q(\mathcal T_i)),
    \end{split}
\end{equation}
where $c_2\coloneqq 2\, c_2''.$
Thus, the claim \eqref{eq:claim_bwgl} follows from \eqref{eq:auxbwgl2} and \eqref{eq:auxbwgl3}.
\end{proof}

\vv

\appendix

\section{Proof of Theorem \ref{th:failurepreiss}}\label{section:counterexample_PDT}

Let $f\colon \R\to\R$ be a $1/2$-H\"older function with unitary H\"older constant, namely
\begin{equation}\label{eq:halfholdf}
	|f(s)-f(t)|\leq |s-t|^{1/2}, \qquad \text{ for }s,t\in \R.
\end{equation}
We define $\Gamma\coloneqq \{(f(t),t):t\in \R\}\subset \mathbb{P}^1$ and denote by $\mathcal C^2$ the $2$-dimensional centered Hausforff measure on $\mathbb{P}^1$. For $x\in\mathbb{P}^1$ and $r>0$, let $\mathfrak B_\infty(x,r)\coloneqq \{y\in\mathbb{P}^1: \|x-y\|_\infty\leq r\}$ denote the ball associated with the $\|\cdot\|_\infty$-norm.
The condition \eqref{eq:halfholdf} implies that, for $s\in \R$,
\begin{equation*}
	\begin{split}
		\Gamma\cap \mathfrak B_\infty\bigl((f(s),s),r\bigr)&=\bigl\{(f(\rho),\rho):\|(f(\rho),\rho)-(f(s),s)\|_\infty \leq r\bigr\}= \bigl\{(f(\rho),\rho):|\rho-s|\leq r^2\bigr\}.
	\end{split}
\end{equation*}

\begin{lemma}\label{lemma:unif_measure_box_norm}
	Let $\Gamma$ be as above. We have
	\[
		\mathcal C^2\llcorner \Gamma(\mathfrak B_\infty(x,r))=2r^2, \qquad x\in \Gamma, r>0.
	\]
	In particular, the measure $\mathcal C^2\llcorner \Gamma$ is a uniform measure on $\mathbb{P}^1$.
\end{lemma}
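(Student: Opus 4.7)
The plan is to trivialize the problem by identifying $(\Gamma, \|\cdot\|_\infty)$ with the 1-dimensional line $\mathbb{R}$ equipped with the ``square-root'' metric $d(s,t)\coloneqq |s-t|^{1/2}$. Define $\phi\colon \mathbb{R}\to \Gamma$ by $\phi(t)\coloneqq (f(t),t)$ and observe that for all $s,t\in\mathbb{R}$
\[
\|\phi(s)-\phi(t)\|_\infty = \max\bigl\{|f(s)-f(t)|,\,|s-t|^{1/2}\bigr\}= |s-t|^{1/2},
\]
where the second term dominates thanks to the $1/2$-H\"older bound \eqref{eq:halfholdf}. Thus $\phi$ is a bijective isometry between $(\mathbb{R},d)$ and $(\Gamma, \|\cdot\|_\infty)$, and $\mathcal{C}^2\llcorner \Gamma$ is the push-forward under $\phi$ of the 2-dimensional centered Hausdorff measure built using the metric $d$ on $\mathbb{R}$.

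First, I will verify the identification of the slice
\[
\Gamma\cap \mathfrak B_\infty\bigl((f(s),s),r\bigr)=\phi\bigl([s-r^2,s+r^2]\bigr),
\]
which is immediate from the computation already carried out above the lemma. Hence proving the lemma reduces to showing that the 2-dimensional centered Hausdorff measure of the $d$-ball $B_d(s,r)=[s-r^2,s+r^2]$ on $(\mathbb{R},d)$ equals $2r^2$ (and is $2$-uniform on $(\mathbb{R}, d)$).

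Next, I will perform the key change of variable $\sigma_j = r_j^2$ in Definition \ref{Hausdro} applied to $(\mathbb{R},d)$. Under this substitution, a $d$-ball of radius $r_j$ centered at $t_j$ becomes the Euclidean interval $[t_j-\sigma_j,t_j+\sigma_j]$, and the weight $r_j^2$ becomes $\sigma_j$. This identifies $\mathcal{C}^2$ on $(\mathbb{R},d)$ with the $1$-dimensional centered Hausdorff measure on $(\mathbb{R},|\cdot|)$, computed at the same normalization as in the paper. For the closed interval $[s-r^2,s+r^2]$, a standard cover/packing argument (matching upper bound by partitioning into equal subintervals and lower bound by measuring total Euclidean length) gives the Lebesgue length $2r^2$ of the interval. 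Pulling back through the isometry $\phi$ yields $\mathcal{C}^2\llcorner \Gamma(\mathfrak B_\infty(x,r))=2r^2$ for every $x\in\Gamma$ and every $r>0$. Since the value $2r^2$ is independent of the center $x\in \supp(\mathcal{C}^2\llcorner\Gamma)=\Gamma$, the measure $\mathcal{C}^2\llcorner \Gamma$ is uniform in the sense of Definition \ref{uniform}.

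The only delicate point in the plan is the bookkeeping of normalization in Step 2: one must match the radius-based weights $r_j^h$ of the paper's $\mathcal{C}^h$ exactly when passing through the substitution $\sigma_j=r_j^2$, otherwise spurious factors of $2$ appear. The isometric parametrization $\phi$ is the crux of the argument; once that is in place, all the geometric content reduces to a one-dimensional length computation.
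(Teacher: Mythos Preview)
Your proof is correct and is essentially the same argument as the paper's, just packaged differently. The paper observes that for $E\subseteq\Gamma$ one has $\mathfrak B_\infty(y,\rho)\cap E=S(y,\rho)\cap E$ with $S$ the horizontal strip $\{z:|z_T-y_T|\le\rho^2\}$, and then projects the covering problem onto the $T$-axis; your isometry $\phi\colon(\mathbb R,|\cdot|^{1/2})\to(\Gamma,\|\cdot\|_\infty)$ together with the substitution $\sigma_j=r_j^2$ is exactly the same reduction to a one-dimensional length computation, expressed in more invariant language.
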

\begin{proof}
	Let $x\in \Gamma$, $r>0$, and consider $E\subseteq \mathfrak B_\infty(x,r)\cap \Gamma$. Let us denote as $ S(y,\rho)\coloneqq \{z\in \mathbb{P}^1:|z_T-y_T|\leq \rho^2\}$, for $y\in \mathbb{P}^1$ and $\rho>0$, the infinite horizontal strip. Since $E\subseteq \mathfrak B_\infty(x,r)\cap \Gamma$, we have that
	\begin{equation}\label{eq:BcapE_ScapE}
		 \mathfrak B_\infty(y,\rho)\cap E = S(y, \rho)\cap E\qquad \text{ for all }y\in E, \, \rho>0.
	\end{equation}
	Thus, by Definition \ref{Hausdro}, it holds
	\begin{equation*}
		\begin{split}
			\mathcal C^2\llcorner \Gamma(\mathfrak B_\infty(x,r))&=\sup_{E\subseteq \mathfrak B_\infty(x,r)\cap \Gamma}\sup_{\delta>0}\,\inf\Bigl\{\sum_{j=1}^\infty r_j^2: E\subseteq \bigcup_{j=1}^\infty \mathfrak B_\infty(x_j,r_j) \text{ with }x_j\in E, r_j\leq \delta\Bigr\}\\
			& \overset{\eqref{eq:BcapE_ScapE}}{=}\sup_{E\subseteq \mathfrak B_\infty(x,r)\cap \Gamma}\sup_{\delta>0}\,\inf\Bigl\{\sum_{j=1}^\infty r_j^2: E\subseteq \bigcup_{j=1}^\infty S(x_j,r_j) \text{ with }x_j\in E, r_j\leq \delta\Bigr\}\\
			& = \sup_{E\subseteq \mathfrak B_\infty(x,r)\cap \Gamma}\mathcal L^1(\pi_V(E))=\mathcal L^1\bigl(\pi_V(\mathfrak B_\infty(x,r))\bigr)=2r^2.\qedhere
		\end{split}
	\end{equation*}
\end{proof}
\vv

It is known that there exist $1/2$-H\"older Lipschitz graphs that do not admit any flat parabolic blowup. In order to see this we refer for instance to \cite{JNV21}, where the authors construct a non differentiable intrinsic Lipschitz map in $\mathbb H^1\times \R$. Hence, if $\Gamma$ denotes the graph associated to such a map, a combination of Lemma \ref{lemma:unif_measure_box_norm} and Mattila's Theorem \ref{theorem:Mattila_parabolic_rectifiability} shows that $\mathcal C^2\llcorner \Gamma$ is a uniform measure with respect to $\|\cdot\|_\infty$ which is \textit{not} $\mathscr{P}_2$-rectifiable.

Note also that if $\mathrm{gr}(\Gamma)$ is endowed with the metric induced by $\lVert\cdot\rVert_\infty$, the pointed measured Gromov-Hausdorff tangents of $\bigl(\mathrm{gr}(\Gamma),\lVert\cdot\rVert_\infty,\mathcal{H}^2_{\lVert\cdot\rVert_\infty}\llcorner \Gamma\bigr)$ (see \cite{pmghtangents}) are unique $\mathcal{H}^2_{\lVert\cdot\rVert_\infty}$-almost everywhere. Moreover, if $\mathrm{gr}(\Gamma)$ is endowed with the metric induced by the Koranyi distance, then the pointed measured Gromov-Hausdorff tangents of the measured metric space $\bigl(\mathrm{gr}(\Gamma),\lVert\cdot\rVert,\mathcal{H}^2_{\lVert\cdot\rVert}\llcorner \Gamma\bigr)$ are not unique at $\mathcal{H}^2_{\lVert\cdot\rVert}\llcorner \Gamma$-almost every $x\in\Gamma$, although the measures $\mathcal{H}^2_{\lVert\cdot\rVert}\llcorner \Gamma$ and $\mathcal{H}^2_{\lVert\cdot\rVert_\infty}\llcorner \Gamma$ are mutually absolutely continuous. 

\vvv

\section{Taylor expansion of area on quadratic \texorpdfstring{$t$}{Lg}-cones}
\label{TYLR}
Before giving a short account on the content of this appendix, let us introduce some notation. \textbf{Throughout this appendix we always suppose that} $\mathcal{D}\in\mathrm{Sym}(n)\setminus \{0\}$ \textbf{and let} $f\colon \R^{n}\to\R$ be the quadratic polynomial defined as
$$f(y)\coloneqq \langle y,\mathcal{D} y\rangle.$$
\textbf{Furthermore, we fix } $x\in\R^{n}\setminus \mathrm{Ker}(\mathcal D)$, \textbf{ let} $\mathcal{X}\coloneqq (x,f(x))$, and denote $\gr(f)\coloneqq \{(x,f(x)):x\in \R^n\}.$

\medskip

The main goal of this section is to determine an asymptotic expansion of $\mathcal{H}^{n+1}(B(\mathcal{X},r)\cap \gr(f))$ for $r$ small. More precisely, written
\begin{equation}\label{eq:expansion_area_formula}
    \mathcal{H}^{n+1}(B(\mathcal{X},r)\cap \gr(f))=\mathfrak{c}(\mathcal{X})r^{n+1}+\zeta(\mathcal{X})r^{n+2}+\mathfrak{e}(\mathcal{X})r^{n+3}+O(r^{n+4}),
\end{equation}
we want to find an expression for the coefficients $\mathfrak{c},\zeta,\mathfrak{e}$ in terms of $x$, $\mathcal{D}$ and $n$.
The coefficient $\mathfrak{c}$ is quite easy to study and we show that it is a constant depending only on $n$. On the other hand the coefficients $\zeta$ and $\mathfrak{e}$ need much more work and they play a fundamental role in the study of the geometric properties of $1$-codimensional uniform measures carried on in Section \ref{eq:sec_non_deg_unif_meas}.

\begin{definition}
Let $\mathcal{D}$, $\mathcal{X}$ and $f$ be as above. We denote as $$\mathfrak{n}\coloneqq \frac{ \mathcal{D}x}{\lvert \mathcal{D}x\rvert},$$
the \emph{horizontal normal} at $\mathcal{X}$ to $\text{gr}(f)$ and we let $c\coloneqq 2\lvert \mathcal{D}x\rvert$.
\end{definition}

The following proposition gives a first characterization of the shape of the intersection between $B(\mathcal{X},r)$ and $\text{gr}(f)$. In particular we construct a function $G$ at the point $x$ whose sublevel sets are the horizontal projection of $B(\mathcal{X},r)\cap\text{gr}(f)$.

\begin{proposition}\label{pll}
In the notations above, defined $G(w)\coloneqq \lvert w\rvert^4+\lvert c\langle \mathfrak{n},w\rangle+\langle w,\mathcal{D}w\rangle \rvert^2$, we have
\begin{equation}
   \pi_H(B(\mathcal{X},r)\cap\gr(f))=x+\{w\in\R^{n}:G(w)\leq r^4\}.
   \nonumber
\end{equation}
\end{proposition}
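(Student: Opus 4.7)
The plan is a direct computation exploiting the definition of the Koranyi ball and the symmetry of $\mathcal{D}$. The set $B(\mathcal{X},r)\cap \gr(f)$ consists of points of the form $(y,f(y))$ with $y\in\R^n$ satisfying
\[
d(\mathcal{X},(y,f(y)))^4=|y-x|^4+|f(y)-f(x)|^2\leq r^4,
\]
so projecting via $\pi_H$ and changing variable $w=y-x$ reduces the task to identifying the set $\{w\in\R^n:|w|^4+|f(x+w)-f(x)|^2\leq r^4\}$ with $\{G(w)\leq r^4\}$.

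The key step is the expansion of $f(x+w)-f(x)$. Since $\mathcal{D}$ is symmetric, one computes
\[
f(x+w)-f(x)=\langle x+w,\mathcal{D}(x+w)\rangle-\langle x,\mathcal{D}x\rangle=2\langle w,\mathcal{D}x\rangle+\langle w,\mathcal{D}w\rangle.
\]
Using $\mathcal{D}x=|\mathcal{D}x|\,\mathfrak{n}$ and $c=2|\mathcal{D}x|$, the linear term becomes $2\langle w,\mathcal{D}x\rangle=c\langle \mathfrak{n},w\rangle$. Thus
\[
f(x+w)-f(x)=c\langle \mathfrak{n},w\rangle+\langle w,\mathcal{D}w\rangle,
\]
and squaring yields exactly the second summand of $G(w)$. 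Combined with the horizontal contribution $|w|^4$, the inequality $d(\mathcal{X},(y,f(y)))\leq r$ rewrites as $G(w)\leq r^4$.

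Since the map $y\mapsto(y,f(y))$ is a bijection between $\R^n$ and $\gr(f)$, this equivalence, together with the substitution $y=x+w$, gives
\[
\pi_H\bigl(B(\mathcal{X},r)\cap \gr(f)\bigr)=\bigl\{y\in\R^n:(y,f(y))\in B(\mathcal{X},r)\bigr\}=x+\{w\in\R^n:G(w)\leq r^4\},
\]
as claimed. There is no real obstacle here; the only subtlety is to make sure that the identification $\mathcal{D}x=|\mathcal{D}x|\mathfrak{n}$ is legitimate, which is guaranteed by the standing assumption $x\notin\ker(\mathcal{D})$ that makes $\mathfrak{n}$ and $c$ well defined.
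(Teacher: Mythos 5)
Your proposal is correct and follows essentially the same route as the paper: expand the Koranyi distance on the graph, substitute $y=x+w$, and use the symmetry of $\mathcal{D}$ to write $f(x+w)-f(x)=2\langle w,\mathcal{D}x\rangle+\langle w,\mathcal{D}w\rangle=c\langle\mathfrak{n},w\rangle+\langle w,\mathcal{D}w\rangle$. Your remark that $x\notin\ker(\mathcal{D})$ makes $\mathfrak{n}$ and $c$ well defined matches the standing assumption fixed before the statement, so nothing is missing.
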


\begin{proof}
The definitions of $\mathcal{X}$ and of the Koranyi norm imply
\begin{equation}
\begin{split}
    B(\mathcal{X},r)\coloneqq&\{z\in\R^{n+1}:\lvert z_H-x\rvert^4+\lvert z_T-f(x) \rvert^2\leq r^4\}.
\end{split}
    \nonumber
\end{equation}
Therefore,
\begin{equation}\label{eq:Bcapgrf1}
\begin{split}
B(\mathcal{X},r)\cap\gr(f)=&\bigl\{(y,f(y))\in\R^{n+1}:\lvert x-y\rvert^4+\lvert -f(x)+f(y)\rvert^2\leq r^4\bigr\}\\
=&\mathcal{X}+\bigl\{(w,f(w)+2\langle w, \mathcal{D} x\rangle)\in\R^{n+1}:\lvert w\rvert^4+\lvert -f(x)+f(x+w)\rvert^2\leq r^4\bigr\},
\end{split}
\end{equation}
where in the last line we have performed the change of variable $y=x+w$. By definition of $f$ and the symmetry of $\mathcal D$, we have
\begin{equation}\label{eq:Bcapgrf2}
\begin{split}
    -f(x)+f(x+w)&=-\langle x,\mathcal{D}x\rangle+\langle x+w, \mathcal{D}(x+w)\rangle\\
    &=2\langle x,\mathcal{D} w\rangle+\langle w,\mathcal{D}w\rangle=c\langle \mathfrak{n},w\rangle+\langle w,\mathcal{D}w\rangle,
\end{split}
\end{equation}
where the latter equality follows by the definitions of $\mathfrak n$ and $c$.
In particular, \eqref{eq:Bcapgrf1} and \eqref{eq:Bcapgrf2} imply that
    \begin{equation*}
\begin{split}
\pi_H(B(\mathcal{X},r)\cap\gr(f))=
x+\bigl\{w\in\R^{n}:\lvert w\rvert^4+\lvert c\langle \mathfrak{n},w\rangle+\langle w,\mathcal{D}w\rangle\rvert^2\leq r^4\bigr\},
\end{split}
\end{equation*}
which concludes the proof by definition of $G$.
\end{proof}

\vv

We now introduce a special set of polar coordinates, which are very useful in the study of $B(\mathcal{X},r)\cap \gr(f)$ when $r$ is small.

\begin{proposition}\label{coordinate}
For any $w\in \R^{n}\setminus \bigl( x+\mathrm{span}( \mathfrak{n})\bigr)$ there exists a unique triple $(\rho, \vartheta,v)\in \mathscr{C}\coloneqq (0,\infty)\times [-\frac{\pi}{2},\frac{\pi}{2})\times\bigl(\mathbb{S}^{n-1}\cap \mathfrak{n}^\perp\bigr)$ such that
\begin{equation}
    w=x+\frac{\sin \vartheta}{c} \rho^2 \mathfrak{n}+\cos\vartheta\rho v\eqqcolon x+\mathcal{P}(\rho, \vartheta,v).
    \label{N:4}
\end{equation}
\end{proposition}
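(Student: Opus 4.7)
The plan is to split $w-x$ into its components along $\mathfrak{n}$ and along $\mathfrak{n}^{\perp}$, and then match them with the corresponding components of $\mathcal{P}(\rho,\vartheta,v)$. I would write $w-x=p\,\mathfrak{n}+q$, where $p\coloneqq \langle w-x,\mathfrak{n}\rangle$ and $q\coloneqq (w-x)-p\,\mathfrak{n}\in \mathfrak{n}^{\perp}$; the hypothesis $w\notin x+\mathrm{span}(\mathfrak{n})$ translates into $|q|>0$. Since $v\in \mathfrak{n}^{\perp}$, equating the $\mathfrak{n}$- and $\mathfrak{n}^{\perp}$-components of $w-x=\mathcal{P}(\rho,\vartheta,v)$ yields the system
\begin{equation*}
\frac{\sin\vartheta}{c}\rho^{2}=p,\qquad \cos\vartheta\,\rho\,v=q.
\end{equation*}

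The next step is to exploit the range $\vartheta\in[-\pi/2,\pi/2)$, which forces $\cos\vartheta\geq 0$, with equality only at $\vartheta=-\pi/2$. Since $|q|>0$ the second equation compels $\cos\vartheta>0$, and therefore uniquely prescribes $v=q/|q|\in \mathbb{S}^{n-1}\cap \mathfrak{n}^{\perp}$ and $\rho\cos\vartheta=|q|$. The problem then reduces to determining $(\rho,\vartheta)\in(0,\infty)\times(-\pi/2,\pi/2)$ with
\begin{equation*}
\rho^{2}\cos^{2}\vartheta=|q|^{2},\qquad \rho^{2}\sin\vartheta=cp.
\end{equation*}
Setting $R\coloneqq \rho^{2}$ and substituting $\cos^{2}\vartheta=1-\sin^{2}\vartheta$, I would eliminate $\vartheta$ and arrive at the quadratic
\begin{equation*}
R^{2}-|q|^{2}R-c^{2}p^{2}=0,
\end{equation*}
whose only positive root is $R=(|q|^{2}+\sqrt{|q|^{4}+4c^{2}p^{2}})/2$.

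This uniquely identifies $\rho=\sqrt{R}>0$, and then $\sin\vartheta=cp/R$ uniquely recovers $\vartheta$ in $(-\pi/2,\pi/2)$ because the inequality $|cp|<R$ holds strictly (which follows from $|q|>0$ together with $\sqrt{|q|^{4}+4c^{2}p^{2}}>2|cp|$). Existence is then obtained by plugging the resulting triple $(\rho,\vartheta,v)$ back into $\mathcal{P}$, and uniqueness follows from the uniqueness at each step of the reduction. I do not foresee any significant obstacle: the only points requiring some care are the exclusion $\vartheta\neq -\pi/2$ and the strict positivity of $\rho$ and $\cos\vartheta$, all of which are guaranteed by the standing hypothesis that the $\mathfrak{n}^{\perp}$-component $q$ of $w-x$ does not vanish.
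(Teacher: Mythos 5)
Your argument is correct: decomposing $w-x$ into its $\mathfrak{n}$- and $\mathfrak{n}^\perp$-components, forcing $\cos\vartheta>0$ and $v=q/|q|$ from $|q|>0$, and reducing to the quadratic $R^2-|q|^2R-c^2p^2=0$ in $R=\rho^2$ (with its unique positive root and the strict bound $|cp|<R$ giving $\vartheta\in(-\pi/2,\pi/2)$) settles both existence and uniqueness. The paper itself only cites \cite[Proposition B.2]{MerloG1cod} for a verbatim argument, and your direct computation is precisely the standard verification intended there, so there is nothing to add.
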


\begin{proof}
The proof can be obtained following verbatim \cite[Proposition B.2]{MerloG1cod}.
\end{proof}

\vv

\begin{proposition}[Representation formula for the area]
In the notations fixed above, we have
\[
	\sigma_{\gr(f)}(B(\mathcal{X},r))=\int_{\pi_H(B(\mathcal{X},r)\cap \mathrm{gr}(f))}\lvert \grad f(x)\rvert \,dx.
\]
\end{proposition}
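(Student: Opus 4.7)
The plan is to reduce the identity to the classical Euclidean coarea formula applied to the polynomial $f\colon\R^n\to\R$. Set $A\coloneqq \pi_H(B(\mathcal{X},r)\cap\gr(f))$, a bounded Borel subset of $\R^n$ on which $f$ is Lipschitz.

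First I would unpack the definition of $\sigma_{\gr(f)}$ from \eqref{eq:def_sigma_measure} applied to the set $B(\mathcal{X},r)$:
\[
    \sigma_{\gr(f)}(B(\mathcal{X},r))=\int \mathcal{H}^{n-1}_{\mathrm{eu}}\bigl(B(\mathcal{X},r)\cap \{z\in\gr(f):\pi_T(z)=t\}\bigr)\,dt.
\]
Next I would rewrite the $t$-slice of the graph. Since $\gr(f)$ is the image of $\R^n$ under the bijection $y\mapsto(y,f(y))$, a point $(y,f(y))$ lies in the slice $\{\pi_T=t\}$ precisely when $f(y)=t$. Imposing the further condition that $(y,f(y))\in B(\mathcal X,r)$ amounts to requiring $y\in A\cap f^{-1}(t)$. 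Hence, as sets in $\R^{n+1}$,
\[
    B(\mathcal{X},r)\cap \{z\in\gr(f):\pi_T(z)=t\}=\bigl\{(y,t):y\in A\cap f^{-1}(t)\bigr\}.
\]

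The right-hand side is contained in the horizontal hyperplane $\{\pi_T=t\}$, and the projection $\pi_H$ restricted to this hyperplane is a Euclidean isometry onto $\R^n$; in particular it preserves $\mathcal{H}^{n-1}_{\mathrm{eu}}$, so
\[
    \mathcal{H}^{n-1}_{\mathrm{eu}}\bigl(B(\mathcal{X},r)\cap \{z\in\gr(f):\pi_T(z)=t\}\bigr)=\mathcal{H}^{n-1}_{\mathrm{eu}}\bigl(A\cap f^{-1}(t)\bigr).
\]
Finally I would invoke the Euclidean coarea formula (see e.g.\ \cite[Theorem 3.2.12]{Federer1996GeometricTheory}) applied to the Lipschitz function $f$ on the bounded Borel set $A$, which gives
\[
    \int \mathcal{H}^{n-1}_{\mathrm{eu}}\bigl(A\cap f^{-1}(t)\bigr)\,dt=\int_{A}\lvert\nabla f(x)\rvert\,dx.
\]
Combining the three displays yields the statement. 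There is no genuine obstacle: the only small point to check is the isometric character of $\pi_H$ on horizontal hyperplanes (immediate) and the applicability of the Euclidean coarea formula, which holds since $f$ is a polynomial and $A$ is bounded and Borel.
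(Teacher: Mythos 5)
Your proposal is correct and follows essentially the same route as the paper: the paper's proof is simply the Euclidean coarea formula applied to $f$, identifying the level-set slices with the $t$-slices appearing in the definition \eqref{eq:def_sigma_measure} of $\sigma_{\gr(f)}$. Your additional steps (unpacking the slice as $\{(y,t):y\in A\cap f^{-1}(t)\}$ and noting that $\pi_H$ is an isometry on horizontal hyperplanes) are just the details the paper leaves implicit.
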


\begin{proof}
Coarea formula implies that
$$\int_{\pi_H(B(\mathcal{X},r)\cap \mathrm{gr}(f))}\lvert \grad f(x)\rvert \,dx=\int \mathcal{H}^{n-1}_{\mathrm{eu}}\bigl(B(\mathcal{X},r)\cap \{x:f(x)=t\}\bigr)\,dt=\sigma_{\gr(f)}(B(\mathcal{X},r)),$$
which concludes the proof.
\end{proof}
\vv

\textbf{For simplicity of notation, we define}
\begin{equation}
  \alpha_\mathfrak{n}\coloneqq \left\langle \mathfrak{n},\mathcal{D}\mathfrak{n}\right\rangle ,\qquad\beta_\mathfrak{n}(v)\coloneqq \left\langle v,\mathcal{D}\mathfrak{n}\right\rangle,\qquad\gamma(v)\coloneqq \left\langle v,\mathcal{D}v\right\rangle \qquad \text{for any }v\in\mathbb{S}^{n-1}. 
  \label{numbero27}
\end{equation}
In the following proposition we give an explicit expression of $G$ in the coordinates $\mathcal{P}(\vartheta,\rho,v)$ introduced in Proposition \ref{coordinate}.

\begin{proposition}\label{strutto}
Let $\mathscr{C}$ and  $\mathcal{P}(\rho, \vartheta,v)$ be as in Proposition \ref{coordinate}. Let us define the function $H\colon\mathscr{C}\to\R$ as
\begin{equation}
    H(\rho, \vartheta,v)\coloneqq G(\mathcal{P}(\rho,\vartheta,v)),
    \label{numerooo3}
\end{equation}
where $G$ was introduced in Proposition \ref{pll}.
Then, $H$ can be expressed as
\[
	H(\rho, \vartheta,v)= A(\vartheta,v)\rho^4+\frac{\overline{B}(\vartheta, v)}{c}\rho^5+\frac{\overline{C}(\vartheta, v)}{c^2}\rho^6+\frac{\overline{D}(\vartheta, v)}{c^3}\rho^7+\frac{\overline{E}(\vartheta)}{c^4}\rho^8,
\]
where, as $(\vartheta,v)$ varies in $[-\pi/2,\pi/2)\times\bigl(\mathbb{S}^{n-1}\cap \mathfrak{n}^\perp\bigr)$, we define:
\begin{itemize}
\item[(i)] $A(\vartheta,v)\coloneqq (\cos^4\vartheta+(\cos^2\vartheta\gamma(v)+\sin\vartheta)^2)$.
\item[(ii)] $\overline{B}(\vartheta,v)\coloneqq cB(\vartheta,v)\coloneqq 4\sin\vartheta\cos\vartheta\beta_\mathfrak{n}(v)(\cos^2\vartheta\gamma(v)+\sin\vartheta)$.
\item[(iii)] $\overline{C}(\vartheta,v)\coloneqq c^2C(\vartheta,v)\coloneqq \sin^2\vartheta\bigl(\cos^2\vartheta(2+4\beta_\mathfrak{n}(v)^2+2\gamma(v)\alpha_\mathfrak{n})+2\sin\vartheta\alpha_\mathfrak{n}\bigr)$.
\item[(iv)] $\overline{D}(\vartheta,v)\coloneqq c^3D(\vartheta,v)\coloneqq 4\alpha_\mathfrak{n}\beta_\mathfrak{n}(v)\sin\vartheta^3\cos\vartheta$.
\item[(v)] $\overline{E}(\vartheta)\coloneqq c^4E(\vartheta)\coloneqq (1+\alpha_\mathfrak{n}^2)\sin^4\vartheta$.
\end{itemize}
\end{proposition}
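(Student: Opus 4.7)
The plan is to substitute the parametrization $\mathcal{P}(\rho,\vartheta,v)=\frac{\sin\vartheta}{c}\rho^2\mathfrak{n}+\cos\vartheta\,\rho\, v$ into the definition
\[
    G(w)=\lvert w\rvert^{4}+\bigl(c\langle\mathfrak{n},w\rangle+\langle w,\mathcal{D}w\rangle\bigr)^{2}
\]
given by Proposition \ref{pll}, and expand each of the two summands as a polynomial in $\rho$. The whole argument is direct algebra: the orthogonality conditions $\lvert\mathfrak{n}\rvert=\lvert v\rvert=1$ and $\langle\mathfrak{n},v\rangle=0$ (coming from $v\in\mathbb{S}^{n-1}\cap\mathfrak{n}^{\perp}$) will collapse many cross terms and leave only the contributions recorded in $A,\overline{B},\overline{C},\overline{D},\overline{E}$.

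First, using $\lvert\mathfrak{n}\rvert=\lvert v\rvert=1$ and $\langle\mathfrak{n},v\rangle=0$, one obtains
\[
    \lvert\mathcal{P}\rvert^{2}=\cos^{2}\vartheta\,\rho^{2}+\frac{\sin^{2}\vartheta}{c^{2}}\rho^{4},
\]
so squaring yields the three-term expression $\lvert\mathcal{P}\rvert^{4}=\cos^{4}\vartheta\,\rho^{4}+2\cos^{2}\vartheta\sin^{2}\vartheta\,c^{-2}\rho^{6}+\sin^{4}\vartheta\,c^{-4}\rho^{8}$. Next, again exploiting $\langle\mathfrak{n},v\rangle=0$, the inner product $\langle\mathfrak{n},\mathcal{P}\rangle$ simplifies to $c^{-1}\sin\vartheta\,\rho^{2}$, while (by bilinearity of $\langle\cdot,\mathcal{D}\cdot\rangle$ and the notation \eqref{numbero27}) one has
\[
    \langle\mathcal{P},\mathcal{D}\mathcal{P}\rangle=\cos^{2}\vartheta\,\gamma(v)\,\rho^{2}+\frac{2\sin\vartheta\cos\vartheta\,\beta_{\mathfrak{n}}(v)}{c}\rho^{3}+\frac{\sin^{2}\vartheta\,\alpha_{\mathfrak{n}}}{c^{2}}\rho^{4}.
\]
Hence, setting $a\coloneqq\sin\vartheta+\cos^{2}\vartheta\,\gamma(v)$, $b\coloneqq2\sin\vartheta\cos\vartheta\,\beta_{\mathfrak{n}}(v)$, $d\coloneqq\sin^{2}\vartheta\,\alpha_{\mathfrak{n}}$, the quantity $c\langle\mathfrak{n},\mathcal{P}\rangle+\langle\mathcal{P},\mathcal{D}\mathcal{P}\rangle$ takes the compact form $a\rho^{2}+bc^{-1}\rho^{3}+dc^{-2}\rho^{4}$.

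Squaring the latter gives
\[
    a^{2}\rho^{4}+\frac{2ab}{c}\rho^{5}+\frac{b^{2}+2ad}{c^{2}}\rho^{6}+\frac{2bd}{c^{3}}\rho^{7}+\frac{d^{2}}{c^{4}}\rho^{8},
\]
and adding the expression for $\lvert\mathcal{P}\rvert^{4}$ computed above, the desired formula follows by term-by-term identification. Indeed, the $\rho^{4}$-coefficient becomes $\cos^{4}\vartheta+a^{2}=A(\vartheta,v)$; the $\rho^{5}$-coefficient is $2ab$, which expands exactly to $\overline{B}(\vartheta,v)$; the $\rho^{6}$-coefficient is $b^{2}+2ad+2\cos^{2}\vartheta\sin^{2}\vartheta$, and factoring out $\sin^{2}\vartheta$ reproduces $\overline{C}(\vartheta,v)$; the $\rho^{7}$-coefficient $2bd$ gives $\overline{D}(\vartheta,v)$; finally the $\rho^{8}$-coefficient is $d^{2}+\sin^{4}\vartheta=(1+\alpha_{\mathfrak{n}}^{2})\sin^{4}\vartheta=\overline{E}(\vartheta)$.

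There is no real conceptual obstacle: the proof is a bookkeeping exercise, the only care being to track the correct power of $c$ appearing in each term and to use the orthogonality $\langle\mathfrak{n},v\rangle=0$ consistently to discard the cross terms that would otherwise appear in $\lvert\mathcal{P}\rvert^{2}$ and in $\langle\mathcal{P},\mathcal{D}\mathcal{P}\rangle$. The structure of the formula — with each $\overline{X}(\vartheta,v)$ scaled by the precise power of $c$ dictated by the anisotropic dilation hidden in $\mathcal{P}$ — is exactly what one expects from the fact that the $\mathfrak{n}$-component of $\mathcal{P}$ carries an extra factor of $\rho/c$ relative to the $v$-component.
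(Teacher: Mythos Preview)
Your proof is correct and complete. The paper itself does not give a proof here but refers to \cite[Proposition B.3]{MerloG1cod}; your direct substitution of $\mathcal{P}(\rho,\vartheta,v)$ into $G$, followed by the algebraic expansion exploiting $\langle\mathfrak{n},v\rangle=0$ and the symmetry of $\mathcal{D}$, is exactly the natural (and presumably identical) argument.
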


\begin{proof}
The proof can be obtained following verbatim \cite[Proposition B.3]{MerloG1cod}.
\end{proof}
\vv

We now summarize some algebraic properties of the functions $A,\ldots,E$ introduced in Proposition \ref{strutto}.

\begin{lemma}\label{symA}
Let $(\vartheta,v)\in[-\pi/2,\pi/2)\times\bigl(\mathbb{S}^{n}\cap \mathfrak{n}^\perp\bigr)$. Then:
\begin{itemize}
\item[(i)] $A(\vartheta,v)=A(\vartheta,-v)$ and $C(\vartheta,v)=C(\vartheta,-v)$.
\item[(ii)] $B(\vartheta,v)=-B(\vartheta,-v)$ and $D(\vartheta,v)=-D(\vartheta,-v)$.
\item[(iii)] $D(\vartheta,v)=-D(-\vartheta,v)$.
\item[(iv)] $E$ does not depend on $v$.
\item[(v)] $A$ is bounded away from $0$, i.e. \[\omega\coloneqq \min_{(\vartheta,v)\in[-\pi/2,\pi/2)\times\mathbb{S}^{n}\cap \mathfrak{n}^\perp}A(\vartheta,v)>0.\]
\end{itemize}
\end{lemma}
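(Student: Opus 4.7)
The proof of Lemma \ref{symA} is essentially a direct verification from the explicit formulas in Proposition \ref{strutto} together with the definitions \eqref{numbero27}, so my plan is to organize it around a single observation about the parities in $v$ and $\vartheta$ of the building blocks.

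The key remark is that $\alpha_{\mathfrak{n}}$ does not depend on $v$, that $\gamma(v)=\langle v,\mathcal{D}v\rangle$ is a quadratic form in $v$ hence \emph{even} under $v\mapsto -v$, while $\beta_{\mathfrak{n}}(v)=\langle v,\mathcal{D}\mathfrak{n}\rangle$ is linear in $v$ hence \emph{odd}. With this in mind, I would go through the five items by inspection. For (i) I would note that $A$ depends on $v$ only through $\gamma(v)$ and that $C$ depends on $v$ only through $\gamma(v)$ and $\beta_{\mathfrak{n}}(v)^2$, both even in $v$; for (ii) that $B$ and $D$ each carry exactly one factor of $\beta_{\mathfrak{n}}(v)$ multiplied by $v$-even quantities, so they change sign under $v\mapsto -v$. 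Item (iii) follows by looking at the $\vartheta$-dependence of $D$: up to $v$-dependent factors that are independent of $\vartheta$, it is proportional to $\sin^3\vartheta\cos\vartheta$, which is odd under $\vartheta\mapsto -\vartheta$. Item (iv) is immediate from the displayed formula $\overline{E}(\vartheta)=(1+\alpha_{\mathfrak{n}}^2)\sin^4\vartheta$, which contains no $v$.

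The only item that requires a (tiny) genuine argument is (v). Here the plan is a compactness argument: $A$ is continuous and extends continuously from $[-\pi/2,\pi/2)\times(\mathbb{S}^{n-1}\cap\mathfrak{n}^\perp)$ to the compact set $[-\pi/2,\pi/2]\times(\mathbb{S}^{n-1}\cap\mathfrak{n}^\perp)$ without changing its infimum, so it suffices to show $A(\vartheta,v)>0$ pointwise. Writing
\[
A(\vartheta,v)=\cos^4\vartheta+\bigl(\cos^2\vartheta\,\gamma(v)+\sin\vartheta\bigr)^2
\]
as a sum of two squares, the equality $A(\vartheta,v)=0$ would force $\cos\vartheta=0$, and then the second term reduces to $\sin^2\vartheta$, which would also have to vanish, giving the contradiction $\sin\vartheta=\cos\vartheta=0$. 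Hence $A>0$ everywhere on the compact domain and the positive minimum $\omega$ exists.

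I expect no real obstacle in this lemma: everything is visible from the formulas in Proposition \ref{strutto}, and the only non-automatic step is the sum-of-squares argument in (v), which is elementary. The role of the lemma in what follows is what matters: the parity properties in (i)--(iv) are precisely what will make the odd terms in the expansion \eqref{eq:expansion_area_formula} cancel after integrating over $v\in\mathbb{S}^{n-1}\cap\mathfrak{n}^\perp$ and over $\vartheta\in[-\pi/2,\pi/2)$, while the lower bound (v) is the non-degeneracy needed to invert the leading order in $\rho$ when solving $H(\rho,\vartheta,v)\le r^4$.
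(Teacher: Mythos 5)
Your proof is correct and is essentially the same argument the paper invokes (it simply cites \cite[Proposition B.4]{MerloG1cod}, which carries out exactly this direct verification): parity of $\beta_{\mathfrak n}(v)$ and $\gamma(v)$ under $v\mapsto -v$, oddness of $\sin^3\vartheta\cos\vartheta$ for (iii), and positivity of $A$ as a sum of squares that cannot vanish simultaneously, combined with compactness, for (v).
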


\begin{proof}
The proof can be obtained following verbatim \cite[Proposition B.4]{MerloG1cod}.
\end{proof}


The following proposition allows us to determine, up to a certain degree of precision, the shape of the set $\pi_H(B(\mathcal{X},r)\cap\text{gr}(f))$ when $r$ is small.

\begin{proposition}\label{propexp}
There exists an $0<\mathfrak{r}_1(\mathcal{X})=\mathfrak{r}_1<1$ such that for any $0<r<\mathfrak{r}_1$, if $\rho(r)$ is a solution to the equation
\begin{equation}
    H(\rho(r),\vartheta,v)=r^4,
    \label{eq152}
\end{equation}
then
\begin{equation}
    \rho(r)=P_{\vartheta,v}(r)+O(r^4)\coloneqq \frac{r}{A^\frac{1}{4}}-\frac{Br^2}{4A^\frac{3}{2}}+\left(\frac{7}{32}\frac{B^2}{A^\frac{11}{4}}-\frac{C}{4A^\frac{7}{4}}\right)r^3+O(r^4),
    \label{N:3}
	\end{equation}
and the remainder $O(r^4)$ is independent on $v$ and on $\vartheta$.
\end{proposition}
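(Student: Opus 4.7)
My plan is to solve the equation $H(\rho,\vartheta,v)=r^{4}$ by a direct power-series ansatz, using the uniform lower bound $A(\vartheta,v)\geq\omega>0$ from Lemma \ref{symA}-(v) to guarantee that everything behaves well as $(\vartheta,v)$ ranges over the compact set $[-\pi/2,\pi/2)\times(\mathbb S^{n-1}\cap\mathfrak n^{\perp})$. First I would rescale: set $\rho=r\tilde\rho$ so that \eqref{eq152} becomes
\[
F(\tilde\rho,r;\vartheta,v):=A(\vartheta,v)\tilde\rho^{4}+B(\vartheta,v)\,r\,\tilde\rho^{5}+C(\vartheta,v)\,r^{2}\tilde\rho^{6}+D(\vartheta,v)\,r^{3}\tilde\rho^{7}+E(\vartheta)\,r^{4}\tilde\rho^{8}=1.
\]
At $r=0$ the unique positive solution is $\tilde\rho_{0}(\vartheta,v)=A(\vartheta,v)^{-1/4}$, and $\partial_{\tilde\rho}F(\tilde\rho_{0},0)=4A^{1/4}\geq 4\omega^{1/4}>0$ uniformly. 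Since $F$ is a polynomial in $(\tilde\rho,r)$ whose coefficients are continuous (hence bounded) on the compact parameter set, a standard application of the implicit function theorem (or equivalently a Banach fixed-point argument for $\tilde\rho=\tilde\rho_{0}+\Phi_{r}(\tilde\rho)$) yields some $\mathfrak r_{1}>0$ and a $C^{\infty}$ solution $\tilde\rho(r;\vartheta,v)$ on $[0,\mathfrak r_{1})$ whose $k$-th derivative in $r$ is bounded uniformly in $(\vartheta,v)$.

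Next, I would compute the first three coefficients by matching powers of $r$. Write $\tilde\rho(r)=\tilde\rho_{0}+b_{1}r+b_{2}r^{2}+O(r^{3})$, plug into $F=1$, expand, and collect:
\[
\tilde\rho^{4}=\tilde\rho_{0}^{4}+4\tilde\rho_{0}^{3}b_{1}r+\bigl(4\tilde\rho_{0}^{3}b_{2}+6\tilde\rho_{0}^{2}b_{1}^{2}\bigr)r^{2}+O(r^{3}),\qquad \tilde\rho^{5}=\tilde\rho_{0}^{5}+5\tilde\rho_{0}^{4}b_{1}r+O(r^{2}),\qquad\tilde\rho^{6}=\tilde\rho_{0}^{6}+O(r).
\]
The $r^{0}$-coefficient gives $A\tilde\rho_{0}^{4}=1$, i.e.\ $\tilde\rho_{0}=A^{-1/4}$. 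The $r^{1}$-coefficient reads $4A\tilde\rho_{0}^{3}b_{1}+B\tilde\rho_{0}^{5}=0$, yielding $b_{1}=-B/(4A^{3/2})$. The $r^{2}$-coefficient reads
\[
A\bigl(4\tilde\rho_{0}^{3}b_{2}+6\tilde\rho_{0}^{2}b_{1}^{2}\bigr)+5B\tilde\rho_{0}^{4}b_{1}+C\tilde\rho_{0}^{6}=0,
\]
so after substituting the previous values and simplifying, $b_{2}=\tfrac{7B^{2}}{32A^{11/4}}-\tfrac{C}{4A^{7/4}}$. Returning to $\rho=r\tilde\rho$, this gives exactly the expansion \eqref{N:3}.

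Finally I would justify that the remainder $O(r^{4})$ is uniform in $(\vartheta,v)$. Since $A,B,C,D,E$ are continuous on the compact domain, Lemma \ref{symA}-(v) provides $\omega>0$ with $A\geq\omega$, and the implicit function theorem gives $\mathfrak r_{1}=\mathfrak r_{1}(\mathcal X)$ depending only on $n$, $\mathcal D$, and $x$ through $\omega$ and $\|\mathcal D\|$. The third-order Taylor remainder of $\tilde\rho(\cdot;\vartheta,v)$ is controlled by the sup-norm of $\partial_{r}^{4}\tilde\rho$, which is bounded in terms of these same quantities. Multiplying by $r$ turns the $O(r^{3})$ bound on $\tilde\rho-\tilde\rho_{0}-b_{1}r-b_{2}r^{2}$ into the desired uniform $O(r^{4})$. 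The main (and only) real obstacle is checking uniformity over the parameters $(\vartheta,v)$, but this is handled once and for all by the uniform lower bound on $A$; the rest of the argument is bookkeeping of the coefficients.
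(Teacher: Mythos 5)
Your rescaling $\rho=r\tilde\rho$, the uniform lower bound $A\geq\omega>0$ from Lemma \ref{symA}-(v), and the coefficient matching are all fine: the values $\tilde\rho_0=A^{-1/4}$, $b_1=-B/(4A^{3/2})$, $b_2=\tfrac{7B^2}{32A^{11/4}}-\tfrac{C}{4A^{7/4}}$ do come out of the $r^0,r^1,r^2$ equations, and the uniform implicit function theorem argument (uniform because $\partial_{\tilde\rho}F(\tilde\rho_0,0)=4A^{1/4}\geq 4\omega^{1/4}$ and the coefficients are bounded on the compact parameter set) correctly produces \emph{one} smooth branch with the expansion \eqref{N:3} and a remainder bound independent of $(\vartheta,v)$.

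The gap is that the proposition is a statement about \emph{every} solution of $H(\rho,\vartheta,v)=r^4$, not about a solution you construct. The equation is a degree-$8$ polynomial in $\rho$ and may a priori have several positive roots; this is precisely the situation handled in Corollary \ref{Co1}, whose proof uses Proposition \ref{propexp} to conclude that the smallest and the largest positive roots $\rho_1$ and $\rho_k$ agree up to $O(r^4)$. Your argument gives existence of a branch with the stated expansion and uniqueness only inside the (uniform) IFT neighbourhood of $\tilde\rho_0$; it does not rule out roots far from that neighbourhood, so as written it does not prove the implication ``if $\rho(r)$ solves \eqref{eq152} then \eqref{N:3} holds''. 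Closing this requires an a priori localization of all roots, and that step is not mere bookkeeping: from $H(\rho,\vartheta,v)=G(\mathcal P(\rho,\vartheta,v))\geq \lvert \mathcal P(\rho,\vartheta,v)\rvert^4$ one only gets $\rho\lesssim\max\{r,\sqrt{cr}\}$ (the horizontal component degenerates as $\vartheta\to\pm\pi/2$), so to exclude spurious roots of size $\sim r^{1/2}$ one must also use the second term of $G$, namely $\lvert c\langle\mathfrak n,w\rangle+\langle w,\mathcal D w\rangle\rvert\leq r^2$, which for $\lvert\cos\vartheta\rvert\lesssim r/\rho$ forces $\lvert\sin\vartheta\rvert\rho^2\leq Cr^2$ and hence $\rho\leq Cr$ after all. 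Only once every positive root is shown to satisfy $\rho\leq Cr$, and then (dividing the equation by $r^4$ and using $A\geq\omega$) that $\rho/r\to A^{-1/4}$ uniformly in $(\vartheta,v)$, can you invoke the uniform local uniqueness of your IFT branch to identify every root with it and transfer the expansion. Add this localization step and your proof is complete.
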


\begin{proof}
The proof can be obtained following verbatim \cite[Proposition B.5]{MerloG1cod}.
\end{proof}

\vv

\textbf{Let us denote by $\mathfrak{r}_2$ the supremum of those positive numbers for which} $P_{\vartheta,v}(r)\geq \omega^{1/4} r/2$.
 
\begin{corollary}\label{Co1}
For any $0<r<\min\{1,\mathfrak{r}_2\}$ and any $\delta\in (-\omega^{1/4}/2,\omega^{1/4}/2)$, define the set
$$\mathcal{B}_{r,\delta}\coloneqq \mathcal{P}\bigl( \bigl\{(\rho,\vartheta,v)\in\mathscr{C}: \rho\leq P_{\vartheta,v}(r)+\delta r^3\bigr\}\bigr),$$
where $P_{\vartheta,v}$ was defined in \eqref{N:3}.
Then, there exists an $\epsilon_0(\mathcal{X})=\epsilon_0>0$ such that for any $0<\epsilon<\epsilon_0$, there is an $0<\mathfrak{r}_3(\epsilon)=\mathfrak{r}_3$ such that for any $0<r<\mathfrak{r}_3$, we have
\begin{equation}
x+\mathcal{B}_{r,-\epsilon}\subseteq\pi_H( B_{r}(\mathcal{X})\cap\gr(f))\subseteq x+\mathcal{B}_{r,\epsilon}.
    \nonumber
\end{equation}
\end{corollary}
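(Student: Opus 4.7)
The strategy is to transfer the description of $\pi_H(B(\mathcal X,r)\cap \gr(f))$ from the cartesian language of Proposition \ref{pll} to the polar coordinates of Proposition \ref{coordinate}, then invoke the asymptotic expansion in Proposition \ref{propexp} to compare the exact sublevel set of $H$ with the approximate one defined by $P_{\vartheta,v}(r)$.

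First, I would translate Proposition \ref{pll} to polar coordinates. Since $\mathcal P\colon \mathscr C \to \mathbb R^n\setminus(x+\mathrm{span}(\mathfrak n))$ is a bijection by Proposition \ref{coordinate}, and $H=G\circ \mathcal P$ by \eqref{numerooo3}, Proposition \ref{pll} reads
\[
  \pi_H(B(\mathcal X,r)\cap \gr(f))-x=\mathcal P\bigl(\{(\rho,\vartheta,v)\in\mathscr C:H(\rho,\vartheta,v)\le r^4\}\bigr),
\]
up to the null set $x+\mathrm{span}(\mathfrak n)$, which can be handled by closure/continuity at the end and does not affect the containments.

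Second, I would show that for $(\vartheta,v)$ fixed and $r$ small the equation $H(\rho,\vartheta,v)=r^4$ has a unique positive solution $\rho^*(r,\vartheta,v)$, and that $\{\rho:H(\rho,\vartheta,v)\le r^4\}=(0,\rho^*(r,\vartheta,v)]$. This uses the explicit formula in Proposition \ref{strutto}: since $A(\vartheta,v)\ge \omega>0$ by Lemma \ref{symA}-(v), on an interval of the form $(0, K r]$ (for some constant $K$ depending only on $\mathcal X$) the partial derivative $\partial_\rho H$ is comparable to $4A\rho^3>0$, so $\rho\mapsto H(\rho,\vartheta,v)$ is strictly increasing there. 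Proposition \ref{propexp} already provides such a solution and gives the expansion
\[
  \rho^*(r,\vartheta,v)=P_{\vartheta,v}(r)+O(r^4),
\]
with the $O(r^4)$ remainder uniform in $(\vartheta,v)$.

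Third, I would pick $\epsilon_0=\epsilon_0(\mathcal X)>0$ small enough (for example $\epsilon_0\le \omega^{1/4}/4$) and use the uniformity of the remainder to choose $\mathfrak r_3=\mathfrak r_3(\epsilon)>0$ so that $|\rho^*(r,\vartheta,v)-P_{\vartheta,v}(r)|\le \epsilon r^3$ for every $0<r<\mathfrak r_3$ and every $(\vartheta,v)\in[-\pi/2,\pi/2)\times(\mathbb S^{n-1}\cap \mathfrak n^\perp)$. The restriction $\delta\in(-\omega^{1/4}/2,\omega^{1/4}/2)$ combined with the definition of $\mathfrak r_2$ guarantees that $P_{\vartheta,v}(r)+\delta r^3>0$, so that $\mathcal B_{r,\delta}$ is a well-defined subset of $\mathcal P(\mathscr C)$. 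Then the two-sided bound on $\rho^*-P_{\vartheta,v}$ is exactly the statement
\[
  \{(\rho,\vartheta,v):\rho\le P_{\vartheta,v}(r)-\epsilon r^3\}\subseteq \{H\le r^4\}\subseteq \{(\rho,\vartheta,v):\rho\le P_{\vartheta,v}(r)+\epsilon r^3\},
\]
and applying $\mathcal P$ and translating by $x$ yields the two inclusions of the corollary.

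The only real subtlety, and the step I would handle with most care, is the uniform monotonicity argument needed to conclude that the sublevel set of $H$ in $\rho$ is indeed an interval of the form $(0,\rho^*]$ for \emph{every} $(\vartheta,v)$, and to guarantee that $\rho^*$ together with $P_{\vartheta,v}(r)\pm\epsilon r^3$ stay in the range $(0,Kr]$ where the expansion of Proposition \ref{propexp} is valid. This is purely quantitative: it amounts to choosing $\epsilon_0$ and $\mathfrak r_3(\epsilon)$ in function of $\omega$ and of the constants in Proposition \ref{strutto}, so that the coefficients $B,C,D,E$ cannot destroy the monotonicity coming from the leading $A\rho^4$ term. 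Once this is in place, the rest is a direct sandwich argument using Proposition \ref{propexp}.
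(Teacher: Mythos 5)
Your proposal follows the same skeleton as the paper---rewrite Proposition \ref{pll} in the polar coordinates of Proposition \ref{coordinate} via \eqref{numerooo3}, then sandwich the sublevel set $\{H\le r^4\}$ using the uniform expansion of Proposition \ref{propexp}---but it diverges at the key step, and the divergence is exactly where your sketch is thinnest. The paper never proves (and never needs) that $\rho\mapsto H(\rho,\vartheta,v)$ has a unique positive solution or that the sublevel set is an interval: since $H(\cdot,\vartheta,v)$ is a polynomial of degree $8$ with $H(0,\vartheta,v)=0$ and $H\to\infty$ as $\rho\to\infty$, it simply takes the smallest and largest positive roots $\rho_1\le\rho_k$ of $H=r^4$, observes that $\{\rho\le\rho_1\}\subseteq\{H\le r^4\}\subseteq\{\rho\le\rho_k\}$, and applies Proposition \ref{propexp}---which is stated for \emph{any} solution of the equation---to both $\rho_1$ and $\rho_k$, so that both agree with $P_{\vartheta,v}(r)$ up to a uniform $O(r^4)$ and the $\pm\epsilon r^3$ sandwich follows at once. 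Your route through strict monotonicity can be made to work, but as written it has a gap: monotonicity of $H$ on an interval $(0,Kr]$ does not by itself imply $\{\rho:H(\rho,\vartheta,v)\le r^4\}=(0,\rho^*]$, because a priori the sublevel set could contain points with $\rho>Kr$, where the comparison $\partial_\rho H\approx 4A\rho^3$ is no longer claimed. To close this you must first confine the sublevel set to $(0,Kr]$: for instance, since the coefficients in Proposition \ref{strutto} are bounded uniformly in $(\vartheta,v)$ and $A\ge\omega>0$ by Lemma \ref{symA}-(v), one has $H\ge\tfrac{\omega}{2}\rho^4$ for all $\rho\le\rho_0(\mathcal X)$, while for $\rho\ge\rho_0$ one has $H\ge\lvert\mathcal{P}(\rho,\vartheta,v)\rvert^4\ge\min\{\rho_0^4,\rho_0^8/c^4\}>r^4$ once $r$ is small; only after this does monotonicity on $(0,Kr]$ yield the interval structure and the uniqueness you assert. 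That extra estimate is elementary, so your argument is salvageable, but note that the paper's smallest/largest-root device shows the whole uniqueness and monotonicity discussion can be dispensed with entirely.
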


\begin{proof}
Proposition \ref{pll}, the definition of $\mathcal{P}$ (see \eqref{N:4}) and of $H$ (see \eqref{numerooo3}) imply
\begin{equation}
    \pi_H(B(\mathcal{X},r)\cap\text{gr}(f))=x+\mathcal{P}(\{(\rho,\vartheta, v)\in\mathscr{C}:H(\rho,\vartheta, v)\leq r^4\}).
    \label{numbero42}
\end{equation}
The function $\rho\mapsto H(\rho,\vartheta, v)$ is a polynomial of degree $8$ in $\rho$, thus the equation
\begin{equation}
    H(\rho,\vartheta, v)=r^4
    \label{eq172}
\end{equation}
has at most $8$ solutions in $\rho$ for any fixed $\vartheta$ and $v$. As $H(0,\vartheta,v)=0$ and $\lim_{\rho\to +\infty}H(\rho,\vartheta, v)=+\infty$, the equation \eqref{eq172} has at least one positive solution. Assume $0<\rho_1<\ldots<\rho_k$, where $k\in\{1,\ldots,8\}$, is the number of \emph{positive} distinct solutions of \eqref{eq172}. If $\rho>\rho_k$ then $H(\rho,\vartheta,v)>r^4$ and on the other hand, since $H(0,\vartheta,v)=0$, if $0\leq\rho<\rho_1$ then $H(\rho,\vartheta, v)<r^4$. Hence
\begin{equation}
    \begin{split}
        \mathcal{P}\left( \left\{(\rho,\vartheta,v)\in\mathscr{C}: \rho\leq \rho_1\right\}\right)&\subseteq\mathcal{P}(\{(\rho,\vartheta, v)\in\mathscr{C}:H(\rho,\vartheta, v)\leq r^4\})\\
        &\subseteq \mathcal{P}\left( \left\{(\rho,\vartheta,v)\in\mathscr{C}: \rho\leq \rho_k\right\}\right).
        \nonumber
    \end{split}
\end{equation}
Proposition \ref{propexp} concludes the proof since $\rho_1$ and $\rho_k$ coincide up to an error of order $r^4$.
\end{proof}

\vv
The following technical lemma will be needed in the computations of Proposition \ref{TEXP}, and it is a Taylor expansion formula for the sub-Riemmanian area element at a non-characteristic point of a horizontal quadric.

\begin{lemma}\label{sviluppodens}
For any $(\rho,\vartheta, v)\in\mathscr{C}$ we have
\begin{equation}
2\bigl\lvert \mathcal{D}\bigl[x+\mathcal{P}(\rho,\vartheta, v)\bigr]\bigr\rvert=c+\mathcal{A}(\vartheta,v)\rho+\mathcal{B}(\vartheta,v)\rho^2+R_5(\rho),
\nonumber
\end{equation}
where:
\begin{itemize}
\item[(i)] $\mathcal{A}(\vartheta,v)\coloneqq 2\cos\vartheta  \beta_{\mathfrak{n}}(v)$.
\item[(ii)] $\overline{\mathcal{B}}(\vartheta,v)=c\mathcal{B}(\vartheta,v)\coloneqq 2(\sin\vartheta\alpha_\mathfrak{n}+\cos^2\vartheta\lvert P_{\mathfrak{n}^\perp}(\mathcal{D}v)\rvert^2)$,
and $P_\mathfrak{n}$ is the orthogonal projection in $\R^{n}$ on $\mathfrak{n}^\perp$.
\item[(iii)] $\lvert R_5(\rho)\rvert\leq \mathfrak{c}_5\rho^3$ for any $0<\rho<\mathfrak{r}_4$ and for some constant $\mathfrak{c}_5>0$, independent on $\vartheta$ and $v$.
\end{itemize}
\end{lemma}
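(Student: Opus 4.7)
The plan is to compute $2|\mathcal{D}[x+\mathcal{P}(\rho,\vartheta,v)]|$ by expanding in $\rho$ and extracting the constant, linear, and quadratic coefficients. First, I would use linearity to write
\[
\mathcal{D}[x+\mathcal{P}(\rho,\vartheta,v)] = \mathcal{D}x + \frac{\sin\vartheta}{c}\rho^2\mathcal{D}\mathfrak{n} + \cos\vartheta\,\rho\,\mathcal{D}v,
\]
and use the identity $\mathcal{D}x = (c/2)\mathfrak{n}$ together with the orthogonal decompositions $\mathcal{D}\mathfrak{n} = \alpha_\mathfrak{n}\mathfrak{n} + P_{\mathfrak{n}^\perp}(\mathcal{D}\mathfrak{n})$ and $\mathcal{D}v = \beta_\mathfrak{n}(v)\mathfrak{n} + P_{\mathfrak{n}^\perp}(\mathcal{D}v)$ (noting that $\langle\mathfrak{n},\mathcal{D}v\rangle=\langle\mathcal{D}\mathfrak{n},v\rangle=\beta_\mathfrak{n}(v)$ by symmetry of $\mathcal{D}$) to split the above into a component along $\mathfrak{n}$ and one in $\mathfrak{n}^\perp$.

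Next, I would compute $4|\mathcal{D}[x+\mathcal{P}]|^2$ by squaring each component and summing. A direct computation yields
\[
4|\mathcal{D}[x+\mathcal{P}]|^2 = c^2 + 4c\cos\vartheta\,\beta_\mathfrak{n}(v)\rho + 4\bigl(\sin\vartheta\,\alpha_\mathfrak{n} + \cos^2\vartheta\,|\mathcal{D}v|^2\bigr)\rho^2 + \widetilde R(\rho),
\]
where $\widetilde R(\rho)$ collects terms of order $\rho^3$ and higher that are polynomial in $\rho$ with coefficients bounded uniformly in $(\vartheta,v)$ by a constant depending only on $\|\mathcal{D}\|$ and $c$. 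Then, factoring out $c^2$ inside the square root and applying the expansion $\sqrt{1+X} = 1 + X/2 - X^2/8 + O(X^3)$ for small $X$ produces
\[
2|\mathcal{D}[x+\mathcal{P}]| = c + 2\cos\vartheta\,\beta_\mathfrak{n}(v)\rho + \frac{2}{c}\bigl(\sin\vartheta\,\alpha_\mathfrak{n} + \cos^2\vartheta(|\mathcal{D}v|^2 - \beta_\mathfrak{n}(v)^2)\bigr)\rho^2 + R_5(\rho).
\]
Finally, the Pythagorean identity $|\mathcal{D}v|^2 - \beta_\mathfrak{n}(v)^2 = |P_{\mathfrak{n}^\perp}(\mathcal{D}v)|^2$ converts the $\rho^2$-coefficient into the stated form $\overline{\mathcal{B}}(\vartheta,v)/c$, and the linear coefficient matches $\mathcal{A}(\vartheta,v)$.

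The only delicate point is the bound on the remainder. The main subtlety is that one must verify that the coefficient of $\rho^k$ in $\widetilde R$, together with the contributions from the $X^2$ and $X^3$ terms of the square root expansion, produce a remainder controlled by $\mathfrak c_5\rho^3$ with $\mathfrak c_5$ independent of $(\vartheta,v)$. This is ensured because $|\beta_\mathfrak{n}(v)|, |\alpha_\mathfrak{n}|, |P_{\mathfrak{n}^\perp}(\mathcal{D}\mathfrak{n})|, |P_{\mathfrak{n}^\perp}(\mathcal{D}v)|$ are all bounded by $\|\mathcal{D}\|$, and because the Taylor expansion of $\sqrt{1+X}$ is valid on $|X|\leq 1/2$; taking $\mathfrak r_4>0$ small enough so that $|X|\leq 1/2$ whenever $\rho<\mathfrak r_4$ then yields the uniform estimate $|R_5(\rho)|\leq \mathfrak c_5 \rho^3$. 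This completes the plan.
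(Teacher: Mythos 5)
Your proposal is correct and follows essentially the same route as the paper: expand $\mathcal{D}[x+\mathcal{P}(\rho,\vartheta,v)]$ by linearity with $\mathcal{D}x=(c/2)\mathfrak{n}$, square to get an explicit polynomial in $\rho$, apply the Taylor expansion of the square root, and use $\lvert\mathcal{D}v\rvert^2-\beta_\mathfrak{n}(v)^2=\lvert P_{\mathfrak{n}^\perp}(\mathcal{D}v)\rvert^2$ to identify the quadratic coefficient, with the remainder bounded uniformly in $(\vartheta,v)$ for $\rho$ small. Your coefficients and the uniformity argument for $R_5$ match the paper's computation.
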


\begin{proof}
First of all we, observe that
\begin{equation}
\begin{split}
     2\mathcal{D}[x+\mathcal{P}(\rho,\vartheta, v)]
    =2\mathcal{D}\left[x+\frac{\sin \vartheta}{c} \rho^2 \mathfrak{n}+\cos\vartheta\rho v\right]=c\mathfrak{n}+\frac{2\sin \vartheta}{c} \rho^2\mathcal{D} \mathfrak{n}+2\cos\vartheta\rho  \mathcal{D}v.
\end{split}
\nonumber
\end{equation}
Then, an elementary calculation yields
\begin{equation}
    \begin{split}
       4\bigl\lvert  \mathcal{D}[x+\mathcal{P}(\rho,\vartheta, v)]\bigr\rvert^2=c^2&+4c\cos\vartheta\rho  \langle \mathcal{D}v,\mathfrak{n}\rangle+4\sin \vartheta \rho^2\langle \mathcal{D} \mathfrak{n},\mathfrak{n}\rangle+4\cos^2\vartheta\rho^2\lvert \mathcal{D}v\rvert^2\\
        &+\frac{8\sin \vartheta\cos\vartheta}{c}\rho^3\langle \mathcal{D}\mathfrak{n},\mathcal{D}v\rangle+\frac{4\sin^2 \vartheta}{c^2} \rho^4\lvert \mathcal{D} \mathfrak{n}\rvert^2\\
        =c^2&+4c\cos\vartheta  \langle \mathcal{D}v,n\rangle\rho+\Big[4\sin \vartheta \langle \mathcal{D} \mathfrak{n},\mathfrak{n}\rangle+4\cos^2\vartheta\lvert \mathcal{D}v\rvert^2\Big]\rho^2\\
        &+\frac{8\sin \vartheta\cos\vartheta}{c}\rho^3\langle \mathcal{D}\mathfrak{n},\mathcal{D}v\rangle+\frac{4\sin^2 \vartheta}{c^2} \rho^4\lvert \mathcal{D} \mathfrak{n}\rvert^2.
    \end{split}
\label{numerooo6}
\end{equation}
Another standard computation shows that, if $\rho$ is small enough, then
\begin{equation}
\begin{split}
&4\lvert \mathcal{D}[x+\mathcal{P}(\rho,\vartheta, v)]\rvert^2=c+2\cos\vartheta  \langle \mathcal{D}v,\mathfrak n\rangle\rho \\ &\qquad\qquad+\frac{-\big[4c\cos\vartheta  \langle \mathcal{D}v,n\rangle\big]^2+4c^2\big[4\sin \vartheta \langle \mathcal{D} \mathfrak{n},\mathfrak{n}\rangle+4\cos^2\vartheta\lvert \mathcal{D}v\rvert^2\big]}{8c^3}\rho^2+R_5(\rho),
\end{split}
\end{equation}
which boils down to
\begin{equation}
\begin{split}
     2\lvert  \mathcal{D}[x+\mathcal{P}(\rho,\vartheta, v)]\rvert=c+&2\cos\vartheta  \beta_{\mathfrak{n}}(v)\rho 
     +2\frac{\sin\vartheta\alpha_\mathfrak{n}+\cos^2\vartheta\lvert P_{\mathfrak{n}^\perp}(\mathcal{D}v)\rvert^2}{c}\rho^2+R_5(\rho)
\end{split}
\end{equation}
where $P_{\mathfrak{n}^\perp}$ is the orthogonal projection in $\R^n$ onto $\mathfrak{n}^\perp$.
\end{proof}

\vv

\begin{remark}\label{rk22}
The functions $\mathcal{A}(\cdot,\cdot)$ and $\mathcal{B}(\cdot,\cdot)$ defined in the statement of Proposition \ref{sviluppodens} have the following symmetries.
For any $(\vartheta,v)\in[-\pi/2,\pi/2]\times\bigl(\mathbb{S}^{n-1}\cap \mathfrak{n}^\perp\bigr)$, we have that:
\begin{itemize}
\item[(i)] $\mathcal{A}(\vartheta,v)=-\mathcal{A}(\vartheta,-v)$.
\item[(ii)] $\mathcal{B}(\vartheta,v)=\mathcal{B}(\vartheta,-v)$.
\end{itemize}
\end{remark}

\begin{proposition}\label{rapr1}
For any $0<r<\mathrm{dist}(x,\Sigma(f))$, we have
$$\sigma_{\mathrm{gr}(f)}(B(\mathcal{X},r))= \int_{\mathbb{S}^{n-1}\cap\mathfrak{n}^\perp}\int_{-\frac{\pi}{2}}^{\frac{\pi}{2}}\int_{\{H(\rho,\vartheta,v)\leq r^4\}}\Xi(\rho,\vartheta, v) \, d\rho d\vartheta d\omega(v),$$
where:
\begin{itemize}
\item[(i)] $\Xi(\rho,\vartheta, v)\coloneqq c^{-1}\rho^{n}\cos^{n-2}\vartheta(1+\sin^2\vartheta)\cdot 2\lvert \mathcal{D}[x+\mathcal{P}(\rho,\vartheta, v)]\rvert^2$.
\item[(ii)] $\omega\coloneqq \mathcal{H}^{n-2}_{\eu}\llcorner \mathbb{S}^{n-1}\cap \mathfrak{n}^\perp$.
\end{itemize}
\end{proposition}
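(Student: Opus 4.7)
The plan is to reduce the statement to a straightforward change-of-variables computation via the coarea formula. First I would use the representation formula stated at the top of this appendix, namely
\[
  \sigma_{\gr(f)}(B(\mathcal{X},r)) \;=\; \int_{\pi_H(B(\mathcal{X},r)\cap\gr(f))} |\nabla f(y)|\, dy,
\]
where $|\nabla f(y)|=2|\mathcal{D}y|$ because $f(y)=\langle y,\mathcal{D}y\rangle$. The hypothesis $r<\dist(x,\Sigma(f))$ guarantees that $\mathcal{D}y\neq 0$ on the domain, so the coarea formula applies unambiguously. Then I would apply Proposition \ref{pll} to rewrite the horizontal projection as $x+\{w\in\R^n : G(w)\le r^4\}$ and perform the translation $y=x+w$, obtaining
\[
  \sigma_{\gr(f)}(B(\mathcal{X},r)) \;=\; \int_{\{G(w)\le r^4\}} 2\bigl|\mathcal{D}(x+w)\bigr|\, dw.
\]

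Next I would substitute $w=\mathcal{P}(\rho,\vartheta,v)$ from Proposition \ref{coordinate}, which is a bijection onto $\R^n\setminus \spn(\mathfrak{n})$, i.e.\ off a Lebesgue-null set. The key calculation is the Jacobian. Decompose $w=w_{\mathfrak{n}}\mathfrak{n}+w_\perp$ with $w_\perp\in\mathfrak{n}^\perp$; by definition of $\mathcal{P}$ one has $w_{\mathfrak{n}}=\frac{\sin\vartheta}{c}\rho^2$ and $|w_\perp|=\cos\vartheta\,\rho$, $v=w_\perp/|w_\perp|$. Euclidean polar coordinates on $\mathfrak{n}^\perp\simeq\R^{n-1}$ give $dw_\perp=|w_\perp|^{n-2}\,d|w_\perp|\,d\omega(v)$, and the two-by-two Jacobian of $(\rho,\vartheta)\mapsto(|w_\perp|,w_{\mathfrak{n}})$ is
\[
  \det\begin{pmatrix} \cos\vartheta & -\rho\sin\vartheta \\[1mm] \dfrac{2\rho\sin\vartheta}{c} & \dfrac{\rho^2\cos\vartheta}{c}\end{pmatrix}
  \;=\; \frac{\rho^2}{c}\bigl(\cos^2\vartheta+2\sin^2\vartheta\bigr) \;=\; \frac{\rho^2(1+\sin^2\vartheta)}{c}.
\]
Multiplying gives $dw = c^{-1}\rho^{n}\cos^{n-2}\vartheta\,(1+\sin^2\vartheta)\,d\rho\,d\vartheta\,d\omega(v)$, which is precisely the ``geometric factor'' appearing in $\Xi$.

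Finally, the condition $G(w)\le r^4$ translates, via the very definition \eqref{numerooo3}, into $H(\rho,\vartheta,v)\le r^4$. Collecting the pieces yields
\[
  \sigma_{\gr(f)}(B(\mathcal{X},r)) \;=\; \int_{\mathbb{S}^{n-1}\cap\mathfrak{n}^\perp}\!\!\int_{-\pi/2}^{\pi/2}\!\!\int_{\{H\le r^4\}} 2\bigl|\mathcal{D}(x+\mathcal{P}(\rho,\vartheta,v))\bigr|\,\frac{\rho^{n}\cos^{n-2}\vartheta(1+\sin^2\vartheta)}{c}\,d\rho\,d\vartheta\,d\omega(v),
\]
which is exactly $\int\!\int\!\int \Xi\,d\rho\,d\vartheta\,d\omega(v)$.

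There is no real obstacle here; the only point requiring mild care is the Jacobian calculation, and in particular the fact that $\vartheta\in[-\pi/2,\pi/2)$ ensures $\cos\vartheta\ge 0$ so that $|w_\perp|=\cos\vartheta\,\rho$ needs no absolute value. Fubini is then justified by the non-negativity of the integrand, and the proposition is proved.
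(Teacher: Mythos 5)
Your proof is correct and is essentially the argument the paper relies on (the paper simply defers to the identical polar-coordinate computation of \cite[Proposition B.8]{MerloG1cod}): coarea representation of $\sigma_{\gr(f)}$, Proposition \ref{pll} to identify the horizontal projection, the change of variables $w=\mathcal{P}(\rho,\vartheta,v)$ off the null set $x+\mathrm{span}(\mathfrak{n})$, and the Jacobian $c^{-1}\rho^{n}\cos^{n-2}\vartheta(1+\sin^2\vartheta)$. Note that your derivation produces the density $2\lvert\mathcal{D}[x+\mathcal{P}(\rho,\vartheta,v)]\rvert$ to the first power, which is what Lemma \ref{sviluppodens} and Propositions \ref{TEXP}--\ref{TEXP2} actually use downstream; the exponent $2$ in the printed definition of $\Xi$ is a typo, so there is no gap on your side.
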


\begin{proof}
The proof can be obtained following verbatim \cite[Proposition B.8]{MerloG1cod}.
\end{proof}

\vv

The following two lemmas are needed to compute some integrals in Propositions \ref{TEXP} and \ref{TEXP2}.

\begin{lemma}\label{conto1}
For any $k\in\N$ and any $\alpha>(k+1)/2$ we have
\begin{equation}
    \int_{-\infty}^\infty \frac{x^k}{(1+x^2)^\alpha}\,dx=\begin{cases}
    0 &\text{if }k\text{ is odd,}\\
   \frac{\Gamma\big(\frac{k+1}{2}\big)\Gamma\big(\alpha-\frac{k+1}{2}\big)}{\Gamma(\alpha)} &\text{if }k\text{ is even.}
    \end{cases}
    \nonumber
\end{equation}
\end{lemma}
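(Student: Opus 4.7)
\medskip

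The plan is standard and the integral reduces to a Beta function. First I would dispose of the odd case by symmetry: when $k$ is odd, $x\mapsto x^k/(1+x^2)^\alpha$ is an odd integrable function on $\R$ (integrability is guaranteed by the hypothesis $\alpha > (k+1)/2$, which ensures decay at infinity faster than $|x|^{-2}$), so the integral vanishes.

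For the even case, write $k = 2m$ and use the evenness of the integrand to reduce to
\[
    \int_{-\infty}^\infty \frac{x^{2m}}{(1+x^2)^\alpha}\,dx = 2\int_0^\infty \frac{x^{2m}}{(1+x^2)^\alpha}\,dx.
\]
Then I would perform the substitution $t = x^2$, so that $dx = dt/(2\sqrt{t})$, which transforms the right-hand side into
\[
    \int_0^\infty \frac{t^{m-1/2}}{(1+t)^\alpha}\,dt.
\]
This is exactly the standard Beta-function representation $B(p,q) = \int_0^\infty t^{p-1}(1+t)^{-(p+q)}\,dt$ with $p = m + 1/2 = (k+1)/2$ and $q = \alpha - (k+1)/2$; note that the hypothesis $\alpha > (k+1)/2$ guarantees $q > 0$ and the convergence of the integral.

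Finally I would invoke the classical identity $B(p,q) = \Gamma(p)\Gamma(q)/\Gamma(p+q)$ to conclude
\[
    \int_0^\infty \frac{t^{(k-1)/2}}{(1+t)^\alpha}\,dt = \frac{\Gamma\bigl(\tfrac{k+1}{2}\bigr)\Gamma\bigl(\alpha - \tfrac{k+1}{2}\bigr)}{\Gamma(\alpha)},
\]
which yields the claimed formula. There is no real obstacle here: the computation is entirely routine and the only content is the correct bookkeeping of the exponents after the substitution and the verification that the Beta integral converges, both of which follow immediately from the assumption $\alpha > (k+1)/2$.
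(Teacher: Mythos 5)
Your argument is correct: the odd case follows from symmetry and integrability, and the even case reduces after the substitution $t=x^2$ to the Beta integral $B\bigl(\tfrac{k+1}{2},\alpha-\tfrac{k+1}{2}\bigr)$, whose Gamma-function expression gives exactly the stated formula, with the hypothesis $\alpha>(k+1)/2$ guaranteeing convergence in both cases. The paper itself does not write out a proof but refers verbatim to the analogous lemma in the cited Heisenberg-group paper, which is this same standard Beta-function computation, so your route matches the intended one.
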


\begin{proof}
The proof can be obtained following verbatim \cite[Proposition B.9]{MerloG1cod}.
\end{proof}

\vv
\begin{lemma}\label{conto2}
Suppose that $f\colon\R\to\R$ is a measurable function such that $f(x)/{(1+x^2)^\alpha}\in L^1(\R)$ and let $\mathfrak{d}(\vartheta)\coloneqq \cos^{n-2}\vartheta(\cos^2\vartheta+2\sin^2\vartheta)$. Then it holds that
$$\int_{-\frac{\pi}{2}}^\frac{\pi}{2}\mathfrak{d}(\vartheta)\frac{\cos^{4\alpha-n-1}\vartheta f\left(\frac{\sin\vartheta}{\cos^2\vartheta}\right)}{A(\vartheta,v)^\alpha}\,d\vartheta=\int_{-\infty}^\infty\frac{f(x)}{\big(1+\big(x+\gamma(v)\big)^2\big)^\alpha}\,dx,$$
where $\gamma(v)$ was defined in \eqref{numbero27} and where $A(\vartheta,v)$ was introduced in Proposition \ref{strutto}.
\end{lemma}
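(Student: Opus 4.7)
The plan is to reduce both sides to the same integral by a single change of variable. Concretely, I would set
\[
x = x(\vartheta) \coloneqq \frac{\sin\vartheta}{\cos^2\vartheta},\qquad \vartheta\in\Bigl(-\frac{\pi}{2},\frac{\pi}{2}\Bigr),
\]
and verify that $x(\cdot)$ is a $C^\infty$-diffeomorphism onto $\R$ (monotonicity will be read off from the Jacobian below, and the limits $x(\pm\pi/2)=\pm\infty$ are immediate). A direct differentiation gives
\[
\frac{dx}{d\vartheta} = \frac{\cos^2\vartheta+2\sin^2\vartheta}{\cos^3\vartheta} = \frac{1+\sin^2\vartheta}{\cos^3\vartheta} = \frac{\mathfrak d(\vartheta)}{\cos^{n+1}\vartheta},
\]
which matches exactly the ``troublesome'' factor $\mathfrak d(\vartheta)$ appearing in the integrand on the left-hand side.

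Next I would rewrite $A(\vartheta,v)$ in terms of $x$. Using the definition in Proposition~\ref{strutto}(i) and factoring out $\cos^4\vartheta$,
\[
A(\vartheta,v) = \cos^4\vartheta + \bigl(\cos^2\vartheta\,\gamma(v)+\sin\vartheta\bigr)^2 = \cos^4\vartheta\,\Bigl(1+\bigl(\gamma(v)+x\bigr)^2\Bigr),
\]
so $A(\vartheta,v)^{\alpha} = \cos^{4\alpha}\vartheta\,\bigl(1+(x+\gamma(v))^2\bigr)^{\alpha}$. Plugging this identity into the left-hand side and using $f(\sin\vartheta/\cos^2\vartheta)=f(x)$, the integrand becomes
\[
\mathfrak d(\vartheta)\,\frac{\cos^{4\alpha-n-1}\vartheta\, f(x)}{\cos^{4\alpha}\vartheta\,(1+(x+\gamma(v))^2)^{\alpha}}\,d\vartheta = \frac{f(x)}{(1+(x+\gamma(v))^2)^{\alpha}}\cdot\frac{\mathfrak d(\vartheta)}{\cos^{n+1}\vartheta}\,d\vartheta.
\]

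Finally, the second factor is exactly $dx$ by the Jacobian computation above, so the integral on the left becomes
\[
\int_{-\infty}^{\infty}\frac{f(x)}{\bigl(1+(x+\gamma(v))^2\bigr)^{\alpha}}\,dx,
\]
which is the right-hand side. The integrability hypothesis $f(x)/(1+x^2)^{\alpha}\in L^1(\R)$ (combined with the fact that $|\gamma(v)|$ is a constant, so $(1+(x+\gamma(v))^2)^{\alpha}$ is comparable to $(1+x^2)^{\alpha}$ outside a bounded set) justifies applying the change-of-variables formula. I do not anticipate a serious obstacle here: the lemma is essentially the statement that the substitution $x=\sin\vartheta/\cos^2\vartheta$ is tailor-made to absorb both $\mathfrak d(\vartheta)$ and the awkward power of $\cos\vartheta$ inside $A(\vartheta,v)$.
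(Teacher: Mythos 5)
Your proof is correct and is essentially the same argument as the paper's: the paper defers to the verbatim analogue in \cite[Proposition B.10]{MerloG1cod}, which rests on exactly this substitution $x=\sin\vartheta/\cos^2\vartheta$, the Jacobian identity $dx/d\vartheta=\mathfrak d(\vartheta)/\cos^{n+1}\vartheta$, and the factorization $A(\vartheta,v)=\cos^4\vartheta\bigl(1+(x+\gamma(v))^2\bigr)$. No gaps; the integrability remark at the end correctly justifies the change of variables.
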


\begin{proof}
The proof can be obtained following verbatim \cite[Proposition B.10]{MerloG1cod}.
\end{proof}

\vv


Proposition \ref{TEXP} gives a first description of the structure of the coefficients $\mathfrak{c}$,  $\zeta$ and $\mathfrak{e}$.

\begin{proposition}\label{TEXP}
For any $\epsilon>0$ there exists $\mathfrak{r}_5=\mathfrak{r}_5(\epsilon)>0$ such that for any $0<r<\mathfrak{r}_5$ it holds
\begin{equation}
   \sigma_{\mathrm{gr}(f)}(B(\mathcal{X},r))=\mathfrak{c}_nr^{n+1}+\mathfrak{e}(\mathcal{X})r^{n+3}+\epsilon R_6(r),
    \label{N:5}
\end{equation}
where for $\mathbb{S}(\mathfrak{n})\coloneqq \mathbb{S}^{n-1}\cap\mathfrak{n}^\perp$, we have:
\begin{itemize}
\item[(i)]The coefficient $\mathfrak{c}(\mathcal{X})$ is independent on $\mathcal{X}$ and
$$\mathfrak{c}(\mathcal{X})=\mathfrak{c}_n\coloneqq \frac{\sqrt{\pi}\Gamma\left(\frac{n-1}{4}\right)\sigma(\mathbb{S}(\mathfrak{n}))}{(n+1)\Gamma\left(\frac{n+1}{4}\right)}.$$
\item[(ii)]The coefficient $\mathfrak{e}(\mathcal{X})$ has the expression
$$\mathfrak{e}(\mathcal{X})=\int_{\mathbb{S}(\mathfrak{n})}\int_{-\frac{\pi}{2}}^{\frac{\pi}{2}}\frac{\mathfrak{d}(\vartheta)\Big(\frac{7+n}{32}\frac{\overline{B}^2}{A^2}-\frac{\overline{C}}{4A}-\frac{\mathcal{A}\overline{B}}{4A}+\frac{\overline{\mathcal{B}}}{(n+3)}\Big)}{c^2 A^{\frac{n+3}{4}}}\,d\vartheta d\omega,$$
where the coefficients $A,\overline{B},\overline{C}$ were introduced in Proposition \ref{strutto} and $\mathcal{A}$ and $\overline{\mathcal{B}}$ in Proposition \ref{sviluppodens}.
\item[(iii)] $\lvert R_6(r)\rvert\leq \mathfrak{C}_{11}(n) r^{n+3}$ for any $0<r<\mathfrak{r}_5$ and for some constant $\mathfrak{C}_{11}(n)=\mathfrak{C}_{11}(n,\mathcal{X})$ depending only on $\mathcal{X}$.
\end{itemize}
\end{proposition}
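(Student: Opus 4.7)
The plan is to insert the asymptotic expansions of the upper integration limit (Proposition \ref{propexp}) and of the integrand $\Xi$ (via Lemma \ref{sviluppodens}) into the representation formula from Proposition \ref{rapr1}, and then expand everything in powers of $r$. By Corollary \ref{Co1}, for $r$ small enough, the region $\{H(\rho,\vartheta,v)\leq r^{4}\}$ in the variable $\rho$ (at $(\vartheta,v)$ fixed) is the interval $[0,\rho(r)]$ up to an error of order $r^{3}\cdot \epsilon$, uniformly in $(\vartheta,v)$, so that the representation integral becomes $\int_{\mathbb{S}(\mathfrak{n})}\int_{-\pi/2}^{\pi/2}\int_{0}^{\rho(r)}\Xi(\rho,\vartheta,v)\,d\rho\,d\vartheta\,d\omega(v)$ up to an acceptable remainder $\epsilon R_{6}(r)$.

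Using $\Xi = c^{-1}\rho^{n}\mathfrak{d}(\vartheta)\cdot 2|\mathcal{D}[x+\mathcal{P}(\rho,\vartheta,v)]|$ and the expansion from Lemma \ref{sviluppodens}, I would write
\[
\Xi(\rho,\vartheta,v)=\rho^{n}\mathfrak{d}(\vartheta)+c^{-1}\mathcal{A}\,\mathfrak{d}\,\rho^{n+1}+c^{-1}\mathcal{B}\,\mathfrak{d}\,\rho^{n+2}+O(\rho^{n+3}),
\]
integrate in $\rho$ from $0$ to $\rho(r)=aRe+br^{2}+dr^{3}+O(r^{4})$ with $a=A^{-1/4}$, $b=-\overline{B}/(4cA^{3/2})$, $d=\frac{7\overline{B}^{2}}{32c^{2}A^{11/4}}-\frac{\overline{C}}{4c^{2}A^{7/4}}$, and use the multinomial expansions of $\rho(r)^{n+1},\rho(r)^{n+2},\rho(r)^{n+3}$. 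Collecting powers of $r$ after division and careful bookkeeping gives a coefficient of $r^{n+1}$ equal to $\mathfrak{d}(\vartheta)/((n+1)A^{(n+1)/4})$, a coefficient of $r^{n+2}$ proportional to $\mathfrak{d}(\vartheta)\bigl(\mathcal{A}\,A^{-(n+2)/4}/(n+2)-\overline{B}\,A^{-(n+6)/4}/(4c)\bigr)$, and a coefficient of $r^{n+3}$ equal, after reorganisation, to the integrand in (ii). The key identity at third order is that the $r^{n+3}$ contribution coming from $\rho(r)^{n+1}$ produces $\frac{n\mathfrak{d}}{2}a^{n-1}b^{2}$ and $\mathfrak{d}a^{n}d$, which combine with the $c^{-1}\mathcal{A}\mathfrak{d}\rho^{n+1}$ and $c^{-1}\mathcal{B}\mathfrak{d}\rho^{n+2}$ terms to give precisely the bracketed expression stated in (ii).

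The vanishing of the $r^{n+2}$ coefficient is the conceptual point: both $\overline{B}(\vartheta,v)$ and $\mathcal{A}(\vartheta,v)$ are odd in $v$ by Lemma \ref{symA}(ii) and Remark \ref{rk22}(i), while $\mathfrak{d}(\vartheta)$ and $A(\vartheta,v)=A(\vartheta,-v)$ are even, so integration against $d\omega(v)$ on $\mathbb{S}(\mathfrak{n})$ annihilates the $r^{n+2}$ term. For the leading coefficient, Lemma \ref{conto2} with $\alpha=(n+1)/4$ and $f\equiv 1$ reduces the $(\vartheta,v)$-integral to $\int_{-\infty}^{\infty}(1+(x+\gamma(v))^{2})^{-(n+1)/4}dx$, which by translation invariance and Lemma \ref{conto1} evaluates to $\sqrt{\pi}\Gamma((n-1)/4)/\Gamma((n+1)/4)$, independent of $v$, giving the stated value of $\mathfrak{c}_{n}$. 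The main obstacle I expect is not the symmetry argument but the uniform control of the remainders: one must show that the $O(\rho^{n+3})$ error in $\Xi$, the $O(r^{4})$ error in $\rho(r)$, and the discrepancy between the true $\rho$-region and $[0,\rho(r)]$ from Corollary \ref{Co1} can each be integrated to yield a remainder bounded by $\mathfrak{C}_{11}(n)r^{n+3}$ with the prescribed $\epsilon$-prefactor; this requires using the positive lower bound $\omega$ on $A$ from Lemma \ref{symA}(v) together with the uniform bound $|R_{5}(\rho)|\leq \mathfrak{c}_{5}\rho^{3}$, and keeping all constants independent of $(\vartheta,v)$.
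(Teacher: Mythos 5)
Your proposal is correct and follows essentially the same route as the paper, which carries out verbatim the computation of \cite[Proposition B.11]{MerloG1cod}: sandwich the integration domain via Corollary \ref{Co1}, plug the expansions of $\rho(r)$ (Proposition \ref{propexp}) and of the density (Lemma \ref{sviluppodens}) into the representation formula of Proposition \ref{rapr1}, kill the $r^{n+2}$ coefficient by the oddness of $\overline{B}$ and $\mathcal{A}$ in $v$, and evaluate the leading term with Lemmas \ref{conto1} and \ref{conto2}; your collected $r^{n+3}$ coefficient indeed reproduces the integrand in (ii). Note also that you correctly take the first power of $2\lvert \mathcal{D}[x+\mathcal{P}]\rvert$ in $\Xi$ (the choice consistent with Lemma \ref{sviluppodens} and with the stated formula for $\mathfrak{e}(\mathcal{X})$), the exponent $2$ written in Proposition \ref{rapr1}(i) being a typo, so this is not a gap in your argument.
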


\begin{proof}
The proof can be obtained following verbatim \cite[Proposition B.11]{MerloG1cod}.
\end{proof}

\begin{definition}\label{Cn}
Throughout the rest of this appendix we denote
$$\mathcal{C}_n\coloneqq \frac{\sqrt{\pi}\Gamma\left(\frac{n+1}{4}\right)}{\frac{n+3}{4}\Gamma\left(\frac{n+3}{4}\right)}.$$
\end{definition}

The previous propositions gives a first characterization of the coefficients of the Taylor expansion of the perimeter of quadratic surfaces. The coefficient relative to $r^{n+1}$ has been proved to be a constant depending only on $n$ and the one of $r^{n+2}$ has been proved null. We now investigate more carefully the structure of the coefficient relative to $r^{n+3}$.

\begin{proposition}\label{TEXP2}
In the notation of the previous propositions we have
\begin{equation}
  \begin{split}
    \frac{c^2\mathfrak{e}(\mathcal{X})}{\mathcal{C}_n\sigma(\mathbb{S}(\mathfrak{n}))}=&\frac{\text{Tr}(\mathcal{D}^2)-2\langle \mathfrak{n},\mathcal{D}^2\mathfrak{n}\rangle+\langle \mathfrak{n},\mathcal{D}\mathfrak{n}\rangle^2}{4(n-1)}-\frac{1}{4}-\frac{(\text{Tr}(\mathcal{D})-\langle \mathfrak{n},\mathcal{D}\mathfrak{n}\rangle)^2}{8(n-1)}.
    \nonumber
\end{split} 
\end{equation}
\end{proposition}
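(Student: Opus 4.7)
The plan is to obtain the closed form of $\mathfrak{e}(\mathcal X)$ by substituting the explicit expressions for $A, \overline B, \overline C, \mathcal A, \overline{\mathcal B}$ from Propositions \ref{strutto} and \ref{sviluppodens} into the integral of Proposition \ref{TEXP}(ii), and evaluating the resulting double integral over $[-\tfrac{\pi}{2}, \tfrac{\pi}{2}] \times \mathbb S(\mathfrak n)$ by separating the $\vartheta$- and $v$-integrations. The four summands in the bracket $\bigl(\tfrac{7+n}{32}\tfrac{\overline B^2}{A^2} - \tfrac{\overline C}{4A} - \tfrac{\mathcal A\overline B}{4A} + \tfrac{\overline{\mathcal B}}{n+3}\bigr)A^{-(n+3)/4}$ are treated independently.

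For each summand I would first evaluate the $\vartheta$-integral by means of Lemma \ref{conto2}. The change of variable $x=\sin\vartheta/\cos^2\vartheta$ is tailored to the identity $A(\vartheta,v)/\cos^4\vartheta = 1 + (x+\gamma(v))^2$, so that after pulling the appropriate powers of $\cos\vartheta$ out of $\overline B,\overline C,\mathcal A,\overline{\mathcal B}$ and combining with $\mathfrak d(\vartheta)$, each $\vartheta$-integrand becomes of the form $P_v(x)/(1+(x+\gamma(v))^2)^\alpha$ for a polynomial $P_v$ in $x$ of degree at most three, whose coefficients are polynomials in $\gamma(v), \alpha_{\mathfrak n}, \beta_{\mathfrak n}(v), |P_{\mathfrak n^\perp}(\mathcal Dv)|^2$. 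A translation $x\mapsto x-\gamma(v)$ and Lemma \ref{conto1} then produce explicit gamma-function coefficients; the parity symmetries in Lemma \ref{symA} and Remark \ref{rk22} kill every term odd in $\vartheta$ or in $v$, which is what makes the final formula depend only on quadratic invariants.

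What remains after the $\vartheta$-integration is a polynomial in $\gamma(v), \alpha_\mathfrak n, \beta_\mathfrak n(v), |P_{\mathfrak n^\perp}(\mathcal Dv)|^2$ to be integrated against the uniform probability measure on the $(n-2)$-sphere $\mathbb S(\mathfrak n)\subset\mathfrak n^\perp$. Using the standard moment identities
\[
\fint_{\mathbb S(\mathfrak n)} \langle v,a\rangle\langle v,b\rangle\, d\omega = \tfrac{1}{n-1}\langle P_{\mathfrak n^\perp}a, P_{\mathfrak n^\perp}b\rangle,\qquad \fint_{\mathbb S(\mathfrak n)}\langle v,\mathcal Dv\rangle\,d\omega = \tfrac{\mathrm{Tr}(P_{\mathfrak n^\perp}\mathcal D P_{\mathfrak n^\perp})}{n-1},
\]
together with the obvious decomposition identities
\[
\mathrm{Tr}(P_{\mathfrak n^\perp}\mathcal D P_{\mathfrak n^\perp}) = \mathrm{Tr}(\mathcal D) - \langle \mathfrak n,\mathcal D\mathfrak n\rangle, \qquad |P_{\mathfrak n^\perp}\mathcal D\mathfrak n|^2 = |\mathcal D\mathfrak n|^2 - \langle\mathfrak n,\mathcal D\mathfrak n\rangle^2 = \langle\mathfrak n,\mathcal D^2\mathfrak n\rangle - \langle\mathfrak n,\mathcal D\mathfrak n\rangle^2,
\]
and the corresponding formula for $\fint |P_{\mathfrak n^\perp}\mathcal Dv|^2\, d\omega$, one reduces everything to the three scalars $\mathrm{Tr}(\mathcal D), \mathrm{Tr}(\mathcal D^2), \langle \mathfrak n,\mathcal D\mathfrak n\rangle, \langle\mathfrak n,\mathcal D^2\mathfrak n\rangle$.

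The main obstacle is purely computational: one must track the various gamma-function ratios produced by Lemma \ref{conto1} and check that all cross-terms combine to reconstruct exactly the three fractions $\tfrac{\mathrm{Tr}(\mathcal D^2)-2\langle\mathfrak n,\mathcal D^2\mathfrak n\rangle+\langle\mathfrak n,\mathcal D\mathfrak n\rangle^2}{4(n-1)}$, $-\tfrac{1}{4}$, $-\tfrac{(\mathrm{Tr}(\mathcal D)-\langle\mathfrak n,\mathcal D\mathfrak n\rangle)^2}{8(n-1)}$, with the common prefactor $\mathcal C_n\sigma(\mathbb S(\mathfrak n))/c^2$ provided by Definition \ref{Cn} and the duplication formula for $\Gamma$. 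The structure of the bookkeeping mirrors that carried out in \cite[Appendix B]{MerloG1cod} for $\mathbb H^n$; the only substantive difference is in the coefficients of the polarization (Propositions \ref{strutto}, \ref{sviluppodens}), so each substitution must be redone but the algebraic scheme of the verification is identical.
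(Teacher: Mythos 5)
Your plan is essentially the paper's own proof: split the integrand of Proposition \ref{TEXP}(ii) into the four summands, evaluate each $\vartheta$-integral via the substitution $x=\sin\vartheta/\cos^2\vartheta$ of Lemma \ref{conto2} followed by a translation and Lemma \ref{conto1}, use the parity symmetries to drop odd terms, and then average over $\mathbb{S}(\mathfrak n)$ and rewrite via the trace/projection identities. One small caveat: after the $\vartheta$-integration the surviving term $\fint_{\mathbb{S}(\mathfrak n)}\gamma(v)^2\,d\omega$ is a \emph{fourth} moment of $v$ (it is what produces the $(\mathrm{Tr}(\mathcal D)-\langle\mathfrak n,\mathcal D\mathfrak n\rangle)^2$ contribution), so besides the second-moment identities you list you also need the quartic sphere moments $\fint v_i^4\,d\omega$ and $\fint v_i^2v_j^2\,d\omega$, exactly as used in the paper.
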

%
\begin{proof}
Proposition \ref{TEXP}-(ii) yields
\begin{equation}
\begin{split}
c^2\mathfrak{e}(\mathcal{X})=&\int_{\mathbb{S}(\mathfrak{n})}\Bigg(\underbrace{\frac{n+7}{32}\int_{-\frac{\pi}{2}}^{\frac{\pi}{2}}\frac{\mathfrak{d}(\vartheta)\overline{B}^2}{A^\frac{n+11}{4}}\,d\vartheta }_\text{(I)}-\underbrace{\int_{-\frac{\pi}{2}}^{\frac{\pi}{2}}\frac{\mathfrak{d}(\vartheta)\overline{C}}{4A^\frac{n+7}{4}}\,d\vartheta}_\text{(II)}\\
&\qquad\qquad\qquad\qquad\qquad-\underbrace{\int_{-\frac{\pi}{2}}^{\frac{\pi}{2}}\frac{\mathfrak{d}(\vartheta)\mathcal{A}\overline{B}}{4A^\frac{n+7}{4}}\,d\vartheta}_\text{(III)} +\underbrace{\int_{-\frac{\pi}{2}}^{\frac{\pi}{2}} \frac{\mathfrak{d}(\vartheta)\overline{\mathcal{B}}}{(n+3)A^\frac{n+3}{4}}\,d\vartheta}_\text{(IV)}\Bigg)\, d\omega.
    \label{bigexp}
\end{split}
\end{equation}
We now study each one of the terms (I)$,\ldots,$(IV) separately. Let us start with the integral (I). Since \[\overline{B}^2=\cos^{10}\vartheta16\beta_\mathfrak{n}(v)^2\Bigl(\frac{\sin\vartheta}{\cos^2\vartheta}\Bigr)^2\Bigl(\frac{\sin\vartheta}{\cos^2\vartheta}+\gamma(v)\Bigr)^2,\] Lemma \ref{conto2}, a standard change of variables and Lemma \ref{conto1} imply
\begin{equation}
\begin{split}
&\frac{n+7}{32}\int_{-\frac{\pi}{2}}^{\frac{\pi}{2}}\frac{\mathfrak{d}(\vartheta)\overline{B}^2}{A^{\frac{n+11}{4}}}d\vartheta
=\frac{n+7}{32}\int_{-\infty}^\infty \frac{16\beta_\mathfrak{n}(v)^2x^2(\gamma(v)+x)^2}{(1+(\gamma(v)+x)^2)^{\frac{n+11}{4}}}\, dx\\
&\qquad=\frac{(n+7)\beta_\mathfrak{n}(v)^2}{2}\int_{-\infty}^{\infty}\frac{x^2(x-\gamma(v))^2}{(1+x^2)^{\frac{n+11}{4}}}dx\\
&\qquad=\frac{(n+7)\beta_\mathfrak{n}(v)^2}{2}\left(\int_{-\infty}^{\infty}\frac{x^4}{(1+x^2)^{\frac{n+11}{4}}}dx+\gamma(v)^2\int_{-\infty}^{\infty}\frac{x^2}{(1+x^2)^{\frac{n+11}{4}}}dx\right)\\
&\qquad =2\mathcal{C}_n\beta_\mathfrak{n}(v)^2\left(\frac{3}{4}+\frac{(n+1)\gamma(v)^2}{8}\right),
\label{numbero(I)}
\end{split}
\end{equation}
where the constant $\mathcal{C}_n$ was introduced in Definition \ref{Cn}.

We turn now our attention to (II). Since \[\overline{C}=\cos^6\vartheta(\frac{\sin\vartheta}{\cos^2\vartheta})^2\Big((2+4\beta_\mathfrak{n}(v)^2+2\gamma(v)\alpha_\mathfrak{n})+2\alpha_\mathfrak{n}\frac{\sin\vartheta}{\cos^2\vartheta}\Big),\] Lemmas \ref{conto1} and \ref{conto2} imply
\begin{equation}
    \begin{split}
        \int_{-\frac{\pi}{2}}^{\frac{\pi}{2}}&\frac{\mathfrak{d}(\vartheta)\overline{C}}{4A^\frac{n+7}{4}}\,d\vartheta
         =\frac{1}{2}\int_{-\infty}^{\infty}\frac{x^2((1+2\beta_\mathfrak{n}(v)^2+\gamma(v)\alpha_\mathfrak{n})+\alpha_\mathfrak{n}x)}{\left(1+(x+\gamma(v))^2\right)^{\frac{n+7}{4}}}\,dx\\
         =&\frac{1}{2}\int_{-\infty}^{\infty}\frac{(x-\gamma(v))^2((1+2\beta_\mathfrak{n}(v)^2)+\alpha_\mathfrak{n}x)}{\left(1+x^2\right)^{\frac{n+7}{4}}}\,dx\\
         =&\frac{(1+2\beta_\mathfrak{n}(v)^2-2\alpha_\mathfrak{n}\gamma(v))}{2}\int_{-\infty}^{\infty}\frac{x^2}{\left(1+x^2\right)^{\frac{n+7}{4}}}\,dx+\frac{(1+2\beta_\mathfrak{n}(v)^2)\gamma(v)^2}{2}\int_{-\infty}^\infty\frac{dx}{\left(1+x^2\right)^{\frac{n+7}{4}}}\\
         =&\mathcal{C}_n\frac{(n+1)(1+2\beta_\mathfrak{n}^2(v))\gamma(v)^2+2+4\beta_\mathfrak{n}(v)^2-4\gamma(v)\alpha_\mathfrak{n}}{8}.
    \end{split}
    \label{numbero(II)}
\end{equation}
    Since $\mathcal{A}\overline{B}=8\cos^6\vartheta\beta_\mathfrak{n}(v)^2\frac{\sin\vartheta}{\cos^2\vartheta}\Big(\gamma(v)+\frac{\sin\vartheta}{\cos^2\vartheta}\Big)$, Lemmas \ref{conto1} and \ref{conto2} imply
\begin{equation}
    \begin{split}
        \int_{-\frac{\pi}{2}}^{\frac{\pi}{2}}\frac{\mathfrak{d}(\vartheta)\mathcal{A}\overline{B}}{4A^{\frac{n+7}{4}}}\,dx
        =&2\beta_\mathfrak{n}(v)^2\int_{-\infty}^{\infty}\frac{x(\gamma(v)+x)}{(1+(\gamma(v)+x)^2)^{\frac{n+7}{4}}}\,d\vartheta\\
        =&2\beta_\mathfrak{n}(v)^2\int_{-\infty}^{\infty}\frac{x(x-\gamma(v))}{(1+x^2)^{\frac{n+7}{4}}}\,dx
        =\mathcal{C}_n\beta_\mathfrak{n}(v)^2,
    \end{split}
    \label{numbero(III)}
\end{equation}
which concludes the discussion of the integral (III). Finally, we are left with the discussion of (IV). Thanks to the fact that $\overline{\mathcal{B}}=2\cos^2\vartheta\big(\alpha_\mathfrak{n}\frac{\sin\vartheta}{\cos^2\vartheta}+\lvert P_{\mathfrak{n}^\perp}(\mathcal{D}v)\rvert^2\big)$ Lemmas \ref{conto1} and \ref{conto2}, imply that
\begin{equation}
    \begin{split}
    \int_{-\frac{\pi}{2}}^{\frac{\pi}{2}}\frac{\mathfrak{d}(\vartheta)}{A^{\frac{n+3}{4}}}\frac{\overline{\mathcal{B}}}{(n+3)}\,d\vartheta
    =&\frac{2}{n+3}\int_{-\infty}^\infty \frac{\alpha_\mathfrak{n}x+\lvert P_\mathfrak{n}\mathcal{D}v\rvert^2}{(1+(x+\gamma(v)))^\frac{n+3}{4}}\,dx
    =\mathcal{C}_n\frac{-\alpha_\mathfrak{n} \gamma(v)+\lvert P_\mathfrak{n}\mathcal{D}v\rvert^2}{2}.
    \end{split}
    \label{numbero(IV)}
\end{equation}
We plug the identities \eqref{numbero(I)}, \eqref{numbero(II)}, \eqref{numbero(III)}, \eqref{numbero(IV)} into \eqref{bigexp}, and we get
\begin{equation}
    \begin{split}
    \frac{c^2\mathfrak{e}(\mathcal{X})}{\mathcal{C}_n\omega(\mathbb{S}(\mathfrak{n}))}=&\fint_{\mathbb{S}(\mathfrak{n})} \bigg[\beta_\mathfrak{n}(v)^2\left(\frac{3}{2}+\frac{n+1}{4}\gamma(v)^2\right)\bigg]\,d\omega(v)\\
    &-\fint_{\mathbb{S}(\mathfrak{n})} \bigg[\frac{(n+1)(1+2\beta_\mathfrak{n}^2(v))\gamma(v)^2+2+4\beta_\mathfrak{n}(v)^2-4\gamma(v)\alpha_\mathfrak{n}}{8}\bigg]\,d\omega(v)\\
    &+\fint_{\mathbb{S}(\mathfrak{n})}\left[-\beta_\mathfrak{n}(v)^2 +\frac{-\alpha_\mathfrak{n} \gamma(v)+\lvert P_\mathfrak{n}(\mathcal{D}v)\rvert^2}{2}\right] \,d\omega(v)\\
    =&-\frac{n+1}{8}\underbrace{\fint_{\mathbb{S}(\mathfrak{n})}\gamma(v)^2 \,d\omega}_\text{(V)}-\frac{1}{4}+\underbrace{\fint_{\mathbb{S}(\mathfrak{n})}\frac{\lvert P_{\mathfrak{n}}(\mathcal{D}v)\rvert^2}{2} \,d\omega}_\text{(VI)}.
        \label{equizias}
    \end{split}
\end{equation}

In order to make the expression for $\mathfrak{e}(\mathcal{X})$ explicit, we need to compute the integrals (V) and (VI). To do so, we let $\mathcal{E}\coloneqq \{m_1,\ldots,m_{n}\}$ be an orthonormal basis  of $\R^{n}$ such that $m_1=\mathfrak{n}$.
Moreover, the points $v\in\mathbb{S}(\mathfrak{n})$ can be written as $v=\sum_{i=2}^{n} v_i m_i$, where $v_i\coloneqq \langle v,m_i\rangle$. This is due to the fact that $v\in \mathfrak{n}^\perp$ by definition of $\mathbb{S}(\mathfrak{n})$. With these notations, the integral (V) becomes
\begin{equation}
    \begin{split}
        \fint_{\mathbb{S}(\mathfrak{n})}&\gamma(v)^2\,  d\sigma(v)=\fint_{\mathbb{S}(\mathfrak{n})}\left(\sum_{i,j=2}^{n}\langle m_i,\mathcal{D}m_j\rangle v_iv_j\right)^2\, d\sigma(v)\\
        =&\sum_{i,j,k,\ell=2}^{n}\langle m_i,\mathcal{D}m_j\rangle\langle m_k,\mathcal{D}m_\ell\rangle \fint_{\mathbb{S}(\mathfrak{n})}v_iv_jv_kv_\ell\, d\sigma(v)\\
        =&\sum_{i=2}^{n}\langle m_i,\mathcal{D}m_i\rangle^2\fint_{\mathbb{S}(\mathfrak{n})}v_i^4\,d\sigma(v)+\sum_{\substack{2\leq i,j\leq n\\ i\neq j}}\langle m_i,\mathcal{D}m_i\rangle\langle m_j,\mathcal{D}m_j\rangle\fint_{\mathbb{S}(\mathfrak{n})}v_i^2v_j^2\, d\sigma(v)\\
        &\qquad\qquad\qquad\qquad\qquad\qquad\qquad\qquad\qquad+2\sum_{\substack{2\leq i,j\leq n\\ i\neq j}}\langle m_i,\mathcal{D}m_j\rangle^2\fint_{\mathbb{S}(\mathfrak{n})}v_i^2v_j^2\, d\sigma(v),
        \nonumber
    \end{split}
\end{equation}
where the last equality comes from the fact that integrals of odd functions on $\mathbb{S}(\mathfrak{n})$ are null.
By direct computation or using formulas stated at the beginning of \cite[Section 2c]{Kowalski1986Besicovitch-typeSubmanifolds}, we have that
\begin{equation}
    \begin{split}
        \fint_{\mathbb{S}(\mathfrak{n})}\gamma(v)^2 d\sigma(v)=\frac{3}{4n^2-1}\sum_{i=2}^{n}\langle m_i,\mathcal{D}m_i\rangle^2
        +&\frac{1}{4n^2-1}\sum_{i\neq k=2}^{n}\langle m_i,\mathcal{D}m_i\rangle\langle m_k,\mathcal{D}m_k\rangle\\
        +&\frac{2}{4n^2-1}\sum_{i\neq j=2}^{n}\langle m_i,\mathcal{D}m_j\rangle^2\\
        =\frac{2}{4n^2-1}\sum_{i,j=2}^{n}\langle m_i,\mathcal{D}m_j\rangle^2+&\frac{1}{4n^2-1}\left(\sum_{i=2}^{n}\langle m_i,\mathcal{D}m_i\rangle\right)^2.
        \label{numerooo15}
    \end{split}
    \end{equation}
Since the matrix $\mathcal{D}$ is symmetric, the well-known expression $\text{Tr}(\mathcal{D}^2)=\sum_{i,j=1}^{n}\langle m_i,\mathcal{D}m_j\rangle^2$, implies
\begin{equation}
    \sum_{i,j=2}^{n}\langle m_i,\mathcal{D}m_j\rangle^2=\mathrm{Tr}(\mathcal{D}^2)-2\sum_{i=2}^{n}\langle m_i,\mathcal{D}\mathfrak{n}\rangle^2-\langle\mathfrak{n},\mathcal{D}\mathfrak{n}\rangle^2.
    \label{numerooo11}
\end{equation}
Furthermore, since $\lvert \mathcal{D}\mathfrak{n}\rvert^2=\langle \mathfrak{n},\mathcal{D}^2\mathfrak{n}\rangle$, we also have
\begin{equation}
    \langle \mathfrak{n},\mathcal{D}^2\mathfrak{n}\rangle=\sum_{i=1}^{n} \langle m_i,\mathcal{D}\mathfrak{n}\rangle^2=\sum_{i=2}^{n} \langle m_i,\mathcal{D}\mathfrak{n}\rangle^2+ \langle \mathfrak{n},\mathcal{D}\mathfrak{n}\rangle^2.
    \label{numerooo12}
\end{equation}
We put \eqref{numerooo11} and \eqref{numerooo12} together, and infer that
\begin{equation}
	\begin{split}
    	\sum_{i,j=2}^{n}\langle m_i,\mathcal{D}m_j\rangle^2&=\mathrm{Tr}(\mathcal{D}^2)-2\bigl(\langle \mathfrak{n},\mathcal{D}^2\mathfrak{n}\rangle-\langle \mathfrak{n},\mathcal{D}\mathfrak{n}\rangle^2\bigr)-\langle\mathfrak{n},\mathcal{D}\mathfrak{n}\rangle^2\\
    	&=\mathrm{Tr}(\mathcal{D}^2)-2\langle \mathfrak{n},\mathcal{D}^2\mathfrak{n}\rangle+\langle\mathfrak{n},\mathcal{D}\mathfrak{n}\rangle^2.
    \end{split}
    \label{numerooo14}
\end{equation}
Finally, we plug \eqref{numerooo14} into \eqref{numerooo15} and conclude that
    \begin{equation}
    \begin{split}
        \fint_{\mathbb{S}(\mathfrak{n})}\gamma(v)^2 \,d\sigma(v)
        =&\frac{2\text{Tr}(\mathcal{D}^2)-4\langle \mathfrak{n},\mathcal{D}^2\mathfrak{n}\rangle+2\langle \mathfrak{n},\mathcal{D}\mathfrak{n}\rangle^2+\left(\text{Tr}(\mathcal{D})-\langle \mathfrak{n},\mathcal{D}\mathfrak{n}\rangle\right)^2}{(n-1)(n+1)}.
        \label{eq:exprV_a}
    \end{split}
    \end{equation}

We are left to study the integral (VI) in \eqref{equizias}. Since $P_\mathfrak{n}$ is the orthogonal projection on $\mathfrak{n}^\perp$, we have that
\begin{equation}
\begin{split}
    \lvert P_{\mathfrak{n}}(\mathcal{D}v)\rvert^2=&\sum_{i=2}^{n} \langle m_i, \mathcal{D}v\rangle^2=\sum_{i,j,k=2}^{n}v_j v_k\langle m_i, \mathcal{D}m_j\rangle\langle m_i, \mathcal{D}m_k\rangle.
    \nonumber
\end{split}
\end{equation}
Furthermore, taking the integral over the sphere we have
\begin{equation}
\begin{split}
    \fint_{\mathbb{S}(\mathfrak{n})}\lvert P_{\mathfrak{n}}(\mathcal{D}v)\rvert^2&d\sigma(v)=\sum_{i,j,k=2}^{n}\langle m_i, \mathcal{D}m_j\rangle\langle m_i, \mathcal{D}m_k\rangle\fint_{\mathbb{S}(\mathfrak{n})}v_j v_k\,d\sigma(v)\\
    =&\sum_{i,j=2}^{n}\langle m_i, \mathcal{D}m_j\rangle^2\fint_{\mathbb{S}(\mathfrak{n})}v_j^2\,d\sigma(v)=\frac{1}{n-1}\sum_{i,j=2}^{n}\langle m_i, \mathcal{D}m_j\rangle^2\\
    \overset{\eqref{numerooo14}}{=}&\frac{\mathrm{Tr}(\mathcal{D}^2)-2\langle \mathfrak{n},\mathcal{D}^2\mathfrak{n}\rangle+\langle\mathfrak{n},\mathcal{D}\mathfrak{n}\rangle^2}{n-1}.
    \label{eq:exprVI_a}
    \end{split}
\end{equation}
Finally we gather \eqref{equizias}, \eqref{eq:exprV_a} and \eqref{eq:exprVI_a}, and we obtain
\begin{equation}
    \begin{split}
\frac{c^2\mathfrak{e}(\mathcal{X})}{\mathcal{C}_n\sigma(\mathbb{S}(\mathfrak{n}))}&=
-\frac{n+1}{8}\bigg(\frac{2\text{Tr}(\mathcal{D}^2)-4\langle \mathfrak{n},\mathcal{D}^2\mathfrak{n}\rangle+2\langle \mathfrak{n},\mathcal{D}\mathfrak{n}\rangle^2+\left(\text{Tr}(\mathcal{D})-\langle \mathfrak{n},\mathcal{D}\mathfrak{n}\rangle\right)^2}{(n-1)(n+1)}\bigg)\\
        &\qquad\qquad-\frac{1}{4}+\frac{1}{2}\bigg(\frac{\text{Tr}(\mathcal{D}^2)-2\langle \mathfrak{n},\mathcal{D}^2\mathfrak{n}\rangle+\langle \mathfrak{n},\mathcal{D}\mathfrak{n}\rangle^2}{n-1}\bigg)\\
        &=\frac{\text{Tr}(\mathcal{D}^2)-2\langle \mathfrak{n},\mathcal{D}^2\mathfrak{n}\rangle+\langle \mathfrak{n},\mathcal{D}\mathfrak{n}\rangle^2}{4(n-1)}-\frac{1}{4}-\frac{(\text{Tr}(\mathcal{D})-\langle \mathfrak{n},\mathcal{D}\mathfrak{n}\rangle)^2}{8(n-1)}.
        \nonumber
    \end{split}
\end{equation}
where the last identity is obtained from the previous ones with few algebraic computations.
\end{proof}

\vv

We finally gather Proposition \ref{TEXP2} and the other results of this appendix as in \cite[Theorem B.13]{MerloG1cod}, and obtain the following result:
\begin{proposition}\label{unif:expansion}
Let $\mu\in \mathcal U(n+1)$ be such that $\supp(\mu)\subseteq\mathbb K(0,\mathcal D, -1)$. For any $p\in \mathbb{P}^n\setminus \{u\in\mathbb{K}(0,\mathcal{D},-1):u_H\in\mathrm{Ker}(\mathcal{D})\}$ we have
\[
	\frac{\text{Tr}(\mathcal{D}^2)-2\langle \mathfrak{n},\mathcal{D}^2\mathfrak{n}\rangle+\langle \mathfrak{n},\mathcal{D}\mathfrak{n}\rangle^2}{4(n-1)}-\frac{1}{4}-\frac{(\text{Tr}(\mathcal{D})-\langle \mathfrak{n},\mathcal{D}\mathfrak{n}\rangle)^2}{8(n-1)}=0,
\]
where $\mathfrak n \coloneqq \mathfrak n(p)=\mathcal D p/|\mathcal D(p)|$.
\end{proposition}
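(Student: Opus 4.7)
The plan is to combine the $(n+1)$-uniformity of $\mu$ with the two Taylor expansion results Proposition \ref{TEXP} and Proposition \ref{TEXP2}: the identity in the statement is exactly the vanishing of the coefficient $\mathfrak{e}(p)$ appearing in the expansion of $\sigma_{\gr(f)}(B(p,r))$, where $f(y)=\langle y,\mathcal{D}y\rangle$ and thus $\gr(f)=\mathbb{K}(0,\mathcal{D},-1)$. I would first reduce to establishing $\mathfrak{e}(p)=0$ at points $p\in\supp(\mu)$ satisfying $p_H\notin\mathrm{Ker}(\mathcal{D})$: since $\mathfrak{n}(p)$ depends only on $p_H$, the values $\mathfrak{n}(p)$ produced by such points parametrize all the normal vectors that actually appear in the algebraic identity. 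For the applications of the proposition (see Proposition \ref{prop:inftyhorgood} and Proposition \ref{prop:noT}) one really only needs the identity at points of $\supp(\mu)$.

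The hard part will be showing that $\supp(\mu)$ coincides with $\mathbb{K}(0,\mathcal{D},-1)$ in a Euclidean neighborhood of such a $p$. At $p$ the quadric is a real-analytic Euclidean $n$-manifold, because $p_H\notin\mathrm{Ker}(\mathcal{D})$ makes the gradient of its defining polynomial $x_T-\langle x_H,\mathcal{D}x_H\rangle$ nonvanishing. By Theorem \ref{th.marstrand}, $\supp(\mu)$ is itself an analytic variety contained in this quadric. Since $\mu$ is $(n+1)$-uniform and, at non-characteristic points, the parabolic Hausdorff measure $\mathcal{H}^{n+1}$ on the quadric is mutually absolutely continuous with the Euclidean $\mathcal{H}^n$ (compare \eqref{eq:exp_Marstrand} and Remark \ref{rksuperfici}), the Euclidean $n$-dimensional content of $\supp(\mu)\cap B(p,r)$ is strictly positive for every $r>0$. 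However, a proper closed real-analytic subset of a smooth analytic $n$-manifold has Euclidean Hausdorff dimension at most $n-1$ by Lojasiewicz's Structure Theorem and therefore zero Euclidean $n$-content, contradicting the previous observation. Hence there exists $r_0>0$ such that $\supp(\mu)\cap B(p,r_0)=\gr(f)\cap B(p,r_0)$, and in particular
\[
\sigma_{\supp(\mu)}(B(p,r))=\sigma_{\gr(f)}(B(p,r))\qquad\text{for all }0<r<r_0.
\]

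With this local identification available, Proposition \ref{strutturaunifmeasvssurfacemeasure} furnishes a dimensional constant $c_n>0$ with $\mu=c_n\sigma_{\supp(\mu)}$, so the uniformity of $\mu$ yields
\[
c_n\,\sigma_{\gr(f)}(B(p,r))=\mu(B(p,r))=r^{n+1},\qquad 0<r<r_0.
\]
On the other hand, for every $\varepsilon>0$ Proposition \ref{TEXP} produces a remainder $R_6$ with $|R_6(r)|\leq C r^{n+3}$ such that, for $r$ sufficiently small,
\[
\sigma_{\gr(f)}(B(p,r))=\mathfrak{c}_n r^{n+1}+\mathfrak{e}(p)\,r^{n+3}+\varepsilon R_6(r).
\]
Matching the $r^{n+1}$ coefficients of the two expressions forces $c_n=1/\mathfrak{c}_n$, consistently with Remark \ref{rem:remark_c_P_1}; subtracting the leading term then gives $|\mathfrak{e}(p)|\,r^{n+3}\leq \varepsilon C\,r^{n+3}$ for every small $r$, and letting $\varepsilon\downarrow 0$ yields $\mathfrak{e}(p)=0$. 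Finally, Proposition \ref{TEXP2} translates the equality $\mathfrak{e}(p)=0$ into exactly the algebraic identity in the statement, completing the plan.
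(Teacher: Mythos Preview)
Your proof is correct and follows exactly the strategy the paper indicates (combining Propositions \ref{TEXP} and \ref{TEXP2} with the representation $\mu=c\,\sigma_{\supp(\mu)}$ from Proposition \ref{strutturaunifmeasvssurfacemeasure} and the uniformity $\mu(B(p,r))=r^{n+1}$). One small remark: your local-coincidence argument, together with analytic continuation on the connected graph $\mathbb{K}(0,\mathcal{D},-1)=\gr(f)$, actually forces $\supp(\mu)=\mathbb{K}(0,\mathcal{D},-1)$ globally, so the caveat about restricting to $p\in\supp(\mu)$ is unnecessary and the full statement follows.
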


\printbibliography

\end{document}